\numberwithin{equation}{section}
\theoremstyle{plain} 
\newtheorem{thm}{Theorem}[section]
\newtheorem{lem}[thm]{Lemma}
\newtheorem{cor}[thm]{Corollary}
\newtheorem{prop}[thm]{Proposition}
\newtheorem{asm}[thm]{Assumption}
\newtheorem{de}[thm]{Definition}
\newtheorem*{thm*}{Theorem}
\theoremstyle{remark}
\newtheorem{rmk}[thm]{Remark}
\newtheorem{ex}[thm]{Example}
\newcommand{\ii}{\mathrm{i}}
\newcommand{\cf}{\emph{c.f., }}
\newcommand{\wt}{\widetilde}
\newcommand{\diag}{\mathrm{diag}}
\newcommand{\bs}{\boldsymbol}
\newcommand{\la}{\langle}
\newcommand{\ra}{\rangle}
\newcommand{\rf}[1]{(\ref{#1})}
\newcommand{\mb}[1]{\mathbf{#1}}
\newcommand{\sn}{\sqrt{N}}
\newcommand{\mG}{\mathcal{G}}
\newcommand{\mP}{\mathcal{P}}
\newcommand {\mbE}{\mathbb{E}}
\newcommand{\onh}{O_\prec(N^{-\frac12})}
\newcommand {\bu}{{\mb{u}}}
\newcommand {\bv}{{\mb{v}}}
\newcommand {\bw}{{\mb{w}}}
\newcommand {\lb}{\llbracket}
\newcommand {\rb}{\rrbracket}
\renewcommand{\mathbf}[1]{\bs{#1}}
\begin{document}

\begin{frontmatter}

\title{Statistical inference for principal components of spiked covariance matrices}
\runtitle{Statistical inference for principal components}

\begin{aug}
\author{\fnms{Zhigang} \snm{Bao}\thanksref{t1}\ead[label=e1]{mazgbao@ust.hk}},
\author{\fnms{Xiucai} \snm{Ding}\thanksref{t4}\ead[label=e4]{xiucai.ding@duke.edu}},
\author{\fnms{Jingming} \snm{Wang}\thanksref{t3}\ead[label=e3]{jwangdm@connect.ust.hk}},
\author{\fnms{Ke} \snm{Wang}\thanksref{t2}\ead[label=e2]{kewang@ust.hk}}

\affiliation{Hong Kong University of Science and Technology \thanksmark{t1} \thanksmark{t3} \thanksmark{t2}\\ Duke University \thanksmark{t4} }

\thankstext{t1}{Z.G. Bao was  is partially supported by Hong Kong RGC grant  GRF 16301519 and NSFC 11871425.}
%\thankstext{t4}{X.C. Ding  was partially supported by {\color{red}????????????????????}.}
\thankstext{t3}{J.M. Wang  was partially supported by Hong Kong RGC grant ECS 26301517 and GRF 16300618.}
\thankstext{t2}{K. Wang was partially supported by Hong Kong RGC grant GRF 16301618 and GRF 16308219.}
\runauthor{Z.G. Bao, X.C.Ding, J.M. Wang, and K. Wang.}

\address{Z.Bao\\J.Wang\\K.Wang\\
Department of Mathematics,  \\ Hong Kong University of Science and Technology, \\ Hong Kong\\
\printead{e1}\\
\phantom{E-mail:\ } \hspace{-1ex}\printead*{e3} \\ 
~~~~~~~~~~~\printead*{e2}}

\address{Department of Mathematics \\ Duke University \\USA\\
\printead{e4}\\
\phantom{E-mail:\ }}

\end{aug}

\runauthor{Zhigang Bao, Xiucai Ding, Jingming Wang, Ke Wang}

\begin{abstract}
In this paper, we study the asymptotic behavior of the  extreme eigenvalues and  eigenvectors of  the high dimensional spiked sample covariance matrices, in the supercritical case when a reliable detection of spikes is possible. Especially, we derive the joint distribution of the extreme eigenvalues and the generalized components of the associated eigenvectors, i.e., the projections of the eigenvectors onto arbitrary given direction,  assuming that the dimension and sample size are comparably large. In general, the joint distribution is given in terms of linear combinations of finitely many  Gaussian and Chi-square variables, with parameters depending on the projection direction and the spikes. Our assumption on the spikes is fully general. First, the strengths of spikes are only required to be slightly above  the critical threshold  and   no upper bound on the strengths is needed. Second,  multiple  spikes,  i.e., spikes with the same strength, are allowed. Third,  no structural assumption is imposed on the spikes. 
Thanks to the  general setting, we can  then apply the results to various high dimensional statistical hypothesis testing problems  involving both the eigenvalues and eigenvectors. Specifically, we propose accurate and powerful statistics to conduct hypothesis testing on the principal components. These statistics are data-dependent and  adaptive to the underlying true spikes.  Numerical simulations also  confirm the accuracy and powerfulness of our proposed statistics and illustrate significantly better performance compared to the existing methods in the literature. Especially, our methods are accurate and powerful even when either the spikes are small or the dimension is large.
\end{abstract}

\begin{keyword}[class=MSC]
\kwd[Primary ]{60B20, 62G10}
\kwd[; secondary ]{62H10, 15B52, 62H25}
\end{keyword}

\begin{keyword}
\kwd{random matrix, sample covariance matrix, eigenvector, spiked model, principal component, adaptive estimator}
\end{keyword}

\end{frontmatter}

\section{Introduction}

Covariance matrices play  important role in multivariate analysis and high dimensional statistics, and find applications in many scientific fields.  Moreover,  many statistical methodologies and techniques rely on the knowledge of the structure of the covariance matrix, to name but a few, Principal Component Analysis, Discriminant Analysis and Cluster Analysis. For detailed discussions of the applications and methodologies, we refer the readers to the monographs \cite{TA,JWHT, JI, MP} for a review. It is well-known in the high dimensional setting when the dimension is comparable with or much larger than the sample size, a direct application of the sample covariance matrix for hypothesis testing may result in untrustful conclusions \cite{YZB}. Consequently, a thorough understanding of the distributions of the eigenvalues and eigenvectors of sample covariance matrices is in demand for high dimensional statistical inference.  

In the literature of high dimensional statistics, a popular and sophisticated model is the spiked covariance matrix model proposed by Johnstone in \cite{Johnstone}, where a finite number of spikes (i.e., eigenvalues detached from the bulk of the spectrum) are added to the spectrum of the population covariance matrix; see (\ref{19071901}) and (\ref{19071902}) below. Throughout the paper, with certain abuse of terminology, we use the word ``spike" to represent either a detached eigenvalue $1+d_i$ (c.f. (\ref{19071901}), (\ref{19071902})) or the whole rank one matrix corresponding to a detached eigenvalue $(1+d_i)\bv_i\bv_i^*$ (c.f. (\ref{19071901}), (\ref{19071902})). These spikes can have various practical meanings in different fields. For instance, they correspond to the first few important factors in  factor models arising from financial economics \cite{FLM,ONA}, the number of patterns in  genetic variation across the globe \cite{DO}, the number of clusters in gene expression data \cite{KML} and the number of signals in single detection \cite{NJ, ONA1}. In this paper, we investigate the distributions of the principal components of the spiked sample covariance matrix, i.e, the sample counterparts of the extreme eigenvalues and eigenvectors (especially  those spikes) of the population covariance matrices. 
The principal components of spiked sample covariance matrices play important roles in Principal Component Analysis for high dimensional data. A lot of work has been devoted to estimating the principal components in various settings.  For instance, sparse principal component analysis \cite{BJNP,sparse1} is proposed to estimate the spiked eigenvalues and eigenvectors assuming some sparsity structure in the population eigenvectors; factor-model based estimators \cite{BN, BN2, ONA2} for the eigenvectors are constructed if the population covariance matrix is of approximate factor-model type; and some regularization-based methods \cite{sparse2, SZ, ZHT} have been proposed under various structural assumptions. 

Despite the wide applications of the principal components in high dimensional statistics, most of the literature focus on the estimation part. Much less is known about their distributions, especially for the leading eigenvectors. As a consequence, a thorough study of the statistical inference for the population covariance matrix in the high dimensional setting is still missing, especially for hypothesis testing problems involving both eigenvalues and eigenvectors. For instance, eigenvectors and eigenspaces play an important role in statistical learning. However, the existing literature has only been able to test whether the eigenvectors or eigenspaces of the population covariance matrix are equal to some given ones under the assumption that the dimension is much smaller than the sample size \cite{LeCamtest, KLsplit, bootsfr, SF, SS}. For another example, in Principal Component Analysis, 
the loadings are transformations of the original variables to the eigenvectors. They describe how much each variable contributes to a particular eigenvector and researchers are interested in hypothesis testing and constructing confidence intervals for them \cite{loadinginterval1,loadinginterval2,loadingtest}. The loadings are scaled eigenvectors using their corresponding eigenvalues and therefore, the joint distribution of the extreme eigenvalues and eigenvectors of the sample covariance matrices will be needed to conduct the hypothesis testing on them.  

Driven by these challenges, we study the joint distributions of the extreme eigenvalues and the generalized components of their associated eigenvectors for the spiked sample covariance matrices,  in the high dimensional setting. 
Based on these results, we will be able to perform hypothesis testings with statistics constructed from both eigenvalues and eigenvectors.

Specifically, in this paper, we consider the sample covariance matrices of the form
\begin{align}
Q=TXX^*T^*, \label{matrix model}
\end{align}
where $T$ is a $M\times M$ deterministic matrix and $X$ is a $M\times N$ random matrix with independent entries and $\mathbb{E}XX^*=I_M$. Further, we assume that the population covariance matrix $\Sigma:=TT^*$ admits the following form
\begin{align}
\Sigma=I_M+S, \label{19071901}   
\end{align}
where $S$ is a fixed-rank deterministic positive semi-definite matrix.  We first impose the following assumptions.

\begin{asm}\label{assumption}  Throughout the paper, we suppose the following assumptions hold.

\noindent (i)({\it  On dimensionality }):  We assume that $M\equiv M(N)$ and $N$ are comparable and there exist constants $\tau_2>\tau_1>0$ such that 
\begin{align} \label{eq_highdimensionalregime}
y\equiv y_N=M/N\in (\tau_1,\tau_2).
\end{align}

\noindent (ii)({\it On $S$}): We assume that $S$ admits the following spectral decomposition
\begin{align}
S=\sum_{i=1}^r d_i \bv_i\bv_i^*, \label{19071902}
\end{align}
where $r\geq 1$ is a fixed integer. Here  $d_1\geq \cdots \geq d_r >0$ are the ordered eigenvalues of $S$, and $\bv_i=(v_{i1},\ldots, v_{iM})^*$'s are the associated unit eigenvectors. All $d_i\equiv d_i(N)$ may be $N$-dependent.

\noindent (iii)({\it On $X$}): For the matrix $X=(x_{ij})$, we assume that the entries $x_{ij}\equiv x_{ij}(N)$ are real random variables satisfying
\begin{align*}
\mathbb{E}x_{ij}=0,\qquad \mathbb{E}x_{ij}^2=1/N.
\end{align*}
Moreover, we assume the existence of  large moments, {\it i.e.,} for any integer $p\geq3$, there exists a constant $C_p>0$, such that
\begin{align*}
\mathbb{E}\vert\sn x_{ij}\vert^p\leq C_p<\infty.
\end{align*}
We further assume that all $\sqrt{N}x_{ij}$'s possess the same $3$rd and $4$th cumulants, which are denoted by $\kappa_3$ and $\kappa_4$ respectively. 
\end{asm}

Throughout the paper, for simplicity,  we will mainly work with the setting 
\begin{align}
T= \Sigma^{\frac12}. \label{19071802}
\end{align} 
We remark that our results hold for much more general $T$ satisfying $\Sigma=TT^*$.  We refer to Remark \ref{rmk.extension} for more discussions on the extension along this direction. 
\subsection{Summary of previous related theoretical results}
In this section, we summarize the results related to the spiked sample covariance matrix  from the Random Matrix Theory literature.

We denote by $\mu_1\geq \cdots\geq \mu_{M \wedge N},  M \wedge N:=\min\{M,N\},$  the nontrivial eigenvalues of $Q$ and  $\mb{\xi}_i$ the unit eigenvector associated with $\mu_i$.  
The primary interest of the sample covariance matrix $Q$ lies in the asymptotic behavior of a few largest  $\mu_i$'s and the associated $\mb{\xi}_i$'s when $N$ is large, under various assumptions of $d_i$'s and $\mathbf{v}_i$'s. Significant progress has been made on this topic in the last few years.  It has been well-known since the seminal work of Baik, Ben Arous and P\'{e}ch\'{e} \cite{BBP}  that the largest eigenvalues $\mu_i$'s undergo a phase transition (BBP transition) w.r.t. the size of $d_i$'s. On the level  of the first order limit,  when $d_i>\sqrt{y}$, the eigenvalue $\mu_i$  jumps out of the support of the Marchenko-Pastur law (MP law) and converges to a limit determined by $d_i$, while  in the case of  $d_i\leq \sqrt{y}$, it sticks to the right end of the   Marchenko-Pastur (MP) law  $(1+\sqrt{y})^2$. In the former case, we call $\mu_i$ an {\it outlier} or {\it outlying eigenvalue}, while in the latter case we call $\mu_i$ a {\it sticking eigenvalue}. On the level of the second order fluctuation, it was revealed in \cite{BBP} that a phase transition  for  $\mu_i$ takes place in the regime $d_i-\sqrt{y}\sim N^{-\frac13}$. Specifically, if $d_i-\sqrt{y}\ll N^{-\frac13}$ (subcritical regime), the eigenvalue $\mu_i$ still admits the Tracy-Widom type distribution; if $d_i-\sqrt{y}\gg N^{-\frac13}$ (supercritical regime), the eigenvalue  $\mu_i$ is asymptotically Gaussian; while if $d_i-\sqrt{y}\sim N^{-\frac13}$ (critical regime), the limiting distribution of the eigenvalue $\mu_i$ is some interpolation between  Tracy-Widom and Gaussian.  The works \cite{Johnstone} and \cite{BBP} are on real and complex spiked Gaussian covariance matrices respectively. On extreme eigenvalues, further study for more generally distributed  covariance matrices can be found in \cite{BaikS, CDM16, BGM11, Paul, BY08, BY12, bloemendal2016principal, DY19, LHY}. The limiting behavior of the extreme eigenvalues  has also been studied for various related models, such as  the finite-rank deformation of Wigner matrices \cite{CDM16, BGM11, CDF09,  CDF12,   FP07, KY13, KY14,  Peche06, PRS13},  the signal-plus-noise model \cite{BGN12, LV, Ding}, the general spiked $\beta$ ensemble \cite{BVirag13, BVirag16}, and also the finite-rank deformation of general unitary/orthogonal invariant matrices \cite{BGN11, BBCF, BBCFAOP}.  

In contrast, the study on the limiting behavior of the eigenvectors associated with the extreme eigenvalues is much less.   On the level of the first order limit, it is known that the $\mathbf{\xi}_i$'s  are delocalized and purely noisy in the subcritical regime, but has a bias on the direction of $\mathbf{v}_i$ in the supercritical regime.  We refer to \cite{BGN11, BGN12, Capitaine17, Ding, Paul, bloemendal2016principal, DY19} for more details of such a phenomenon. It was recently noticed in \cite{ bloemendal2016principal} that a $d_i$ close to the critical point can cause a bias even for the non-outlier eigenvectors. On the level of the second order fluctuation, it was proved in \cite{ bloemendal2016principal} that the eigenvectors are asymptotically Gaussian in the subcritical regime, for the spiked covariance matrices. In the supercritical regime, a non-universality phenomenon was shown in \cite{CDM18} and \cite{BDW} for the eigenvector distribution for the finite-rank deformation of Wigner matrices and the signal-plus-noise model, respectively. The non-universality phenomenon in the supercritical regime has been previously observed in \cite{CDF09, KY13, KY14} for the extreme eigenvalues of the  finite-rank deformation of Wigner matrices.  Here we also refer to \cite{MJMY, JY, FFHL} for related study on the extreme eigenstructures of various finite-rank deformed models from more statistical perspective.   

\subsection{An overview of our results}
 In the theoretical part of this paper,  we will primarily  focus on the distribution of the eigenvectors $\mb{\xi}_i$'s associated with the outlying  eigenvalues $\mu_i$'s. That means, we will focus on the supercritical regime, in contrast to the work \cite{ bloemendal2016principal} where the eigenvector distribution in the subcritical regime was obtained. The results in the supercritical regime are particularly important for the statistical applications, since it is well-known that a reliable detection of spikes based on eigenvalues is only possible in this regime in general; see \cite{NE2008, NS2010, Nadler2008,  PWBM} for instance. Our assumption on the spikes is fully general (c.f. Assumption \ref{supercritical}). Especially, we do allow $d_i$'s to be divergent and   multiple (i.e. some $d_i$'s are identical). In case that the spikes are simple (i.e. $d_i$'s are all distinct), we also establish the joint distribution of the outlying eigenvalues and the associated  eigenvectors for the spiked covariance matrices.  This is the primary goal of the Principal Component Analysis from statistics point of view. More specifically, in this paper, we are interested in the  distribution of the largest $\mu_i$'s and the \emph{generalized component} of the top eigenvectors, i.e., the projections of those eigenvectors onto a general direction. More specifically, let $\mb{w}\in S^{M-1}_\mathbb{R}$ be any deterministic unit vector. We will study the limiting distribution of $ |\langle\mb{w},\mb{\xi}_i\rangle|^2$ in the supercritical regime under general assumption of the spikes, and also state the joint distribution of $ |\langle\mb{w},\mb{\xi}_i\rangle|^2$ and $\mu_i$'s in case that the spikes are simple.  
 We emphasize here that in case that a spike is multiple, one can also describe the joint distribution of eigenvalues and eigenvectors using the approach in this paper. But the result does not have a succinct form so we omit it from the statements of our main theorems; see Remark \ref{rmk. multiple eigenvalue} for more details. Nevertheless, we will describe (in certain equivalent form) and prove an extension of the joint eigenvalue-eigenvector distribution to the multiple case in  the application part, Section \ref{s.application}, and present applications of this result.

In the application part of this paper, we construct statistics to infer the principal components. We mainly focus on two hypothesis testing problems regarding the eigenspaces, (\ref{eq_spectralprojection}) and (\ref{eq_orthogobalteset}). To our best knowledge, it is the first time that these problems are tackled for spiked covariance matrices  in the high dimensional regime (\ref{eq_highdimensionalregime}) without imposing any structural assumptions on the spikes.  Our proposed statistics make use of some plug-in estimators and are adaptive to the information of unknown spikes, for instance, their values and multiplicity. Thanks to the joint distribution of the eigenvalues and eigenvectors, we can easily establish the asymptotic distributions of our testing statistics; see Section \ref{sec.statanddist} for more details.
Our methodology is simple, computationally cheap and easy to be implemented. Extensive numerical simulations lend strong support to our testing statistics. Especially, our proposed statistics are accurate and powerful regardless of the value of $y$ and magnitude of the spikes. Moreover, for testing (\ref{eq_spectralprojection}), our statistics show better performance compared to the existing methods in the literature both in terms of accuracy and power. We point out that our methodology can be used to study other hypothesis testing problems regarding Principal Component Analysis and this will be discussed in Section \ref{s.application}.

 Finally, we remark that our theory and applications are highly compatible. The theoretical results are not only interesting and natural on their own, they are also highly motivated by the  applications, which are all fundamental problems in the statistics literature. Especially, the generality of our assumptions in the theoretical part is not pursued only for technical interest; it is indeed indispensable in the application part. We leave more details about our contribution and novelties to Section \ref{s.pfst},   since they can be better illustrated only after necessary notations and main results are stated.

\vspace{2ex} 
{\bf Notation: } Throughout the paper, the sample size $N$ will be the fundamental parameter which goes to $\infty$.  The symbol $o_N(\cdot)$ stands for any quantity going to $0$ as $N$ goes to $\infty$. We use~$c$ and~$C$ to denote positive finite constants that do not depend on the ~$N$. Their values may change from line to line.  For two positive quantities $A_N, B_N$ depending on $N$ we use the notation $A_N \asymp B_N$ to denote $C^{-1}A_N\leq B_N \leq C A_N$ for some constant $C>1$. Further, we write $A_N\doteq B_N$ if $A_N=B_N(1+o_N(1))$. 

 For vectors $\mb{v},\mb{w}\in \mathbb{C}^N$, we write $\mb{v}^*\mb{w}=\la \mb{v},\mb{w}\ra$ for their scalar product. We emphasize here, unless otherwise specified,  the vectors in this paper are real vectors and thus $\mb{v}^*\mb{w}=\mb{v}^{\top}\mb{w}$.   Further, for a matrix $A$, we denote by $\|A\|_{\text{op}}$ its operator norm, while we use $\|\mathbf{v}\|$ to represent  the $\ell^2$ norm for a vector $\mathbf{v}$.  
 
 We use double brackets to denote index sets, i.e.\ for $n_1, n_2\in\mathbb{R}$, $\llbracket n_1,n_2\rrbracket:= [n_1, n_2] \cap\mathbb{Z}$. In addition, we use $\mathbf{1}_n=\frac{1}{\sqrt{n}}(1,\ldots, 1)^*$ to denote the  
 $n$-dimensional normalized all-1 vector.  Further, we denote by $\mathds{1}(E)$ or $\mathds{1}_E$ the indicator function of an event $E$. 

\vspace{2ex}
{\bf Organization:} The paper is organized as the following: In Section \ref{sec.mainresult}, we state our main results and proof strategy. In Section \ref{s.application}, we discuss several applications of our results and present the simulation results.  In Section \ref{s. pre}, we introduce some basic notions and preliminary results for later discussion. Section \ref{secgfr} is devoted to the Green function representations of our eigenvalue and eigenvector statistics. Then in Section \ref{s.proof of thm}, we prove our main result, Theorem \ref{mainthm}, based on a key technical recursive moment estimate for  some Green function statistics; see Proposition \ref{recursivemP}.  The proof of Theorem  \ref{thm.simple case} is also stated in Section \ref{s.proof of thm}.  The proof of Proposition \ref{recursivemP} is  postponed to Appendix \ref{sec:prop}.  In addition, in Appendix \ref{s.derivative of G}, we collect some basic formulas concerning the derivatives of Green function, for the convenience of the reader. In Appendix \ref{appendix.proof}, we provide the technical proofs for the statistics used in Section \ref{s.application}. Finally, Appendix \ref{sec_additionalsimulation} collects some additional simulation results regarding non-Gaussian data.   

\section{Main results, proof strategy and novelties}\label{sec.mainresult}
In this section, we state our main results and explain our proof strategy and novelties. 
We will need the following notions of a high-probability bound and convergence in distribution. The notion of {\it stochastic domination} was introduced in \cite{EKY2013}, which provides a precise statement of the form ``$X_N$ is bounded by $Y_N$ up to a small power of $N$ with high probability''.
\begin{de}(Stochastic domination) \label{def.sd} Let
\begin{align*}
X=\big(X_N(u): N\in \mathbb{N}, u\in U_N\big), \quad Y=\big(Y_N(u): N\in \mathbb{N}, u\in U_N\big)
\end{align*}
 be two families of  random variables, where $Y$ is nonnegative, and $U_N$ is a possibly $N$-dependent parameter set.  We say that $X$ is bounded by $Y$, uniformly in $u$, if for all small $\varrho>0$ and large $\phi>0$, we have
 \begin{align*}
 \sup_{u\in U_N}\mathbb{P}\big(|X_N(u)|>N^{\varrho}Y_N(u)\big)\leq N^{-\phi}
 \end{align*}
 for large $N\geq N_0(\varrho, \phi)$. Throughout the paper, we use the notation $X=O_\prec(Y)$ or $X\prec Y$ when $X$ is stochastically bounded by $Y$ uniformly in $u$.   Note that in the special case when $X$ and $Y$ are deterministic, $X\prec Y$
	means for any given $\varrho>0$,  $|X_{N}(u)|\leq N^{\varrho}Y_{N}(u)$ uniformly in $u$, for all sufficiently large $N\geq N_0(\varrho)$.

 In addition,  we also say that an $N$-dependent event $\mathcal{E}\equiv \mathcal{E}(N)$ holds with high probability if, for any large $\varphi>0$,  
\begin{align*}
\mathbb{P}(\mathcal{E}) \geq 1- N^{-\varphi},
\end{align*}
for sufficiently large $N \geq N_0( \varphi)$.
\end{de}

\begin{de} \label{defn_asymptotic}
Two sequences of random vectors, $\mathsf X_N \in \mathbb{R}^k$ and $\mathsf Y_N \in \mathbb{R}^k$, $N\geq 1$,  are \emph{asymptotically equal in distribution}, denoted as $\mathsf X_N \simeq \mathsf Y_N,$ if they are tight and satisfy
\begin{equation*}
\lim_{N \rightarrow \infty} \big( \mathbb{E}f(\mathsf X_N)-\mathbb{E}f(\mathsf Y_N) \big)=0
\end{equation*}   
for any bounded continuous function $f:\mathbb{R}^k\to \mathbb{R}$. 
\end{de}

\subsection{Main results}
Since we will focus on the supercritical regime, we  further make the following assumption. First, we recall the notation $\lb 1, m\rb:=\{1,\cdots,m\}$.
\begin{asm} \label{supercritical} 
Let $\epsilon>0$ be any small but fixed constant. Let $d_i\equiv d_i(N), i\in\lb 1, r\rb$ be the eigenvalues of $S$ in (\ref{19071902}). 
There exists a maximum  integer $ r_0\equiv r_0(\epsilon)\in \lb1, r\rb$, such that  for any $i\in \lb 1, r_0\rb$,
\begin{align}
   d_i- y^{1/2} > N^{-\frac 13 +\epsilon }  \label{asd}
\end{align}
for all sufficiently large $N\geq N_0(\epsilon)$. 
Moreover,  for a fixed $i\in \lb 1,  r_0\rb $, there exists a (unique) index set $\mathsf{I}\equiv \mathsf{I}(i) \subset \lb1, r_0\rb$ such  that $i\in \mathsf{I}$ and  for any $t\in \mathsf{I}$, 
\begin{align}
d_t= d_{i}, \quad \delta_i:=\min_{j\in \mathsf{I}^c} | d_{i}-d_j| > d_i^{3/2}(d_i-y^{1/2})^{-\frac 12}N^{-\frac 12+ \epsilon}, \label{asd2} 
\end{align}
where we denote $\mathsf{I}^c:=\lb 1,r\rb \setminus \mathsf{I}$. By definition, $\delta_t$ (or $\mathsf{I}(t)$) is the same for all $t\in \mathsf{I}(i)$.

\end{asm}

\begin{rmk}
It is known that the BBP phase transition takes place on the regime $d_i-y^{1/2}\sim N^{-\frac13}$; see for instance \cite{BBP,KY13,bloemendal2016principal}. Hence, (\ref{asd}) ensures that we are in the supercritical regime. Further, note that we do not assume the spikes $d_i\equiv d_i(N)$ to be bounded in $N$. That means, we do allow $d_i\sim N^c$ for any $c>0$, say. 
In \rf{asd2}, the first identity means that we allow  $d_i$ to be multiple.  And the second inequality is the so-called  {\it non-overlapping condition} which guarantees that the distinct (possibly multiple) $d_i$'s are well-separated such that the eigenvalues $\mu_i$'s corresponding to  distinct $d_i$'s do not have essential overlap on the scale of fluctuation; see detailed explanation  in \cite{bloemendal2016principal} for instance. Note that the prefactor $d_i^{3/2}$ is not included in the non-overlapping condition in \cite{bloemendal2016principal}. But this factor is needed to cover the case when the $N$-dependent $d_i$ is large. More precisely, when $d_i$ is  large, as stated in Theorem 2.3 of  \cite{bloemendal2016principal}, the fluctuation of  $\mu_i$ around its limiting location is of order $O_\prec(d_iN^{-1/2})$, and thus  the lower bound on the RHS of the second inequality in \rf{asd2} which separates the limiting locations of  $\mu_i$'s corresponding to distinct $d_i$'s is slightly larger than the fluctuation of  $\mu_i$'s. We emphasize here that in reality, it can certainly happen that two distinct $d_i$'s are close enough to violate the non-overlapping condition. However, in this case, since the fluctuation of their sample counterparts, $\mu_i$'s, have essential overlap,  effective inference of $d_i$'s based on $\mu_i$'s is believed to be impossible in general. 
Also, since eigenvectors are sensitive to the eigenvalue gap, in this case, inference of $\bv_i$'s based on $\mb{\xi}_i$'s will also be unreliable. Therefore, the non-overlapping condition together with (\ref{asd}) can be regarded as a nearly minimal condition for a reliable detection of spikes.  
\end{rmk}

We will state our main results under Assumptions \ref{assumption} and \ref{supercritical}, after necessary notations are introduced. Rewrite  the spectral decomposition of $\Sigma$ as $\Sigma= \sum_{i=1}^M \sigma_i \mathbf v_i \mathbf v_i^* = I_M + \sum_{i=1}^M d_i \mathbf v_i \mathbf v_i^*,$ where by Assumption \ref{assumption}, $d_{r+1}=\cdots=d_M=0$. Further, we emphasize here that the specific choice of the orthonormal $\mathbf{v}_i$'s for $i=r+1,\ldots, M$ is irrelevant to our discussion since only $\sum_{j=r+1}^M \mathbf{v}_i\mathbf{v}_i^*$ will be involved. In the sequel, we fix an $i$ and consider a (possibly) multiple $d_i$. Let $\mathsf{I}\equiv \mathsf{I}(i)$ be the index set of this multiple $d_i$ in Assumption \ref{supercritical}.  In order to study the generalized components of the eigenvectors of the $|\mathsf{I}|$-fold multiple  $d_i$, we introduce 
\begin{align}Z_{\mathsf{I}}:=\sum_{t\in \mathsf{I}} \mathbf v_t \mathbf v_t^*, \label{def of ZI}
\end{align} 
 the orthogonal projection onto $\mathrm{Span}\{\mathbf v_t\}_{t\in \mathsf{I}}$ 
and 
the corresponding random projection 
\begin{align}
{\rm P}_{\mathsf{I}}:=\sum_{t\in \mathsf{I}} \mb{\xi}_t\mb{\xi}_t^*, \label{def of PI}
\end{align}  
which is the sample counterpart of $Z_{\mathsf{I}}$. Note that in case $|\mathsf{I}|>1$, it is meaningless to do statistical inference for an individual  $d_i\mathbf{v}_i\mathbf{v}_i^*$, since in the multiple case, there is an arbitrariness in the  choice  of $\{\mathbf{v}_t\}_{t\in \mathsf{I}}$ as a basis for certain subspace. Hence, it is more natural to study $Z_{\mathsf{I}}$ and its sample counterpart. 
For any unit $\mathbf{w}\in \mathbb{R}^{M}$, denote its projection onto $\mathrm{Span}\{\mathbf v_t \}_{t\in \mathsf{I}}$ by 
\begin{align}
\bw_{\mathsf{I}}:=Z_{\mathsf{I}} \bw, \label{081510}
\end{align}
and its weighted projection onto $\mathrm{Span}\{\mathbf v_j \}_{j\in \lb 1,M\rb \setminus \mathsf{I}}$ by 
 \begin{align} \label{def:v_I_varsigma_I}
{\mb{\varsigma}}_\mathsf{I} := \sum_{j\in \lb 1,M\rb \setminus \mathsf{I}} \frac{d_i\sqrt{d_j +1}}{d_i -d_j} \langle \bw, \bv_j \rangle \bv_j
 \end{align}
 with its normalized version 
 \begin{align}\label{def.nor.Varsigma}
\mb{\varsigma}_{\mathsf{I}}^0: = \begin{cases}
\mb{\varsigma}_{\mathsf{I}}/\Vert \mb{\varsigma}_{\mathsf{I}}\Vert, &\text{if }\mb{\varsigma}_{\mathsf{I}}\neq \mb{0};\\
\mb{0}, &\text{otherwise}.
\end{cases}
\end{align}

  For any vectors $\mathbf{a}_l=(a_l(i))\in \mathbb{R}^M, l\in \mathbb{Z}^+$, we set the notations 
 \begin{align}
 \mathbf{s}_{k_1,\cdots, k_t}(\mathbf{a}_1, \cdots, \mathbf{a}_t) = 
  \sum_{j=1}^M a_1(j)^{k_1}\cdots a_t(j)^{k_t}.
 \end{align}
 For instance, $\mathbf{s}_{1,3}(\mathbf{a}_1, \mathbf{a}_2) = 
  \sum_{j=1}^M a_1(j)  a_2(j)^{3}$. 
 
For brevity, we introduce the following auxiliary functions for $d>0$
 \begin{align}
&\mathtt f(d):=\frac{y(1+d)}{d(d+y)} \Big( 1+\frac{d(1+d)}{d+y} \Big), 
\qquad\mathtt g(d):= \frac{2\sqrt{(d+1)(d+\sqrt y)}}{d+y
}, \label{eq_keyeqone}\\
& \mathtt h(d):=\frac{d+1}{d+y} ,\qquad \mathtt l(d):={\frac{1+d}{\sqrt{d(d+y)}}}. \label{eq_keyeqtwo}
\end{align}
With the above notations, we further define two symmetric matrices $A_{\mathsf{I}}^\bw$ and $B_{\mathsf{I}}^\bw$ of size $(r+2)\times (r+2)$, indexed by the vectors $\bw_{\mathsf{I}}, {\mb{\varsigma}}_\mathsf{I}, \{\bv_t\}_{t\in \mathsf{I}}$ and $\{\bv_j\}_{j\in \mathsf{I}^c}$. 
Here we recall the definition $\mathsf{I}^c=\llbracket 1, r\rrbracket\setminus \mathsf{I}$ from Assumption \ref{supercritical}.  The only non-zero entries  of $A_{\mathsf{I}}^\bw$ are given by the ones below and those followed by symmetry

\begin{align}\label{def:covA}
&A_{\mathsf{I}}^\bw(\bw_{\mathsf{I}}, \bw_{\mathsf{I}}) =2y \mathtt h(d_i)^2(1+y \mathtt h(d_i)^2) \|\bw_{\mathsf{I}}\|^4, \quad { A_{\mathsf{I}}^\bw({\mb{\varsigma}}_\mathsf{I}, {\mb{\varsigma}}_\mathsf{I}) = \mathtt g(d_i)^2 \|\bw_{\mathsf{I}}\|^2}, \nonumber\\
&{ A_{\mathsf{I}}^\bw(\bv_t, \bv_t) = \mathtt h(d_i) },
 \quad A_{\mathsf{I}}^\bw(\bv_j, \bv_j) = \mathtt l(d_i)^2 \|\bw_{\mathsf{I}}\|^2 , 
\nonumber\\
& { A_{\mathsf{I}}^\bw({\mb{\varsigma}}_\mathsf{I}, \bv_t) = \mathtt g(d_i)\sqrt{\mathtt h(d_i)}  \langle \bw_{\mathsf{I}}, \bv_t \rangle,\quad  A_{\mathsf{I}}^\bw({\mb{\varsigma}}_\mathsf{I}, \bv_j) = \mathtt g(d_i) \mathtt l(d_i) \langle {\mb{\varsigma}}_\mathsf{I}^0, \bv_j \rangle \|\bw_{\mathsf{I}}\|^2,  } 
\nonumber\\
& { A_{\mathsf{I}}^\bw(\bv_t,\bv_j)=\sqrt{\mathtt h(d_i)} \mathtt l(d_i) \langle \bw_{\mathsf{I}}, \bv_t \rangle \langle {\mb{\varsigma}}_\mathsf{I}^0, \bv_j \rangle, \quad \text{for } t\in \mathsf{I}, j\in \mathsf{I}^c.  } 
\end{align}

The only non-zero entries  of $B_{\mathsf{I}}^\bw$ are given by the ones below and those followed by symmetry 
\begin{align}\label{def:covB}
&B_{\mathsf{I}}^\bw(\bw_{\mathsf{I}}, \bw_{\mathsf{I}}) =\mathtt f(d_i)^2 s_4(\bw_{\mathsf{I}}), \quad B_{\mathsf{I}}^\bw({\mb{\varsigma}}_\mathsf{I}, {\mb{\varsigma}}_\mathsf{I}) = \mathtt g(d_i)^2 s_{2,2}({\mb{\varsigma}}_\mathsf{I}^0,\bw_{\mathsf{I}}),  \nonumber\\
&B_{\mathsf{I}}^\bw(\bv_{t_1}, \bv_{t_2}) = \mathtt h(d_i)s_{1,1,2}(\bv_{t_1},\bv_{t_2},\mb{\varsigma}_{\mathsf{I}}^0), \quad B_{\mathsf{I}}^\bw(\bv_{j_1},\bv_{j_2})= \mathtt l(d_i)^2 s_{1,1,2}(\bv_{j_1},\bv_{j_2}, \bw_{\mathsf{I}}),\nonumber\\  
&B_{\mathsf{I}}^\bw(\bw_{\mathsf{I}},\mb{\varsigma}_{\mathsf{I}} ) = \mathtt f(d_i) \mathtt g(d_i) s_{1,3}(\mb{\varsigma}_{\mathsf{I}}^0,\bw_{\mathsf{I}} ), \quad B_{\mathsf{I}}^\bw(\bw_{\mathsf{I}}, \bv_t) = \mathtt f(d_i) \sqrt{\mathtt h(d_i)}s_{1,1,2}(\bv_t, \mb{\varsigma}_{\mathsf{I}}^0, \bw_{\mathsf{I}}),\nonumber\\  
&B_{\mathsf{I}}^\bw(\bw_{\mathsf{I}}, \bv_j) = \mathtt f(d_i) \mathtt l(d_i) s_{1,3}( \bv_j, \bw_{\mathsf{I}}), \quad B_{\mathsf{I}}^\bw({\mb{\varsigma}}_\mathsf{I}, \bv_t) = \mathtt g(d_i)\sqrt{\mathtt h(d_i)} s_{1,1,2}(\bv_t, \bw_{\mathsf{I}}, {\mb{\varsigma}}_\mathsf{I}^0),\nonumber\\
& B_{\mathsf{I}}^\bw({\mb{\varsigma}}_\mathsf{I}, \bv_j) = \mathtt g(d_i) \mathtt l(d_i) s_{1,1,2}(\bv_j, {\mb{\varsigma}}_\mathsf{I}^0,\bw_{\mathsf{I}}), \nonumber\\
& B_{\mathsf{I}}^\bw(\bv_t, \bv_j) = \sqrt{\mathtt h(d_i)} \mathtt l(d_i) s_{1,1,1,1}(\bv_t,\bv_j,\bw_{\mathsf{I}}, \mb{\varsigma}_{\mathsf{I}}^0),   \qquad \text{for  }t,t_1,t_2\in \mathsf{I}, j, j_1, j_2 \in \mathsf{I}^c. 
\end{align}

To avoid the ambiguity on the realizations of the matrices $A_{\mathsf{I}}^\bw$ and $B_{\mathsf{I}}^\bw$ due to the possible permutations of the vector-indices $\bw_{\mathsf{I}}, {\mb{\varsigma}}_\mathsf{I}, \{\bv_t\}_{t\in \mathsf{I}}, \{\bv_j\}_{j\in \mathsf{I}^c}$, we fix the ordering of these indices as follows:  $\bw_{\mathsf{I}}$, ${\mb{\varsigma}}_\mathsf{I}$, followed by 
$\bv_t$'s in  ascending order of $t\in \mathsf{I}$, and then followed by $\bv_j$'s in  ascending order of $j\in \mathsf{I}^c$. Then, $A_{\mathsf{I}}^\bw(\bw_{\mathsf{I}},\mb{\varsigma}_{\mathsf{I}})$ will be the $(1,2)$-entry of  $A_{\mathsf{I}}^\bw$, for instance. Correspondingly, in the sequel, we use the shorthand notation such as $(\alpha,\beta, \{\gamma_t\}_{t\in \mathsf{I}}, \{\delta_j\}_{j\in \mathsf{I}^c})$ to represent the vector with components $\alpha,\beta, \gamma_t, \delta_j\in \mathbb{R}, t\in \mathsf{I},j\in \mathsf{I}^c$. Further, the ordering of the components in $(\alpha,\beta, \{\gamma_t\}_{t\in \mathsf{I}}, \{\delta_j\}_{j\in \mathsf{I}^c})$ is analogous to the ordering of the vector-indices of the matrices $A_{\mathsf{I}}^{\bw}$ and $B_{\mathsf{I}}^{\bw}$. Similar notations are used throughout the paper without further explanation.

With the above notations, we now state the first main theorem regarding the general components. 

\begin{thm}\label{mainthm}
Suppose that Assumptions \ref{assumption},  \ref{supercritical}
 and the setting (\ref{19071802}) hold. Fix an  $i\in \lb 1, r_0\rb$ and let $\bw\in S_{\mathbb{R}}^{M-1}$ be any deterministic unit vector. In case $d_i\equiv d_i(N)\to \infty$ as $N\to\infty$, we additionally assume that $|y-1|\geq \tau_0$ for some small but fixed $\tau_0>0$. Then there exist random  variables $\Theta_{\bw_{\mathsf{I}}}^{\bw}, \Lambda_{\mb{\varsigma}_{\mathsf{I}}}^{\bw}, \{\Delta_{\bv_t}^{\bw}\}_{t\in \mathsf{I}}, \{\Pi_{\bv_j}^{\bw}\}_{j\in \mathsf{I}^c}$
such that $\la \bw, {\rm{P}}_{\mathsf{I}}\bw \ra$ admits the following expansion
 \begin{align} \label{weak_gf_decomp}
\la \bw, {\rm{P}}_{\mathsf{I}}\bw \ra&=\frac{d_i^2-y}{d_i(d_i+y)}\la \bw, {\rm{Z}}_{\mathsf{I}}\bw \ra+ 
\frac{1}{\sqrt{N(d_i^2-y)}}  \,\Theta_{\bw_{\mathsf{I}}}^{\bw}
+\frac{\| \mb{\varsigma}_{\mathsf{I}} \| \sqrt{d_i- y^{1/2}}}{\sn d_i}\, \Lambda_{\mb{\varsigma}_{\mathsf{I}}}^{\bw}   \nonumber \\
&
+\frac{\| \mb{\varsigma}_{\mathsf{I}} \|^2 }{Nd_i} \sum_{t\in \mathsf{I}} (\Delta_{\bv_t}^{\bw})^2-\frac{1 }{N}\sum_{j\in \mathsf{I}^c}\frac{d_id_j}{(d_i-d_j)^2} (\Pi_{\bv_j}^{\bw})^2 \nonumber\\
&+ O_\prec \bigg(\frac{1}{N^{\frac 12+\varepsilon}}\Big( \frac{\Vert \bw_{\mathsf{I}}\Vert^2}{\sqrt{d_i^2-y}} \,+ \Vert \bw_{\mathsf{I}}\Vert \Vert \mb{\varsigma}_{\mathsf{I}}\Vert \frac{\sqrt{ d_i- y^{1/2}}}{d_i}\,\Big)\bigg) 
\nonumber\\
&
+O_\prec \bigg(\frac{1}{N^{1+\varepsilon}}\Big(\frac{ \Vert \mb{\varsigma}_{\mathsf{I}}\Vert^2}{d_i}\,+ \Vert \bw_{\mathsf{I}}\Vert^2 \sum_{j\in \mathsf{I}^c} \frac{d_id_j}{(d_i-d_j)^2}\,\Big)\bigg) 
\end{align}
for some small constant $\varepsilon>0$, and 
\begin{align}
\Big( \Theta_{\bw_{\mathsf{I}}}^{\bw}, \Lambda_{\mb{\varsigma}_{\mathsf{I}}}^{\bw}, \{\Delta_{\bv_t}^{\bw}\}_{t\in \mathsf{I}}, \{\Pi_{\bv_j}^{\bw}\}_{j\in \mathsf{I}^c} \Big) \simeq \mathcal N \left(\mb{0}, A_{\mathsf{I}}^{\bw} + \kappa_4  \frac{d_i^2-y}{d_i^2} B_{\mathsf{I}}^{\bw}\right). \label{asymptotic normality}
\end{align} 
 Here  $\mathcal N(\mb{0}, A_{\mathsf{I}}^{\bw} + \kappa_4  \frac{d_i^2-y}{d_i^2} B_{\mathsf{I}}^{\bw})$ represents a Gaussian vector with mean $\mb{0}$ and covariance matrix $A_{\mathsf{I}}^{\bw} + \kappa_4  \frac{d_i^2-y}{d_i^2} B_{\mathsf{I}}^{\bw}$ with $A_{\mathsf{I}}^{\bw}$ and $B_{\mathsf{I}}^{\bw}$ defined in \eqref{def:covA} and \eqref{def:covB} respectively. 
 \end{thm}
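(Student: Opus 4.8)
The plan is to establish \eqref{weak_gf_decomp} and \eqref{asymptotic normality} via a Green function analysis of the random projection $\mathrm{P}_{\mathsf{I}}$. First I would express $\langle \bw, \mathrm{P}_{\mathsf{I}}\bw\rangle$ through a contour integral of the resolvent $G(z)=(Q-z)^{-1}$: for a contour $\Gamma$ enclosing only the outlier eigenvalues $\{\mu_t\}_{t\in\mathsf{I}}$,
\begin{equation*}
\langle \bw, \mathrm{P}_{\mathsf{I}}\bw\rangle = -\frac{1}{2\pi\ii}\oint_{\Gamma} \langle \bw, G(z)\bw\rangle\, \dd z,
\end{equation*}
which holds with high probability once one knows the rigidity/isotropic local law for $Q$ and the fact that the outliers $\mu_t$, $t\in\mathsf{I}$, concentrate around the classical location $\theta(d_i)$, separated from the bulk and from the other outliers by \eqref{asd} and the non-overlapping condition \eqref{asd2}. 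The pointwise quadratic form $\langle \bw, G(z)\bw\rangle$ should be analyzed by conditioning on the spike directions: decomposing $\Sigma^{1/2}=I+(\text{rank-}r)$, one reduces to the resolvent of the ``unspiked'' part plus finitely many scalar/bilinear random quantities of the form $\langle \bv_t, \mathcal{G}(z)\bv_s\rangle$, $\langle \bw, \mathcal{G}(z)\bv_t\rangle$, $\langle \bw, \mathcal{G}(z)\bw\rangle$, where $\mathcal{G}$ is a resolvent not containing the spikes. These are the quantities whose fluctuations are controlled by the recursive moment estimate (Proposition \ref{recursivemP} in the paper), which I would invoke as a black box; it supplies both the deterministic limits (the $\mathtt f,\mathtt g,\mathtt h,\mathtt l$ functions and the leading term $\frac{d_i^2-y}{d_i(d_i+y)}\langle\bw,Z_{\mathsf{I}}\bw\rangle$) and the Gaussian-plus-$\kappa_4$ structure of the fluctuations with covariance $A_{\mathsf{I}}^\bw+\kappa_4\frac{d_i^2-y}{d_i^2}B_{\mathsf{I}}^\bw$.

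Next I would carry out the residue calculation. Writing $\langle\bw,G(z)\bw\rangle$ as a ratio whose numerator and denominator are analytic functions of $z$ built from the reduced bilinear forms, the poles inside $\Gamma$ come from the vanishing of a ``master equation'' of the form $\det(I + D\,\underline{m}(z)\,(\text{matrix of } \langle \bv_t,\mathcal G \bv_s\rangle)) = 0$, a perturbation of the deterministic equation $1+d_i\, m_{\mathrm{MP}}(z)\cdot(\cdots)=0$ whose root is $\theta(d_i)$. Expanding this root and the residue to the needed order in the small fluctuation parameters, and collecting terms by their size — the $O(N^{-1/2})$ terms linear in the Gaussian fluctuations of $\langle\bw,\mathcal G\bw\rangle$-type and $\langle\bw,\mathcal G\bv_t\rangle$-type quantities, giving $\Theta$ and $\Lambda$; and the $O(N^{-1})$ quadratic terms, which split into the sum $\sum_{t\in\mathsf{I}}(\Delta_{\bv_t}^\bw)^2$ coming from the $|\mathsf{I}|\times|\mathsf{I}|$ block and the $-\sum_{j\in\mathsf{I}^c}\frac{d_id_j}{(d_i-d_j)^2}(\Pi_{\bv_j}^\bw)^2$ coming from the interaction with non-overlapping spikes — yields exactly \eqref{weak_gf_decomp}, with the stated error terms absorbing the higher-order contributions (this is where the quantitative forms of \eqref{asd}, \eqref{asd2} and the $|y-1|\ge\tau_0$ assumption when $d_i\to\infty$ are used to guarantee the errors are genuinely $o$ of the main fluctuation scales).

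Finally, the joint asymptotic normality \eqref{asymptotic normality} follows once all of $\Theta_{\bw_{\mathsf{I}}}^\bw,\Lambda_{\mb{\varsigma}_{\mathsf{I}}}^\bw,\{\Delta_{\bv_t}^\bw\},\{\Pi_{\bv_j}^\bw\}$ are identified as explicit (near-)linear functionals of a common underlying vector of Green function entries $\langle \bv_k,\mathcal G(\theta(d_i))\bv_l\rangle$, $\langle \bw,\mathcal G(\theta(d_i))\bv_k\rangle$, $\langle\bw,\mathcal G(\theta(d_i))\bw\rangle$ evaluated at the classical outlier location; Proposition \ref{recursivemP} (together with a CLT for such Green function linear statistics, proved via the recursive moment method / cumulant expansion) gives joint convergence of this vector to a Gaussian with the computed covariance, and the covariance of the linear images is then a bookkeeping computation producing $A_{\mathsf{I}}^\bw$ (the ``Gaussian part'', independent of the fourth cumulant) and $\kappa_4\frac{d_i^2-y}{d_i^2}B_{\mathsf{I}}^\bw$ (the non-universal correction, carrying the $s_{k_1,\dots,k_t}$ sums over the squared components of the relevant vectors). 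I expect the \textbf{main obstacle} to be the bookkeeping in the residue/perturbation expansion in the multiple-spike case: one must simultaneously track the $|\mathsf{I}|$-dimensional degenerate eigenspace (where the relevant random matrix is a small perturbation of a scalar multiple of identity, so the expansion of the root and residue must be done at the matrix level and symmetrized correctly to produce the clean $\sum_{t\in\mathsf{I}}(\Delta_{\bv_t}^\bw)^2$ term) and the $\mathsf{I}^c$ directions, while keeping every error term uniform in $N$ and sharp enough — in the regime where $d_i$ may diverge — that it lands strictly below the fluctuation order $N^{-1/2}(d_i^2-y)^{-1/2}\|\bw_{\mathsf{I}}\|^2$. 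Verifying that the non-overlapping prefactor $d_i^{3/2}(d_i-y^{1/2})^{-1/2}$ in \eqref{asd2} is exactly what is needed to control the $\sum_{j\in\mathsf{I}^c}$ contributions is the delicate quantitative heart of the argument.
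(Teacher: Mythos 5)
Your overall architecture --- the contour-integral Green function representation of $\la \bw, {\rm P}_{\mathsf{I}}\bw\ra$, the reduction to quadratic forms of the unspiked resolvent $\mG_1$, and a CLT for linear combinations of these forms via the recursive moment estimate of Proposition \ref{recursivemP} --- is the same as the paper's. One structural caveat: you propose to locate the poles inside the contour as the random roots of a perturbed master equation $\det(D^{-1}+zV^*\mG_1(z)V)=0$ and then to expand root and residue. The paper deliberately avoids taking residues at random (and, in the multiple case, nearly degenerate) poles: it expands $(D^{-1}+zV^*\mG_1(z)V)^{-1}$ around the deterministic $L(z)=(D^{-1}+zm_1(z))^{-1}$, so that after the expansion all residues sit at deterministic points (the $d_t$'s, via $VL(\theta(z))V^*=\sum_j \frac{zd_j\bv_j\bv_j^*}{z-d_j}$), while the expansion remainder is controlled directly on the contour through an operator-norm bound on $(D^{-1}+\theta(z)V^*\mG_1(\theta(z))V)^{-1}$ --- which is exactly where the non-overlapping condition \eqref{asd2} enters. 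Your ``matrix-level symmetrization'' of random residues in the degenerate block can in principle be replaced by integrating around the whole group, but as written it is the fragile route; note also that the deterministic quantities ($\frac{d_i^2-y}{d_i(d_i+y)}\la\bw,Z_{\mathsf{I}}\bw\ra$ and the functions $\mathtt f,\mathtt g,\mathtt h,\mathtt l$) come out of this residue/expansion step, not out of Proposition \ref{recursivemP}, which only governs the centered quadratic forms.

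The genuine gap is in the regime $d_i\to\infty$: your error bookkeeping rests on the isotropic local law alone, and that does not suffice there. Since $\Delta(d_i)\asymp d_i^{-2}$ for large $d_i$, the local law bounds the two contributions $-2d_i(d_i+1)\,\bw_{\mathsf{I}}^*\Xi(\theta(d_i))\bw_{\mathsf{I}}$ and $-\frac{f(d_i)^2}{1+d_i}\,\bw_{\mathsf{I}}^*\Xi'(\theta(d_i))\bw_{\mathsf{I}}$ separately only at order $\Vert\bw_{\mathsf{I}}\Vert^2N^{-1/2}$, whereas the fluctuation term $\frac{1}{\sqrt{N(d_i^2-y)}}\Theta_{\bw_{\mathsf{I}}}^{\bw}$ in \eqref{weak_gf_decomp} has size $\Vert\bw_{\mathsf{I}}\Vert^2N^{-1/2}d_i^{-1}$, so without further input the ``error'' exceeds the main term by a factor $d_i$ and the expansion cannot be identified. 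The paper closes this by exploiting a cancellation in precisely this combination (and in the analogous combinations inside $S_3$ and inside the first component of $\mb{\chi}_{\mathsf{I}}$ in the moment estimate), and seeing the cancellation requires the near-optimal convergence rate of the eigenvector empirical spectral distribution, Theorem \ref{thm.VESD} from \cite{XYY}; this is the only reason the extra assumption $|y-1|\geq\tau_0$ appears for diverging $d_i$. Your proposal cites that assumption only as a generic device to make errors small, with no mechanism attached; as stated, the bookkeeping for diverging spikes does not close, and this VESD input (or an equivalent refinement beyond the isotropic law) must be added.
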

\begin{rmk} Here we further explain how to read off the information of  the limiting behaviour of $\la \bw, {\rm{P}}_{\mathsf{I}}\bw \ra$ from the expansion in (\ref{weak_gf_decomp}). The first term in the RHS of (\ref{weak_gf_decomp}) is the first order deterministic estimator of  $\la \bw, {\rm{P}}_{\mathsf{I}}\bw \ra$ which can be biased in the high-dimensional case, especially when $d_i$ is a fixed constant independent of $N$. The second and the third terms in the RHS of (\ref{weak_gf_decomp}) are asymptotically normal, and the fourth  and fifth terms are (asymptotically) linear combinations of $\chi^2$, according to (\ref{asymptotic normality}).  These four terms together describe the limiting distribution of $\la \bw, {\rm{P}}_{\mathsf{I}}\bw \ra-\frac{d_i^2-y}{d_i(d_i+y)}\la \bw, {\rm{Z}}_{\mathsf{I}}\bw \ra$, after appropriate scaling. 
We further emphasize here that the sizes of the first five terms may not be comparable and it is not uniformly determined which one is the  leading term in all cases. Under  different choices of $d_i$'s and $\mathbf{w}$, say, the leading term may change. However, in any case, the two error terms in (\ref{weak_gf_decomp}) are always smaller than the sum of the second to the fifth terms with high probability. This can be checked easily from the sizes of the entries in the covariance matrix $A_{\mathsf{I}}^{\bw} + \kappa_4  \frac{d_i^2-y}{d_i^2} B_{\mathsf{I}}^{\bw}$. 
Hence, from the expansion  (\ref{weak_gf_decomp}), one can get the limiting distribution of $\la \bw, {\rm{P}}_{\mathsf{I}}\bw \ra-\frac{d_i^2-y}{d_i(d_i+y)}\la \bw, {\rm{Z}}_{\mathsf{I}}\bw \ra$ in all cases. In the next  remark, we show how to get the limiting distribution for some specific examples. 
\end{rmk}

\begin{rmk}\label{rmk_inside}
 If $\bw \in \mathrm{Span}\{\bv_t\}_{t\in \mathsf{I}}$, then $\bw_{\mathsf{I}}=\bw$ and $ \mb{\varsigma}_{\mathsf{I}} =\mb{0}$ (c.f. (\ref{081510}), (\ref{def:v_I_varsigma_I})). Hence,  $\Lambda_{\mb{\varsigma}_{\mathsf{I}}}^{\bw}=0$ and $\Delta_{\bv_t}^{\bw}=0$ for all $t\in \mathsf{I}$. The conclusion of Theorem \ref{mainthm} is reduced to 
\begin{align}
\la \bw, {\rm{P}}_{\mathsf{I}}\bw \ra =&\frac{d_i^2-y}{d_i(d_i+y)}+
\frac{1}{\sqrt{N(d_i^2- y)}}\Theta_{\bw}^{\bw}
-\frac{1 }{N}\sum_{j\in \mathsf{I}^c}\frac{d_id_j}{(d_i-d_j)^2} (\Pi_{\bv_j}^{\bw})^2\notag \\
&+O_\prec\Big(\frac{N^{-\varepsilon}}{\sqrt{N(d_i^2-y)}}+\frac{N^{-\varepsilon}}{N}\sum_{j\in \mathsf{I}^c}\frac{d_id_j}{(d_i-d_j)^2} \Big), \label{082021}
\end{align}
for some small $\varepsilon>0$, and $(\Theta_{\bw}^{\bw}, \{\Pi_{\bv_j}^{\bw} \}_{j \in \mathsf{I}^c})$ is asymptotically Gaussian with mean $\mb{0}$ and covariance matrix with entries given by the RHS of the following equations
\begin{align*}
&\mathrm{var}(\Theta_{\bw}^{\bw}) \doteq 2y \mathtt h(d_i)^2(1+y \mathtt h(d_i)^2) + \kappa_4  \frac{d_i^2-y}{d_i^2}\mathtt f(d_i)^2 s_4(\mb{w}), \\
&  \mathrm{var}(\Pi_{\bv_j}^{\bw}) \doteq \mathtt l(d_i)^2 + \kappa_4  \frac{d_i^2-y}{d_i^2}\mathtt l(d_i)^2 s_{2,2} (\bv_j, \mb{w}), \\
& \mathrm{cov}(\Theta_{\bw}^{\bw},\Pi_{\bv_j}^{\bw}) \doteq \kappa_4  \frac{d_i^2-y}{d_i^2} \mathtt f(d_i) \mathtt l(d_i) s_{1,3}(\bv_j,\bw),\\
& \mathrm{cov}(\Pi_{\bv_j}^{\bw},\Pi_{\bv_{\bar{j}}}^{\bw})\doteq \kappa_4  \frac{d_i^2-y}{d_i^2} \mathtt l(d_i)^2 s_{1,1,2}(\bv_j, \bv_{\bar{j}}, \bw),
\end{align*}
for $j, \bar{j}\in \mathsf{I}^c$. In particular, if $\kappa_4=0$, the limiting distribution of $\la \bw, {\rm{P}}_{\mathsf{I}}\bw \ra$ does not depend on the specific choice of $\bw \in \mathrm{Span}\{\bv_t\}_{t\in \mathsf{I}}$. Here we recall the notation $A_N\doteq B_N$ for $A_N=B_N(1+o_N(1))$.

If $\bw \in \mathrm{Span}\{\mathbf v_j \}_{j\in \lb 1,M\rb \setminus \mathsf{I}}$, then $\bw =0$ and thus \eqref{weak_gf_decomp} becomes
\begin{align}
\la \bw, {\rm{P}}_{\mathsf{I}}\bw \ra = \frac{\| \mb{\varsigma}_{\mathsf{I}} \|^2}{Nd_i}  \sum_{t\in \mathsf{I}} (\Delta_{\bv_t}^{\bw})^2 + O_\prec \Big(\frac{\Vert \mb{\varsigma}_{\mathsf{I}}\Vert^2}{N^{1+\varepsilon}d_i}\Big). \label{0820100}
\end{align}
Hence,  the  eigenvectors $\mb{\xi_i}$'s are unbiased to any direction orthogonal to the spike directions $\bv_i$'s.  
\end{rmk}

 \begin{rmk}\label{rmk.extension} In this remark, we discuss several extensions. In (\ref{19071802}), we assumed $T=\Sigma^{\frac12}$. But our result holds under much more general assumption on $T$. We can indeed extend our result to the matrix $Q=TXX^*T^*$ with $(M+k)\times N$ random matrix $X$ and $M\times (M+k)$ matrix $T$. As long as $k\in \mathbb{N}$ is  fixed and $TT^*=\Sigma$ satisfying (\ref{19071901}), our result remains true. Such an extension  has been discussed in Section 8 of \cite{bloemendal2016principal} for the joint eigenvalue-eigenvector distribution in the subcritical regime and large deviation for eigenvector in the supercritical regime. It is easy to verify the validity of the same extension for the joint eigenvalue-eigenvector distribution in the supercritical regime.  The discussion in \cite{bloemendal2016principal} relies on the rewriting  $Q=TXX^*T^*=\Sigma^{\frac12}YY^*\Sigma^{\frac12}$, where $Y= (I_M\;0)OX$. Here $0$ is the $M\times k$ zero matrix and $O$ is some $(M+k)\times (M+k)$ orthogonal matrix.  It will be clear that all our arguments in the proof of Theorem \ref{mainthm} are valid, as long as the isotropic local law of the matrix $XX^*$ (c.f. Theorem \ref{isotropic}) holds. So here it would be sufficient to have the isotropic local law for the matrix $YY^*$, which has been demonstrated in Theorem 8.1 of  \cite{bloemendal2016principal}. Further,  we excluded the point $1$ from the possible values of $y\equiv y_N$ in case $d_i\equiv d_i(N)$ diverges. This is essentially due to the same restriction in one technical result we need, Theorem \ref{thm.VESD}, which is established in \cite{XYY}.   We emphasize here that Theorem \ref{thm.VESD} is only used in the case when $d_i=d_i(N)$ grows with $N$, but not needed in the case when $d_i$ is fixed. Therefore, when $d_i$ is fixed, the result of Theorem  \ref{mainthm} and also Theorem \ref{thm.simple case} below also apply to all $y\equiv y_N\in (\tau_1,\tau_2)$. Also, our result shall hold without the restriction $|y-1|\geq \tau_0$ even if $d_i$ diverges. 
 But this extension will be left as future work.
 \end{rmk}
 
   Our second result is  the joint eigenvalue-eigenvector distribution, i.e.,  joint distribution of the outlying eigenvalues and the generalized components of the associated eigenvectors. We state it   for the case when $d_i$ is simple, i.e. $\mathsf{I}=\{i\}$. For simplicity, we abbreviate the notation $\mathsf{A}_{\{i\}}$ to $\mathsf{A}_i$ for $\mathsf{A}=\bw, {\rm P}, \Phi$, etc. Further, we  use $\{i\}^c$ to represent $\llbracket 1, r\rrbracket\setminus\{i\}$. 
  \begin{thm} \label{thm.simple case}
  Under the same assumptions as Theorem \ref{mainthm}, with  $\mathsf{I}=\{i\}$ and  $\bw_i= \la \bw, \bv_i\ra \bv_i$,  the conclusion of Theorem \ref{mainthm} for the generalized component $\la \bw, {\rm{P}}_i\bw \ra = |\la \bw, \bv_i\ra|^2$ holds.
    Additionally, there exists a random variable $\Phi_i$ such that the outlying eigenvalue admits the  expansion
\begin{align}\label{expansion.eigv}
\mu_i=1+d_i+y+\frac{y}{d_i}+\frac{\sqrt{d_i^2-y}}{\sn}\Phi_i+O_\prec \Big(\frac{\sqrt{d_i^2-y}}{N^{\frac 12+\varepsilon}}\Big),
\end{align}
for some small constant $\varepsilon>0$,  and 
\begin{align*}
\Big(\Phi_i, \Theta_{\bw_i}^{\bw}, \Lambda_{\mb{\varsigma}_i}^{\bw}, \Delta_{\bv_i}^{\bw}, \{\Pi_{\bv_j}^{\bw}\}_{j\in \{i\}^c} \Big) \simeq \mathcal N \left(\mb{0}, C_i^\bw \right).
\end{align*}
Here  $\mathcal N(\mb{0}, C_i^\bw)$ represents a Gaussian vector with mean $\mb{0}$ and covariance matrix $C_i^\bw$ of size $r+3$. The lower right $(r+2)\times (r+2)$ corner of $C_i^\bw$ is given by $A_i^{\bw} + \kappa_4  \frac{d_i^2-y}{d_i^2} B_i^{\bw}$ as in \eqref{def:covA} and \eqref{def:covB}. The entries of the first row of $C_i^\bw$ is given by the RHS of the following equations
\begin{align*}
&\mathrm{var}(\Phi_i) \doteq (1+ d_i^{-1})^2\Big(2+ \kappa_4 \frac{d_i^2-y}{d_i^2}s_4(\bv_i)\Big) ,\nonumber\\
& \mathrm{cov}(\Phi_i, \Theta_{\bw_i}^{\bw})\doteq 2y \mathtt h(d_i)^2 (1+d_i^{-1})  \la \bw, \bv_i\ra^2+ 
\kappa_4 \frac{d_i^2-y}{d_i^2} (1+d_i^{-1}) \mathtt f(d_i) s_{2,2}(\bw_i, \bv_i),\\
&\mathrm{cov}(\Phi_i, \Lambda_{\mb{\varsigma}_i}^{\bw}) \doteq \kappa_4 \frac{d_i^2-y}{d_i^2}\mathtt g(d_i)(1+d_i^{-1}) s_{1,1,2}(\mb{\varsigma}_i^0, \bw_i, \bv_i), \nonumber\\
& \mathrm{cov}(\Phi_i, \Delta_{\bv_i}^{\bw})\doteq \kappa_4 \frac{d_i^2-y}{d_i^2}\sqrt{\mathtt h(d_i)}(1+d_i^{-1}) s_{1,3}(\mb{\varsigma}_i^0, \bv_i) ,\\
&\mathrm{cov}(\Phi_i, \Pi_{\bv_j}^{\bw})\doteq \kappa_4 \frac{d_i^2-y}{d_i^2} \mathtt l(d_i)(1+d_i^{-1}) s_{1,1,2}(\mb{v}_j, \bw_i, \bv_i), \quad\text{for } j \in \{i\}^c.
\end{align*}
Here we recall the notation $A_N\doteq B_N$ for $A_N=B_N(1+o_N(1))$.
 \end{thm}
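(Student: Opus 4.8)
\textbf{Proof strategy for Theorem~\ref{thm.simple case}.} The generalized-component statement is Theorem~\ref{mainthm} specialized to $\mathsf{I}=\{i\}$, $\bw_i=\langle\bw,\bv_i\rangle\bv_i$, so the only genuinely new content is the expansion \eqref{expansion.eigv} of the outlier $\mu_i$ and the joint law of $\Phi_i$ with the eigenvector fluctuation vector. The plan is to run both through a single resolvent analysis. Write $\mathcal{G}(z):=(XX^*-z)^{-1}$, $V:=(\bv_1,\dots,\bv_r)$, $D:=\diag(d_1,\dots,d_r)$, and set $\theta_i:=1+d_i+y+y/d_i$. Recall from Section~\ref{secgfr} that, on the high-probability event that $\mu_i$ is a genuine outlier, $\mu_i$ is the unique root near $\theta_i$ of a master equation of the form $\det\!\big(D^{-1}+\mathcal{M}(z)\big)=0$, where $\mathcal{M}(z)=I_r+zV^*\mathcal{G}(z)V$ up to negligible corrections, and that $\langle\bw,{\rm P}_i\bw\rangle=|\langle\bw,\xi_i\rangle|^2$ equals the residue at $z=\mu_i$ of $-\langle\bw,(Q-z)^{-1}\bw\rangle$, which by a Woodbury-type reduction is a rational function of the entries of $V^*\mathcal{G}(\mu_i)V$, $V^*\mathcal{G}(\mu_i)\bw$, $\langle\bw,\mathcal{G}(\mu_i)\bw\rangle$ and their first $z$-derivatives.

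First I would localize everything at the deterministic point $\theta_i$: using the isotropic local law (Theorem~\ref{isotropic}) together with rigidity of the outlier, replace $\mu_i$ by $\theta_i$ in all the resolvent quantities above, the errors being absorbed into the $O_\prec$ terms of \eqref{weak_gf_decomp} and \eqref{expansion.eigv}. This is where supercriticality $d_i-y^{1/2}>N^{-1/3+\epsilon}$ enters, and, in the regime $d_i\to\infty$, where Theorem~\ref{thm.VESD} and the restriction $|y-1|\ge\tau_0$ are used. Next, in the simple case the master equation decouples (the off-diagonal $\bv_i^*\mathcal{G}\bv_j$ are subleading by the non-overlapping condition and the local law), so to leading order $\mu_i-\theta_i$ is governed by the scalar relation $1+d_i^{-1}+\mu_i\,\bv_i^*\mathcal{G}(\mu_i)\bv_i\approx 0$; writing $\mathcal{R}(z):=z\,\bv_i^*\mathcal{G}(z)\bv_i$, a Taylor expansion around $\theta_i$ plus the implicit function theorem give $\mu_i-\theta_i\doteq -\big(\mathcal{R}(\theta_i)-\mathbb{E}\mathcal{R}(\theta_i)\big)\big/\partial_z\mathbb{E}\mathcal{R}(\theta_i)$. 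Computing the deterministic scalar $\partial_z\mathbb{E}\mathcal{R}(\theta_i)$ produces the normalization $\sqrt{d_i^2-y}/\sqrt N$, identifies $\Phi_i$ as a (centered) quadratic functional of $X$, and the variance of $\bv_i^*\mathcal{G}(\theta_i)\bv_i$ yields $\mathrm{var}(\Phi_i)\doteq(1+d_i^{-1})^2\big(2+\kappa_4\tfrac{d_i^2-y}{d_i^2}s_4(\bv_i)\big)$.

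The joint part is then a multivariate CLT. Up to negligible error, each coordinate of $\big(\Phi_i,\Theta_{\bw_i}^{\bw},\Lambda_{\mb{\varsigma}_i}^{\bw},\Delta_{\bv_i}^{\bw},\{\Pi_{\bv_j}^{\bw}\}_{j\in\{i\}^c}\big)$ is a fixed linear combination (for $\Phi_i,\Theta,\Lambda$) or a diagonal quadratic form (for $\Delta,\Pi$) in the single centered resolvent field $\{\langle\mathbf{a},(\mathcal{G}(\theta_i)-\mathbb{E}\mathcal{G}(\theta_i))\mathbf{b}\rangle\}$, with $\mathbf{a},\mathbf{b}$ ranging over $\bw,\bv_1,\dots,\bv_r,\mb{\varsigma}_i^0$. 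Joint asymptotic normality of this field follows from the recursive moment/cumulant estimate Proposition~\ref{recursivemP}, applied now to mixed moments carrying one extra marked factor for the master-equation statistic; the limiting covariance splits into a universal Gaussian part plus a $\kappa_4$-part coming from the common fourth cumulant of the $\sqrt N\,x_{ij}$'s. It then remains to evaluate entries: the lower-right $(r+2)\times(r+2)$ block is $A_i^{\bw}+\kappa_4\tfrac{d_i^2-y}{d_i^2}B_i^{\bw}$ by Theorem~\ref{mainthm}, and the new first row $\mathrm{cov}(\Phi_i,\cdot)$ is computed from the two-point covariance of resolvent quadratic forms at argument $\theta_i$ (as encoded by Proposition~\ref{recursivemP}) together with the explicit values of $\mathtt f,\mathtt g,\mathtt h,\mathtt l$ and of $\theta_i$, giving the displayed formulas.

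The hard part will be the joint CLT with the \emph{correct cross-covariances} between the eigenvalue functional and the eigenvector functionals: this requires upgrading Proposition~\ref{recursivemP} so that it controls mixed moments in which one factor is the master-equation statistic and the remaining ones are eigenvector statistics, while keeping all error terms uniform in $d_i$, including the regime $d_i\asymp N^{c}$. A secondary delicate point is transferring back from the random argument $\mu_i$ to the deterministic $\theta_i$ with an error small enough to be invisible at the fluctuation scale $\sqrt{d_i^2-y}/\sqrt N$, which is itself $d_i$-dependent; the rigidity bound $\mu_i-\theta_i=O_\prec(d_iN^{-1/2})$ must therefore be combined carefully with the Lipschitz behaviour in $z$ of the resolvent quantities near $\theta_i$.
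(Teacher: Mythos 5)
Your plan is essentially the paper's own proof: the outlier expansion comes from the master equation $\det\big(D^{-1}+zV^*\mG_1(z)V\big)=0$, decoupled in the simple case by second-order perturbation theory together with the non-overlapping condition and then Taylor-expanded at $\theta(d_i)$ (this is exactly Lemma \ref{lem.representation.eigv}), after which $\Phi_i=-\sqrt{N(d_i^2-y)}\,\theta(d_i)\chi_{ii}(\theta(d_i))$ is one more resolvent quadratic form of precisely the type already treated, so it is simply appended to the linear combination $\mathcal{P}$ and Proposition \ref{recursivemP} is rerun mutatis mutandis — no genuinely new "marked-factor" upgrade is required. Two small corrections to your description: $\Delta_{\bv_i}^{\bw}$ and $\Pi_{\bv_j}^{\bw}$ are \emph{linear} in the centered resolvent field (only their squares enter the expansion of $\la \bw,{\rm P}_i\bw\ra$), which is exactly why the whole vector can be jointly Gaussian as claimed, and the field you run the CLT for must also include the derivative entry $(\bw_{i}^0)^*\Xi'(\theta(d_i))\bw_{i}^0$ in the combination defining $\Theta_{\bw_i}^{\bw}$ (with the large-$d_i$ cancellation handled via Theorem \ref{thm.VESD}), as in the definition of $\mb{\chi}_{\mathsf{I}}$.
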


 \begin{rmk} \label{rmk. multiple eigenvalue}Here we remark that in the supercritical regime, a generalized CLT for the eigenvalues  has been established in \cite{BY08} previously, for fixed $d_i$'s which are away from $y^{1/2}$ by a constant order distance. When there is a multiple $d_i$ in the supercritical regime with multiplicity $|\mathsf{I}|$, it is known from \cite{BY08} that the corresponding eigenvalues $\{\mu_t\}_{t\in \mathsf{I}}$ will converge jointly to the eigenvalues of a $|\mathsf{I}|\times |\mathsf{I}|$ Gaussian matrix  GOE. Since it is not convenient to express the distribution of the eigenvalues of this fixed-dimensional GOE and their dependence with the the generalized components of $\mb{\xi_i}$'s, we are not going to state the joint eigenvalue-eigenvector distribution in the multiple case here. Nevertheless,   we will state the joint distribution of the generalized components of  $\mb{\xi_i}$ and all the matrix entries of this limiting GOE in Appendix \ref{appendix.proof} which equivalently describes the joint eigenvalue-eigenvector distribution; see Proposition \ref{joint ev-evector multiple}.
 \end{rmk}

\subsection{Proof strategy and novelties} \label{s.pfst}

Finally, here we remark that a related problem has been previously  studied in \cite{BDW} for the so-called matrix denoising model, where the distribution of the leading singular vectors of this model was studied. Due to the additive structure of this model, the distribution of the singular vector may depend on the structure of the deformation and the entire distribution of entries of the noise matrix (rather than their first 4 moments only), and  may not be Gaussian or Chi-square or linear combinations of them. 
Such a phenomenon is called non-universality, which exists in  the additive models \cite{BDW, CDM18}. However, such a phenomenon does not show up for the spiked covariance matrix, as one can see from Theorem \ref{mainthm}, where the distribution has a {\it Gaussian nature} in the sense that it is a polynomial of Gaussian variables. This is essentially due to the multiplicative structure of the spiked covariance matrix where the structure of the spikes are smoothed out by the random matrix $X$. In addition, we emphasize here that in \cite{BDW} the assumption of the strengths of the deformation, counterpart of $d_i$'s,  is much more limited than the assumption here. In \cite{BDW}, the strengths are assumed to be bounded and thus cannot grow with $N$, and also the strengths are away from the critical threshold by a constant order distance. Further, the strengths in  \cite{BDW} are assumed to be simple, and thus no multiplicity is allowed, and also distinct strengths are away from each other by a constant order. In addition, only the projection of the random singular vector onto the directions of the deformations are discussed therein. Finally, no joint distribution of eigenvalue and eigenvectors is obtained in \cite{BDW}. 
The generality of the settings and the results bring various technical problems but meanwhile they are highly motivated by the applications. In the sequel, we highlight several novelties from both theoretical and application point of views. 

Similarly to \cite{BDW}, we start with  Green function representations of the eigenvector and eigenvalue statistics. It turns out that both the eigenvector and eigenvalue statistics can be expressed (approximately)
in terms of some quadratic forms of the Green function of the matrix $XX^*$. Then a recursive moment estimate for the linear combinations of these quadratic forms leads to the joint distribution of the eigenvector and eigenvalues statistics. A key technical input is the isotropic local laws derived in \cite{bloemendal2014isotropic, knowles2017anisotropic}. Equipped with this general strategy, we face several new technical obstacles, in contrast to the discussions in \cite{BDW}.  First, since the $d_i$'s could be either very close to the critical threshold or diverging, and meanwhile could be equal or close to each other, 
the control of the sizes of the terms (especially the error terms) becomes much more delicate. One needs to keep tracking the dependence of the size of terms on $d_i-\sqrt{y}$, $d_i-d_j$ and $d_i$ carefully to conduct a unified analysis in all cases of $d_i$'s. Especially, in case that $d_i$ is diverging, one needs to exploit a hidden cancellation between two quadratic forms of the Green function, which is absent in case that $d_i$ is fixed. In order to see such a cancellation, one needs to adopt the recently established nearly optimal convergence rate of the so-called {\it eigenvector empirical spectral distribution (VESD)} in \cite{XYY}. Second, in contrast to \cite{BDW}, where only the projection to the direction of the deformation is considered, here we consider the projection to arbitrary direction. Especially, when one considers the projection to the orthogonal complement 
of the direction of the deformation, the size of the whole projection will degenerate to a smaller order. In order 
to study the fluctuation of the eigenvector projection onto arbitrary directions, including the direction orthogonal to the one of the spike, one needs to express the eigenvector projection in terms of the Green function up to a higher order term, and involve the higher order term in the recursive moment estimate, since it could be significant.  Third, the joint distribution of the eigenvalue and eigenvector statistics is obtained for the first time in the supercritical regime for the whole range of $d_i$. Such a complete result was even not available for the eigenvalue statistics only. 

All the theoretical novelties are well motivated by our applications. First, in most of mathematical work on spiked models, $d_i$ is assumed to be bounded. However, in many  popular statistical models such as
 the factor model \cite{BN, BN2,BN3, ONA}, 
$d_i\equiv d_i(N)$ could be diverging. We provide a unified result in the whole supercritical regime, no matter $d_i$ is close to the threshold or diverging. Practically, that means our results can be applied no matter the spike is weak or strong.   
Second, in the application part, we consider two hypothesis testing problems. The first is to test whether an eigenspace formed by any part of the spikes is equal to some given subspace, while the second is to test whether it is orthogonal to certain given subspace. Both questions are significant in the statistics literature. We raised two testing statistics for these two problems respectively, and the limiting distribution of the first statistic relies on our joint eigenvalue-eigenvector distribution in case $\bw \in \mathrm{Span}\{\bv_t\}_{t\in \mathsf{I}}$, while  the limiting distribution of the second statistic relies on the joint eigenvalue-eigenvector distribution in case $\bw \in \mathrm{Span}\{\mathbf v_j \}_{j\in \lb 1,M\rb \setminus \mathsf{I}}$. This explains the necessity for us to derive the distribution of the projection onto general directions. Third,  in two applications, if we construct the testing statistics, using the result in 
Theorem \ref{mainthm} solely, the limiting distribution of the statistics will contain the parameter $d_i$, which is normally unknown in real application. Hence, in order to construct  {\it adaptive} statistics which do not depend on the unknown parameter    $d_i$, we use a plug-in estimator of $d_i$ which is given in terms of $\mu_i$. Then, in order to derive the distribution of these adaptive  statistics, we have to establish the joint eigenvalue-eigenvector distribution, as what we have in Theorem \ref{thm.simple case}, and its multiple extension in Proposition \ref{joint ev-evector multiple}.

{\section{Statistical inference for principal components} \label{s.application}
In this section, we apply our results and some of their variants to the statistical applications regarding the inference of principal components. We focus our discussion on the hypothesis testing regarding the eigenspaces of covariance matrices.  Eigenspaces of covariance matrices are important in many statistical methodologies and computational algorithms. A lot of efforts have been made to infer the eigenspace of the covariance matrices in the setting $M \ll N$, for instance, see \cite{LeCamtest, KLsplit, bootsfr, SF, SS}. 

In this section, we consider a generic index set $\mathcal{I}\subset\llbracket 1, r_0\rrbracket$, which may contain indices for both simple and multiple $d_t$'s.  Further, we set 
 \begin{align}
Z_{\mathcal{I}}= \sum_{t\in \mathcal{I}} \mb{v}_t\mb{v}_t^*.
 \end{align}
 We remark here that $ Z_{\mathcal{I}}$  shall be regarded as an extension of $ Z_{\mathsf{I}(i)}$  defined in (\ref{def of ZI}), in the sense that the former may be constituted of $\mb{v}_t$'s 
 associated with distinct $d_t$'s. 

Specifically,  in the literature, researchers are particularly interested in testing the following hypothesis: for $\mathcal{I} \subset \lb1, r_0\rb,$
\begin{equation} \label{eq_spectralprojection}
\mathbf{H}_0^{(1)}: \ Z_{\mathcal{I}}=Z_0 \ \text{vs} \ \mathbf{H}_a^{(1)}:\ Z_{\mathcal{I}} \neq Z_0
\end{equation}   
for a given projection $Z_0.$ For the general alternatives $\mathbf{H}_a^{(1)}$, we are particular interested in testing whether $Z_0$ is in the complement of $Z_{\mathcal{I}}.$ Specifically, the hypothesis testing problem can be formulated as 
\begin{equation}\label{eq_orthogobalteset}
\mathbf{H}_0^{(2)}: Z_{\mathcal{I}} \perp Z_0 \  \text{vs} \ \mathbf{H}_a^{(2)}:\ Z_{\mathcal{I}} \not\perp Z_0.
\end{equation} 
Note that (\ref{eq_orthogobalteset}) is the complement of the test considered in \cite{tyler1981, tyler1983} and hence it can be used to study the alternative in \cite{tyler1981,tyler1983}.  

 In Section \ref{sec.statanddist}, we propose accurate and powerful statistics for the aforementioned two hypothesis testing problems (\ref{eq_spectralprojection}) and (\ref{eq_orthogobalteset}) in the high dimensional regime (\ref{eq_highdimensionalregime}).  We construct test statistics using some plug-in estimators and then derive their distributions utilizing the joint distribution of the eigenvalues and eigenvectors established in Section \ref{sec.mainresult} and its extensions in Appendix \ref{appendix.proof}. For (\ref{eq_spectralprojection}) and (\ref{eq_orthogobalteset}), the plug-in estimators are nonlinear shrinkers of the sample eigenvalues. Consequently, the proposed statistics are adaptive to $d_i$'s.  We mention that this methodology can be potentially applied to perform statistical inference  and build up confidence intervals for other important statistics related to the principal components. For instance, the loadings of principal components \cite{JI}, the shrinkage of eigenvalues \cite{shrinkage}, the number of spikes \cite{ED, PYspike}, the estimation of eigenvectors \cite{Monasson_2015} and the invariant estimator for covariance matrices \cite[Section 6]{BBPA}. These applications will be studied  in the future.

\subsection{Test statistics and their asymptotic distributions}\label{sec.statanddist} In this section, we propose statistics to test (\ref{eq_spectralprojection}) and (\ref{eq_orthogobalteset}). We start with  (\ref{eq_spectralprojection}). 
In what follows, we construct a data-dependent statistic to address the high dimensional issue. Denote $Z_{0}= \sum_{i\in \mathcal{I}} \mb{u}_i\mb{u}_i^*$.  
We  first study 
\begin{align}\label{eq_teststat}
\mathcal{T}:=\sum_{i \in \mathcal{I}} \Big(\langle \bm{u}_i,  {\rm P}_{\mathcal{I}} \bm{u}_i\rangle - \vartheta(\widehat{d}_i)\Big),
\end{align}
where 
\begin{align}
{\rm P}_{\mathcal{I}}=\sum_{i \in \mathcal{I}} \mb{\xi}_i \mb{\xi}_i^* \label{def of PI general}, \quad   \vartheta(d)= \frac{d^2-y}{d(d+y)}
\end{align}
and $\widehat{d}_i=\gamma(\mu_i)$ is a nonlinear shrinkage of the sample eigenvalues  denoted as
\begin{equation}\label{eq_diconsistent}
\widehat{d}_i=\gamma(\mu_i),\ \gamma(x)=\frac{1}{2}(-y+x-1)+\frac{1}{2} \sqrt{(-y+x-1)^2-4y}.
\end{equation}
 We remark here that $ {\rm{P}}_{\mathcal{I}}$  shall be regarded as an extension of $ {\rm{P}}_{\mathsf{I}(i)}$  defined in (\ref{def of PI}), in the sense that the former may be constituted of $\mb{\xi}_t$'s 
 associated with distinct $d_t$'s.  Further, we remark here, according to the definition  in (\ref{eq_teststat}), the statistic $\mathcal{T}$ does not depend on the specific choice of the basis $\{\mb{u}_i\}$ of $Z_0$. Hence, we have a freedom to choose any basis $\{\mb{u}_i\}$ of $Z_0$ in the sequel.

Since we are studying general $\mathcal{I}$ in this section, the indices in $\mathcal{I}$ may not belong to the same multiple $d_t$'s. To facilitate our discussion in the sequel, we do a decomposition of  $\mathcal{I}$ into subsets with each consisting of the indices for one multiple (or simple) $d_t$. 
 For $\mathcal{I}= \{i_1, \cdots, i_{r_*}\}\subset \lb 1, r_0 \rb,$ we assume that $\mathcal{I}=\bigcup_{k=1}^\ell \mathcal{I}_k$  for some fixed integer $\ell$ such that $\mathcal{I}_{k}\cap \mathcal{I}_{j}=\varnothing$ for $k \neq j \in \lb 1, \ell\rb.$ We assume that (\ref{asd2}) holds for all the $d_i, i\in \mathcal{I}$. For each $i\in \mathcal{I}$, there is a $k_i\in \lb 1, \ell\rb$, such that $i\in \mathcal{I}_{k_i}$. 
 Moreover, we suppose that for $1 \leq k \leq \ell$, $d_t, t\in  \mathcal{I}_k$ are all the same,  and $d_i\neq d_j$ for $i \in \mathcal{I}_{k_i}, j \notin \mathcal{I}_{k_i}$. Note that by definition $\mathcal{I}_{k_i}\equiv \mathsf{I}(i)$ (c.f.  Assumption \ref{supercritical}).  Further note that $\ell=r_*$ corresponds to the case that all the spikes in $\mathcal{I}$ are simple , $\ell=1$ corresponds to the case that all the spikes are equal and $1< \ell <r_*$ corresponds to a mixture case.

 For the brevity of the discussion in the first test problem (\ref{eq_spectralprojection}), we further restrict ourselves to the case satisfying the following assumption. 
 \begin{asm} \label{asm.082021} Let the index set $\mathcal{I}\subset\llbracket 1, r_0\rrbracket$ be defined above. We assume that for any $i\in \mathcal{I}$, the following inequality holds
 \begin{align}
 \frac{1}{N} \sum_{j\in (\mathsf{I}(i))^c}\frac{d_id_j}{(d_i-d_j)^2}\leq  N^{-\varepsilon}\frac{1}{\sqrt{N}(d_i^2-y)}.  \label{082020}
 \end{align} 
 for some small but fixed $\varepsilon>0$. Here $i\in \mathcal{I}_{k_i}\equiv \mathsf{I}(i)$ for some $k_i\in \lb 1, \ell\rb$.
 \end{asm}
 \begin{rmk} 
 We remark here that the inequality (\ref{082020}) insures that the $\chi^2$ terms in (\ref{082021}) are suppressed by the Gaussian term. We impose such a condition in order to simplify the discussion in the application part, thanks to the simplicity of the Gaussianity. But our result can be applied without this additional assumption. In the general case, we need to work with a linear combination of Gaussian and $\chi^2$ variables. For brevity, we omit such a general discussion and leave it to the future work.  
 \end{rmk}

   We record the results regarding the asymptotic distribution of (\ref{eq_teststat}) in the following theorem and postpone its proof to Appendix \ref{appendix.proof}.

\begin{thm}\label{thm_firststatdist} Suppose that Assumptions \ref{assumption} ,  \ref{supercritical},  and the setting (\ref{19071802}) hold.  In case $d_i\equiv d_i(N)\to \infty$ as $N\to\infty$ for some $i\in \mathcal{I}$, we additionally assume that $|y-1|\geq \tau_0$ for some small but fixed $\tau_0>0$. Suppose that $\mathbf{H}_0^{(1)}$ of (\ref{eq_spectralprojection}) and Assumption \ref{asm.082021} hold. 
For the statistic  (\ref{eq_teststat}), we have that 
\begin{equation}\label{eq_statone}
\frac{\sqrt{N} \mathcal{T}}{\sqrt{\operatorname{\mathtt{V_{1}}}(\mathbf{d}_{\mathcal{I}})}} \simeq  \mathcal{N}(0,1),
\end{equation}
where $\mathtt{V}_{1}(\bm{d}_\mathcal{I}), \bm{d}_\mathcal{I}=(d_{i_1}, \cdots, d_{i_{r_*}})$ is defined as 
\begin{equation*}
\mathtt{V}_{1}(\bm{d}_\mathcal{I}):= \bm{\alpha}^* C_{\mathcal{I}} \bm{\alpha}.  
\end{equation*}
Here $\bm{\alpha}=(\alpha_1,\cdots, \alpha_{2 r_*})^* \in \mathbb{R}^{2r_*}$ is defined as
\begin{equation*}
\alpha_k= \left\{
\begin{array}{cc}
-{y(d_{i_k}^2+2d_{i_k}+y)}{(d_{i_k}+y)^{-2}(d_{i_k}^2-y)^{-\frac12}},& 1 \leq k \leq r_*; \\  (d_{i_{k-r_*}}^2-y)^{-\frac12}, & r_*+1 \leq k \leq 2 r_*,
\end{array}
\right.
\end{equation*}
and $C_{\mathcal{I}}$ is a positive definite matrix of dimension $2r_*$ and explicitly defined in Proposition \ref{prop.simples}. Particularly,  when all the spikes $d_t, t \in \mathcal{I}$ are equal to $d_e$,  i.e., $\mathcal{I}=\mathsf{I}(i)$ for some $i$, we have that

\begin{align}\label{def.V1(d_I)}
& \mathtt{V}_{1}( \bm{d}_\mathcal{I})=
2(d_e^2-y)^{-1}\Big( y\mathtt{h}(d_e)^2 |\mathcal{I}| - \frac{y(d_e^2+2d_e+y)(1+d_e)}{(d_e+y)^2d_e}\Big)^2 \notag\\
&\qquad\quad  + 2(d_e^2-y)^{-1} \Big(y\mathtt{h}(d_e)^2|\mathcal{I}|  + y^2\mathtt{h}(d_e)^4(|\mathcal{I}|- |\mathcal{I}| ^2)  \Big) \nonumber\\
&\qquad \quad + \kappa_4 \Big( \frac{\mathtt{f}(d_e)}{d_e} -  \frac{y(d_e^2+2d_e+y)(1+d_e)}{(d_e+y)^2d_e^2}\Big)^2 \sum_{k, t\in \mathcal{I}}s_{2,2}(\bv_k,\bv_t),
\end{align}
where $\mathtt{h}(\cdot)$ and $\mathtt{f}(\cdot)$ are defined in (\ref{eq_keyeqone}) and (\ref{eq_keyeqtwo}).
\end{thm}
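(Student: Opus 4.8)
The plan is to rewrite $\sqrt N\,\mathcal T$ as an explicit linear functional of the jointly Gaussian fluctuation variables furnished by Theorem~\ref{thm.simple case} and its multiple‑spike extension (Proposition~\ref{joint ev-evector multiple}), plus a negligible remainder, and then read off the Gaussian limit. Since $\mathcal T$ in (\ref{eq_teststat}) is independent of the choice of basis $\{\bm u_i\}$ of $Z_0$, and since $\mathbf H_0^{(1)}$ forces $Z_0=Z_{\mathcal I}=\sum_{t\in\mathcal I}\bv_t\bv_t^*$, I first take a basis adapted to the block decomposition $\mathcal I=\bigcup_{k=1}^{\ell}\mathcal I_k$: for each $k$, let $\{\bm u_i\}_{i\in\mathcal I_k}$ be an orthonormal basis of $\mathrm{Span}\{\bv_t\}_{t\in\mathcal I_k}$, so that $\bm u_i\in\mathrm{Span}\{\bv_t\}_{t\in\mathsf I(i)}$ and Remark~\ref{rmk_inside} applies to $\langle\bm u_i,{\rm P}_{\mathsf I(i)}\bm u_i\rangle$. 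The target is the identity $\sqrt N\,\mathcal T=\bm\alpha^*\bm g+o_\prec(\cdot)$, where $\bm g=(\Phi_{i_1},\dots,\Phi_{i_{r_*}},\Theta^{\bm u_{i_1}}_{\bm u_{i_1}},\dots,\Theta^{\bm u_{i_{r_*}}}_{\bm u_{i_{r_*}}})$ and $\bm\alpha$ is exactly the vector in the statement.

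To establish this, write ${\rm P}_{\mathcal I}=\sum_{k'}{\rm P}_{\mathcal I_{k'}}$. For $k'\ne k_i$ the vector $\bm u_i$ lies in $\mathrm{Span}\{\bv_s:s\in\llbracket1,M\rrbracket\setminus\mathsf I(i_{k'})\}$, so (\ref{0820100}) governs $\langle\bm u_i,{\rm P}_{\mathcal I_{k'}}\bm u_i\rangle$; for this particular $\bm u_i$ one computes $\|\mb{\varsigma}_{\mathsf I(i_{k'})}\|^2=d_{i_{k'}}^{2}(d_{i_{k_i}}+1)(d_{i_{k'}}-d_{i_{k_i}})^{-2}$, and then Assumption~\ref{asm.082021} (with $d_{i_{k_i}}\gtrsim\sqrt{\tau_1}$) makes this cross‑block contribution smaller than the Gaussian scale $N^{-1/2}(d_{i_{k_i}}^2-y)^{-1/2}$ by a factor $N^{-\varepsilon}$. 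For the diagonal block I use the expansion (\ref{082021}) of $\langle\bm u_i,{\rm P}_{\mathsf I(i)}\bm u_i\rangle$, where Assumption~\ref{asm.082021} again suppresses the $\chi^2$ piece. For the plug‑in correction, set $\theta(d):=1+d+y+y/d$; from (\ref{eq_diconsistent}) one checks $\gamma=\theta^{-1}$ and $\gamma'(\theta(d))=d^2/(d^2-y)$, so by the eigenvalue expansion (\ref{expansion.eigv}) and its multiple counterpart $\widehat d_i-d_i=d_i^2\Phi_i/\sqrt{N(d_i^2-y)}+O_\prec(d_i^2 N^{-1/2-\varepsilon}(d_i^2-y)^{-1/2})$, and with $\vartheta'(d)=y(d^2+2d+y)d^{-2}(d+y)^{-2}$ this gives, for $i=i_k$,
\[
\langle\bm u_i,{\rm P}_{\mathcal I}\bm u_i\rangle-\vartheta(\widehat d_i)=\tfrac{1}{\sqrt N}\bigl(\alpha_{k+r_*}\Theta^{\bm u_i}_{\bm u_i}+\alpha_k\Phi_i\bigr)+o_\prec\bigl(N^{-1/2}(d_i^2-y)^{-1/2}\bigr).
\]
Summing over $i\in\mathcal I$ (the deterministic terms $\vartheta(d_i)$ cancel by the choice of $\widehat d_i$) yields the identity. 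By Theorem~\ref{thm.simple case} for the simple blocks and Proposition~\ref{joint ev-evector multiple} for the multiple ones, $\bm g\simeq\mathcal N(\bm0,C_{\mathcal I})$ with $C_{\mathcal I}$ the positive‑definite matrix recorded in Proposition~\ref{prop.simples}; hence $\sqrt N\,\mathcal T\simeq\mathcal N(0,\bm\alpha^*C_{\mathcal I}\bm\alpha)=\mathcal N(0,\mathtt V_1(\bm d_{\mathcal I}))$, and, the remainder divided by $\sqrt{\mathtt V_1}$ tending to $0$ by the error bounds above, dividing by $\sqrt{\mathtt V_1}>0$ gives (\ref{eq_statone}).

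For the specialization $\mathcal I=\mathsf I(i)$ with common spike $d_e$, all the first $r_*$ entries of $\bm\alpha$ collapse to $-y(d_e^2+2d_e+y)(d_e+y)^{-2}(d_e^2-y)^{-1/2}$ and all the last $r_*$ to $(d_e^2-y)^{-1/2}$, so $\mathtt V_1=(d_e^2-y)^{-1}\mathrm{var}\bigl(\sum_{t\in\mathsf I}\Theta^{\bm u_t}_{\bm u_t}-\tfrac{y(d_e^2+2d_e+y)}{(d_e+y)^2}\sum_{t\in\mathsf I}\Phi_t\bigr)$. Evaluating this second moment from Proposition~\ref{joint ev-evector multiple} — where $\sum_{t\in\mathsf I}\Phi_t$ is a scalar multiple of the trace of the limiting $|\mathsf I|\times|\mathsf I|$ GOE and $\sum_{t\in\mathsf I}\Theta^{\bm u_t}_{\bm u_t}$ is the basis‑independent fluctuation of $\tr(Z_{\mathsf I}{\rm P}_{\mathsf I})$ — and simplifying with $\mathtt h,\mathtt f$ from (\ref{eq_keyeqone})--(\ref{eq_keyeqtwo}) reproduces the three groups of terms in (\ref{def.V1(d_I)}): the $|\mathcal I|$‑ and $|\mathcal I|^2$‑dependent ones from the within‑block variances and covariances of the $\Theta$'s and their cross‑correlation with $\sum_t\Phi_t$, and the last from the fourth‑cumulant part of $A^{\bw}_{\mathsf I}+\kappa_4(\cdots)B^{\bw}_{\mathsf I}$ evaluated on $\{\bv_t\}_{t\in\mathsf I}$, which yields $\sum_{k,t\in\mathcal I}s_{2,2}(\bv_k,\bv_t)$. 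The main obstacle throughout is the uniform error bookkeeping: one must check that the cross‑block biases, the $\chi^2$ terms and the Taylor remainders are all $o_\prec$ of the leading fluctuation relative to $\sqrt{\mathtt V_1}$ over the \emph{entire} supercritical range of the $d_i$'s, which is precisely where Assumption~\ref{asm.082021} and the sharp dependence of the error terms of Theorem~\ref{mainthm} on $d_i-\sqrt y$, $d_i-d_j$ and $d_i$ are used; the secondary, purely computational difficulty is the variance identity in the last step.
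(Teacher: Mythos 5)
Your proposal is correct and follows essentially the same route as the paper's own proof: take the basis $\bm u_i=\bv_i$ (or any block-adapted basis) under the null, expand $\langle\bm u_i,{\rm P}_{\mathcal I}\bm u_i\rangle-\vartheta(\widehat d_i)$ using (\ref{082021})/(\ref{0820100}) together with a Taylor expansion of $\vartheta\circ\gamma$ at $\theta(d_i)$, absorb the $\chi^2$ and cross-block contributions with Assumption \ref{asm.082021}, and conclude from the joint CLTs of Propositions \ref{prop.simples} and \ref{prop.multiple}/\ref{joint ev-evector multiple}, exactly as the paper does blockwise over $\mathcal I=\bigcup_k\mathcal I_k$. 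The one step to phrase carefully is your per-index expansion $\widehat d_i-d_i=d_i^2\Phi_i/\sqrt{N(d_i^2-y)}+O_\prec(\cdots)$ inside a multiple block: by Proposition \ref{repre.multiple.eigv} an individual $\mu_i$ there fluctuates like an \emph{ordered eigenvalue} of the $|\mathsf I(i)|\times|\mathsf I(i)|$ matrix $\Phi$ (a GOE-type quantity, not Gaussian), so $\Phi_i$ cannot be identified with the Gaussian diagonal entry $\Phi_{ii}$ and the vector $\bm g$ is not componentwise Gaussian in that case; however, since the entries of $\bm\alpha$ are constant within each block, only the within-block sum of the $\vartheta(\widehat d_t)$'s matters, and the eigenvalue sum equals the trace $\sum_{t}\Phi_{tt}$, which is the Gaussian object the paper works with (through the block average of the $\mu_t$'s and Proposition \ref{prop.multiple}) — you invoke precisely this trace identity in your final variance computation, so the argument closes as in the paper.
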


By Theorems \ref{thm_firststatdist} and \ref{thm.simple case}, we can construct a pivotal statistic. Denote 
\begin{equation}\label{eq_finalstatistict1s}
\mathbb{T}_{1}=\frac{\sqrt{N} \mathcal{T}}{\sqrt{\texttt{V}_{1}(\widehat{\bm{d}}_{\mathcal{I}})}}, \ \widehat{\bm{d}}_{\mathcal{I}}=(\gamma(\mu_{i_1}), \cdots, \gamma(\mu_{i_{r_*}})). 
\end{equation}
We mention that  (\ref{eq_finalstatistict1s}) is  adaptive to the $d_i$'s by utilizing their estimators (\ref{eq_diconsistent}). We summarize the distribution of $\mathbb{T}_1$ in the corollary below, whose proof will also be postponed to Appendix \ref{appendix.proof}.

\begin{cor}\label{cor.082001}
Under the assumptions of Theorem \ref{thm_firststatdist}, we have that 
\begin{equation*}
\mathbb{T}_{1} \simeq \mathcal{N}(0,1). 
\end{equation*}
\end{cor}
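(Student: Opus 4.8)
The plan is to derive Corollary \ref{cor.082001} from Theorem \ref{thm_firststatdist} by replacing the deterministic variance $\mathtt{V}_1(\bm d_{\mathcal{I}})$ with its plug-in estimate $\mathtt{V}_1(\widehat{\bm d}_{\mathcal{I}})$ and controlling the error incurred. The first step is to observe that by Theorem \ref{thm_firststatdist}, $\sqrt{N}\,\mathcal{T}/\sqrt{\mathtt{V}_1(\bm d_{\mathcal{I}})} \simeq \mathcal{N}(0,1)$, so it suffices to show that the multiplicative correction factor $\sqrt{\mathtt{V}_1(\bm d_{\mathcal{I}})/\mathtt{V}_1(\widehat{\bm d}_{\mathcal{I}})}$ converges to $1$ in probability, together with a Slutsky-type argument. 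Writing
\begin{align*}
\mathbb{T}_1 = \frac{\sqrt{N}\,\mathcal{T}}{\sqrt{\mathtt{V}_1(\bm d_{\mathcal{I}})}}\cdot \sqrt{\frac{\mathtt{V}_1(\bm d_{\mathcal{I}})}{\mathtt{V}_1(\widehat{\bm d}_{\mathcal{I}})}},
\end{align*}
I would conclude by the converging-together lemma once the second factor is shown to be $1+o_{\mathbb{P}}(1)$; here one must be slightly careful since $\simeq$ denotes asymptotic equality in distribution (Definition \ref{defn_asymptotic}), so I would phrase the argument via tightness of $\sqrt{N}\mathcal{T}/\sqrt{\mathtt{V}_1(\bm d_{\mathcal{I}})}$ (already part of the $\simeq$ conclusion) plus convergence of the correction factor to $1$, which preserves asymptotic equality in distribution.

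The core of the work is then the consistency statement $\mathtt{V}_1(\widehat{\bm d}_{\mathcal{I}})/\mathtt{V}_1(\bm d_{\mathcal{I}}) \xrightarrow{\mathbb{P}} 1$. The plan is: (i) recall from \eqref{eq_diconsistent} that $\gamma$ is the inverse of the map $d \mapsto 1+d+y+y/d$, and from the eigenvalue expansion \eqref{expansion.eigv} that $\mu_i = 1+d_i+y+y/d_i + O_\prec(\sqrt{d_i^2-y}\, N^{-1/2})$; applying $\gamma$ and a first-order Taylor expansion of $\gamma$ gives $\widehat d_i = \gamma(\mu_i) = d_i\big(1 + O_\prec(N^{-1/2}(d_i^2-y)^{1/2}\gamma'(\cdot)/d_i)\big)$. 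One needs the derivative bound $\gamma'(\mu_i)\asymp d_i^2/(d_i^2-y)$ (since $\mathtt{d}/\mathtt{d}d(1+d+y+y/d) = 1 - y/d^2 = (d^2-y)/d^2$), so that $\widehat d_i/d_i = 1 + O_\prec(N^{-1/2}d_i/\sqrt{d_i^2-y})$. This ratio is $1+o_{\mathbb{P}}(1)$ precisely under Assumption \ref{supercritical} (it gives $d_i - \sqrt y \gg N^{-1/3}$, hence $d_i/\sqrt{d_i^2-y} \ll N^{1/2}$ when $d_i$ is bounded, and when $d_i$ diverges the ratio stays bounded). (ii) Then substitute $\widehat{\bm d}_{\mathcal{I}}$ into the explicit formula for $\mathtt{V}_1$ from Proposition \ref{prop.simples} (or \eqref{def.V1(d_I)} in the single-multiple case), and use that each building block $\mathtt f, \mathtt g, \mathtt h, \mathtt l$ and the prefactors are smooth functions of $d$ whose logarithmic derivatives are $O(1/d) + O(1/(d^2-y))$, together with the fact that the $s_{\cdot}(\cdot)$ sums over the population eigenvectors are unchanged (they do not involve $d_i$). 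Propagating the relative error $\widehat d_i/d_i - 1 = o_{\mathbb{P}}(1)$ through these smooth functions yields $\mathtt{V}_1(\widehat{\bm d}_{\mathcal{I}}) = \mathtt{V}_1(\bm d_{\mathcal{I}})(1+o_{\mathbb{P}}(1))$, provided $\mathtt{V}_1(\bm d_{\mathcal{I}})$ does not itself degenerate — which holds because $C_{\mathcal{I}}$ is positive definite and $\bm\alpha \neq \bm 0$ (the entries $\alpha_k$ for $r_*+1\le k\le 2r_*$ are $(d_{i_{k-r_*}}^2-y)^{-1/2}\neq 0$), so $\mathtt{V}_1(\bm d_{\mathcal{I}}) = \bm\alpha^* C_{\mathcal{I}}\bm\alpha$ is bounded below by a positive quantity up to the relevant $N$-dependent scaling.

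The main obstacle I anticipate is \emph{uniformity of the error control across the whole supercritical range}, exactly the point the authors emphasize as the source of technical difficulty. When $d_i$ is close to the critical threshold, $d_i^2 - y$ can be as small as $N^{-1/3+\epsilon}$-order, so the relative perturbation $N^{-1/2}d_i/\sqrt{d_i^2-y}$ is of order $N^{-1/2}\cdot(d_i^2-y)^{-1/2}$, which is only $\ll N^{-\epsilon/2}$ — small, but one must verify that when this relative error is propagated through the \emph{ratios} $1/(d_i^2-y)$ appearing inside $\mathtt V_1$ it does not blow up; concretely one needs $|\widehat d_i^2 - y| \asymp |d_i^2 - y|$ with high probability, which follows from $\widehat d_i = d_i(1+o_{\mathbb{P}}(1))$ only after checking the additive perturbation of $d_i^2$ is $o(d_i^2-y)$, i.e. $d_i^2\cdot N^{-1/2}d_i/\sqrt{d_i^2-y} = o(d_i^2-y)$, equivalently $d_i^3 N^{-1/2} = o((d_i^2-y)^{3/2})$ — which is again guaranteed by \eqref{asd} in Assumption \ref{supercritical} (with the $d_i^{3/2}$ factor built into the thresholds). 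The diverging-$d_i$ case under $|y-1|\geq\tau_0$ is comparatively easy since all quantities stabilize. I would organize the proof so that these two regimes are handled by the single estimate $\widehat d_i/d_i = 1 + O_\prec(N^{-1/2}d_i(d_i^2-y)^{-1/2})$ and the observation that Assumption \ref{supercritical} makes this $O_\prec(N^{-\epsilon/2})$, then invoke the smoothness of $\mathtt V_1$ in $\bm d$ away from the degeneracy locus.
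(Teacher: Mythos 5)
Your proposal is correct and follows essentially the same route as the paper, whose proof is exactly the one-line observation that Corollary \ref{cor.082001} follows from Theorem \ref{thm_firststatdist} together with the consistency of the plug-in estimator $\widehat{\bm{d}}_{\mathcal{I}}$ after appropriate scaling, combined with a Slutsky-type argument. Your additional care about propagating the relative error through the $1/(d_i^2-y)$ factors uniformly over the supercritical regime simply fills in the details the authors leave implicit.
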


Since $\mathbb{T}_{1}$ is asymptotically pivotal, we will use (\ref{eq_finalstatistict1s}) as our statistic for the testing of (\ref{eq_spectralprojection}). For an illustration, we record the behavior of our statistic for a single  spike model (i.e., $r_0=r_*=1$) in Figure \ref{fig_toy1size}. The more general and extensive simulations will be conducted in Section \ref{sec_simu}. We find that under the null hypothesis of (\ref{eq_spectralprojection}), our proposed statistic is close to $\mathcal{N}(0,1)$ for different values of $d$ and hence it is suitable for the hypothesis testing problem (\ref{eq_spectralprojection}).  We mention that even though we have not justified the case $d_i$ diverges under the assumption $y=1$ theoretically, our statistic is still accurate and powerful according to empirical illustrations for this case.  Hence, in the sequel, we also present the simulation results for the case $y=1$ for more extensive simulation study.

\begin{figure}[!ht]
 \hspace*{-1.1cm}
\includegraphics[width=13cm,height=6.5cm]{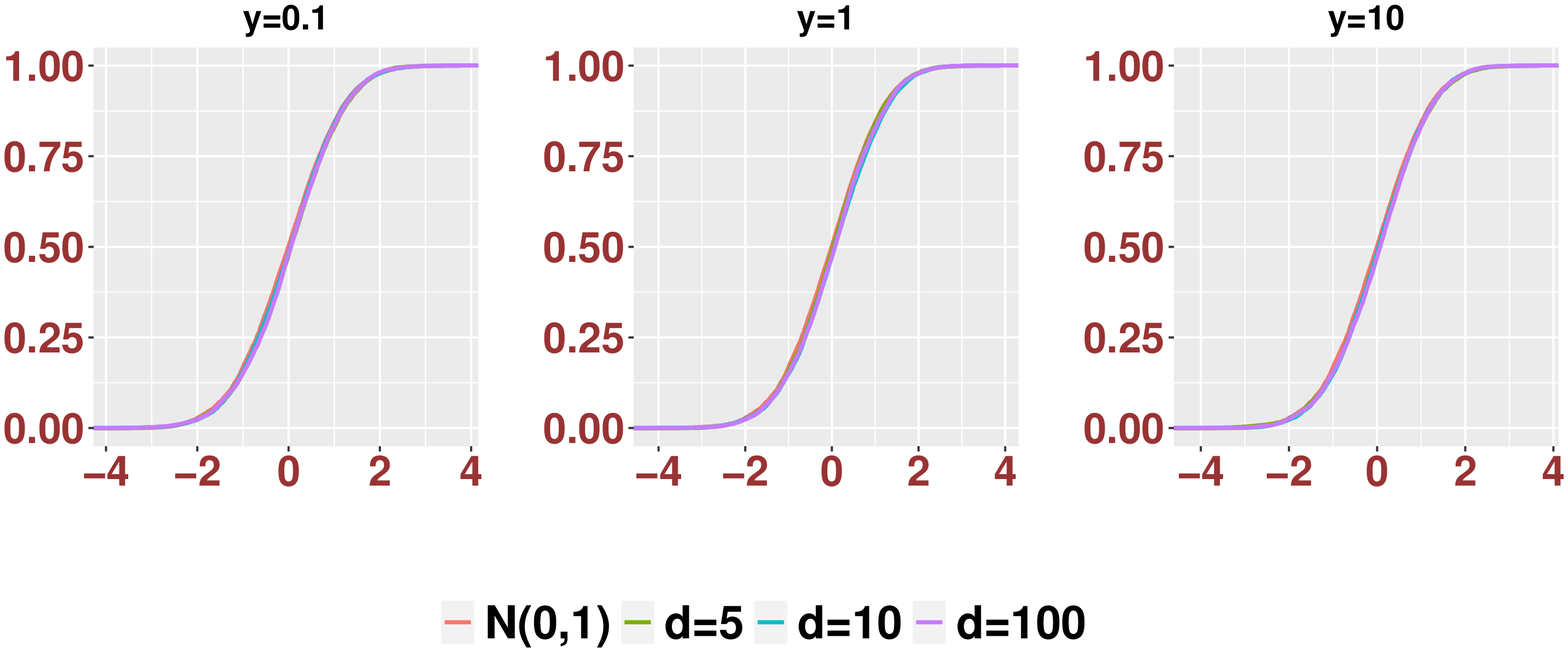}
\caption{ \footnotesize Simulated empirical cumulative distribution function (ECDF) for the proposed statistic (\ref{eq_finalstatistict1s}) under null of (\ref{eq_spectralprojection}) with $r_0=r_*=1.$ Here, the spiked covariance matrix is denoted as $\Sigma=\operatorname{diag}(d+1,1,\cdots,1)$ and we use the statistic $\sqrt{N}(|\langle \bm{\xi}_1, \bm{e}_1 \rangle|^2-\vartheta(\widehat{d}))/\sqrt{\mathtt{V}_1(\widehat{d}\,)},$ where
 $\mathtt{V}_1(d)=\frac{1}{2}\mathtt{V}_1(d,d)$;  see Example \ref{ex_test1_simple} for the definition of $\mathtt{V}_1(\cdot,\cdot)$.
 Here $N=500$ and we report our results based on 8,000 simulations with Gaussian random variables. }
\label{fig_toy1size}
\end{figure}

In what follows, we provide a few  examples with explicit formulas of  $\texttt{V}_{1}(\cdot)$.  These will be used for the simulations in Section \ref{sec_simu}.  
\begin{ex}\label{ex_test1_simple}
We consider that $\mathcal{I}=\{1,2\},$ both $d_1$ and $d_2$ are simple and $\bm{v}_i=\bm{e}_i, i=1,2.$ In this case, we have that 
\begin{align*}
 \mathtt{V}_{1}(d_1, d_2)= \sum_{i=1}^2 \mb{\alpha}^*
 \left[
 \begin{array}{cc}
 A_{11}(d_i) & A_{12}(d_i) \\
  A_{21}(d_i) & A_{22}(d_i) 
 \end{array} 
 \right]
 \mb{\alpha},
 \end{align*}
 where
 \begin{align*}
 &\mb{\alpha}  = \Big( { -} \frac{y(d_i^2+2d_i+y)}{(d_i+y)^2(d_i^2-y)^{\frac12}},  (d_i^2-y)^{-\frac12}\Big) ^* \notag\\
& A_{11}(d_i) = 2(1+d_i^{-1} )^2 +  \kappa_4 (1-yd_i^{-2})(1+d_i^{-1})^2,
\nonumber\\
  & A_{12}(d_i) = 2y \mathtt{h}(d_i)^2 (1+d_i^{-1} ) + \kappa_4 (1-yd_i^{-2}) \mathtt{f}(d_i)(1+d_i^{-1}),\nonumber\\
  & A_{22}(d_i) = 2y \mathtt{h}(d_i)^2 (1+ y \mathtt{h}(d_i) ^2 ) + \kappa_4 (1-yd_i^{-2}) \mathtt{f}(d_i)^2,
   \end{align*}
and  the functions $\mathtt{h}(\cdot)$ and $\mathtt{f}(\cdot)$ are defined in (\ref{eq_keyeqone}) and (\ref{eq_keyeqtwo}).
\end{ex}

\begin{ex}\label{ex_test1_equal}
We consider the case that $\mathcal{I}=\{1,2\}$ with $d_1=d_2=d$ and $\bm{v}_i=\bm{e}_i, i=1,2.$ In this case, we have that 
\begin{align*}
\mathtt{V}_{1}(d, d)=&2(d^2-y)^{-1}\Big( 2y\mathtt{h}(d)^2 - \frac{y(d^2+2d+y)(1+d)}{(d+y)^2d}\Big)^2 \notag\\
&\qquad\quad  + 2(d^2-y)^{-1} \Big(2 y\mathtt{h}(d)^2  -2 y^2\mathtt{h}(d)^4  \Big) \nonumber\\
&\qquad \quad + \kappa_4 \Big( \frac{\mathtt{f}(d)}{d} -  \frac{y(d^2+2d+y)(1+d)}{(d+y)^2d^2}\Big)^2 \sum_{k, t\in \mathcal{I}}s_{2,2}(\bv_k,\bv_t).
\end{align*}
\end{ex}

Next, we consider the hypo{thesis testing (\ref{eq_orthogobalteset}). In this case, we further assume that the true model or the population matrix $\Sigma$ only contains supercritical spikes, i.e., 
\begin{align}
r_0=r.  \label{082501}
\end{align}
It will be seen that if there exist  subcritical spikes, one will  need to provide a plug-in estimator of the subcritical $d_i$'s in order to  raise a testing statistic which is adaptive to all the spiked eigenvalues.
However, it is well-known now that an effective detection of subcritical $d_i$'s  based on $\mu_i$'s is impossible in general, unless one knows additional information such as the  structure of $\mb{v}_i$'s.  
 And also, indeed,  in many applications, $d_i$'s are very large and even divergent, and thus are certainly supercritical. Hence, in the sequel, we will focus on the case when (\ref{082501}) is satisfied. 
  Suppose that in this case $Z_{0}= \sum_{j\in \mathcal{J}} \mb{u}_j\mb{u}_j^*$ for some fixed index set $\mathcal{J}$ and $\{\bu_j\}_{j\in \mathcal{J}}$ is a family of orthonormal vectors.

We employ that 
\begin{equation}\label{eq_finalstat}
\mathbb{T}_{2}=\sum_{i \in \mathcal{I}, j\in\mathcal{J}} \langle  \mb{\xi}_i, \bm{u}_j \rangle^2.
\end{equation}

The asymptotic distribution of $\mathbb{T}_2$ is recorded in the following theorem. It turns out that its asymptotic distribution coincides with linear combinations of $\chi^2$ variables. 
 For convenience, we  first define $\mb{d}:= (d_1, \dots, d_r)$ and 
\begin{align*}
\mathtt{q}(\mb{d}):=  \max_{i\in\mathcal{I}, j \in \mathcal{J}} \sum_{k\in \lb 1,M\rb\setminus\mathcal{I}} \mathtt{h}(d_i)\frac{d_i(d_k+1)}{(d_i-d_k)^2} \langle \bu_j,\bv_k\rangle^2
\end{align*}
  which depends on the subspace $Z_0$ and all the  $d_i$'s for $i\in\lb 1, r\rb$. Here $d_{r+1}=\cdots d_M=0$. We emphasize that  all  $d_i$'s for $i\in\lb 1, r\rb$ satisfy  \rf{asd} and \rf{asd2} in this part.

\begin{thm}\label{thm_secondtestatistics} Suppose that Assumptions \ref{assumption} ,  \ref{supercritical},  and the settings (\ref{19071802}) and (\ref{082501}) hold.  In case $d_i\equiv d_i(N)\to \infty$ as $N\to\infty$ for some $i\in \mathcal{I}$, we additionally assume that $|y-1|\geq \tau_0$ for some small but fixed $\tau_0>0$.
Suppose that $\bm{H}_0^{(2)}$ of (\ref{eq_orthogobalteset}) holds true.
For the statistic $\mathbb{T}_2$ defined in \rf{eq_finalstat}, we have 
\begin{align}\label{asymp.test.ortho}
\frac{N \mathbb{T}_{2}}{\mathtt{q}(\mb{d})} \simeq \frac{\mb{g}^*\mb{U}\mb{g}}{\mathtt{q}(\mb{d})},
\end{align}
where $\mb{g}\in \mathbb{R}^{|\mathcal{I}||\mathcal{J}|}$, $\mb{g} \sim \mathcal{N}(0,{I}_{|\mathcal{I}||\mathcal{J}|})$, and $\mb{U} \equiv \mb{U}(\mb{d}) $ is a symmetric matrix of dimension $|\mathcal{I}||\mathcal{J}|$ defined explicitly in Proposition \ref{prop.simples}.
\end{thm}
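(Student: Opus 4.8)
The plan is to rewrite $\mathbb{T}_2$ as a sum of generalized components of the sample eigenvectors, specialize the (multiple-spike) joint eigenvalue--eigenvector distribution, Proposition~\ref{joint ev-evector multiple} (equivalently Proposition~\ref{prop.simples}), to the \emph{degenerate} directions permitted under $\mathbf{H}_0^{(2)}$, and then identify the resulting quadratic form in Gaussians. Since $\sum_{i\in\mathcal{I}}\langle\mb{\xi}_i,\bu_j\rangle^2=\langle\bu_j,{\rm P}_{\mathcal{I}}\bu_j\rangle$, we have $\mathbb{T}_2=\sum_{j\in\mathcal{J}}\langle\bu_j,{\rm P}_{\mathcal{I}}\bu_j\rangle=\sum_{j\in\mathcal{J}}\sum_{k=1}^{\ell}\langle\bu_j,{\rm P}_{\mathcal{I}_k}\bu_j\rangle$, where $\mathcal{I}=\bigcup_{k=1}^{\ell}\mathcal{I}_k$ is the decomposition into groups of equal $d_t$'s with $\mathcal{I}_k\cap\mathcal{I}_{k'}=\varnothing$ for $k\neq k'$; write $d_{(k)}$ for the common value of $d_t$ on $\mathcal{I}_k$. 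Under $\mathbf{H}_0^{(2)}$ we have $Z_{\mathcal{I}}\perp Z_0$, hence each $\bu_j$ is orthogonal to $\mathrm{Span}\{\bv_t\}_{t\in\mathcal{I}}$; in particular $Z_{\mathcal{I}_k}\bu_j=\mb{0}$ for every $j,k$, so we are exactly in the degenerate regime of Remark~\ref{rmk_inside}.

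Applying \eqref{0820100} with $\bw=\bu_j$ and $\mathsf{I}=\mathcal{I}_k$ gives, for each pair $(j,k)$,
\[
\langle\bu_j,{\rm P}_{\mathcal{I}_k}\bu_j\rangle=\frac{\|\mb{\varsigma}_{\mathcal{I}_k}^{(j)}\|^2}{N\,d_{(k)}}\sum_{t\in\mathcal{I}_k}\bigl(\Delta_{\bv_t}^{\bu_j}\bigr)^2+O_\prec\!\Bigl(\frac{\|\mb{\varsigma}_{\mathcal{I}_k}^{(j)}\|^2}{N^{1+\varepsilon}\,d_{(k)}}\Bigr),
\]
where $\mb{\varsigma}_{\mathcal{I}_k}^{(j)}$ is the vector \eqref{def:v_I_varsigma_I} evaluated at $\bw=\bu_j$, $\mathsf{I}=\mathcal{I}_k$; orthogonality kills the terms $m\in\mathcal{I}\setminus\mathcal{I}_k$, so $\|\mb{\varsigma}_{\mathcal{I}_k}^{(j)}\|^2/d_{(k)}=\sum_{m\in\lb1,M\rb\setminus\mathcal{I}}\tfrac{d_{(k)}(d_m+1)}{(d_{(k)}-d_m)^2}\langle\bu_j,\bv_m\rangle^2$, and consequently $\mathtt{q}(\mb{d})=\max_{j\in\mathcal{J},\,k}\mathtt{h}(d_{(k)})\,\|\mb{\varsigma}_{\mathcal{I}_k}^{(j)}\|^2/d_{(k)}$. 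I would then invoke the multivariate form of Theorem~\ref{mainthm}: the vector $\bigl(\Delta_{\bv_t}^{\bu_j}\bigr)_{t\in\mathcal{I},\,j\in\mathcal{J}}\in\mathbb{R}^{|\mathcal{I}||\mathcal{J}|}$ is asymptotically centred jointly Gaussian, with covariance assembled from $A_{\mathcal{I}_k}^{\bu_j}+\kappa_4\tfrac{d_{(k)}^2-y}{d_{(k)}^2}B_{\mathcal{I}_k}^{\bu_j}$ of \eqref{def:covA}--\eqref{def:covB} together with their cross-direction analogues, the $(j,j')$ off-diagonal blocks being generated only by the $\kappa_4$ (i.e.\ $B$-type) terms and vanishing when $\kappa_4=0$. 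This multi-direction, multi-group statement is not quite Theorem~\ref{mainthm} as printed, but it follows from the same recursive moment estimate, Proposition~\ref{recursivemP}, which handles arbitrary linear combinations of the underlying Green-function quadratic forms; this is precisely what Proposition~\ref{prop.simples} records.

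Substituting into the previous display, multiplying by $N$, and bounding the remainders by $O_\prec(N^{-\varepsilon}\mathtt{q}(\mb{d}))$ (using that $\mathtt{h}$ is bounded above and below, so $\|\mb{\varsigma}_{\mathcal{I}_k}^{(j)}\|^2/d_{(k)}\lesssim\mathtt{q}(\mb{d})$), we obtain
\[
N\,\mathbb{T}_2=\sum_{j\in\mathcal{J}}\sum_{k=1}^{\ell}\frac{\|\mb{\varsigma}_{\mathcal{I}_k}^{(j)}\|^2}{d_{(k)}}\sum_{t\in\mathcal{I}_k}\bigl(\Delta_{\bv_t}^{\bu_j}\bigr)^2+O_\prec\!\bigl(N^{-\varepsilon}\mathtt{q}(\mb{d})\bigr),
\]
so $N\,\mathbb{T}_2$ is, up to a negligible error, a nonnegative quadratic form in a centred Gaussian vector. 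Writing $\Xi$ for the limiting covariance of $\bigl(\Delta_{\bv_t}^{\bu_j}\bigr)$ and $D$ for the diagonal matrix whose $(t,j)$-th diagonal entry is $\|\mb{\varsigma}_{\mathcal{I}_{k_t}}^{(j)}\|^2/d_{(k_t)}$ ($k_t$ the group index of $t$), the limit of $N\mathbb{T}_2$ is distributed as $\mb{g}^*\mb{U}\mb{g}$ with $\mb{g}\sim\mathcal{N}(0,I_{|\mathcal{I}||\mathcal{J}|})$ and $\mb{U}=\Xi^{1/2}D\,\Xi^{1/2}$; this is the matrix $\mb{U}(\mb{d})$ of Proposition~\ref{prop.simples}. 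Since the entries of the $A$- and $B$-matrices are bounded we have $\|\Xi\|_{\text{op}}=O(1)$ while $\|D\|_{\text{op}}\asymp\mathtt{q}(\mb{d})$, so dividing by $\mathtt{q}(\mb{d})$ kills the $O_\prec(N^{-\varepsilon}\mathtt{q}(\mb{d}))$ term after normalization and makes $N\mathbb{T}_2/\mathtt{q}(\mb{d})$ tight, which yields \eqref{asymp.test.ortho}.

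The main obstacle is the multivariate CLT in the second step together with its error control: the groups $\mathcal{I}_k$ may live on very different scales --- some $d_{(k)}$ just above the threshold $y^{1/2}$, others diverging --- and one must show that all the $O_\prec(\cdot)$ remainders are uniformly of smaller order than $N^{-1}\mathtt{q}(\mb{d})$, using \eqref{asd}, the non-overlapping condition \eqref{asd2}, and, when some $d_{(k)}$ diverges, the sharp VESD convergence rate of \cite{XYY}, exactly as in the proof of Theorem~\ref{mainthm}. A secondary issue is the bookkeeping needed to pin down the cross-correlations $\mathrm{cov}\bigl(\Delta_{\bv_t}^{\bu_j},\Delta_{\bv_{t'}}^{\bu_{j'}}\bigr)$ coming from the $B$-matrices, so that $\mb{U}(\mb{d})$ in Proposition~\ref{prop.simples} is assembled correctly, together with the checking of degenerate sub-cases (e.g.\ $\mb{\varsigma}_{\mathcal{I}_k}^{(j)}=\mb{0}$, in which the corresponding row/column of $\mb{U}$ and the corresponding entry in the maximum defining $\mathtt{q}(\mb{d})$ simply drop out).
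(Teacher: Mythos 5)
Your proposal is correct and follows essentially the same route as the paper: decompose $N\mathbb{T}_2$ over the groups $\mathcal{I}_k$ of equal spikes, apply the degenerate-direction expansion \eqref{0820100} (i.e.\ statement (2) of Proposition \ref{prop.multiple}) to each $\langle\bu_j,{\rm P}_{\mathcal{I}_k}\bu_j\rangle$, and conclude via the joint Gaussianity of $\{\Delta_{\bv_i}^{\bu_j}\}$ recorded in Propositions \ref{prop.simples}/\ref{prop.multiple} and Remark \ref{rmk.082102}, which rests on the same recursive moment estimate. The only cosmetic difference is your realization $\mb{U}=\Xi^{1/2}D\,\Xi^{1/2}$ versus the paper's $D^{1/2}\mb{U}_0D^{1/2}$; these are similar matrices, so $\mb{g}^*\mb{U}\mb{g}$ has the same law either way.
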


\begin{rmk}
We remark here that in \rf{asymp.test.ortho}, we add the factor $1/\mathtt{q}(\mb{d})$ on both sides to scale them to order one random variables.   Since we use the notation `` $\simeq$ " which is defined in Definition \ref{defn_asymptotic} , we have to require the sequence of variables on both sides to be tight.
\end{rmk}

The results of Theorem \ref{thm_secondtestatistics}, especially $\mathtt{q}(\mb{d})$ and $\mathbf{U},$ still contain the values of $d_i, i \in \mathcal{I}$ and also the other nonzero spikes $d_j, j\in \mathcal{I}^c$ which are all supercritical (c.f. (\ref{082501})) so that  we can  use   (\ref{eq_diconsistent}) to estimate them all.
To construct a data-dependent statistic, we can use the plug-in estimator (\ref{eq_diconsistent})  to generate critical values of the hypothesis testing (\ref{eq_orthogobalteset}) using the samples. Recall $\widehat{\mb{d}}= (\widehat{d}_1, \dots,\widehat{d}_r)$.

\begin{cor}\label{cor_simulateddistribution} Under the assumptions of Theorem \ref{thm_secondtestatistics}, we have that
\begin{equation*}
\frac{N\mathbb{T}_{2}}{\mathtt{q}(\widehat{\mb{d}}\,)} \simeq \frac {\mb{g}^*\widehat{\mb{U}}\mb{g}}{\mathtt{q}(\widehat{\mb{d}}\,)},
\end{equation*}
where $\widehat{\mb{U}} = {\mb{U}} (\widehat{\mb{d}}\,)$. 
\end{cor}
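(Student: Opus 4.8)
The plan is to deduce Corollary~\ref{cor_simulateddistribution} from Theorem~\ref{thm_secondtestatistics} by a Slutsky/continuous-mapping argument for the relation $\simeq$. The two ingredients are: (i) consistency of the nonlinear shrinkage $\widehat d_k=\gamma(\mu_k)$ for $d_k$, with an explicit relative rate; and (ii) relative Lipschitz stability, over the $N$-dependent set of admissible spikes, of the maps $\mb{d}\mapsto\mathtt{q}(\mb{d})$ and $\mb{d}\mapsto\mb{U}(\mb{d})$ appearing in Theorem~\ref{thm_secondtestatistics} and Proposition~\ref{prop.simples}. Throughout, $c>0$ denotes a small constant that may change from line to line.

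\emph{Step 1: consistency of $\widehat{\mb{d}}$.} Under \eqref{082501} every nonzero $d_k$, $k\in\llbracket 1,r\rrbracket$, is supercritical, so \eqref{expansion.eigv} and its multiple-spike counterpart (Proposition~\ref{joint ev-evector multiple}) give $\mu_k=\psi(d_k)+O_\prec\big(\sqrt{d_k^2-y}\,N^{-1/2}\big)$, where $\psi(d):=1+d+y+y/d$ is the increasing inverse of $\gamma$ on $\big((1+\sqrt y)^2,\infty\big)$, with $\gamma'(\psi(d))=d^2/(d^2-y)$. Since $\psi(d_k)-(1+\sqrt y)^2=(d_k-\sqrt y)^2/d_k$ dominates the above error by \eqref{asd}, on a high-probability event $\mu_k$ lies in the domain of $\gamma$ (so $\widehat d_k$ is well defined) and the segment from $\mu_k$ to $\psi(d_k)$ stays at distance $\asymp(d_k-\sqrt y)^2/d_k$ from $(1+\sqrt y)^2$, where $\gamma$ is smooth with $\gamma'\asymp d_k^2/(d_k^2-y)$; the mean value theorem then yields
\begin{align*}
\widehat d_k-d_k=O_\prec\Big(\frac{d_k^2}{\sqrt{N(d_k^2-y)}}\Big).
\end{align*}
Invoking \eqref{asd} and \eqref{asd2}, this error is $O_\prec(N^{-c})$ times each of $d_k$, $d_k-\sqrt y$, and $|d_k-d_j|$ whenever $d_j\neq d_k$; hence $\widehat{\mb{d}}$ still satisfies the supercriticality and non-overlapping conditions \eqref{asd}--\eqref{asd2} on that event.

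\emph{Step 2: stability of $\mathtt{q}$ and $\mb{U}$.} Both $\mathtt{q}(\mb{d})$ and $\mb{U}(\mb{d})$ are rational functions of $\mb{d}$ and of the fixed inner products $\langle\bu_j,\bv_k\rangle$, built from the factors $\mathtt{h}(d_i)$ and $d_i(d_k+1)/(d_i-d_k)^2$ together with the further rational expressions of Proposition~\ref{prop.simples}; their only denominators, $d_i$ and $d_i-d_k$ with $k\notin\mathsf{I}(i)$, are bounded below by $\sqrt y$ and by the right-hand side of \eqref{asd2}, the latter dominating $|\widehat d_i-d_i|+|\widehat d_k-d_k|$ by Step~1. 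Consequently each elementary factor is multiplied by $1+O_\prec(N^{-c})$ under the Step-1 perturbation (e.g. $\mathtt{h}'(d)=(y-1)/(d+y)^2$), and, since $\mathtt{q}$ is a maximum of sums of products of such nonnegative factors, this propagates to
\begin{align*}
\mathtt{q}(\widehat{\mb{d}})=\mathtt{q}(\mb{d})\big(1+O_\prec(N^{-c})\big),\qquad
\frac{\mb{U}(\widehat{\mb{d}})}{\mathtt{q}(\widehat{\mb{d}})}=\frac{\mb{U}(\mb{d})}{\mathtt{q}(\mb{d})}+O_\prec(N^{-c})
\end{align*}
entrywise, equivalently in operator norm since $|\mathcal{I}||\mathcal{J}|$ is fixed; here the normalization by $\mathtt{q}$ keeps all entries $O(1)$, which is precisely the scaling built into Theorem~\ref{thm_secondtestatistics}.

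\emph{Step 3: conclusion, and the main obstacle.} Set $\mathsf{Y}_N:=\mb{g}^*\mb{U}(\mb{d})\mb{g}/\mathtt{q}(\mb{d})$ with the deterministic $\mb{d}$. Writing $N\mathbb{T}_2/\mathtt{q}(\widehat{\mb{d}})=\big(\mathtt{q}(\mb{d})/\mathtt{q}(\widehat{\mb{d}})\big)\cdot\big(N\mathbb{T}_2/\mathtt{q}(\mb{d})\big)$ and combining Theorem~\ref{thm_secondtestatistics} (which says $N\mathbb{T}_2/\mathtt{q}(\mb{d})\simeq\mathsf{Y}_N$, both tight) with $\mathtt{q}(\mb{d})/\mathtt{q}(\widehat{\mb{d}})\to1$ in probability from Step~2, a Slutsky-type argument for $\simeq$ gives $N\mathbb{T}_2/\mathtt{q}(\widehat{\mb{d}})\simeq\mathsf{Y}_N$. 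On the other hand, $\mb{g}$ is an auxiliary standard Gaussian with $\|\mb{g}\|^2=O_\prec(1)$, so Step~2 gives $\mb{g}^*\widehat{\mb{U}}\mb{g}/\mathtt{q}(\widehat{\mb{d}})=\mathsf{Y}_N+O_\prec(N^{-c})$, whence $\mb{g}^*\widehat{\mb{U}}\mb{g}/\mathtt{q}(\widehat{\mb{d}})\simeq\mathsf{Y}_N$; both quantities are tight, and the corollary follows from the symmetry and transitivity of $\simeq$. I expect Step~2 to be the main obstacle: the stability bounds must hold uniformly over the $N$-dependent range of spikes --- in particular when $d_i\to\infty$ or $d_i-\sqrt y$ decays at the critical rate $N^{-1/3+\epsilon}$ --- which requires carefully tracking how every factor depends on $d_i$, $d_i-\sqrt y$ and $d_i-d_k$, the same bookkeeping emphasized in Section~\ref{s.pfst}; Step~1 is of the same flavour but lighter, and Step~3 is routine.
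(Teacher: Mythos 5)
Your proposal is correct and follows essentially the same route as the paper: the paper's own proof is a one-line reduction to Theorem \ref{thm_secondtestatistics} together with the consistency of $\widehat{\mb{d}}$ "after appropriate scaling," which is precisely your Step 1 (rate $|\widehat d_k-d_k|\prec d_k^2 N^{-1/2}(d_k^2-y)^{-1/2}$, dominated by $d_k$, $d_k-\sqrt y$ and the gaps via \eqref{asd}--\eqref{asd2}) combined with your Steps 2--3 (relative stability of $\mathtt{q}$ and $\mb{U}/\mathtt{q}$ and a Slutsky-type argument for $\simeq$). You have simply made explicit the bookkeeping the paper leaves implicit, and your estimates are consistent with the paper's conventions (e.g.\ $\gamma'(\theta(d))=d^2/(d^2-y)$ as used in the proof of Theorem \ref{thm_firststatdist}).
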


}
We can use our statistic (\ref{eq_finalstat})  with the critical values generated from Corollary \ref{cor_simulateddistribution} to study the hypothesis testing problem (\ref{eq_orthogobalteset}). 
 Here we shall point out that although our statistic  $N\mathbb{T}_{2}/\mathtt{q}(\widehat{\mb{d}}\,) $ is adaptive to the $d_i$'s, it is nevertheless dependent of $\{\mb{v}_i\}_{i\in \llbracket 1, r\rrbracket\setminus \mathcal{I}}$ and also  a $\kappa_4$ term which involves some $\{\mb{v}_i\}_{i\in \mathcal{I}}$-dependent parameters of the form $s_{1,1,1,1}(\cdot, \cdot, \cdot,\cdot)$; see the definition of $\mb{U}$ in Proposition \ref{prop.simples}. Hence, first of all, we shall only apply our statistic $N\mathbb{T}_{2}/\mathtt{q}(\widehat{\mb{d}}\,) $ in case either $\{\mb{v}_i\}_{i\in \llbracket 1, r\rrbracket\setminus \mathcal{I}}$ is known a priori, or $\mathcal{I}=\llbracket 1, r\rrbracket$ such that the set $\{\mb{v}_i\}_{i\in \llbracket 1, r\rrbracket\setminus \mathcal{I}}$ is empty. In practice, this restriction is mild and  fits the following real scenario: if some of the $\mb{v}_i$'s are already known, we only need to do inference for those unknown $\mb{v}_i$'s, while if none of the $\mb{v}_i$ is known a priori, we consider the inference for all $\{\mb{v}_i\}_{i\in \llbracket 1, r\rrbracket}$ together. Certainly, it is also natural to consider a part of $\mb{v}_i$'s even if none of them is known, as  what we did in the test (\ref{eq_spectralprojection}). Nevertheless, due to the restriction of the theoretical result, we focus on the aforementioned scenario, which is more restricted but still very natural.  Second, the unknown $\kappa_4$ term will be absent in case we consider the Gaussian matrix $X$ which is often the case in reality. Hence, in Gaussian case, we can apply our statistic directly if we restrict ourself to the aforementioned scenario of $\mb{v}_i$'s. Nevertheless, for the reader's reference,  we also present our simulation study in the Appendix for the two-point case, as if the additional parameter, the $\kappa_4$ term, was known a priori.  Here, we remark that the restriction of scenario to apply our theoretical result for the test (\ref{eq_orthogobalteset}), which is not necessary for (\ref{eq_spectralprojection}), is actually quite reasonable. Note that, in (\ref{eq_spectralprojection}), the necessary parameters from $\{\mb{v}_i\}_{i\in\mathcal{I}}$ are completely fixed by the  null hypothesis, and the distribution of our statistic (under the null hypothesis) can be expressed in terms of these given parameters. However, in the test (\ref{eq_orthogobalteset}), our null hypothesis is $Z_{\mathcal{I}} \perp Z_0$. In case that the rank of the projection of the given $Z_0$ is small, as a low-rank subspace living in the complement of $Z_0$, $Z_{\mathcal{I}}$ can have many choices and thus there is a big uncertainty on the unknown parameters of $Z_{\mathcal{I}}$  which cannot be fixed by $Z_0$.

For an illustration, we record the behavior and power of our statistic for a single spiked model (i.e., $r_0=r=1$) in Figure \ref{fig_toy2size}.
We find that our proposed statistic is close to Chi-square distribution with one degree of freedom for various values of $d$ and hence it can be applied for testing (\ref{eq_orthogobalteset})  for this single spike model. For more general case, the asymptotic distribution of (\ref{eq_finalstat}) is a linear combination of Chi-square distributions. We will conduct extensive simulations in Section \ref{sec_simu}.

\begin{figure}[!ht]
\hspace*{-1.0cm}
\includegraphics[width=13cm,height=6.5cm]{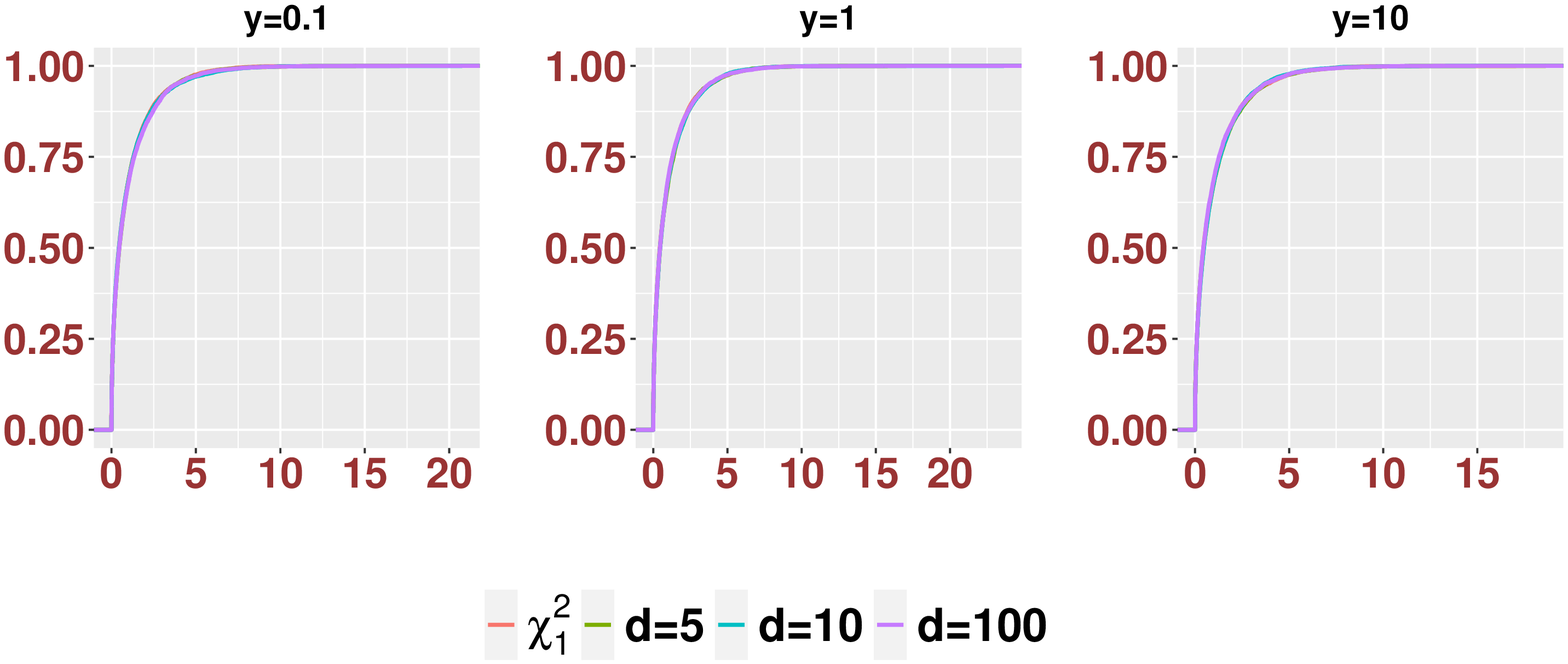}
\caption{\footnotesize Simulated empirical cumulative distribution function (ECDF) for the proposed statistic under null of (\ref{eq_orthogobalteset}) with $r_0=1.$ Here, the spiked covaraince matrix is denoted as $\Sigma=\operatorname{diag}\{d+1,1,\cdots,1\}$ and $Z_0=\mathbf{e}_3$ in (\ref{eq_orthogobalteset}).  We use the statistic $N\widehat{d}(\widehat{d}+y)|\langle \bm{\xi}_1, \bm{e}_3 \rangle|^2/(\widehat{d}+1).$ Here $\widehat{d}=\gamma(\mu_1), N=500$ and we report our results based on 8,000 simulations with Gaussian random variables. }
\label{fig_toy2size}
\end{figure}

We next consider a few examples to specify the asymptotic distribution stated in Theorem \ref{thm_secondtestatistics} and the results will be used in Section \ref{sec_simu}. 
\begin{ex}\label{ex_orthogonalsimple}
We consider that $r_0=3$,  $\mathcal{I}=\{1,2\},$ $d_i, i=1,2,3$ are simple and satisfy \rf{asd}, \rf{asd2} and $\bm{v}_i=\bm{e}_i, i=1,2,3$ and $Z_0=\bm{e}_3 \bm{e}_3^*+\bm{e}_4 \bm{e}_4^*.$ In this case, since  $\bm{\varsigma}_{\{1\}}^{\bm{e}_3}=\frac{d_1\sqrt{d_3+1}}{d_1-d_3}\bm{e}_3$, $\bm{\varsigma}_{\{2\}}^{\bm{e}_3}=\frac{d_2\sqrt{d_3+1}}{d_2-d_3}\bm{e}_3$  and $\bm{\varsigma}_{\{1\}}^{\bm{e}_4}=\bm{\varsigma}_{\{2\}}^{\bm{e}_4}=\bm{e}_4$ and $s_{1,1,1,1}(\bm{e}_{i_1}, \bm{e}_{i_2},  \bm{e}_{j_1}, \mb{e}_{j_2})=0$ for $i_{1,2}= 1,2, j_{1,2}= 3,4 $. Further, $\mathtt{q}(\mb{d})= \mathtt{q}(d_1,d_2, d_3)= \max\{\frac{(d_3+1) d_1\mathtt{h}(d_1)}{(d_1-d_3)^2}, \frac{\mathtt{h}(d_1)}{d_1}, \frac{(d_3+1) d_2\mathtt{h}(d_2)}{(d_2-d_3)^2}, \frac{\mathtt{h}(d_2)}{d_2}\}$. We have that the statistic $N \mathbb{T}_{2}/\mathtt{q}(\mb{d})$ will be asymptotically distributed as 
{ 
\begin{equation*}
\frac{1}{\mathtt{q}(\mb{d})}\, \bm{g}^* \operatorname{diag}\left( \frac{(d_3+1) d_1\mathtt{h}(d_1)}{(d_1-d_3)^2}, \frac{\mathtt{h}(d_1)}{d_1}, \frac{(d_3+1) d_2\mathtt{h}(d_2)}{(d_2-d_3)^2}, \frac{\mathtt{h}(d_2)}{d_2}  \right) \bm{g}, 
\end{equation*}
}
where $\bm{g} \in \mathbb{R}^4$ is a Gaussian random vector such that $\bm{g} \sim \mathcal{N}(\bm{0}, \rm{I}_4).$  
\end{ex}

\begin{ex} We consider that $r_0=3$, $\mathcal{I}=\{1,2\}, d_1=d_2=d, d_3$ is distinct from $d$ by a distance of order 1, and $\mathbf{v}_i=\bm{e}_i, i=1,2,3.$ In this case, $\mathtt{q}(\mb{d})= \max\{ \frac{(d_3+1) d\mathtt{h}(d)}{(d-d_3)^2}, \frac{\mathtt{h}(d)}{d}\}$. We have that the statistic $N\mathbb{T}_{2}/\mathtt{q}(\mb{d})$ will be asymptotically distributed as 
\begin{equation*}
\frac{1}{\mathtt{q}(\mb{d})}\, \bm{g}^* \operatorname{diag}\left( \frac{(d_3+1) d\, \mathtt{h}(d)}{(d-d_3)^2}, \frac{\mathtt{h}(d)}{d}, \frac{(d_3+1) d\, \mathtt{h}(d)}{(d-d_3)^2}, \frac{\mathtt{h}(d)}{d}  \right) \bm{g},
\end{equation*}
where $\bm{g} \in \mathbb{R}^4$ is a Gaussian random vector such that $\bm{g} \sim \mathcal{N}(\bm{0}, \rm{I}_4).$
\end{ex}

\subsection{Simulation studies}\label{sec_simu}
 In this section, we  perform extensive Monte Carlo simulations to study the finite-sample
accuracy and power of our proposed statistics. For (\ref{eq_spectralprojection}), we not only report our results but also compare them with the existing statistics in the literature. We will call our statistics as $\texttt{Fr-Adaptive}.$

We start with (\ref{eq_spectralprojection}). In what follows regarding (\ref{eq_spectralprojection}), we use $\texttt{Fr-boostrap}$ to stand for the bootstrapping method using the Frobenius norm proposed in \cite{bootsfr}, $\texttt{Fr-Bayes}$ to stand for the frequentist Bayes using the Frobenius norm proposed in \cite{SS},  $\texttt{Fr-DataDriven}$ to stand for the sample splitting method using the Frobenius norm proposed in \cite{KLsplit}, $\texttt{HPV-LeCam}$ to stand for the Le Cam optimal test proposed in \cite{LeCamtest}, $\texttt{En-bootstrap}$ and $\texttt{En-Bayes}$ to stand for the bootstrapping method and the frequentist Bayes method respectively using the power-enhanced norm introduced in \cite{SF} with $s_1=s_2=1$ (see Definition 3.1 of \cite{SF}), and  $\texttt{Sp-bootstrap}$ and $\texttt{Sp-Bayes}$  the bootstrapping method  and the frequentist Bayes method respectively using the spectral norm. In the following discussion, we compare the performance of our $\texttt{Fr-Adaptive}$ with all the aforementioned statistics.

In all the following simulations, we conduct 2,000 Monte-Carlo 
repetitions for the bootstrapping and frequentist Bayes procedure. For the accuracy of the tests, we focus on reporting the results with the type I error rate $0.1$ under different values of $y=0.1, 1, 10$ and various choices of the spikes. Moreover, we consider different scenarios to illustrate the usefulness and generality of our results.
\begin{itemize}
\item  Scenario I: We consider the case $r_0=3$ with $d_1=d+7,$ $d_2=7$ and $d_3=5,$ where $d$ takes a variety of values. We consider the hypothesis testing for the eigenspace of $\Sigma=I+ \sum_{i=1}^3d_i \bm{e}_i \bm{e}_i^*$ with $\mathcal{I}=\{1,2\},$ where the null is 
\begin{equation}\label{eq_trueeigenspace}
Z_0=\bm{e}_1 \bm{e}_1^*+\bm{e}_2 \bm{e}_2^*.
\end{equation}
 In this scenario, the spiked eigenvalues are simple. We will consider the standard Gaussian distribution and the two-point distribution $\frac{1}{3} \delta_{\sqrt{2}}+\frac{2}{3} \delta_{-\frac{1}{\sqrt{2}}}$ as the distribution of entries of $X$. 
\item Scenario II:  We consider the case $r_0=3$ with $d_1=d_2=d+5$ and $d_3=5,$ where $d$ takes a variety of values. We consider the hypothesis testing under the null (\ref{eq_trueeigenspace}).
In this scenario, we have multiple/identical $d_i$'s. We  consider both the standard Gaussian random variables and the two-point distribution $\frac{1}{3} \delta_{\sqrt{2}}+\frac{2}{3} \delta_{-\frac{1}{\sqrt{2}}}$ as the distribution of entries of $X$.     
\end{itemize}
We mention that the asymptotic distribution of our statistic (\ref{eq_finalstatistict1s}) under the null hypothesis of (\ref{eq_spectralprojection}) has been established in Examples \ref{ex_test1_simple} and \ref{ex_test1_equal} for Scenarios I and II, respectively.

For both of the above two scenarios, we consider the alternative
\begin{equation}\label{eq_alternative}
Z_a=\bm{v}_1(\varphi) \bm{v}_1(\varphi)^*+ \bm{v}_2(\varphi) \bm{v}_2(\varphi)^*,
\end{equation}
where for $\varphi \in [0,\frac{\pi}{2}]$
\begin{equation*}
\bm{v}_1(\varphi)=\cos \varphi \bm{e}_1+\sin \varphi \bm{e}_4, \ \bm{v}_2(\varphi)=\cos \varphi \bm{e}_2+\sin \varphi \bm{e}_5. 
\end{equation*}
Note that $\varphi=0$ corresponds to the null case of (\ref{eq_trueeigenspace}). It is easy to see that $\kappa_4=-1.5$ for the two-point distribution and $\kappa_4=0$ for standard Gaussian random variable.

For Scenario I, from Tables \ref{table_singu}--\ref{table_singu2}, we find that our proposed statistic (\ref{eq_finalstatistict1s}) is very accurate even for smaller values of $N$ and $d$. Moreover, our statistic reaches accuracy regardless of the values of $y.$ In contrast, for the other methods in the literature, we find that all of them lose their accuracy when $y$ increases (i.e. $M$ increases). Moreover, we find that most of the methods are conservative except for the Le Cam test.  Finally, we find that some of the methods, especially the frequentiest Bayes method with Frobenius norm, spectral norm or the power-enhanced norm in \cite{SF} are also reasonably accurate for larger values of $d$ when $y$ is small. Since our results allow us to deal with identical $d_i$'s, we report the simulation results  in Tables \ref{table_singudegenerate}--\ref{table_singu2denegenrate} for Scenario II. We find that our statistic (\ref{eq_finalstatistict1s}) is also very accurate and outperforms the other methods especially when either the $d_i$'s are small or $y$ is large.  In Appendix \ref{sec_additionalsimulation}, we report the simulation results for random variables with two-point distribution in Tables \ref{table_singutp}--\ref{table_singu2tp} for Scenario I and Tables \ref{table_singutpdenegate}--\ref{table_singu2tpdegenerate} for Scenario II. We can get analogous conclusions.

In summary, our proposed statistic (\ref{eq_finalstatistict1s}) is quite accurate for different values of $d$ satisfying Assumption \ref{supercritical}, even for smaller and multiple/identical ones. This accuracy is robust against different values of $y.$  As summarized in \cite[Section 7.4]{SF}, all the  previous methods in the literature request that $M \ll N.$ Therefore, when $y$ increases (i.e., $M$ diverges faster), we find that our methods perform better than all the other methods. Indeed, all our current results can be extended to the regime $\log M \asymp \log N$ following the discussion of \cite{bloemendal2016principal,bloemendal2014isotropic}. We will pursue this direction in the future work. Moreover, since the computational complexity of the aforementioned methods depends on the a polynomial order of the dimensionality $M$ (see \cite[Section 7.4]{SF}), they can be computationally intensive as $M$ diverges faster. In contrast, our method works faster since it only depends the sample eigenvalues and eigenvectors.

\begin{table}[!ht]

\setlength\arrayrulewidth{1.5pt}
\renewcommand{\arraystretch}{1.3}
 \captionsetup{width=1\linewidth}
 \addtolength{\tabcolsep}{-0.5pt} 
\begin{center}
\scriptsize
\hspace*{-0.9cm}
{
\begin{tabular}{clclclclclclc|c|c|c|c}
\toprule
 & \multicolumn{5}{c}{$N=200$}                                                                                                                       & \multicolumn{5}{c}{$N=500$}                                                                                                                       \\ \hline
Method   & $d=2$& $d=5$  & $d=10$  & $d=50$  & $ d=100$  & $d=2$ & $d=5$ & $d=10$ & $d=50$ & $d=100$ \\ \hline
$\texttt{Fr-bootstrap}$ & 0.041  &  0.044  &  0.044  & 0.054  & 0.062 & 0.047 &  0.051 &  0.049  & 0.063 & 0.067  \\
$\texttt{Fr-Bayes}$ & 0.055  & 0.049 &  {\bf 0.079} & {0.089}  & {\bf 0.095} & 0.045 & 0.053  &  0.082 & 0.093  & {0.094} \\
$\texttt{En-bootstrap}$& 0.047  & {0.053}  & 0.068  & {0.067} & 0.077 & 0.052 & 0.049 & 0.063  & 0.069  & 0.075  \\
$\texttt{En-Bayes}$ &  0.053 &  0.058 & 0.077  &  {\bf 0.093} & {\bf 0.096}  & 0.051  & {0.064}  & {\bf 0.088} & {\bf 0.098} & 0.093  \\
$\texttt{Fr-Datadriven}$& 0.046  &  0.049  &  0.051  & 0.063  &  0.067 & 0.041  &  0.043 &  0.057  & 0.059  & 0.065   \\
$\texttt{HPV-LeCam}$& 0.381 & 0.374 &  0.383 &  0.391 & 0.373   & 0.376 & 0.368  & 0.365 & 0.342  & 0.373  \\
$\texttt{Sp-bootstrap}$	& 0.047   & 0.054 & 0.062 & 0.073 & 0.079  &  0.042  & 0.053 & 0.063  & 0.069 &  0.076\\
$\texttt{Sp-Bayes}$&  {\bf 0.066}  & {\bf 0.069}   & 0.072  & 0.083  & 0.094  & {\bf 0.072}  &  {\bf 0.078} & 0.075 &  0.088 & {\bf 0.095} \\
$\texttt{Fr-Adaptive}$  & \bf 0.091  & \bf 0.108 & \bf 0.103 & \bf 0.107 & \bf 0.096  & \bf 0.11  & \bf 0.107 & \bf 0.095 & \bf 0.104  &  {\bf 0.102} \\ 
 \bottomrule
\end{tabular}
}
\end{center}
\caption{ Scenario I: simulated type I error rates under the nominal level $0.1$ for $y=0.1$.  We report our results based on 2,000 Monte-Carlo simulations with Gaussian random variables. We highlighted the two most accurate methods for each value of $d.$ 
} \label{table_singu}
\end{table}

\begin{table}[!ht]
\setlength\arrayrulewidth{1.5pt}
\renewcommand{\arraystretch}{1.3}
 \captionsetup{width=1\linewidth}
 \addtolength{\tabcolsep}{-0.5pt} 
\begin{center}
\scriptsize
\hspace*{-0.7cm}
{
\begin{tabular}{clclclclclclc|c|c|c|c}
\toprule
 & \multicolumn{5}{c}{$N=200$}                                                                                                                       & \multicolumn{5}{c}{$N=500$}                                                                                                                       \\ \hline
Method   & $d=2$& $d=5$  & $d=10$  & $d=50$  & $ d=100$  & $d=2$ & $d=5$ & $d=10$ & $d=50$ & $d=100$ \\ \hline
$\texttt{Fr-bootstrap}$ & 0.053   &  0.045  &  0.051  & 0.049  & 0.047 & 0.052 & 0.049 &  0.047  & 0.053  & 0.061  \\
$\texttt{Fr-Bayes}$ & 0.045   & 0.039  &   0.047& 0.063 & 0.071 & 0.039  &  0.041  & 0.052 &  0.061 & 0.068 \\
$\texttt{En-bootstrap}$& {\bf 0.057}  & 0.051   & 0.042   & 0.043 & 0.049 & 0.041 & 0.039  & 0.048 &  0.052  & 0.059   \\
$\texttt{En-Bayes}$ & {\bf 0.057}  & {0.053}  & {0.061}   &  0.067  & 0.075  & {0.048}  & {\bf 0.059}  & {\bf 0.064} &  {\bf 0.073} & {\bf 0.078}   \\
$\texttt{Fr-Datadriven}$& 0.026  & 0.023   & 0.034 & 0.037 & 0.039 & 0.041  & 0.04 & 0.048  & 0.042   & 0.047    \\
$\texttt{HPV-LeCam}$& 0.87 & 0.79 & 0.82  & 0.81 & 0.85   & 0.882 & 0.835 & 0.823 &  0.872 & 0.823 \\
$\texttt{Sp-bootstrap}$	&  0.049 & 0.057  & 0.056 & 0.062  & 0.059   &  0.043 & 0.041 & 0.045  & 0.053 &  0.062 \\
$\texttt{Sp-Bayes}$&  {\bf 0.057 }& {\bf 0.058}    &{\bf 0.062}  & {\bf 0.074}  & {\bf 0.081}   & {\bf 0.053} & 0.057 & 0.059 & 0.059 & 0.069 \\
$\texttt{Fr-Adaptive}$  & {\bf 0.103}  & {\bf 0.092} & {\bf 0.105}  & {\bf 0.107}  & {\bf 0.099}   &  {\bf 0.11} & {\bf 0.107} & {\bf 0.104} &  {\bf 0.097} & {\bf 0.103}  \\ 
 \bottomrule
\end{tabular}
}
\end{center}
\caption{ Scenario I: simulated type I error rates under the nominal level $0.1$ for $y=1$.  We report our results based on 2,000 Monte-Carlo simulations with Gaussian random variables. We highlighted the two most accurate methods for each value of $d.$   
} \label{table_singu1}
\end{table}

\begin{table}[!ht]
\setlength\arrayrulewidth{1.5pt}
\renewcommand{\arraystretch}{1.3}
 \captionsetup{width=1\linewidth}
 \addtolength{\tabcolsep}{-0.5pt} 
\begin{center}
\scriptsize 
\hspace*{-0.7cm}
{
\begin{tabular}{clclclclclclc|c|c|c|c}

\toprule
 & \multicolumn{5}{c}{$N=200$}                                                                                                                       & \multicolumn{5}{c}{$N=500$}                                                                                                                       \\ \hline
Method   & $d=2$& $d=5$  & $d=10$  & $d=50$  & $ d=100$  & $d=2$ & $d=5$ & $d=10$ & $d=50$ & $d=100$ \\ \hline
$\texttt{Fr-bootstrap}$ &  0.028 &  0.034  &  0.037  & 0.041 & 0.043 & 0.039 & 0.045  & 0.052   & 0.038  &  0.047 \\
$\texttt{Fr-Bayes}$ & 0.038  & {\bf 0.051}  & 0.049   &  {\bf 0.062} & {\bf 0.071} & {\bf 0.051} & {\bf 0.049} & 0.046 & 0.053  & {\bf 0.069} \\
$\texttt{En-bootstrap}$&  0.032 & 0.041   & 0.045   & 0.046  & 0.059  &  0.037 & 0.041 &  0.054 & 0.049  & 0.054   \\
$\texttt{En-Bayes}$ & {\bf 0.046} & 0.048  & {\bf 0.057}   & 0.061   & {0.068} & 0.039  & 0.042  & {0.049} & 0.052  & {0.064}   \\
$\texttt{Fr-Datadriven}$& 0.027 &  0.033 &  0.038  & 0.041 & 0.043  &  0.038  & 0.034  & 0.029 &  0.045  & 0.052   \\
$\texttt{HPV-LeCam}$& 0.897 & 0.939 & 0.964  & 0.971 & 0.972   & 0.891 & 0.911 & 0.943 &  0.932 & 0.953 \\
$\texttt{Sp-bootstrap}$	& {\bf 0.046} & 0.048  &0.045 &  0.054 & 0.052   & {0.039}   & {0.047}  &  0.049  & 0.053 &  0.058 \\
$\texttt{Sp-Bayes}$&  0.043  & {0.049}   & {0.052}  & {0.057}  & {0.068}  & {\bf 0.051} & 0.048 & {\bf 0.059} & {\bf 0.063} & 0.068 \\
$\texttt{Fr-Adaptive}$ &  {\bf 0.104} & {\bf 0.102} & {\bf 0.095}  & {\bf 0.098}  & {\bf 0.103}   & {\bf 0.091}   & {\bf 0.097} & {\bf 0.104} & {\bf 0.097}  & {\bf 0.103}  \\ 
 \bottomrule
\end{tabular}
}
\end{center}
\caption{ Scenario I: simulated type I error rates under the nominal level $0.1$ for $y=10$.  We report our results based on 2,000 Monte-Carlo simulations with Gaussian random variables. We highlighted the two most accurate methods for each value of $d.$   
} \label{table_singu2}
\end{table}

\begin{table}[!ht]
\setlength\arrayrulewidth{1.5pt}
\renewcommand{\arraystretch}{1.3}
 \captionsetup{width=1\linewidth}
 \addtolength{\tabcolsep}{-0.5pt} 
\begin{center}
\scriptsize 
\hspace*{-0.7cm}
{
\begin{tabular}{clclclclclclc|c|c|c|c}
\toprule
 & \multicolumn{5}{c}{$N=200$}                                                                                                                       & \multicolumn{5}{c}{$N=500$}                                                                                                                       \\ \hline
Method   & $d=2$& $d=5$  & $d=10$  & $d=50$  & $ d=100$  & $d=2$ & $d=5$ & $d=10$ & $d=50$ & $d=100$ \\ \hline
$\texttt{Fr-bootstrap}$ & 0.036  &  0.041   &  0.045   & 0.049  &  0.062 & 0.039 & 0.047  & 0.039   & 0.053 & 0.067 \\
$\texttt{Fr-Bayes}$ &  0.042 & 0.047 & 0.063   &  {\bf 0.087} &  {\bf 0.096} & 0.053 & 0.062  & {\bf 0.072} &  0.085  &  {\bf 0.096}\\
$\texttt{En-bootstrap}$ & 0.045  & 0.047  & 0.043  & 0.046 & 0.052  & {\bf 0.058} & 0.049 &  0.059& 0.062  & 0.066   \\
$\texttt{En-Bayes}$ & {\bf 0.062}  &  {\bf 0.069} &  {\bf 0.075} &  0.074 & 0.094  & 0.054  & {\bf 0.064}  & 0.063 & 0.085 & {0.108}  \\
$\texttt{Fr-Datadriven}$& 0.043  & 0.046  & 0.043 & 0.052 & 0.059  & 0.052  & 0.049 & 0.061   &  0.058  & 0.063   \\
$\texttt{HPV-LeCam}$& 0.394 & 0.379 & 0.433  & 0.436 & 0.398   & 0.431 & 0.441 & 0.423 &  0.412 & 0.393 \\
$\texttt{Sp-bootstrap}$	& {0.058}  & 0.056  & 0.047 & 0.059 & 0.068  & 0.051  & 0.042  & 0.049  &  0.047 & 0.062 \\
$\texttt{Sp-Bayes}$&   {0.058}  & {0.068}    & { 0.072}  & 0.081  & {0.095}  & 0.049 & {0.062} & {0.069} & {\bf 0.087} & {\bf 0.104}\\
$\texttt{Fr-Adaptive}$  & \bf 0.105  & \bf 0.103 & \bf 0.097 & \bf 0.102 & \bf 0.103  & \bf 0.104  & \bf 0.096 & \bf 0.095 & \bf 0.093 &  {0.105} \\ 
 \bottomrule
\end{tabular}
}
\end{center}
\caption{ Scenario II: simulated type I error rates under the nominal level $0.1$ for $y=0.1$.  We report our results based on 2,000 Monte-Carlo simulations with Gaussian random variables. We highlighted the two most accurate methods for each value of $d.$  
} \label{table_singudegenerate}
\end{table}

\begin{table}[!ht]
\setlength\arrayrulewidth{1.5pt}
\renewcommand{\arraystretch}{1.3}
 \captionsetup{width=1\linewidth}
 \addtolength{\tabcolsep}{-0.5pt} 
\begin{center}
\scriptsize 
\hspace*{-0.7cm}
{
\begin{tabular}{clclclclclclc|c|c|c|c}
\toprule
 & \multicolumn{5}{c}{$N=200$}                                                                                                                       & \multicolumn{5}{c}{$N=500$}                                                                                                                       \\ \hline
Method   & $d=2$& $d=5$  & $d=10$  & $d=50$  & $ d=100$  & $d=2$ & $d=5$ & $d=10$ & $d=50$ & $d=100$ \\ \hline
$\texttt{Fr-bootstrap}$ & 0.039  & 0.043  &  0.038  & 0.051 & 0.058& { 0.042} & 0.045 & 0.049  & 0.048  & 0.052 \\
$\texttt{Fr-Bayes}$ & 0.052   & {\bf 0.049} & 0.048   & {\bf 0.067} & 0.072 & 0.048 & 0.042  & 0.051  &  0.057  & 0.068  \\
$\texttt{En-bootstrap}$& {\bf 0.054} & {\bf 0.049}  & 0.053 & 0.047 & 0.061 & {\bf 0.052} & {\bf 0.047}   & 0.041  & 0.039   & 0.051   \\
$\texttt{En-Bayes}$ & 0.041   & {0.046}  & 0.052   &  {0.062}  & {\bf 0.078}  & 0.043  & {0.046}& {\bf 0.059} & {\bf 0.063} & {0.068}  \\
$\texttt{Fr-Datadriven}$& 0.325  & 0.029  & 0.038 & 0.041 & 0.047 & 0.039  & 0.042  &  0.047 &  0.046  & 0.052  \\
$\texttt{HPV-LeCam}$& 0.844 & 0.829 & 0.814  & 0.87 & 0.88   & 0.865 & 0.825 & 0.853 &  0.792 & 0.83 \\
$\texttt{Sp-bootstrap}$	& 0.038  & 0.047  & 0.042 & 0.057 & 0.049  & 0.037  & 0.042& {0.058} & 0.052 & 0.048  \\
$\texttt{Sp-Bayes}$&  0.038  &  0.046    & {\bf 0.057}  & 0.065  &  {0.069}  & 0.042 & 0.039  & 0.054 & 0.061 & {\bf 0.074} \\
$\texttt{Fr-Adaptive}$  & {\bf 0.11}  & {\bf 0.093} & {\bf 0.097}  & {\bf 0.096}  & {\bf 0.102}   &  {\bf 0.091} & {\bf 0.09} & {\bf 0.098} &  {\bf 0.096} & {\bf 0.102}  \\ 
 \bottomrule
\end{tabular}
}
\end{center}
\caption{ Scenario II: simulated type I error rates under the nominal level $0.1$ for $y=1$.  We report our results based on 2,000 Monte-Carlo simulations with Gaussian random variables. We highlighted the two most accurate methods for each value of $d.$  
} \label{table_singu1degenate}
\end{table}

\begin{table}[!ht]
\setlength\arrayrulewidth{1.5pt}
\renewcommand{\arraystretch}{1.3}
 \captionsetup{width=1\linewidth}
 \addtolength{\tabcolsep}{-0.5pt} 
\begin{center}
\scriptsize 
\hspace*{-0.7cm}
{
\begin{tabular}{clclclclclclc|c|c|c|c}
\toprule
 & \multicolumn{5}{c}{$N=200$}                                                                                                                       & \multicolumn{5}{c}{$N=500$}                                                                                                                       \\ \hline
Method   & $d=2$& $d=5$  & $d=10$  & $d=50$  & $ d=100$  & $d=2$ & $d=5$ & $d=10$ & $d=50$ & $d=100$ \\ \hline
$\texttt{Fr-bootstrap}$ & { 0.035}  & 0.041  &  0.035  & 0.045 & 0.058& {0.032} & 0.04 & 0.043  & 0.039  & 0.052 \\
$\texttt{Fr-Bayes}$ & 0.039   &{ 0.042} & 0.046   & {\bf 0.059} & {0.062} & 0.034 & {\bf 0.052}  & 0.049  &  {\bf 0.056}  & {\bf 0.065}  \\
$\texttt{En-bootstrap}$&  0.039 & 0.042  & 0.038  &  0.041 & 0.045 & 0.032 & 0.029  &0.042  &0.047  & 0.048 \\
$\texttt{En-Bayes}$ & 0.041  & 0.047  &0.051   & {\bf 0.059}  & {\bf 0.066}  & {\bf 0.045} & {0.036} & 0.047 & 0.053 & {\bf 0.065}  \\
$\texttt{Fr-Datadriven}$& 0.025 & 0.031  & 0.029 & 0.038 &  0.041 & 0.031  & 0.033 &  0.041 & 0.039  &0.048    \\
$\texttt{HPV-LeCam}$& 0.914 & 0.979 & 0.974  & 0.959 & 0.963   & 0.932 & 0.941 & 0.893 &  0.962 & 0.912 \\
$\texttt{Sp-bootstrap}$	&  {\bf 0.042} & {\bf 0.053}  & {\bf 0.052} & 0.052 & 0.057  & 0.039  & 0.042 & 0.049  & 0.052 & 0.049 \\
$\texttt{Sp-Bayes}$&   0.039 & {0.045}    & 0.046  & 0.054  & 0.062 & 0.042 & 0.048&  {\bf 0.051}&  {0.053} &  {0.064}\\
$\texttt{Fr-Adaptive}$ &  {\bf 0.107} & {\bf 0.103}  & {\bf 0.103}  & {\bf 0.094}  & {\bf 0.098}  &  {\bf 0.102} & {\bf 0.092}  & {\bf 0.097}  & {\bf 0.104}  & {\bf 0.098}  \\ 
 \bottomrule
\end{tabular}
}
\end{center}
\caption{ Scenario II: simulated type I error rates under the nominal level $0.1$ for $y=10$.  We report our results based on 2,000 Monte-Carlo simulations with Gaussian random variables. We highlighted the two most accurate methods for each value of $d.$  
} \label{table_singu2denegenrate}
\end{table}

Then we compare the power of the above statistics under the alternative (\ref{eq_alternative}) for different values of $\varphi$ regarding Scenarios I and II and $d=5, 50,$ respectively,  in Figures \ref{fig_power1}--\ref{fig_power250}.  
We find that our method is very powerful even when $y$ becomes larger and $d$  becomes smaller, and it outperforms the other methods especially when either $y$ is large or $d$ is small. When $y$ is small and $d$ is large, even though 
for larger values of $\varphi,$ many methods can obtain high power, we find that our proposed statistic (\ref{eq_finalstatistict1s}) is quite powerful even under a relatively weak alternative, i.e., smaller values of $\varphi$. We mention that the high power of $\texttt{HPV-LeCam}$ is not trustful since we have seen from the simulated type I error rates that it is not accurate. Similar results can be found for two-point random variables in Figures \ref{fig_powertp1}-\ref{fig_powertp250} of Appendix \ref{sec_additionalsimulation}.

\begin{figure}[!ht]
\hspace*{-2.0cm}
\begin{subfigure}{0.3\textwidth}
\includegraphics[width=5.8cm,height=5cm]{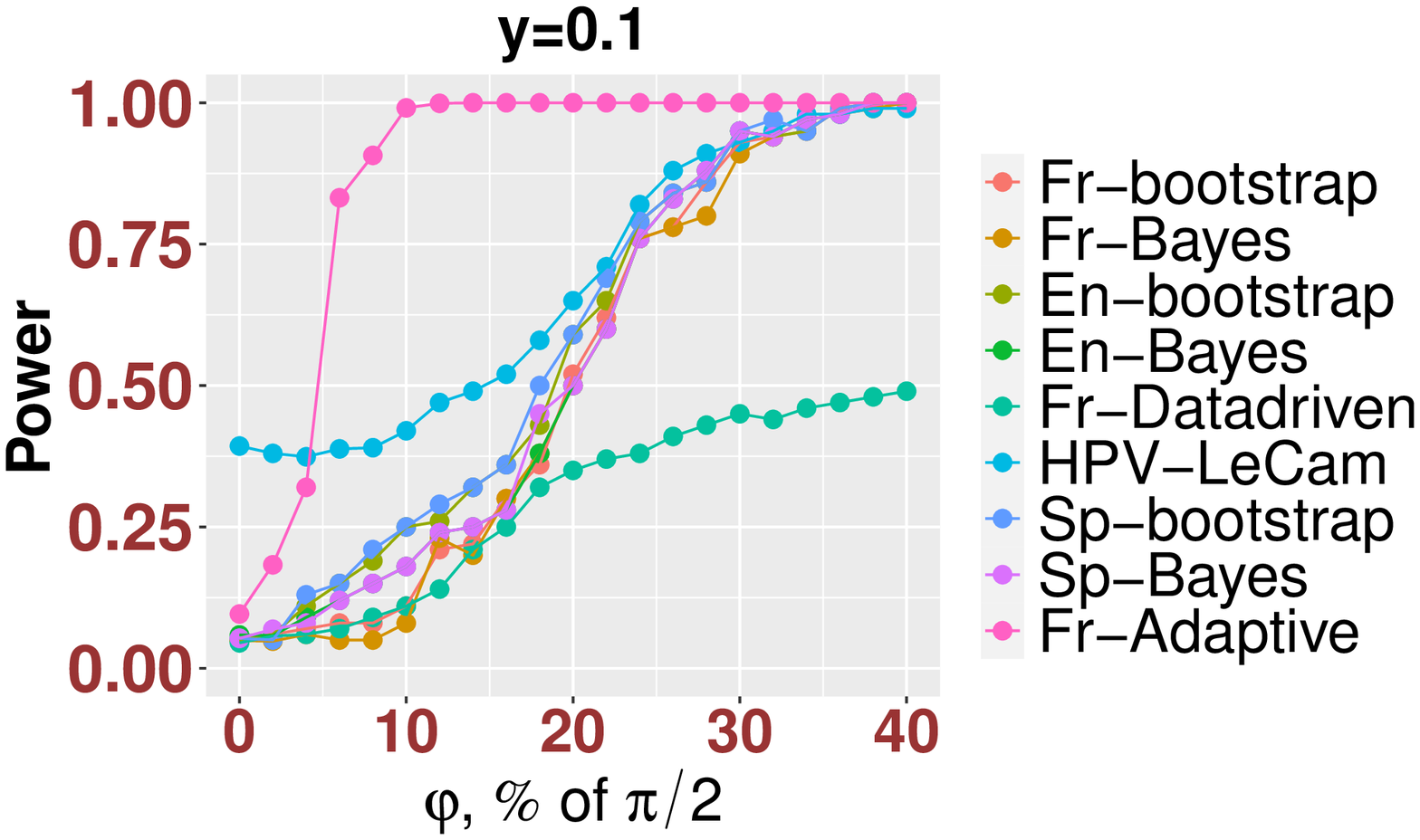}
\end{subfigure}
\begin{subfigure}{0.3\textwidth}
\includegraphics[width=5.8cm,height=5cm]{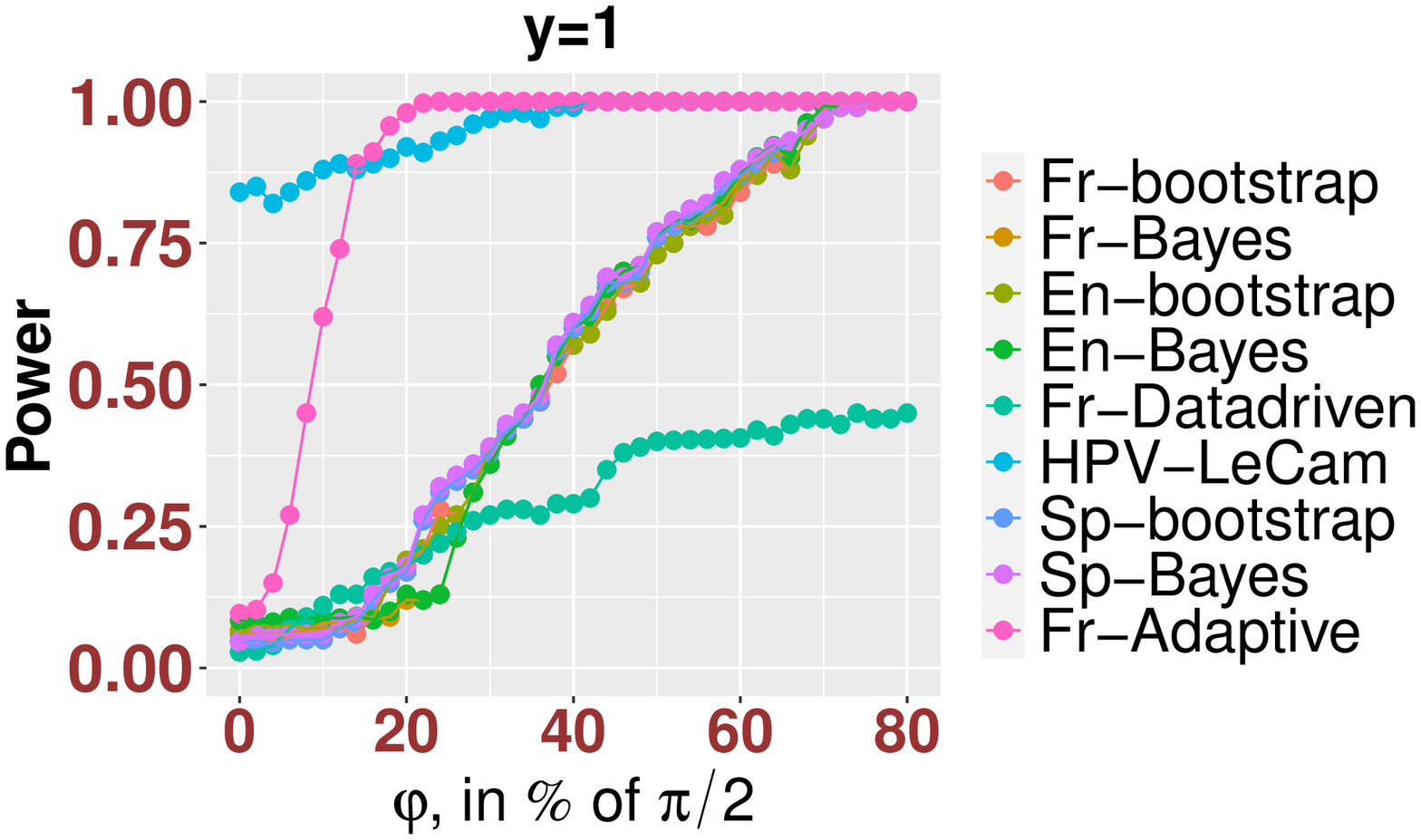}
\end{subfigure}
\begin{subfigure}{0.3\textwidth}
\includegraphics[width=5.8cm,height=5cm]{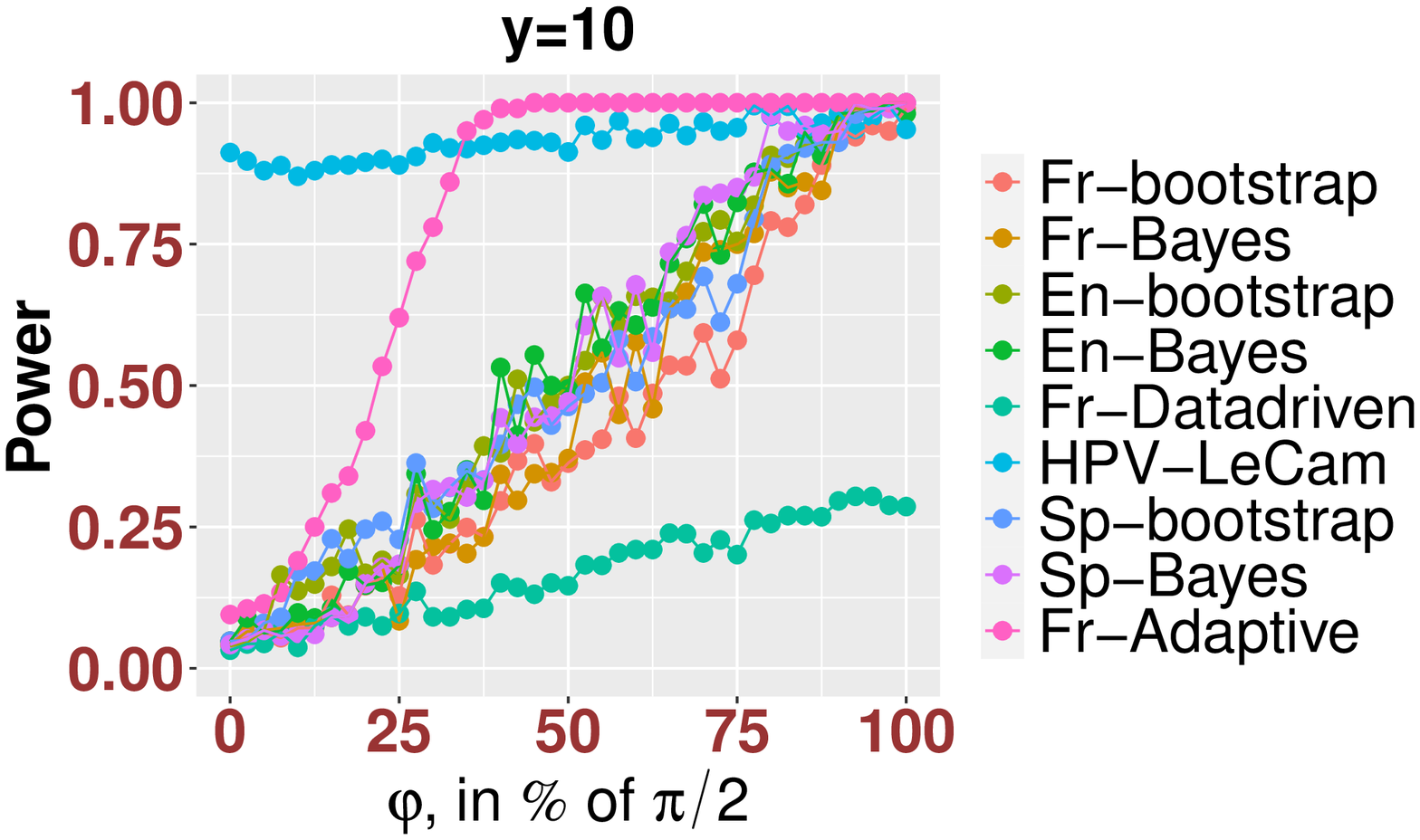}
\end{subfigure}
\caption{ {\footnotesize Comparison of power for Scenario I. We choose $d=5$ and use Gaussian random variables. We report our results under the nominal level 0.1 based on $2,000$ simulations. Here $N=500.$ } }
\label{fig_power1}
\end{figure}

\begin{figure}[!ht]
\hspace*{-2.0cm}
\begin{subfigure}{0.3\textwidth}
\includegraphics[width=5.8cm,height=5cm]{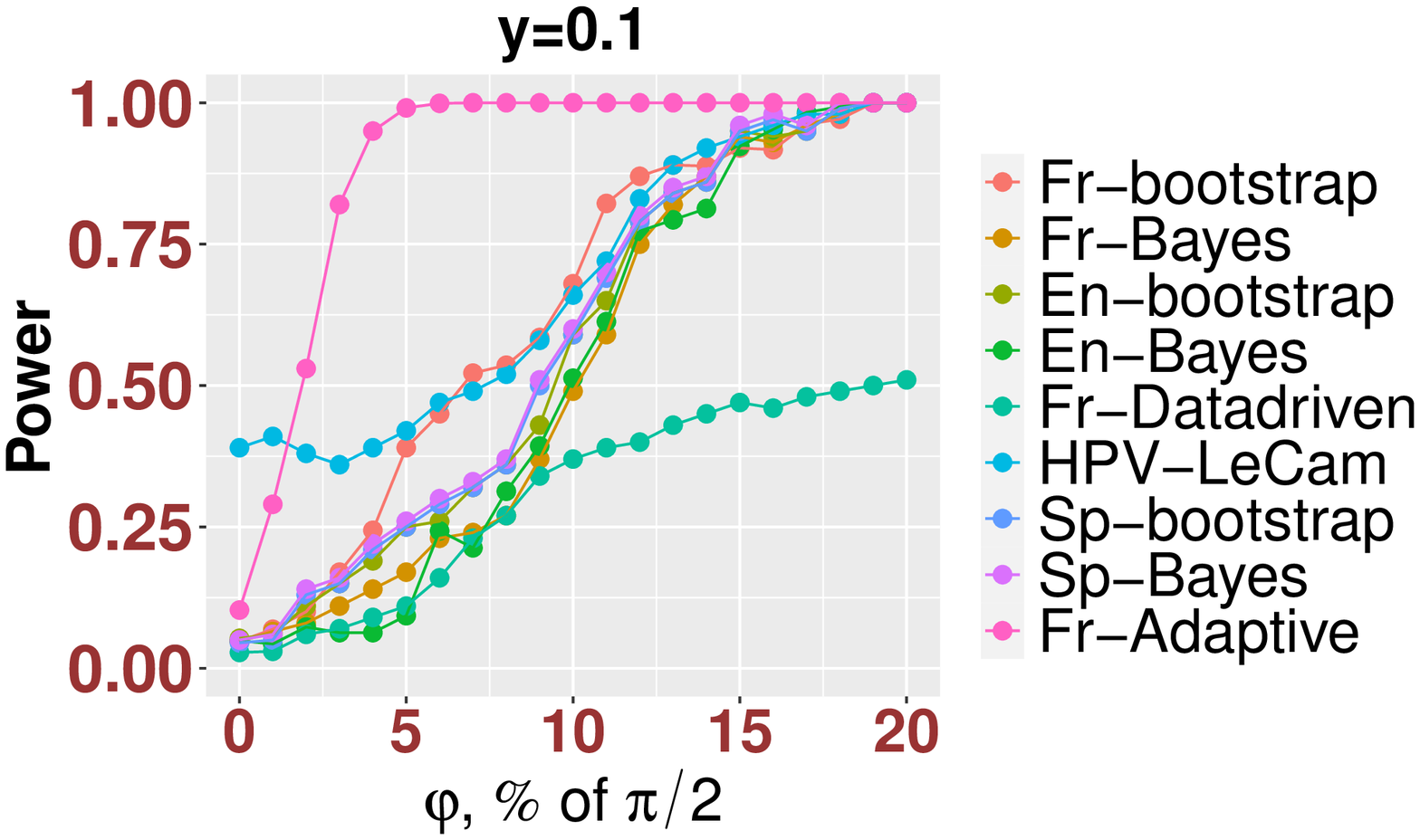}
\end{subfigure}
\begin{subfigure}{0.3\textwidth}
\includegraphics[width=5.8cm,height=5cm]{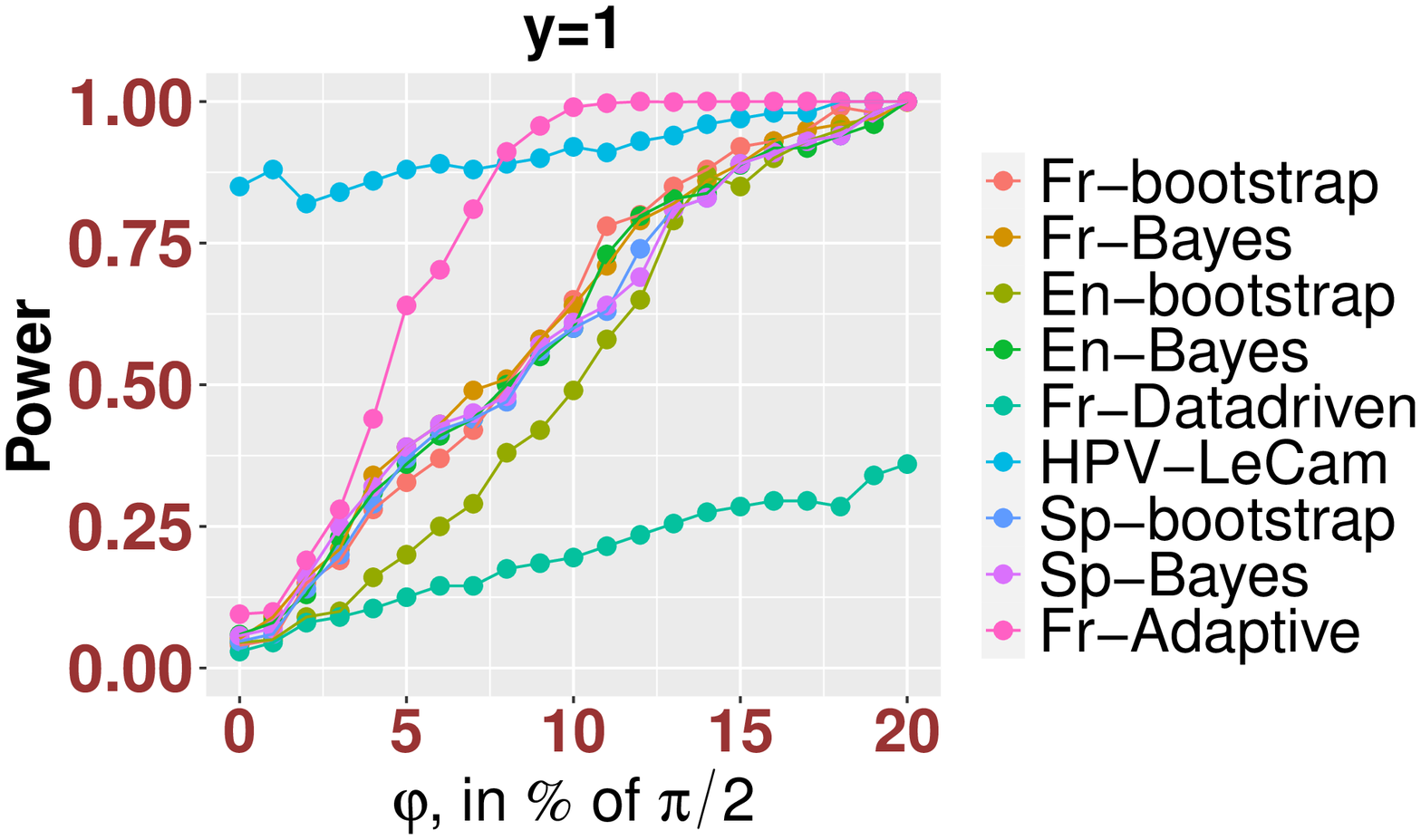}
\end{subfigure}
\begin{subfigure}{0.3\textwidth}
\includegraphics[width=5.8cm,height=5cm]{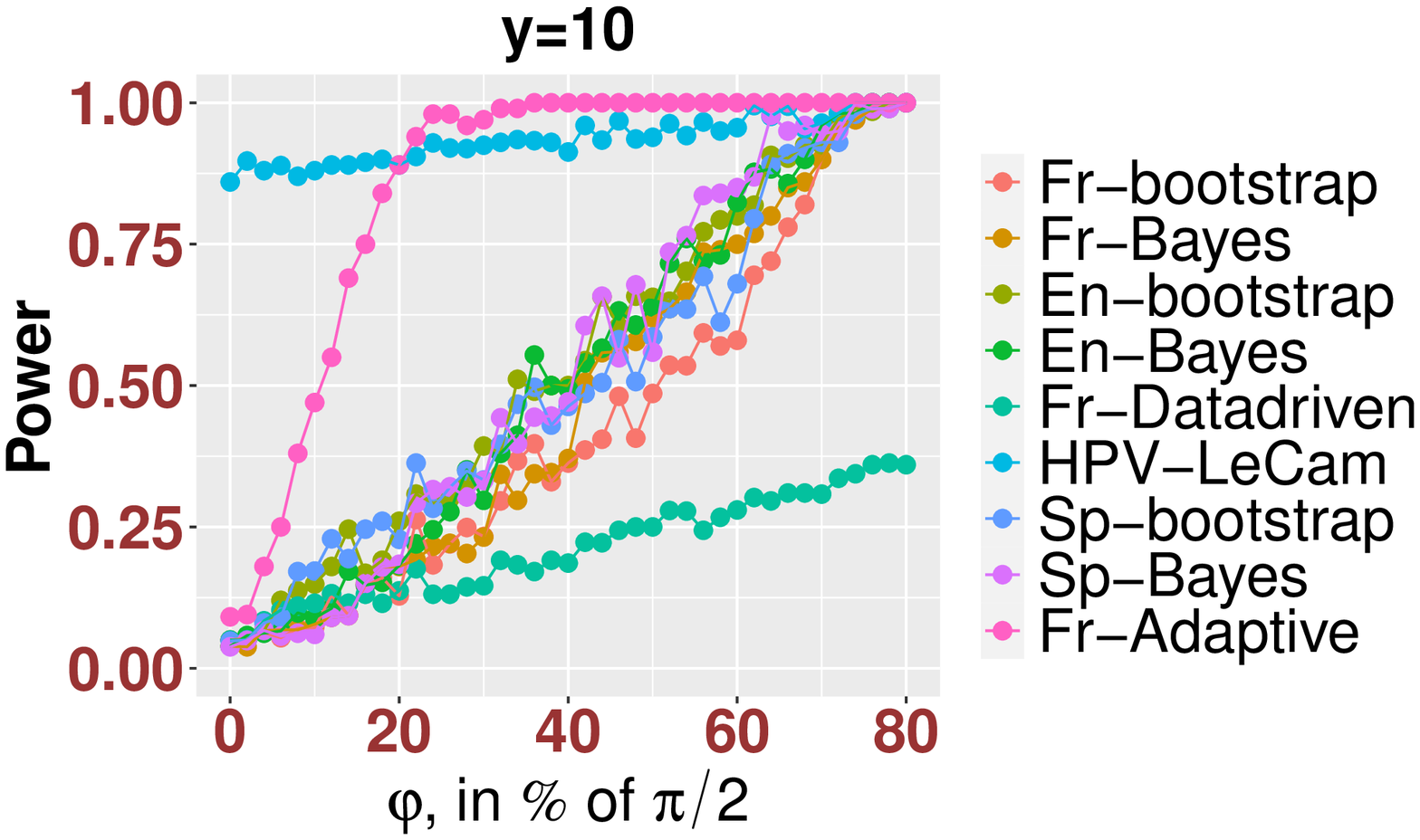}
\end{subfigure}
\caption{{  \footnotesize Comparison of power for Scenario I. We choose $d=50$ and use Gaussian random variables. We report our results under the nominal level 0.1 based on $2,000$ simulations. Here $N=500.$ }}
\label{fig_power150}
\end{figure}

\begin{figure}[!ht]
\hspace*{-2.0cm}
\begin{subfigure}{0.3\textwidth}
\includegraphics[width=5.8cm,height=5cm]{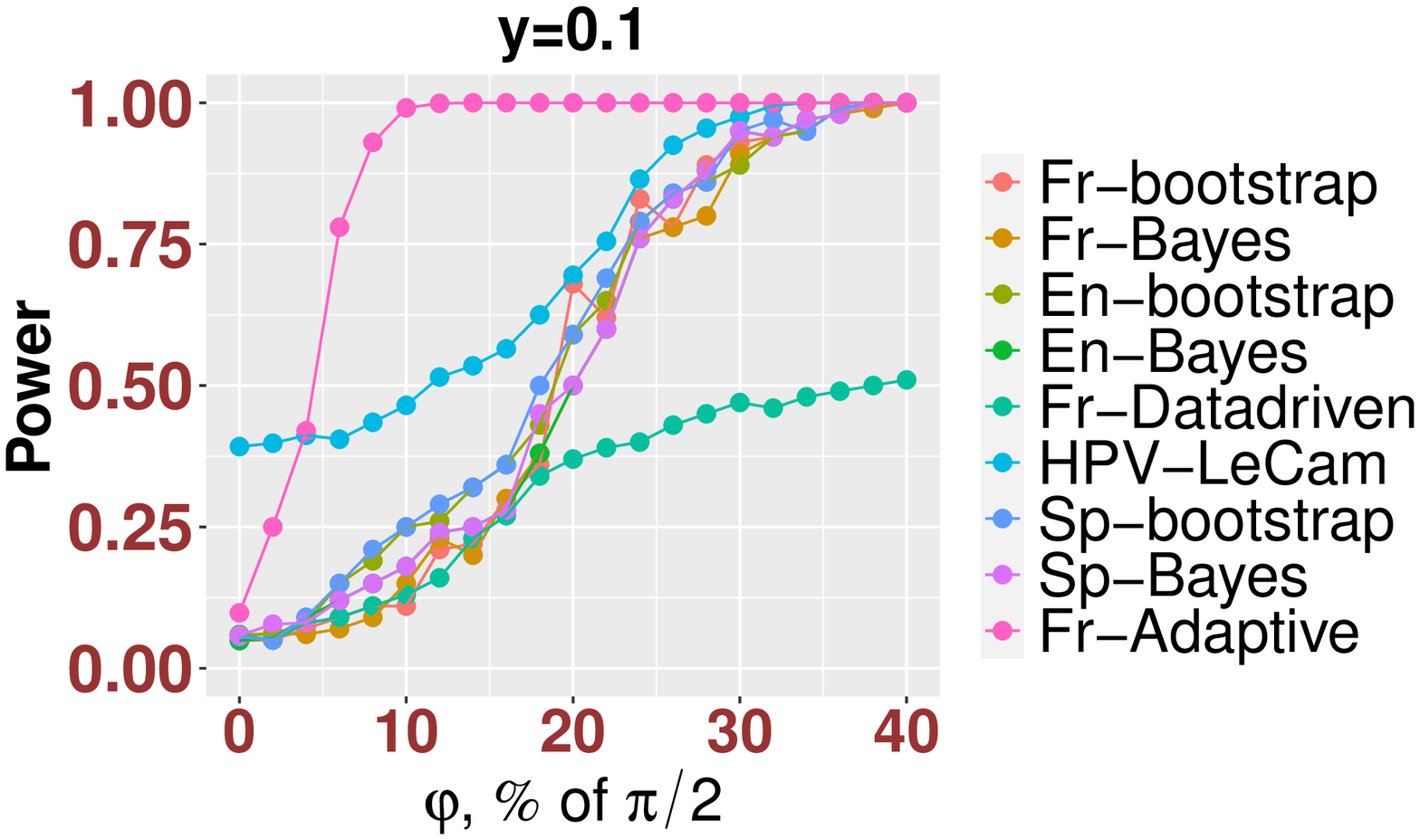}
\end{subfigure}
\begin{subfigure}{0.3\textwidth}
\includegraphics[width=5.8cm,height=5cm]{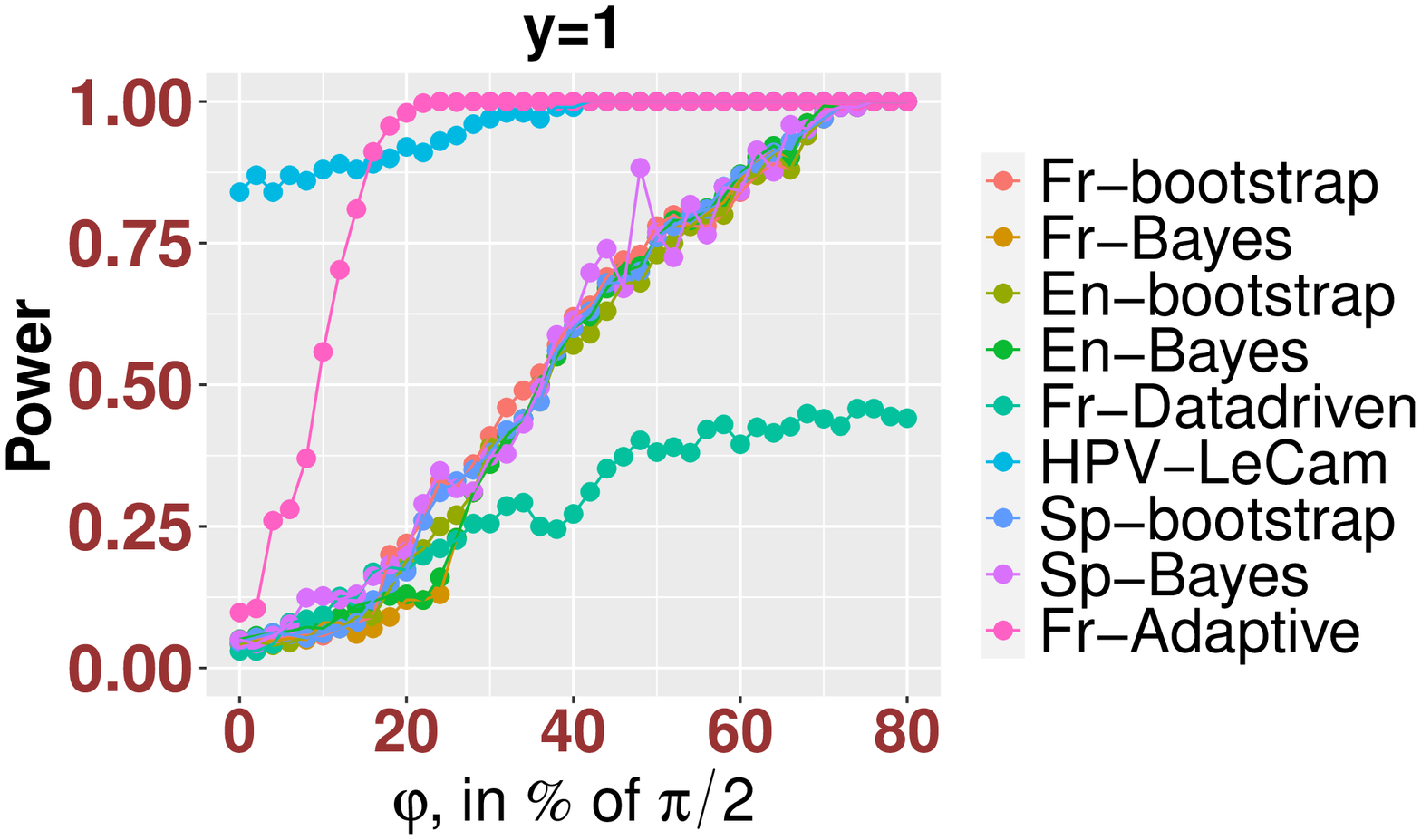}
\end{subfigure}
\begin{subfigure}{0.3\textwidth}
\includegraphics[width=5.8cm,height=5cm]{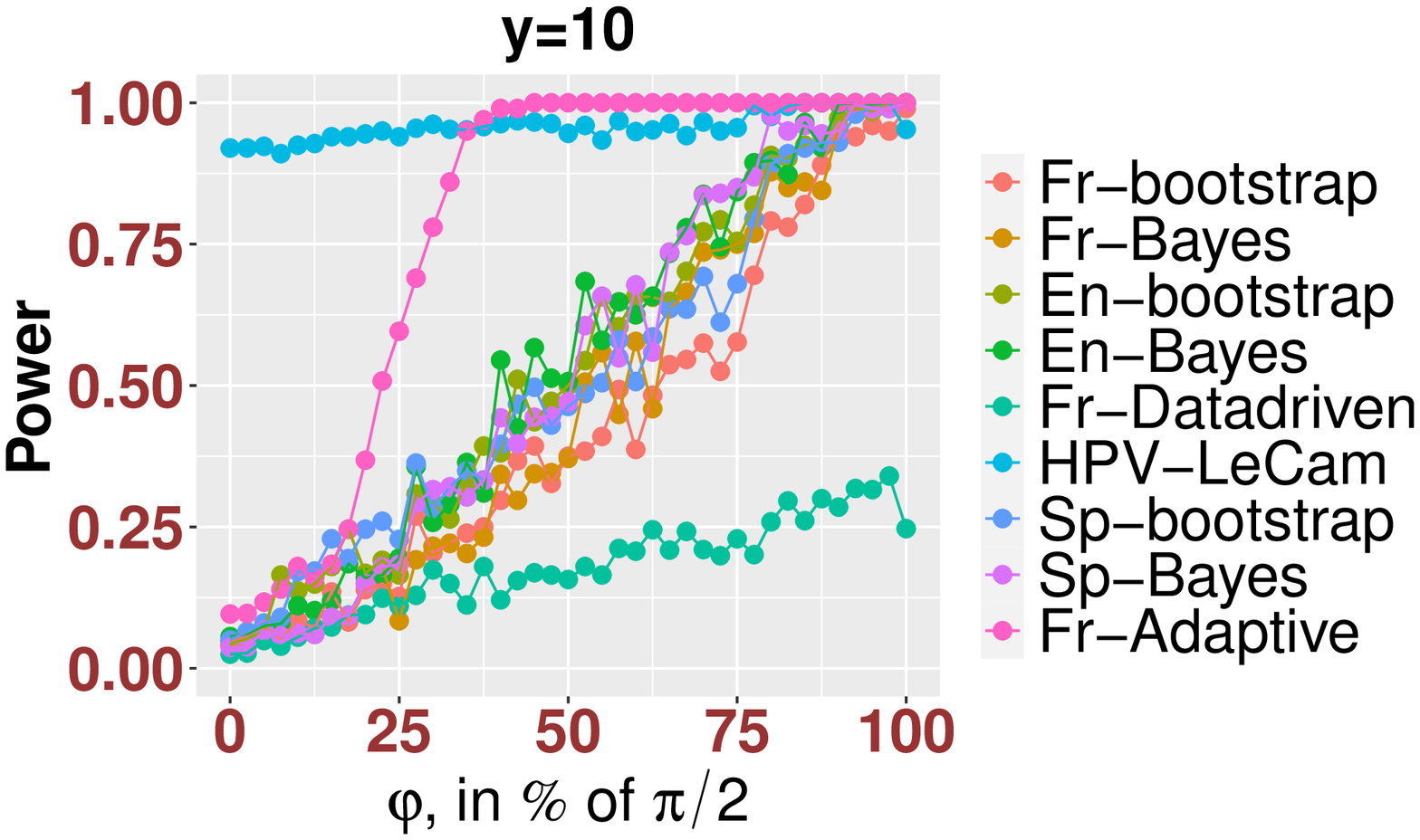}
\end{subfigure}
\caption{{ \footnotesize Comparison of power for Scenario II. We choose $d=5$ and use Gaussian random variables. We report our results under the nominal level 0.1 based on $2,000$ simulations.  Here $N=500.$} }
\label{fig_power2}
\end{figure}

\begin{figure}[!ht]
\hspace*{-2.0cm}
\begin{subfigure}{0.3\textwidth}
\includegraphics[width=5.8cm,height=5cm]{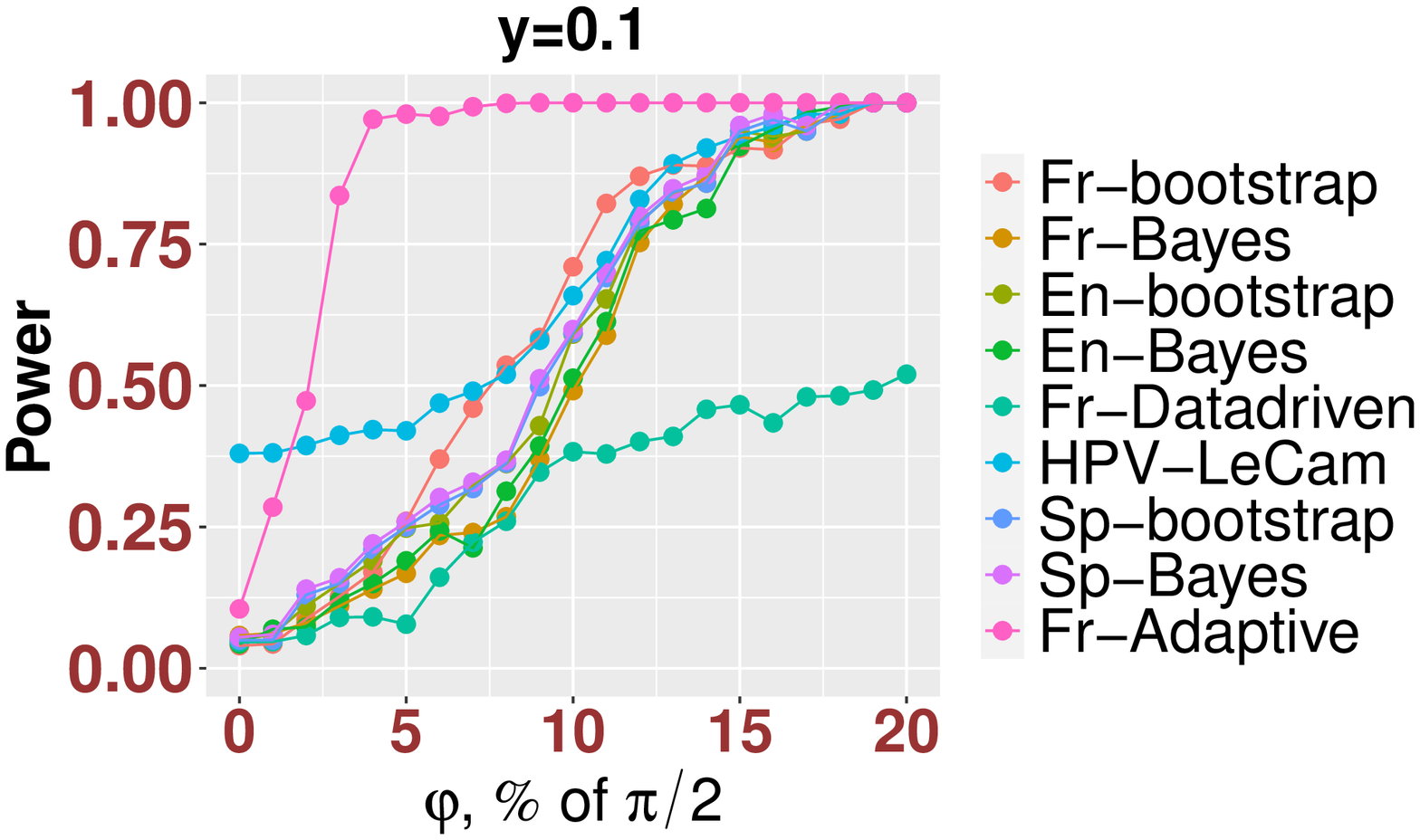}
\end{subfigure}
\begin{subfigure}{0.3\textwidth}
\includegraphics[width=5.8cm,height=5cm]{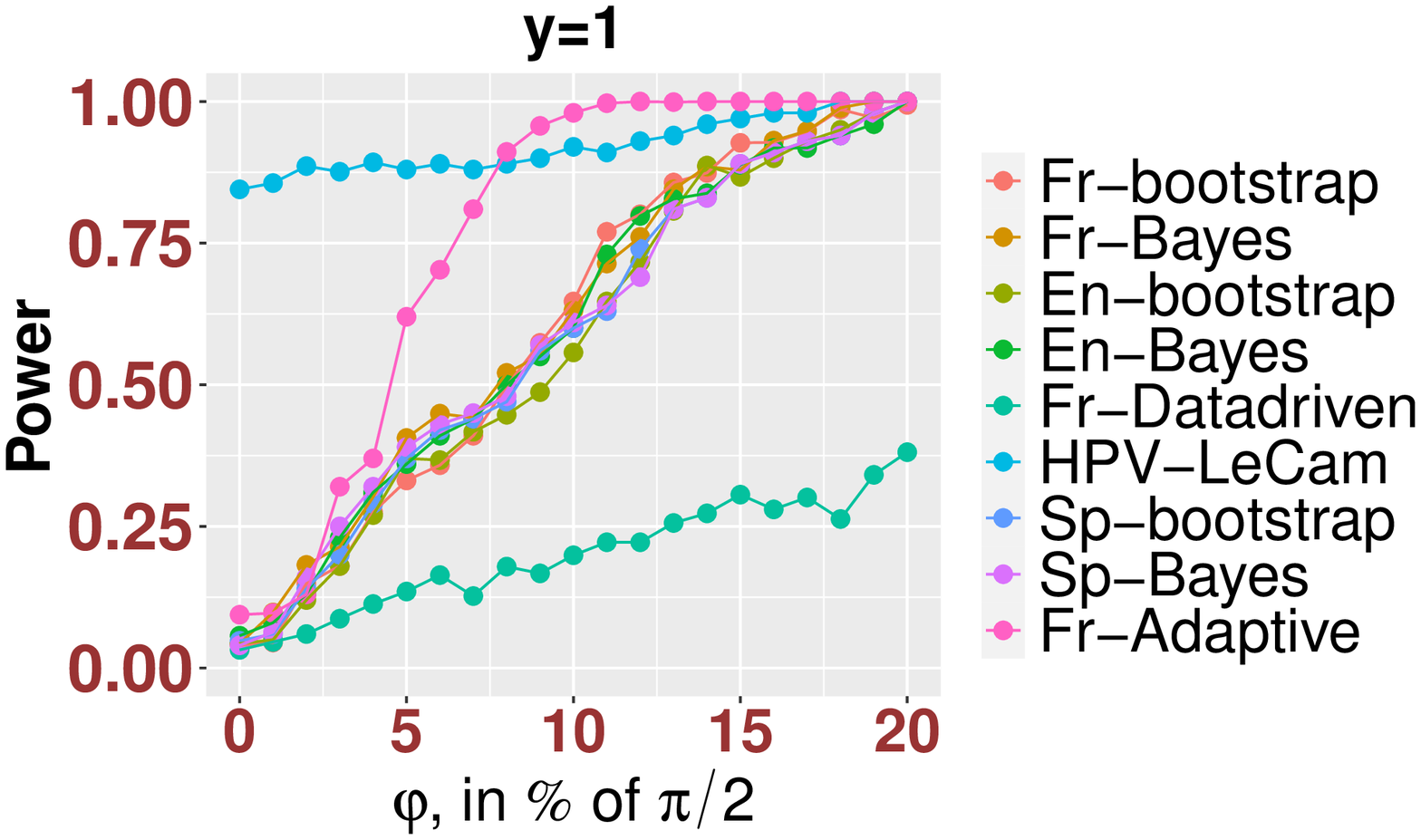}
\end{subfigure}
\begin{subfigure}{0.3\textwidth}
\includegraphics[width=5.8cm,height=5cm]{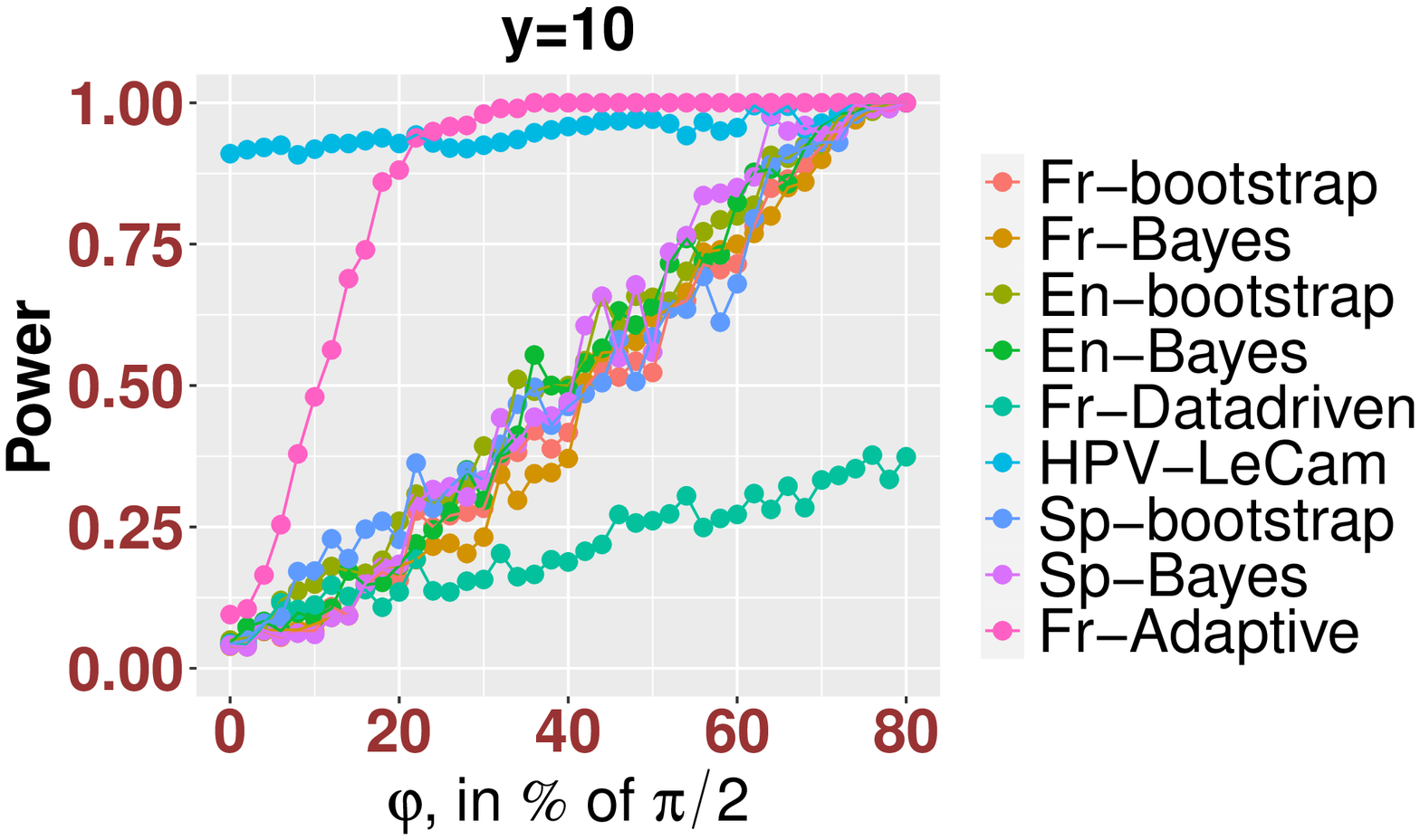}
\end{subfigure}
\caption{{ \footnotesize Comparison of power for Scenario II. We choose $d=50$ and use Gaussian random variables. We report our results under the nominal level 0.1 based on $2,000$ simulations.  Here $N=500.$  }  }
\label{fig_power250}
\end{figure}

Next, we investigate the size and power of the orthogonal test (\ref{eq_orthogobalteset}) using our proposed statistic (\ref{eq_finalstat}) with critical values generated adaptively according to Corollary \ref{cor_simulateddistribution}. We conduct numerical simulations using the following two scenarios. 
{
\begin{itemize}
\item  Scenario A: We consider the case $r_0=3$ with $d_1=d+7,$ $d_2=7$ and $d_3=5,$ where $d$ takes a variety of values. We consider the hypothesis testing of the eigenspace of $\Sigma=I + \sum_{i=1}^3d_i \bm{e}_i \bm{e}_i^*$ with $\mathcal{I}=\{1,2\},$ where the null is 
\begin{equation}\label{eq_trueeigenspace1}
Z_0=\bm{e}_3 \bm{e}_3^*+\bm{e}_4 \bm{e}_4^*.
\end{equation}
In this scenario, the spiked eigenvalues are simple. We will consider the standard Gaussian random variables and the two-point distribution $\frac{1}{3} \delta_{\sqrt{2}}+\frac{2}{3} \delta_{-\frac{1}{\sqrt{2}}}.$
\item Scenario B:  We consider the case $r_0=3$ with $d_1=d_2=d+5$ and $d_3=5,$ where $d$ takes a variety  of values. We consider the hypothesis testing under the null (\ref{eq_trueeigenspace1}).
In this scenario, we have multiple/identical $d_i$. We  consider both the standard Gaussian random variables and the two-point distribution $\frac{1}{3} \delta_{\sqrt{2}}+\frac{2}{3} \delta_{-\frac{1}{\sqrt{2}}}.$     
\end{itemize}

For both scenarios, we consider the alternative
\begin{equation}\label{eq_alternative1}
Z_a=\bm{v}_1(\varphi) \bm{v}_1(\varphi)^*+ \bm{v}_2(\varphi) \bm{v}_2(\varphi)^*,
\end{equation}
where for $\varphi \in [0,\frac{\pi}{2}]$
\begin{equation*}
\bm{v}_1(\varphi)=\cos \varphi \bm{e}_3+\sin \varphi \bm{e}_1, \ \bm{v}_2(\varphi)=\cos \varphi \bm{e}_4+\sin \varphi \bm{e}_2. 
\end{equation*}
Note $\varphi=0$ corresponds to the null case (\ref{eq_trueeigenspace1}).} For Gaussian random variables, we report the simulated type-I error rates in Figure \ref{fig_figorttypei} for Scenario A and in Figure \ref{fig_figorttypeii} for Scenario B for a variety of choices of $y$ and $d$. The results for the two-point random variables can be found in Figures \ref{fig_figorttypeitp} and \ref{fig_figorttypeiitp} of Appendix \ref{sec_additionalsimulation}. We can see that our proposed statistic (\ref{eq_finalstat})  with critical values generated adaptively according to Corollary \ref{cor_simulateddistribution} are reasonably accurate even for smaller values of $d$ and $N$ and we  have better accuracy when $N$ increases.  Finally, we examine the finite sample power for Gaussian  variables in Figures \ref{fig_figorttypeipower} and \ref{fig_figorttypeiipower} and two-point variables in Figures \ref{fig_figorttypeipowertp} and \ref{fig_figorttypeiipowertp}, we find that our statistic is powerful even for smaller values of $\varphi$ (i.e., weaker alternative) and $d.$

\begin{figure}[!ht]
\hspace*{-1.0cm}
\begin{subfigure}{0.45\textwidth}
\includegraphics[width=6.5cm,height=4.8cm]{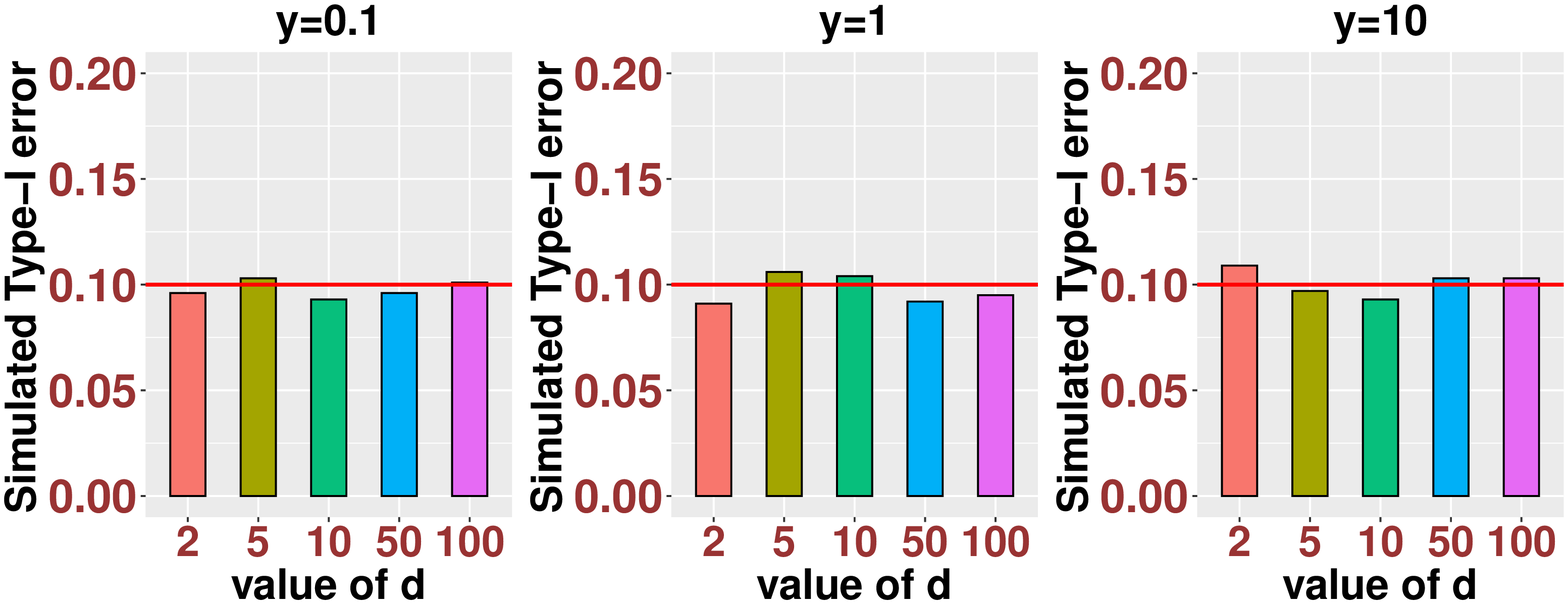}
\caption{$N=200.$}\label{subfig_nullorthogonaltypei200}
\end{subfigure}
\hspace{1cm}
\begin{subfigure}{0.45\textwidth}
\includegraphics[width=6.5cm,height=4.8cm]{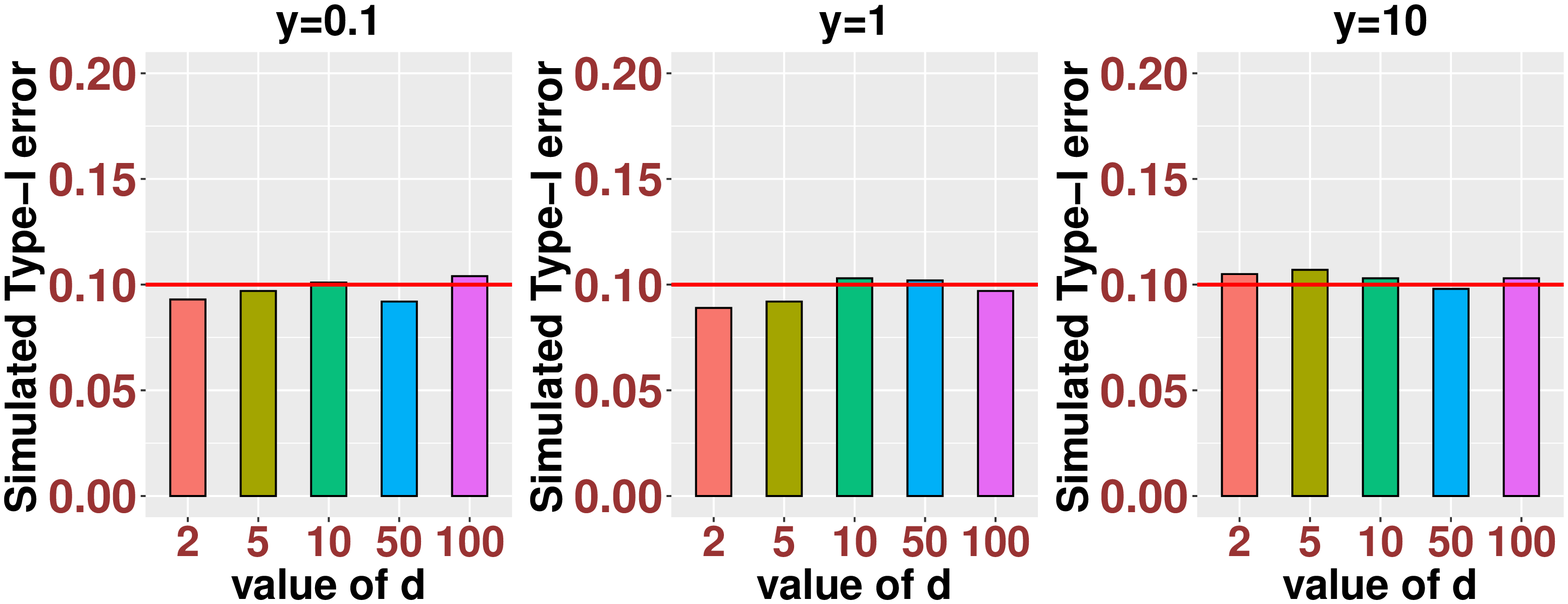}
\caption{$N=500.$}\label{subfig_nullorthogonaltypei500}
\end{subfigure}
\caption{{ \footnotesize Scenario A: simulated type I error rates for (\ref{eq_orthogobalteset}) using (\ref{eq_finalstat}). We report our results based on 2,000 Monte-Carlo simulations with Gaussian random variables. The critical values are generated using Corollary \ref{cor_simulateddistribution}. } }
\label{fig_figorttypei}
\end{figure}

\begin{figure}[!ht]
\hspace*{-1.0cm}
\begin{subfigure}{0.45\textwidth}
\includegraphics[width=6.5cm,height=4.8cm]{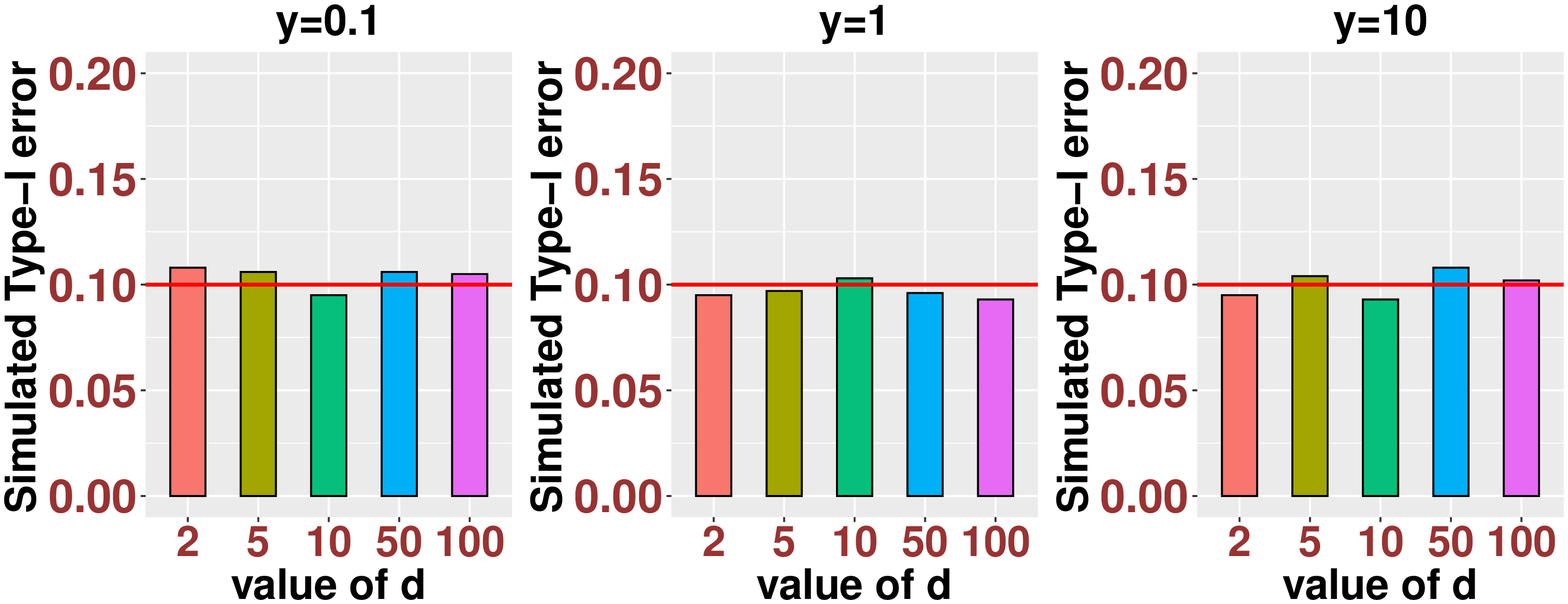}
\caption{$N=200.$}\label{subfig_nullorthogonaltypeii500}
\end{subfigure}
\hspace{1cm}
\begin{subfigure}{0.45\textwidth}
\includegraphics[width=6.5cm,height=4.8cm]{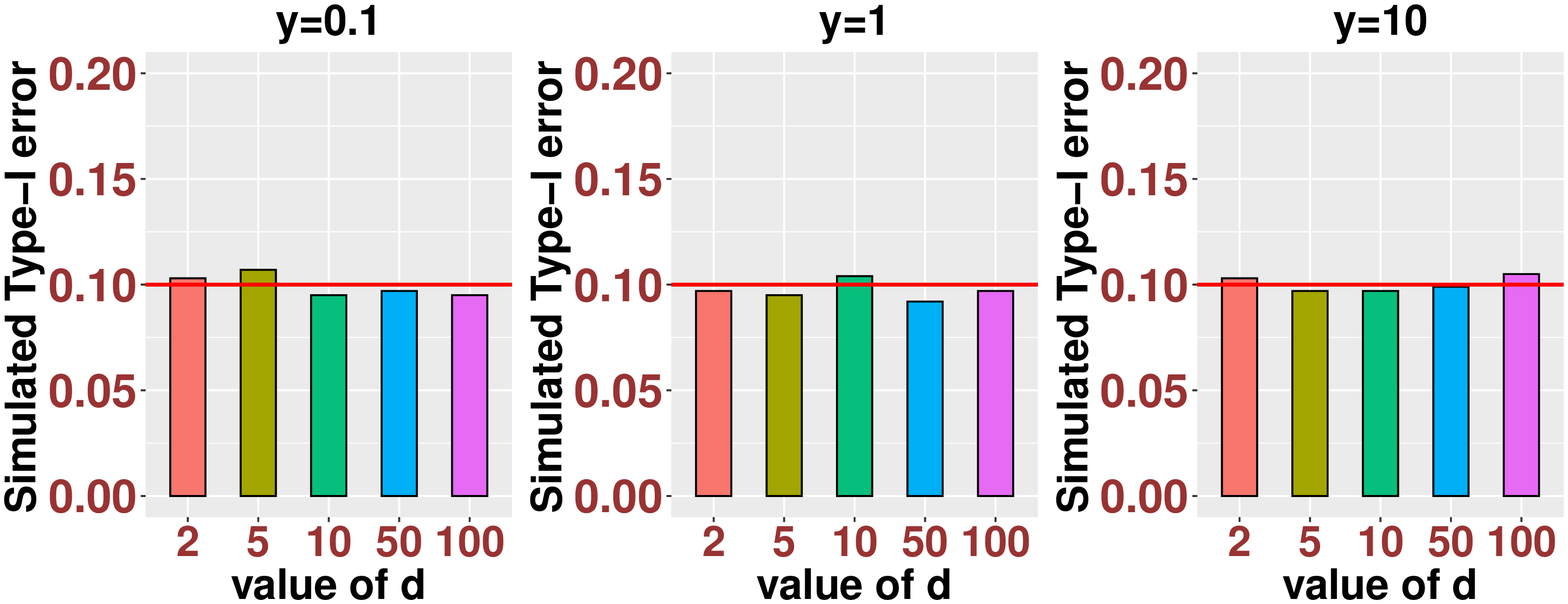}
\caption{$N=500.$}\label{subfig_nullorthogonaltypeii500}
\end{subfigure}
\caption{{ \footnotesize  Scenario B: simulated type I error rates for (\ref{eq_orthogobalteset}) using (\ref{eq_finalstat}). We report our results based on 2,000 Monte-Carlo simulations with Gaussian random variables. The critical values are generated from Corollary \ref{cor_simulateddistribution}. } }
\label{fig_figorttypeii}
\end{figure}

\begin{figure}[!ht]
\hspace*{-1.0cm}
\begin{subfigure}{0.45\textwidth}
\includegraphics[width=6.8cm,height=5cm]{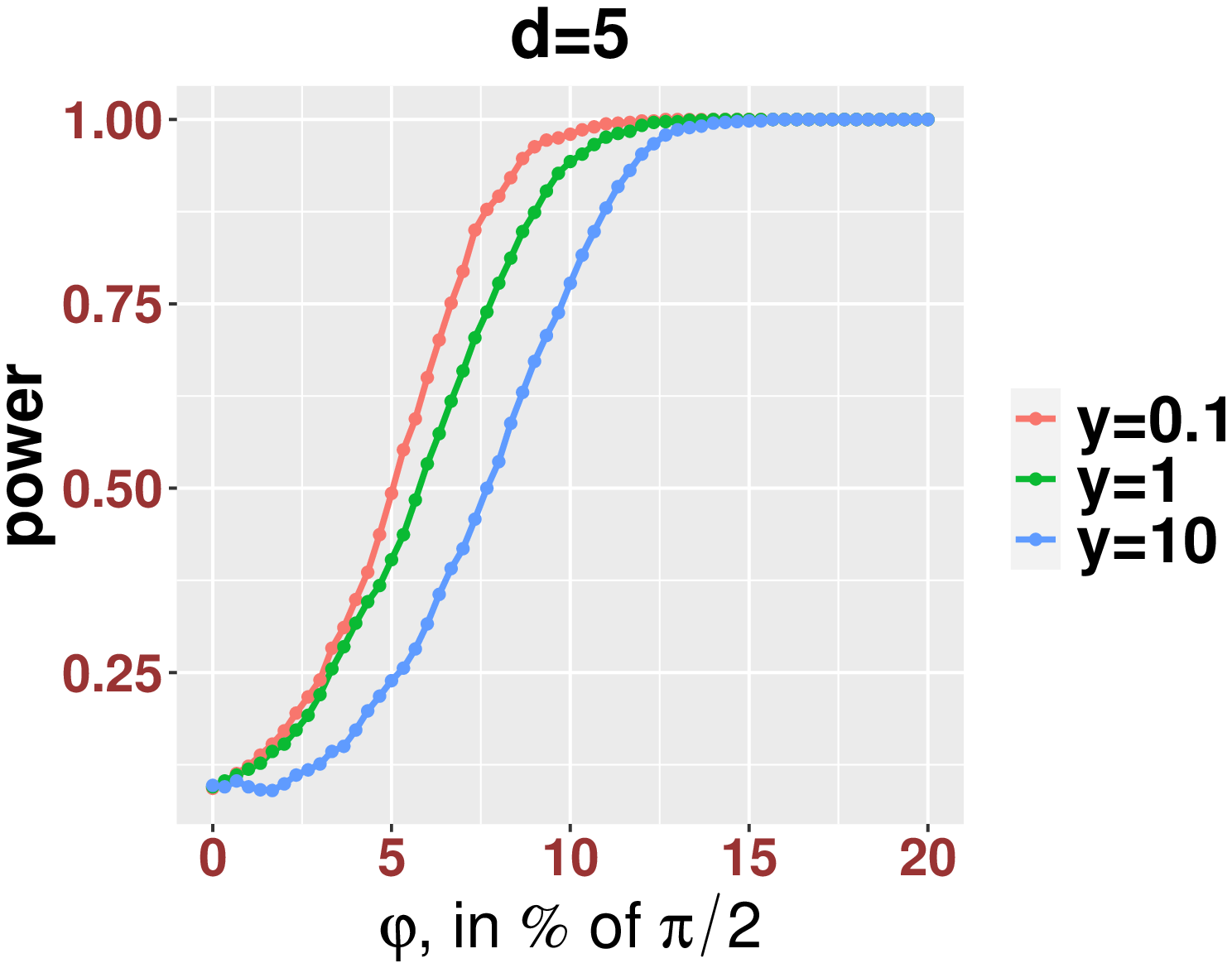}
\caption{$d=5.$}
\end{subfigure}
\hspace{1cm}
\begin{subfigure}{0.45\textwidth}
\includegraphics[width=6.8cm,height=5cm]{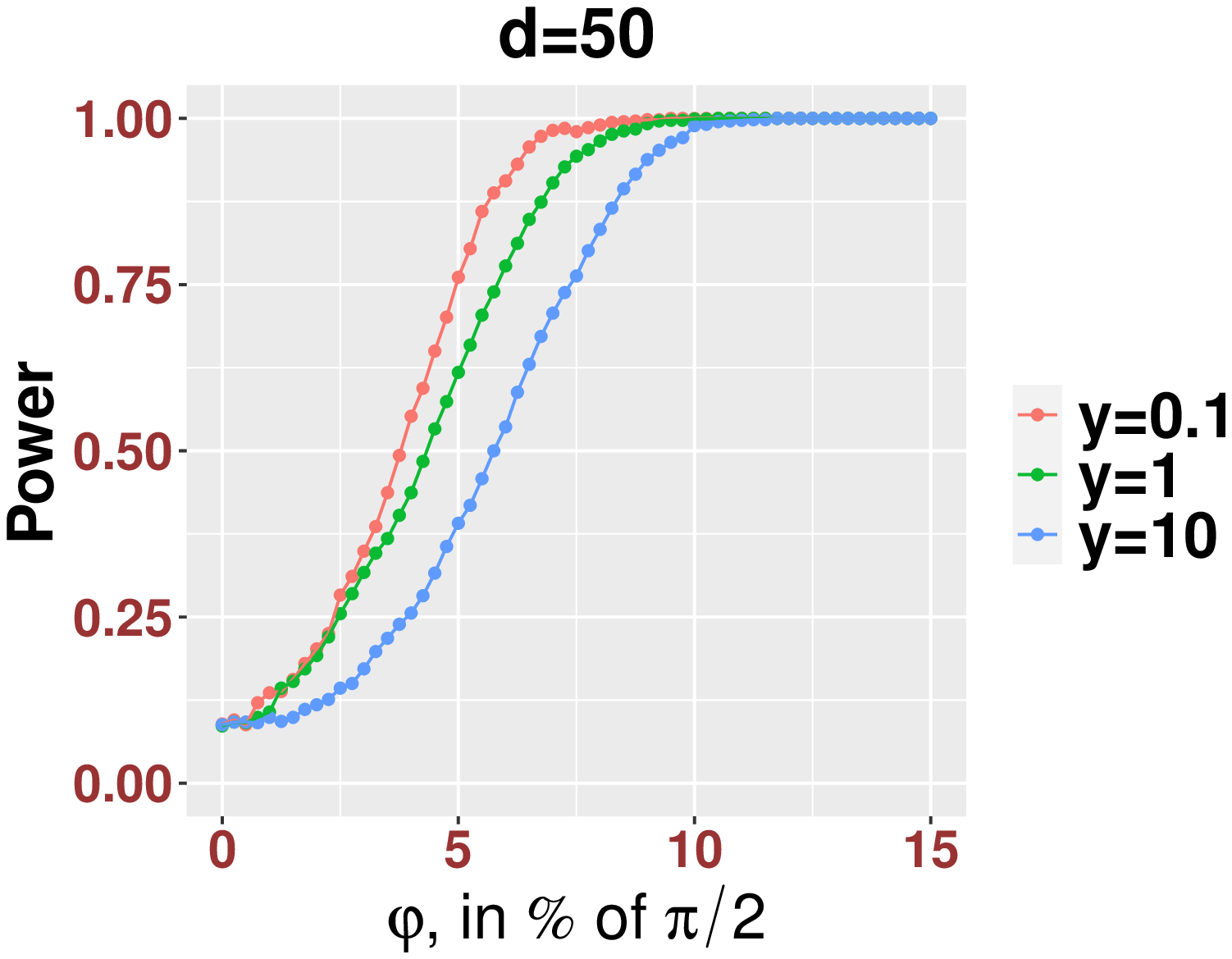}
\caption{$d=50.$}
\end{subfigure}
\caption{{ \footnotesize Power of Scenario A for Gaussian random variables using (\ref{eq_finalstat}).  We report our results under the nominal level 0.1 based on $2,000$ simulations.  Here $N=500$ and the critical values are generated using Corollary \ref{cor_simulateddistribution}. } }
\label{fig_figorttypeipower}
\end{figure}

\begin{figure}[!ht]
\hspace*{-1.0cm}
\begin{subfigure}{0.45\textwidth}
\includegraphics[width=6.8cm,height=5cm]{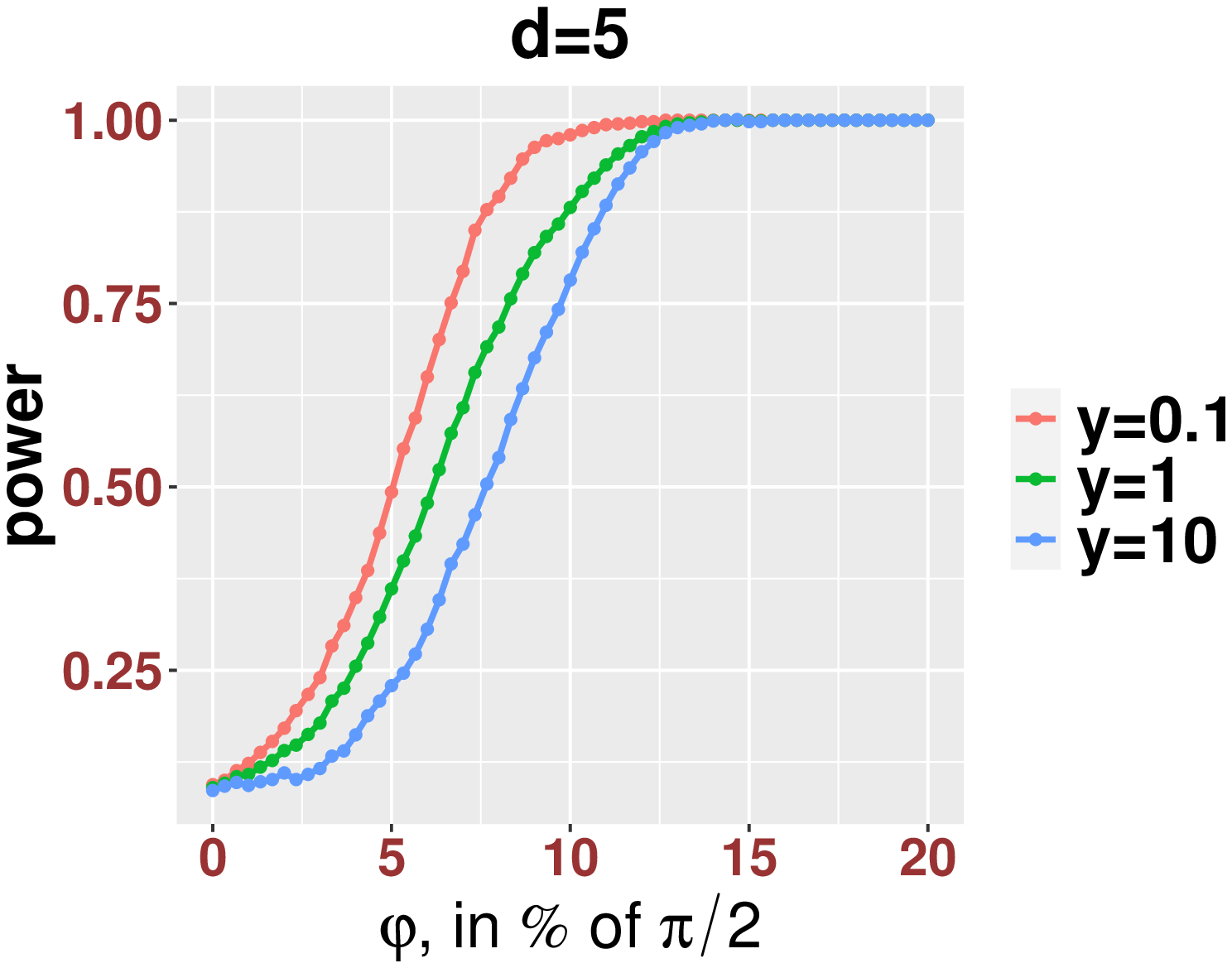}
\caption{$d=5.$}
\end{subfigure}
\hspace{1cm}
\begin{subfigure}{0.45\textwidth}
\includegraphics[width=6.8cm,height=5cm]{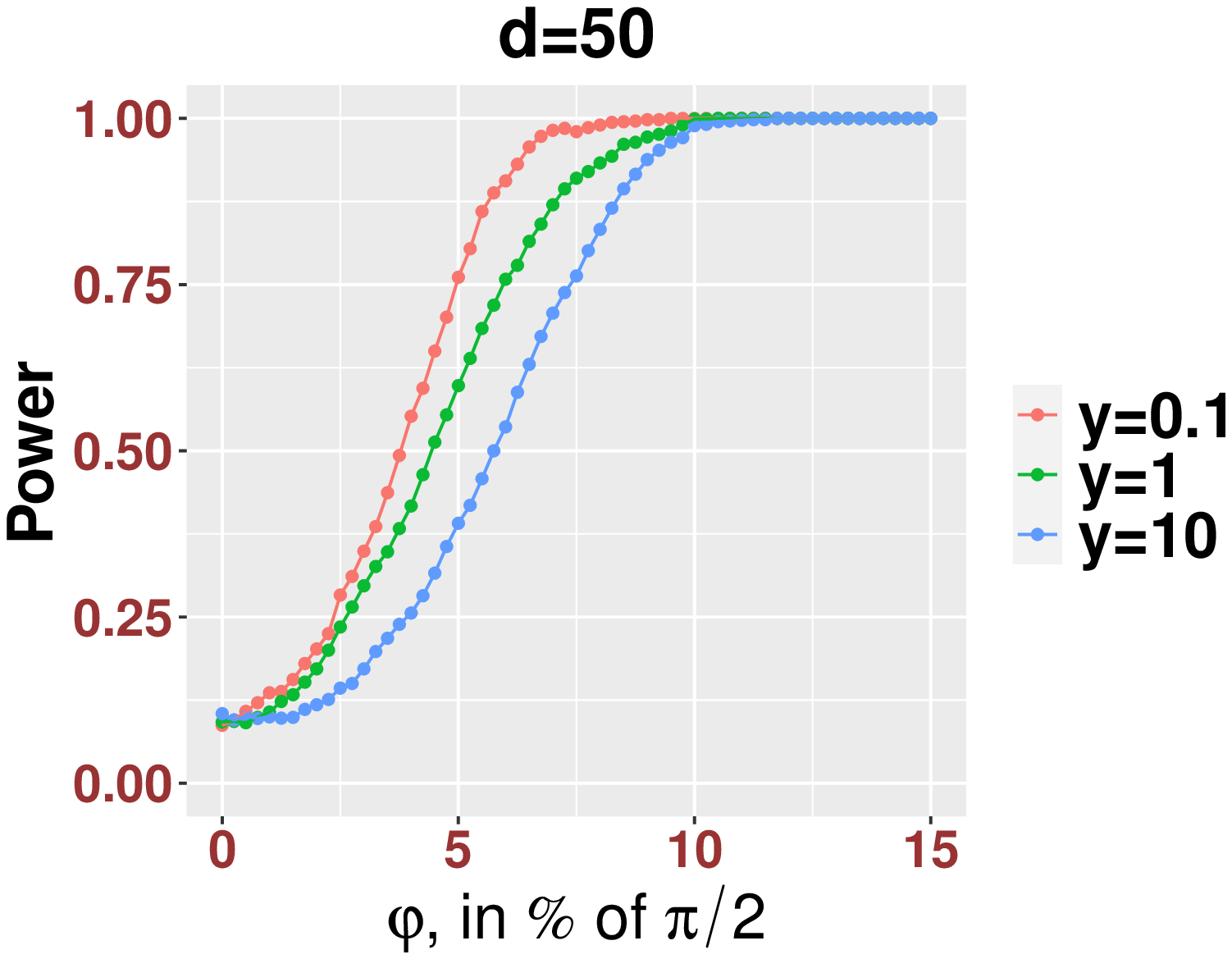}
\caption{$d=50.$}
\end{subfigure}
\caption{{  \footnotesize Power of Scenario B for Gaussian random variables using (\ref{eq_finalstat}).  We report our results under the nominal level 0.1 based on $2,000$ simulations.  Here $N=500$ and the critical values are generated using Corollary \ref{cor_simulateddistribution}. } }
\label{fig_figorttypeiipower}
\end{figure}

\section{Preliminaries} \label{s. pre}
In this section, we collect some basic notions and preliminary results which will be used in the proof of our main theorem. A key technical input is the isotropic local law from \cite{bloemendal2014isotropic, knowles2017anisotropic}.

\subsection{Basic notions}\label{basicnotations}

In the sequel,  we denote the Green function of $Q$ by
\begin{align}
G(z):=(Q-z)^{-1}, \quad z\in \mathbb{C}^+. \nonumber
\end{align}
The matrix $Q$ can be regarded as a finite-rank perturbation of the matrix $H:=XX^*$. In the sequel, we also need to consider $\mathcal{H}:=X^*X$ which shares the same non-zero eigenvalues with $H$. We further denote the Green functions of $H$ and $\mathcal{H}$ respectively  by
\begin{align}\label{def:green}
\mG_1(z):=(XX^*-z)^{-1}&, \qquad \mG_2(z):=(X^*X-z)^{-1}, \quad z\in \mathbb{C}^+,  
\end{align}
and their  normalized traces  by 
\begin{align*}
&m_{1N}(z):=\frac{1}{M}\text{Tr}\mG_1(z)=\int(x-z)^{-1}\,dF_{1N}(x),\nonumber\\
&m_{2N}(z):=\frac{1}{N}\text{Tr}\mG_2(z)= \int(x-z)^{-1}\,dF_{2N}(x),
\end{align*}
where  $F_{1N}(x)$, $F_{2N}(x)$ are the empirical spectral distributions of $H$ and $\mathcal{H}$ respectively, i.e.,
\begin{align}
F_{1N}(x):=\frac{1}{M}\sum_{i=1}^M \mathds{1}(\lambda_i(H)\leq x), \quad F_{2N}(x):=\frac{1}{N}\sum_{i=1}^N \mathds{1}(\lambda_i(\mathcal{H})\leq x). \nonumber
\end{align}
Here we used $\lambda_i(H)$ and $\lambda_i(\mathcal{H})$  to denote the $i$-th largest eigenvalue of $H$ and $\mathcal{H}$, respectively. 

It is well-known since \cite{MP67} that $F_{1N}(x)$ and $F_{2N}(x)$ converge weakly (a.s.) to the {\it Marchenko-Pastur} laws 
$\nu_{\text{MP},1}$ and $\nu_{\text{MP},2}$ (respectively) given below
\begin{align}
&\nu_{\text{MP},1}({\rm d}x):=\frac{1}{2\pi xy}\sqrt{\big((\lambda_+-x)(x-\lambda_-)\big)_+}{\rm d}x+(1-\frac{1}{y})_+\delta({\rm d}x),\nonumber\\
&\nu_{\text{MP},2}({\rm d}x):=\frac{1}{2\pi x}\sqrt{\big((\lambda_+-x)(x-\lambda_-)\big)_+}{\rm d}x+(1-y)_+\delta({\rm d}x), \label{19071801}
\end{align}
where $\lambda_{\pm}:=(1\pm \sqrt{y})^2$.  Note that here the parameter $y$ may be $N$-dependent. Hence, the weak convergence (a.s.) shall be understood as $\int g(x) {\rm d} F_{aN}(x)-\int g(x) \nu_{\text{MP},a}({\rm d}x) \stackrel{a.s.} \longrightarrow 0 $ for any given bounded continuous function $g:\mathbb{R}\to \mathbb{R}$, for $a=1,2$.  We further denote by $F_\alpha$ the cumulative distribution function of $\nu_{\text{MP},\alpha}$ for $\alpha=1,2$. 
Note that $m_{1N}$ and $m_{2N}$ can be regarded as the Stieltjes transforms of $F_{1N}$ and $F_{2N}$, respectively.  We further define their deterministic counterparts, i.e.,  Stieltjes transforms of $\nu_{\text{MP},1},\nu_{\text{MP},2}$,  by $m_1(z),m_2(z)$,  respectively, i.e.,
 \begin{align}
 m_1(z):=\int (x-z)^{-1}\nu_{\text{MP},1}({\rm d}x), \quad m_2(z):=\int (x-z)^{-1}\nu_{\text{MP},2}({\rm d}x). \nonumber
 \end{align}
 From the definition (\ref{19071801}), it is elementary to compute 
 \begin{align}
&m_1(z)=\frac{1-y-z+\ii\sqrt{(\lambda_+-z)(z-\lambda_-)}}{2zy},  \nonumber\\
&m_2(z)=\frac{y-1-z+\ii\sqrt{(\lambda_+-z)(z-\lambda_-)}}{2z}, \label{m1m2}
 \end{align}
where the square root is taken with a branch cut on the negative real axis. Equivalently, we can also characterize $m_1(z),m_2(z)$ as the unique solutions from $\mathbb{C}^+$ to $\mathbb{C}^+$ to the equations
\begin{align}
zym_1^2+[z-(1-y)]m_1+1=0, \qquad  zm_2^2+[z+(1-y)]m_2+1=0. \label{selfconeqt}
\end{align} 
Using \eqref{m1m2} and \eqref{selfconeqt}, one can easily derive the following identities  
\begin{align}
m_1=-\frac{1}{z(1+m_2)}, \quad 1+zm_1=\frac{1+zm_2}{y},\quad m_1\big((zm_2)'+1\big)=\frac{m_1'}{m_1}, \label{identitym1m2}
\end{align}
which will be used in the later discussions.

\subsection{Isotropic local law}
In this section, we state the isotropic local law from \cite{bloemendal2014isotropic, knowles2017anisotropic} together with some  consequences which will serve as the main technical inputs in the proofs. But before the statements of these estimates, we first introduce a basic lemma about  the notion ``stochastic domination" introduced in Definition \ref{def.sd}. 
\begin{lem} \label{prop_prec} Let
	\begin{equation*}
   {X}_i=({X}_{N,i}(u):  N \in \mathbb{N}, \ u \in {U}_{N}), \   {Y}_i=({Y}_{N,i}(u):  N \in \mathbb{N}, \ u \in {U}_{N}),\quad i=1,2
	\end{equation*}
	be families of  random variables, where ${Y}_i, i=1,2,$ are nonnegative, and ${U}_{N}$ is a possibly $N$-dependent parameter set.	Let 
	\begin{align*}
	\Phi=(\Phi_{N}(u): N \in \mathbb{N}, \ u \in {U}_{N})
	\end{align*}
	be a family of deterministic nonnegative quantities. We have the following results:
	
(i)	If ${X}_1 \prec {Y}_1$ and ${X}_2 \prec {Y}_2$ then ${X}_1+{X}_2 \prec {Y}_1+{Y}_2$ and  ${X}_1{X}_2 \prec {Y}_1 {Y}_2$.

 (ii) Suppose ${X}_1 \prec \Phi$, and there exists a constant $C>0$ such that  $|{X}_{N,1}(u)| \leq N^C$ a.s.\ and $\Phi_{N}(u)\geq N^{-C}$ uniformly in $u$ for all sufficiently large $N$. Then $\mathbb{E} {X}_1 \prec \Phi$.
\end{lem}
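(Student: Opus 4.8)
\textbf{Proof plan for Lemma \ref{prop_prec}.}

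The plan is to prove both parts directly from Definition \ref{def.sd}, using only a union bound and elementary estimates on probabilities. Throughout, fix small $\varrho>0$ and large $\phi>0$; the goal in each case is to produce, for all large $N$, a bound of the form $\sup_u \mathbb{P}(\cdots) \le N^{-\phi}$.

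For part (i), first consider the sum. From $X_1\prec Y_1$ and $X_2\prec Y_2$, I would apply the definition with the \emph{same} exponent $\varrho/2$ (say) in place of $\varrho$ and with the \emph{same} inflated $\phi'$ (e.g. $\phi'=\phi+1$), obtaining, uniformly in $u$, $\mathbb{P}(|X_{N,1}(u)|>N^{\varrho/2}Y_{N,1}(u))\le N^{-\phi'}$ and likewise for $X_2$, for all $N\ge N_0(\varrho,\phi)$. On the complement of these two events we have $|X_{N,1}(u)+X_{N,2}(u)| \le N^{\varrho/2}(Y_{N,1}(u)+Y_{N,2}(u)) \le N^{\varrho}(Y_{N,1}(u)+Y_{N,2}(u))$ for $N$ large, so a union bound gives $\sup_u\mathbb{P}(|X_{N,1}(u)+X_{N,2}(u)|>N^{\varrho}(Y_{N,1}(u)+Y_{N,2}(u))) \le 2N^{-\phi'}\le N^{-\phi}$ for $N$ large. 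For the product, I would similarly intersect the events $\{|X_{N,1}(u)|\le N^{\varrho/2}Y_{N,1}(u)\}$ and $\{|X_{N,2}(u)|\le N^{\varrho/2}Y_{N,2}(u)\}$; on this intersection $|X_{N,1}(u)X_{N,2}(u)|\le N^{\varrho}Y_{N,1}(u)Y_{N,2}(u)$, and the same union bound closes the argument. All estimates are uniform in $u$ because the inputs are, so uniformity is preserved.

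For part (ii), the idea is a standard truncation. Fix $\varrho>0$; I want to show $\mathbb{E}|X_{N,1}(u)| \le N^{\varrho}\Phi_N(u)$ uniformly in $u$ for $N$ large. Split the expectation according to the event $\mathcal{A}_u:=\{|X_{N,1}(u)|\le N^{\varrho/2}\Phi_N(u)\}$. On $\mathcal{A}_u$, $\mathbb{E}|X_{N,1}(u)|\mathds{1}_{\mathcal{A}_u}\le N^{\varrho/2}\Phi_N(u)$. On $\mathcal{A}_u^c$, I use the deterministic bound $|X_{N,1}(u)|\le N^C$ together with $X_1\prec\Phi$, applied with exponent $\varrho/2$ and with $\phi$ chosen as, say, $C+C+1$ (using $\Phi_N(u)\ge N^{-C}$), to get $\mathbb{P}(\mathcal{A}_u^c)\le N^{-(2C+1)}$; hence $\mathbb{E}|X_{N,1}(u)|\mathds{1}_{\mathcal{A}_u^c}\le N^{C}\cdot N^{-(2C+1)} = N^{-C-1}\le N^{-1}\Phi_N(u)$. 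Adding the two pieces yields $\mathbb{E}|X_{N,1}(u)|\le (N^{\varrho/2}+N^{-1})\Phi_N(u)\le N^{\varrho}\Phi_N(u)$ for $N$ large, uniformly in $u$, which is exactly $\mathbb{E}X_1\prec\Phi$.

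The only mild subtlety — the ``main obstacle'', though it is routine — is bookkeeping of the quantifiers: one must choose the auxiliary exponents ($\varrho/2$) and auxiliary powers ($\phi'$, $2C+1$) \emph{before} invoking the hypotheses, so that the resulting $N_0$ depends only on $(\varrho,\phi)$ (and the fixed constant $C$), and one must verify that all intermediate inequalities such as $N^{\varrho/2}\le N^{\varrho}$ and $N^{-C-1}\le N^{-1}\Phi_N(u)$ hold for all $N$ beyond that threshold; the uniformity in $u$ throughout is immediate since every hypothesis is assumed uniform in $u$.
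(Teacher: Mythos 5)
Your proof is correct and follows essentially the same route as the paper: part (i) is a direct definition chase with a union bound (which the paper simply calls obvious), and part (ii) is the same truncation argument the paper uses, splitting $\mathbb{E}|X_1|$ at the stochastic-domination threshold and controlling the bad event via the a.s.\ bound $|X_1|\le N^C$, the lower bound $\Phi\ge N^{-C}$, and a sufficiently large choice of $\phi$. The quantifier bookkeeping you flag is handled correctly, so nothing is missing.
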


\begin{proof}
	Part (i) is obvious from Definition \ref{def.sd}. For any fixed $\varrho>0$, we have
\begin{align*}
	|\mathbb{E} X_1| &\leq \mathbb{E} |X_1\mathds{1}(|X_1|\leq  N^{\varrho}\Phi)|+\mathbb{E} |X_1\mathds{1}(|X_1|\geq  N^{\varrho}\Phi)|\nonumber\\
	&\leq N^{\varrho}\Phi+ N^C \mathbb{P}(|X_1|\geq N^{\varrho}\Phi)=O(N^{\varrho}\Phi)
\end{align*}
	for for sufficiently large $N\geq N_0(\varrho)$. This proves part (ii). 
\end{proof}

In the sequel, we state the isotropic local law and related estimates.  We first introduce the following domain. For a small (but fixed) $\tau>0$, we set
\begin{align}
{\mathscr{D}}\equiv {\mathscr{D}}(\tau):=\big\{z=E+\ii\eta\in\mathbb{C}:E\geq \lambda_{+}+N^{-\frac 23+\tau}, 0<\eta\leq \tau^{-1} \big\}. \label{19071810}
\end{align}
Conventionally,  for $a=1,2$,  we denote by $\mathcal{G}_a^l$ and $\mathcal{G}_a^{(l)}$  the $l$-th power of $\mathcal{G}_a$ and the $l$-th derivative of $\mathcal{G}_a$ w.r.t. $z$, respectively. 
With the above notation, we have the following theorem.
\begin{thm} \label{isotropic}
Let  $\tau>0$ in (\ref{19071810}) be a small but fixed constant. Let  $\bu,\bv$ be complex deterministic unit vectors of proper dimensions. Suppose $X$ satisfies Assumption \ref{assumption}. Then,  for any given $l\in\mathbb{N}$ and $\alpha=1,2$, we have  
\begin{align}
&|\langle \bu,\mG_\alpha^{(l)}(z)\bv\rangle-m_\alpha^{(l)}(z)\langle \bu,\bv\rangle|\prec \frac{\min\{ (\kappa+\eta)^{-2}, (\kappa+\eta)^{-\frac 14}\}}{(\kappa+\eta)^{l}\sqrt N},  \label{est.DG}
\\
&|\langle \bu,X^*\mG_1^{l+1}(z)\bv\rangle|\prec  \sqrt{\frac{\Im m_1(z)}{N\eta}} \frac{1}{(\kappa+\eta)^{l}}, \quad  |\langle  \bu,X\mG_2^{l+1}(z) \bv\rangle|\prec  \sqrt{\frac{\Im m_2(z)}{N\eta}} \frac{1}{(\kappa+\eta)^{l}}, \label{081501}
\\
&|m_{\alpha N}^{(l)}(z)-m^{(l)}_\alpha(z)|=O_\prec\Big(\frac{1}{N(\kappa+\eta)^{l+1}}\Big)
\label{est_m12N}, 
\end{align}
uniformly in $z\in{\mathscr{D}} $, where $\kappa=E-\lambda_+$. Further, when $\kappa>K$ for some sufficiently large constant $K>0$ and $|y-1|\geq \tau_0$ for any small but fixed $\tau_0>0$,  (\ref{est_m12N}) can be improved to 
\begin{align}
|m_{\alpha N}^{(l)}(z)-m^{(l)}_\alpha(z)|=O_\prec\Big(\frac{1}{N(\kappa+\eta)^{l+2}}\Big).
\label{est_m12N for large z}
\end{align}
\end{thm}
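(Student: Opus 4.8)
The plan is to deduce everything from the isotropic and averaged local laws for sample covariance matrices proved in \cite{bloemendal2014isotropic, knowles2017anisotropic}, treating the cases $l=0$ as the input and getting the derivatives by Cauchy's integral formula. First I would record the $l=0$ versions of \eqref{est.DG} and \eqref{081501}: the isotropic law gives $|\langle \bu,\mG_\alpha(z)\bv\rangle-m_\alpha(z)\langle\bu,\bv\rangle|\prec \sqrt{\Im m_\alpha(z)/(N\eta)}+(N(\kappa+\eta))^{-1}$, while the off-diagonal entries $\langle\bu,X^*\mG_1(z)\bv\rangle$, $\langle\bu,X\mG_2(z)\bv\rangle$, whose deterministic limit vanishes, satisfy $\prec\sqrt{\Im m_\alpha(z)/(N\eta)}$ (e.g.\ via the isotropic law for the off-diagonal block of the resolvent of the Hermitian linearization of $X$). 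On $\mathscr{D}$ with $\kappa+\eta$ bounded, the square-root edge behaviour $\Im m_\alpha(z)\asymp\eta(\kappa+\eta)^{-1/2}$ and $|m_\alpha(z)|\asymp1$ turn the first term into $(\kappa+\eta)^{-1/4}N^{-1/2}$, and since $N^{1/2}(\kappa+\eta)^{3/4}\gg1$ on $\mathscr{D}$ this dominates $(N(\kappa+\eta))^{-1}$, which produces the $\min$ with $(\kappa+\eta)^{-1/4}$. For $\kappa+\eta$ large this is not sharp and I would obtain the $(\kappa+\eta)^{-2}$ factor from a short resolvent expansion $\mG_1(z)=-\sum_{k=0}^{L}z^{-(k+1)}(XX^*)^k+z^{-(L+1)}(XX^*)^{L+1}\mG_1(z)$, with the analogous expansion of $m_1(z)$ in powers of $z^{-1}$ and moments $c_k=\int x^k\,\nu_{\mathrm{MP},1}(\mathrm dx)$: the $k=0$ term cancels exactly (as $c_0=1$), the $k=1$ term is $\prec(\kappa+\eta)^{-2}\,|\langle\bu,(XX^*-I)\bv\rangle|\prec(\kappa+\eta)^{-2}N^{-1/2}$ since $c_1=1$, the remaining terms are $\prec(\kappa+\eta)^{-2}N^{-1/2}$ for $\kappa+\eta\ge1$, and the tail is controlled by $\|\mG_1(z)\|\le(\kappa+\eta)^{-1}$ and $\|X\|_{\mathrm{op}}\prec1$; the case $\alpha=2$ is identical.

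For the derivatives I would apply Cauchy's formula $\mG_\alpha^{(l)}(z)=\frac{l!}{2\pi\ii}\oint_\Gamma\mG_\alpha(\zeta)(\zeta-z)^{-l-1}\,\mathrm d\zeta$ — and the same identity with $\mG_\alpha$ replaced by $m_\alpha$, and with $X^*\mG_1$ or $X\mG_2$ in place of $\mG_\alpha$, using $\frac{d^l}{dz^l}(X^*\mG_1)=l!\,X^*\mG_1^{l+1}$ — along a circle $\Gamma$ of radius $\rho\asymp\kappa+\eta$ centred at $z$. On $\Gamma$ one has $\kappa(\zeta)+\eta(\zeta)\asymp\kappa+\eta$ and $\Im m_\alpha(\zeta)\asymp\Im m_\alpha(z)$, so the $l=0$ bounds hold uniformly on $\Gamma$ and the integral contributes a factor $\rho^{-l}\asymp(\kappa+\eta)^{-l}$, yielding \eqref{est.DG} and \eqref{081501}. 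Estimate \eqref{est_m12N} comes the same way from the averaged local law $|m_{\alpha N}(z)-m_\alpha(z)|\prec(N(\kappa+\eta))^{-1}$ on $\mathscr{D}$. For \eqref{est_m12N for large z} I would run the trace analogue of the resolvent expansion: $m_{1N}(z)=-z^{-1}-z^{-2}\,\tfrac1M\tr XX^*-z^{-3}\,\tfrac1M\tr(XX^*\mG_1(z)XX^*)-\cdots$, using that $\tfrac1M\tr XX^*-1\prec N^{-1}$ (its mean is $1$, its variance $O(N^{-2})$) so that the leading fluctuation is $\prec(\kappa+\eta)^{-2}N^{-1}$, comparing term by term with the expansion of $m_1(z)$, and invoking the stability of the self-consistent equation far from the spectrum (following \cite{XYY}, whence the restriction $|y-1|\ge\tau_0$, needed to keep the lower edge $\lambda_-=(1-\sqrt y)^2$ and all implicit constants under control and to choose $K$ uniformly); a final Cauchy-formula step as above handles general $l$, and the same argument applies to $m_{2N}$.

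The hard part will be the contour argument near the corner of $\mathscr{D}$ where $\kappa\asymp N^{-2/3+\tau}$ while $\eta$ is of order one: there the natural radius $\rho\asymp\kappa+\eta\asymp1$ would push $\Gamma$ outside $\{\Re\zeta>\lambda_++N^{-2/3+\tau}\}$, the region on which the input laws are available. I would resolve this by deforming $\Gamma$, keeping its diameter $\asymp\kappa+\eta$ so the $\rho^{-l}$ gain is preserved, into a contour that stays in $\{\Re\zeta>\lambda_++N^{-2/3+\tau}\}$ and dips into the lower half-plane, where the estimates continue to hold by the reflection symmetry $\mG_\alpha(\bar\zeta)=\overline{\mG_\alpha(\zeta)}$ valid away from the spectrum; eigenvalue rigidity (no eigenvalue of $XX^*$ above $\lambda_++N^{-2/3+\tau/2}$ with high probability) ensures $\mG_\alpha$ is well defined along such a contour. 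A secondary, routine point is uniformity in $z$: the input bounds are uniform over the relevant domains and the Cauchy contours are deterministic, so stochastic domination is preserved, and any residual gaps are closed by a standard net-plus-Lipschitz argument.
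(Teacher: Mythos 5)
Your overall architecture matches the paper's (take the $l=0$ isotropic/averaged laws as input, upgrade to $l\ge 1$ by Cauchy's formula on a contour at distance $\asymp\kappa+\eta$ from $z$, treat the far-right regime by expansion), but the step you yourself single out as ``the hard part'' is resolved incorrectly. To retain the $(\kappa+\eta)^{-l}$ gain you need $|\zeta-z|\gtrsim\kappa+\eta$ on \emph{all} of $\Gamma$, not merely a diameter of that size. Now take the corner you describe: $z=E+\ii\eta$ with $E=\lambda_++\kappa$, $\kappa\asymp N^{-2/3+\tau}$, $\eta\asymp 1$. Any closed contour winding once around $z$ must cross the leftward ray $\{x+\ii\eta:\,x\le E\}$, and if it keeps distance $\ge c(\kappa+\eta)\asymp c$ from $z$ the crossing point has $x_0\le E-c(\kappa+\eta)=\lambda_++\kappa-c(\kappa+\eta)<\lambda_+$ for large $N$. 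So the contour cannot stay in $\{\Re\zeta>\lambda_++N^{-2/3+\tau}\}$, and dipping into the lower half-plane is irrelevant because the obstruction is horizontal, not vertical. If you instead squeeze $\Gamma$ into that strip, its distance to $z$ degenerates to at most $\kappa$ (and when $\kappa$ sits exactly at the boundary value no enclosing contour exists at all), so Cauchy's estimate returns $\kappa^{-l}$ in place of $(\kappa+\eta)^{-l}$, a loss of order $N^{(2/3-\tau)l}$ that destroys \eqref{est.DG} and \eqref{081501} for $l\ge1$. The workable remedy, implicit in the paper's circle of radius $|z-\lambda_+|/2$ centred at $z$, is the opposite of what you propose: let the contour pass over the bulk. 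In your corner the circle stays in the upper half-plane at height $\gtrsim\eta\asymp1$, where the ordinary local laws on the standard bounded domain give error $\prec N^{-1/2}$, which suffices since there $\kappa+\eta\asymp1$ and the target is $\asymp N^{-1/2}(\kappa+\eta)^{-l}$; the reflection-plus-rigidity argument you invoke is only needed in the opposite regime $\kappa\gtrsim\eta$, where the circle indeed stays to the right of $\lambda_++\kappa/2$. (Relatedly, the blanket claim $\Im m_\alpha(\zeta)\asymp\Im m_\alpha(z)$ on $\Gamma$ fails as stated once $\Gamma$ leaves the upper half-plane.)

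A second gap concerns \eqref{081501} at $l=0$: you take $|\langle\bu,X^*\mG_1\bv\rangle|\prec\sqrt{\Im m_1/(N\eta)}$ as an off-the-shelf input on all of $\mathscr D$, but $\mathscr D$ contains arbitrarily large $E$, and the cited sources do not directly give this there: the anisotropic law of \cite{knowles2017anisotropic} is stated for fixed $z$ on a bounded domain, while Proposition 3.8 of \cite{bloemendal2016principal} covers the symmetric quadratic forms of $\mG_1$ only. This is exactly the point the paper flags and closes via the polarization identity $2\langle\bu,X^*\mG_1\bv\rangle=(X\bu+\bv)^*\mG_1(X\bu+\bv)-(X\bu)^*\mG_1(X\bu)-\bv^*\mG_1\bv$ followed by the extension argument applied to each symmetric form; your Neumann-expansion device, which would also do the job here since $\langle\bu,X^*(XX^*)^k\bv\rangle$ has mean zero and fluctuation $\prec N^{-1/2}$, is deployed only for $\mG_1$ itself, i.e.\ for the $(\kappa+\eta)^{-2}$ regime of \eqref{est.DG}. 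Apart from these two points your deviations are legitimate alternatives: the expansion for the $(\kappa+\eta)^{-2}$ part of \eqref{est.DG} and the trace expansion for \eqref{est_m12N for large z} replace, respectively, the paper's citation of the extended-domain law and its integration by parts against $F_{\alpha N}-F_\alpha$ using $\sup_x|F_{\alpha N}(x)-F_\alpha(x)|\prec N^{-1}$; note it is through this rigidity/Kolmogorov input near the hard edge, not stability of the self-consistent equation, that the restriction $|y-1|\ge\tau_0$ enters the paper's argument.
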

\begin{rmk}
The case of $l=0$ of (\ref{est.DG}) and  (\ref{081501}) is  from the isotropic law for extended spectral domain in  Proposition 3.8 of \cite{bloemendal2016principal} which is derived from Theorem 3.12 of \cite{bloemendal2014isotropic} and the anisotropic laws in Theorem 3.7 of   \cite{knowles2017anisotropic}.  We emphasize that the extension to all $z$ for (\ref{081501}) is not directly included in Proposition 3.8 of \cite{bloemendal2016principal}. Nevertheless, the approach used there can be adapted to prove  (\ref{081501}) for all $z$, starting from the result in \cite{knowles2017anisotropic}  which is stated for fixed $z$ only. Specifically, writing $2\langle \bu,X^*\mG_1(z)\bv\rangle=(X\bu+\bv)^*\mG_1(z)(X\bu+\bv)-(X\bu)^*\mG_1(z)(X\bu)-\bv^*\mG_1(z)\bv$, one can use the approach for the extension in \cite{bloemendal2016principal} to each symmetric quadratic forms and conclude the extension of the estimate for $\langle \bu,X^*\mG_1^{l}(z)\bv\rangle$. 
For other $l\geq 1$, we can derive the estimate easily from the case $l=0$ by using Cauchy integral with  the radius of the contour taking value $|z-\lambda_+|/2 \asymp \kappa+\eta$ . We also remark here that the original isotropic local laws in \cite{bloemendal2014isotropic, knowles2017anisotropic} were stated in much larger domains which also include the bulk and  edge regimes of the MP law. But here we only need the result for the domain far away from the support of the MP law. The (\ref{est_m12N}) can be obtained by the rigidity estimates of eigenvalues in \cite[Theorem 3.3]{PY14}  and the definition of the Stieltjes transform;  see e.g. \cite[Remark 4.5]{BDW} for more details.  For (\ref{est_m12N for large z}), denoting by $F_\alpha$ the cumulative distribution function of the measure $\nu_{\text{MP},\alpha}$ and using integration by parts, we have
\begin{align*}
m_{\alpha N}^{(l)}(z)-m^{(l)}_\alpha(z) &\asymp \int \frac{1}{(\lambda-z)^{\ell+1}} {\rm d} (F_{\alpha N}(\lambda)-F_\alpha(\lambda))\nonumber\\
&\asymp \int \frac{1}{(\lambda-z)^{\ell+2}}  (F_{\alpha N}(\lambda)-F_\alpha(\lambda)) {\rm d} \lambda\notag\\
&=O_\prec \Big(\frac{1}{N}\int_{0}^{\lambda_++N^{-\frac 23+\frac{\tau}{2}}} \frac{1}{|\lambda-z|^{\ell+2}} {\rm d}\lambda\Big ),
\end{align*}
where the last step follows from the rigidity results in Lemma \ref{rigidity of H} below and its consequence on the convergence rate,  $\sup_x|F_{\alpha N}(x)-F_\alpha(x)|\prec \frac{1}{N}$; see for instance \cite{PY14}. 
Then  (\ref{est_m12N for large z}) follows by elementary calculation. 
\end{rmk}

Further,  in the following lemma, we collect some basic estimates of $m_1$ and $m_2$ which can be verified by elementary computations.

\begin{lem} \label{lem.19072501*}
 Recall the definition of $m_1$ and $m_2$ in \rf{m1m2}. For $a=1,2$, we have
\begin{align}
|zm_{a}(z)|\asymp 1,\quad  |1+zm_a(z)|\asymp \min\{1,(\kappa+\eta)^{-1}\},
 \quad 
{\Im} m_{a}(z) \asymp \frac{\eta}{|z|^{\frac 32}\sqrt{\kappa+\eta}},  \label{estm1m2*}
\end{align}
and 
\begin{align}  \label{estm1m2**}
 |z^{\frac 32}m_{a}^{(l)}(z)|\asymp (\kappa+\eta)^{-l+\frac 12},
\quad 
|(zm_a)^{(l)}| \asymp \frac{(\kappa+\eta)^{-l+\frac 12}}{|z|^{\frac 32}},  \text{ for } l\geq 1
 \end{align}
uniformly in $z\in{\mathscr{D}}$ for some positive constant $C$, where $\kappa=E-\lambda_+$.
\end{lem}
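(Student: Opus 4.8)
**Proof proposal for Lemma \ref{lem.19072501*}.**

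The plan is to obtain all of \eqref{estm1m2*} and \eqref{estm1m2**} by elementary real/complex analysis starting from the explicit formulas \eqref{m1m2}, using only the geometry of the domain $\mathscr D$ in \eqref{19071810}. Throughout I write $z = E + \ii\eta$ with $E \ge \lambda_+ + N^{-2/3+\tau}$ and $0 < \eta \le \tau^{-1}$, and set $\kappa = E - \lambda_+ \ge 0$. First I would record the auxiliary bounds $|z| \asymp 1$ on $\mathscr D$ — the lower bound because $E \ge \lambda_+ = (1+\sqrt y)^2 \ge 1$ and the upper bound because $E$ ranges over a bounded set once we observe (as is standard, since the outliers $\mu_i$ remain $O(1)$ under the assumptions; in fact for the statement here one restricts to the relevant compact part of $\mathscr D$, or one simply notes the estimates are used only for such $z$) that $E$ is bounded; and the key quantity $w(z) := (\lambda_+ - z)(z - \lambda_-)$, whose square root appears in $m_a$. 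Since $E \ge \lambda_+$ we have $\Re(\lambda_+ - z) = \lambda_+ - E \le -\kappa \le 0$, so $\lambda_+ - z$ lies in the closed left half-plane, and $z - \lambda_- $ has $\Re(z-\lambda_-) = E - \lambda_- \ge \lambda_+ - \lambda_- = 4\sqrt y \asymp 1 > 0$; hence $|z - \lambda_-| \asymp 1$ and $|\lambda_+ - z| \asymp \kappa + \eta$ (the latter because $|\lambda_+ - z|^2 = \kappa^2 + \eta^2 \asymp (\kappa+\eta)^2$). Therefore $|w(z)| \asymp \kappa + \eta$, and a short argument with the branch cut on the negative axis shows $|\sqrt{w(z)}| \asymp \sqrt{\kappa+\eta}$ and, more precisely, $\Im\sqrt{w(z)} \asymp \eta/\sqrt{\kappa+\eta}$ (one checks this by writing $w(z) = -(z - \lambda_+)(z - \lambda_-)$ and tracking the imaginary part: $\Im w = -\eta(E - \lambda_-) - \eta(E-\lambda_+) \asymp -\eta$ when $\kappa \lesssim 1$, while when $\kappa \gtrsim 1$ one expands the square root directly).

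With these in hand, \eqref{estm1m2*} follows directly. From \eqref{m1m2}, $z m_1(z) = \tfrac{1}{2y}\big(1 - y - z + \ii\sqrt{w(z)}\big)$; since $|z| \asymp 1$ and $y \in (\tau_1,\tau_2)$, we get $|z m_1(z)| \lesssim 1$, and for the matching lower bound I would use the self-consistent equation \eqref{selfconeqt}: $z y m_1^2 + [z - (1-y)] m_1 + 1 = 0$ forces $m_1 \neq 0$, and dividing by $m_1$ gives $z y m_1 + [z-(1-y)] + m_1^{-1} = 0$, from which $|m_1| \gtrsim 1$ follows by noting $|m_1|$ cannot be too small without making the constant term $1$ unbalanced — equivalently one reads it off from the explicit formula once the $\ii\sqrt{w}$ term is controlled. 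The estimate $|1 + z m_1(z)| \asymp \min\{1, (\kappa+\eta)^{-1}\}$ uses the second identity in \eqref{identitym1m2}, $1 + z m_1 = (1 + z m_2)/y$, together with $1 + z m_2(z) = \tfrac12\big(y - 1 - z + \ii\sqrt{w(z)} + 2\big)$... more cleanly, from \eqref{selfconeqt} one has $z m_2(1 + m_2) = -(1+zm_2) \cdot$(something); the cleanest route is: $1 + z m_a$ is, up to the bounded factor, $\ii\sqrt{w(z)}\,/z$ plus a bounded piece, and one checks that when $\kappa + \eta \lesssim 1$ the leading behavior is $O(1)$, while when $\kappa + \eta \gtrsim 1$ a cancellation in $\tfrac{1-y-z+\ii\sqrt w}{2zy}\cdot z$ forces $1 + zm_1 \asymp (\kappa+\eta)^{-1}$ — this last cancellation is the standard one giving $m_a(z) \sim -1/z$ for large $z$. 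Finally $\Im m_a(z) \asymp \eta / (|z|^{3/2}\sqrt{\kappa+\eta})$: from \eqref{m1m2}, $\Im m_a(z) = \tfrac{1}{2|z|^2}\big(\eta \cdot (\text{bounded}) + \Re z \cdot \Im\sqrt{w(z)}/|z|\cdot(\dots)\big)$; more directly, $\Im(z m_a) = \tfrac{1}{2y}\Im\sqrt{w(z)} \asymp \eta/\sqrt{\kappa+\eta}$, and since $m_a = (z m_a)/z$ with $|z|\asymp 1$ one gets $\Im m_a \asymp \eta/\sqrt{\kappa+\eta}$, matching the claim with $|z|^{3/2} \asymp 1$; I would be careful to present the $|z|$-dependence so the formula is literally as stated (it is written to be robust even where $|z|$ is not assumed bounded).

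For \eqref{estm1m2**}, the cleanest approach is the Cauchy integral formula rather than brute-force differentiation. Fix $z \in \mathscr D$ and integrate $m_a$ over the circle of radius $\rho := |z - \lambda_+|/2 \asymp \kappa+\eta$ centered at $z$; on this circle the analyticity of $m_a$ off $[\lambda_-,\lambda_+]$ (the circle stays a distance $\asymp \kappa+\eta$ from the support) and the already-established bound $|m_a(w)| \asymp |(w m_a)(w)|/|w|$, combined with $|w m_a(w)| \lesssim \sqrt{\kappa+\eta}/|w|^{1/2}$-type control from \eqref{estm1m2*} on the enlarged contour, give $|m_a^{(l)}(z)| \lesssim l!\,\rho^{-l}\sup_{\text{circle}}|m_a| \asymp (\kappa+\eta)^{-l}\cdot(\kappa+\eta)^{-1/2}\cdot|z|^{-3/2}$, which is the upper half of the first line of \eqref{estm1m2**}; the matching lower bound comes from differentiating the explicit formula \eqref{m1m2} once and noting the derivative of $\sqrt{w(z)}$ contributes exactly a term of size $(\kappa+\eta)^{-1/2}$, with no cancellation for $l=1$, and then bootstrapping. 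The bound for $(zm_a)^{(l)}$ is entirely analogous, applying Cauchy to $z m_a(z)$, which by \eqref{m1m2} equals an explicit affine function of $z$ plus $\tfrac{\ii}{2y}\sqrt{w(z)}$ (for $a=1$), so all derivatives of order $\ge 1$ come from $\sqrt{w(z)}$ and inherit the same $(\kappa+\eta)^{-l+1/2}$ scaling. I do not expect any of this to be hard; the only mildly delicate point — and hence the step I would spend the most care on — is keeping track of the exact $|z|$-powers in the statement and verifying the lower bounds (the $\gtrsim$ direction), since those require ruling out accidental cancellations in the explicit formulas and are where a careless expansion of $\sqrt{w(z)}$ near $\kappa+\eta \asymp 1$ could go wrong.
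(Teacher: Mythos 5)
Your reduction to $|z|\asymp 1$ is a genuine gap, not a presentational convenience. The domain $\mathscr D$ in \rf{19071810} has no upper bound on $E$, and the lemma is invoked in the paper precisely at $z=\theta(d_i)+\ii N^{-K}$ with $|z|\asymp\theta(d_i)\asymp d_i$ (Remark \ref{Rmk:UpsilonDelta}), where Assumption \ref{supercritical} allows $d_i\sim N^{c}$; the explicit $|z|$-powers in \rf{estm1m2*} and \rf{estm1m2**} exist exactly to cover this regime, and the downstream uses ($m_1=O(d_i^{-1})$, $1+zm_2=O(d_i^{-1})$, $m_1'=O(d_i^{-1}\delta_{i0}^{-1})$, repeatedly in Section \ref{secgfr} and Appendix \ref{sec:prop}) are statements about large $|z|$. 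Your parenthetical justifications --- that the outliers stay $O(1)$, or that one may restrict to a compact part of $\mathscr D$ --- are both false here, so the argument as written proves the lemma only where it is easy and silently discards the case it was designed for.

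Moreover, in the unbounded regime the two-sided bounds do not follow term by term from \rf{m1m2}: they rest on cancellations that your sketch explicitly denies. With the paper's branch one has $\ii\sqrt{(\lambda_+-z)(z-\lambda_-)}=z-(1+y)+O(|z|^{-1})$ as $E\to\infty$, so in $(zm_1)'=\frac{1}{2y}\bigl(-1+(\ii\sqrt{w})'\bigr)$ the two terms cancel to leading order, and the true size $(zm_1)'\asymp|z|^{-2}\asymp(\kappa+\eta)^{-1/2}|z|^{-3/2}$ is exactly the result of that cancellation --- contrary to your claim that the derivative of $\sqrt{w}$ alone sets the size ``with no cancellation for $l=1$''. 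The same mechanism produces $|1+zm_1|\asymp(\kappa+\eta)^{-1}$ and $|m_a^{(l)}|\asymp|z|^{-(l+1)}$ for large $E$; the clean route to the matching lower bounds there is the representation $m_a^{(l)}(z)=l!\int(x-z)^{-(l+1)}\,\nu_{\mathrm{MP},a}(\mathrm{d}x)$, where for large $E$ all arguments of $(x-z)^{-(l+1)}$ are nearly aligned, rather than the algebraic formula. Separately, your Cauchy-integral step is off even in the bounded regime: applying Cauchy directly to $m_a$ with radius $\rho\asymp\kappa+\eta$ and the correct bound $|m_a|\asymp|z|^{-1}$ from \rf{estm1m2*} (note $|wm_a(w)|\asymp 1$, not $\lesssim\sqrt{\kappa+\eta}/|w|^{1/2}$ as you write) yields only $|m_a^{(l)}|\lesssim(\kappa+\eta)^{-l}|z|^{-1}$, which misses the claimed rate by the factor $\bigl((\kappa+\eta)/|z|\bigr)^{1/2}$; to recover \rf{estm1m2**} one must split off the rational part of $m_a$ and track the derivatives of $\sqrt{w(z)}/z$ (together with the large-$|z|$ cancellation above), which is the elementary computation the paper has in mind.
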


\begin{rmk} \label{boundrmk*}
By Theorem \ref{isotropic}, Lemma \ref{lem.19072501*} and Lemma \ref{lemrelation} , we can easily bound  $(\kappa+\eta)^l z\bu^*\mG_1^{(l)}\bv $, $(\kappa+\eta)^l z\bu^*X^*\mG_1^{(l)}\bv$ and $(\kappa+\eta)^l\bu^*X^* \mG_1 ^{(l)}X \bv$ for any deterministic unit vectors $\mathbf{u}$ and $\mathbf{v}$ of appropriate dimensions. For instance, for $(\kappa+\eta)^l\bu^*X^*\mG_1 ^{(l)}X\bv$, we have the following estimates on ${\mathscr{D}}$,
\begin{align*}
&\bu^*X^*\mG_1 X\bv =(1+zm_2) \bu^*\bv+ O_\prec \big({\min\{ (\kappa+\eta)^{-2}, (\kappa+\eta)^{-\frac 14}\}}N^{-\frac 12} \big)\nonumber\\
&\qquad\qquad\quad= O_\prec(\min\{1,(\kappa+\eta)^{-1}\}),\\
&(\kappa+\eta)^l\bu^*X^*\mG_1 ^{(l)}X\bv =(\kappa+\eta)^{l}\bu^* (\mG_1^{l}+z\mG_1^{l+1})\bv\notag\\
&\qquad\qquad\quad= O_\prec\Big(|(\kappa+\eta)^{l} (zm_2)^{(l)} \bu^*\bv |\Big)=O_\prec \big(|z|^{-\frac 32} \sqrt {\kappa+\eta}\, \big).
\end{align*}
\end{rmk}

Using the isotropic local law, one can also get the following result, which gives the location of the outlier and the  extremal non-outlier.

\begin{lem}\label{locationeig}
{\rm(}{\it Theorem 2.3} of \cite{bloemendal2016principal}{\rm)} Under Assumption \ref{assumption} and \rf{asd}, we have for $i\in \lb 1, r_0\rb$
\begin{align*}
|\mu_i-\theta(d_i)|\prec  (d_i-\sqrt y)^{\frac 12}d_i^{\frac 12} N^{-\frac12},\qquad |\mu_{r_0+1}-\lambda_+|\prec N^{-2/3},
\end{align*}
where
\begin{align}
\theta(z):=1+z+y+yz^{-1}, \qquad \text{for   } z\in \mathbb{C},\quad \Re z>\sqrt{y}. \label{def. of theta}
\end{align}
\end{lem}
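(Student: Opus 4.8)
This is Theorem 2.3 of \cite{bloemendal2016principal}; the plan is to recall its proof, which combines the outlier ``master equation'' with the isotropic local law of Theorem \ref{isotropic}. Write $V=[\bv_1,\dots,\bv_r]\in\mathbb R^{M\times r}$ and $D=\diag(d_1,\dots,d_r)$, so $\Sigma=I_M+VDV^*$ with $V^*V=I_r$. The nonzero eigenvalues of $Q=\Sigma^{1/2}XX^*\Sigma^{1/2}$ coincide with those of $XX^*\Sigma=XX^*+XX^*VDV^*$, so, using $(XX^*-E)^{-1}XX^*=I_M+E\mG_1(E)$ and Sylvester's determinant identity, for every $E\notin\operatorname{spec}(XX^*)$ with $E\neq0$ the multiplicity of $E$ as an eigenvalue of $Q$ equals the order of vanishing at $E$ of
\begin{align}
f(E):=\det\!\big(D^{-1}+I_r+E\,V^*\mG_1(E)V\big). \label{masterloc}
\end{align}
I will use \eqref{masterloc} for real $E\ge\lambda_++N^{-2/3+\tau}$, where $\mG_1(E)$ is a well-defined real symmetric matrix. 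There, by Theorem \ref{isotropic} with $l=0$ (applied at $z=E+\ii N^{-C}$ and pushed to $\eta=0$ via the resolvent identity, using that such $E$ lies at distance $\gtrsim N^{-2/3+\tau}$ from $\operatorname{spec}(XX^*)$ by the edge rigidity of $XX^*$, Lemma \ref{rigidity of H}), one has $V^*\mG_1(E)V=m_1(E)I_r+\mathcal E(E)$ with $\|\mathcal E(E)\|_{\text{op}}\prec\min\{\kappa^{-2},\kappa^{-1/4}\}N^{-1/2}$, $\kappa:=E-\lambda_+$. Hence $f(E)=0$ is equivalent to $D^{-1}+h(E)I_r+E\,\mathcal E(E)$ being singular, where $h(E):=1+Em_1(E)$.

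The deterministic input is an analysis of $h$. From \eqref{m1m2}--\eqref{selfconeqt} one checks that on $(\lambda_+,\infty)$ the function $h$ is smooth and strictly increasing, with $h(\lambda_+)=-y^{-1/2}$ and $h(E)\to0$ as $E\to\infty$, and that solving $h(E)=-1/d$ gives $E=\theta(d)$ with $h'(\theta(d))=(d^2-y)^{-1}$; moreover $\theta$ is a strictly increasing bijection of $(\sqrt y,\infty)$ onto $(\lambda_+,\infty)$ and $\theta(d)-\lambda_+=(d-\sqrt y)^2/d$. Thus the eigenvalues $\{d_j^{-1}+h(E)\}_{j=1}^r$ of $D^{-1}+h(E)I_r$ are monotone in $E$ and vanish precisely at $E=\theta(d_j)$; under \eqref{asd} these points satisfy $\theta(d_j)-\lambda_+=(d_j-\sqrt y)^2/d_j\ge N^{-2/3+\tau}$ once $\tau<2\epsilon$.

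A continuity/counting argument (deforming $D\mapsto tD$, $t\in[0,1]$, and tracking the zeros of the master function, as in \cite{bloemendal2016principal}) then shows that with high probability $Q$ has exactly $r_0$ eigenvalues in $(\lambda_++N^{-2/3+\tau},\infty)$, one near each $\theta(d_j)$, $j\le r_0$ (clusters of equal $d_j$ being treated together via the corresponding rank-$|\mathsf I|$ block), so in decreasing order $\mu_i\doteq\theta(d_i)$ for $i\le r_0$. For the rate: since $f(\mu_i)=0$, some eigenvalue of $D^{-1}+h(\mu_i)I_r$ lies within $\|\mu_i\mathcal E(\mu_i)\|_{\text{op}}$ of zero, i.e.\ $|d_i^{-1}+h(\mu_i)|\prec\mu_i\min\{\kappa(\mu_i)^{-2},\kappa(\mu_i)^{-1/4}\}N^{-1/2}$; feeding back $\mu_i\asymp\theta(d_i)$ (so $\kappa(\mu_i)\asymp(d_i-\sqrt y)^2/d_i$) and Taylor-expanding $h$ about $\theta(d_i)$ with $h'(\theta(d_i))=(d_i^2-y)^{-1}$ and the crude bound on $h''$ on the relevant interval, one obtains
\begin{align*}
|\mu_i-\theta(d_i)|\prec(d_i^2-y)\,\frac{(d_i+1)(d_i+y)}{d_i}\,\frac{\min\{\kappa^{-2},\kappa^{-1/4}\}}{\sqrt N},\qquad\kappa=\frac{(d_i-\sqrt y)^2}{d_i}.
\end{align*}
A direct check shows the right-hand side is $\asymp(d_i-\sqrt y)^{1/2}d_i^{1/2}N^{-1/2}$ in both regimes: for $d_i$ bounded the $\kappa^{-1/4}$ branch is active and gives $(d_i-\sqrt y)^{1/2}N^{-1/2}$, while for $d_i$ diverging the $\kappa^{-2}$ branch gives $d_iN^{-1/2}$. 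This is the first assertion.

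For $\mu_{r_0+1}$, the counting (applied with $\tau$ arbitrarily small) gives $\mu_{r_0+1}\le\lambda_++N^{-2/3+\tau}$, hence $(\mu_{r_0+1}-\lambda_+)_+\prec N^{-2/3}$, while Weyl's interlacing for the rank-$\le2r$ difference $Q-XX^*$ gives $\mu_{r_0+1}\ge\mu_{r_0+1+2r}(XX^*)=\lambda_+-O_\prec(N^{-2/3})$ (edge rigidity of $XX^*$); together, $|\mu_{r_0+1}-\lambda_+|\prec N^{-2/3}$. The main obstacle is the counting step: ruling out spurious solutions of $f(E)=0$ in the outlier region and handling repeated or nearly-repeated $d_j$ uniformly as the spikes range from just above the critical threshold to divergent, while keeping the error estimates sharp. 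I stress that for this lemma the plain operator-norm bound on $\mathcal E$ from the isotropic local law already yields the optimal rate in every regime; the finer ``hidden cancellation'' between quadratic forms of $\mG_1$ (and the VESD estimate of \cite{XYY}) is needed only later, for the CLT of Theorem \ref{mainthm}.
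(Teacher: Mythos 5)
Your reconstruction is correct and is essentially the argument of the cited source: the paper gives no proof of this lemma (it is quoted from Theorem 2.3 of \cite{bloemendal2016principal}), and the machinery you recall — the master equation $\det\big(D^{-1}+I_r+E\,V^*\mG_1(E)V\big)=0$, the isotropic local law of Theorem \ref{isotropic}, the identity $1+z^{-1}+\theta(z)m_1(\theta(z))=0$, and a counting/perturbation step — is exactly what the paper itself redeploys later in the proofs of Lemma \ref{lem.representation.eigv} and Proposition \ref{repre.multiple.eigv}. Your rate bookkeeping, combining $h'(\theta(d_i))=(d_i^2-y)^{-1}$ with the error $\min\{\kappa^{-2},\kappa^{-1/4}\}N^{-1/2}$ at $\kappa\asymp(d_i-\sqrt y)^2/d_i$, correctly reproduces the bound $(d_i-\sqrt y)^{1/2}d_i^{1/2}N^{-1/2}$ in both the bounded and divergent regimes of $d_i$, and the Weyl interlacing plus edge rigidity argument for $\mu_{r_0+1}$ is likewise sound.
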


Further, for the eigenvalues of $H$,  we have the following rigidity estimate. 
\begin{lem}[Theorem 3.1 of \cite{PY14}] \label{rigidity of H} Suppose that Assumption \ref{assumption} holds. Denote by $\gamma_i$ the $i$-th largest $N$-quantile of $F_1$, i.e, $1-F_1(\gamma_i)=\frac{i-1/2}{N}$. We have  
\begin{align}
|\lambda_i(H)-\gamma_i|\prec  N^{-\frac23}\big(\min \{\min\{N,M\}+1-i,i\}\big)^{-\frac13},  \label{190726100}
\end{align}
for  $1\leq i\leq (1-\tau_0)\min\{N,M\}$ or  for all $1\leq i\leq \min\{N,M\}$ in case $|y-1|\geq \tau_0$,  for some small but fixed $\tau_0>0$. 
Especially, (\ref{190726100}) implies
\begin{align*}
|\lambda_1(H)-\lambda_+|\prec N^{-\frac23}.
\end{align*}
Also, we note that $\lambda_i(H)=0$ if $i\geq \min\{N,M\}+1$.  
\end{lem}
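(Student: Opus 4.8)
The statement is quoted verbatim from \cite{PY14}, so strictly speaking the proof is a citation; below I outline the argument one would carry out to obtain it from scratch, since it is a rigidity estimate of Erd\H{o}s--Yau--Yin / Pillai--Yin type for sample covariance matrices.

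\emph{Step 1: a strong local Marchenko--Pastur law.} The plan is first to prove an entrywise (or at least averaged) local law for $\mathcal{G}_1(z)=(XX^*-z)^{-1}$ on a spectral domain that reaches down to heights $\eta\gg N^{-1}$, covering the bulk of $\nu_{\mathrm{MP},1}$ and the soft edge $\lambda_+$ (and, when $|y-1|\ge\tau_0$, also the soft edge $\lambda_-$, which then stays bounded away from the hard edge $0$). Concretely one derives an approximate self-consistent equation for $m_{1N}(z)$ of the form $zy\,m_{1N}^2+[z-(1-y)]m_{1N}+1=\mathcal{E}(z)$ by Schur-complement and resolvent-expansion identities, and bounds $\mathcal{E}(z)$ by large-deviation estimates for the quadratic forms in the rows of $X$ that appear (the usual simultaneous control of off-diagonal and diagonal resolvent entries, closed by a self-improving bootstrap). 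The stability analysis of the limiting equation $zy\,m^2+[z-(1-y)]m+1=0$ near its regular edges — where $1+zm_1\asymp\min\{1,(\kappa+\eta)^{-1}\}$ as recorded in Lemma \ref{lem.19072501*} — then converts the bound on $\mathcal{E}$ into
\[
|m_{1N}(z)-m_1(z)|\prec \frac{1}{N\eta}\wedge\frac{1}{N^{1/2}(\kappa+\eta)^{1/4}},
\]
uniformly on the domain, with $\kappa$ the distance to the nearest soft edge. The restriction $i\le(1-\tau_0)\min\{N,M\}$ (absent when $|y-1|\ge\tau_0$) enters exactly here: if $y$ is allowed near $1$, the lower soft edge $\lambda_-=(1-\sqrt y)^2$ approaches the hard edge $0$, where the square-root edge behaviour and the corresponding stability estimate degenerate.

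\emph{Step 2: from the local law to the counting function.} Next I would control the eigenvalue counting function $\mathfrak n(E):=\#\{i:\lambda_i(H)\le E\}$ against $N F_1(E)$. Applying the Helffer--Sj\"ostrand functional calculus to a smoothed indicator and feeding in the local law at height $\eta\sim N^{-1+\epsilon}$ (using $\Im m_{1N}$ to count eigenvalues in windows of that size) yields $|\mathfrak n(E)-N F_1(E)|\prec 1$ uniformly for $E$ near the support; in particular there are no eigenvalues beyond $\lambda_+$ up to scale $N^{-2/3}$ nor in any spectral gap, consistently with the outlier separation in Lemma \ref{locationeig}.

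\emph{Step 3: inversion and the edge scaling.} Inverting the counting estimate gives $|\lambda_i(H)-\gamma_i|\prec (N\rho_1(\gamma_i))^{-1}$, where $\rho_1$ is the density of $\nu_{\mathrm{MP},1}$; in the bulk $\rho_1\asymp1$ so $|\lambda_i-\gamma_i|\prec N^{-1}$, while near either soft edge the square-root vanishing of $\rho_1$ places $\gamma_i$ at distance $\asymp (k/N)^{2/3}$ from that edge with $\rho_1(\gamma_i)\asymp (k/N)^{1/3}$, where $k$ counts the quantiles between $\gamma_i$ and the edge; this produces $|\lambda_i-\gamma_i|\prec N^{-2/3}k^{-1/3}$, and combining the two edges gives the factor $\big(\min\{\min\{N,M\}+1-i,\,i\}\big)^{-1/3}$ in the statement. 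Taking $i=1$ recovers $|\lambda_1(H)-\lambda_+|\prec N^{-2/3}$, since $\gamma_1$ itself lies within $N^{-2/3}$ of $\lambda_+$. \textbf{The main obstacle} is Step 1: proving the strong local law all the way down to the optimal height near the soft edge — the fluctuation-averaging/bootstrap that upgrades the crude self-consistent bound to the $(N\eta)^{-1}$ bound — and doing so uniformly over the admissible, possibly $N$-dependent, values of $y$ and over the full index range in the regular-edge case; the bulk and far-from-spectrum estimates (the latter already encoded in Theorem \ref{isotropic}) are comparatively routine by contrast.
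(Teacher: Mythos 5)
Your proposal is correct and matches what the paper does: the lemma is imported verbatim as Theorem 3.1 of \cite{PY14}, so the paper's ``proof'' is exactly the citation you begin with, and your three-step sketch (strong local Marchenko--Pastur law, counting-function control via Helffer--Sj\"ostrand, inversion with the square-root edge scaling giving the $N^{-2/3}\min\{i,\min\{N,M\}+1-i\}^{-1/3}$ factor) is the same route taken in that reference, including the correct identification of where the restriction $i\le(1-\tau_0)\min\{N,M\}$ versus $|y-1|\ge\tau_0$ originates. Nothing further is needed.
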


\begin{rmk} \label{Rmk:UpsilonDelta}
By setting $z:=\theta(d_i)+\ii N^{-K}$ for a sufficiently large integer $K$, we have the estimates  
\begin{align}\label{est_z_kap}
|z| \asymp \theta(d_i) \asymp d_i, \quad \kappa+\eta\asymp \kappa\asymp \Upsilon.
\end{align}
Here we set 
$$\Upsilon\equiv \Upsilon(d_i):=(d_i-\sqrt y)^2/d_i .$$ 
Applying the above estimates, we can further simplify the error bounds in Theorem \ref{isotropic} to get 
\begin{align}
&|\langle \bu, \Upsilon^l\mG_\alpha^{(l)}(z)\bv\rangle- \Upsilon^lm_\alpha^{(l)}(z)\langle \bu,\bv\rangle|=O_\prec\big(\Delta(d_i)N^{-1/2}\big), \label{weak_est_DG} \\
&\max \big\{ |\langle  \bu, \Upsilon^{l-1}X^*\mG_1^{l}(z)\bv\rangle|, |\langle  \bu, \Upsilon^{l-1}X\mG_2^{l}(z)\bv\rangle| \big\}\notag\\
&\qquad\qquad\qquad=O_\prec\big( (d_i-\sqrt y)^{-\frac 12}d_i^{-\frac 12} N^{-1/2}\big), \label{weak_est_XG}\\
&|\Upsilon^{l}m_{\alpha N}^{(l)}(z)-\Upsilon^{l}m^{(l)}_\alpha(z)|=O_\prec\big((d_i-\sqrt y)^{-2} N^{-1}\big)  \label{weak_est_m12N},
\end{align}
for any deterministic unit vectors $\bu, \bv$ of appropriate dimensions, where for convenience, we set 
\begin{align} \label{def:Delta(d)}
\Delta(d):=\min\{(d-\sqrt y)^{-4}d^2, (d-\sqrt y)^{- 1/2}d^{1/4}\}.
\end{align}
We remark here that due to the Lipschitz continuity of $\mG_\alpha^{(l)}(z), m_\alpha^{(l)}(z), \mG_\alpha^{l}(z) $,  the estimates in \eqref{weak_est_DG}-\eqref{weak_est_m12N} also hold for $z=\theta(d_i)$ with the error bounds unchanged.
\end{rmk}

\begin{rmk}\label{Rmk:Delta(d)}
According to the definition of $\Delta(d)$, by simple calculations,  we see that for $i\in \lb 1, r_0\rb$, i.e. $d_i>\sqrt y$, 
\begin{align}
\Delta(d_i)&=\left \{
\begin{array}{cc}
(d_i-\sqrt y)^{- 1/2}d_i^{1/4} & \text{if } d_i<\Big(\frac 12+\sqrt{\sqrt y+\frac 14}\Big)^2\\
(d_i-\sqrt y)^{-4}d_i^2 &\text{if } d_i\geq \Big(\frac 12+\sqrt{\sqrt y+\frac 14}\Big)^2
\end{array}
\right. \nonumber\\
&\asymp
\left \{
\begin{array}{cc}
(d_i-\sqrt y)^{- 1/2}& \text{if } d_i<\Big(\frac 12+\sqrt{\sqrt y+\frac 14}\Big)^2\\
d_i^{-2} &\text{if } d_i\geq \Big(\frac 12+\sqrt{\sqrt y+\frac 14}\Big)^2.
\end{array}
\right.  \label{Delta:asymp}
\end{align}
\end{rmk}

Finally, for the large $d_i$ regime, i.e., $d_i>K$ for some sufficiently large $K$, we will also need a convergence rate of the so-called {\it eigenvector empirical spectral distribution (VESD)}, which is recently obtained in 
\cite{XYY}. For simplicity, we  state the result with necessary modification to adapt to our assumption. We also refer to \cite{XYY} for the original statement under more general assumption. 

 Recall the notation $\lambda_i(H)$ for the $i$-th largest eigenvalue of $H=XX^*$. We further denote by $\mb{\phi}_i$ the unit eigenvector of $H$ associated with $\lambda_i(H)$.  
 { Then, for a fixed unit vector $\bv$, we define the so-called {\it eigenvector empirical spectral distribution (VESD)} with respect to $\bv$ by 
 \begin{align}\label{def:VESD0}
F_{1N}^{\bv} (x) = \sum_{i=1}^M \big| \la \mb{\phi}_i, \bv\ra \big|^2   \mathds{1} (\lambda_i(H)\leq x).
\end{align} 
With the definition, we now introduce the following theorem  which gives the convergence rate of {\it VESD}. 
 \begin{thm}\label{thm.VESD}
 Under the Assumption \ref{assumption} and the additional restriction $|y-1|\geq \tau_0$ with any small but fixed $\tau_0>0$, for a given deterministic  vector $\bv\in S^{M-1}_\mathbb{R}$, we have 
 \begin{align}
 \sup_x | F_{1N}^{\bv} (x) - F_1(x)| \prec N^{-\frac 12},
 \end{align}
 where $F_1$ is the cumulative distribution function of $\nu_{\text{MP,1}}$ defined in \rf{19071801}.
\end{thm}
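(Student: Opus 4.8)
\emph{Proof strategy.}
The plan is to pass to the Stieltjes transform of the VESD and invert a quantitative estimate for it. Note first that $F_{1N}^{\bv}$ is a bona fide probability distribution function, since $\sum_{i=1}^M |\la \mb{\phi}_i,\bv\ra|^2=\|\bv\|^2=1$, and that its Stieltjes transform is exactly
\begin{equation*}
s_N(z):=\int (x-z)^{-1}\,\dd F_{1N}^{\bv}(x)=\sum_{i=1}^M \frac{|\la \mb{\phi}_i,\bv\ra|^2}{\lambda_i(H)-z}=\la \bv, \mG_1(z)\bv\ra ,
\end{equation*}
whose deterministic counterpart is $m_1(z)$. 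The hypothesis $|y-1|\ge \tau_0$ keeps the edges $\lambda_\pm=(1\pm\sqrt y)^2$ bounded away from each other and from $0$, so that $\nu_{\text{MP},1}$ is supported on a single interval with two square-root edges (plus, when $y>1$, an atom at $0$ from the $M-N$ trivial zero eigenvalues of $H$, which is treated separately via the $N^{-1/2}$-concentration of $\la\bv, P_{\ker H}\bv\ra$); this is the regime in which the isotropic local law is cleanest.

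\emph{Ingredients.}
First I would invoke the full-strength isotropic local law of \cite{bloemendal2014isotropic, knowles2017anisotropic} — valid, as noted in the remark after Theorem \ref{isotropic}, also for $z=E+\ii\eta$ with $E$ in a neighbourhood of $\mathrm{supp}\,\nu_{\text{MP},1}$ and $\eta\ge N^{-1+\varepsilon}$ — which gives
\begin{equation*}
|s_N(z)-m_1(z)|\prec \sqrt{\frac{\Im m_1(z)}{N\eta}}+\frac{1}{N\eta},
\end{equation*}
so that in the bulk $|s_N-m_1|\prec (N\eta)^{-1/2}$ and near or outside the edge it is smaller; outside the support one already has Theorem \ref{isotropic} on $\mathscr D$, and Lemma \ref{rigidity of H} pins down the edge eigenvalues. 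Then I would apply a quantitative smoothing inequality of Bai / Götze--Tikhomirov type (equivalently, the Helffer--Sjöstrand formula applied to a regularised indicator), which reduces the Kolmogorov distance to an integrated Stieltjes bound,
\begin{equation*}
\sup_x\bigl|F_{1N}^{\bv}(x)-F_1(x)\bigr|\ \lesssim\ \int_{\mathbb{R}}\bigl|s_N(u+\ii\eta)-m_1(u+\ii\eta)\bigr|\,\dd u\ +\ O(\eta),
\end{equation*}
valid for any $\eta\in[N^{-1+\varepsilon},1]$ (modulo a harmless logarithm and standard tail control using that $F_1$ has bounded density away from $0$).

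\emph{The main obstacle.}
Feeding the pointwise bound $|s_N-m_1|\prec(N\eta)^{-1/2}$ into the last display controls the integral by $(N\eta)^{-1/2}$ over an $O(1)$ window, and balancing against the $O(\eta)$ term yields only the suboptimal Kolmogorov rate $N^{-1/3}$. Reaching the sharp $N^{-1/2}$ — which is genuinely the correct order, reflecting the CLT-scale fluctuations of the partial sums $\sum_{i\le k}|\la\mb{\phi}_i,\bv\ra|^2$ of squared eigenvector entries, and thus \emph{not} improvable to the $N^{-1}$ rate of the ordinary empirical spectral distribution — requires extracting genuine cancellation in the $u$-integral of $s_N-m_1$. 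This is the technical heart of the matter: one performs a refined fluctuation-averaging–type analysis of the self-consistent equation satisfied by $s_N(u+\ii\eta)$, carefully tracking the dependence of the error on $\eta$ and on the distance of $u$ to the spectral edges, which upgrades the integrated bound to $\prec N^{-1/2}$ uniformly for $\eta$ in a window around $N^{-1/2}$. This is precisely the argument of \cite{XYY}; the remaining steps — the region outside the support via Theorem \ref{isotropic}, the edge behaviour via Lemma \ref{rigidity of H}, the atom at $0$ when $y>1$, and the bookkeeping of the regularisation scales — are routine. In the modest generality required here (Assumption \ref{assumption} together with $|y-1|\ge\tau_0$) the statement is therefore a special case of the main result of \cite{XYY}.
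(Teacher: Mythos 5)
Your proposal is correct and ends up exactly where the paper does: the paper gives no proof of this statement at all, but simply imports it (with cosmetic adaptation of the assumptions, including the restriction $|y-1|\geq\tau_0$) from the main result of \cite{XYY}. Your preliminary discussion of why the naive Stieltjes-transform/smoothing route only yields $N^{-1/3}$ and why the sharp $N^{-1/2}$ rate is the genuine content of \cite{XYY} is accurate, but the citation is the whole of the paper's argument, so nothing further is required.
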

}

\subsection{Auxiliary lemmas} 
The following \emph{cumulant expansion formula} plays a central role in our computation, whose proof can be found in \cite[Proposition 3.1]{LP09} or \cite[Section II]{KKP96}, for instance.

\begin{lem}\label{cumulantexpansion}
(Cumulant expansion formula) For a fixed $\ell\in \mathbb{N}$,  let $f\in C^{\ell+1}(\mathbb{R})$. Supposed $\xi$ is a centered random variable with finite moments  to order $\ell+2$.  Recall the notation $\kappa_k(\xi)$ for  the k-th cumulant of $\xi$.  Then we have  
\begin{align}
\mbE(\xi f(\xi))=\sum_{k=1}^{\ell}\frac{\kappa_{k+1}(\xi)}{k!}\mbE(f^{(k)}(\xi))+\mbE(r_{\ell}(\xi f(\xi))),
\end{align}
where the error term $r_{\ell}(\xi f(\xi))$ satisfies
\begin{align}
|\mbE(r_{\ell}(\xi f(\xi)))|\leq &C_\ell\mbE(|\xi|^{\ell+2}){\rm{sup}}_{|t|\leq s}|f^{\ell+1}(t)|\notag\\
&+C_\ell\mbE(|\xi|^{\ell+2}\mathds{1}(|\xi|>s)){\rm{sup}}_{t\in \mathbb{R}}|f^{\ell+1}(t)|  \label{remaining}
\end{align}
for any  $s>0$ and $C_\ell$ satisfied $C_\ell\leq (C\ell)^{\ell}/\ell!$ for some constant $C>0$.
\end{lem}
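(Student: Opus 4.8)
The plan is to prove the formula in two steps: first reduce it to the case of polynomials, where it becomes the purely algebraic moment--cumulant recursion, and then pass to a general $f$ by Taylor's theorem with an integral remainder. (One can instead argue through the characteristic function of $\xi$, which produces the formal infinite expansion $\mbE(\xi f(\xi))=\sum_{k\ge1}\frac{\kappa_{k+1}}{k!}\mbE(f^{(k)}(\xi))$ directly from $\mbE(\xi e^{\ii t\xi})$; but truncating that series at order $\ell$ with the explicit remainder \eqref{remaining} is cleanest via the polynomial reduction, so I describe that route.)

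First I record the moment--cumulant recursion. Write $m_j\deq\mbE(\xi^j)$, so $m_0=1$ and $m_1=0$, and recall that the cumulants $\kappa_2,\dots,\kappa_{\ell+1}$ are characterized (equivalently to the generating-function definition) by
\begin{align}\label{plan.mc}
m_{n+1}=\sum_{k=1}^{n}\binom{n}{k}\kappa_{k+1}\,m_{n-k},\qquad 0\le n\le\ell,
\end{align}
an identity involving only quantities that are finite under our moment hypothesis. Applying \eqref{plan.mc} with $f(x)=x^n$, $n\le\ell$, and using $f^{(k)}(x)=\frac{n!}{(n-k)!}x^{n-k}$ for $k\le n$ while $f^{(k)}\equiv0$ for $k>n$, gives $\mbE(\xi f(\xi))=m_{n+1}=\sum_{k=1}^{\ell}\frac{\kappa_{k+1}}{k!}\mbE(f^{(k)}(\xi))$; by linearity the cumulant expansion holds \emph{exactly}, i.e.\ with $\mbE(r_\ell(\xi f(\xi)))=0$, for every polynomial of degree at most $\ell$. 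This is consistent with \eqref{remaining}, whose right-hand side vanishes for such $f$ since $f^{(\ell+1)}\equiv0$.

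For general $f\in C^{\ell+1}(\mathbb{R})$ I set $P(x)\deq\sum_{k=0}^{\ell}\frac{f^{(k)}(0)}{k!}x^k$ and $R\deq f-P$, so that by the integral form of Taylor's remainder
\begin{align}\label{plan.rem}
R^{(k)}(x)=\frac{1}{(\ell-k)!}\int_0^{x}(x-t)^{\ell-k}f^{(\ell+1)}(t)\,\dd t,\qquad 0\le k\le\ell,
\end{align}
whence $|R^{(k)}(x)|\le\frac{|x|^{\ell+1-k}}{(\ell+1-k)!}\sup_{|t|\le|x|}|f^{(\ell+1)}(t)|$ and in particular $|\xi R(\xi)|\le\frac{|\xi|^{\ell+2}}{(\ell+1)!}\sup_{|t|\le|\xi|}|f^{(\ell+1)}(t)|$. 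Subtracting the (exact) instance of the formula for $P$ from the definition of $r_\ell$ for $f$ yields the representation
\begin{align}\label{plan.rem2}
\mbE\big(r_\ell(\xi f(\xi))\big)=\mbE\big(\xi R(\xi)\big)-\sum_{k=1}^{\ell}\frac{\kappa_{k+1}}{k!}\,\mbE\big(R^{(k)}(\xi)\big),
\end{align}
and one sees that the $(\ell+2)$-th moment of $\xi$ enters precisely through $\mbE(\xi R(\xi))$, matching the hypothesis. Taking expectations in \eqref{plan.rem2} and splitting each over $\{|\xi|\le s\}$ and $\{|\xi|>s\}$ --- on the former bounding $\sup_{|t|\le|\xi|}|f^{(\ell+1)}|\le\sup_{|t|\le s}|f^{(\ell+1)}|$ and using $\mbE|\xi|^{j}<\infty$ for $j\le\ell+2$, on the latter bounding it by $\sup_{t\in\mathbb{R}}|f^{(\ell+1)}|$ --- and collecting the factorial, binomial and cumulant factors into one constant gives \eqref{remaining}, a Stirling estimate on that constant yielding $C_\ell\le(C\ell)^{\ell}/\ell!$.

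I expect the one genuinely delicate point to be the bookkeeping in the final step. The lower-order terms $\mbE(R^{(k)}(\xi))$ in \eqref{plan.rem2} a priori carry the smaller powers $|\xi|^{\ell+1-k}$ rather than $|\xi|^{\ell+2}$, so folding them into the single moment $\mbE|\xi|^{\ell+2}$ of the stated bound requires exploiting the freedom in the threshold $s$ together with the mutual comparability of the moments $\mbE|\xi|^{j}$, $j\le\ell+2$, for the normalized variables (here $\sqrt N\,x_{ij}$) to which the lemma is applied. Tracking the $\ell$-dependence of the constants carefully enough to reach $C_\ell\le(C\ell)^{\ell}/\ell!$ is the other, routine, source of care; everything else is elementary.
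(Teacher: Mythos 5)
You should first note that the paper does not prove this lemma at all: it is quoted with a pointer to \cite[Proposition 3.1]{LP09} and \cite[Section II]{KKP96}, so there is no internal argument to compare against. Your route (exactness for polynomials of degree $\le\ell$ via the moment--cumulant recursion with $\kappa_1=0$, then $f=P+R$ with the integral Taylor remainder) is in fact the standard proof skeleton, and your treatment of the leading term is correct: $|\xi R(\xi)|\le\frac{|\xi|^{\ell+2}}{(\ell+1)!}\sup_{|t|\le|\xi|}|f^{(\ell+1)}(t)|$, and splitting over $\{|\xi|\le s\}$ and $\{|\xi|>s\}$ produces exactly the two terms of \eqref{remaining}.

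The genuine gap is in the lower-order terms $\sum_{k=1}^{\ell}\frac{\kappa_{k+1}}{k!}\mbE\big(R^{(k)}(\xi)\big)$, and your proposed way of closing it would not prove the lemma as stated. First, you cannot ``collect the cumulant factors into one constant'': the constant $C_\ell$ in \eqref{remaining} must depend only on $\ell$, while $\kappa_{k+1}(\xi)$ is unbounded over the admissible class of $\xi$; you must first dominate cumulants by moments, $|\kappa_{k+1}(\xi)|\le c_k\,\mbE|\xi|^{k+1}$ with $c_k$ depending only on $k$ (write $\kappa_{k+1}$ as a polynomial in $m_2,\dots,m_{k+1}$ and bound each monomial by $\mbE|\xi|^{k+1}$ via Lyapunov). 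Second, your closing remark appeals to ``the freedom in the threshold $s$'' and to ``mutual comparability of the moments \ldots for the normalized variables to which the lemma is applied''; neither is available, since the statement is for an arbitrary fixed $s>0$ and a general centered $\xi$, and invoking properties of the specific $\sqrt N x_{ij}$ would only yield a weaker, application-specific claim. The gap is closable, but by a general elementary argument you have not supplied: after splitting $\mbE\big(|\xi|^{\ell+1-k}\sup_{|t|\le|\xi|}|f^{(\ell+1)}(t)|\big)$ at level $s$, the piece on $\{|\xi|\le s\}$ is handled by $\mbE|\xi|^{k+1}\,\mbE|\xi|^{\ell+1-k}\le\mbE|\xi|^{\ell+2}$ (Lyapunov), while for the piece on $A=\{|\xi|>s\}$, which carries $\sup_{t\in\mathbb{R}}|f^{(\ell+1)}|$, one needs $\mbE|\xi|^{k+1}\,\mbE\big(|\xi|^{\ell+1-k}\mathds{1}_A\big)\le 2\,\mbE\big(|\xi|^{\ell+2}\mathds{1}_A\big)$; this follows by splitting $\mbE|\xi|^{k+1}$ itself at $s$: the part from $\{|\xi|\le s\}$ contributes at most $s^{k+1}$, which pairs with $\mbE(|\xi|^{\ell+1-k}\mathds{1}_A)\le s^{-(k+1)}\mbE(|\xi|^{\ell+2}\mathds{1}_A)$, and the part from $A$ is bounded via Lyapunov for the law conditioned on $A$ by $\mathbb{P}(A)\,\mbE(|\xi|^{\ell+2}\mathds{1}_A)$. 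With $|\kappa_{k+1}|\le c_k\mbE|\xi|^{k+1}$, $c_k\le (Ck)^k$, the factors $\frac{c_k}{k!\,(\ell+1-k)!}$ then sum to a constant of size $(C\ell)^\ell/\ell!$ by Stirling, which is the remaining (routine) bookkeeping you alluded to.
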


Next we collect some basic identities for the Green functions in (\ref{def:green}) without proof.

\begin{lem} \label{lemrelation}
For any integer $l\geq1$, we have
\begin{align}
&\mG_1^{l}=\frac{1}{(l-1)!}\frac{\partial^{l-1}\mG_1}{\partial z^{l-1}} = \frac{1}{(l-1)!} \mG_1^{(l-1)},\label{eq:basicG}\\   
&\mG_1^lXX^*=\mG_1^{l-1}+z\mG_1^{l}, \quad X^*\mG_1^lX=\mG_2^lX^*X=\mG_2^{l-1}+z\mG_2^{l}. \label{relationXG}
\end{align}
\end{lem}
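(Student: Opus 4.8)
The plan is to verify all three assertions directly from the resolvent identity; no probabilistic input is needed, and everything is an equality of rational matrix-valued functions of $z$. Throughout, write $\mG_1=\mG_1(z)=(XX^*-z)^{-1}$ and $\mG_2=\mG_2(z)=(X^*X-z)^{-1}$, adopt the convention $\mG_\alpha^0=I$, and recall $\mG_1(XX^*-z)=(XX^*-z)\mG_1=I$ and similarly for $\mG_2$.

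For \eqref{eq:basicG}, I would first show $\partial_z\mG_1=\mG_1^2$: differentiating the identity $\mG_1(z)(XX^*-z)=I$ in $z$ gives $(\partial_z\mG_1)(XX^*-z)-\mG_1=0$, and right-multiplying by $\mG_1$ yields $\partial_z\mG_1=\mG_1^2$. An induction on $k$ then gives $\partial_z^{k}\mG_1=k!\,\mG_1^{k+1}$: the product rule applied to $\mG_1^{k+1}$, together with $\partial_z\mG_1=\mG_1^2$ and the fact that $\mG_1$ commutes with itself, gives $\partial_z(\mG_1^{k+1})=(k+1)\mG_1^{k+2}$, so differentiating the inductive hypothesis once more produces $\partial_z^{k+1}\mG_1=(k+1)!\,\mG_1^{k+2}$. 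Setting $k=l-1$ is exactly \eqref{eq:basicG}.

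For the first identity in \eqref{relationXG}, I would split $XX^*=(XX^*-z)+z$ and left-multiply by $\mG_1^l$; since $\mG_1^l(XX^*-z)=\mG_1^{l-1}$ for all $l\ge1$, this gives $\mG_1^lXX^*=\mG_1^{l-1}+z\mG_1^l$. For the remaining chain the key step is the intertwining relation $X^*\mG_1=\mG_2X^*$ (an identity between $N\times M$ matrices), which follows from $X^*(XX^*-z)=(X^*X-z)X^*$ by left-multiplying by $\mG_2$ and right-multiplying by $\mG_1$. Iterating this $l$ times moves all the $\mG_1$'s through the $X^*$ on the left and the $X$ on the right and yields $X^*\mG_1^lX=\mG_2^lX^*X$; then writing $X^*X=(X^*X-z)+z$ and using $\mG_2^l(X^*X-z)=\mG_2^{l-1}$ gives $\mG_2^lX^*X=\mG_2^{l-1}+z\mG_2^l$, which completes \eqref{relationXG}.

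None of the steps is genuinely difficult; the only point needing a little care is bookkeeping of the spaces on which the various matrices act ($M\times M$ for $\mG_1$, $N\times N$ for $\mG_2$, and the rectangular $X$, $X^*$ in between) when deriving and iterating $X^*\mG_1=\mG_2X^*$. Since all the identities are between rational functions of $z$, it suffices to verify them for $z$ outside the spectra of $XX^*$ and $X^*X$, after which they extend to the full domain of definition---in particular to $\mathbb{C}^+$ and to the domain ${\mathscr{D}}$ used later---by analyticity.
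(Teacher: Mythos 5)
Your proof is correct: the differentiation identity $\partial_z\mG_1=\mG_1^2$ with induction, the splitting $XX^*=(XX^*-z)+z$, and the intertwining relation $X^*\mG_1=\mG_2X^*$ are exactly the standard resolvent manipulations these identities rest on, and each step checks out. The paper states this lemma without proof precisely because the argument is this routine, so there is nothing in your write-up that diverges from (or falls short of) what the authors intend.
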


 Further, for $a \in \lb 1, M\rb$ and $b\in \lb 1, N\rb$, we denote by  $E_{ab}$ the $M\times N$ matrix with entires $(E_{ab})_{cd}=\delta_{ac}\delta_{bd}$. Let
\begin{align}
\mathscr{P}_{0}^{ab}=E_{ab}(E_{ab})^*,\quad \mathscr{P}_{1}^{ab}=E_{ab}X^*,\quad \mathscr{P}_{2}^{ab}=X(E_{ab})^*. \label{P012}
\end{align}
For any integer $l\geq1$, it is also elementary to compute that 
\begin{align}
&\frac{\partial \mG_1^{l}}{\partial x_{ab}}=-\sum_{\alpha=1}^2\sum_{\begin{subarray}{c}l_1,l_2\geq 1\\ l_1+l_2=l+1 \end{subarray}}\mG_1^{l_1}\mathscr{P}_{\alpha}^{ab}\mG_1^{l_2}.\label{derivative}
\end{align}
Repeatedly applying the  identity \rf{derivative}, we can  get the formulas for higher order derivatives of $\mG_1^l$ w.r.t.  $x_{ab}$. Moreover, by (\ref{derivative}) and  the product rule, we can easily deduce the derivatives of $X^*\mG_1^{l}$ w.r.t. $x_{ab}$.   For the convenience of the reader, we collect more basic formulas of the  derivatives of Green functions in Appendix \ref{s.derivative of G}.

\section{Green function representation}\label{secgfr}
In this section, we  express  $\la \bw, {\rm{P}}_{\mathsf{I}}\bw \ra$ in terms of  the Green function $\mG_1(z)$ in \eqref{def:green}.  And for singleton $\mathsf{I}=\{i\}$, we also express the eigenvalue $\mu_i$ in terms of  the Green function $\mG_1(z)$ in \eqref{def:green}. Both  representations are obtained via doing expansions of certain functionals of the Green function. The expansion for the eigenvalue in the multiple case can be done similarly, and the details will be stated in Appendix \ref{appendix.proof}.  This representation will allow us to work with the Green function instead of the eigenvalue and eigenvector statistics.  We also remark here that similar derivation of the Green function representation has appeared in previous work such as \cite{KY13, bloemendal2016principal, KY14}. But here for eigenvectors, we need to do it up to a higher order precision, in order to capture all contributing terms for the fluctuation.  For instance, when $\bw \in \mathrm{Span}\{\mathbf v_j \}_{j\in \lb 1,M\rb \setminus \mathsf{I}}$, the fluctuation of $\la \bw, {\rm{P}}_{\mathsf{I}}\bw \ra$ is one order smaller than that of the case $\bw \in \mathrm{Span}\{\mathbf v_t \}_{t \in \mathsf{I}}$. In order to cover the situation like the former case, we will need to investigate a higher order term in the expansion. 

We start with a few more notations. The vector $\bw$ is decomposed as
\begin{align}
\mb{w}=\sum_{j=1}^r \langle\mb{w}, \bv_j\rangle \bv_j+\bu, \quad \text{where $ \bu\in \mathrm{Span}\{\bv_1\cdots,\bv_r\}^\perp$}. \label{19071820}
\end{align}
Hereafter,  we take (\ref{19071820}) as the definition of $\mathbf{u}$. 

We define the centered Green function by
\begin{align}
\Xi(z) := \mG_1(z) - m_1(z) I, \label{19071905}
\end{align} 
and introduce its quadratic forms
\begin{align}
\chi_{ij} (z) = \bv_i^* \Xi(z) \bv_j,\qquad \chi_{\bu j}(z) = \bu^* \Xi(z) \bv_j,\qquad i,j\in \lb 1, r\rb. \label{19071920}
\end{align}
For brevity, we further set
 \begin{align}\label{def:wtw}
\widetilde{\mb{w}}:= \Sigma^{-\frac{1}{2}}\mb{w}=\sum_{j=1}^r \wt w_j\bv_j+\bu \quad\text{with}\quad \wt w_j:= \frac{\langle \mb{w},\bv_j \rangle}{\sqrt{1+d_j}}.
 \end{align}
Also, for $d>0$, we define the following functions
\begin{align}
&f(d):=\frac{1}{d}(d+1)(d^2-y), \qquad g(d):=\frac{1}{d} (d+1)(d+y)(d^2-y) , \label{19071921}
\end{align}
which are of order $O(d(d-\sqrt y))$ and $O(d^2(d-\sqrt y))$  respectively.
And for $i\in \lb 1,r\rb$, we set for $d\neq d_i$, 
\begin{align}
&\nu_i (d) := \frac{d_i (d+1)}{d_i -d}, \label{19071945}
\end{align}
and further introduce the following shorthand notations
\begin{align}\label{def:d_i-d_j}
\delta_{ij}:=|d_i-d_j|,\qquad \delta_{i0}:=|d_i-\sqrt y|
\end{align}
for $i, j \in \lb 1,r\rb$.

With the above notations and $\bw_{\mathsf{I}}, \mb{\varsigma}_\mathsf{I}$ from  (\ref{081510}) and \eqref{def:v_I_varsigma_I}, we have the following lemma. 
\begin{lem}\label{weak_lem_decomp}Suppose that the assumptions of Theorem \ref{mainthm} hold. 
For $i \in \lb 1, r_0\rb$, we have 
\begin{align}\label{eq:weakgreen}
\la \bw, {\rm{P}}_{\mathsf{I}}\bw \ra= &\frac{d_i^2-y}{d_i(d_i+y)}\Vert \bw_{\mathsf{I}}\Vert^2-2d_i(d_i+1)\bw_{\mathsf{I}}^*\Xi(\theta(d_i))\bw_{\mathsf{I}}-2\frac{f(d_i)}{\sqrt{1+d_i}}\mb{\varsigma}_{\mathsf{I}}^*\Xi(\theta(d_i))\bw_{\mathsf{I}}  \nonumber \\
& -\frac{f(d_i)^2}{1+d_i} \bw_{\mathsf{I}}^*\Xi'(\theta(d_i))\bw_{\mathsf{I}}+ g(d_i)\sum_{t \in \mathsf{I}} \Big(\bv_t^*\Xi(\theta(d_i)) \mb{\varsigma}_{\mathsf{I}}\Big)^2 \nonumber\\
&-d_i(1+d_i)g(d_i)\sum_{j \in \mathsf{I}^c}\frac{d_j}{(d_i-d_j)^2}\Big(\bv_j^* \Xi(\theta(d_i))\bw_{\mathsf{I}}\Big)^2\notag\\
&+O_\prec\Big( N^{-\frac 12-\varepsilon}\big(\delta_{i0}\sqrt{d_i}\, \Vert \mb{\varsigma}_{\mathsf{I}}\Vert + d_i \Vert \bw_{\mathsf{I}} \Vert \big)\,\Vert\bw_{\mathsf{I}}\Vert \, \Delta(d_i) \Big) \nonumber\\
&+O_\prec\bigg(  N^{-1-\varepsilon}\Big( d_i\Vert \mb{\varsigma}_{\mathsf{I}}\Vert^2 + d_i^2\,\Vert \mb{w}_\mathsf{I}\Vert^2\Big( \sum_{j\in \mathsf{I}^c}\frac {d_j}{\delta_{ij}}\Big) \Big(  \sum_{j\in \mathsf{I}^c}\frac {d_i}{\delta_{ij}}\Big) \Big) \delta_{i0}\, d_i\, \Delta(d_i)^2 \bigg)
\end{align}
for some small fixed constant $\varepsilon>0$. 
\end{lem}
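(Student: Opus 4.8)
The plan is to follow the Green-function representation strategy used for related finite-rank models in \cite{KY13, bloemendal2016principal, BDW}, but carried out to one order of precision beyond the leading fluctuation so that the regime $\bw\in\mathrm{Span}\{\bv_j\}_{j\notin\mathsf{I}}$, in which the fluctuation of $\la\bw,{\rm P}_{\mathsf{I}}\bw\ra$ degenerates, is also captured. First I would write the spectral projection as a contour integral, $\la\bw,{\rm P}_{\mathsf{I}}\bw\ra=-\frac{1}{2\pi\ii}\oint_{\Gamma}\la\bw,(Q-z)^{-1}\bw\ra\,\dd z$, where $\Gamma$ is a small positively oriented circle centred at $\theta(d_i)$ whose radius is a slowly growing power of $N$ times $(d_i-\sqrt y)^{1/2}d_i^{1/2}N^{-1/2}$. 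On the high-probability event furnished by Lemma \ref{locationeig}, Lemma \ref{rigidity of H} and the non-overlapping condition \eqref{asd2}, this $\Gamma$ encircles exactly the eigenvalues $\{\mu_t\}_{t\in\mathsf{I}}$ and leaves $\lambda_+$ and all $\mu_j$, $j\notin\mathsf{I}$, strictly outside.

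Next I would reduce everything to the Green function $\mG_1$ of $XX^{*}$. Since $T=\Sigma^{1/2}$ by \eqref{19071802} and $\Sigma^{-1}=I-\sum_{j=1}^{r}\frac{d_j}{1+d_j}\bv_j\bv_j^{*}$, the factorisation $Q-z=\Sigma^{1/2}(XX^{*}-z\Sigma^{-1})\Sigma^{1/2}$ gives $\la\bw,(Q-z)^{-1}\bw\ra=\la\widetilde\bw,(XX^{*}-z\Sigma^{-1})^{-1}\widetilde\bw\ra$ with $\widetilde\bw=\Sigma^{-1/2}\bw$ from \eqref{def:wtw}, and the Woodbury identity applied to the rank-$r$ perturbation $z\Sigma^{-1}=zI-zV\widehat D V^{*}$ ($V=(\bv_1,\dots,\bv_r)$, $\widehat D=\diag(d_j/(1+d_j))$) yields $\la\bw,(Q-z)^{-1}\bw\ra=\la\widetilde\bw,\mG_1(z)\widetilde\bw\ra-\mathbf p(z)^{*}\mathcal M(z)^{-1}\mathbf p(z)$, where $\mathbf p(z)=V^{*}\mG_1(z)\widetilde\bw\in\mathbb C^{r}$ and $\mathcal M(z)=z^{-1}\widehat D^{-1}+V^{*}\mG_1(z)V$. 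Writing $V^{*}\mG_1V=m_1(z)I_r+(\chi_{jk}(z))$ and using the elementary identity $m_1(\theta(d))=-(d+y)^{-1}$ (equivalently $z^{-1}(1+d)/d+m_1=0$ at $z=\theta(d)$), one sees that the $\mathsf{I}$-diagonal of $\mathcal M(\theta(d_i))$ is $O_\prec(\Delta(d_i)N^{-1/2})$ while, by \eqref{asd2}, the $\mathsf{I}^{c}$-diagonal has size $\asymp|d_i-d_j|/(d_id_j(1+d_i))$; hence $\mathcal M_{\mathsf I^{c}\mathsf I^{c}}$ is invertible inside $\Gamma$ with inverse diagonal $\doteq(d_i+y)d_j(1+d_i)/(d_i-d_j)$. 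Since $\mG_1(z)$ is analytic inside $\Gamma$ with high probability, the only poles of the integrand there come from the Schur complement $\mathcal S(z)=\mathcal M_{\mathsf I\mathsf I}-\mathcal M_{\mathsf I\mathsf I^{c}}\mathcal M_{\mathsf I^{c}\mathsf I^{c}}^{-1}\mathcal M_{\mathsf I^{c}\mathsf I}$, while $\la\widetilde\bw,\mG_1\widetilde\bw\ra$ and $\mathbf p_{\mathsf I^{c}}^{*}\mathcal M_{\mathsf I^{c}\mathsf I^{c}}^{-1}\mathbf p_{\mathsf I^{c}}$ are analytic inside $\Gamma$ and contribute nothing, leaving $\la\bw,{\rm P}_{\mathsf I}\bw\ra=\frac{1}{2\pi\ii}\oint_{\Gamma}\mathbf b(z)^{*}\mathcal S(z)^{-1}\mathbf b(z)\,\dd z$ with $\mathbf b(z)=\mathbf p_{\mathsf I}(z)-\mathcal M_{\mathsf I\mathsf I^{c}}(z)\mathcal M_{\mathsf I^{c}\mathsf I^{c}}(z)^{-1}\mathbf p_{\mathsf I^{c}}(z)$; the weights of \eqref{def:v_I_varsigma_I} emerge here from combining the Schur diagonal $(d_i+y)d_j(1+d_i)/(d_i-d_j)$ with the $\sqrt{1+d_j}$, $\theta(d_i)$ factors and, via the $j>r$ directions (where $d_j=0$), with the $\bu$-part of $\widetilde\bw$, so that $\mathbf b(\theta(d_i))_t=-\tfrac{\la\bw,\bv_t\ra}{(d_i+y)\sqrt{1+d_i}}+\tfrac{1}{\sqrt{1+d_i}}\bv_t^{*}\Xi(\theta(d_i))\bw_{\mathsf I}+\bv_t^{*}\Xi(\theta(d_i))\mb{\varsigma}_{\mathsf I}$.

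The third step is the residue evaluation. Writing $\zeta=z-\theta_i$ with $\theta_i:=\theta(d_i)$, I would Taylor-expand $\mathcal S(z)$ and $\mathbf b(z)$ about $\theta_i$, using that the leading deterministic part of $\mathcal S'(\theta_i)$ is the scalar $g(d_i)^{-1}I_{\mathsf I}$ (with $g$ from \eqref{19071921}) while $\mathcal S(\theta_i)$, the $\zeta$-coefficient of the fluctuating part of $\mathcal S$, and the $\zeta^{2}$-term are all of lower (and on $\Gamma$ mutually comparable) size. Performing the matrix residue calculus — $\frac{1}{2\pi\ii}\oint((z-\theta_i)I+K)^{-1}\dd z=I$, $\frac{1}{2\pi\ii}\oint(z-\theta_i)((z-\theta_i)I+K)^{-1}\dd z=-K$, etc. — and keeping all contributions up to second order in $\Xi=\mG_1-m_1I$ produces the expansion \eqref{eq:weakgreen}: the deterministic term $g(d_i)|\mathbf b_0|^{2}$ (where $\mathbf b_0$ is the leading part of $\mathbf b$) simplifies to $\frac{d_i^{2}-y}{d_i(d_i+y)}\|\bw_{\mathsf I}\|^{2}$; the linear terms in $\bw_{\mathsf I}^{*}\Xi(\theta_i)\bw_{\mathsf I}$ and $\mb{\varsigma}_{\mathsf I}^{*}\Xi(\theta_i)\bw_{\mathsf I}$ come from the $\zeta^{0}$-part together with the fluctuations of $\mathbf b$ and $\mathcal S$; the $\bw_{\mathsf I}^{*}\Xi'(\theta_i)\bw_{\mathsf I}$ term comes from the $\zeta$-coefficient of the fluctuating part of $\mathcal S$ (i.e. $\chi'_{tt'}(\theta_i)$) interacting with the residue; and the two $\chi^{2}$-terms come respectively from inverting the small $\mathcal S(\theta_i)$ and from the off-block product $\mathcal M_{\mathsf I\mathsf I^{c}}\mathcal M_{\mathsf I^{c}\mathsf I^{c}}^{-1}\mathcal M_{\mathsf I^{c}\mathsf I}$ inside $\mathcal S$. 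The coefficients collapse to the displayed ones after repeated use of \eqref{identitym1m2} and of the explicit values $m_1(\theta_i)=-(d_i+y)^{-1}$, $(zm_1)'(\theta_i)=(d_i^{2}-y)^{-1}$ at $z=\theta_i$. For the error bounds, all $\chi$'s and their derivatives at $\theta_i$ obey $|\chi_{jk}|,|\chi_{\bu k}|\prec\Delta(d_i)N^{-1/2}$ (with $\Upsilon(d_i)^{-\ell}$ weights for the $\ell$-th derivatives) by \eqref{weak_est_DG}, and the relevant sizes of $m_1$ and its derivatives are as in Lemma \ref{lem.19072501*} and \eqref{est_z_kap}; feeding these into the discarded $O(\zeta^{2})$-corrections and into all terms of order $\Xi^{3}$ or higher, and tracking separately the dependence on $d_i$, $\delta_{i0}=|d_i-\sqrt y|$ and the gaps $\delta_{ij}$, bounds the remainder by the two $O_\prec(\cdot)$ quantities appearing in \eqref{eq:weakgreen}.

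The hardest part is this last bookkeeping carried out \emph{uniformly} across the whole parameter range — $d_i-\sqrt y>N^{-1/3+\epsilon}$ up to $d_i\to\infty$, with possibly small gaps $\delta_{ij}$. One must verify that precisely the five displayed terms survive (for instance, the $\bw_{\mathsf I}^{*}\Xi'(\theta_i)\bw_{\mathsf I}$ term, whose prefactor $f(d_i)^{2}/(1+d_i)$ is large when $d_i$ is large, must be kept, whereas certain other second-order pieces must be shown to fall into the error), that $\Gamma$ can simultaneously be chosen small enough for Theorem \ref{isotropic} to apply with the stated precision on it and large enough to enclose $\{\mu_t\}_{t\in\mathsf I}$ with high probability, and that the small off-block quantities recombining into $\mb{\varsigma}_{\mathsf I}$ are counted exactly once. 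In the regime where $d_i$ diverges, this is also where one would bring in the nearly optimal VESD convergence rate of Theorem \ref{thm.VESD} to expose a cancellation between two quadratic forms of $\mG_1$ that is invisible when $d_i$ is bounded.
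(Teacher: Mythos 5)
Your proposal is sound and reaches \eqref{eq:weakgreen} by the same overall strategy as the paper (contour representation, Woodbury reduction to $\mG_1$, isotropic local laws, and the VESD-based cancellation between $\bw_{\mathsf I}^*\Xi\bw_{\mathsf I}$ and $\bw_{\mathsf I}^*\Xi'\bw_{\mathsf I}$ when $d_i$ diverges), but the core computation is organized differently. The paper keeps the full $r\times r$ matrix and expands $(D^{-1}+zV^*\mG_1 V)^{-1}$ around the deterministic $L(z)=(D^{-1}+zm_1)^{-1}$, splitting $\la\bw,{\rm P}_{\mathsf I}\bw\ra=S_1+S_2+S_3+R$; the explicit terms $S_1,S_2,S_3$ are evaluated by scalar residue calculus at $z=d_i$ on the image contour $\theta(\Gamma_i)$ with $\rho_i=\frac12(\delta_i\wedge\delta_{i0})$, and the remainder $R=T_1+T_2+T_3$ is bounded on the contour via the operator-norm estimate on $(D^{-1}+zV^*\mG_1 V)^{-1}$, which is where the $\mb{\varsigma}_{\mathsf I}$ weights and the delicate $\Lambda_2,\Lambda_3$ error calculus appear. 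You instead take a Schur complement in the $\mathsf I/\mathsf I^c$ blocks, so that only $\mathbf b^*\mathcal S^{-1}\mathbf b$ has poles inside a small circle around $\theta(d_i)$, and then do matrix residue calculus after Taylor expanding $\mathcal S$ and $\mathbf b$; your key algebraic identifications check out (e.g. the $\mathsf I^c$-block inverse combined with $\mathbf p_{\mathsf I^c}$ regenerates exactly the weights $d_i\sqrt{1+d_j}/(d_i-d_j)$ of $\mb{\varsigma}_{\mathsf I}$, the deterministic part of $\mathcal S'(\theta_i)$ is $g(d_i)^{-1}I$, and $g(d_i)|\mathbf b_0|^2=\frac{d_i^2-y}{d_i(d_i+y)}\|\bw_{\mathsf I}\|^2$, with the $\Xi'$ coefficient $-f(d_i)^2/(1+d_i)$ emerging from $-g(d_i)^2$ times the fluctuation of $\mathcal S'$). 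What your route buys is that the multiple block and the $\mb\varsigma_{\mathsf I}$ recombination are handled structurally rather than term by term, and the small contour makes Taylor-truncation errors easy to state; what it costs is that the residue identities $\frac{1}{2\pi\ii}\oint\zeta^k(\zeta I+K)^{-1}\dd\zeta$ are applied with a random $K$, so you must verify (with high probability, using Lemma \ref{locationeig}, \eqref{asd2} and rigidity) that all poles of $\mathcal S^{-1}$ inside $\Gamma$ are captured and that $\|\mathcal S(z)^{-1}\|_{\mathrm{op}}$ is controlled on $\Gamma$ — the exact analogue of the paper's bound \eqref{2020011304} — and you must carry out the deterministic$\times$fluctuation bookkeeping coming from $\mathbf b'(\theta_i)$ and the $\zeta$-linear residues (this is where the coefficient $-2d_i(d_i+1)$ of $\bw_{\mathsf I}^*\Xi\bw_{\mathsf I}$ is completed, beyond the $-2(d_i^2-y)/d_i$ piece visible from $g(d_i)|\mathbf b|^2$ alone), uniformly in $d_i$ and in the gaps $\delta_{ij}$, exactly as the paper does for $S_2$ and $R$. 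You correctly flag this uniform bookkeeping and the large-$d_i$ cancellation via Theorem \ref{thm.VESD} as the hard part, so I see no gap, only a substantial amount of residual computation that the proposal defers.
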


\begin{rmk}\label{rmk:representation} It will be seen that the bounds in \rf{weak_est_XG} actually give the  true typical size of 
the quadratic forms. In light of this, 
 Lemma \ref{weak_lem_decomp} suggests that  the  distribution of $ \la \bw, {\rm{P}}_i\bw \ra$ is ultimately governed by the joint distribution of the quadratic forms $\bw_{\mathsf{I}}^*\Xi \bw_{\mathsf{I}}, \mb{\varsigma}_{\mathsf{I}}^*\Xi \bw_{\mathsf{I}}, \bw_{\mathsf{I}}^*\Xi' \bw_{\mathsf{I}}, \{ \bv_t^*\Xi \mb{\varsigma}_{\mathsf{I}}\}_{t\in \mathsf{I}}, \{\bv_j^* \Xi \bw_{\mathsf{I}}\}_{j\in \mathsf{I}^c}$, since the last two error terms in (\ref{eq:weakgreen}) are smaller than the sum of the second to the fifth fluctuating terms in any case, with high porbability.  Here we drop the parameter $\theta(d_i)$ from $\Xi$ for simplicity.
\end{rmk}

 \begin{proof}[Proof of Lemma \ref{weak_lem_decomp}]
 Recall (\ref{19071901}) together with (\ref{19071902}).  We rewrite   $S$ as
$$S=V\diag(d_1,\cdots,d_r)V^*,$$ 
by setting $V=(\bv_1,\cdots,\bv_r)$. Therefore, we have 
 $$\Sigma^{-1}=I-VDV^*.$$
 with
 $$D=\diag\left(\frac{d_1}{1+d_1},\cdots,\frac{d_r}{1+d_r} \right).$$
  From Lemma \ref{locationeig}, we observe that all $\mu_i, i\in \mathsf{I}$ tend to the identical limit $\theta(d_i)$ with error bound $O_\prec (N^{-1/2}(d_i-\sqrt y)^{1/2})$.
 
  Let $\Gamma_i$ be the boundary of a disc centered at $d_i$ with  radius $\rho_i$,
  \begin{align} \label{def:radius}
  \rho_i:=\frac 12 \Big( \min_{j\in \mathsf{I}^c}|d_i-d_j| \wedge (d_i-\sqrt{y})\Big),
    \end{align}
    such that the disc is away from the critical value $\sqrt y$ and other distinct $d_j$'s.
  Therefore under the definitions of $\rho_i$ and $\theta(z)$ in \rf{def. of theta},  with  Assumption \ref{supercritical}, the contour $\theta(\Gamma_i)$, which is the image of $\Gamma_i$ under the map $\theta(\cdot)$, encloses exactly $|\mathsf{I}|$ eigenvalues of $Q$, i.e. $\mu_i,i\in \mathsf{I}$. This follows from the fact 
  \begin{align*}
  |\theta(d_i+\rho_i)-\theta(d_i)| \asymp (d_i-\sqrt y)\rho_i > (d_i-\sqrt y)^{\frac 12 } N^{-\frac 12+ \varepsilon }, \text { for some $\varepsilon >0$},
  \end{align*}
where the last inequality is guaranteed  by   Assumption \ref{supercritical}. 
 According to Lemma  \ref{locationeig}, together with the Cauchy integral, we have the following equality with high probability 
\begin{align}
\la \bw, {\rm{P}}_{\mathsf{I}}\bw \ra=-\frac{1}{2\pi \ii}\oint_{\theta(\Gamma_i)}\mb{w}^*G(z)\mb{w}\,{\rm d}z.\label{greenfunctionrepre}
\end{align} 
 With the notations for $V, S, D$ and $\Sigma^{-1}$, using the setting (\ref{19071802}), we can write 
\begin{align*}
G(z)&=\left( \Sigma^{\frac12} XX^*  \Sigma^{\frac12} - z I \right)^{-1}= \Sigma^{-\frac12} \left( \mG_1^{-1}(z) +z \Sigma^{-\frac12} S \Sigma^{-\frac12} \right)^{-1} \Sigma^{-\frac12}\\
& = \Sigma^{-\frac12} \left( \mG_1^{-1}(z) + z VDV^* \right)^{-1} \Sigma^{-\frac12}.
\end{align*}
Then, it follows from the matrix inversion lemma that
\begin{align*}
G(z)&= \Sigma^{-\frac12}\mG_1(z) \Sigma^{-\frac12}-z \Sigma^{-\frac12}\mG_1(z) V\big(D^{-1}+zV^*\mG_1(z) V\big)^{-1} V^*\mG_1(z) \Sigma^{-\frac12}. 
\end{align*}
With the notation introduced in \eqref{def:wtw}, we can further write
 \begin{align}
 \mb{w}^*G(z)\mb{w}=\widetilde{\mb{w}}^*\mG_1(z)\widetilde{\mb{w}}-z\widetilde{\mb{w}}^*\mG_1(z){V}\big(D^{-1}+zV^*\mG_1(z)V\big)^{-1}V^*\mG_1(z) \widetilde{\mb{w}}. \label{19071903}
 \end{align}
Plugging (\ref{19071903}) into \eqref{greenfunctionrepre}, and noticing that the contour integral of $\widetilde{\mb{w}}^*\mG_1(z)\widetilde{\mb{w}}$ on $\theta(\Gamma_i)$ is zero with high probability by Assumption \ref{supercritical} and the rigidity of eigenvalues of $H$ (\cf (\ref{190726100})), one has
  \begin{align}\label{eq:intrep1}
\la \bw, {\rm{P}}_{\mathsf{I}}\bw \ra =\frac{1}{2\pi \ii}\oint_{\theta(\Gamma_i)} z\widetilde{\mb{w}}^*\mG_1(z){V}\big(D^{-1}+zV^*\mG_1(z)V\big)^{-1}V^*\mG_1(z) \widetilde{\mb{w}}\,{\rm d}z
  \end{align}
with high probability. 

For the integrand in (\ref{eq:intrep1}), we first recall the notation in (\ref{19071905}) and then 
we apply resolvent expansion 
\begin{align}
\big(D^{-1}+zV^*\mG_1(z)V\big)^{-1} 
=&L(z) - z L(z) V^*\Xi(z)V L(z) \nonumber\\
&+ \big( z L(z)V^*\Xi(z)V \big)^2 \big(D^{-1}+zV^*\mG_1(z)V\big)^{-1}, \label{19071907}
\end{align}
where 
$$L(z) := \left( D^{-1} + z m_1(z)  \right)^{-1}.$$
With (\ref{19071907}), we can further rewrite  \eqref{eq:intrep1}  as
\begin{align*}
&\la \bw, {\rm{P}}_{\mathsf{I}}\bw \ra=\frac{1}{2\pi \ii} \oint_{\theta(\Gamma_i)} z \big( m_1(z) \wt{\bw}^*V +\wt{\bw}^* \Xi(z) V \big)\Big( L(z) - z L(z) V^*\Xi(z)V L(z)\\
& \qquad+ \big( z L(z)V^*\Xi(z)V \big)^2 \big(D^{-1}+zV^*\mG_1(z)V\big)^{-1} \Big) \big( m_1(z) V^* \wt{\bw} + V^* \Xi(z) \wt{\bw} \big) \,{\rm d}z.
\end{align*}
Hence,  we  can  write
\begin{align}\label{eq:decomposew}
\la \bw, {\rm{P}}_{\mathsf{I}}\bw \ra = S_1 + S_2 + S_3 +R,
\end{align}
by defining
 \begingroup
\allowdisplaybreaks
\begin{align}
&S_1:= \frac{1}{2\pi \ii}\oint_{\theta(\Gamma_i)} z m_1^2(z) \wt{\bw}^* V L(z) V^* \wt\bw \,{\rm d}z, \notag\\
&S_2:= \frac{1}{2\pi \ii}\oint_{\theta(\Gamma_i)} \Big( 2 z m_1(z)\wt{\bw}^* V L(z) V^* \Xi(z) \wt\bw - z^2 m_1^2(z) \wt{\bw}^* V L(z) V^* \Xi(z) V L(z) V^* \wt\bw   \Big) \,{\rm d}z, \notag\\
&S_3:=  \frac{1}{2\pi \ii} \oint_{\theta(\Gamma_i)} \Big( z  \wt\bw^* \Xi(z) V L(z) V^* \Xi(z) \wt\bw - 2 z^2 m_1(z) \wt\bw^* (VL(z)V^* \Xi(z))^2 \wt\bw \notag\\
&\qquad \qquad \qquad \qquad+ z^3 m_1^2(z) \wt\bw^* (VL(z)V^* \Xi(z))^2V L(z) V^* \wt\bw  \Big) \,{\rm d}z,
\label{def of S_i}
\end{align}
\endgroup
and we take \rf{eq:decomposew} as the definition of the remainder term $R$. It remains to estimate $S_1, S_2, S_3$ and $R$.
From the definitions in (\ref{m1m2}) and (\ref{def. of theta}),  
it is easy to check the identity
\begin{align}
1+z^{-1} + \theta(z) m_1 (\theta(z))=0.  \label{19071910}
\end{align} 
With the above identity, we see that
\begin{align*}
V L(\theta(z)) V^* = \sum_{j=1}^r  \frac{\bv_j \bv_j^*}{1+d_j^{-1} + \theta(z) m_1 (\theta(z))} = \sum_{j=1}^r  \frac{ z d_j \bv_j \bv_j^*}{z- d_j}.
\end{align*}
Therefore, by the residue theorem,
 \begingroup
\allowdisplaybreaks
\begin{align}
S_1 &= \frac{1}{2\pi \ii}\oint_{\Gamma_i}  \theta(z) \theta'(z) m_1^2(\theta(z)) \wt{\bw}^* V L(\theta(z)) V^* \wt\bw \,{\rm d}z \nonumber\\
&= \theta(d_i) \theta'(d_i) m_1^2 (\theta(d_i)) d_i^2 \sum_{t\in \mathsf{I}}(\wt\bw^* \bv_t)^2 = \frac{d_i^2 - y }{d_i (d_i+ y)} \sum_{t\in \mathsf{I}}(\bw^* \bv_t)^2.
\end{align}
\endgroup
Similarly, using \rf{19071910}, we can get
\begin{align*}
S_2 &= \frac{1}{2\pi \ii}\oint_{\Gamma_i}  \Big( 2\theta(z) \theta'(z) m_1(\theta(z))  \sum_{j=1}^r \frac{z d_j}{z- d_j} (\wt\bw^* \bv_j) \wt\bw^* \Xi(\theta(z)) \bv_j \\
&\quad- \theta^2(z) \theta'(z) m_1^2(\theta(z)) \sum_{j,k=1}^r \frac{z^2 d_j d_k}{(z-d_j)(z-d_k)} (\wt\bw^* \bv_j) (\wt\bw^* \bv_k) \bv_j^*\Xi(\theta(z)) \bv_k \Big) \,{\rm d}z.
\end{align*} 
Further by the residue theorem together with the definition of $\theta$ and $m_1$ in (\ref{def. of theta}) and (\ref{m1m2}), we can get
\begin{align}
S_2&=-2d_i (d_i+1)^2 \sum_{j,k\in \mathsf{I}}{\wt w_j}{\wt w_k}  \chi_{jk}(\theta(d_i)) - 2 f(d_i) \sum_{j\in \mathsf{I}^{c}} \nu_i(d_j) \wt w_j \sum_{t\in \mathsf{I}}\wt w_t \chi_{tj}(\theta(d_i)) \nonumber\\
&\quad- 2 f(d_i) \sum_{t\in \mathsf{I}}\wt w_t \chi_{\bu t}(\theta(d_i)) - f(d_i)^2 \sum_{j,k\in \mathsf{I}}{\wt w_j}{\wt w_k}  \chi_{jk}'(\theta(d_i))\nonumber\\
&=-2d_i(d_i+1) \bw_{\mathsf{I}}^*\Xi(\theta(d_i))\bw_{\mathsf{I}}-2 \frac{f(d_i)}{\sqrt{1+d_i}}\bw_{\mathsf{I}}^*\Xi(\theta(d_i))\mb{\varsigma}_{\mathsf{I}} - \frac{f(d_i)^2}{1+d_i}\bw_{\mathsf{I}}^*\Xi'(\theta(d_i))\bw_{\mathsf{I}},  \label{weak_19071941}
\end{align}
 where we recall the notations in \rf{19071920}, \rf{19071921} and \rf{def:v_I_varsigma_I}. 
 Next, we fix a sufficiently large $K>0$. For the case $d_i\leq K$, by isotropic local law  \rf{weak_est_DG}, we can bound both the first and third term on the RHS of \rf{weak_19071941}  by $O_\prec ( \sum_{t\in \mathsf{I}} |\wt w_t|^2  \Delta(d_i) N^{-1/2})$. In the case $d_i>K$,  this  crude bound shall be refined to $O_\prec ( \sum_{t\in \mathsf{I}} |\wt w_t|^2  d_i^3\Delta(d_i) N^{-1/2})$. Here in the latter case we carry the additional factor $d_i^3$ since it can be very large; for instance, when $d_i\sim N^C$. Nevertheless,  a more precise bound can be obtained for the combination of the first and third terms (\ref{weak_19071941}) due to a hidden cancellation in case $d_i>K$.  In order to see this cancellation, we need to apply in Theorem \ref{thm.VESD}. To this end,  we first set the following notation for the normalized vector  
\begin{align}
\bw_{\mathsf{I}}^0:=            
\begin{cases}
{\bw_{\mathsf{I}}}/{\| \bw_{\mathsf{I}}\|}, & \text{if } \bw_{\mathsf{I}} \neq \mb{0};\\
\mb{0}, & \text{otherwise}.
\end{cases} \label{normalized bw}
\end{align}
Note that due to the triviality of the case $\bw_{\mathsf{I}}=\mb{0}$, we only discuss the case $\bw_{\mathsf{I}}\neq \mb{0}$ in the sequel. 

Recall the definition of  the VESD from \rf{def:VESD0} with $\mathbf{v}=\bw_{\mathsf{I}}^{0}$ 
\begin{align}\label{def:VESD}
F_{1N}^{\mb{w}_\mathsf{I}^0} (x) = \sum_{i=1}^M \big| \la \mb{\phi}_i, \bw_{\mathsf{I}}^0\ra \big|^2  \mathscr{1} \mathds{1} (\lambda_i(H)\leq x),
\end{align} 
and we denote by $F_1(x)$ the distribution function of $\nu_{\text{MP},1}({\rm d}x)$ in \rf{19071801}.
By Theorem \ref{thm.VESD}, we obtain,
\begin{align}\label{bdd:Koldis_VESD}
\sup_x| F_{1N}^{\mb{w}_\mathsf{I}^0}(x) - F_1(x)  | = O_\prec (N^{-\frac 12}).
\end{align}
In light of the fact that $\bw_{\mathsf{I}}^0$ is the normalized $\bw_{\mathsf{I}}$, we can write the combination of the first and third terms on the RHS of (\ref{weak_19071941}) as
\begin{align}\label{cancel:1}
&\quad \Big|-2d_i(d_i+1) \bw_{\mathsf{I}}^*\Xi(\theta(d_i))\bw_{\mathsf{I}} - \frac{f(d_i)^2}{1+d_i}\bw_{\mathsf{I}}^*\Xi'(\theta(d_i))\bw_{\mathsf{I}} \Big| \nonumber\\
& = \Vert \bw_{\mathsf{I}}\Vert^2 \Big|\int \Big(\frac{-2d_i(d_i+1)}{x-\theta(d_i)}  - \frac{(1+d_i)(d_i^2-y)^2/d_i^2}{\big(x-\theta(d_i)\big)^2} \Big) {\rm{d}}  \big(F_{1N}^{\mb{w}_\mathsf{I}^0} (x)- F_1 (x)\big)\Big| \nonumber\\
&\asymp \Vert \bw_{\mathsf{I}}\Vert^2 \bigg|\int^{\lambda_+ + N^{-\frac 23 + \frac \tau 2} }_{\lambda_- - N^{-\frac 23 + \frac \tau 2}} \Big(\frac{2d_i(d_i+1)}{\big(x-\theta(d_i)\big)^2}  + \frac{2(1+d_i)(d_i^2-y)^2/d_i^2}{\big(x-\theta(d_i)\big)^3} \Big)   \big(F_{1N}^{\mb{w}_\mathsf{I}^0} (x)- F_1 (x)\big) {\rm{d}}x \bigg|,
\end{align}
where in the last step, we used integration by parts, together with the rigidity results in Lemma \ref{rigidity of H}. 
In the case $d_i>K$ for sufficiently large $K$, by the definition of $\theta(d_i)$, we see that $\theta(d_i) \sim d_i$ and thus $\theta(d_i)$ is also sufficiently large. Hence,  applying Taylor expansion for both $1/(x-\theta(d_i))^2, 1/(x-\theta(d_i))^3$ at $x=0$, we can easily get the bound 
\begin{align}\label{cancel:Taylor}
\frac{2d_i(d_i+1)}{\big(x-\theta(d_i)\big)^2}  + \frac{2(1+d_i)(d_i^2-y)^2/d_i^2}{\big(x-\theta(d_i)\big)^3} = O(1/d_i)
\end{align} 
uniformly for $x\in [\lambda_- - N^{-\frac 23 + \frac \tau 2}, \lambda_+ + N^{-\frac 23 + \frac \tau 2}]$. Then using \rf{cancel:Taylor} and \rf{bdd:Koldis_VESD}, we finally get the estimate 
\begin{align} \label{2020072106}
\Big|-2d_i(d_i+1) \bw_{\mathsf{I}}^*\Xi(\theta(d_i))\bw_{\mathsf{I}} - \frac{f(d_i)^2}{1+d_i}\bw_{\mathsf{I}}^*\Xi'(\theta(d_i))\bw_{\mathsf{I}} \Big| =  O_\prec \Big(\sum_{t\in \mathsf{I}} |\wt w_t|^2 N^{-\frac 12}\Big).
\end{align}
We therefore get a unified estimate $ O_\prec \Big(\sum_{t\in \mathsf{I}} |\wt w_t|^2 d_i^2 \Delta(d_i) N^{-\frac 12}\Big)$ for this combination, no matter $d_i\leq K$ or $d_i>K$.

Further, from the isotropic local law \rf{weak_est_DG},  Assumption \ref{supercritical}, and the definition of $\bw_{\mathsf{I}}, \mb{\varsigma}_{\mathsf{I}}$ in \rf{def:v_I_varsigma_I}, we get the following crude bound  
\begin{align} \label{2020011401}
S_2&=O_\prec \bigg( \sum_{t\in \mathsf{I}}|\wt w_t| \Delta(d_i) N^{-1/2}\Big( d_i^2 \sum_{t\in \mathsf{I}}|\wt w_t| + \sum_{j\in \mathsf{I}^c} |\wt w_j| \frac {d_i^2d_j\delta_{i0}}{\delta_{ij}}  + d_i\delta_{i0}\Vert \bu\Vert \Big)\bigg)\nonumber\\
&=O_\prec (\Lambda_2),
\end{align}
where we recall the notations $\delta_{ij}, \delta_{i0}$ in \rf{def:d_i-d_j} and  define 
\begin{align} \label{def:Lambda_2}
\Lambda_2&:=\Big( \sum_{t\in \mathsf{I}}\frac{|\wt w_t|d_i}{\delta_{i0}}+ \sum_{j\in \mathsf{I}^c}\frac{|\wt w_j|d_id_j}{\delta_{ij}} +  {\Vert \bu \Vert}\Big) \, \Big(\sum_{t\in \mathsf{I}}\frac{|\wt w_t|d_i}{\delta_{i0}}\Big) \, \delta_{i0}^2 \Delta(d_i) N^{-\frac 12}.
\end{align}
Since $\Lambda_2$ can degenerate depending on the size of $\sum_{t\in \mathsf{I}}|\wt w_t|$, it is necessary to also consider the fluctuation of the higher order term $S_3$. We shall obtain more precise estimates for the summands in $S_2$ in later context, after first obtaining an estimate of $S_3$.

Next, we turn to estimate $S_3$ (c.f. (\ref{def of S_i})). We estimate the integrals of three terms in the integrand separately. First, using the residue theorem together with the notations in (\ref{19071920}) and (\ref{19071921}), we have  
 \begin{align} \label{weak_d_is31}
&  \frac{1}{2\pi \ii} \oint_{\theta(\Gamma_i)}  z  \wt\bw^* \Xi(z) V L(z) V^* \Xi(z) \wt\bw\,{\rm d}z=\theta(d_i)\theta'(d_i)d_i^2 \sum_{t\in \mathsf{I}}\Big(\wt \bw^*\Xi(\theta(d_i))\bv_t\Big)^2 \nonumber\\
&  = g(d_i)\sum_{t\in \mathsf{I}}\Big( { \sum_{j,k\in \mathsf{I}^c} }  \wt w_j \wt w_k \chi_{tj}(\theta(d_i)) \chi_{tk}(\theta(d_i)) +2 { \sum_{j\in \mathsf{I}^c}} \wt w_j \chi_{tj}(\theta(d_i)) \chi_{\bu t}(\theta(d_i)) + \chi_{\bu t}^2(\theta(d_i))\Big)\nonumber\\
&\quad +O_\prec\Big({d_i^2(d_i-\sqrt y)\Delta(d_i)^2\sum_{t\in \mathsf{I}}|\wt w_t|\Vert  \wt\bw\Vert }{N^{-1}} \Big),
\end{align}
where the error bound follows from the isotropic local law \rf{weak_est_DG} and the  expression for $g(d)$.

For the second part of the integral, we have
 \begingroup
\allowdisplaybreaks
\begin{align}\label{weak_d_is32}
 &\frac{1}{2\pi \ii} \oint_{\theta(\Gamma_i)} - 2 z^2 m_1(z) \wt\bw^* (VL(z)V^* \Xi(z))^2 \wt \bw \,{\rm d}z \nonumber \\
 &=-2 \theta^2(d_i)\theta'(d_i)m_1(\theta(d_i)) \sum_{j\in \mathsf{I}^c, \; t\in \mathsf{I}} \frac{d_i^3 d_j}{d_i - d_j} (\wt\bw^*\bv_j) \wt\bw^*\Xi(\theta(d_i))\bv_t\chi_{tj}(\theta(d_i)) \nonumber \\
&\quad -2 \theta^2(d_i)\theta'(d_i)m_1(\theta(d_i)) \sum_{j\in \mathsf{I}^c,\; t\in \mathsf{I}} \frac{d_i^3 d_j}{d_i - d_j} (\wt\bw^*\bv_t)\wt\bw^*\Xi(\theta(d_i))\bv_j\chi_{tj}(\theta(d_i)) \nonumber\\
&\quad-2d_i^2 \sum_{t, k\in \mathsf{I}}(\wt\bw^*\bv_t) \Big(\theta^2(z)\theta'(z)m_1(\theta(z))z^2 \wt\bw^*\Xi(\theta(z))\bv_k \chi_{tk}(\theta(z)) \Big)'\Big|_{z=d_i} \nonumber\\
 &=2g(d_i)\sum_{j\in \mathsf{I}^c}\frac{(1+d_i)d_j}{d_i-d_j}\wt w_j \sum_{t\in \mathsf{I}} \chi_{tj}(\theta(d_i))\Big(\sum_{k\in \mathsf{I}^c} \wt w_k \chi_{tk}(\theta(d_i)) +\chi_{\bu t}(\theta(d_i)) \Big) \nonumber \\
 &\quad+O_\prec\Big(\sum_{t\in \mathsf{I}, j\in \mathsf{I}^c} \Big(\frac{d_i^3d_j\delta_{i0}}{\delta_{ij}} + d_i^4\Big)|\wt w_t|\Vert \wt \bw\Vert \Delta(d_i)^2 {N^{-1}}\Big).
\end{align}
\endgroup
Similarly, we used isotropic local law \rf{weak_est_DG} for $l=0,1$ and the definition for $g(d)$ in (\ref{19071921})  for the estimates in the last step.

Analogously, for  the last term, we obtain
 \begingroup
\allowdisplaybreaks
\begin{align} \label{weak_d_is33}
&\frac{1}{2\pi \ii} \oint_{\theta(\Gamma_i)} z^3 m_1^2(z) \wt\bw^* (VL(z)V^* \Xi(z))^2V L(z) V^* \wt\bw   \,{\rm d}z \nonumber \\
&=g(d_i)\sum_{j,k\in \mathsf{I}^c}\frac{(1+d_i)^2d_jd_k}{(d_i-d_j)(d_i-d_k)} \wt w_j \wt w_k \sum_{t\in \mathsf{I}} \chi_{tj}(\theta(d_i)) \chi_{tk}(\theta(d_i))\nonumber\\
& +2g(d_i)\sum_{j,k\in \mathsf{I}^c}\frac{(1+d_i)^2d_jd_k}{(d_i-d_j)(d_i-d_k)} \wt w_k\chi_{jk}(\theta(d_i)) \sum_{t\in \mathsf{I}} \wt w_t\chi_{tj}(\theta(d_i)) \nonumber\\
&+2\sum_{t,k\in \mathsf{I}, j\in \mathsf{I}^c}\wt w_t \wt w_j d_i^2d_j\Big(\theta(z)^3\theta'(z)m_1(\theta(z))^2\frac{z^3}{z-d_j}\chi_{tk}(\theta(z))\chi_{kj}(\theta(z)) \Big)'\Big|_{z=d_i} \nonumber\\
&+\sum_{t,k\in \mathsf{I}, j\in \mathsf{I}^c}\wt w_t \wt w_k d_i^2d_j \Big(\theta(z)^3\theta'(z)m_1(\theta(z))^2\frac{z^3}{z-d_j}\chi_{tj}(\theta(z))\chi_{kj}(\theta(z)) \Big)'\Big|_{z=d_i} \nonumber\\
&+\sum_{t, j, k\in \mathsf{I}}\wt w_t \wt w_kd_i^3 \Big(\theta(z)^3\theta'(z)m_1(\theta(z))^2z^3\chi_{tj}(\theta(z))\chi_{kj}(\theta(z)) \Big)''\Big|_{z=d_i}.
\end{align}
\endgroup
In the sequel, we will keep the first term on the RHS of (\ref{weak_d_is33}) as it is and estimate the other terms. 
First, notice that the second term  on the RHS of (\ref{weak_d_is33}) can be estimated by 
\begin{align}
&2g(d_i)\sum_{j,k\in \mathsf{I}^c}\frac{(1+d_i)^2d_jd_k}{(d_i-d_j)(d_i-d_k)} \wt w_k\chi_{jk}(\theta(d_i)) \sum_{t\in \mathsf{I}} \wt w_t\chi_{tj}(\theta(d_i)) \nonumber\\
&=O_\prec \Big(\sum_{j,k\in \mathsf{I}^c, t\in \mathsf{I}}  |\wt w_t| |\wt w_k| \frac {d_i^4d_jd_k\delta_{i0}}{\delta_{ij}\delta_{ik}} \Delta(d_i)^2N^{-1}\Big), \label{S_33:omitted_2}
\end{align}
Similarly, simple calculation shows that the third term on the RHS of (\ref{weak_d_is33}) can be bounded crudely by   
\begin{align}
&\sum_{t,k\in \mathsf{I}, j\in \mathsf{I}^c} \wt w_t\wt w_j d_i^2d_j\Big[\Big(\frac{\theta''(d_i)d_i^4 +\theta'(d_i) ^2 d_i^3+ \theta'(d_i) d_i^3}{d_i-d_j} + \frac{\theta'(d_i) d_i^4  }{(d_i-d_j)^2 } \Big)\chi_{tk}(\theta(d_i))\chi_{kj}(\theta(d_i)) \nonumber\\
&+ \frac{\theta'(d_i) ^2 d_i^4}{d_i-d_j}\Big(\chi_{tk}'(\theta(d_i))\chi_{kj}(\theta(d_i)) + \chi_{tk}(\theta(d_i))\chi_{kj}'(\theta(d_i))\Big)\Big] \nonumber\\
&=O_\prec\bigg(\sum_{t\in \mathsf{I}, j \in \mathsf{I}^c}|\wt w_t| |\wt w_j| \Big(\frac{d_i^5d_j}{\delta_{ij}} + \frac{d_i^5d_j\delta_{i0}}{\delta_{ij}^2}\Big) \Delta(d_i)^2N^{-1}\bigg). \label{S_33:omitted_3}
\end{align}

Next,   we turn to  the last term on the RHS of (\ref{weak_d_is33}). We split the discussion into two cases: $d_i\leq K$ or $d_i>K$, for some sufficiently large (but fixed) $K>0$. For $d_i\leq K$ , it is easy to compute the derivative by chain rule and use isotropic local law \rf{weak_est_DG} to get the bound $O_\prec \big( \sum_{t,k\in \mathsf{I}} |\wt w_t| |\wt w_k| \delta_{i0}^{-1}\Delta(d_i)^2{N^{-1}}\big)$ directly for the last term on the RHS of (\ref{weak_d_is33}), and thus we omit the detail. In case $d_i>K$,  we first introduce a new form of eigenvector empirical spectral distribution(VESD) of $H$ with respect to two fixed unit vectors $\bu, \bv$, which is defined by 
\begin{align} \label{def:VESDnew}
F_{1N}^{\bu,\bv}(x)  = \sum_{i=1}^M  \la \mb{\phi}_i, \bu\ra \la \mb{\phi}_i, \bv\ra    \mathscr{1} \mathds{1} (\lambda_i(H)\leq x).
\end{align}
Correspondingly, we set $F_1^{\bu, \bv}(x) = \la \bu, \bv \ra   F_1(x)$. Then by triangle inequality and Theorem \ref{thm.VESD}, we have  the estimate
\begin{align} \label{est:Kol uv}
\sup_{x}| F_{1N}^{\bu,\bv}(x)- F_1^{\bu, \bv}(x) | & \leq    \frac 12 \Vert\bu+\bv\Vert^2 \sup_x| F_{1N}^{(\bu+\bv)^0}(x) - F_1(x)| \nonumber\\
&+\frac 12 \sup_x |F_{1N}^{\bu}(x) - F_1(x)| + \frac 12\sup_x|F_{1N}^{\bv} - F_1|\notag\\
& = O_\prec(N^{-\frac 12}).
\end{align}
Here we use $(\bu+\bv)^0$ to denote the normalized $\bu+\bv$. 
Similarly to \rf{def:VESD}- \rf{2020072106}, after computing the derivative of  the last term on the RHS of (\ref{weak_d_is33}), we will see the hidden cancellations 
 among the resulting terms using the following estimates,
 \begin{align*}
 &2\chi_{tj}(\theta(z)) + \theta(d_i)\chi_{tj}'(\theta(z))  = O_\prec(d_i^{-3} N^{-1/2}),  \notag\\
 & \chi_{tj}(\theta(z)) \chi_{kj}'(\theta(z)) \theta''(d_i) = O_\prec(d_i^{-4} \Delta(d_i)^2N^{-1/2}), \nonumber\\
 &6\chi_{tj}(\theta(z)) + 6\theta(d_i)\chi_{tj}'(\theta(z))  + \theta(d_i)^2\chi_{tj}''(\theta(z)) = O_\prec(d_i^{-3} N^{-1/2})
 \end{align*}
 which can be checked readily  by using (\ref{est:Kol uv}).
  Then after elementary computations,  
we can get for the case $d_i>K$, the  last term on the RHS of (\ref{weak_d_is33}) admits the bound $O_\prec \Big( \sum_{t,k\in \mathsf{I}} |\wt w_t| |\wt w_k| {N^{-1}}\Big)$. Combining the estimates for the two cases, $d_i\leq K$ or $d_i>K$, we can uniformly bound  the last term on the RHS of (\ref{weak_d_is33}) by 
\begin{align}
O_\prec \Big( \sum_{t,k\in \mathsf{I}} |\wt w_t| |\wt w_k| d_i^5\delta_{i0}^{-1}  \Delta(d_i)^2 {N^{-1}}\Big).\label{S_33:omitted_5}
\end{align}

Next, we turn to estimate the fourth term on the RHS of \rf{weak_d_is33}, 
which by product rule can be written up as 
\begin{align} \label{2020072201}
&\sum_{t,k\in \mathsf{I}, j\in \mathsf{I}^c}\wt w_t \wt w_k d_i^2d_j \Big(\theta(z)^3\theta'(z)m_1(\theta(z))^2\frac{z^3}{z-d_j}\chi_{tj}(\theta(z))\chi_{kj}(\theta(z)) \Big)'\Big|_{z=d_i}\nonumber\\
&=-d_i(1+d_i)^2g(d_i)\sum_{t, k\in \mathsf{I}, j\in \mathsf{I}^c}\wt w_t \wt w_k \frac{d_j}{(d_i-d_j)^2}\chi_{tj}(\theta(d_i))\chi_{kj}(\theta(d_i))  \nonumber\\
&\quad +  \sum_{t,k\in \mathsf{I}, j\in \mathsf{I}^c}\wt w_t \wt w_k \frac{d_i^2d_j }{d_i-d_j}\Big(\theta(z)^3\theta'(z)m_1(\theta(z))^2{z^3}\chi_{tj}(\theta(z))\chi_{kj}(\theta(z)) \Big)'\Big|_{z=d_i}.
\end{align}
The first term on the RHS of \rf{2020072201} can be further simplified to 
\begin{align}\label{weak_d_1s33term}
-d_i(1+d_i)g(d_i)\sum_{ j\in \mathsf{I}^c}\frac{d_j}{(d_i-d_j)^2}\Big(\bw_{\mathsf{I}}^*\Xi(\theta(d_i))\bv_j\Big)^2.
\end{align}
The above may exceed the stochastic bound $\Lambda_2$ defined in (\ref{def:Lambda_2}). While a posteriori, it will be clear that $\Lambda_2$ gives the typical size of $S_2$. Hence,  we shall keep the  term (\ref{weak_d_1s33term}) explicit instead of grouping it into the errors. As for the second term of \rf{2020072201},
analogously to the last term on the RHS of \rf{weak_d_is33}, by splitting the discussion into two cases based on the  size of $d_i$, we can  get the estimate 
\begin{align}
O_\prec\bigg(\sum_{t,k\in \mathsf{I}, j \in \mathsf{I}^c}|\wt w_t| |\wt w_k| \frac{d_i^4d_j}{\delta_{ij}}  \Delta(d_i)^2N^{-1}\bigg).  \label{S33:omitted_4}
\end{align}

Combining \eqref{weak_d_is31}-\eqref{S33:omitted_4}, after necessary simplification, we arrive at 
\begin{align} \label{2020011402}
S_3&=g(d_i)\sum_{t\in \mathsf{I}}\Big(\bv_t^*\Xi(\theta(d_i))\mb{\varsigma}_{\mathsf{I}}\Big)^2- d_i(1+d_i)g(d_i)\sum_{ j\in \mathsf{I}^c}\frac{d_j}{(d_i-d_j)^2}\Big(\bv_j^*\Xi(\theta(d_i))\bw_{\mathsf{I}}\Big)^2\notag\\
&\quad+O_\prec (\mathcal{R}_3)\nonumber\\
&=:\wt S_3+ O_\prec (\mathcal{R}_3),
\end{align}
where  
 \begingroup
\allowdisplaybreaks
\begin{align} \label{def:R_3}
\mathcal{R}_3&:= \Big(\sum_{t\in \mathsf{I}} \frac{|\wt w_t|d_i}{\delta_{i0}} \Big) \Vert \wt\bw \Vert \Big( \sum_{j\in \mathsf{I}^c}\frac{d_j}{ \delta_{ij}}+\frac{d_i}{\delta_{i0}}\Big) d_i^2\delta_{i0}^2 \Delta(d_i)^2N^{-1} \nonumber\\
&+\Big(\sum_{j\in \mathsf{I}^c}  \frac{|\wt w_j|d_id_j}{\delta_{ij}}\Big) \Big( \sum_{t\in \mathsf{I}}  \frac{|\wt w_t|d_i}{\delta_{i0}}\Big)\Big( \sum_{k\in \mathsf{I}^c} \frac{d_k}{\delta_{ik}} + \frac{d_i}{\delta_{i0}} + \sum_{k\in \mathsf{I}^c} \frac{d_i}{\delta_{ik}} \Big)  d_i^2\delta_{i0} ^2\Delta(d_i)^2N^{-1}\nonumber\\
&+ \Big(\sum_{t\in \mathsf{I}} \frac{|\wt w_t|d_i}{\delta_{i0}} \Big)^2 \Big(\frac{d_i}{\delta_{i0}}+ \sum_{j\in \mathsf{I}^c}\frac {d_j}{\delta_{ij}} \Big)  {d_i^2}{\delta_{i0}^2} \Delta(d_i)^2N^{-1}.
\end{align}
\endgroup
Now we claim that $\mathcal{R}_3\prec N^{-\varepsilon} \Lambda_2$ for some small (but fixed) $\varepsilon>0$, where $\Lambda_2$ is defined in \rf{def:Lambda_2}. This can be achieved by applying assumption  \rf{asd} and \rf{asd2} and  using the following three estimates 
\begin{align} \label{2020011405}
&\frac{d_i^3}{\delta_{i0}}\Delta(d_i) N^{-\frac 12} \prec N^{-\varepsilon},\quad \quad \sum_{k\in \mathsf{I}^c} \frac{d_i^3}{\delta_{ik}}\Delta(d_i) N^{-\frac 12}\prec N^{-\varepsilon} ,\notag\\
& \sum_{k\in \mathsf{I}^c} \frac{d_i^2 d_k}{\delta_{ik}}\Delta(d_i) N^{-\frac 12} \prec N^{-\varepsilon}.
\end{align}
The first two estimates in \rf{2020011405} can be easily checked by combining the assumption for $\delta_{i0}=d_i-\sqrt y$ in \rf{asd}, the non-overlapping condition  \rf{asd2}  with  the definition of $\Delta(d_i)$ in \rf{def:Delta(d)} and its estimates in  Remark \ref{Rmk:Delta(d)}. The last estimate in \eqref{2020011405} can be derived as follows
\begin{align}
&\sum_{k} \frac{d_i^2 d_k}{\delta_{ik}}\Delta(d_i) N^{-\frac 12}\leq \sum_{k}\Big( \frac{d_i^3}{\delta_{ik}}+ d_i^2\Big)\Delta(d_i) N^{-\frac 12} \notag\\
&\leq   \sum_{k}\Big( \frac{d_i^3}{\delta_{ik}}+ \frac{d_i^3}{\delta_{i0}}\Big)\Delta(d_i) N^{-\frac 12}\prec N^{-\varepsilon}.
\end{align}
Here for the first inequality, we use the triangle inequality $d_k\leq d_i +\delta_{ik}$. The last inequality above holds from the definition of $\Delta(d_i)$ and the non-overlapping condition \rf{asd2} for $\delta_i=\min_{j\in \mathsf{I}^c}\delta_{ij}$ as well as the first estimate in \rf{2020011405}.

Thus to see $\mathcal{R}_3\prec N^{-\varepsilon}\Lambda_2$, we first observe that the second and third terms on the RHS of \rf{def:R_3} are much smaller than $\Lambda_2$ by applying the estimates in \rf{2020011405} and the trivial fact $d_i>\delta_{i0}$. For the first term on the RHS of \rf{def:R_3}, notice that 
\begin{align} \label{20200417}
\Vert \wt \bw \Vert \leq \Big( \sum_{t\in \mathsf{I}}\frac{|\wt w_t|d_i}{\delta_{i0}}+ \sum_{j\in \mathsf{I}^c}\frac{|\wt w_j|d_id_j}{\delta_{ij}} +  {\Vert \bu \Vert}\Big),
\end{align}
which is a consequence of the fact  
\begin{align*}
 \frac{d_id_j}{|d_i-d_j|}>C
\end{align*}
for some constant $C>0$. Using \rf{20200417} and the first and third estimates in  \rf{2020011405}, we get the first term on the RHS of \rf{def:R_3} is also negligible compared to $\Lambda_2$. 
To conclude, we have $\mathcal{R}_3\prec N^{-\varepsilon} \Lambda_2$.

 Therefore, we can rewrite 
\begin{align} \label{2020011406}
S_3=\wt S_3+O_\prec \Big(\frac {\Lambda_2}{N^{\varepsilon}}\Big),
\end{align}
for some small fixed positive $\varepsilon$. 
Further, using isotropic local law \rf{weak_est_DG} together with the definitions of $\mb{w}_\mathsf{I}, \mb{\varsigma}_{\mathsf{I}}$ in   \rf{def:v_I_varsigma_I},
 we can crudely estimate the size  of $\wt S_3$ in \rf{2020011402} by $O_\prec (\Lambda_3)$ where 
\begin{align} \label{est.S_3}
\Lambda_3:=&\Big( \sum_{j\in \mathsf{I}^c} \frac{|\wt w_j|d_j}{\delta_{ij}}+\frac{\Vert \bu \Vert}{d_i}\Big)^2 d_i^4 \,\delta_{i0} \, \Delta(d_i)^2N^{-1} \nonumber\\
& +  \Big( \sum_{t\in \mathsf{I}, j\in \mathsf{I}^c} \frac{|\wt w_t |d_j}{\delta_{ij}}\Big)\, \Big( \sum_{t\in \mathsf{I}, j\in \mathsf{I}^c} \frac{|\wt w_t |d_i}{\delta_{ij}}\Big) d_i^4 \delta_{i0} \, \Delta(d_i)^2N^{-1}.
\end{align}

Now it remains to estimate the remainder term $R$ in (\ref{eq:decomposew}). We shall prove that 
$$|R|= O_\prec\Big( \frac{\Lambda_2+\Lambda_3}{N^\varepsilon}\Big).$$
Following the derivation of \eqref{eq:decomposew}, one has 
  \begin{align}
R&=\la \bw, {\rm P}_{\mathsf{I}} \bw\ra-S_1-S_2-S_3 \nonumber\\
&=-\frac{1}{2\pi \ii}\oint_{\theta(\Gamma_i)}zm_1(z)\wt {\mb{w}}^*V ( zL(z)V^* \Xi(z)V )^3 (D^{-1}+zV^*\mG_1(z)V)^{-1}m_1(z) V^*\wt {\mb{w}} \,{\rm d}z\nonumber\\
&\quad +\frac{2}{2\pi \ii}\oint_{\theta(\Gamma_i)}zm_1(z)\wt {\mb{w}}^*V( zL(z)V^* \Xi(z)V )^2 (D^{-1}+zV^*\mG_1(z)V)^{-1}  V^*\Xi(z) \wt {\mb{w}} \,{\rm d}z\nonumber\\
&\quad -\frac{1}{2\pi \ii}\oint_{\theta(\Gamma_i)}z\wt {\mb{w}}^* \Xi(z)V  ( zL(z)V^* \Xi(z)V )(D^{-1}+zV^*\mG_1(z)V)^{-1}  V^*\Xi(z) \wt {\mb{w}} \,{\rm d}z\nonumber\\
&=:T_1+T_2+T_3.
\end{align} 
The estimations of $T_1, T_2, T_3$ cannot be computed by only applying the residue theorem. Instead, we shall follow the method used in Lemma $5.6$ of \cite{bloemendal2016principal}. More specifically,
 for $T_1$, we first use the resolvent expansion formula 
$$(D^{-1}+zV^*\mG_1(z)V)^{-1}=L(z)-(D^{-1}+zV^*\mG_1(z)V)^{-1}zV^*\Xi(z)V L(z)$$ 
to split it into two parts
\begin{align} \label{Remainder:T_1}
T_1&=-\frac{1}{2\pi \ii}\oint_{\theta(\Gamma_i)}zm_1(z)\wt {\mb{w}}^*V ( zL(z)V^* \Xi(z)V )^3 L(z)m_1(z) V^*\wt {\mb{w}} \,{\rm d}z\nonumber\\
&\quad +\frac{1}{2\pi \ii}\oint_{\theta(\Gamma_i)}zm_1(z)\wt {\mb{w}}^*V ( zL(z)V^* \Xi(z)V )^3 
(D^{-1}+zV^*\mG_1(z)V)^{-1}V^*\Xi(z)V  L(z) m_1(z) V^*\wt {\mb{w}} \,{\rm d}z\nonumber\\
&=:T_{11}+T_{12}.
\end{align}
For $T_{11}$, we estimate it using the residue theorem. Similar to the previous calculations as in \eqref{weak_19071941}-\eqref{2020011401}, one can derive that the magnitude of $T_{11}$ is stochastically bounded by 
\begin{align}\label{20200623}
&\Big(\sum_{j\in \mathsf{I}^c} \frac {|\wt w_j| d_j}{\delta_{ij}} \Big) \Big(\sum_{j\in \mathsf{I}^c} \frac {|\wt w_j| d_j}{\delta_{ij}} + \sum_{t\in \mathsf{I}, j\in \mathsf{I}^c}\frac{|\wt w_t| d_j}{\delta_{ij}}\Big) \Big( \sum_{j\in \mathsf{I}^c} \frac { d_j}{\delta_{ij}}\Big)d_i^6\delta_{i0} \Delta(d_i)^3N^{-\frac 32} \nonumber\\
&+\Big(\sum_{j\in \mathsf{I}^c} \frac {|\wt w_j| d_j}{\delta_{ij}} + \sum_{t\in \mathsf{I}, j\in \mathsf{I}^c}\frac{|\wt w_t| d_j}{\delta_{ij}}\Big)  \Big(\sum_{j\in \mathsf{I}^c} \frac {|\wt w_j| d_j}{\delta_{ij}} + \sum_{t\in \mathsf{I}, j\in \mathsf{I}^c}\frac{|\wt w_t| d_j}{\delta_{ij}}+\sum_{t\in \mathsf{I}}\frac{|\wt w_t| d_i}{\delta_{i0}}\Big) d_i^6 \Delta(d_i)^3N^{-\frac 32} \nonumber\\
&+ \Big(\sum_{j\in \mathsf{I}^c} \frac {|\wt w_j| d_j}{\delta_{ij}^2} + \sum_{t\in \mathsf{I}, j\in \mathsf{I}^c}\frac{|\wt w_t| d_j}{\delta_{ij}^2}\Big) \Big(\sum_{j\in \mathsf{I}^c} \frac {|\wt w_j| d_j}{\delta_{ij}} + \sum_{t\in \mathsf{I}, j\in \mathsf{I}^c}\frac{|\wt w_t| d_j}{\delta_{ij}} + \sum_{t\in \mathsf{I}}\frac{|\wt w_t|}{\delta_{i0}}\Big) d_i^7 \delta_{i0}\Delta(d_i)^3N^{-\frac 32}
\nonumber\\
&+\Big(\sum_{j\in \mathsf{I}^c} \frac {|\wt w_j| d_j}{\delta_{ij}^2} + \sum_{t\in \mathsf{I}, j\in \mathsf{I}^c}\frac{|\wt w_t| d_j}{\delta_{ij}^2}\Big) \,\Big(\sum_{t\in \mathsf{I},j\in \mathsf{I}^c} \frac { |\wt w_t|d_i}{\delta_{ij}}\Big) d_i^7 \delta_{i0} \Delta(d_i)^3N^{-\frac 32} \nonumber\\
&
+ \Big(\sum_{t\in \mathsf{I}}\frac{|\wt w_t| d_i}{\delta_{i0}}\Big)^2 d_i^6 \Delta(d_i)^3N^{-\frac 32}. 
\end{align}
It is easy to check that \eqref{20200623} is  negligible compared to $\Lambda_2, \Lambda_3$ in \rf{def:Lambda_2} and \rf{est.S_3}. Hence, we can write 
\begin{align}\label{2020011501}
|T_{11}|=O_\prec\Big( \frac{\Lambda_2+\Lambda_3}{N^\varepsilon}\Big).
\end{align}

For the term $T_{12}$, instead of using the residue theorem as above, we directly bound the integral as follows
\begin{align} \label{est_T_{12}}
|T_{12}|\leq &C\int_{\Gamma_i}\big|z^2(1+z)^2\theta(z)^2\theta'(z)\big| \, \Big(\sum_{j}\frac{|\wt w_j|d_j}{|z-d_j|}\Big)^2\Big(\sum_{k}\frac{d_k}{|z-d_k|}\Big)^2  |\Delta(\theta(z))|^4 N^{-2}\nonumber\\
&\qquad\quad \times \Vert (D^{-1}+\theta(z)V^*\mG_1(\theta(z))V)^{-1}\Vert_{\text{op}} \,|{\rm d}z|.
\end{align}
Notice that for $z\in \Gamma_i$,
\begin{align} \label{2020011303}
|z|\asymp d_i, \quad |z-\sqrt{y}|\asymp d_i-\sqrt{y}  \quad\text{ and }\quad  |z-d_j|\asymp \left\{
\begin{array}{cc}
|d_i-d_j| & \text{ for } j\in \mathsf{I}^c\\
\rho_i  & \text{ for } j\in \mathsf{I}
\end{array}
\right. .
\end{align}
Besides,
\begin{align}\label{2020011301}
&\quad\Big\Vert (D^{-1}+\theta(z)V^*\mG_1(\theta(z))V)^{-1}\Big\Vert_{\text{op}} \notag\\
&=\Big\Vert \Big( \big(D^{-1}+\theta(z)m_1(\theta(z))\big)+ \theta(z)V^*\Xi(\theta(z))V \Big)^{-1} \Big\Vert_{\text{op}}  \nonumber\\
&\prec \frac{1}{\min_{j}|z-d_j|/ |zd_j| -\Vert \theta(z)V^*\Xi(\theta(z))V\Vert_{\text{op}} },
\end{align}
where for the last bound we used the fact that $\theta(z)m_1(\theta(z))=-(1+1/z)$. By elementary computations and the definitions of $\rho_i, \delta_{ij}$ (see \rf{def:radius} and  \rf{def:d_i-d_j}), one has
\begin{align*}
&\min_{j\in \mathsf{I}^c}|z-d_j|/ |z d_j|\asymp \min_{j\in \mathsf{I}^c}|d_i-d_j|/d_id_j\geq \min_{j\in \mathsf{I}^c}\frac {\delta_{ij}}{d_i(d_i+\delta_{ij})} \geq \frac{\rho_i}{d_i(d_i+\rho_i)}\asymp \frac{\rho_i}{d_i^2}, \\
&\min_{j\in \mathsf{I}} |z-d_j|/|z d_j| \asymp\frac{\rho_i}{d_i^2},
\end{align*}
and  
\begin{align*}
\Vert \theta(z)V^*\Xi(\theta(z))V\Vert_{\text{op}}  &\prec d_i\Vert V^*\Xi(\theta(z))V\Vert_{\text{op}}  \notag\\
&=O_\prec (d_i\min\{ |\theta(z)-\lambda_+|^{-2}, |\theta(z)-\lambda_+|^{-1/4}\} N^{-1/2})  \nonumber\\
&=O_\prec (d_i\Delta(d_i)N^{-1/2}).
\end{align*}
Note that the following bounds are implied by the non-overlapping condition \rf{asd2}
\begin{align*}
\frac{\delta_i}{d_i^3\Delta(d_i)N^{-1/2}}\geq CN^{\varepsilon},\quad \frac{d_i-\sqrt y}{d_i^3\Delta(d_i)N^{-1/2}}\geq CN^{\varepsilon},
\end{align*}
where $\delta_i=\min_{j\in \mathsf{I}^c}|d_i-d_j|$. 
 Recall $\rho_i$ from \rf{def:radius}. Thus
\begin{align*}
 \Vert \theta(z)V^*\Xi(\theta(z))V\Vert_{\text{op}}\prec N^{-\varepsilon} \frac{\rho_i}{d_i^2} .
\end{align*}
Consequently, from \rf{2020011301}, we get 
\begin{align} \label{2020011304}
\Vert (D^{-1}+\theta(z)V^*\mG_1(\theta(z))V)^{-1}\Vert_{\text{op}}  \prec \frac{d_i^2}{\rho_i}.
\end{align} 

Inserting the estimates \rf{2020011303} and \rf{2020011304} into \eqref{est_T_{12}}, we arrive at 
\begin{align}\label{eq:anotherT12}
|T_{12}| \prec & \Big(\sum_{j\in \mathsf{I}^c}\frac{|\wt w_j|d_j}{\delta_{ij}}+
\sum_{t\in \mathsf{I}} \frac{|\wt w_t|d_i}{\delta_{i0}}+ \sum_{t\in \mathsf{I},j\in \mathsf{I}^c}\frac{|\wt w_t|d_i}{\delta_{ij}}\Big)^2\notag\\
& \times\Big( \sum_{j\in \mathsf{I}^c}\frac{d_j}{\delta_{ij}}+\sum_{j\in \mathsf{I}^c}\frac {d_i}{\delta_{ij}} +
\frac{d_i}{\delta_{i0}} \Big)^2 d_i^7\delta_{i0}\, \Delta(d_i)^4N^{-2}.
\end{align}
Further, applying the estimates \rf{2020011405} to the term $\sum_{j\in \mathsf{I}^c}\frac{d_j}{\delta_{ij}}+\sum_{j\in \mathsf{I}^c}\frac {d_i}{\delta_{ij}} +
\frac{d_i}{\delta_{i0}}$ above, we get 
\begin{align}
|T_{12}| \prec N^{-\varepsilon} \Big(\sum_{j\in \mathsf{I}^c}\frac{|\wt w_j|d_j}{\delta_{ij}}+
\sum_{t\in \mathsf{I}} \frac{|\wt w_t|d_i}{\delta_{i0}}+ \sum_{t\in \mathsf{I},j\in \mathsf{I}^c}\frac{|\wt w_t|d_i}{\delta_{ij}}\Big)^2 d_i^3\delta_{i0}\, \Delta(d_i)^2 N^{-1}. \label{eq:2anotherT12}
\end{align}

Next, we expand the squared term on the RHS of \eqref{eq:2anotherT12} and analyse them term by term.
Recalling the expression of  $\Lambda_3$ in \rf{est.S_3}, we directly have 
\begin{align} \label{eq:2020040501}
\Big(\sum_{j\in \mathsf{I}^c}\frac{|\wt w_j|d_j}{\delta_{ij}}\Big)^2   d_i^3\delta_{i0}\, \Delta(d_i)^2N^{-1}\leq  \Lambda_3. 
\end{align}
Further recalling  the estimates for $S_2$, i.e. $\Lambda_2$ in \rf{def:Lambda_2} and using the first estimate in \rf{2020011405}, we obtain  
\begin{align} \label{eq:2020040503}
\Big(\sum_{t\in \mathsf{I}} \frac{|\wt w_t|d_i}{\delta_{i0}}\Big)^2  d_i^3\delta_{i0}\, \Delta(d_i)^2 N^{-1}\prec N^{-\varepsilon} 
\Big(\sum_{t\in \mathsf{I}} \frac{|\wt w_t|d_i}{\delta_{i0}}\Big)^2 \delta_{i0}^2\, \Delta(d_i) N^{-\frac 12} \leq N^{-\varepsilon}\Lambda_2.
\end{align}
It remains to bound the last squared term $( \sum_{t\in \mathsf{I},j\in \mathsf{I}^c}{|\wt w_t|d_i}/{\delta_{ij}} )^2$. First notice that 
it follows from the triangle inequality and $d_i/\delta_{i0}\geq 1$ that 
\begin{align} \label{eq:triangle}
 \sum_{t\in \mathsf{I},j\in \mathsf{I}^c}\frac{|\wt w_t|d_i}{\delta_{ij}}\leq \sum_{t\in \mathsf{I},j\in \mathsf{I}^c}\frac{|\wt w_t|d_j}{\delta_{ij}}+ \sum_{t\in \mathsf{I},j\in \mathsf{I}^c}\frac{|\wt w_t|d_i}{\delta_{i0}}.
 \end{align}
By \rf{eq:triangle}, we have  
\begin{align} \label{eq:2020040506}
&\Big(\sum_{t\in \mathsf{I},j\in \mathsf{I}^c}\frac{|\wt w_t|d_i}{\delta_{ij}}\Big)^2 d_i^3\delta_{i0}\, \Delta(d_i)^2N^{-1} \nonumber\\
\leq 
&\Big(\sum_{t\in \mathsf{I},j\in \mathsf{I}^c}\frac{|\wt w_t|d_i}{\delta_{ij}}\Big) \Big(\sum_{t\in \mathsf{I},j\in \mathsf{I}^c}\frac{|\wt w_t|d_j}{\delta_{ij}}\Big)  d_i^3\delta_{i0}\, \Delta(d_i)^2N^{-1} \notag\\
&\qquad+
 \Big(\sum_{t\in \mathsf{I},j\in \mathsf{I}^c}\frac{|\wt w_t|d_i}{\delta_{ij}}\Big) \Big( \sum_{t\in \mathsf{I}}\frac{|\wt w_t| d_i}{\delta_{i0}} \Big)  d_i^3\delta_{i0}\, \Delta(d_i)^2N^{-1}\nonumber \\
\leq
&\Lambda_3 + \Lambda_2.
\end{align}
To obtain the last inequality above, we compared the first term with the latter part in the  expression of $\Lambda_3$ in \rf{est.S_3}. And for the second term, we used the non-overlapping condition \rf{asd} for $\delta_{ij}$ and further compared it with $\Lambda_2$ in \rf{def:Lambda_2}.

Combining \rf{eq:anotherT12}-\rf{eq:2020040506}, we finally have 
\begin{align}
|T_{12}|=O_\prec \Big(\frac{\Lambda_2+\Lambda_3}{N^\varepsilon} \Big)
\end{align}
for some small fixed positive $\varepsilon$.

 Next, we turn to estimate $T_2$ and $T_3$. The method is analogous to that of $T_{12}$; we omit the details. By definition, we can bound $T_2$ and $ T_3$ as follows
 \begingroup
\allowdisplaybreaks
 \begin{align} \label{eq:est_T2}
|T_2|&=\Big| \frac{1}{2\pi \ii}\oint_{\theta(\Gamma_i)}zm_1(z)\wt {\mb{w}}^*V( zL(z)V^* \Xi(z)V )^2 (D^{-1}+zV^*\mG_1(z)V)^{-1}  V^*\Xi(z) \wt {\mb{w}} \,{\rm d}z \Big|\nonumber\\
&\leq C\oint_{\Gamma_i}\Big| z(1+z)\theta(z)^2\theta'(z)\Big|  \Big(\sum_{j}\frac{|\wt w_j|d_j}{|z-d_j|}\Big)\Big(\sum_{k}\frac{d_k}{|z-d_k|}\Big) \Vert \wt{\mb{w}}\Vert  \frac{|\Delta(z)|^3}{ N^{\frac 32}}\nonumber\\
&\qquad\qquad \quad \times\Vert (D^{-1}+\theta(z)V^*\mG_1(\theta(z))V)^{-1}\Vert_{\text{op}}  \,|{\rm d}z| \nonumber\\
&\prec  N^{-\frac 32}\Big(\sum_{j\in \mathsf{I}^c}\frac{|\wt w_j|d_j}{\delta_{ij}}+
\sum_{t\in \mathsf{I}}\frac{|\wt w_t|d_i}{\delta_{i0}}+ \sum_{t\in \mathsf{I},j\in \mathsf{I}^c}\frac{|\wt w_t|d_i}{\delta_{ij}}\Big) \Vert  \wt {\mb{w}}\Vert \notag\\
&\qquad \qquad \quad \times \Big( \sum_{j\in \mathsf{I}^c}\frac{d_j}{\delta_{ij}}+\sum_{j\in \mathsf{I}^c}\frac{d_i}{\delta_{ij}}+
\frac{d_i}{\delta_{i0}} \Big) d_i^5 \delta_{i0}\, \Delta(\theta(d_i))^3\nonumber\\
&\prec N^{-1-\varepsilon}\Big(\sum_{j\in \mathsf{I}^c}\frac{|\wt w_j|d_j}{\delta_{ij}}+
\sum_{t\in \mathsf{I}}\frac{|\wt w_t|d_i}{\delta_{i0}}+ \sum_{t\in \mathsf{I},j\in \mathsf{I}^c}\frac{|\wt w_t|d_i}{\delta_{ij}}\Big)^2 d_i^3\delta_{i0}\, \Delta(\theta(d_i))^2 \nonumber\\
&\prec N^{-\varepsilon} (\Lambda_2+\Lambda_3)
\end{align}
\endgroup
and  
\begin{align} \label{eq:est_T3}
|T_3|&=\Big| -\frac{1}{2\pi \ii}\oint_{\theta(\Gamma_i)}z\wt {\mb{w}}^* \Xi(z)V  ( zL(z)V^* \Xi(z)V )(D^{-1}+zV^*\mG_1(z)V)^{-1}  V^*\Xi(z) \wt {\mb{w}} \,{\rm d}z \Big|\nonumber\\
&\leq C\oint_{\Gamma_i} \Vert \wt {\mb{w}}\Vert^2 \Big| z\theta(z)^2\theta'(z) \Big| \Big(\sum_{k}\frac{d_k}{|z-d_k|}\Big)  \frac{|\Delta(z)|^3}{ N^{\frac 32}}\Vert (D^{-1}+\theta(z)V^*\mG_1(\theta(z))V)^{-1}\Vert_{\text{op}}  \,|{\rm d}z| \nonumber\\
&\prec \Vert  \wt {\mb{w}}\Vert^2  \Big( \sum_{j\in \mathsf{I}^c}\frac{d_j}{\delta_{ij}}+\sum_{j\in \mathsf{I}^c}\frac{d_i}{\delta_{ij}}+
\frac{d_i}{\delta_{i0}} \Big) d_i^4 \delta_{i0}\, \Delta(\theta(d_i))^3N^{-\frac 32}\nonumber\\
& \prec N^{-\varepsilon} (\Lambda_2+\Lambda_3).
\end{align}
To obtain the above bounds, we used the estimates in \rf{2020011405} and the  inequality \rf{20200417} for $\Vert \wt \bw \Vert$.
Hence, using the estimates of $T_1,T_2$ and $T_3$, we get that 
\begin{align}
\la \bw, {\rm P}_{\mathsf{I}} \bw\ra=S_1+S_2+\wt S_3+O_\prec \Big(\frac{\Lambda_2+\Lambda_3}{N^\varepsilon}\Big),
\end{align} 
for some  small  constant $\varepsilon>0$. Finally, using the definitions of $\Lambda_2, \Lambda_3$ in \rf{def:Lambda_2} and \rf{est.S_3} and the expression of $\bw_{\mathsf{I}}, \mb{\varsigma}_{\mathsf{I}}$ in \rf{def:v_I_varsigma_I}, one can rewrite the error bound 
\begin{align*}
\frac{\Lambda_2+\Lambda_3}{N^\varepsilon}&=O_\prec\Big( \big(\delta_{i0}\sqrt{d_i}\, \Vert \mb{\varsigma}_{\mathsf{I}}\Vert + d_i\Vert \bw_{\mathsf{I}} \Vert \big)\,\Vert\bw_{\mathsf{I}}\Vert \, \Delta(d_i)N^{-\frac 12-\varepsilon} \Big) \\
&+O_\prec\bigg(  \Big( d_i\Vert \mb{\varsigma}_{\mathsf{I}}\Vert^2 + d_i^2\,\Vert \mb{w}_\mathsf{I}\Vert^2\Big( \sum_{j\in \mathsf{I}^c}\frac {d_j}{\delta_{ij}}\Big) \Big(  \sum_{j\in \mathsf{I}^c}\frac {d_i}{\delta_{ij}}\Big) \Big) \delta_{i0}\, d_i\, \Delta(d_i)^2N^{-1-\varepsilon} \bigg)
\end{align*}
as in \rf{eq:weakgreen}. This completes the proof. 
 \end{proof}
{
Besides the Green representation for the eigenvector, we also have the following lemma  for the representation of the eigenvalue $\mu_i$ in the simple case $\mathsf{I}=\{i\}$. The  extension to multiple case can be found in Appendix \ref{appendix.proof}; see Proposition \ref{repre.multiple.eigv}. 
\begin{lem}\label{lem.representation.eigv}
Suppose that the assumptions in Theorem \ref{mainthm} hold. In the simple case $\mathsf{I}=\{i\}$,  we have 
\begin{align} \label{decomeigenv}
\mu_i=\theta(d_i) - (d_i^2-y) \theta(d_i) \chi_{ii} \big(\theta(d_i)\big) + O_\prec \Big( d_i^2\delta_{i0}\Delta(d_i)N^{-\frac 12-\varepsilon }\Big)
\end{align}
for some small fixed constant $\varepsilon>0$.
\end{lem}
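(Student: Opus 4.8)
\textbf{Proof strategy for Lemma~\ref{lem.representation.eigv}.} The plan is to characterize $\mu_i$ in the simple case as the root of a scalar secular equation obtained from the contour/resolvent machinery already set up for Lemma~\ref{weak_lem_decomp}, and then to solve this equation perturbatively. First I would note that, since $\mathsf{I}=\{i\}$ and $d_i$ is simple, $\mu_i$ is the unique eigenvalue of $Q$ enclosed by the contour $\theta(\Gamma_i)$, and an eigenvalue $z$ of $Q=\Sigma^{\frac12}XX^*\Sigma^{\frac12}$ with $z$ outside the spectrum of $H=XX^*$ must satisfy $\det(D^{-1}+zV^*\mathcal{G}_1(z)V)=0$ (this is exactly the factor that produces the pole in $\mathbf{w}^*G(z)\mathbf{w}$ in \eqref{19071903}). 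Restricting to the $\mathsf{I}$-block and using that for $z$ near $\theta(d_i)$ the other diagonal entries $d_j^{-1}/(1+d_j^{-1}+zm_1(z))$ stay bounded away from the singularity, the relevant scalar equation reduces to
\begin{align}
\frac{1}{d_i}+zm_1(z)+z\,\bv_i^*\Xi(z)\bv_i=0,
\label{plan:secular}
\end{align}
up to a correction that is nonsingular and of smaller order near $z=\theta(d_i)$; here $\Xi=\mathcal{G}_1-m_1 I$ and $\chi_{ii}(z)=\bv_i^*\Xi(z)\bv_i$. By the definition \eqref{def. of theta} of $\theta$ and the identity \eqref{19071910}, namely $1+z^{-1}+\theta(z)m_1(\theta(z))=0$ (equivalently $d_i^{-1}+\theta(d_i)m_1(\theta(d_i))=0$ after the substitution $z\mapsto\theta(d_i)$, using $\theta'$-bookkeeping), the point $\theta(d_i)$ is exactly the root of the deterministic part of \eqref{plan:secular}.

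Next I would linearize. Write $\mu_i=\theta(d_i)+\Delta\mu$ and Taylor-expand the deterministic part $\phi(z):=z^{-1}\!\cdot\!(\text{something})$... more precisely, set $\psi(z):=\tfrac1{d_i}+zm_1(z)$ so that $\psi(\theta(d_i))=0$, and \eqref{plan:secular} becomes $\psi(\mu_i)=-\mu_i\chi_{ii}(\mu_i)$. Then
\begin{align}
\psi'(\theta(d_i))\,\Delta\mu + O\big(|\psi''|\,|\Delta\mu|^2\big) = -\theta(d_i)\chi_{ii}(\theta(d_i)) + O\big(|\Delta\mu|\cdot|\chi_{ii}'|\cdot\theta(d_i)\big),
\end{align}
so the leading correction is $\Delta\mu = -\psi'(\theta(d_i))^{-1}\theta(d_i)\chi_{ii}(\theta(d_i))(1+o(1))$. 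A direct computation of $\psi'(z)=m_1(z)+zm_1'(z)=(zm_1)'(z)$ at $z=\theta(d_i)$, using the self-consistent equation \eqref{selfconeqt} and the algebraic identities \eqref{identitym1m2} (together with $\theta'(d_i)=1-yd_i^{-2}$), gives $\psi'(\theta(d_i))^{-1}\theta(d_i)=(d_i^2-y)$ after simplification — this is the purely algebraic heart of the statement and matches the coefficient in \eqref{decomeigenv}. For the a priori control of $|\Delta\mu|$ and the error terms I would invoke Lemma~\ref{locationeig}, which already gives $|\mu_i-\theta(d_i)|\prec (d_i-\sqrt y)^{\frac12}d_i^{\frac12}N^{-\frac12}$, so $|\Delta\mu|\prec \delta_{i0}^{1/2}d_i^{1/2}N^{-1/2}$; combined with the size $|\chi_{ii}(\theta(d_i))|\prec (d_i-\sqrt y)^{-\frac12}d_i^{-\frac12}N^{-\frac12}$ and $|\chi_{ii}'(\theta(d_i))|\prec \Delta(d_i)(d_i-\sqrt y)^{-1/?}\dots$ from the isotropic local law in the form \eqref{weak_est_DG} (evaluated at $z=\theta(d_i)$, with $\kappa+\eta\asymp\Upsilon(d_i)$ as in Remark~\ref{Rmk:UpsilonDelta}), the quadratic-in-$\Delta\mu$ remainder, the $\Delta\mu\cdot\chi_{ii}'$ cross term, and the neglected off-block contributions are all seen to be $O_\prec(d_i^2\delta_{i0}\Delta(d_i)N^{-\frac12-\varepsilon})$ for a small $\varepsilon>0$, using the non-overlapping condition \eqref{asd2} and the bounds \eqref{2020011405} exactly as in the proof of Lemma~\ref{weak_lem_decomp}.

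The main obstacle I anticipate is \emph{not} the linearization itself but the rigorous reduction from the matrix secular equation $\det(D^{-1}+zV^*\mathcal{G}_1(z)V)=0$ to the scalar equation \eqref{plan:secular}: one must show that the $|\mathsf{I}|\times|\mathsf{I}|$ block (here $1\times1$) decouples from the rest with a quantitatively controlled error, i.e. that the Schur complement of the singular block against the bounded block contributes only at the claimed order near $z=\theta(d_i)$. This is handled by the same resolvent expansion used in \eqref{19071907}, controlling $\|(D^{-1}+zV^*\mathcal{G}_1(z)V)^{-1}\|_{\mathrm{op}}\prec d_i^2/\rho_i$ on $\Gamma_i$ as in \eqref{2020011304}, and bookkeeping the powers of $d_i$, $\delta_{i0}$, $\delta_{ij}$ with the aid of \eqref{2020011405}; when $d_i$ is large the same VESD-based cancellation (Theorem~\ref{thm.VESD}) that was needed in \eqref{2020072106} is required to absorb the would-be large factors of $d_i^3$, so the $|y-1|\geq\tau_0$ restriction enters here too. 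An alternative, slightly cleaner route that avoids the determinant entirely is to start from the Cauchy integral $\mu_i = -\frac{1}{2\pi\ii}\oint_{\theta(\Gamma_i)} z\,\partial_z\log\det(\cdots)\,dz$ or, more simply, to apply the residue/expansion computation of Lemma~\ref{weak_lem_decomp} with $\bw=\bv_i$, read off the pole location from the requirement that the denominator $L(z)^{-1}+zV^*\Xi V$ (restricted to the $i$-th component) vanishes, and expand; I would choose whichever of these keeps the error tracking most parallel to the already-completed argument for \eqref{eq:weakgreen}, since all the needed estimates on $\chi_{ii}$, $\chi_{ii}'$ and the operator norm of the inverse have been established there.
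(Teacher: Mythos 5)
Your proposal is correct and follows essentially the same route as the paper: both start from the observation that an outlier $\mu_i$ solves $\det(D^{-1}+zV^*\mG_1(z)V)=0$, decouple the $i$-th diagonal entry from the remaining block, and then linearize at $\theta(d_i)$ using $(zm_1)'(\theta(d_i))=1/(d_i^2-y)$ together with Lemma \ref{locationeig} and the isotropic law \eqref{est.DG} to control errors. The only substantive difference is how the decoupling is justified: you propose a Schur-complement/resolvent bound on the off-block contribution, while the paper phrases it as eigenvalue perturbation of the $r\times r$ matrix $\mathcal{A}(x)=D^{-1}+xV^*\mG_1(x)V-xm_1(x)I$ around its diagonal part $\wt{\mathcal{A}}(x)$: since the diagonal of $\mathcal{A}-\wt{\mathcal{A}}$ vanishes, the first-order correction is zero and the second-order one is $\Vert\mathcal{A}-\wt{\mathcal{A}}\Vert_{\text{op}}^2$ over the gap, which the non-overlapping condition \eqref{asd2} renders $O_\prec(d_i^{-1/2}\delta_{i0}^{-1/2}N^{-1/2-\varepsilon})$; the paper also passes from $\mu_i$ to $\theta(d_i)$ through a bound on $\Vert\partial_x\mathcal{A}(x)\Vert_{\text{op}}$ rather than through a Taylor cross term in $\chi_{ii}'$, and it linearizes the bounded quantity $\mu_im_1(\mu_i)$ around $\theta(d_i)m_1(\theta(d_i))$ instead of solving for $\mu_i$ directly. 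These variants are interchangeable and produce the same error size. Two small points to fix in your write-up: the scalar secular equation should read $1+d_i^{-1}+zm_1(z)+z\chi_{ii}(z)=0$ (you dropped the constant $1$ coming from $D^{-1}$, so your $\psi$ satisfies $\psi(\theta(d_i))=-1$, not $0$; this is harmless since only $(zm_1)'$ enters the linearization, and likewise the coefficient is $(d_i^2-y)\theta(d_i)$ rather than $d_i^2-y$). Also, the VESD input (Theorem \ref{thm.VESD}) you anticipate for large $d_i$ is not needed here: since only the single quadratic form $\chi_{ii}$ appears at leading order and the argument is run on the order-one quantity $\mu_im_1(\mu_i)$, there is no large-coefficient cancellation between $\chi$ and $\chi'$ to exploit, and the isotropic local law alone suffices, so the $|y-1|\geq\tau_0$ restriction plays no role in this lemma.
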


\begin{proof}[Proof of Lemma \ref{lem.representation.eigv}]
Recall the notations at the beginning of proof of Lemma \ref{weak_lem_decomp}, $$V= (\bv_1,\cdots, \bv_r), \quad D={\rm {diag}} \Big(d_1/(1+d_1), \cdots, d_r/(1+d_r)\Big).$$
 Then, by elementary calculation, we have 
 \begin{align*}
 Q-z=\Sigma^{1/2}\mG_1^{-1}(z)\big(I_M+z\mG_1(z)VDV^*\big)\Sigma^{1/2}.
 \end{align*}
 Notice that $\mu_i$ is the $i$th largest real value such that $\det{(Q-\mu_i)}=0$. Further by the fact that $\mu_i$ stays away from the spectrum of $H$ with high probability (\cf Lemma  \ref{locationeig},  Assumption \ref{supercritical}), together with the identity $\det \big(I_M+z\mG_1(z)VDV^*\big)=\det(D)\det  \big(D^{-1}+zV^*\mG_1(z)V\big)$, we have that $\mu_i$ is the $i$th largest real solution to the equation $\det  \big(D^{-1}+z V^*\mG_1(z)V\big)=0$ with high probability. 
 
For $x\in [\lambda_++ N^{-2/3+ \delta}, \infty) \cap \big(\theta(d_i)- d_i^{1/2}\delta_{i0}^{1/2}N^{-1/2 + \delta}, \theta(d_i)+ d_i^{1/2}\delta_{i0}^{1/2}N^{-1/2 + \delta} \big)$ with sufficiently small constant $\delta>0$,  we define the matrices $\mathcal{A}(x)=(\mathcal{A}_{ij}(x))$ and $\wt{\mathcal{A}}(x)=(\wt{\mathcal{A}}_{ij}(x))$ by setting
 \begin{align*}
& \mathcal{A}_{ij}(x)=(1+d_i^{-1})\delta_{ij}+x\mb{v}_i^*\mG_1(x)\mb{v}_j-xm_1(x)\delta_{ij}, \notag\\ &\wt{\mathcal{A}}_{ij}(x)=\delta_{ij}((1+d_i^{-1})+x\mathbf{v}_i^*\mG_1(x)\mb{v}_i-xm_1(x)),
 \end{align*}
 Further, we denote the eigenvalues of $\mathcal{A}(x)$ and $\wt{\mathcal{A}}(x)$ by $a_1(x)\leq \ldots\leq a_r(x)$ and $\wt{a}_1(x)\leq \ldots\leq \wt{a}_r(x)$ respectively. Apparently, one has
 \begin{align}
 \wt{a}_i(x)= (1+d_i^{-1})+x\mathbf{v}_i^*\mG_1(x)\mb{v}_i-xm_1(x), \label{19072601}
 \end{align}
 with high probability by the isotropic local law (\ref{est.DG}) and the Assumption \ref{supercritical}. We then claim that, in order to prove  (\ref{decomeigenv}), it suffices to show  the following two estimates
 \begin{align}
 &\mu_im_1(\mu_i)=-a_i(\theta(d_i))+ O_\prec\Big( d_i^{-\frac 12} \delta_{i0}^{-\frac 12} N^{-\frac 12- \varepsilon}\Big),\label{19072602}\\
 & a_i(\theta(d_i))=\wt{a}_i(\theta(d_i))+ O_\prec\Big( d_i^{-\frac 12} \delta_{i0}^{-\frac 12} N^{-\frac 12- \varepsilon}\Big).  \label{19072603}
 \end{align}
 for some small constant $\varepsilon>0$.
 Combining (\ref{19072601}), (\ref{19072602}) and (\ref{19072603}), we have 
 \begin{align*}
&\mu_im_1(\mu_i)- \theta(d_i)m_1(\theta(d_i))\notag\\
&\quad= -(1+d_i^{-1})-\theta(d_i)\mathbf{v}_i^*\mG_1(\theta(d_i))\mb{v}_i + O_\prec\Big( d_i^{-\frac 12} \delta_{i0}^{-\frac 12} N^{-\frac 12- \varepsilon}\Big). 
 \end{align*}
 Expanding $\mu_im_1(\mu_i)$ around $\theta(d_i)m_1(\theta(d_i))$ with the aid of Lemma \ref{locationeig}  will then lead to  (\ref{decomeigenv}). Therefore, what remains is to prove (\ref{19072602}) and (\ref{19072603}).  
 
 We start with (\ref{19072602}).  First, by the fact that $\mu_i$ is a solution to $\det  \big(D^{-1}+z V^*\mG_1(z)V\big)=0$, it is easy to see that 
 $\mu_im_1(\mu_i)= -a_k(\mu_i)$ for some $k$.  But by isotropic law (\ref{est.DG}),  we see that  $\mathcal{A}=\wt{\mathcal{A}}+O_\prec(d_i^{-\frac 12} \delta_{i0}^{-\frac 12} N^{-\frac12})$, where $O_\prec(d_i^{-\frac 12} \delta_{i0}^{-\frac 12} N^{-\frac12})$ represents a matrix bounded in  operator norm  by $O_\prec(d_i^{-\frac 12} \delta_{i0}^{-\frac 12} N^{-\frac12})$. This leads to the  estimate $a_k(\mu_i)=\wt{a}_k(\mu_i)+O_\prec(d_i^{-\frac 12} \delta_{i0}^{-\frac 12} N^{-\frac12})$. Further, by  (\ref{est.DG}) and Lemma \ref{locationeig} one can easily show that $\mu_im_1(\mu_i)=-(1+d_i^{-1})+O_\prec(d_i^{-\frac 12} \delta_{i0}^{-\frac 12} N^{-\frac12})$ and $\wt{a}_k(\mu_i)=(1+d_k^{-1})+O_\prec(d_i^{-\frac 12} \delta_{i0}^{-\frac 12} N^{-\frac12})$.  Therefore, due to the fact that $d_i$'s are well separated, more specifically, non-overlapping condition \rf{asd2}, we have $\mu_im_1(\mu_i)= -a_i(\mu_i)$  with high probability. Next, by the isotropic law (\ref{est.DG}), one can also check that 
 \begin{align*}
 \|\partial_x \mathcal{A}(x)\|_{\text{op}}\prec d_i^{\frac 12} \delta_{i0}^{-\frac 52} N^{-\frac 12} 
 \end{align*}
 This together with Lemma \ref{locationeig} leads to
 \begin{align*}
 |a_i(\mu_i)-a_i(\theta(d_i))|\prec  d_i \delta_{i0}^{-2} N^{-1}\prec d_i^{-\frac 12} \delta_{i0}^{-\frac 12}N^{-\frac12-\varepsilon},
 \end{align*}
 where the last step is due to the fact that  $d_i\delta_{i0}^{-\frac 32} N^{-\frac 12} = O(N^{-\varepsilon})$ following from \rf{asd}.
 
 Combining the above with the fact $\mu_im_1(\mu_i)= -a_i(\mu_i)$  with high probability, we arrive at (\ref{19072602}). 
 
  Next, we prove (\ref{19072603}).  Observe that the diagonal entries of $\mathcal{A}-\wt{\mathcal{A}}$ are $0$, and $\wt{\mathcal{A}}$ is a diagonal matrix. So expanding the eigenvalues of $\mathcal{A}$ around the eigenvalues of $\wt{\mathcal{A}}$ using the perturbation theory, we see that the first order  term vanishes. Hence, it suffices to estimate the second order term. More specifically, we have 
 \begin{align*}
 |a_i(\theta(d_i))-\wt{a}_i(\theta(d_i))|\prec \frac{\|\mathcal{A}-\wt{\mathcal{A}}\|_{\text{op}}^2}{\min_{j\neq i} |\wt{a}_j(\theta(d_i))-\wt{a}_i(\theta(d_i))|}\prec d_i^{-\frac 12} \delta_{i0}^{-\frac 12} N^{-\frac 12- \varepsilon},
 \end{align*} 
for some small fixed $\varepsilon>0$, where the last step follows from the fact that $\wt{a}_k(\theta(d_i))=1+d_k^{-1}+O_\prec(d_i^{-\frac12} \delta_{i0}^{-\frac12}N^{-\frac12})$ and the fact that the $d_i$'s are well separated and satisfy \rf{asd2}. This concludes the proof of (\ref{19072603}).
\end{proof}
}

\section{Proof of Theorems \ref{mainthm} and \ref{thm.simple case} } \label{s.proof of thm}

In this section, we prove the  main result Theorem \ref{mainthm}, based on Proposition \ref{recursivemP}. The proof of Theorem \ref{thm.simple case} is nearly the same, and thus it will be  only briefly stated in the end of the section. The proof of Proposition \ref{recursivemP} will be deferred to Section \ref{sec:prop}. The starting point  is Lemma \ref{weak_lem_decomp} and  Remark \ref{rmk:representation}, which state that  the study of $\la \bw, {\rm P}_{\mathsf{I}} \bw\ra$ can be  reduced to the study of the random vector 
\begin{align*}
\begin{pmatrix}
 \bw_{\mathsf{I}}^*\Xi' (z)\bw_{\mathsf{I}}, &\bw_{\mathsf{I}}^*\Xi (z)\bw_{\mathsf{I}},& \mb{\varsigma}_{\mathsf{I}}^*\Xi(z) \bw_{\mathsf{I}}, &\{ \bv_t^*\Xi (z)\mb{\varsigma}_{\mathsf{I}}\}_{t\in \mathsf{I}}, &\{\bv_j^* \Xi (z)\bw_{\mathsf{I}}\}_{j\in \mathsf{I}^c}
\end{pmatrix}^*
\end{align*}
at  $z=\theta(d_i)$.

In view of the isotropic local laws in Theorem \ref{isotropic} and Lemma \ref{weak_lem_decomp}, we shall study the limiting distribution of the rescaled random vectors 
 \begin{align} \label{def:chi_I}
 &\mb{\chi}_{\mathsf{I}}(z):= \frac{\sn}{ \Delta(d_i)}\Big( c_1 (\bw_{\mathsf{I}}^0)^*\Xi(z)\bw_{\mathsf{I}}^0+ c_2(\bw_{\mathsf{I}}^0)^*\Xi'(z)\bw_{\mathsf{I}}^0,\; (\mb{\varsigma}_{\mathsf{I}}^0)^*\Xi(z)\bw_{\mathsf{I}}^0,\; \{\bv_t^*\Xi  (z)\mb{\varsigma}_{\mathsf{I}}^0\}_{t\in \mathsf{I}},\; \{ \bv_j^*\Xi (z)\bw_{\mathsf{I}}^0\}_{j\in \mathsf{I}^c} \Big)
\end{align}
at $z=\theta(d_i)$, where  $\bw_{\mathsf{I}}^0$ and $\mb{\varsigma}_{\mathsf{I}}^0$ are defined  in (\ref{normalized bw}) and (\ref{def.nor.Varsigma}) and we further introduced the shorthand notations
 \begin{align} \label{def:c12}
c_1\equiv c_1(d_i):= 2d_i \quad \text{and}\quad c_2\equiv c_2(d_i):= \frac{(d_i^2-y)^2}{d_i^2}. 
\end{align}
We remark here, in the case when $d_i>K$ for sufficiently large $K$ or even grows with $N$, that either summand in the first component of $\mb{\chi}_{\mathsf{I}}(z)$ may have larger typical order than the other components of the vector $\mb{\chi}_{\mathsf{I}}(z)$, by a direct application of the isotropic local laws \rf{weak_est_DG}. However, for such large $d_i$, referring to  the estimate of  \rf{weak_19071941}  in the proof of Lemma \ref{weak_lem_decomp}, more precisely, \rf{cancel:1}-\rf{2020072106}, we shall also exploit a cancellation between two summands in the first component of (\ref{def:chi_I}) that is simply proportional to the combination in \rf{weak_19071941}.
Consequently, all the components of $\mb{\chi}_{\mathsf{I}}(z)$ are of comparable sizes. The details of such arguments will be provided in later proofs. Thus our goal is to prove that $\mb{\chi}_{\mathsf{I}}(z)$ is asymptotically Gaussian (c.f. Lemma \ref{lem:normal}) and this leads to the proof of Theorem \ref{mainthm}.

Before we give the precise statement, let us introduce some necessary notations. For brevity, in the sequel, we will very often omit the $z$-dependence from the notations. For instance, we will write $m_{1,2}(z)$ as $m_{1,2}$. 

In the sequel, we fix an $i\in \lb 1, r_0\rb$ and its corresponding index set ${\mathsf{I}}\equiv {\mathsf{I}}(i)$. For simplicity, we also denote
\begin{align} \label{def:wtc12}
 \wt c_1= \frac{d_i^2+2d_iy+y}{d_i+y},\quad  \wt c_2= c_2= \frac{(d_i^2-y)^2}{d_i^2}.
\end{align}
Define a symmetric matrix  $\mathcal{M}_{\mathsf{I}}\equiv \mathcal{M}_{\mathsf{I}}(z)\in{\mathbb{C}^{(r+2)\times (r+2)}}$ with diagonal entries 
\begin{align}\label{def:Mdiag}
\mathcal M_\mathsf{I} (j,j) = 
\begin{cases}
\frac{2}{\Delta(d_i)^2}  \; \sum\limits_{a,b=1}^2  c_a \wt c_bm_1 \sum\limits_{
\begin{subarray}{c}
a_1,a_2\geq 1\\ a_1+a_2=a+1
\end{subarray}}
\frac{m_1^{(a_1-1)} (zm_1)^{(b+a_2-1)}}{(a_1-1)!(b+a_2-1)!},
&\text{ if } j=1;\\
\\
\frac{1}{\Delta(d_i)^2}\;m_1^2(zm_1)', &\text{ otherwise } ;\\
\end{cases}
\end{align}
 and its non-zero off-diagonal  entries as
 \begin{align} \label{def:Moff}
 & \mathcal{M}_\mathsf{I}(2, \lb 3,r+2 \rb)=
\frac{1}{\Delta(d_i)^2}\Big(m_1^2(zm_1)'\Big) \bigg( \Big\{ \la \bw_{\mathsf{I}}^0, \bv_t\ra\Big\}_{t\in \mathsf{I}} , \;  \Big\{ \la \mb{\varsigma}_{\mathsf{I}}^0, \bv_j\ra \Big\}_{j\in \mathsf{I}^c}\bigg),\nonumber\\
  & \mathcal{M}_\mathsf{I}(\lb 3,|\mathsf{I}|+ 2 \rb, \lb |\mathsf{I}|+3,r+2 \rb)=\frac{1}{\Delta(d_i)^2} \Big(m_1^2(zm_1)'\Big) \Big(  \la \bw_{\mathsf{I}}^0, \bv_t\ra  \la \mb{\varsigma}_{\mathsf{I}}^0, \bv_j\ra \Big)_{t\in \mathsf{I},j\in \mathsf{I}^c}.
 \end{align}
Here  for any matrix $A=(a_{ij})_{n\times n}$ and  index sets $\mathcal{S}_1,\mathcal{S}_2\subset \lb1, n\rb$,   we denote by $A(\mathcal{S}_1,\mathcal{S}_2)$ the submatrix of $A$ obtained via taking rows from $\mathcal{S}_1$ and columns from $\mathcal{S}_2$. More specifically, 
\begin{align*}
A(\mathcal{S}_1,\mathcal{S}_2):=(a_{ij})_{i\in \mathcal{S}_1, j\in\mathcal{S}_2}.
\end{align*}
When $\mathcal{S}_1=\{a\}$ is a singleton, we use the abbreviation $A(a, \mathcal{S}_2)$ instead of $A(\{a\}, \mathcal{S}_2)$. 
We then further define the  symmetric matrix  $\mathcal{K}_\mathsf{I}\equiv \mathcal{K}_\mathsf{I}(z)\in \mathbb{C}^{(r+2)\times (r+2)}$ whose 
$r \times r $ lower right corner has the block structure 
\begin{align*}
\left(
\begin{array}{cc}
\mathcal{K}_\mathsf{I}(\mathsf{I},\mathsf{I}) & \mathcal{K}_\mathsf{I}(\mathsf{I}, \mathsf{I}^c)\\
\mathcal{K}_\mathsf{I}(\mathsf{I}^c, \mathsf{I}) & \mathcal{K}_\mathsf{I}(\mathsf{I}^c, \mathsf{I}^c)
\end{array}
\right),
\end{align*}
given by 
\begingroup
\allowdisplaybreaks
\begin{align} \label{def_Klowdiag}
&\mathcal{K}_\mathsf{I}(\mathsf{I}, \mathsf{I})= \frac{1}{\Delta(d_i)^2}(zm_2m_1^2)^2 \Big(\mathbf{s}_{1,1,2}(\bv_{t_1},\bv_{t_2},\mb{\varsigma}_{\mathsf{I}}^0)\Big)_{t_1,t_2\in \mathsf{I}}, \nonumber\\
&\mathcal{K}_\mathsf{I}(\mathsf{I}^c, \mathsf{I}^c)= \frac{1}{\Delta(d_i)^2} (zm_2m_1^2)^2 \Big(\mathbf{s}_{1,1,2}(\bv_{j},\bv_{k},\bw_{\mathsf{I}}^0)\Big)_{j,k \in \mathsf{I}^c},\nonumber\\
&\mathcal{K}_\mathsf{I}(\mathsf{I}, \mathsf{I}^c)= \frac{1}{\Delta(d_i)^2} (zm_2m_1^2)^2 \Big(\mathbf{s}_{1,1,1,1}(\bv_{t},\bv_{j}, \mb{\varsigma}_{\mathsf{I}}^0,\bw_{\mathsf{I}}^0)\Big)_{t\in \mathsf{I}, j\in \mathsf{I}^c}.
\end{align}
\endgroup
The remaining  entries  are defined by 
 \begingroup
\allowdisplaybreaks
\begin{align} \label{def_Kothers}
&\mathcal{K}_\mathsf{I}(1,1)=   \frac{\mathbf{s}_{4}(\bw_{\mathsf{I}}^0)}{\Delta(d_i)^2} \Big(\sum_{s=1}^2 \wt c_s\frac {m_1(zm_2m_1)^{(s-1)}}{(s-1)!}\Big)\Big( \sum_{s=1}^2  c_s\frac {(zm_2m_1^2)^{(s-1)}}{(s-1)!} \Big),
\nonumber\\
&\mathcal{K}_\mathsf{I}(1, \lb 2, r+2\rb )= \frac{zm_2m_1^2}{2\Delta(d_i)^2}\Big(\sum_{s=1}^2 \wt c_s\frac {m_1(zm_2m_1)^{(s-1)}}{(s-1)!} +  \sum_{s=1}^2  c_s\frac {(zm_2m_1^2)^{(s-1)}}{(s-1)!} \Big) \nonumber\\
&\qquad \qquad \qquad\qquad \times
 \,\Big(\mathbf{s}_{1,3}(\mb{\varsigma}_{\mathsf{I}}^0, \bw_{\mathsf{I}}^0),\;\{\mathbf{s}_{1,1,2}(\bv_t , \mb{\varsigma}_{\mathsf{I}}^0 ,\bw_{\mathsf{I}}^0 )\}_{t\in \mathsf{I}},\; \{\mathbf{s}_{1,3}(\bv_j, \bw_{\mathsf{I}}^0)\}_{j\in \mathsf{I}^c} \Big), \nonumber \\
&\mathcal{K}_\mathsf{I}(2,\lb 2, r+2\rb)=\frac{(zm_2m_1^2)^2}{\Delta(d_i)^2} \notag\\
&\qquad\qquad\qquad\qquad \times\Big(\mathbf{s}_{2,2}(\mb{\varsigma}_{\mathsf{I}}^0,\bw_{\mathsf{I}}^0),\;\{\mathbf{s}_{1,1,2}(\bv_t,\bw_{\mathsf{I}}^0 , \mb{\varsigma}_{\mathsf{I}}^0)\}_{t\in \mathsf{I}},\; \{\mathbf{s}_{1,1,2}(\bv_j, \mb{\varsigma}_{\mathsf{I}}^0 , \bw_{\mathsf{I}}^0)\}_{j\in \mathsf{I}^c} \Big) .
\end{align}
\endgroup
Finally, we  define 
\begin{align}\label{weak_def:V}
\mathcal{V}_\mathsf{I}(z)=\mathcal{M}_\mathsf{I}(z)+\kappa_4 \mathcal{K}_\mathsf{I}(z).
\end{align}

Recall $\mathbf{\chi}_\mathsf{I}$ defined in (\ref{def:chi_I}).  We have the following lemma. 
\begin{lem}\label{lem:normal} Under the assumptions of Theorem \ref{mainthm}, we have
$$\mathbf{\chi}_\mathsf{I}(\theta(d_i)) \simeq \mathcal N\left(\mathbf{0}, \mathcal V_\mathsf{I}(\theta(d_i)) \right).$$
\end{lem}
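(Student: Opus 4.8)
The plan is to prove Lemma~\ref{lem:normal} via a moment method: establish asymptotic normality of the rescaled vector $\mathbf{\chi}_\mathsf{I}(\theta(d_i))$ by showing that all joint moments converge to those of the claimed Gaussian $\mathcal N(\mathbf0,\mathcal V_\mathsf{I}(\theta(d_i)))$. Since $\mathbf{\chi}_\mathsf{I}$ is a (rescaled) linear combination of quadratic forms $\mathbf{a}^*\Xi^{(l)}(z)\mathbf{b}$ and $\mathbf{a}^*\Xi(z)\mathbf{b}$ in the centered Green function, and $\Xi(z) = \mG_1(z) - m_1(z)I$ evaluated at $z = \theta(d_i)$ lies in the spectral domain $\mathscr{D}$ where the isotropic local law of Theorem~\ref{isotropic} holds, these quantities are all $O_\prec$ of the typical size $\Delta(d_i) N^{-1/2}$ as noted in Remark~\ref{Rmk:UpsilonDelta}. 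To prove convergence of moments, I would first reduce (by the Cramér--Wold device) to showing that an arbitrary fixed linear combination $Y_N := \langle \boldsymbol{\lambda}, \mathbf{\chi}_\mathsf{I}(\theta(d_i))\rangle$ is asymptotically Gaussian with the variance read off from $\boldsymbol{\lambda}^* \mathcal{V}_\mathsf{I}(\theta(d_i))\boldsymbol{\lambda}$, and then prove this through the \emph{recursive moment estimate}, Proposition~\ref{recursivemP}: one controls $\mathbb{E}[Y_N^{2p}]$ and $\mathbb{E}[Y_N^{2p+1}]$ by relating them, via the cumulant expansion formula (Lemma~\ref{cumulantexpansion}) applied to the entries $x_{ij}$, to lower moments $\mathbb{E}[Y_N^{2p-2}]$ and error terms; the self-consistent structure of this recursion forces $\mathbb{E}[Y_N^{k}]$ to match the Gaussian moments.

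The key computational engine is the cumulant expansion. Because $Y_N$ is (up to scaling and a contour-derived reshuffling, including the cancellation discussed after \eqref{weak_19071941}) built out of $\Xi(\theta(d_i))$ and $\Xi'(\theta(d_i))$ sandwiched between the deterministic vectors $\bw_{\mathsf{I}}^0, \mb{\varsigma}_{\mathsf{I}}^0, \{\bv_t\}_{t\in\mathsf I}, \{\bv_j\}_{j\in\mathsf I^c}$, differentiating with respect to $x_{ij}$ via \eqref{derivative} produces products of Green function entries of the form $\mathscr{P}_\alpha^{ab}$-insertions. The second-cumulant ($\kappa_2 = 1/N$) terms, after summing over $a,b$ and using the identities $\mG_1^l X X^* = \mG_1^{l-1} + z \mG_1^l$ and $X^*\mG_1^l X = \mG_2^{l-1} + z\mG_2^l$ from Lemma~\ref{lemrelation}, together with the deterministic limits $m_1, m_2$ and their derivatives (via Lemma~\ref{lem.19072501*} and the identities in \eqref{identitym1m2}), reproduce exactly the matrix $\mathcal{M}_\mathsf{I}(z)$ in \eqref{def:Mdiag}--\eqref{def:Moff}; the fourth-cumulant ($\kappa_4$) terms, which survive because all $\sqrt N x_{ij}$ share the same fourth cumulant, reproduce $\kappa_4 \mathcal{K}_\mathsf{I}(z)$ in \eqref{def_Klowdiag}--\eqref{def_Kothers}, with the $\mathbf{s}_{k_1,\dots,k_t}(\cdot)$ sums appearing precisely from the structure of the four-fold sum over a single index $j$. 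The third-cumulant terms and all higher ($k\ge 5$) cumulant remainders, as well as the error term $r_\ell$ in \eqref{remaining}, need to be shown negligible; this uses the isotropic law bounds of Theorem~\ref{isotropic}, the $|y-1|\ge\tau_0$ improvement \eqref{est_m12N for large z} and VESD rate Theorem~\ref{thm.VESD} in the large-$d_i$ regime, and the large-moment assumption on $x_{ij}$ to handle the truncation.

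I would organize the steps as follows: (1) write $\mathbf{\chi}_\mathsf{I}(z)$ explicitly as a vector of rescaled quadratic forms, noting that each component is $O_\prec(1)$ by \eqref{weak_est_DG}, and identify the single-component linear combination $Y_N$; (2) invoke Proposition~\ref{recursivemP} to get the recursive relation $\mathbb{E}[Y_N^{2p}] = (2p-1)\,\boldsymbol\lambda^*\mathcal V_\mathsf{I}\boldsymbol\lambda\,\mathbb{E}[Y_N^{2p-2}] + o(1)$ (and the odd moments $o(1)$); (3) perform the induction on $p$ to conclude $Y_N$ converges in distribution to $\mathcal N(0, \boldsymbol\lambda^*\mathcal V_\mathsf{I}\boldsymbol\lambda)$ — here one also needs tightness, which follows from the uniform moment bounds; (4) apply Cramér--Wold to upgrade to the joint statement, identifying the covariance matrix as $\mathcal V_\mathsf{I}(\theta(d_i)) = \mathcal M_\mathsf{I} + \kappa_4 \mathcal K_\mathsf{I}$. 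The main obstacle is step (2) itself — the recursive moment estimate of Proposition~\ref{recursivemP} — whose proof (deferred to Appendix~\ref{sec:prop}) requires carefully carrying out the cumulant expansion, collecting the correct leading terms into the explicit entries of $\mathcal M_\mathsf{I}$ and $\mathcal K_\mathsf{I}$, and bounding a proliferation of error terms uniformly over the full range of $d_i$ (small, bounded, and divergent), where the delicate point is the hidden cancellation in the first component of $\mathbf{\chi}_\mathsf{I}$ for large $d_i$ that one exposes using the VESD convergence rate from \cite{XYY} exactly as in \eqref{cancel:1}--\eqref{2020072106}. Given Proposition~\ref{recursivemP}, the remainder of the argument for Lemma~\ref{lem:normal} is standard.
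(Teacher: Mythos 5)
Your proposal is correct and follows essentially the same route as the paper: a Cramér--Wold reduction to a scalar linear combination of the components of $\mathbf{\chi}_\mathsf{I}$, the recursive moment estimate of Proposition \ref{recursivemP} to force the Gaussian moment recursion, and induction on moments, with all the substantive work deferred to that proposition exactly as in the paper. The only detail the paper adds is that $\mathcal{P}$ is defined at the slightly regularized point $z=\theta(d_i)+\mathrm{i}N^{-K}$ (so that deterministic bounds on the Green function control the bad event when taking expectations), followed by a short continuity argument to pass back to $\theta(d_i)$ --- a step you implicitly skip by working at $\theta(d_i)$ directly, but which is routine once noted.
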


With the above lemma, we now can finish the proof of Theorem \ref{mainthm}.
\begin{proof}[Proof of Theorem \ref{mainthm}]
Recall the shorthand notation $\delta_{i0}=d_i-\sqrt y$. Set
 \begingroup
\allowdisplaybreaks
\begin{align}\label{weak_def:variables}
&\Theta_\mathsf{I}^{\bw}:=- (d_i^2- y)^{\frac 12}(1+d_i)\Delta(d_i)\bigg(  \frac{\sn}{ \Delta(d_i)}\Big( c_1 (\bw_{\mathsf{I}}^0)^*\Xi(z)\bw_{\mathsf{I}}^0+ c_2(\bw_{\mathsf{I}}^0)^*\Xi'(z)\bw_{\mathsf{I}}^0\Big)\bigg),
\nonumber \\
&\Lambda_\mathsf{I}^{\bw}:=-2(d_i^2- y)\sqrt{\frac {d_i+1}{\delta_{i0}}} \;\Delta(d_i) \Big(\frac{\sn}{\Delta(d_i)}(\mb{\varsigma}_{\mathsf{I}}^{0})^*\Xi(\theta(d_i))\bw_{\mathsf{I}}^0\Big),\nonumber \\
&\Delta_{\mathsf{I}t}^{\bw}:=-\sqrt{d_ig(d_i)}\, \Delta(d_i) \; \Big(\frac{\sn}{\Delta(d_i)}\bv_t^*\Xi(\theta(d_i))\mb{\varsigma}_{\mathsf{I}}^0\Big) \quad \text{ for } t\in \mathsf{I},\nonumber \\
&\Pi_{\mathsf{I}j}^{\bw}:=-\sqrt{(1+d_i)g(d_i)}\,\Delta(d_i)  \Big(\frac{\sn}{\Delta(d_i)}\bv_j^*\Xi(\theta(d_i))\bw_{\mathsf{I}}^0\Big) \quad \text{ for } j\in \mathsf{I}^c.
\end{align}
\endgroup
Notice that $\Theta_\mathsf{I}^{\bw}$,  $\Lambda_\mathsf{I}^{\bw}$, $\Delta_{\mathsf{I}t}^{\bw}$ and $\Pi_{\mathsf{I}j}^{\bw}$ for $t\in \mathsf{I}, j\in \mathsf{I}^c$ are all linear combinations of the components of $\mathbf{\chi}_\mathsf{I}(\theta(d_i))$. Then by Lemma \ref{lem:normal}, they are also asymptotically jointly Gaussian with mean $\mb{0}$. The entries of the covariance matrix of $(\Theta_\mathsf{I}^{\bw},  \Lambda_\mathsf{I}^{\bw}, \{\Delta_{\mathsf{I}t}^{\bw}\}_{t\in \mathsf{I}}, \{\Pi_{\mathsf{I}j}^{\bw}\}_{j\in \mathsf{I}^c})$ can be obtained from $\mathcal V_\mathsf{I}(\theta(d_i))$. Further simplifications can be achieved using the following identities at $z=\theta(d_i)$
\begin{align*}
&m_1^2(zm_1)' = \frac{1}{(d_i+y)^2(d_i^2-y)},\qquad  m_1^2(zm_1)''= -\frac{2d_i^3}{(d_i+y)^2(d_i^2-y)^{3}}, \notag\\
& m_1^2(zm_1)'''= \frac{6d_i^4(d_i^2+y)}{(d_i+y)^2(d_i^2-y)^{5}}, \qquad m_1m_1'(zm_1)'= - \frac{d_i^2}{(d_i+y)^3(d_i^2-y)^2} ,\\
&m_1m_1'(zm_1)'' = \frac{2d_i^5}{(d_i+y)^3(d_i^2-y)^4}, \qquad zm_2m_1^2=-\frac{1}{d_i(d_i+y)},
\\
&(zm_2m_1^2)'=\frac{2d_i+y}{(d_i+y)^2(d_i^2-y)}, \qquad m_1(zm_2m_1)'= \frac{1}{(d_i+y)(d_i^2-y)},
\end{align*}
as well as the explicit expressions for $c_1, c_2, \wt c_{1}, \wt c_{2}$ in \rf{def:c12} and \rf{def:wtc12}. 

We further set 
 $$\Theta_{\bw_{\mathsf{I}}}^{\bw}=\Theta_\mathsf{I}^{\bw} \|\bw_{\mathsf{I}}\|^2, \quad
\Lambda_{\mb{\varsigma}_{\mathsf{I}}}^{\bw}=\Lambda_\mathsf{I}^{\bw} \|\bw_{\mathsf{I}}\|, \quad\Delta_{\bv_t}^{\bw}=\Delta_{\mathsf{I}t}^{\bw}, \quad\Pi_{\bv_j}^{\bw}= \Pi_{\mathsf{I}j}^{\bw} \|\bw_{\mathsf{I}}\|. $$ 
 It follows immediately from Lemma \ref{weak_lem_decomp} that $\la \bw, {\rm{P}}_{\mathsf{I}}\bw \ra$ can be written as the RHS of \rf{weak_gf_decomp} in  Theorem \ref{mainthm} and 
 $$\Big( \Theta_{\bw_{\mathsf{I}}}^{\bw}, \Lambda_{\mb{\varsigma}_{\mathsf{I}}}^{\bw}, \{\Delta_{\bv_t}^{\bw}\}_{t\in \mathsf{I}}, \{\Pi_{\bv_j}^{\bw}\}_{j\in \mathsf{I}^c} \Big) \simeq \mathcal N \left(\mb{0}, A_{\mathsf{I}}^{\bw} + \kappa_4  \frac{d_i^2-y}{d_i^2} B_{\mathsf{I}}^{\bw}\right),$$ 
 with the matrices $A_{\mathsf{I}}^{\bw}$ and $B_{\mathsf{I}}^{\bw}$ defined in \eqref{def:covA} and $\eqref{def:covB}$. This completes the proof of Theorem \ref{mainthm}.
\end{proof}

In the rest of this section, we prove Lemma \ref{lem:normal}, based on our key technical result, Proposition \ref{recursivemP}. In order to show the asymptotic Gaussianity of $\mathbf{\chi}_\mathsf{I}(\theta(d_i))$, it suffices to show that all  linear combinations of the components of $\mathbf{\chi}_\mathsf{I}(\theta(d_i))$ are asymptotic Gaussian.  Our proof will be based on a moment estimate. This requires a deterministic bound for the Green function,  in order to control the contribution of the bad event in the isotropic local laws. To this end, we introduce a tiny imaginary part to the parameter $z$, such that the Green functions can be bounded by $1/\Im z$ deterministically. Specifically, in the sequel, we set 
\begin{align}
z=\theta(d_i)+\ii N^{-K}, \label{19072401}
\end{align}
for some sufficiently large constant $K>0$. 

For a fixed deterministic  column vector $\mb{c}=(c_{01}, \; c_{02},\; \{c_{1t}\}_{t\in \mathsf{I}}, \; \{c_{2j}\}_{j\in \mathsf{I}^c} )^*\in \mathbb{R}^{r+2}$, we define
\begin{align}
\mP:=\mb{c}^*\mathbf{\chi}_\mathsf{I}(z), \label{19071950}
\end{align}
where $z$ is given in (\ref{19072401}).  Notice that $|\mathcal{P}|\prec 1$ by the isotropic local law. 
 Here we omit the dependence of $\mathcal{P}$ on $\mathbf{c}$ and the index set ${\mathsf{I}}$ for simplicity. Hereafter we always assume that $\mathbf{c}$ and ${\mathsf{I}}$ are fixed. 
The following proposition is our main technical task. 

\begin{prop}[Recursive moment estimate] \label{recursivemP} Let $\mathcal{P}$ be defined in (\ref{19071950}) with $z$ given in (\ref{19072401}). Under the assumption of  Theorem \ref{mainthm}, we have 
\begin{align}
&(i) \quad\mbE \mathcal{P}=O_\prec(N^{-\frac12}\delta_{i0}^{-\frac 32}d_i^{\frac 32}), \label{est_EmP}\\
&(ii)\quad \mbE \mathcal{P}^l =(l-1)\mathcal{V}_{\mathsf{I},\mathbf{c}}\mbE \mathcal{P}^{l-2}+O_\prec(N^{-\frac12}\delta_{i0}^{-\frac 32}d_i^{\frac 32}),\label{est_EmPk}
\end{align}
where {$\mathcal{V}_{\mathsf{I},\mathbf{c}}=\mb{c}^*{\mathcal{V}_\mathsf{I} (\theta(d_i))}\mb{c }$} with  $\mathcal{V}_\mathsf{I}(\theta(d_i))$ defined in \eqref{weak_def:V} and $\delta_{i0}= d_i-\sqrt y$ is defined in  \rf{def:d_i-d_j}.
\end{prop}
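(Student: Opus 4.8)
\textbf{Proof proposal for Proposition \ref{recursivemP}.}

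The plan is to prove \eqref{est_EmPk} directly (with \eqref{est_EmP} being the special case $l=1$, obtained by the same computation with the lower-order bookkeeping) via the cumulant expansion formula of Lemma \ref{cumulantexpansion} applied to $\mathbb{E}\mathcal P^l$. The starting point is that $\mathcal P = \mathbf c^*\mathbf\chi_\mathsf{I}(z)$ is, up to the scaling factor $\sqrt N/\Delta(d_i)$, a linear combination of quadratic forms $\mathbf a^*\Xi(z)\mathbf b$ and $\mathbf a^*\Xi'(z)\mathbf b$ in the centered Green function $\Xi = \mathcal G_1 - m_1 I$. Writing $\Xi = \mathcal G_1 - m_1I$ and using the resolvent identity $XX^*\mathcal G_1 = I + z\mathcal G_1$, one expresses $\mathbb E[\mathcal P\cdot\mathcal P^{l-1}]$ by inserting one factor of $XX^*\mathcal G_1$ (appropriately) and peeling off a single matrix entry $x_{ab}$, so that $\mathbb E\mathcal P^l = \sum_{a,b}\mathbb E[\bar x_{ab}(\cdots)]$ with $(\cdots)$ depending on $x_{ab}$ through the Green functions; then one invokes the cumulant expansion in the variable $x_{ab}$, keeping terms up to order $\ell=4$ (the truncation error from Lemma \ref{cumulantexpansion} being negligible since we have all moments and $\Xi$ is bounded by $N^{K}$ deterministically, while the relevant scale is polynomial in $N$). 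First I would set up this expansion carefully, using the derivative formulas \eqref{derivative} and those collected in Appendix \ref{s.derivative of G}.

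The organization of the resulting terms is the crux. The $k=1$ cumulant term ($\kappa_2 = 1/N$) produces, after summing over $a,b$, two types of contributions: (a) a ``self-contraction'' where the peeled $x_{ab}$ pairs with the matching factor produced by differentiating the same Green-function string --- this reproduces the deterministic structure and, after substituting the isotropic local laws \eqref{weak_est_DG}--\eqref{weak_est_XG} and the bounds of Lemma \ref{lem.19072501*}, yields exactly the $(l-1)\mathcal V_{\mathsf{I},\mathbf c}\mathbb E\mathcal P^{l-2}$ main term via the $\mathcal M_\mathsf{I}$ part; (b) ``cross'' terms where $x_{ab}$ hits one of the other $l-1$ copies of $\mathcal P$, which after re-summation are lower order and absorbed into the error. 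The $k=3$ cumulant term ($\kappa_3$): I expect this to vanish to the required order after summing over $a,b$, because the relevant combinations involve odd numbers of off-diagonal Green-function entries or traces that carry an extra $N^{-1/2}$; this parity/power-counting cancellation needs to be checked term by term. The $k=4$ cumulant term ($\kappa_4$) survives and, after substituting the local laws and collecting the ``fully contracted'' diagrams, produces precisely the $\kappa_4\mathcal K_\mathsf{I}$ contribution to $\mathcal V_{\mathsf{I},\mathbf c}$ --- the functions $\mathbf s_{k_1,\dots,k_t}(\cdot)$ arise here as $\sum_j a_1(j)^{k_1}\cdots$ from the diagonal-entry sums $\sum_a (\text{projection components})$. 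Throughout, the remaining $k=2$ sub-leading pieces and all $k=3,4$ off-diagonal leftovers must be shown to be $O_\prec(N^{-1/2}\delta_{i0}^{-3/2}d_i^{3/2})$; this is where one repeatedly invokes \eqref{weak_est_DG}, \eqref{weak_est_XG}, \eqref{weak_est_m12N}, the asymptotics \eqref{estm1m2*}--\eqref{estm1m2**}, and the crucial estimates \eqref{2020011405} that control the $d_i$-dependence in the divergent-spike regime.

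The main obstacle, I expect, is twofold. First, the bookkeeping of the $d_i$-dependence: unlike in \cite{BDW}, here $d_i$ may diverge and $\delta_{i0}=d_i-\sqrt y$ may be small, so every term must be tracked with its sharp power of $d_i$, $\delta_{i0}$, and $\Delta(d_i)$, and one must use the cancellation exploited in Lemma \ref{weak_lem_decomp} (the combination of $\mathbf w_\mathsf{I}^*\Xi\mathbf w_\mathsf{I}$ and $\mathbf w_\mathsf{I}^*\Xi'\mathbf w_\mathsf{I}$ governed by $c_1,c_2$ in \eqref{def:c12}) --- via Theorem \ref{thm.VESD} on the VESD convergence rate --- to see that the first component of $\mathbf\chi_\mathsf{I}$ is genuinely of the same size as the others rather than larger. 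Second, identifying the limiting covariance: one must verify that the constellation of fully-contracted terms, after all the algebraic identities for $m_1,m_2$ at $z=\theta(d_i)$ listed before the proof of Lemma \ref{lem:normal} are applied, collapses exactly to the entries of $\mathcal M_\mathsf{I}$ and $\mathcal K_\mathsf{I}$ in \eqref{def:Mdiag}--\eqref{def_Kothers}. Since the proof of Lemma \ref{lem:normal} then follows from \eqref{est_EmPk} by the standard moment-matching argument (the recursion $\mathbb E\mathcal P^l = (l-1)\mathcal V_{\mathsf I,\mathbf c}\mathbb E\mathcal P^{l-2} + o(1)$ characterizes the Gaussian moments), the detailed cumulant-expansion computation is deferred to Appendix \ref{sec:prop}.
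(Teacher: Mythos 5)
Your overall strategy --- rewrite $\mathcal P$ so that a factor of $H\mathcal G_1$ appears via $\mathcal G_1^t=z^{-1}(H\mathcal G_1^t-\mathcal G_1^{t-1})$, peel off one entry $x_{qk}$, apply the cumulant expansion up to the fourth cumulant with a fourth-derivative remainder, and then organize terms by where the derivatives land, using the isotropic laws, Lemma \ref{lem.19072501*}, the bounds \eqref{2020011405} and the VESD-based cancellation of Theorem \ref{thm.VESD} for divergent $d_i$ --- is exactly the route the paper takes in Appendix \ref{sec:prop}. However, there is a genuine error in your identification of where the main term comes from. You claim that the second-cumulant ``self-contraction'' terms (the derivative hitting the same Green-function string that was peeled) produce the $(l-1)\mathcal V_{\mathsf I,\mathbf c}\,\mathbb E\mathcal P^{l-2}$ recursion via $\mathcal M_\mathsf I$, and that the ``cross'' terms where $\partial/\partial x_{qk}$ hits one of the other $l-1$ copies of $\mathcal P$ are lower order and absorbed into the error. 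It is precisely the opposite. The combinatorial factor $(l-1)$ can only arise from differentiating $\mathcal P^{l-1}$: in the paper's notation these are the terms $h_t(0,1)$, which after applying \eqref{weak_est_DG}--\eqref{weak_est_XG} yield the Gaussian block $(l-1)\,\mathbf c^*\mathcal M_\mathsf I\mathbf c\,\mathbb E\mathcal P^{l-2}$, while the fourth-cumulant contribution $\kappa_4\mathcal K_\mathsf I$ comes from $h_t(1,2)$, i.e.\ one derivative on the Green-function string and two on $\mathcal P^{l-1}$. The self-contraction terms $h_t(1,0)$ do not produce the variance at all: they combine with the deterministic $zm_2\mathcal G_1$ and $(zm_2)'\mathcal G_1$ pieces left over from the rewriting of $\mathbb E\mathcal P^l$ (the self-consistent-equation cancellation), and their residue is only of the size of the error $O_\prec(N^{-1/2}\delta_{i0}^{-3/2}d_i^{3/2})$ (and, at $l=1$, gives part (i)). Following your plan as written --- keeping the self-contractions as the main term and discarding the cross terms --- the recursion \eqref{est_EmPk} would never be obtained.

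Two smaller points. First, the negligibility of the $\kappa_3$ terms ($h_t(2,0)$, $h_t(1,1)$, $h_t(0,2)$) is not a parity cancellation after summation over $q,k$; in the paper it is a direct size estimate, exploiting that each such term carries at least one quadratic form of type $X^*\mathcal G_1^a$ (size $O_\prec(N^{-1/2})$ after the local law), together with the sums controlled by Lemma \ref{Importantests}, and in the large-$d_i$ regime additional cancellations of the type $\tfrac{c_1}{2}m_1+c_2m_1'=O(d_i^{-1})$ coming from the specific coefficients \eqref{def:c12}. Second, you correctly flag that the error must be tracked as $O_\prec(N^{-1/2}\delta_{i0}^{-3/2}d_i^{3/2})$ rather than $O_\prec(N^{-1/2})$, and that the $c_1,c_2$ cancellation (via the VESD rate) is needed to keep the first component of $\mathbf\chi_\mathsf I$ of order one; both of these are indeed essential in the paper's estimates of $h_0(1,0)$ and $h_0(0,1)$.
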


With the above proposition, we can now show the proof of Lemma \ref{lem:normal}. 

\begin{proof}[Proof of Lemma \ref{lem:normal}]
By Proposition \ref{recursivemP}, one observes that $\mP(z)$ is asymptotically Gaussian with mean $\mb{0}$ and variance $\mathcal{V}_{\mathsf{I},\mathbf{c}}$. By the definition of $z$ in (\ref{19072401}), and a simple continuity argument for the Green function, one can easily see that $\mathcal{P}(\theta(d_i))$ admits the same asymptotic distribution as  $\mP(z)$, when $K$ is chosen to be sufficiently large. 
Since  {$\mathcal{V}_{\mathsf{I},\mathbf{c}}=\mb{c}^*{\mathcal{V}_\mathsf{I}(\theta(d_i))}\mb{c }$} and $\mb{c}$ is arbitrary, we have 
$$\mathbf{\chi}_\mathsf{I}(\theta(d_i)) \simeq \mathcal N\left(\mathbf{0}, \mathcal V_\mathsf{I}(\theta(d_i)) \right).$$
This concludes the proof of Lemma \ref{lem:normal}. 
\end{proof}

In the end of this section, we briefly state the proof of Theorem \ref{thm.simple case}.
 \begin{proof} [Proof of Theorem \ref{thm.simple case}]
 In addition to the expansion of $\la \bw,  {\rm{P}}_i \bw\ra$ which can be easily obtained from Lemma \ref{weak_lem_decomp} in case $\mathsf{I}=\{i\}$, by setting 
 \begin{align}
 \Phi_i:= - \sn \sqrt{d_i^2-y} \theta(d_i)\chi_{ii} \big(\theta(d_i)\big),  \label{090301}
 \end{align} 
 it follows immediately from Lemma \ref{lem.representation.eigv} that 
 the  eigenvalue $\mu_i$ can be written in terms of $\Phi_i$. Note that $\Phi_i$ is also a quadratic form similar to the other quadratic forms in   the expansion of eigenvector in Lemma  \ref{weak_lem_decomp} in case  $\mathsf{I}=\{i\}$. Hence their joint distribution can de deduced analogously to the proof of Theorem \ref{mainthm} by adding one more quadratic form into the linear combinations of quadratic forms $\mathcal{P}$. Since the proof strategy of Theorem \ref{mainthm} can apply mutatis mutandis to that of Theorem  \ref{thm.simple case}, we omit the details and conclude the proof. 
 \end{proof}

{\bf \large Acknowledgment.} The second author would like to thank Igor Silin for sharing the Python codes of \cite{SF} and providing some insights on the statistical applications.

\begin{appendix}
\section{ Collection of derivatives} \label{s.derivative of G}

In this section, we summarize some derivatives that appear in the previous sections. And all these derivatives can be obtained by repeatedly applying the second identity in \rf{derivative} and chain rule. 
For convenience, we set 
\begin{align}
\mathcal{O}_l:=\{(o_1,\cdots,o_l):\exists i\in \{1,\cdots,l\}, o_i=0, \text{and }o_{j}=1,2  \text{ for all $j\neq i$}\} \label{def of O}
\end{align}

For simplicity, in the sequel,  we still use $\mG_a^{s}, \mG_a^{(s)}$ to denote $\Upsilon^{s-1}\mG_a^{s}, \Upsilon^{s}\mG_a^{(s)}$ respectively  for $a=1,2$. Below,  we first collect  the derivatives of $(X^*\mG_1^s\bv)_{k}$  for any deterministic unit vector $\bv$, 
which can be derived by using (\ref{derivative}) and the product rule.
The first derivative of $(X^*\mG_1^s\bv)_{k}$ is
\begin{align}
\label{deriXG^l}
 &\frac{\partial (X^*\mG_1^{s}\bv)_{k}}{\partial x_{qk}}=(\mG_1^s\bv)_{q}-\sum_{a=1}^2\sum_{\begin{subarray}{c}s_1,s_2\geq 1;\\ s_1+s_2=s+1 \end{subarray} }(X^*\mG_1^{s_1}\mathscr{P}_a^{qk}\mG_1^{s_2}\bv)_{k}.
\end{align}
The second derivative of  $(X^*\mG_1^s\bv)_{k}$ is
\begin{align} \label{eq:2ndXG}
\frac{\partial^2 (X^*\mG_1^s\bv)_{k}}{\partial x_{qk}^2}=&2\bigg(-\sum_{a=1}^2\sum_{\begin{subarray}{c}s_1,s_2\geq 1;\\ s_1+s_2=s+1 \end{subarray}}\Big(\mG_1^{s_1}\mathscr{P}_a^{qk}\mG_1^{s_2}\bv\Big)_{q}\notag\\
&+\sum_{a_1,a_2=1}^2\sum_{\begin{subarray}{c}s_1, s_2, s_3\geq 1;\\ \sum_{i=1}^3s_i=s+2 \end{subarray}}\Big(X^*\mG_1^{s_1}\mathscr{P}_{a_1}^{qk}\mG_1^{s_2}\mathscr{P}_{a_2}^{qk}\mG_1^{s_3}\bv\Big)_{k}\nonumber\\
&
-\sum_{\begin{subarray}{c}s_1,s_2\geq 1;\\ s_1+s_2=s+1 \end{subarray}}\Big(X^*\mG_1^{s_1}\mathscr{P}_{0}^{qk}\mG_1^{s_2}\bv\Big)_{k}\bigg).
\end{align}
The third derivative of  $(X^*\mG_1^s\bv)_{k}$ is
\begin{align} 
\label{rankrorder3derivative}
\frac{\partial^3 (X^*\mG_1^s\bv)_{k}}{\partial x_{qk}^3}=& 6\bigg(\sum_{a_1,a_2=1}^2\sum_{\begin{subarray}{c}s_1,s_2,s_3\geq 1;\\ \sum_{i=1}^3s_i=s+2 \end{subarray}}\Big(\mG_1^{s_1}\mathscr{P}_{a_1}^{qk}\mG_1^{s_2}\mathscr{P}_{a_2}^{qk}\mG_1^{s_3}\bv\Big)_{q} \notag\\
&-\sum_{\begin{subarray}{c}s_1,s_2\geq 1;\\ s_1+s_2=s+1 \end{subarray}}\Big(\mG_1^{s_1}\mathscr{P}_{0}^{qk}\mG_1^{s_2}\bv\Big)_{q} \nonumber\\
&-\sum_{a_1,a_2,a_3=1}^2\sum_{\begin{subarray}{c}s_1,\cdots,s_4\geq 1;\\ \sum_{i=1}^4s_i=s+3 \end{subarray}}\Big(X^*\Big(\prod_{i=1}^3(\mG_1^{s_i}\mathscr{P}_{a_i}^{qk})\Big)\mG_1^{s_4}\bv\Big)_{k}\nonumber\\
&+\sum_{(a_1,a_2)\in \mathcal{O}_2}\sum_{\begin{subarray}{c}s_1,\cdots,s_3\geq 1;\\ \sum_{i=1}^3s_i=s+2 \end{subarray}}\Big(X^*\Big(\prod_{i=1}^2(\mG_1^{s_i}\mathscr{P}_{a_i}^{qk})\Big)\mG_1^{s_3}\bv\Big)_{k}\bigg).
\end{align}
The fourth derivative of  $(X^*\mG_1^s\bv)_{k}$ is
\begin{align} 
\frac{\partial^4 (X^*\mG_1^s\bv)_{k}}{\partial x_{ik}^4}=&4!\bigg(-\sum_{a_1,a_2,a_3=1}^2\sum_{\begin{subarray}{c}s_1,\cdots,s_4\geq 1;\\ \sum_{i=1}^4s_i=s+3 \end{subarray}}\Big(\Big(\prod_{i=1}^3(\mG_1^{s_i}\mathscr{P}_{a_i}^{qk})\Big)\mG_1^{s_4}\bv\Big)_{q} \nonumber\\
&+\sum_{(a_1,a_2)\in \mathcal{O}_2}\sum_{\begin{subarray}{c}s_1,s_2,s_3\geq 1;\\ \sum_{i=1}^3s_i=s+2 \end{subarray}}\Big(\Big(\prod_{i=1}^2(\mG_1^{s_i}\mathscr{P}_{a_i}^{qk})\Big)\mG_1^{s_3}\bv\Big)_{q}\nonumber\\
&+\sum_{a_1,\cdots,a_4=1}^2\sum_{\begin{subarray}{c}s_1,\cdots,s_5\geq 1;\\ \sum_{i=1}^5s_i=s+4\end{subarray}}\Big(X^*
\Big(\prod_{i=1}^4(\mG_1^{s_i}\mathscr{P}_{a_i}^{qk})\Big)\mG_1^{s_5}\bv\Big)_{k} \nonumber\\
&-\sum_{(a_1,a_2,a_3)\in \mathcal{O}_3}\sum_{\begin{subarray}{c}s_1,\cdots,s_4\geq 1;\\ \sum_{i=1}^4s_i=s+3 \end{subarray}}\Big(X^*\Big(\prod_{i=1}^3(\mG_1^{s_i}\mathscr{P}_{a_i}^{qk})\Big)\mG_1^{s_4}\bv\Big)_{k}\nonumber\\
&+\sum_{\begin{subarray}{c}s_1,s_2,s_3\geq 1;\\ \sum_{i=1}^3s_i=s+2 \end{subarray}}\Big(X^*\Big(\prod_{i=1}^2(\mG_1^{s_i}\mathscr{P}_{0}^{qk})\Big)\mG_1^{s_3}\bv\Big)_{k}\bigg).
\end{align}

Next, for $\mathcal P$ defined in \eqref{rankrrepresentmP}, we collect its derivatives  $$ \frac{\partial^s \mP}{\partial x_{qk}^s}=\frac{\sn}{\Delta(d_i)} \mb{y}_0^* \Big(\sum_{a=1}^2 c_a
\frac{\partial^s \mG_1^{a}}{\partial x_{qk}^s}\Big) \mb{\vartheta}_0 + \frac{\sn}{\Delta(d_i)} \sum_{t=1}^2   \mb{y}_t^* \frac{\partial^s \mG_1}{\partial x_{qk}^s}\mb{\vartheta}_t.
$$ For the first derivative, we have
\begin{align}\label{eq:Pd1}
\frac{\partial \mP}{\partial x_{qk}}
&=-\frac{\sn}{\Delta(d_i)} \sum_{a=1}^2 c_a \sum_{\begin{subarray}{c}a_1,a_2\geq 1;\\ a_1+a_2=a+1 \end{subarray}}\Big((\mb{y}_0^*\mG_1^{a_1})_q(X^*\mG_1^{a_2}\mb{\vartheta}_0)_k+(X^*\mG_1^{a_1}\mb{y}_0)_k(\mG_1 ^{a_2}\mb{\vartheta}_0)_q\Big)\nonumber\\
&\quad - \frac{\sn}{\Delta(d_i)} \sum_{t=1}^2 \Big((\mb{y}_t^*\mG_1)_q(X^*\mG_1\mb{\vartheta}_{t})_k+(X^*\mG_1\mb{y}_t)_k(\mG_1\mb{\vartheta}_{t})_q\Big).
\end{align}
By \rf{weak_est_XG} and Remark \ref{boundrmk*}, it is easy to see that the second term on the RHS of \rf {eq:Pd1} can be estimated by 
\begin{align*}
\Big| \frac{\sn}{\Delta(d_i)} \sum_{t=1}^2 \Big((\mb{y}_t^*\mG_1)_q(X^*\mG_1\mb{\vartheta}_{t})_k+(X^*\mG_1\mb{y}_t)_k(\mG_1\mb{\vartheta}_{t})_q\Big) \Big| &\prec \Delta(d_i)^{-1}d_i^{-\frac 32}(d_i-\sqrt y)^{-\frac 12 } \nonumber\\
&\asymp 1, 
\end{align*}
where the last step follows from the estimate of $\Delta(d_i)$ in \rf{Delta:asymp}. For the first term on the RHS of  \rf {eq:Pd1}, first using  {\rf{weak_est_DG}}, We see
that 
\begin{align*}
\frac{\partial \mP}{\partial x_{qk}} &= - \frac{\sn}{\Delta(d_i)} \bigg[ 
\Big(\frac {c_1}{2} m_1+ c_2 m_1'\Big)(X^*\mG_1\mb{\vartheta}_{0})_k y_{0q}
\notag\\
&+ \Big(\frac {c_1}{2} (X^*\mG_1\mb{\vartheta}_{0})_k + c_2  (X^*\mG_1^2\mb{\vartheta}_{0})_k \Big)(\mG_1^*\mb{y}_0)_q+ \Big(\frac {c_1}{2} m_1+ c_2 m_1'\Big) (X^*\mG_1\mb{y}_{0})_k \vartheta_{0q} \nonumber\\
&
+ \Big(\frac {c_1}{2} (X^*\mG_1\mb{y}_{0})_k + c_2  (X^*\mG_1^2\mb{y}_{0})_k \Big) (\mG_1^*\mb{\vartheta}_0)_q
\bigg] +O_\prec(d_i^{\frac 12} \delta_{i0}^{-\frac 12}N^{-\frac 12}).
\end{align*}
Note that by plugging the values of $c_1,c_2$ and $m_1, m_1'$ at $z=\theta(d_i)$, we have
\begin{align*}
\frac {c_1}{2} m_1+ c_2 m_1'=-\frac{y(d_i+1)}{(d_i+y)^2} =O(d_i^{-1}).
\end{align*}
If $d_i$ is large, i.e. $d_i>K$ for sufficiently large constant $K>0$, performing the expansions for $\mG_1, \mG_1^2$ around  $-1/\theta(d_i)$ and $1/(\theta(d_i))^2$ respectively, we can further see the cancellation
\begin{align*}
&\quad\frac {c_1}{2} (X^*\mG_1\mb{\vartheta}_{0})_k + c_2  (X^*\mG_1^2\mb{\vartheta}_{0})_k\notag\\
& =O(d_i) \times \Big( (X^*\mG_1\mb{\vartheta}_{0})_{k}  +\theta(d_i) (X^*\mG_1^2\mb{\vartheta}_{0})_{k}  \Big)+O_\prec(N^{-\frac12}) \nonumber\\
 &= O(d_i) \times O_\prec\Big( \frac{(X^*H\mb{\vartheta}_{0})_{k}}{\theta(d_i)^2}\Big) = O_\prec(d_i^{-1}N^{-\frac12}).
\end{align*}
which leads to ${\partial \mP}/{\partial x_{qk}}=O_\prec(1)$. Here we used  the estimate 
$
(X^*H\mb{\vartheta}_{0})_{k} =O_\prec(N^{-\frac12}),
$ which can be easily verified by the moment method.
 For the case $d_i=O(1)$, simply using isotropic local laws \rf{weak_est_XG} and  Remark \ref{boundrmk*}, we  obtain ${\partial \mP}/{\partial x_{qk}}=O_\prec(1)$. To conclude, we always have
\begin{align}\label{19072410}
\frac{\partial \mP}{\partial x_{qk}}=O_\prec(1).
\end{align}

The second derivative of $\mP$ is
\begin{align}\label{eq:Pd2}
 \frac{\partial^2 \mP}{\partial x_{qk}^2}&=\frac{2\sn}{\Delta(d_i)} \sum_{b=1}^2 c_b \Big(\sum_{a_1,a_2=1}^2\sum_{\begin{subarray}{c}b_1,b_2,b_3\geq 1;\\ \sum_{i=1}^3b_i=b+2 \end{subarray}}\mb{y}_0^*\Big(\prod_{j=1}^2(\mG_1^{b_j}\mathscr{P}_{a_j}^{qk})\Big)\mG_1^{b_3}\mb{\vartheta}_0 \nonumber\\
 &\qquad \qquad\qquad -\sum_{\begin{subarray}{c}b_1,b_2\geq 1;\\ b_1+b_2=b+1 \end{subarray}}\mb{y}_0^*\mG_1^{b_1}\mathscr{P}_0^{qk}\mG_1^{b_2}\mb{\vartheta}_{0}\Big)\nonumber\\
 &+\frac{2\sn}{\Delta(d_i)} \sum_{t=1}^2  \Big(\sum_{a_1,a_2=1}^2 \mb{y}_t^*\Big(\prod_{j=1}^2(\mG_1\mathscr{P}_{a_j}^{qk})\Big)\mG_1\mb{\vartheta}_t - 
 \mb{y}_t^*\mG_1\mathscr{P}_0^{qk}\mG_1\mb{\vartheta}_{t}\Big).
\end{align}
Recalling the definition of $\mathscr{P}^{qk}_i$ for $i=0,1,2$ in \rf{P012}, one observes that all the terms in the parenthesis admit  one of the following forms 
\begin{align*}
&(X^*\mG_1^{b_1}\mb{\eta}_1)_k(X^*\mG_1^{b_2})_{kq}(\mG_1^{b_3}\mb{\eta}_2)_q, \qquad (X^*\mG_1^{b_1}\mb{\eta}_1)_k(\mG_1^{b_2})_{qq}(X^*\mG_1^{b_3}\mb{\eta}_2)_k, \notag\\
&
(\mG_1^{b_1}\mb{\eta}_1)_q(X^*\mG_1^{b_2}X)_{kk}^a(\mG_1^{b_3}\mb{\eta}_2)_q
\end{align*}
for $\mb{\eta}_1,\mb{\eta}_2=\mb{y}_0, \mb{y}_1,\mb{y}_2, \mb{\vartheta}_{0}, \mb{\vartheta}_{1}, \mb{\vartheta}_{2}$, $a=0,1$ and $b_1,b_2,b_3=1,2$ satisfying 
$b_1+b_3-1+a(b_2-1)=b \in \{1,2\}$, which are bounded by $O_\prec (N^{-1}(d_i-\sqrt y)^{-1}d_i^{-2})$, $O_\prec (N^{-1} (d_i-\sqrt y)^{-1}d_i^{-2})$, $O_\prec( d_i^{-2})$ respectively for $b=1$ and by $O_\prec (N^{-1}(d_i-\sqrt y)^{-3}d_i^{-1})$, $O_\prec (N^{-1} (d_i-\sqrt y)^{-3}d_i^{-1})$, $O_\prec( (d_i-\sqrt y)^{-1}d_i^{-2})$ respectively for $b=2$, in light of \rf{weak_est_XG} and Remark \ref{boundrmk*}. Therefore, combining with the prefactor $\sn$ and coefficients $c_{1,2}$ in (\ref{eq:Pd2}), we get 
\begin{align}
 \frac{\partial^2 \mP}{\partial x_{qk}^2}&= \frac{2\sn}{\Delta(d_i)} \bigg[
 c_1 \Big((\mG_1\mb{y}_0)_q(X^*\mG_1X)_{kk}(\mG_1\mb{\vartheta}_0)_q
 - (\mG_1\mb{y}_0)_q(\mG_1\mb{\vartheta}_0)_q\Big) \nonumber\\
 &\quad + c_2 \Big( (\mG_1^2\mb{y}_0)_q(X^*\mG_1X)_{kk}(\mG_1 \mb{\vartheta}_0)_q + 
 (\mG_1\mb{y}_0)_q(X^*\mG_1^2X)_{kk}(\mG_1\mb{\vartheta}_0)_q\nonumber\\
 &\quad
 +(\mG_1\mb{y}_0)_q(X^*\mG_1X)_{kk}(\mG_1^2\mb{\vartheta}_0)_q - (\mG_1^2\mb{y}_0)_q(\mG_1\mb{\vartheta}_0)_q- (\mG_1\mb{y}_0)_q(\mG_1^2\mb{\vartheta}_0)_q\Big)
 \bigg]  \nonumber\\
 &\quad
 + O_\prec (\sn\Delta(d_i)^{-1} d_i^{-2})\nonumber\\
  &= \frac{2\sn}{\Delta(d_i)} \bigg[
 c_1 \Big((\mG_1\mb{y}_0)_q(z\mG_2)_{kk}(\mG_1\mb{\vartheta}_0)_q
 \Big) + c_2 \Big( (\mG_1^2\mb{y}_0)_q(z\mG_2)_{kk}(\mG_1 \mb{\vartheta}_0)_q\notag\\
  &\quad + 
 (\mG_1\mb{y}_0)_q(\mG_2+z\mG_2^2)_{kk}(\mG_1\mb{\vartheta}_0)_q   +(\mG_1\mb{y}_0)_q(z\mG_2)_{kk}(\mG_1^2\mb{\vartheta}_0)_q\Big) \bigg]  \notag\\
 &\quad
 + O_\prec (\sn(d_i-\sqrt y)^{\frac 12} d_i^{-\frac 12})\nonumber\\
&= O_\prec \Big( \frac{2\sn}{\Delta(d_i)} \big(c_1m_1^2(zm_2)  + c_2(2m_1'm_1(zm_2)+ m_1^2(zm_2)')\big) \Big) \nonumber\\
&\quad + O_\prec (\sn(d_i-\sqrt y)^{\frac 12} d_i^{-\frac 12}) \nonumber\\
& = O_\prec (\sn(d_i-\sqrt y)^{\frac 12} d_i^{-\frac 12}) .  \label{19072411}
\end{align}
In the last step above, we used
$$c_1m_1^2(zm_2)  + c_2(2m_1'm_1(zm_2)+ m_1^2(zm_2)')=-\frac{y(d_i^2+2d_i+y)}{d_i^2(d_i+y)^2}= O(d_i^{-2}). $$

The third derivative of $\mP$ is
\begin{align}\label{eq:Pd3} 
\frac{\partial^3 \mP}{\partial x_{qk}^3}
&=-\frac{6\sn}{\Delta(d_i)} \sum_{b=1}^2 c_b  \bigg(\sum_{a_1,a_2,a_3=1}^2\sum_{\begin{subarray}{c}b_1,\cdots,b_4\geq 1;\\\sum_{i=1}^4 b_i=b+3 \end{subarray}}\Big(\mb{y}_0^*\Big(\prod_{j=1}^3(\mG_1^{b_j}\mathscr{P}_{a_j}^{qk})\Big)\mG_1^{b_4} \mb{\vartheta}_{0}\Big)\nonumber\\
&\qquad-\sum_{(a_1,a_2)\in \mathcal{O}_2}\sum_{\begin{subarray}{c}b_1,b_2,b_3\geq 1;\\ \sum_{i=1}^3b_i=b+2 \end{subarray}}\Big(\mb{y}_0^*\Big(\prod_{j=1}^2(\mG_1^{b_j}\mathscr{P}_{a_j}^{qk})\Big)\mG_1^{b_3}\mb{\vartheta}_{0}\Big)\bigg) \nonumber\\
&\quad -\frac{6\sn}{\Delta(d_i)} \sum_{t=1}^2  \bigg(\sum_{a_1,a_2,a_3=1}^2\Big(\mb{y}_t^*\Big(\prod_{j=1}^3(\mG_1\mathscr{P}_{a_j}^{qk})\Big)\mG_1 \mb{\vartheta}_{t}\Big) \nonumber\\
&\qquad
-\sum_{(a_1,a_2)\in \mathcal{O}_2}\Big(\mb{y}_t^*\Big(\prod_{j=1}^2(\mG_1\mathscr{P}_{a_j}^{qk})\Big)\mG_1\mb{\vartheta}_{t}\Big)\bigg).
\end{align}
By plugging in $\mathscr{P}^{qk}_a$ defined in (\ref{P012}), one can see that all summands  above contain either more than two quadratic forms of  $(X^*\mG_1^a)$  and  one quadratic form of $(\mG_1^b)$, or one  quadratic form of  $(X^*\mG_1^a)$  and  two quadratic forms of $(\mG_1^b)$ for some $a, b=1, 2,$  which  by \rf{weak_est_XG} and \rf{weak_est_DG} will contribute a $O_\prec (N^{-1/2} d_i^{-1/2+(a-1)}(d_i-\sqrt y)^{-1/2-2(a-1)})$ factor and a $O_\prec(d_i^{-1}(d_i-\sqrt y)^{-(a-1)})$ factor respectively. Using this fact together with contribution of $c_b/\Delta(d_i)$, one can easily obtain the crude bound 
\begin{align}
\Big|\frac{\partial^3 \mP}{\partial x_{qk}^3}\Big|\prec 1.  \label{eq:bdPd3}
\end{align}

The fourth derivative of $\mP$ is
\begin{align}\label{eq:Pd4}
 \frac{\partial^4 \mP}{\partial x_{qk}^4}
 &=\frac{4!\sn}{\Delta(d_i)} \sum_{b=1}^2 c_b \bigg[\sum_{a_1,\cdots,a_4=1}^2\sum_{\begin{subarray}{c}b_1,\cdots,b_5\geq 1;\\\sum_{i=1}^5 b_i=b+4 \end{subarray}}\Big(\mb{y}_b^*
\Big(\prod_{j=1}^4(\mG_1^{b_j}\mathscr{P}_{a_j}^{qk})\Big)\mG_1^{b_5}\mb{\vartheta}_{0}\Big)\nonumber\\
&\qquad-\sum_{(a_1,a_2,a_3)\in \mathcal{O}_3}\sum_{\begin{subarray}{c}b_1,\cdots,b_4\geq 1;\\\sum_{i=1}^4 b_i=b+3 \end{subarray}}\Big(\mb{y}_0^*\Big(\prod_{j=1}^3(\mG_1^{b_j}\mathscr{P}_{a_j}^{qk})\Big)\mG_1^{b_4}\mb{\vartheta}_{0}\Big)\notag\\
&\qquad +\sum_{\begin{subarray}{c}b_1,b_2,b_3\geq 1;\\ \sum_{i=1}^3b_i=b+2 \end{subarray}}\Big(\mb{y}_0^*\Big(\prod_{j=1}^2(\mG_1^{b_j}\mathscr{P}_{0}^{qk})\Big)\mG_1^{b_3}\mb{\vartheta}_{0}\Big)\bigg] \nonumber\\
&\quad +\frac{4!\sn}{\Delta(d_i)} \sum_{t=1}^2\bigg[ \sum_{a_1,\cdots,a_4=1}^2\Big(\mb{y}_t^*
\Big(\prod_{j=1}^4(\mG_1\mathscr{P}_{a_j}^{qk})\Big)\mG_1\mb{\vartheta}_{t}\Big)\notag\\
&\qquad
-\sum_{(a_1,a_2,a_3)\in \mathcal{O}_3}\Big(\mb{y}_t^*\Big(\prod_{j=1}^3(\mG_1\mathscr{P}_{a_j}^{qk})\Big)\mG_1\mb{\vartheta}_{t}\Big)+\Big(\mb{y}_t^*\Big(\prod_{j=1}^2(\mG_1\mathscr{P}_{0}^{qk})\Big)\mG_1\mb{\vartheta}_{t}\Big) \bigg]. 
\end{align}

\section{Proof of Proposition \ref{recursivemP}} \label{sec:prop}

This section is devoted to the proof of Proposition \ref{recursivemP}, the recursive moment estimates of $\mathcal{P}$ defined in (\ref{19071950}). The basic strategy is to use the cumulant expansion formula in Lemma \ref{cumulantexpansion} to the functionals of Green functions. In the context of Random Matrix Theory, such an idea dates back to \cite{KKP96}. We also refer to \cite{HLS, LS18, HK, HK2} for some recent applications of this strategy for other problems in Random Matrix Theory. 

First,  we provide estimates for some random terms that appear frequently in the proof. Indeed, they have  one of the following forms: $\mb{\eta}_1^*X^*\mG_1^s\mb{\eta}_2$, $\mb{\eta}_1^*X^*\mG_1^sX\mb{\eta}_2$, $\mb{\eta}_1^*\mG_1^s\mb{\eta}_2$, for any fixed $s\in \mathbb{N}$ and deterministic vectors $\mb{\eta}_1, \mb{\eta}_2$ whose $\ell^2$-norm are bounded by some constant $C>0$. Notice that under the choice of $z$ in (\ref{19072401}),
we have the deterministic bound
 \begin{align}\label{trivialbd_G1}
 |\mb{\eta}_1^*\mG_1^s\mb{\eta}_2|\leq C(\Im z)^{-s}.
 \end{align}
Similarly, by Cauchy-Schwarz inequality, we have 
 \begin{align}\label{trivialbd_XG1}
 |\mb{\eta}_1^*X^*\mG_1^s\mb{\eta}_2|&\leq  \|\mb{\eta}_1\|\|X^*\mG_1^s\mb{\eta}_2\|\leq C  \Big(\mb{\eta}_2^*\mG_1^s(\bar{z})XX^*\mG_1^s(z)\mb{\eta}_2 \Big)^{\frac12}\nonumber\\
 &= \Big(\mb{\eta}_2^*\mG_1^s(\bar{z})(I+z\mG_1)\mG_1^{s-1}(z)\mb{\eta}_2 \Big)^{\frac12}\leq C(\Im z)^{-s} 
  \end{align}
 and 
  \begin{align}\label{trivialbd_XG1X}
 |\mb{\eta}_1^*X^*\mG_1^sX\mb{\eta}_2|= |\mb{\eta}_1^*X^*X\mG_2^s\mb{\eta}_2|= |\mb{\eta}_1^*\mG_2^{s-1}\mb{\eta}_2+z\mb{\eta}_1^*\mG_2^s\mb{\eta}_2|\leq C(\Im z)^{-s}.
 \end{align}
According to Lemma \ref{prop_prec} (ii), the above deterministic bounds allow us to use the high probability bounds  for the aforementioned quantities (following from the isotropic local law) directly in the calculation of the expectations in (\ref{est_EmP}) and (\ref{est_EmPk}).

Next, we derive an initial bound on $\mP:=\mb{c}^*\mathbf{\chi}_\mathsf{I}(z)$ in \rf{19071950}, where $\mathbf{\chi}_\mathsf{I}$ is the random vector defined in \rf{def:chi_I} and $\mb{c}=(c_{01}, \; c_{02}, \; \{c_{1t}\}_{t\in \mathsf{I}}, \; \{c_{2j}\}_{j\in \mathsf{I}^c} )^*\in \mathbb{R}^{r+2}$ is an arbitrary fixed deterministic column vector of size $r+2$. For the case $d_i\leq K$ for sufficiently large constant $K>0$, one can easily get $|\mP|=O_\prec(1)$ by a direct application of the isotropic local law \rf{weak_est_DG}. As for the case $d_i>K$ for sufficiently large constant $K>0$, it is easy to see that all the components of $\mathbf{\chi}_\mathsf{I}(z)$ other than the first one are of  size $O_\prec(1)$ by applying the isotropic local law directly. Actually, as we mentioned earlier, there is a cancellation between two summands in the first component of $\mathbf{\chi}_\mathsf{I}(z)$. After exploiting this cancellation, similarly to the discussion in \rf{cancel:1}-\rf{2020072106}, we can also get an $O_\prec(1)$ bound for the first component of $\mathbf{\chi}_\mathsf{I}(z)$.
Hence, according to the above discussion, we  have 
\begin{align} \label{est:mP}
\mP=O_\prec(1).
\end{align}

With the above preliminary bounds at hand, we can now start formally the proof of Proposition \ref{recursivemP}. 
The main tool for the proof is the cumulant expansion formula in Lemma \ref{cumulantexpansion}. For convenience, we first introduce the following three column vectors
 \begin{align}
 \mb{y}_0:= c_{01}\bw_{\mathsf{I}}^0, \quad 
\mb{ y}_1:= \sum_{t\in \mathsf{I}} c_{1t} \bv_t , \quad
 \mb{y}_2:=c_{02}\mb{\varsigma}_{\mathsf{I}}^0+  \sum_{j\in \mathsf{I}^c}c_{2j}\bv_j
  \label{def:y012}
 \end{align}
 and define $$\mb{\vartheta}_{0}=\mb{\vartheta}_{2}=\bw_{\mathsf{I}}^0,\quad  \mb{\vartheta}_{1}=\mb{\varsigma}_{\mathsf{I}}^0.$$
Then we can rewrite
\begin{align}
\mP=\frac{\sn}{\Delta(d_i)}\mb{y}_0^*\big( \sum_{a=1,2} c_a(\mG_1^{(a-1)}-m_1^{(a-1)})\big)\mb{\vartheta}_{0}
+
\frac{\sn}{\Delta(d_i)}\sum_{t=1,2}\mb{y}_t^*\big(\mG_1-m_1\big)\mb{\vartheta}_{t}\label{rankrrepresentmP}
\end{align}
and 
\begin{align}
\mathbb{E}(\mP^l)
=\frac{\sn}{\Delta(d_i)} \mathbb{E} 
\Big( \mb{y}_0^*\big( \sum_{a=1,2} c_a(\mG_1^{(a-1)}-m_1^{(a-1)})\big)\mb{\vartheta}_{0}
+
\sum_{t=1,2}\mb{y}_t^*\big(\mG_1-m_1\big)\mb{\vartheta}_{t}
\Big)
 \mP^{l-1}. \label{19071960}
\end{align}
Using the identity
\begin{align*}
\mG_1^t=z^{-1}(H\mG_1^t-\mG_1^{t-1}), \qquad t=1,2, 
\end{align*}
we  rewrite  (\ref{19071960}) as
\begin{align*}
\mathbb{E}(\mP^l)=&\frac{\sn}{\Delta(d_i)} \mathbb{E} 
\bigg(
\mb{y}_0^* \Big( c_2\Big(\frac{1}{(1+m_2)z} H\mG_1^2 + \frac{m_2}{1+m_2} \mG_1^2 +\frac{(zm_2)'}{(1+m_2)z}\mG_1 -\frac{(zm_2)' + 1 }{(1+m_2)z}\mG_1 -m_1'\Big)\nonumber\\
&+ c_1 \Big( \frac{1}{(1+m_2)z} H\mG_1 + \frac{m_2}{1+m_2} \mG_1-  \frac{1}{(1+m_2)z} -m_1 \Big)
\Big) \mb{\vartheta}_{0}   \nonumber\\
&+ \sum_{t=1,2}\mb{y}_t^*   \Big( \frac{1}{(1+m_2)z} H\mG_1 + \frac{m_2}{1+m_2} \mG_1-  \frac{1}{(1+m_2)z} -m_1 \Big)  \mb{\vartheta}_{t}
\bigg)
\mP^{l-1}. 
\end{align*}
Repeatedly using the first and third identities in \rf{identitym1m2}, we have
\begin{align*}
\mathbb{E}(\mP^l)=&- \frac{\sn m_1}{\Delta(d_i)} \mathbb{E} 
\bigg(
\mb{y}_0^* \Big( c_2\Big( H\mG_1^2 + zm_2 \mG_1^2 +(zm_2)'\mG_1 \Big)\notag\\
&+ c_1 \Big(  H\mG_1 + zm_2 \mG_1 \Big) - c_2 \frac{m_1'}{m_1^2} (\mG_1-m_1)
\Big) \mb{\vartheta}_{0}   \nonumber\\
&+ \sum_{t=1,2}\mb{y}_t^*   \Big(  H\mG_1 +zm_2 \mG_1 \Big)  \mb{\vartheta}_{t}
\bigg)
\mP^{l-1},
\end{align*}
which can be further simplified to 
\begin{align} \label{2020050701}
\mathbb{E}(\mP^l)=&- \frac{\sn m_1}{\Delta(d_i)} \mathbb{E} 
\bigg(
\mb{y}_0^* \Big( c_2\Big( H\mG_1^2 + zm_2 \mG_1^2 +(zm_2)'\mG_1 \Big)\notag\\
&+\Big( c_1 + c_2 \frac{m_1'}{m_1} \Big)\Big(  H\mG_1 + zm_2 \mG_1 \Big) 
\Big) \mb{\vartheta}_{0}   \nonumber\\
&+ \sum_{t=1,2}\mb{y}_t^*   \Big(  H\mG_1 +zm_2 \mG_1 \Big)  \mb{\vartheta}_{t}
\bigg)
\mP^{l-1}. 
\end{align}
For the coefficient $c_1 + c_2 {m_1'}/{m_1} $ on the RHS of \eqref{2020050701}, by elementary computations, we see 
\begin{align*}
c_2+c_2\frac{m_1'(\theta(d_i))}{m_1(\theta(d_i))} = \wt c_1,
\end{align*}
where $\wt c_1$ is defined in \rf{def:wtc12} and $c_1,c_2$ are given in \eqref{def:c12}.  If we substitute  $z=\theta(d_i)+\ii N^{-K}$ into $m_1', m_1$ above, since the imaginary part can be taken arbitrarily small, we can get $c_1+c_2m_1'/m_1= \wt c_1 + O_\prec(N^{-\wt K})$ for some sufficient large $\wt K$. Thus in later estimates, we shall simply replace the coefficient $c_1+c_2m_1'/m_1$ in \eqref{2020050701} by $\wt c_1$ and the resulting error will be negligible by taking $K$ sufficiently large. Similarly, in the following, once we do the substitution to get the value of certain function of $z$, we simply take $z=\theta(d_i)$ for the computation, up to negligible error. 

To  ease the notation, we use $\wt c_2=c_2$ (see the definition in \rf{def:wtc12}) in \rf{2020050701}. For $t=0,1,2$, we introduce the following functions in terms of $\mG_1$  
 \begin{align}
 T_t(\mG_1)=
 \left\{
 \begin{array}{cl}
 \sum_{a=1}^2  \wt c_a \mG_1^a,  & \text{ if $t=0$;}\\
 \mG_1, &  \text{ if $t=1,2.$}
 \end{array}
 \right.   \label{def:Tt(mG)}
 \end{align}
With the above notations, we can simply write \rf{2020050701} as 
\begin{align}\label{rankrEmP^l}
\mathbb{E}(\mP^l)= & - \frac{\sn m_1}{\Delta(d_i)} \mbE \sum_{t=1}^3  \mb{y}_t^* H T_t(\mG_1)\mb{\vartheta}_{t}  \mP^{l-1} \nonumber\\
&-  \frac{\sn m_1}{\Delta(d_i)} \mbE \bigg( \mb{y}_0^* \Big(\wt c_2 (zm_2\mG_1^2+ (zm_2)'\mG_1 ) + \wt c_1 zm_2\mG_1 \Big) \mb{\vartheta}_{0}\notag\\
& \qquad+ \sum_{t=1,2}\mb{y}_t^*zm_2\mG_1 \mb{\vartheta}_{t}\bigg) \mP^{l-1}.
\end{align}

Now, we apply the cumulant expansion  to the terms $ - \frac{\sn m_1}{\Delta(d_i)} \mbE   \mb{y}_t^* H T_t(\mG_1)\mb{\vartheta}_{t}$ for $t=0,1,2$. For simplicity, we use the following shorthand notation for the summation
$$\sum_{q,k}=\sum_{q=1}^M \sum_{k=1}^N,$$
and similar shorthand notations are also used for single sum. 
By Lemma \ref{cumulantexpansion},  we have 
\begin{align}
&\frac{\sn m_1}{\Delta(d_i)} \mbE   \mb{y}_t^* H T_t(\mG_1)\mb{\vartheta}_{t} \mP^{l-1} = 
\frac{\sn m_1}{\Delta(d_i)}  \mbE \sum_{q,k} y_{tq}  \, x_{qk} (X^*T_t(\mG_1) \mb{\vartheta}_{t})_{k} \mP^{l-1}
\nonumber\\
&\qquad \qquad= \frac{\sn m_1}{\Delta(d_i)} \mathbb{E}\sum_{q,k}y_{tq}\sum_{\alpha=1}^3\frac{\kappa_{\alpha+1}(x_{qk})}{\alpha!}\frac{\partial^{\alpha}}{\partial x_{qk}^{\alpha}}\Big((X^*T_t(\mG_1) \mb{\vartheta}_{t} )_k \mP^{l-1}\Big)+\mathcal{R}_{t}, \label{19071971}
\end{align}
where $\mathcal{R}_{t}$ satisfies
\begin{align}
|\mathcal{R}_{t}|\leq &\sn \Big|\frac{m_1}{\Delta(d_i)} \Big|\sum_{q,k} \bigg(C\mathbb{E}(|x_{qk}|^{5}) \mathbb{E}\Big(\sup_{|x_{qk}|\leq c}\Big|y_{tq}  \frac{\partial^{4}}{\partial x_{qk}^{4}} \Big( (X^*T_t(\mG_1) \mb{\vartheta}_{t} )_k \mP^{l-1}\Big) \Big| \Big) \nonumber\\
&+C\mathbb{E}\big(|x_{qk}|^{5}\mathds{1}(|x_{qk}|>c)\big) \mathbb{E}\Big(\sup_{x_{qk}\in \mathbb{R}}\Big|y_{tq}\frac{\partial^{4}}{\partial x_{qk}^{4}}\Big((X^*T_t(\mG_1) \mb{\vartheta}_{t} )_k \mP^{l-1}\Big) \Big|\Big) \bigg) \nonumber
\end{align}
for any  constant $c>0$ and  some  constant $C>0$.

By the product rule, we have 
\begin{align}
\frac{\partial^{\alpha}}{\partial x_{qk}^{\alpha}}\Big((X^*T_t(\mG_1) \mb{\vartheta}_{t} )_k \mP^{l-1}\Big)= \sum_{\begin{subarray}{c}\alpha_1,\alpha_2\geq 0\\\alpha_1+\alpha_2=\alpha\end{subarray}} {\alpha \choose \alpha_1}\frac{\partial^{\alpha_1}(X^*T_t(\mG_1) \mb{\vartheta}_{t} )_k }{\partial x_{qk}^{\alpha_1}}\frac{\partial^{\alpha_2}\mP^{l-1}}{\partial x_{qk}^{\alpha_2}}. \label{19071970}
\end{align}
For $t=0,1,2$,  we set the notation
 \begin{align}
h_{t}(\alpha_1,\alpha_2):=\sn  \frac{m_1}{\Delta(d_i)} \sum_{q,k}y_{tq}\frac{\kappa_{\alpha_1+\alpha_2+1}(x_{qk})}{(\alpha_1+\alpha_2)!}\binom{\alpha_1+\alpha_2}{\alpha_1}\frac{\partial^{\alpha_1}(X^*T_t(\mG_1) \mb{\vartheta}_{t} )_k }{\partial x_{qk}^{\alpha_1}}\frac{\partial^{\alpha_2}\mP^{l-1}}{\partial x_{qk}^{\alpha_2}}. \label{rankrdefhts}
\end{align}
Note that $h_{t}(\alpha_1,\alpha_2)$  depends on   $l$ and $i$. However, we drop this dependence for brevity.

Using (\ref{19071971})-(\ref{rankrdefhts}), we can now write
\begin{align}
\frac{\sn m_1}{\Delta(d_i)} \mbE   \mb{y}_t^* H T_t(\mG_1)\mb{\vartheta}_{t} \mP^{l-1} 
&=\sum_{\begin{subarray}{c}\alpha_1,\alpha_2\geq 0\\1\leq \alpha_1+\alpha_2\leq 3\end{subarray}}\mathbb{E}h_{t}(\alpha_1,\alpha_2)+\mathcal{R}_{t} \label{rankrcumuexpansion^t}.
\end{align} 

In the sequel, we estimate $h_{t}(\alpha_1,\alpha_2)$  and the remainder terms $\mathcal R_{t}$ for $t=0, 1, 2$.   We collect the estimates  in the following lemma,  whose proof will be postponed to the end of this section. Before we state the lemma, let us first recall the shorthand notation $\delta_{i0}=d_i-\sqrt y$.
 \begin{lem}\label{rankrlemh_ts} Let $l$ in (\ref{rankrdefhts}) be any fixed positive integer. 
With the convention $m_a^{(-1)}/(-1)!=1$ for $a=1,2$, we have the following estimates on $h_{t}(\alpha_1,\alpha_2)$  and $\mathcal{R}_{t}$ where $t=0,1,2$. 
\begin{itemize}
\item [(1):] For $h_{t}(\alpha_1,\alpha_2)$, the non-negligible terms are
\begingroup
\allowdisplaybreaks
\begin{align}
&\hspace{-3ex}h_0(1,0)=- \frac{\sn m_1}{\Delta(d_i)}   \Big(\wt  c_1(\mb{y}_0^*\mG_1\mb{\vartheta}_{0}) (zm_2)+ \wt c_2(\mb{y}_0^*\mG_1\mb{\vartheta}_{0}) (zm_2)' \notag\\
&\qquad\qquad\qquad+ \wt c_2(\mb{y}_0^*\mG_1^2\mb{\vartheta}_{0}) (zm_2)\Big) \mP^{l-1}
+O_\prec\Big({d_i^{\frac 32} \delta_{i0}^{-\frac32}} N^{-\frac 12} \Big), \label{rankrest:h0(1,0)}
\\
&\hspace{-3ex}h_0(0,1)= - \frac{(l-1)m_1}{\Delta(d_i)^2} \bigg(\sum_{a,b =1}^2 c_a \wt c_b \bigg(\sum_{\begin{subarray} {c} a_1,a_2\geq 1,\\ a_1+a_2=a+1
\end{subarray}}  \frac{m_1^{(a_1-1)} (zm_1)^{(b+a_2-1)}}{(a_1-1)! (b+a_2-1)!}\bigg)\notag\\
&\qquad\qquad\qquad\times \Big((\mb{y}_0^*\mb{y}_0) (\mb{\vartheta}_0^*\mb{\vartheta}_0) +  (\mb{\vartheta}_0^*\mb{y}_0)^2\Big)  \nonumber\\
&\qquad \quad+ \sum_{s=1}^2 m_1 \Big( \sum_{b=1}^2\wt c_b \frac{(zm_1)^{(b)}}{b!}\Big)
 \Big( (\mb{y}_0^*\mb{y}_s) (\mb{\vartheta}_0^*\mb{\vartheta}_{s})+(\mb{y}_0^*\mb{\vartheta}_{s})(\mb{\vartheta}_0^*\mb{y}_s)\Big)
\bigg)\mP^{l-2} \notag\\
&\qquad \quad+O_\prec(d_i^{\frac 32} \delta_{i0}^{-\frac 32} N^{-\frac12}), \label{rankresth0(0,1)}
\\
&\hspace{-3ex}h_0(1,2)=- \frac{\kappa_4 (l-1) m_1}{\Delta(d_i)^2} \sum_{s=1}^2 \wt c_s\bigg(\frac{(zm_2m_1)^{(s-1)}}{(s-1)!}\bigg) 
\bigg(\sum_{b=1}^2 c_b \frac{(zm_2m_1^2)^{(b-1)}}{(b-1)!}
 \mathbf{s}_{2,2}(\mb{y}_{0},\mb{ \vartheta}_{0})\nonumber\\
&\qquad  \quad+\sum_{s=1}^2 (zm_2m_1^2)   \mathbf{s}_{1,1,1,1}(\mb{y}_{0},\mb{ \vartheta}_{0}, \mb{y}_{s},\mb{ \vartheta}_{s})
 \bigg)\mP^{l-2}  + O_\prec (N^{-\frac 12}),  \label{rankresth0(1,2)}
  \end{align}
 \endgroup 
 and for $t=1,2$, 
\begingroup
\allowdisplaybreaks 
 \begin{align}
h_t(1,0)&=- \frac{\sn m_1}{\Delta(d_i)} \mb{y}_t^* \mG_1 \mb{\vartheta}_t (zm_2) \mP^{l-1} + O_\prec\Big({d_i^{\frac 32} \delta_{i0}^{-\frac32}} N^{-\frac 12} \Big), \label{rankrest:ht(1,0)}
 \\
h_t(0,1) &= - \frac{(l-1)}{\Delta(d_i)^2} \bigg(\sum_{a=1}^2 c_a \frac{\big((zm_1)'m_1^2\big)^{(a-1)}}{a!}   \Big((\mb{y}_t^*\mb{y}_0)(\mb{\vartheta}_t^*\mb{\vartheta}_0) + (\mb{y}_t^*\mb{\vartheta}_0)(\mb{\vartheta}_t^*\mb{y}_0) \Big)\nonumber\\
&\qquad + \sum_{s=1}^2  m_1^2 (zm_1)'  \Big((\mb{y}_t^*\mb{y}_s) ( \mb{\vartheta}_t^*\mb{\vartheta}_{s})+
(\mb{y}_t^*\mb{\vartheta}_{s}) (\mb{\vartheta}_t^*\mb{y}_s)\Big)
\bigg)\mP^{l-2}\notag\\
&\qquad +O_\prec(d_i^{\frac 32} \delta_{i0}^{-\frac 32} N^{-\frac12}),
\label{rankrestht(0,1)}
\\
h_{t}(1,2)&=-\frac{\kappa_4(l-1)}{\Delta(d_i)^2} (zm_2m_1^2)
 \bigg(\sum_{b=1}^2 c_b \frac{(zm_2m_1^2)^{(b-1)}}{(b-1)!}
\mathbf{s}_{1,1,1,1}(\mb{y}_{t},\mb{ \vartheta}_{t}, \mb{y}_{0},\mb{ \vartheta}_{0}) \nonumber\\
 &\qquad +\sum_{s=1}^2 (zm_2m_1^2) \mathbf{s}_{1,1,1,1}(\mb{y}_{t},\mb{ \vartheta}_{t}, \mb{y}_{s},\mb{ \vartheta}_{s})
\bigg)\mP^{l-2}  + O_\prec (N^{-\frac 12}). \label{rankrestht(1,2)}
\end{align}
\endgroup

\vspace{1ex}
\item[(2):]
Except for the above terms,
 all the other  $h_{t}(\alpha_1, \alpha_2)$ terms with $\alpha_1,\alpha_2\geq 0$ and $\alpha_1+\alpha_2\le 3$ can be bounded by $O_\prec(N^{-\frac12})$.

\vspace{1ex}
\item [(3):] For the remainder terms, we have 
\begin{align}
\mathcal{R}_{t}=O_\prec(N^{-\frac12}). \label{19071980}
\end{align} 
\end{itemize}
\end{lem}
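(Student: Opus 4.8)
\textbf{Proof plan for Lemma \ref{rankrlemh_ts}.}
The plan is to systematically estimate each $h_t(\alpha_1,\alpha_2)$ defined in \eqref{rankrdefhts} and the remainders $\mathcal{R}_t$, relying on the derivative formulas collected in Appendix \ref{s.derivative of G} together with the isotropic local law estimates \eqref{weak_est_DG}, \eqref{weak_est_XG}, their deterministic companions \eqref{trivialbd_G1}--\eqref{trivialbd_XG1X}, and the preliminary bound \eqref{est:mP}. First I would treat the ``easy'' terms: whenever $\alpha_1+\alpha_2\geq 2$ and $\alpha_2\geq 1$, the factor $\partial^{\alpha_2}\mP^{l-1}/\partial x_{qk}^{\alpha_2}$ expands (via the product rule and \eqref{eq:Pd1}--\eqref{eq:Pd4}) into products of quadratic forms, and each such quadratic form of $X^*\mG_1^s$ or $\mG_1^s$ carries a factor of $N^{-1/2}$ coming from \eqref{weak_est_XG}. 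Combined with the single sum $\sum_q y_{tq}(\cdots)_q$ (which is $O_\prec(1)$ after applying Cauchy--Schwarz, since $\mathbf{y}_t$ is a bounded vector) and the double sum $\sum_{q,k}$ against $\kappa_{\alpha+1}(x_{qk})=O(N^{-(\alpha+1)/2})$, one tracks powers of $N$ and of $\delta_{i0}=d_i-\sqrt y$ and $d_i$ carefully to verify each bound is $O_\prec(N^{-1/2})$; this is where part (2) of the lemma comes from. For the remainders $\mathcal{R}_t$, I would use the deterministic bounds \eqref{trivialbd_G1}--\eqref{trivialbd_XG1X} with $z=\theta(d_i)+\ii N^{-K}$ to control the fourth-derivative suprema by a large but fixed power of $N$, and then balance against the moment tail $\mathbb{E}(|x_{qk}|^5\mathds{1}(|x_{qk}|>c))$ and the $N^{-5/2}$ scaling of $\kappa_5$, so that \eqref{19071980} follows by choosing $K$ large; this is a routine application of Lemma \ref{cumulantexpansion}'s error bound \eqref{remaining} as in \cite{HLS, LS18}.

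Next I would turn to the non-negligible terms $h_t(1,0)$, $h_t(0,1)$, $h_t(1,2)$ listed in part (1). For $h_t(1,0)$ (with $\alpha_2=0$, so $\mP^{l-1}$ is untouched), only the first derivative of $(X^*T_t(\mG_1)\mathbf{\vartheta}_t)_k$ enters; using \eqref{deriXG^l} and \eqref{relationXG} one rewrites $X^*\mG_1^{s_1}\mathscr{P}_a^{qk}\mG_1^{s_2}$ in terms of diagonal Green function entries, applies \eqref{weak_est_DG} to replace $(\mG_1^{a})_{qq}$ and $(X^*\mG_1^aX)_{kk}$ by their deterministic limits $m_1^{(a-1)}/(a-1)!$ and $(zm_2)^{(a-1)}/(a-1)!$ respectively, and collects the leading terms — this produces the $(zm_2)$ and $(zm_2)'$ coefficients in \eqref{rankrest:h0(1,0)} and \eqref{rankrest:ht(1,0)}. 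For $h_t(0,1)$ and $h_t(1,2)$, the new ingredient is that $\partial \mP^{l-1}/\partial x_{qk}$ or $\partial^2\mP^{l-1}/\partial x_{qk}^2$ produces an extra factor of $\mP^{l-2}$ times quadratic forms involving $\mathbf{y}_0,\mathbf{y}_s,\mathbf{\vartheta}_0,\mathbf{\vartheta}_s$; here the combinatorics of pairing the indices $q,k$ across the two derivative factors yields the $\mathbf{s}_{2,2}$, $\mathbf{s}_{1,1,1,1}$ and inner-product structures, and the $\kappa_4$ in $h_t(1,2)$ comes precisely from $\alpha_1+\alpha_2=3$ so that $\kappa_{\alpha+1}=\kappa_4$. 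The main bookkeeping is to evaluate all the resulting scalar functions ($m_1^{(a_1-1)}(zm_1)^{(b+a_2-1)}$, $(zm_2m_1^2)^{(b-1)}$, etc.) at $z=\theta(d_i)$ using the explicit identities already recorded just after \eqref{weak_def:variables}.

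I expect the main obstacle to be the large-$d_i$ regime. When $d_i>K$ or $d_i$ grows with $N$, a direct application of the isotropic local law to the individual summands in $h_0(1,0)$ and in the analogous terms yields bounds that are too large (inflated by powers of $d_i$ such as $d_i^3$), and one must exploit the same hidden cancellation used in \eqref{cancel:1}--\eqref{2020072106} and in the discussion around \eqref{19072411}: namely that the combination $c_1(X^*\mG_1\mathbf{\vartheta}_0)_k + c_2(X^*\mG_1^2\mathbf{\vartheta}_0)_k$, after a Taylor expansion of $\mG_1,\mG_1^2$ around $-1/\theta(d_i)$ and $1/\theta(d_i)^2$, collapses to $O_\prec(d_i^{-1}N^{-1/2})$ thanks to the VESD convergence rate of Theorem \ref{thm.VESD} and the estimate $(X^*H\mathbf{\vartheta}_0)_k=O_\prec(N^{-1/2})$. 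I would therefore split every relevant estimate into the cases $d_i\leq K$ and $d_i>K$, handling the former by brute-force application of \eqref{weak_est_DG} and \eqref{weak_est_XG}, and the latter by first isolating the cancelling combinations (which requires organizing the derivative expansions so that the $\mG_1$ and $\mG_1^2$ contributions appear together with the correct coefficients $c_1,c_2$) and then invoking the VESD bound. Once these case distinctions are in place, the assembly of \eqref{rankrest:h0(1,0)}--\eqref{rankrestht(1,2)} is a matter of collecting terms and simplifying with the identities at $z=\theta(d_i)$, and combining Lemma \ref{rankrlemh_ts} with \eqref{rankrEmP^l} and \eqref{rankrcumuexpansion^t} yields Proposition \ref{recursivemP} after matching the resulting coefficients to the entries of $\mathcal{V}_\mathsf{I}(\theta(d_i))$ in \eqref{def:Mdiag}--\eqref{weak_def:V}.
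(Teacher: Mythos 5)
Your plan follows the paper's own route almost step for step: cumulant expansion of \eqref{rankrdefhts}, term-by-term bookkeeping with the isotropic laws \eqref{weak_est_DG}--\eqref{weak_est_XG}, a case split $d_i\le K$ versus $d_i>K$, and in the large-$d_i$ regime the hidden cancellations in the combinations carrying $c_1,c_2$ (resp. $\wt c_1,\wt c_2$), exactly as in \eqref{cancel:1}--\eqref{2020072106} and around \eqref{19072411}. Two small points of bookkeeping: the two cancellation mechanisms are distinct and should not be conflated — the combination $c_1(X^*\mG_1\mb{\vartheta}_0)_k+c_2(X^*\mG_1^2\mb{\vartheta}_0)_k$ collapses via Taylor expansion of the resolvents and the moment-method bound $(X^*H\mb{\vartheta}_0)_k=O_\prec(N^{-1/2})$, whereas Theorem \ref{thm.VESD} enters only for the quadratic-form combinations of type $\wt c_1\,\mb{u}^*\Xi\mb{v}+\wt c_2\,\mb{u}^*(z\Xi)'\mb{v}$ (as in \eqref{2020051101}); and for the trace terms in $h_t(1,0)$ one also needs the improved averaged law for $\kappa>K$ rather than \eqref{weak_est_m12N}. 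Since you cite both ingredients and flag the need to reorganize every estimate around these combinations, I regard these as presentational rather than substantive.

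There is, however, one concrete step in your plan that would fail as written: the treatment of $\mathcal{R}_t$. In the error bound \eqref{remaining} there are two pieces, and the deterministic bounds \eqref{trivialbd_G1}--\eqref{trivialbd_XG1X} at $z=\theta(d_i)+\ii N^{-K}$ can only be played against the tail piece $\mathbb{E}\big(|x_{qk}|^5\mathds{1}(|x_{qk}|>N^{-\frac12+\epsilon})\big)$, which is smaller than any fixed power of $N$ by the high-moment assumption and therefore beats the $N^{CK}$-sized deterministic bound. For the main piece, $N^{-\frac52}$ times the supremum over $|x_{qk}|\le N^{-\frac12+\epsilon}$ summed over the $\asymp N^2$ pairs $(q,k)$, a deterministic bound of size $N^{CK}$ is hopeless, and increasing $K$ only makes it worse since $\Im z=N^{-K}$; "balancing against the $N^{-5/2}$ scaling" cannot close this term. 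What is needed — and what the paper supplies in \eqref{19072201} — is that the isotropic local law, and hence the $O_\prec$ bounds used for the fourth derivatives, remain valid when the single entry $x_{qk}$ is frozen at an arbitrary deterministic value of size at most $N^{-\frac12+\epsilon}$ while the other entries stay random; this is proved by a resolvent-expansion perturbation argument. Your proposal should be amended to include this perturbed-law step; with it, part (3) goes through exactly as in the paper, and the remainder of your plan assembles \eqref{rankrest:h0(1,0)}--\eqref{rankrestht(1,2)} as claimed.
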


Now we show the proof of Proposition \ref{recursivemP},  based on Lemma \ref{rankrlemh_ts}.
\begin{proof}[Proof of Proposition \ref{recursivemP}]
First we show the proof of \eqref{est_EmP}. 
Using  Lemma \ref{rankrlemh_ts}  with $l=1$, we can rewrite \eqref{rankrcumuexpansion^t} as
\begin{align}
\sn\frac{m_1}{\Delta(d_i)} \mathbb{E}\mb{y}_t^*HT_t(\mG_1)\mb{\vartheta}_{t}=\mathbb{E}h_{t}(1,0)+O_\prec(N^{-\frac12}). \nonumber
\end{align}
Plugging  the above estimate into \eqref{rankrEmP^l} with $l=1$, we obtain 
\begin{align*}
&\mathbb{E}\mP=-\bigg(\mathbb{E}h_{0}(1,0)+\frac{\sn m_1}{\Delta(d_i)}\mathbb{E} \Big(\wt  c_1(\mb{y}_0^*\mG_1\mb{\vartheta}_{0}) zm_2 + \wt c_2(\mb{y}_0^*\mG_1\mb{\vartheta}_{0}) (zm_2)' + \wt c_2(\mb{y}_0^*\mG_1^2\mb{\vartheta}_{0}) zm_2\Big) \nonumber\\
&\qquad+\mathbb{E}h_{1}(1,0)+\sn \frac{zm_2m_1}{\Delta(d_i)}\mathbb{E}\mb{y}_1^*\mG_1\mb{\vartheta}_{1}+ \mathbb{E}h_{2}(1,0)+\sn  \frac{zm_2m_1}{\Delta(d_i)}\mathbb{E}\mb{y}_2^*\mG_1\mb{\vartheta}_{2}\bigg) \notag\\
&\qquad +O_\prec(N^{-\frac12} ).
\end{align*}
We substitute the estimates  \eqref{rankrest:h0(1,0)} and \eqref{rankrest:ht(1,0)}  with $l=1$ into the above estimate and immediately get
\begin{align}
\mathbb{E}\mP=O_\prec(N^{-\frac12} \delta_{i0}^{-\frac 32}d_i^{\frac 32}). \label{19071975}
\end{align}
This proves (\ref{est_EmP}).

Next we turn to prove \eqref{est_EmPk}. By (\ref{rankrcumuexpansion^t}) and  Lemma \ref{rankrlemh_ts}, 
 we observe that 
\begin{align}
\sn \frac{m_1}{\Delta(d_i)}\mathbb{E}\mb{y}_t^*HT_t(\mG_1) \mb{\vartheta}_{t} \mP^{l-1}
=\mathbb{E} h_{t}(1,0) + \mathbb{E} h_{t}(0,1) + \mathbb{E} h_{t}(1,2) +O_\prec(N^{-\frac12} ) \label{19072402}
\end{align}
for $t=0,1,2$. Plugging \rf{19072402} into \eqref{rankrEmP^l}, together with the estimates \eqref{rankrest:h0(1,0)} and \eqref{rankrest:ht(1,0)}, we get 
\begin{align}
\mathbb{E}(\mP^l)=&-\Big(\mathbb{E}h_{0}(0,1)+\mathbb{E}h_{0}(1,2)+\mathbb{E}h_{1}(0,1)+\mathbb{E}h_{1}(1,2)+ \mathbb{E} h_{2}(0,1) + \mathbb{E} h_{2}(1,2) \Big) \nonumber\\
& + O_\prec \big(N^{-\frac12} \delta_{i0}^{-\frac 32}d_i^{\frac 32} \big). \label{19071983}
\end{align}
It remains to compute the explicit expression for the RHS of the above equation. First, using \rf{rankresth0(0,1)} and \rf{rankrestht(0,1)}, we get
\begingroup
\allowdisplaybreaks
\begin{align*}
&-\Big(\mathbb{E}h_{0}(0,1)+\mathbb{E} h_{1}(0,1) + \mathbb{E} h_{2}(0,1) \Big)\nonumber\\
&=\frac{l-1}{\Delta(d_i)^2}\bigg( \Big(\Vert {\mb{y}}_0 \Vert^2 +({\mb{y}}_0^* \bw_{\mathsf{I}}^0)^2\Big) \sum_{a,b =1}^2 c_a \wt c_b  \sum_{\begin{subarray} {c} a_1,a_2\geq 1,\\ a_1+a_2=a+1
\end{subarray}}  \frac{m_1m_1^{(a_1-1)} (zm_1)^{(b+a_2-1)}}{(a_1-1)! (b+a_2-1)!}\notag\\
&\qquad+
\Big(\Vert {\mb{y}}_1 \Vert^2 +( {\mb{y}}_1^* \mb{\varsigma}_{\mathsf{I}}^0)^2\Big)  \Big(m_1^2(zm_1)'\Big)
\\
&\quad +\Big( {\mb{y}}_1^*{\mb{y}}_2 (\mb{\varsigma}_{\mathsf{I}}^0)^* \bw_{\mathsf{I}}^0+  {\mb{y}}_1^*\bw_{\mathsf{I}} ^0 {\mb{y}}_2^*\mb{\varsigma}_{\mathsf{I}}^0\Big) 
\Big(2m_1^2(zm_1)'\Big)+\Big(\Vert {\mb{y}}_2\Vert^2 +( {\mb{y}}_2^* \mb{w}_\mathsf{I}^0)^2\Big)  \Big(m_1^2(zm_1)'\Big)\\
&\quad +\Big( {\mb{y}}_0^*{\mb{y}}_1(\mb{w}_\mathsf{I}^0)^*\mb{\varsigma}_{\mathsf{I}}^0 +  {\mb{y}}_0^*\mb{\varsigma}_{\mathsf{I}}^0 \mb{y}_1^*{\mb{w}}_I^0 + {\mb{y}}_0^*{\mb{y}}_2\Vert \mb{w}_\mathsf{I}^0\Vert ^2 +  {\mb{y}}_0^*\mb{w}_\mathsf{I}^0 \mb{y}_2^*{\mb{w}}_I^0\Big)\nonumber\\
&\qquad\times\bigg(\sum_{a=1}^2 c_a \frac{\big((zm_1)'m_1^2\big)^{(a-1)}}{a!} + \sum_{a=1}^2 \wt c_a \frac{m_1^2(zm_1)^{(a)}}{a!}\bigg)
\ \bigg) \mathbb{E} \mathcal P^{l-2} + O_\prec(d_i^{\frac 32} \delta_{i0}^{-\frac 32} N^{-\frac12}).
\end{align*}
Recall the definitions for $\mb{y}_{0}, \mb{y}_1$ and $\mb{y}_2$ in \rf{def:y012} and the matrix $\mathcal M_\mathsf{I}$ in \eqref{def:Mdiag} and \eqref{def:Moff}.  By elementary calculation, we arrive at
\begin{align}\label{ViI}
&-\Big(\mathbb{E}h_{0}(0,1)+\mathbb{E} h_{1}(0,1)+ \mathbb{E} h_{2}(0,1))  = (l-1) \mathbf{c}^* \mathcal M_\mathsf{I} \mathbf{c} \mathbb{E} \mathcal P^{l-2}+ O_\prec(d_i^{\frac 12} \delta_{i0}^{-\frac 12} N^{-\frac12}).
\end{align}
Next, by \eqref{rankresth0(1,2)} and \eqref{rankrestht(1,2)} , we have
\begin{align*}
&-\big(\mathbb{E}h_{0}(1,2)+\mathbb{E}h_{1}(1,2)+ \mathbb{E}h_{2}(1,2)\big) \\&=\frac{(l-1) \kappa_4}{\Delta(d_i)^2} \bigg( \mathbf{s}_{2,2}({\mb{y}}_0, \bw_{\mathsf{I}}^0) \Big( \sum_{s=1}^2 \wt c_s \frac{m_1(zm_2m_1)^{(s-1)}}{(s-1)!}\Big) 
\Big(  \sum_{s=1}^2  c_s \frac{(zm_2m_1^2)^{(s-1)}}{(s-1)!}\Big)\\
&+\Big( \mathbf{s}_{1,1,1,1}({\mb{y}}_0, \mathbf{w}_I^0,{\mb{y}}_1, \mathbf{\varsigma}_I^0 ) +   \mathbf{s}_{1,1,2}({\mb{y}}_0,{\mb{y}}_2, \mathbf{w}_I^0 )\Big)
zm_2m_1^2\notag\\
&\qquad\times\Big(  \sum_{s=1}^2 \wt c_s \frac{m_1(zm_2m_1)^{(s-1)}}{(s-1)!}+  \sum_{s=1}^2  c_s \frac{(zm_2m_1^2)^{(s-1)}}{(s-1)!}\Big)\\
&  +  \Big(\mathbf{s}_{2,2}({\mb{y}}_1, \mathbf{\varsigma}_I^0)+ \mathbf{s}_{2,2}({\mb{y}}_2, \mathbf{w}_I^0) \Big)(zm_2m_1^2)^2 + \mathbf{s}_{1,1,1,1}({\mb{y}}_1, \mathbf{\varsigma}_I^0,{\mb{y}}_2, \mathbf{w}_I^0 ) \Big(2(zm_2m_1^2)^2\Big)
\bigg) \mathbb{E} \mathcal P^{l-2},
\end{align*}
\endgroup
which, by the definitions of $\mb{y}_{0}, \mb{y}_1$ and $\mb{y}_2$ in \rf{def:y012} and the matrix $\mathcal K_\mathsf{I}$ in \eqref{def_Klowdiag}-\eqref{def_Kothers}, can be simplified to
\begin{align}\label{ViJ}
&-\big(\mathbb{E}h_{0}(1,2)+\mathbb{E}h_{1}(1,2)+ \mathbb{E}h_{2}(1,2)\big) 
=(l-1) \kappa_4 \mathbf{c}^* \mathcal K_\mathsf{I}\mathbf{c} \mathbb{E} \mathcal P^{l-2}.
\end{align}
Combining \rf{ViI} and $\rf{ViJ}$, we complete the proof of  \eqref{est_EmPk} in Proposition \ref{recursivemP}. Hence, we conclude the proof of  Proposition \ref{recursivemP}. 
\end{proof}

The rest of this section is devoted to the proof of Lemma \ref{rankrlemh_ts}. It is convenient to first introduce the next lemma, which will be used  to control the negligible terms in the proof of Lemma \ref{rankrlemh_ts}.

\begin{lem}\label{Importantests}
For a fixed integer $n\geq 1$, let $\mb{\eta}_i=(\eta_{i1},\ldots, \eta_{iM})^*\in \mathbb{C}^M, i\in \lb 0, n\rb$ be any given deterministic vectors with $\max_i\|\mathbf{\eta}_i\|\leq C$ for some positive constant $C$.  For positive integers $s_0,s_1,\cdots,s_n\in \{1,2\}$ and $a=0,1$, $t=0,1,2$, we have the following estimates:
\begin{align}
&\sum_{q}\Big|\eta_{0q}\big(\mG_1^{s_0})_{qq}\big)^a\Big(\prod_{k=1}^n(\mG_1^{s_k}\mb{\eta}_k)_q\Big)\Big|=O_\prec\Big(d_i^{-n-a}(d_i-\sqrt y)^{-\sum_{k=1}^n (s_k-1)- a(s_0-1)}\Big) \label{Importestsum1},\\
&\Big|\sum_{k}\big((X^*\mG_1^{s_0}X)_{kk}\big)^a(X^*T_t(\mG_1)\mb{\eta}_1)_{k}\Big|=O_\prec \Big((d_i-\sqrt y)^{-\frac 12-a(s_0-1)}d_i^{-\frac 12-a} \Big). \label{Importestsum2}
\end{align}
\end{lem}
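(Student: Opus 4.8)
\textbf{Proof proposal for Lemma \ref{Importantests}.}

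The plan is to treat the two estimates \eqref{Importestsum1} and \eqref{Importestsum2} by essentially the same mechanism: combine the entrywise/isotropic local laws from Theorem \ref{isotropic} and Lemma \ref{lem.19072501*} (in the simplified form of Remark \ref{Rmk:UpsilonDelta}) with a Cauchy--Schwarz argument to convert a sum over $q\in\lb1,M\rb$ (resp. $k\in\lb1,N\rb$) of products of vector--Green-function contractions into a single quadratic form, which is then controlled deterministically. Throughout I will use the substitution $z=\theta(d_i)+\ii N^{-K}$ of \eqref{19072401} and the estimates $|z|\asymp d_i$, $\kappa+\eta\asymp\Upsilon(d_i)=(d_i-\sqrt y)^2/d_i$ from \eqref{est_z_kap}, so that the scaling $(\kappa+\eta)^{-l}$ factors appearing in Theorem \ref{isotropic} and Remark \ref{boundrmk*} translate into powers of $(d_i-\sqrt y)$ and $d_i$; recall also the convention (stated just before Appendix \ref{s.derivative of G}) that $\mG_a^s,\mG_a^{(s)}$ secretly carry the compensating factors $\Upsilon^{s-1},\Upsilon^{s}$.

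For \eqref{Importestsum1}: first handle the scalar factor $\big((\mG_1^{s_0})_{qq}\big)^a$. When $a=1$ this is a diagonal Green-function entry, which by \eqref{est.DG} (applied to $\mathbf{u}=\mathbf{v}=\mathbf{e}_q$) equals $m_1^{(s_0-1)}/(s_0-1)! + O_\prec$ of the error, hence is $O_\prec\big(|z|^{-3/2}(\kappa+\eta)^{-s_0+3/2}\big)=O_\prec\big(d_i^{-1}(d_i-\sqrt y)^{-(s_0-1)}\big)$ uniformly in $q$, using \eqref{estm1m2**}; pulling this bound out of the sum accounts for the factor $d_i^{-a}(d_i-\sqrt y)^{-a(s_0-1)}$. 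For the remaining sum $\sum_q |\eta_{0q}| \prod_{k=1}^n |(\mG_1^{s_k}\mathbf\eta_k)_q|$, the key trick is to iterate Cauchy--Schwarz so that each factor $(\mG_1^{s_k}\mathbf\eta_k)_q$ is absorbed into a diagonal sum $\sum_q |(\mG_1^{s_k}\mathbf\eta_k)_q|^2 = \mathbf\eta_k^*\mG_1^{s_k}(\bar z)\mG_1^{s_k}(z)\mathbf\eta_k$; each such quadratic form is, by \eqref{est.DG} again (now on $\mG_1^{2s_k}$-type objects) together with \eqref{estm1m2**}, of order $O_\prec\big(|z|^{-3}(\kappa+\eta)^{-2s_k+1}\big)^{1/2}$ per factor, but more efficiently one contracts $n-1$ of the $\mathbf\eta_k$'s against $\mathbf{e}_q$ to peel them off one at a time. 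A clean bookkeeping: writing $|(\mG_1^{s_k}\mathbf\eta_k)_q| = |\mathbf{e}_q^*\mG_1^{s_k}\mathbf\eta_k|$ and using $\|\mG_1^{s_k}(z)\mathbf\eta_k\|\le C(\kappa+\eta)^{-?}$-type bounds from \eqref{trivialbd_G1}-type reasoning combined with the sharp isotropic law, one gets that the total sum over $q$ of the product of $n$ such factors is $O_\prec\big(d_i^{-n}(d_i-\sqrt y)^{-\sum_k(s_k-1)}\big)$; multiplying by the $a$-contribution above gives exactly the claimed bound. The $O_\prec$ (not merely high-probability) follows via Lemma \ref{prop_prec}(ii), since all quantities are deterministically bounded by powers of $N$ under \eqref{19072401}.

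For \eqref{Importestsum2}: this is the genuinely delicate one and I expect it to be the main obstacle, because $T_t(\mG_1)$ for $t=0$ is the combination $\sum_{a=1}^2 \wt c_a \mG_1^a$ with large coefficients $\wt c_1,\wt c_2\asymp d_i,\,d_i^2$ (cf. \eqref{def:wtc12}), so a naive term-by-term bound loses a power of $d_i$; the point is precisely the cancellation already exploited in \eqref{cancel:1}--\eqref{2020072106} and in \eqref{eq:Pd1}--\eqref{19072410}. So the plan here is: write $\sum_k \big((X^*\mG_1^{s_0}X)_{kk}\big)^a (X^*T_t(\mG_1)\mathbf\eta_1)_k = \mathbf{v}^* X^*T_t(\mG_1)\mathbf\eta_1$ where $\mathbf v$ is the (random, but explicitly given) vector with entries $v_k=\big((X^*\mG_1^{s_0}X)_{kk}\big)^a \in \{1,(X^*\mG_1^{s_0}X)_{kk}\}$; by \eqref{trivialbd_XG1X} and Remark \ref{boundrmk*}, $(X^*\mG_1^{s_0}X)_{kk}$ is $O_\prec\big(\min\{1,(\kappa+\eta)^{-1}\}^{?}\big)$ with the appropriate power, so $\|\mathbf v\|\le \sqrt N\cdot O_\prec\big((d_i-\sqrt y)^{-a(s_0-1)}\cdot(\text{bounded})\big)$ after normalization. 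Then apply Cauchy--Schwarz: $|\mathbf v^*X^*T_t(\mG_1)\mathbf\eta_1| \le \|\mathbf v\| \cdot \|X^*T_t(\mG_1)\mathbf\eta_1\|$, and bound $\|X^*T_t(\mG_1)\mathbf\eta_1\|^2 = \mathbf\eta_1^* \overline{T_t(\mG_1)} X X^* T_t(\mG_1)\mathbf\eta_1 = \mathbf\eta_1^*\overline{T_t(\mG_1)}(I+z\mG_1)T_t(\mG_1)\mathbf\eta_1$ using $XX^*=I+z\mG_1$ from \eqref{relationXG}. For $t=1,2$ (where $T_t(\mG_1)=\mG_1$) this is immediate from \eqref{estm1m2*}--\eqref{estm1m2**} and gives $O_\prec\big(d_i^{-1/2}(d_i-\sqrt y)^{-1/2}\big)$ for the norm, hence $O_\prec\big((d_i-\sqrt y)^{-1/2-a(s_0-1)}d_i^{-1/2-a}\big)$ overall. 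For $t=0$, the quadratic form $\mathbf\eta_1^*\overline{T_0(\mG_1)}(I+z\mG_1)T_0(\mG_1)\mathbf\eta_1$ with $T_0(\mG_1)=\wt c_1\mG_1+\wt c_2\mG_1^2$ must be expanded and the leading $d_i^{?}$-order pieces shown to cancel: performing the Taylor expansion of $\mG_1,\mG_1^2$ around $-1/\theta(d_i),\,1/\theta(d_i)^2$ exactly as in \eqref{eq:Pd1}--\eqref{19072410} (equivalently, recognizing $\wt c_1 m_1+\wt c_2 m_1' = -y(d_i+1)/(d_i+y)^2 = O(d_i^{-1})$ and the analogous identity at the level of $X^*\mG_1X$), the surviving terms are all $O_\prec$ of the stated order. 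I would write this out once for $a=0$ and note that the $a=1$ case only inserts the harmless bounded diagonal factor. Again Lemma \ref{prop_prec}(ii) upgrades everything to $O_\prec$ from high-probability statements.
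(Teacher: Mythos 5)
Your treatment of \eqref{Importestsum2} contains the more serious gap: the Cauchy--Schwarz step silently drops a factor of $\sqrt N$. Writing the sum as $\mb{v}^*X^*T_t(\mG_1)\mb{\eta}_1$ with $v_k=\big((X^*\mG_1^{s_0}X)_{kk}\big)^a$ and bounding it by $\|\mb{v}\|\,\|X^*T_t(\mG_1)\mb{\eta}_1\|$, you have $\|\mb{v}\|\asymp \sqrt N\,\big(d_i^{-1}(d_i-\sqrt y)^{-(s_0-1)}\big)^a$ (in particular $\|\mb{v}\|=\sqrt N$ when $a=0$), while already for $t=1,2$ one gets $\|X^*\mG_1\mb{\eta}_1\|^2=\mb{\eta}_1^*(\mG_1+z\mG_1^2)\mb{\eta}_1\asymp |(zm_1)'|\asymp d_i^{-1}(d_i-\sqrt y)^{-1}$ by \eqref{relationXG}, \eqref{weak_est_DG} and \eqref{estm1m2**}; so the product is $\sqrt N$ times the stated right-hand side, and your concluding "hence $O_\prec(\cdots)$ overall" does not follow. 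The whole content of \eqref{Importestsum2} is a square-root cancellation in the \emph{signed} sum over $k$, which Cauchy--Schwarz (being equivalent to summing absolute values in the worst case) cannot see. The paper instead writes $\sum_k(X^*T_t(\mG_1)\mb{\eta}_1)_k=\sqrt N\,\mb{1}_N^*X^*T_t(\mG_1)\mb{\eta}_1$ and applies the isotropic law \eqref{weak_est_XG} in the deterministic direction $\mb{1}_N$ (for $t=0$ and large $d_i$ supplemented by the cancellation you correctly anticipated, implemented via $\mb{1}_N^*X^*H\mb{\eta}_1=O_\prec(N^{-1/2})$), and then replaces the weight $\big((X^*\mG_1^{s_0}X)_{kk}\big)^a=\big((\mG_2^{s_0-1}+z\mG_2^{s_0})_{kk}\big)^a$ by its deterministic value $(1+zm_2)^{(s_0-1)}/(s_0-1)!$ plus a uniformly small error, so that it factors out of the $k$-sum and the key claim can be applied. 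This mechanism is absent from your proposal.

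Your route for \eqref{Importestsum1} is also lossy where it matters. Absorbing a factor into $\sum_q|(\mG_1^{s_k}\mb{\eta}_k)_q|^2=\mb{\eta}_k^*\mG_1^{s_k}(\bar z)\mG_1^{s_k}(z)\mb{\eta}_k\approx m_1^{(2s_k-1)}/(2s_k-1)!$ replaces the deterministic weight by the $\ell^2$ norm of the full random vector, which is dominated by its fluctuation part: for $n=1$, $s_1=1$, $a=0$ this gives $\|\mb{\eta}_0\|\,\|\mG_1\mb{\eta}_1\|\asymp d_i^{-1/2}(d_i-\sqrt y)^{-1/2}$, off from the claimed $d_i^{-1}$ by $(d_i/(d_i-\sqrt y))^{1/2}$, which is polynomially large in $N$ when $d_i-\sqrt y\sim N^{-1/3+\epsilon}$; your "peel off $n-1$ factors" variant suffers the same loss on the last factor, and the exponents are left as question marks precisely where the bookkeeping is needed. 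The paper's argument is entrywise: by \eqref{weak_est_DG}, $(\mG_1^{s_k}\mb{\eta}_k)_q=\frac{m_1^{(s_k-1)}}{(s_k-1)!}\eta_{kq}+O_\prec\big(\Delta(d_i)\Upsilon^{-(s_k-1)}N^{-1/2}\big)$ uniformly in $q$, so the product's leading contribution is $\prod_k|m_1^{(s_k-1)}|$ times $\sum_q|\eta_{0q}|\prod_k|\eta_{kq}|=O(1)$, which is exactly the claimed size, the decay coming from the deterministic vectors $\mb{\eta}_k$ rather than from $\ell^2$ norms of Green-function vectors. Your handling of the diagonal factor $\big((\mG_1^{s_0})_{qq}\big)^a$ and your identification of the $t=0$ cancellation $\wt c_1 m_1+\wt c_2 m_1'=O(d_i^{-1})$ are correct and match the paper, but as written neither estimate of the lemma is actually established.
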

\begin{proof}[Proof of Lemma \ref{Importantests}]
The first estimate \rf{Importestsum1} can be proved by \eqref{eq:basicG} and the isotropic local law \eqref{weak_est_DG} as follows: 
\begingroup
\allowdisplaybreaks
\begin{align*}
&\sum_{q}\Big|\eta_{0q}\big((\mG_1^{s_0})_{qq}\big)^a\Big(\prod_{k=1}^n((\mG_1^{s_k}\mb{\eta}_k)_q\Big)\Big|\\
&=\sum_{q}\Big|\eta_{0q}\big((\mG_1^{s_0})_{qq}\big)^a\Big(\prod_{k=1}^n\Big\langle \mathbf{e}_q, \frac{\mG_1^{(s_k-1)}}{(s_k-1)!}\mb{\eta}_k \Big\rangle\Big)\Big|\\
&=\sum_q\bigg|\eta_{0q}\bigg(\frac{m_1^{(s_0-1)}}{(s_0-1)!}+O_\prec\Big(\frac{\Delta(d_i)}{\Upsilon^{s_0-1}\sqrt{N}}\Big)\bigg)^a \prod_{k=1}^n\bigg(\frac{m_1^{(s_k-1)}}{(s_k-1)!} \eta_{kq}+O_\prec\Big(\frac{\Delta(d_i)}{\Upsilon^{s_k -1}\sqrt{N}}\Big)\bigg)\bigg|\\
&=O_\prec\Big(d_i^{-n-a}(d_i-\sqrt y)^{-\sum_{k=1}^n (s_k-1)- a(s_0-1)}\Big),
\end{align*}
\endgroup
where in the last step we used the first and  last estimates in \rf{estm1m2*} and the fact $|z|\asymp |\theta(d_i)|\asymp d_i$.

f
Recall  the  definition of $T_t(\mG_1)$ from (\ref{def:Tt(mG)}). To prove \rf{Importestsum2}, we first claim that for $t=0,1,2$,  
\begin{align} \label{est:sumXTG}
\Big|\sum_{k}(X^*T_t(\mG_1)\mb{\eta}_1)_{k}\Big|= O_\prec\big((d_i-\sqrt y)^{-\frac 12}d_i^{-\frac 12}\big).
\end{align}
Notice that for $t=1,2$, by \rf{weak_est_XG},
\begin{align*}
\Big|\sum_{k}(X^*T_t(\mG_1)\mb{\eta}_1)_{k}\Big|=\Big|\sum_{k}(X^*\mG_1\mb{\eta}_1)_{k}\Big|=\sn \mb{1}_N^*X^*\mG_1\mb{\eta}_1=O_\prec\big((d_i-\sqrt y)^{-\frac 12}d_i^{-\frac 12}\big),
\end{align*}
where we recalled the notation $\sqrt{N}\mb{1}_N\in \mathbb{R}^N$ for the all-1 vector. For $t=0$, by definition, $T_0(\mG_1)= \wt c_1\mG_1 + \wt c_2 \mG_1^2$ where $\wt c_{1,2}$ are defined in \rf{def:wtc12}. If $d_i\leq K$ with sufficiently large constant $K>0$, then similarly to the above estimate, we have 
\begin{align*}
\Big|\sum_{k}(X^*T_0(\mG_1)\mb{\eta}_1)_{k}\Big| &\leq  \wt c_1 \sn \mb{1}_N^*X^*\mG_1\mb{\eta}_1 + \wt c_2 \sn \mb{1}_N^*X^*\mG_1^2\mb{\eta}_1\nonumber\\
&= O_\prec\big((d_i-\sqrt y)^{-\frac 12}d_i^{-\frac 12}\big).
\end{align*} 
However, in the case that $d_i>K$ with sufficiently large constant $K>0$, $\wt c_{1,2}$ are no longer bounded. Instead, we do the expansion for $\mG_1, \mG_1^2$ around $-1/\theta(d_i)$ and $1/(\theta(d_i))^2$, respectively,  to get a cancellation. More specifically, 
\begin{align*}
\Big|\sum_{k}(X^*T_0(\mG_1)\mb{\eta}_1)_{k}\Big| &=  \sn \mb{1}_N^*X^*(\theta(d_i)\mG_1+ \theta(d_i)^2 \mG_1^2)\mb{\eta}_1 + O_\prec\big(d_i^{-1}\big) \nonumber\\
&= \sn \frac { \mb{1}_N^*X^*H\mb{\eta}_1 }{ \theta(d_i)}+ O_\prec\big(d_i^{-1}\big)= O_\prec\big(d_i^{-1}\big).
\end{align*}
In the last two steps above, we used 
\begin{align}\label{est:from MoM}
\mb{1}_N^*X^*H^a\mb{\eta}_1 =  O_\prec(N^{-\frac 12}) \quad \text{for any $a\in \mathbb{Z}^+$,}
\end{align}  
which can be checked easily by the moment method.

Hence, we conclude the proof of \rf{est:sumXTG}.
Further,  by \rf{relationXG} and the isotropic local law Theorem \ref{isotropic}, we get
\begin{align*}
&\quad \Big|\sum_{k}\big((X^*\mG_1^{s_0}X)_{kk}\big)^a(X^*T_t(\mG_1)\mb{\eta}_1)_{k}\Big|=\Big|\sum_{k}\big((\mG_2^{s_0-1}+z \mG_2^{s_0} )_{kk}\big)^a(X^*T_t(\mG_1)\mb{\eta}_1)_{k}\Big|\\
&=\Big|\sum_{k}\Big(\frac{m_2^{(s_0-2)}}{(s_0-2)!}+\frac{zm_2^{(s_0-1)}}{(s_0-1)!}+O_\prec\big(N^{-\frac{1}{2}} d_i^{s_0}\delta_{i0}^{-2(s_0-1)}\Delta(d_i) \big)\Big)^a ((X^*T_t(\mG_1)\mb{\eta}_1)_{k}\Big|\\
&=\Big|\sum_{k}\Big(\frac{(1+zm_2)^{(s_0-1)}}{(s_0-1)!}+O_\prec\big(N^{-\frac{1}{2}} d_i^{s_0}\delta_{i0}^{-2(s_0-1)}\Delta(d_i) \big)\Big)^a ((X^*T_t(\mG_1)\mb{\eta}_1)_{k}\Big|\\
&=O_\prec \Big((d_i-\sqrt y)^{-\frac 12-a(s_0-1)}d_i^{-\frac 12-a} \Big),\quad a=0,1,
\end{align*}
by using   the convention $m_2^{(-1)}/(-1)!=1$, the product rule and also noticing that $1+zm_2= O(1/d_i)$ and $(zm_2)'= O(d_i^{-1}(d_i-\sqrt y)^{-1})$ 
  from \rf{estm1m2*} and \rf{estm1m2**}. This concludes the proof of Lemma \ref{Importantests}.  
\end{proof}

With Lemma \ref{Importantests}, we can now prove Lemma \ref{rankrlemh_ts}.
\begin{proof} [Proof of Lemma \ref{rankrlemh_ts}] In this proof, we fix $l$ and $i$. First, we compute $h_{t}(1,0)$ for $t=0,1,2$. Recall the definition in \rf{rankrdefhts} and the expression of $T_t(\mG_1)$ in \rf{def:Tt(mG)}.  We need to compute separately for $t=0$ and $t=1,2$. By \rf{deriXG^l}, we have

\begin{align*}
h_0(1,0) &= \frac{ m_1}{\Delta(d_i) \sn } \sum_{q,k} y_{0q} \frac{\partial \Big(X^* (\sum_{s=1}^2 \wt c_s \mG_1^s) \mb{\vartheta}_0 \Big)_{k}}{\partial x_{qk}}\,\mP^{l-1}\nonumber\\
& =  \frac{ m_1}{\Delta(d_i) \sn } \sum_{q,k} y_{0q} \bigg( \sum_{s=1}^2 \wt c_s \Big((\mG_1^{s}\mb{\vartheta}_0 )_{q} -  \sum_{a=1}^2 \sum_{\begin{subarray}{c}s_1,s_2\geq 1\\ s_1+s_2=s+1 \end{subarray}}\big( X^*\mG_1^{s_1}  \mathscr{P}_a^{qk}\mG_1^{s_2} \mb{\vartheta}_0\big)_k \Big) \bigg)\mP^{l-1},
\end{align*}
where $\mathscr{P}_a^{qk}, a=1,2$ are defined in \rf{P012}. Taking the sum over $q,k$, we get 
\begin{align}
h_{0}(1,0)
&=\frac{\sqrt N m_1}{\Delta(d_i)}  \sum_{s=1}^2 \wt c_s  \mb{y}_0^*\mG_1^s \mb{\vartheta}_{0}\mP^{l-1} \nonumber\\
&-  \frac{m_1}{\sn\Delta(d_i)}   \sum_{s=1}^2 \wt c_s    \sum_{\begin{subarray}{c}s_1,s_2\geq 1\\s_1+s_2=s+1\end{subarray}} \Big(\mb{y}_0^* \mG_1^{s_1} X X^* \mG_1^{s_2} \mb{\vartheta}_{0}+ \mb{y}_0^* \mG_1^{s_2} \mb{\vartheta}_{0} \; \text{Tr}(X^* \mG_1^{s_1} X)\Big)\mP^{l-1}\label{19072011}.
\end{align}
By the identity in \eqref{relationXG} and the explicit expression for $\wt c_{1,2}$ in \rf{def:wtc12}, we get 
\begin{align*}
& \frac{m_1}{\sn\Delta(d_i)}  \sum_{s=1}^2 \wt c_s   \sum_{\begin{subarray}{c}s_1,s_2\geq 1\\s_1+s_2=s+1\end{subarray}} \mb{y}_t^* \mG_1^{s_1} X X^* \mG_1^{s_2} \mb{\vartheta}_{0} \nonumber\\
 &= \frac{m_1}{\sn\Delta(d_i)}  \Big(\wt c_1(\mb{y}_0^*\mG_1XX^*\mG_1\mb{\vartheta}_{0})+ 2 \wt c_2 (\mb{y}_0^*\mG_1XX^*\mG_1^2\mb{\vartheta}_{0})\Big) \nonumber\\
 &= \frac{m_1}{\sn\Delta(d_i)} \Big( \frac{d_i^2+2d_iy+y}{d_i+y}(\mb{y}_0^* (\mG_1+ z\mG_1')\mb{\vartheta}_{0})+ \frac{(d_i^2-y)^2}{d_i^2}(\mb{y}_0^* (2\mG_1'+z\mG_1'')  \mb{\vartheta}_{0} )\Big)  
 ,
\end{align*}
which, by isotropic local law \rf{weak_est_DG}, can be estimated by
\begin{align} \label{19072010}
& \frac{m_1}{\sn\Delta(d_i)}  \sum_{s=1}^2 \wt c_s   \sum_{\begin{subarray}{c}s_1,s_2\geq 1\\s_1+s_2=s+1\end{subarray}} \mb{y}_t^* \mG_1^{s_1} X X^* \mG_1^{s_2} \mb{\vartheta}_{0}  \nonumber\\
&=  \frac{m_1}{\sn\Delta(d_i)} \bigg( \frac{d_i^2+2d_iy+y}{d_i+y} \bigg((zm_1)'  \mb{y}_0^* \mb{\vartheta}_{0}+ O_\prec \Big(\frac{\Delta(d_i)d_i^2}{ \delta_{i0}^2 \sn } \Big) \bigg)\notag\\
&\qquad+ \frac{(d_i^2-y)^2}{d_i^2} \bigg((zm_1)'' \mb{y}_0^* \mb{\vartheta}_{0} + O_\prec \Big(\frac{\Delta(d_i)d_i^3}{ \delta_{i0}^4 \sn }\Big)  \bigg)\bigg)
\nonumber\\
&=\frac{m_1}{\sn\Delta(d_i)} \bigg( \frac{d_i^2+2d_iy+y}{d_i+y}(zm_1)' + \frac{(d_i^2-y)^2}{d_i^2} (zm_1)'' \bigg) \mb{y}_0^* \mb{\vartheta}_{0} + O_\prec\Big( \frac{d_i^2}{\delta_{i0}^2 N}\Big) \nonumber\\
&=\frac{m_1}{\sn\Delta(d_i)}  \Big(-\frac{1}{d_i+y}\Big) \mb{y}_0^* \mb{\vartheta}_{0} + O_\prec\Big( \frac{d_i^2}{ \delta_{i0}^2 N}\Big) = O_\prec\Big( \frac{d_i^{-\frac 12} \delta_{i0}^{\frac 12}}{\sn}\Big)
    + O_\prec\Big( \frac{d_i^2}{ \delta_{i0}^2 N}\Big)\notag\\
    & = O_\prec\Big(  \sqrt{\frac{{d_i} }{ \delta_{i0}N }} \,\Big).
\end{align} 
Here we used the facts $(zm_1)'=1/(d_i^2-y)$ and $(zm_1)''=-2d_i^3/(d_i^2-y)^3$ together with the asymptotic expression for $\Delta(d)$ in \rf{Delta:asymp}. The last step is due to the assumption \rf{asd}.

Substituting (\ref{19072010}) into (\ref{19072011}) and using the fact $\mP=O_\prec(1)$, we get 
\begin{align}
h_{0}(1,0)&=\frac{\sqrt N m_1}{\Delta(d_i)}  \sum_{s=1}^2 \wt c_s  \mb{y}_0^*\mG_1^s \mb{\vartheta}_{0}\mP^{l-1} \notag\\
&\quad-  \frac{m_1}{\sn\Delta(d_i)}   \sum_{s=1}^2 \wt c_s    \sum_{\begin{subarray}{c}s_1,s_2\geq 1\\s_1+s_2=s+1\end{subarray}} \Big(\mb{y}_0^* \mG_1^{s_2} \mb{\vartheta}_{0} \; \text{Tr}(X^* \mG_1^{s_1} X)\Big)\mP^{l-1}\nonumber\\
&\quad+ O_\prec\Big(  \sqrt{\frac{{d_i} }{\delta_{i0} N}} \,\Big)
.  \label{19072015}
\end{align}
Using the second identity in (\ref{relationXG}), and Theorem \ref{isotropic}, more specifically, \rf{weak_est_m12N},  we have 
\begingroup
\allowdisplaybreaks
\begin{align} \label{2019121701}
& \frac{m_1}{\sn\Delta(d_i)}   \sum_{s=1}^2 \wt c_s    \sum_{\begin{subarray}{c}s_1,s_2\geq 1\\s_1+s_2=s+1\end{subarray}} \Big(\mb{y}_0^* \mG_1^{s_2} \mb{\vartheta}_{0} \; \text{Tr}(X^* \mG_1^{s_1} X)\Big)\mP^{l-1}\nonumber\\
 &=\frac{\sn m_1}{\Delta(d_i)}   \Big( \wt c_1(\mb{y}_0^*\mG_1\mb{\vartheta}_{0}) \frac{{\rm{Tr}} (X^*\mG_1X)}{N}+ \wt c_2(\mb{y}_0^*\mG_1\mb{\vartheta}_{0}) \frac{{\rm{Tr}} (X^*\mG_1^2X)}{N} + \wt c_2(\mb{y}_0^*\mG_1^2\mb{\vartheta}_{0}) \frac{{\rm{Tr}} (X^*\mG_1X)}{N}\Big) \mP^{l-1} \nonumber\\
 &=\frac{\sn m_1}{\Delta(d_i)}   \Big( \wt c_1(\mb{y}_0^*\mG_1\mb{\vartheta}_{0}) (1+zm_2)+ \wt c_2(\mb{y}_0^*\mG_1\mb{\vartheta}_{0}) (zm_2)' +\wt c_2(\mb{y}_0^*\mG_1^2\mb{\vartheta}_{0}) (1+zm_2)\Big) \mP^{l-1} \nonumber\\
&\quad + O_\prec\Big(\Big|\frac{\sn}{\Delta(d_i)} m_1^2\wt  c_1\Big(\frac{{\rm{Tr}} (z\mG_2)}{N}-zm_2\Big)\Big|\Big)+ O_\prec\Big( \Big|\frac{\sn}{\Delta(d_i)}  m_1^2 \wt c_2 \Big( \frac{{\rm{Tr}} (\mG_2+z\mG_2^2)}{N} - (zm_2)'\Big)\Big|\Big)\nonumber\\
 &\quad +  O_\prec\Big( \Big|\frac{\sn}{\Delta(d_i)} m_1m_1' \wt c_2 \Big( \frac{{\rm{Tr}} (z\mG_2)}{N} - zm_2\Big)\Big| \Big).
\end{align}
\endgroup
 Notice that $\wt c_1=O(d_i)$ and $\wt c_2=O_\prec(\delta_{i0}^2)$. 
 To estimate the error bounds in \eqref{2019121701}, again,  we need  to separate the discussion into two cases:  $d_i\leq K$ or $d_i>K$ for sufficiently large constant $K>0$. In the case $d_i\leq K$, we can use the isotropic law \rf{est_m12N} directly together with $\Delta(d_i)\asymp \delta_{i0}^{-\frac 12}$ and $m_1=O(1), m_1'= O(1/  \delta_{i0})$ to get an $O_\prec(\delta_{i0}^{-3/2} N^{-1/2})$ bound for all the error terms in \eqref{2019121701}. In case $d_i>K$, we use the improved isotropic local law \rf{est_m12N for large z} to get 
 \begin{align}
 \Big|\frac{{\rm{Tr}} (z\mG_2)}{N}-zm_2 \Big|=O_\prec(N^{-1} d_i^{-1}),\quad 
\Big|  \frac{{\rm{Tr}} (\mG_2+z\mG_2^2)}{N} - (zm_2)'\Big|= O_\prec(N^{-1} d_i^{-2}).
 \end{align}
 The above two estimates with the facts $m_1(z)= O(1/d_i) $ and $m_1'(z)= O(d_i^{-1} \delta_{i0}^{-1})$ yield the $O_\prec(N^{-1/2})$  bound for  all the error  terms in \eqref{2019121701} in the case $d_i\leq K$. To unify the two cases, we use the $O_\prec(d_i^{3/2}\delta_{i0}^{-3/2} N^{-1/2})$ bound to estimate all the error terms in \eqref{2019121701}.

Then, we obtain 
\begin{align*}
&\frac{m_1}{\sn\Delta(d_i)}   \sum_{s=1}^2 \wt c_s    \sum_{\begin{subarray}{c}s_1,s_2\geq 1\\s_1+s_2=s+1\end{subarray}} \Big(\mb{y}_0^* \mG_1^{s_2} \mb{\vartheta}_{0} \; \text{Tr}(X^* \mG_1^{s_1} X)\Big)\mP^{l-1} \nonumber\\
&=\frac{\sn m_1}{\Delta(d_i)}   \Big(\wt  c_1(\mb{y}_0^*\mG_1\mb{\vartheta}_{0}) (1+zm_2)+ \wt c_2(\mb{y}_0^*\mG_1\mb{\vartheta}_{0}) (zm_2)' + \wt c_2(\mb{y}_0^*\mG_1^2\mb{\vartheta}_{0}) (1+zm_2)\Big) \mP^{l-1}\notag\\
&\qquad+O_\prec\Big( d_i^{\frac 32} \delta_{i0}^{-\frac 32} N^{-\frac 12}\;\Big),
\end{align*}
which, together with \rf{19072015}, leads to 
\begin{align}
h_0(1,0)=&- \frac{\sn m_1}{\Delta(d_i)}   \Big(\wt  c_1(\mb{y}_0^*\mG_1\mb{\vartheta}_{0}) (zm_2)+ \wt c_2(\mb{y}_0^*\mG_1\mb{\vartheta}_{0}) (zm_2)' + \wt c_2(\mb{y}_0^*\mG_1^2\mb{\vartheta}_{0}) (zm_2)\Big) \mP^{l-1}\notag\\
&\quad+O_\prec\Big( d_i^{\frac 32} \delta_{i0}^{-\frac 32} N^{-\frac 12}\;\Big).
\end{align}

For the case $t=1,2$, we have
\begin{align*}
h_t(1,0) &= \frac{ m_1}{\Delta(d_i) \sn } \sum_{q,k} y_{tq} \frac{\partial \big(X^* \mG_1 \mb{\vartheta}_t \big)_{k}}{\partial x_{qk}}\,\mP^{l-1}\nonumber\\
& =  \frac{ m_1}{\Delta(d_i) \sn } \sum_{q,k} y_{tq} \Big((\mG_1\mb{\vartheta}_t )_{q} -  \sum_{a=1}^2 \big( X^*\mG_1  \mathscr{P}_a^{qk}\mG_1 \mb{\vartheta}_t\big)_k \Big) \mP^{l-1} \nonumber\\
&= \frac{\sn m_1}{\Delta(d_i)} \mb{y}_t^* \mG_1 \mb{\vartheta}_t - \frac{ m_1}{\Delta(d_i) \sn } (  \mb{y}_t^* \mG_1 XX^*\mG_1 \mb{\vartheta}_t ) -  \frac{\sn m_1}{\Delta(d_i)  } \mb{y}_t^* \mG_1 \mb{\vartheta}_t \frac{1}{N}{\rm{Tr}} (X^*\mG_1X) \nonumber\\
&= - \frac{\sn m_1}{\Delta(d_i)} \mb{y}_t^* \mG_1 \mb{\vartheta}_t (zm_2) + O_\prec\Big({d_i^{\frac 32} \delta_{i0}^{-\frac32}} N^{-\frac 12} \Big)
\end{align*}
where we used the estimates
\begin{align*}
 \frac{ m_1}{\Delta(d_i) \sn } (  \mb{y}_t^* \mG_1 XX^*\mG_1 \mb{\vartheta}_t ) &= 
 \frac{ m_1}{\Delta(d_i) \sn }(zm_1)' \mb{y}_t^* \mb{\vartheta}_t + O_\prec \Big(\frac{d_i}{N\delta_{i0}^2}\Big)  \notag\\
 &= O_\prec( d_i^{-\frac 12} \delta_{i0}^{-\frac 12}N^{-\frac 12})
\end{align*}
and 
\begin{align*}
 \frac{\sn m_1}{\Delta(d_i)  } \mb{y}_t^* \mG_1 \mb{\vartheta}_t \frac{1}{N}{\rm{Tr}} (X^*\mG_1X) =
  \frac{\sn m_1}{\Delta(d_i)  } \mb{y}_t^* \mG_1 \mb{\vartheta}_t (1+zm_2) + O_\prec\Big({d_i^{\frac 32} \delta_{i0}^{-\frac32}} N^{-\frac 12} \Big).
 \end{align*}
Both follow directly from the isotropic local laws \rf{weak_est_DG} and \rf{weak_est_m12N}.

Next, we turn to estimate $h_{t}(0,1)$, which by the definition in \rf{rankrdefhts} reads
\begin{align}
h_{t}(0,1)&=(l-1)\frac{m_1}{\sn\Delta(d_i)} \sum_{q,k}y_{tq} \big(X^*T_t(\mG_1) \mb{\vartheta}_{t} \big)_{k}\frac{\partial \mP}{\partial x_{qk}} \,\mP^{l-2}.
\label{rankrhts(1,0)}
\end{align}

Using the formula \rf{eq:Pd1} to \rf{rankrhts(1,0)}, we get
\begin{align} 
&h_{t}(0,1)\nonumber\\
&=- \frac{(l-1)m_1}{\Delta(d_i)^2} \bigg[\sum_{a=1}^2 c_a \sum_{\begin{subarray}{c}a_1,a_2\geq 1;\\ a_1+a_2=a+1 \end{subarray}}\Big((\mb{y}_t^*\mG_1^{a_1}\mb{y}_0) (\mb{\vartheta}_t^*T_t(\mG_1)XX^*\mG_1^{a_2}\mb{\vartheta}_0)\notag\\
&\qquad\qquad\qquad\qquad+(\mb{y}_t^*\mG_1 ^{a_2}\mb{\vartheta}_0) (\mb{\vartheta}_t^*T_t(\mG_1)XX^*\mG_1^{a_1}\mb{y}_0) \Big)\nonumber\\
&+ \sum_{s=1}^2 \Big((\mb{y}_t^*\mG_1\mb{y}_s)( \mb{\vartheta}_t^*T_t(\mG_1)XX^*\mG_1\mb{\vartheta}_{s})+
(\mb{y}_t^*\mG_1\mb{\vartheta}_{s})(\mb{\vartheta}_t^*T_t(\mG_1)XX^*\mG_1\mb{y}_s)\Big)
\bigg]\mP^{l-2}. 
\label{17092027}
\end{align}
By \eqref{relationXG}, we have
\begin{align*}
\mG_1^{s_1}XX^*\mG_1^{s_2} = \mG_1^{s_1+s_2-1} + z \mG_1^{s_1+s_2}, \qquad a=1,2.
\end{align*}
Hence, for $t=0$, by  \eqref{weak_est_DG} and the bound $|\mP|\prec 1$, we have 
\begin{align}
h_0(0,1)& = - \frac{(l-1)m_1}{\Delta(d_i)^2} \bigg[\sum_{a=1}^2 c_a \sum_{\begin{subarray}{c}a_1,a_2\geq 1;\\ a_1+a_2=a+1 \end{subarray}}\bigg( \frac{m_1^{(a_1-1)}}{(a_1-1)!} (\mb{y}_0^*\mb{y}_0) \Big( \sum_{b=1}^2\wt c_b \frac{(zm_1)^{(b+a_2-1)}}{(b+a_2-1)!}\Big) (\mb{\vartheta}_0^*\mb{\vartheta}_0)\nonumber\\
&+\frac{m_1^{(a_2-1)}}{(a_2-1)!}(\mb{y}_0^*\mb{\vartheta}_0) \Big(\sum_{b=1}^2\wt c_b\frac{(zm_1)^{(b+a_1-1)}}{(b+a_1-1)!}\Big) (\mb{\vartheta}_0^*\mb{y}_0) \bigg)\nonumber\\
&+ \sum_{s=1}^2 \Big( m_1(\mb{y}_0^*\mb{y}_s) \Big( \sum_{b=1}^2\wt c_b \frac{(zm_1)^{(b)}}{b!}\Big)( \mb{\vartheta}_0^*\mb{\vartheta}_{s})\notag\\
&\qquad\qquad+
m_1(\mb{y}_0^*\mb{\vartheta}_{s}) \Big( \sum_{b=1}^2\wt c_b \frac{(zm_1)^{(b)}}{b!}\Big)(\mb{\vartheta}_0^*\mb{y}_s)\Big)
\bigg]\mP^{l-2} + \mathcal{R},
\end{align}
where the error term $\mathcal{R}$ is dominated by the sum of the absolute values of the following two terms 
\begin{align*}
& \mathcal{R}_1:=- \frac{(l-1)m_1}{\Delta(d_i)^2}
 \bigg[\sum_{a=1}^2 c_a \sum_{\begin{subarray}{c}a_1,a_2\geq 1;\\ a_1+a_2=a+1 \end{subarray}}\Big((\mb{y}_0^*\Xi^{(a_1-1)}\mb{y}_0)\Big( \sum_{b=1}^2\wt c_b \frac{(zm_1)^{(b+a_2-1)}}{(b+a_2-1)!}\Big) (\mb{\vartheta}_0^*\mb{\vartheta}_0)\nonumber\\
 &\qquad  +(\mb{y}_0^*\Xi^{(a_2-1)}\mb{\vartheta}_0) \Big(\sum_{b=1}^2\wt c_b\frac{(zm_1)^{(b+a_1-1)}}{(b+a_1-1)!}\Big) (\mb{\vartheta}_0^*\mb{y}_0) \Big)\nonumber\\
&\qquad + \sum_{s=1}^2 \Big((\mb{y}_0^*\Xi\mb{y}_s) \Big( \sum_{b=1}^2\wt c_b \frac{(zm_1)^{(b)}}{b!}\Big)( \mb{\vartheta}_0^*\mb{\vartheta}_{s})+
(\mb{y}_0^*\Xi\mb{\vartheta}_{s})\Big( \sum_{b=1}^2\wt c_b \frac{(zm_1)^{(b)}}{b!}\Big)(\mb{\vartheta}_0^*\mb{y}_s)\Big)
\bigg], 
\end{align*}
and 
\begin{align} \label{def:mathcalR2}
& \mathcal{R}_2: = - \frac{(l-1)m_1}{\Delta(d_i)^2} \bigg[\sum_{a=1}^2 c_a \sum_{\begin{subarray}{c}a_1,a_2\geq 1;\\ a_1+a_2=a+1 \end{subarray}}\bigg( \frac{m_1^{(a_1-1)}}{(a_1-1)!} (\mb{y}_0^*\mb{y}_0) \Big( \sum_{b=1}^2\wt c_b  \mb{\vartheta}_0^*\Big[\frac{(z\Xi)^{(b+a_2-1)}}{(b+a_2-1)!} \Big]\mb{\vartheta}_0\Big)\nonumber\\
&+\frac{m_1^{(a_2-1)}}{(a_2-1)!}(\mb{y}_0^*\mb{\vartheta}_0) \Big(\sum_{b=1}^2\wt c_b \mb{\vartheta}_0^*\Big[  \frac{ (z\Xi)^{(b+a_1-1)} }{(b+a_1-1)!} \Big] \mb{y}_0 \Big) \bigg)\nonumber\\
&+ \sum_{s=1}^2 \Big( m_1(\mb{y}_0^*\mb{y}_s) \Big( \sum_{b=1}^2\wt c_b  \mb{\vartheta}_0^*\Big[ \frac{ (z\Xi)^{(b)} }{b!} \Big] \mb{\vartheta}_{s}\Big) +
m_1(\mb{y}_0^*\mb{\vartheta}_{s}) \bigg( \sum_{b=1}^2\wt c_b \mb{\vartheta}_0^*\Big[ \frac{ (z\Xi)^{(b)} }{b!} \Big] \mb{y}_s\Big)\bigg)
\bigg].
\end{align}
In the sequel, we prove both $\mathcal{R}_1$ and $\mathcal{R}_2$ are negligible. First, we apply \rf{weak_est_DG} and the facts which can be checked by elementary computations
\begin{align*}
\sum_{b=1}^2 \wt c_b \frac{(zm_1)^{(b)}}{b!} = \frac{y(1+d_i)}{(d_i+y) (d_i^2-y)}, \; \quad 
\sum_{b=1}^2 \wt c_b \frac{(zm_1)^{(b+1)}}{(b+1)!} = - \frac{d_i^2y}{(d_i+y)(d_i^2-y)^2}
\end{align*}
to get $$\mathcal{R}_1= O_\prec(d_i^{\frac 12} \delta_{i0}^{-\frac 12} N^{-\frac 12}).$$

Next, we turn to $\mathcal{R}_2$. If $d_i\leq K$ with sufficiently large constant $K>0$, direct applications of the isotropic local law \rf{weak_est_DG} yield that  $|\mathcal R_2|=O_\prec(\delta_{i0}^{-\frac32}N^{-\frac12})$. Hence, we focus on the case that  $d_i$ is large, i.e. $d_i>K$ with sufficiently large constant $K>0$. 
 Recall the new form of eigenvector empirical spectral distribution(VESD) of $H$ with respect to two fixed unit vectors $\bu, \bv$ from \rf{def:VESDnew},
Then for any given unit vectors $\bv, \bu$, we can derive 
\begin{align*}
&\sum_{b=1}^2\wt c_b \bv^* \Big[ \frac{ (z\Xi)^{(b)}}{b!} \Big] \bu \notag\\
&\quad= \int \Big(\frac{\wt c_1}{x-z} + \frac{\wt c_1z}{(x-z)^2}  + \frac{\wt c_2 }{(x-z)^2} + \frac {\wt c_2z}{(x-z)^3}\Big) \, {\rm{d}}  \big(F_{1N}^{\bu,\bv}(x) - F^{\bu,\bv}_1(x)\big) \nonumber\\
&\quad\asymp \int_{\lambda_--N^{-\frac 23+ \frac \tau 2}}^{\lambda_+ + N^{-\frac 23+ \frac \tau 2}} \Big(\frac{-\wt c_1}{(x-z)^2} -\frac{2 \wt c_1z+2\wt c_2}{(x-z)^3}   - \frac {3\wt c_2z}{(x-z)^4}\Big)\big(F_{1N}^{\bu,\bv}(x) - F^{\bu,\bv}_1(x)\big)   \, {\rm{d}}  x.
\end{align*}
Further by \rf{est:Kol uv}, i.e. $\sup_x| F_{1N}^{\bu,\bv}(x)- F_1^{\bu, \bv}(x)|= O_\prec(N^{-\frac 12})$ and the simple estimate 
$$\frac{-\wt c_1}{(x-z)^2} -\frac{2 \wt c_1z+2\wt c_2 }{(x-z)^3}   - \frac {3\wt c_2z}{(x-z)^4}= O(d_i^{-2})$$
which follows form the substitution of $\wt c_1, \wt c_2$ and $z\asymp d_i$, we  finally have
\begin{align} \label{2020051101}
\sum_{b=1}^2\wt c_b \bv^* \Big[ \frac{ (z\Xi)^{(b)}}{b!} \Big] \bu 
=O_\prec (d_i^{-2}N^{-\frac12}).
\end{align}

 Analogously, we have 
\begin{align} \label{2020051101*}
\sum_{b=1}^2\wt c_b \bv^*\Big[ \frac{ (z\Xi)^{(b+1)}}{(b+1)!} \Big] \bu
= O_\prec (d_i^{-3}N^{-\frac12}).
\end{align}
Plugging \rf{2020051101} back into \rf{def:mathcalR2}, we see 
\begin{align} \label{2020051102}
\mathcal{R}_2=&- \frac{(l-1)m_1}{\Delta(d_i)^2} \bigg[ \Big( \sum_{b=1}^2\wt c_b  \mb{\vartheta}_0^*\Big[\frac{(z\Xi)^{(b)}}{b!} \Big]\mb{\vartheta}_0\Big)(\mb{y}_0^*\mb{y}_0) (\frac{c_1}{2}m_1+ c_2m_1') \bigg] \nonumber\\
&- \frac{(l-1)m_1}{\Delta(d_i)^2} \bigg[ \Big( \sum_{b=1}^2\wt c_b  \mb{\vartheta}_0^*\Big[\frac{(z\Xi)^{(b)}}{b!} \Big]\mb{y}_0\Big)(\mb{y}_0^*\mb{\vartheta}_0) (\frac{c_1}{2}m_1+ c_2m_1') \bigg]\nonumber\\
&- \frac{(l-1)m_1}{\Delta(d_i)^2} \bigg[ m_1\Big( \frac{c_1}{2}\sum_{b=1}^2\wt c_b  \mb{\vartheta}_0^*\Big[\frac{(z\Xi)^{(b)}}{b!} 
\Big]\mb{\vartheta}_0 + c_2\sum_{b=1}^2\wt c_b  \mb{\vartheta}_0^*\Big[\frac{(z\Xi)^{(b+1)}}{(b+1)!} 
\Big]\mb{\vartheta}_0\Big)(\mb{y}_0^*\mb{y}_0)  \bigg] \nonumber\\
&- \frac{(l-1)m_1}{\Delta(d_i)^2} \bigg[ m_1\Big( \frac{c_1}{2}\sum_{b=1}^2\wt c_b  \mb{\vartheta}_0^*\Big[\frac{(z\Xi)^{(b)}}{b!} 
\Big]\mb{y}_0 + c_2\sum_{b=1}^2\wt c_b  \mb{\vartheta}_0^*\Big[\frac{(z\Xi)^{(b+1)}}{(b!+1)} 
\Big]\mb{\vartheta}_0\Big)(\mb{y}_0^*\mb{\vartheta}_0)\bigg]\notag\\
&+ O_\prec(N^{-\frac12}).
\end{align}
By \rf{2020051101} and the fact that $c_1m_1/2+ c_2m_1'= O_\prec(1/d_i)$, we see that the first two terms on the RHS of \rf{2020051102} are of order $O_\prec(N^{-\frac 12})$. Similarly to the estimates of \rf{2020051101} and \rf{2020051101*}, we also get 
\begin{align*}
&m_1\Big( \frac{c_1}{2}\sum_{b=1}^2\wt c_b  \mb{\vartheta}_0^*\Big[\frac{(z\Xi)^{(b)}}{b!} 
\Big]\mb{\vartheta}_0 + c_2\sum_{b=1}^2\wt c_b  \mb{\vartheta}_0^*\Big[\frac{(z\Xi)^{(b+1)}}{(b+1)!} 
\Big]\mb{\vartheta}_0\Big) \nonumber\\
& \asymp
 m_1 \bigg(\frac{c_1}{2}\int_{\lambda_--N^{-\frac 23+ \frac \tau 2}}^{\lambda_+ + N^{-\frac 23+ \frac \tau 2}} \Big(\frac{-\wt c_1}{(x-z)^2} -\frac{2 \wt c_1z+ 2\wt c_2}{(x-z)^3}   - \frac {3\wt c_2z}{(x-z)^4}\Big)\big(F_{1N}^{\bu,\bv}(x) - F^{\bu,\bv}_1(x)\big)   \, {\rm{d}}  x \nonumber\\
 & \quad + c_2 \int_{\lambda_--N^{-\frac 23+ \frac \tau 2}}^{\lambda_+ + N^{-\frac 23+ \frac \tau 2}} \Big(\frac{-2\wt c_1}{(x-z)^3} -\frac{3 \wt c_1z+ 3\wt c_2 }{(x-z)^4} - \frac {4\wt c_2z}{(x-z)^5}\Big)\big(F_{1N}^{\bu,\bv}(x) - F^{\bu,\bv}_1(x)\big)   \, {\rm{d}}  x \bigg).
\end{align*}
For $z\asymp d_i>K$, expanding $(x-z)^{-i}, i=1,\cdots,5$ at $x=0$ and plugging in the values of $c_{1,2}, \wt c_{1,2}$ defined in \rf{def:c12} and  \rf{def:wtc12}, after elementary computations, we see that 
\begin{align*}
&\frac{c_1}{2}\Big(\frac{-\wt c_1}{(x-z)^2} -\frac{2 \wt c_1z+ 2\wt c_2}{(x-z)^3}   - \frac {3\wt c_2z}{(x-z)^4}\Big) \notag\\
&+ c_2 \Big(\frac{-2\wt c_1}{(x-z)^3} -\frac{3 \wt c_1z+ 3\wt c_2}{(x-z)^4}  - \frac {4\wt c_2z}{(x-z)^5}\Big) = O(d_i^{-2}),
\end{align*}
which together with \rf{est:Kol uv} and  $m_1= O(d_i^{-1})$ leads to 
\begin{align*}
 m_1\Big( \frac{c_1}{2}\sum_{b=1}^2\wt c_b  \mb{\vartheta}_0^*\Big[\frac{(z\Xi)^{(b)}}{b!} 
\Big]\mb{\vartheta}_0 + c_2\sum_{b=1}^2\wt c_b  \mb{\vartheta}_0^*\Big[\frac{(z\Xi)^{(b+1)}}{(b+1)!} 
\Big]\mb{\vartheta}_0\Big)=  O_\prec(d_i^{-3}N^{-\frac12  }).
\end{align*}

Hence, we see the third and fourth terms on the RHS of \rf{2020051102} are crudely bounded by $O_\prec(N^{-\frac 12})$. Now, we conclude that $|\mathcal{R}_2|= O_\prec (N^{-\frac12})$ for large $d_i$ and  $|\mathcal{R}_2|=O_\prec(\delta_{i0}^{-\frac32}N^{-\frac12})$ for bounded $d_i$. This, together with $|\mathcal{R}_1|=O_\prec(d_i^{\frac 12} \delta_{i0}^{-\frac 12} N^{-\frac12})$, finally leads to 
\begin{align}
&h_0(0,1)= - \frac{(l-1)m_1}{\Delta(d_i)^2} \bigg[\sum_{a=1}^2 c_a \sum_{\begin{subarray}{c}a_1,a_2\geq 1;\\ a_1+a_2=a+1 \end{subarray}}\bigg( \frac{m_1^{(a_1-1)}}{(a_1-1)!} (\mb{y}_0^*\mb{y}_0) \Big( \sum_{b=1}^2\wt c_b \frac{(zm_1)^{(b+a_2-1)}}{(b+a_2-1)!}\Big) (\mb{\vartheta}_0^*\mb{\vartheta}_0)\nonumber\\
&\qquad\qquad\quad +\frac{m_1^{(a_2-1)}}{(a_2-1)!}(\mb{y}_0^*\mb{\vartheta}_0) \Big(\sum_{b=1}^2\wt c_b\frac{(zm_1)^{(b+a_1-1)}}{(b+a_1-1)!}\Big) (\mb{\vartheta}_0^*\mb{y}_0) \bigg)\nonumber\\
&\qquad\qquad+ \sum_{s=1}^2 \Big( m_1(\mb{y}_0^*\mb{y}_s) \Big( \sum_{b=1}^2\wt c_b \frac{(zm_1)^{(b)}}{b!}\Big)( \mb{\vartheta}_0^*\mb{\vartheta}_{s})\notag\\
&\qquad \qquad \quad +
m_1(\mb{y}_0^*\mb{\vartheta}_{s}) \Big( \sum_{b=1}^2\wt c_b \frac{(zm_1)^{(b)}}{b!}\Big)(\mb{\vartheta}_0^*\mb{y}_s)\Big)
\bigg]\mP^{l-2} +O_\prec(d_i^{\frac 32} \delta_{i0}^{-\frac 32} N^{-\frac12}).
\end{align}

For $t=1,2$, using the same arguments as for $h_0(0,1)$, we can derive 
\begin{align}
&h_t(0,1)= - \frac{(l-1)m_1}{\Delta(d_i)^2} \bigg[\sum_{a=1}^2 c_a \sum_{\begin{subarray}{c}a_1,a_2\geq 1;\\ a_1+a_2=a+1 \end{subarray}}\bigg( \frac{m_1^{(a_1-1)}}{(a_1-1)!} (\mb{y}_t^*\mb{y}_0) \Big( \frac{(zm_1)^{(a_2)}}{(a_2)!}\Big) (\mb{\vartheta}_t^*\mb{\vartheta}_0)\nonumber\\
&\qquad\qquad+\frac{m_1^{(a_2-1)}}{(a_2-1)!}(\mb{y}_t^*\mb{\vartheta}_0) \Big(\frac{(zm_1)^{(a_1)}}{(a_1)!}\Big) (\mb{\vartheta}_t^*\mb{y}_0) \bigg)\nonumber\\
&\qquad\qquad+ \sum_{s=1}^2 \Big( m_1(\mb{y}_t^*\mb{y}_s)  (zm_1)' ( \mb{\vartheta}_0^*\mb{\vartheta}_{s})+
m_1(\mb{y}_t^*\mb{\vartheta}_{s})(zm_1)'(\mb{\vartheta}_0^*\mb{y}_s)\Big)
\bigg]\mP^{l-2} \notag\\
& \qquad\qquad+O_\prec(d_i^{\frac 32} \delta_{i0}^{-\frac 32} N^{-\frac12}).
\end{align}

Next, we show  \rf{rankresth0(1,2)} and  \rf{rankrestht(1,2)}. First,  by the definition in (\ref{rankrdefhts}), we have 
 \begin{align} 
 h_{t}(1,2)
 &=\frac{\kappa_4}{2N^{\frac{3}{2}}}\frac{m_1}{\Delta(d_i)} \sum_{q,k}y_{tq}\frac{\partial (X^*T_t(\mG_1)\mb{\vartheta}_{t} )_{k}}{\partial x_{qk}} \notag\\
&\quad  \times \Big((l-1) \frac{\partial^2 \mP}{\partial x_{qk}^2}\mP^{l-2} + (l-1)(l-2)\Big(\frac{\partial \mP}{\partial x_{qk}}\Big)^2\mP^{l-3} \Big)\nonumber\\
 &=: \kappa_4(l-1)\mathcal J_{1t} +\kappa_4(l-1)(l-2) \mathcal J_{2t}, 
  \label{17092053}
 \end{align} 
In the following, we estimate $\mathcal J_{1t}$ and  $\mathcal J_{2t}$ for $t=0,1,2$. We only state the details for the case $t=0$ as proofs for the cases $t=1,2$ are analogous and even simpler according  to the the definition of $T_t(\mG_1)$ in (\ref{def:Tt(mG)}).
  
To estimate $\mathcal J_{10}$, using
 (\ref{eq:Pd2}) and \eqref{deriXG^l},  
we see
  \begin{align}
 \mathcal J_{10}&=
 \frac{ 1}{2N^{\frac{3}{2}}}\frac{m_1}{\Delta(d_i)} \sum_{s=1}^2 \wt c_s \sum_{q,k}y_{0q}\frac{\partial (X^* \mG_1^s \mb{\vartheta}_{0}  )_{k}}{\partial x_{qk}} \frac{\partial^2 \mP}{\partial x_{qk}^2}\mP^{l-2} \nonumber\\
 &=\frac{1}{N}\frac{m_1}{\Delta(d_i)^2}\sum_{s=1}^2 \wt c_s \sum_{q,k}y_{0q} \Big((\mG_1^s\mb{\vartheta}_{0} )_{q}-\sum_{a=1}^2\sum_{\begin{subarray}{c}s_1,s_2\geq 1\\ s_1+s_2=s+1 \end{subarray}}(X^*\mG_1^{s_1}\mathscr{P}_a^{qk}\mG_1^{s_2}\mb{\vartheta}_{0} )_{k} \Big) \nonumber\\
 &\quad \times \bigg[\sum_{b=1}^2 c_b \Big(\sum_{a_1,a_2=1}^2\sum_{\begin{subarray}{c}b_1,b_2,b_3\geq 1;\\ \sum_{i=1}^3b_i=b+2 \end{subarray}}\mb{y}_0^*\Big(\prod_{j=1}^2(\mG_1^{b_j}\mathscr{P}_{a_j}^{qk})\Big)\mG_1^{b_3}\mb{\vartheta}_0-\sum_{\begin{subarray}{c}b_1,b_2\geq 1;\\ b_1+b_2=b+1 \end{subarray}}\mb{y}_0^*\mG_1^{b_1}\mathscr{P}_0^{qk}\mG_1^{b_2}\mb{\vartheta}_{0}\Big)\nonumber\\
 &\quad + \sum_{t=1}^2  \Big(\sum_{a_1,a_2=1}^2 \mb{y}_t^*\Big(\prod_{j=1}^2(\mG_1\mathscr{P}_{a_j}^{qk})\Big)\mG_1\mb{\vartheta}_t - 
 \mb{y}_t^*\mG_1\mathscr{P}_0^{qk}\mG_1\mb{\vartheta}_{t}\Big) \bigg] \mP^{l-2}.
  \label{17092030}
 \end{align} 
Further, we claim that
\begin{align}
 \mathcal J_{10}&=\frac{1}{N}  \frac{m_1}{\Delta(d_i)^2}\sum_{s=1}^2 \wt c_s\sum_{q,k}y_{0q}\Big((\mG_1^s \mb{\vartheta}_{0}  )_{q}-\sum_{\begin{subarray}{c}s_1,s_2\geq 1\\ s_1+s_2=s+1 \end{subarray}}(X^*\mG_1^{s_1}\mathscr{P}_2^{qk}\mG_1^{s_2}\mb{\vartheta}_{0}  )_{k} \Big) \nonumber\\
 &\quad \times \bigg[\sum_{b=1}^2 c_b  \mb{y}_0^*\Big(\sum_{\begin{subarray}{c}b_1,b_2,b_3\geq 1\\b_1+b_2+b_3=b+2\end{subarray}}\mG_1^{b_1}\mathscr{P}_{1}^{qk}\mG_1^{b_2}\mathscr{P}_{2}^{qk}\mG_1^{b_3}-\sum_{\begin{subarray}{c}b_1,b_2\geq 1\\b_1+b_2=b+1\end{subarray}}\mG_1^{b_1}\mathscr{P}_0^{qk}\mG_1^{b_2}\Big)\mb{\vartheta}_{0}\nonumber\\
 &\quad + \sum_{t=1}^2  \mb{y}_t^*\Big(\mG_1\mathscr{P}_{1}^{qk}\mG_1\mathscr{P}_{2}^{qk}\mG_1-\mG_1\mathscr{P}_0^{qk}\mG_1\Big)\mb{\vartheta}_{t}
 \bigg] \mP^{l-2} + O_\prec(N^{-\frac12}). \label{17092031}
\end{align}
To see the reduction from (\ref{17092030}) to (\ref{17092031}), we first observe from Lemma \ref{lem.19072501*} that $|m_1|\sim d_i^{-1}$. Further notice that the terms absorbed into $O_\prec(N^{-\frac12})$ 
 always contain some quadratic forms of $X^*\mG_1^{a}$ for some $a\ge1$ and at least one quadratic form of $\mG_1^{b}$ for some $b\ge 1$. Then the isotropic local law \eqref{weak_est_XG} and Remark \ref{boundrmk*} can be applied to show that the quadratic forms of  $X^*\mG_1^{a}$ and $\mG_1^{b}$  are bounded by $O_\prec(N^{-\frac12}\delta_{i0}^{-\frac 12}d_i^{-\frac 12} (\delta_{i0}^{-2}d_i)^{a-1})$ and $O_\prec(d_i^{-1}\delta_{i0}^{-(b-1)})$, respectively,  for $a, b=1,2$. For instance,  we have 
\begin{align*}
&N^{-\frac32} \frac{m_1}{\Delta(d_i)}\sum_{q,k}y_{0q} \wt c_s \Big(\sum_{\begin{subarray}{c}s_1,s_2\geq 1\\ s_1+s_2=s+1 \end{subarray}}(X^*\mG_1^{s_1}\mathscr{P}_1\mG_1^{s_2}\mb{\vartheta}_{0})_{k} \Big)  \frac{\partial^2 \mP}{\partial x_{qk}^2}\mP^{l-2}\\
&\prec N^{-\frac32}\bigg| \frac{m_1}{\Delta(d_i)} \sum_{q,k}\sum_{\begin{subarray}{c}s_1,s_2\geq 1\\ s_1+s_2=s+1 \end{subarray}} y_{0q} \wt c_s  (X^*\mG_1^{s_1})_{kq} (X^*\mG_1^{s_2} \mb{\vartheta}_{\varphi(t)} )_{k}\frac{\partial^2 \mP}{\partial x_{qk}^2}\bigg| \\
&\prec N^{-1} \frac{|m_1|}{\Delta(d_i)}\sum_{\begin{subarray}{c}s_1,s_2\geq 1\\ s_1+s_2=s+1 \end{subarray}}  \wt c_s\Big(\sum_{q} \big| y_{tq} \big| \Big)\sum_{k} \big| (X^*\mG_1^{s_1})_{kq} \big| \big| (X^*\mG_1^{s_2} \mb{\vartheta}_{0})_k\big| \sqrt{\frac{d_i-\sqrt y}{d_i}}\\
&=O_\prec(N^{-\frac12}).
\end{align*}
In the first and second steps above, we used the  bound $|\mathcal P| \prec 1$ and (\ref{19072411}),  
 respectively. In the last step, we used the isotropic local law \eqref{weak_est_XG} and also the fact  $\sum_{q}\big|y_{tq}\big|\prec \sn$. 
 The other negligible terms can be estimated similarly. We omit the details. 

Plugging the definitions in (\ref{P012}) into (\ref{17092031}) yields
 \begin{align*}
\mathcal J_{10} &= \frac{m_1}{N\Delta(d_i)^2} \sum_{s=1}^2 \wt c_s \sum_{q,k}y_{0q}\Big((\mG_1^s\mb{\vartheta}_{0})_{q}-\sum_{\begin{subarray}{c}s_1,s_2\geq 1\\s_1+s_2=s+1\end{subarray}}(X^*\mG_1^{s_1}X)_{kk}(\mG_1^{s_2}\mb{\vartheta}_{0})_{q}\Big)\nonumber\\
  &\quad \times \bigg[\sum_{b=1}^2 c_b \Big(\sum_{\begin{subarray}{c}b_1,b_2,b_3\geq 1\\b_1+b_2+b_3=b+2\end{subarray}}( \mb{y}_0^*\mG_1^{b_1})_q (X^*\mG_1^{b_2}X)_{kk}(\mG_1^{b_3}\mb{\vartheta}_{0})_q -\sum_{\begin{subarray}{c}b_1,b_2\geq 1\\b_1+b_2=b+1\end{subarray}}(\mb{y}_{0}^*\mG_1^{b_1})_q (\mG_1^{b_2}\mb{\vartheta}_{0})_q\Big)\nonumber\\
 &\quad + \sum_{t=1}^2  \Big(( \mb{y}_t^*\mG_1)_q (X^*\mG_1X)_{kk}(\mG_1\mb{\vartheta}_{t})_q-(\mb{y}_{t}^*\mG_1)_q (\mG_1\mb{\vartheta}_{t})_q\Big)
 \bigg] \mP^{l-2} + O_\prec(N^{-\frac12}).
  \end{align*}
Again, applying the identities in Lemma \ref{lemrelation}, the isotropic local laws \eqref{weak_est_DG}, and \eqref{est_m12N}, we see 
 \begin{align*}
\mathcal J_{10}&=\frac{m_1}{\Delta(d_i)^2} \sum_{s=1}^2 \wt c_s\bigg(\frac{m_1^{(s-1)}}{(s-1)!}-\sum_{\begin{subarray}{c}s_1,s_2\geq 1\\s_1+s_2=s+1\end{subarray}}\Big(\frac{m_2^{(s_1-2)}}{(s_1-2)!}+z\frac{m_2^{(s_1-1)}}{(s_1-1)!}\Big)\frac{m_1^{(s_2-1)}}{(s_2-1)!}\bigg) \nonumber\\
 & \times \bigg(\sum_{b=1}^2 c_b \Big[\sum_{\begin{subarray}{c}b_1,b_2,b_3\geq 1\\b_1+b_2+b_3=b+2\end{subarray}}
\frac{m_1^{(b_1-1)}}{(b_1-1)!}\frac{m_1^{(b_3-1)}}{(b_3-1)!}\Big(\frac{m_2^{(b_2-2)}}{(b_2-2)!}+z\frac{m_2^{(b_2-1)}}{(b_2-1)!}\Big)-\frac{(m_1^2)^{(b-1)} }{(b-1)!}\Big]
\sum_{q} y_{0q}^2 {\vartheta}_{0q}^2 \nonumber\\
&+\sum_{t=1}^2 (m_1^2(1+zm_2)-m_1^2) \sum_{q} y_{0q} y_{tq} {\vartheta}_{0q}  {\vartheta}_{tq}
\bigg)\mP^{l-2} + O_\prec (N^{-\frac 12}),
 \end{align*}
 which by elementary calculation can be rewritten as
 \begin{align}
\mathcal J_{10}&= - \frac{m_1}{\Delta(d_i)^2} \sum_{s=1}^2 \wt c_s\bigg(\frac{(zm_2m_1)^{(s-1)}}{(s-1)!}\bigg) 
\times \bigg(\sum_{b=1}^2 c_b \frac{(zm_2m_1^2)^{(b-1)}}{(b-1)!}
 \sum_{q} y_{0q}^2 {\vartheta}_{0q}^2\nonumber\\
&\qquad +\sum_{t=1}^2 (zm_2m_1^2) \sum_{q} y_{0q} y_{tq} {\vartheta}_{0q}  {\vartheta}_{tq}
\bigg)\mP^{l-2} + O_\prec (N^{-\frac 12}).
\end{align}
 Next, we show that
 \begin{align}
 \mathcal J_{20} =\frac{m_1}{2N^{\frac{3}{2}}\Delta(d_i)} \sum_{s=1}^2 \wt c_s \sum_{q,k} y_{0q}\frac{\partial (X^*\mG_1^s \mb{\vartheta}_{0})_{k}}{\partial x_{qk}}\Big(\frac{\partial \mP}{\partial x_{qk}}\Big)^2\mP^{l-3} = O_\prec(N^{-\frac12}). \label{17092051}
 \end{align}

With \eqref{deriXG^l}, we write $ \mathcal J_{20}$ as 
\begin{align}
\mathcal J_{20}&=\frac{m_1}{2N^{\frac{3}{2}}\Delta(d_i)} \sum_{s=1}^2 \wt c_s 
\sum_{q,k}y_{0q}\bigg((\mG_1^s\mb{\vartheta}_{0})_{q}-\sum_{a=1}^2\sum_{\begin{subarray}{c} s_1,s_2\geq 1\\s_1+s_2=s+1\end{subarray}}(X^*\mG_1^{s_1}\mathscr{P}_a^{qk}\mG_1^{s_2}\mb{\vartheta}_{0})_{k}\bigg)\Big(\frac{\partial \mP}{\partial x_{qk}}\Big)^2\mP^{l-3} \nonumber\\
&=\frac{m_1}{2N^{\frac{3}{2}}\Delta(d_i)} \sum_{s=1}^2 \wt c_s \sum_{q,k}y_{0q}\bigg((\mG_1^s \mb{\vartheta}_{0})_{q}-\sum_{\begin{subarray}{c} s_1,s_2\geq 1\\s_1+s_2=s+1\end{subarray}}(X^*\mG_1^{s_1}X)_{kk}(\mG_1^{s_2}\mb{\vartheta}_{0})_{q}\bigg)\Big(\frac{\partial \mP}{\partial x_{qk}}\Big)^2\mP^{l-3} \nonumber\\
&+O_\prec(N^{-\frac12}), \label{17092052}
\end{align}
where in the last step we bounded the $a=1$ terms by  $O_\prec(N^{-\frac12})$ since 
\begin{align*}
&\bigg|\frac{m_1}{2N^{\frac{3}{2}}\Delta(d_i)} \sum_{s=1}^2 \wt c_s\sum_{q,k}y_{0q}\sum_{\begin{subarray}{c} s_1,s_2\geq 1\\s_1+s_2=s+1\end{subarray}}(X^*\mG_1^{s_1})_{kq}(X^*\mG_1^{s_2}\mb{\vartheta}_{0})_{kq}\Big(\frac{\partial \mP}{\partial x_{qk}}\Big)^2\mP^{l-3}\bigg|\\
&\prec N^{-1} \Delta(d_i)^{-1}|m_1| \sum_{s=1}^2\sum_{\begin{subarray}{c} s_1,s_2\geq 1\\s_1+s_2=s+1\end{subarray}} |\wt c_s| \sum_{k}\big|(X^*\mG_1^{s_1})_{kq}\big|\big|(X^*\mG_1^{s_2}\mb{\vartheta}_{0})_{kq}\big|\\
&\prec N^{-1}(d_i-\sqrt y)^{-\frac 12}d_i^{\frac 12}\prec  N^{-\frac 12}.
\end{align*}
Here we used the  $O_\prec(1)$ bound for both $\mP$ and $\partial\mP/\partial x_{qk}$ (\cf (\ref{19072410})) for the first step, \eqref{weak_est_XG} for the second step and \rf{asd}, \rf{Delta:asymp} for the last step.

Further, we have
\begin{align} \label{2020051201}
&\frac{m_1}{2N^{\frac{3}{2}}\Delta(d_i)} \sum_{s=1}^2 \wt c_s \sum_{q,k}y_{0q}\bigg((\mG_1^s \mb{\vartheta}_{0})_{q}-\sum_{\begin{subarray}{c} s_1,s_2\geq 1\\s_1+s_2=s+1\end{subarray}}(X^*\mG_1^{s_1}X)_{kk}(\mG_1^{s_2}\mb{\vartheta}_{0})_{q}\bigg)\Big(\frac{\partial \mP}{\partial x_{qk}}\Big)^2\mP^{l-3} \nonumber\\
&= \frac{m_1}{2N^{\frac{3}{2}}\Delta(d_i)} \sum_{q,k}y_{0q}
{\vartheta}_{0q}\Big( \wt c_1 (-zm_2m_1) + \wt c_2 (-zm_2m_1)'\Big)\Big(\frac{\partial \mP}{\partial x_{qk}}\Big)^2\mP^{l-3}  \notag\\
&\quad+ O_\prec(N^{-\frac12})
\end{align}
by using \rf{weak_est_DG}. Recall \rf{def:wtc12} and the explicit expressions of $\wt c_{1,2}$. Elementary computations indicate 
\begin{align} \label{cancel:wtc12}
\wt c_1 (-zm_2m_1) + \wt c_2 (-zm_2m_1)' =-\frac{y(d_i^2+2d_i+y)}{d_i^2 (d_i+y)}
=O(d_i^{-1}),
\end{align}
 by which we can further estimate the RHS of \rf{2020051201} as
 \begin{align} \label{2020051202}
 |m_1|N^{-\frac 12} \Delta({d_i})^{-1}\Vert \mb{y}_0\Vert\, \Vert \mb{\vartheta}_0 \Vert d_i^{-1} \Big|\frac{\partial \mP}{\partial x_{qk}} \Big| ^2=O_\prec(N^{-\frac 12}). 
 \end{align}
Combining \rf{17092052}, \rf{2020051201} and \rf{2020051202}, we proved \rf{17092051}. And this completes the estimate  of $h_0(1,2)$.

 For the case $t=1,2$, by similar arguments, we will have 
 \begin{align*}
 &h_{t}(1,2)\nonumber\\
 &=\frac{\kappa_4}{2N^{\frac{3}{2}}}\frac{m_1}{\Delta(d_i)} \sum_{q,k}y_{tq}\frac{\partial (X^*\mG_1\mb{\vartheta}_{t} )_{k}}{\partial x_{qk}}\Big((l-1) \frac{\partial^2 \mP}{\partial x_{qk}^2}\mP^{l-2} + (l-1)(l-2)\Big(\frac{\partial \mP}{\partial x_{qk}}\Big)^2\mP^{l-3} \Big)\nonumber\\
 &=- \frac{\kappa_4(l-1)}{\Delta(d_i)^2} zm_2m_1^2
 \bigg(\sum_{b=1}^2 c_b \frac{(zm_2m_1^2)^{(b-1)}}{(b-1)!}
 \sum_{q} y_{tq} y_{0q} {\vartheta}_{tq}  {\vartheta}_{0q}
 \nonumber\\
 &\qquad+\sum_{s=1}^2 zm_2m_1^2 \sum_{q} y_{tq} y_{sq} {\vartheta}_{tq}  {\vartheta}_{sq}
\bigg)\mP^{l-2}  + O_\prec (N^{-\frac 12}).
 \end{align*} 
 We omit the details here since it follows the same arguments as for $h_0(1,2)$.

In the sequel,  we prove (2) of Lemma \ref{rankrlemh_ts}, i.e.,   we show that except for (\ref{rankrest:h0(1,0)})-(\ref{rankrestht(1,2)}), all the other terms $h_{t}(\alpha_1,\alpha_2)$ with $\alpha_1+\alpha_2\le 3$ can be bounded by $O_\prec(N^{-\frac12})$.  We start with the case when $\alpha_1+\alpha_2=2$, i.e. $(\alpha_1,\alpha_2)=(2,0),(0,2),(1,1)$. 
First,  by (\ref{eq:2ndXG}), we have 
\begin{align}\label{eq:hst20}
h_{0}(2,0)&= \frac{\kappa_3 m_1}{2N\Delta(d_i)} \sum_{s=1}^2\wt c_s\sum_{q,k}y_{0q}\frac{\partial^2 (X^*\mG_1^s\mb{\vartheta}_{0})_{k}}{\partial x_{qk}^2}\mP^{l-1}\nonumber\\
&=\frac{\kappa_3m_1 }{N\Delta(d_i)}  \sum_{s=1}^2\wt c_s\sum_{q,k}y_{0q}\bigg(\sum_{a_1,a_2=1}^2\sum_{\begin{subarray}{c}s_1,s_2,s_3\geq 1\\s_1+s_2+s_3=s+2\end{subarray}}\Big(X^*\mG_1^{s_1}\mathscr{P}_{a_1}^{qk}\mG_1^{s_2}\mathscr{P}_{a_2}^{qk}\mG_1^{s_3}\mb{\vartheta}_{0}\Big)_{k}\nonumber\\
&\quad -\sum_{a=1}^2\sum_{\begin{subarray}{c}s_1,s_2\geq 1\\s_1+s_2=s+1\end{subarray}}\Big(\mG_1^{s_1}\mathscr{P}_{a}^{qk}\mG_1^{s_2}\mb{\vartheta}_{0}\Big)_{q}-\sum_{\begin{subarray}{c}s_1,s_2\geq 1\\s_1+s_2=s+1\end{subarray}}\Big(X^*\mG_1^{s_1}\mathscr{P}_0^{qk}\mG_1^{s_2}\mb{\vartheta}_{0}\Big)_{k}\bigg)\mP^{l-1}.
\end{align}
 Note that each term above contains at least one quadratic form of $X^*\mG_1^a$ as a factor,  for some $a\ge 1$. This fact eventually leads to the $O_\prec(N^{-\frac12}d_i^{- 1})$ bound for all the terms above, by the isotropic local law.  More specifically, plugging the definitions in (\ref{P012}) into (\ref{eq:hst20}) and taking the sums, we can see that  the RHS of (\ref{eq:hst20}) is a linear combination of the terms of the following forms 
\begin{align}
&N^{-1} \frac{\wt c_s m_1}{\Delta(d_i)} \sum_{q,k}y_{0q}(X^*\mG_1^{s_1})_{kq}(X^*\mG_1^{s_2})_{kq}(X^*\mG_1^{s_3}\mb{\vartheta}_{0})_k \mP^{l-1}, \label{17092071}\\
&N^{-1}\frac{ \wt c_s m_1}{\Delta(d_i)} \sum_{q,k}y_{0q}\big((X^*\mG_1^{s_1}X)_{kk}\big)^a(X^*\mG_1^{s_2})_{kq}(\mG_1^{s_3}\mb{\vartheta}_{0})_q \mP^{l-1},\label{17092072}\\ 
&N^{-1}\frac{\wt c_s m_1}{\Delta(d_i)} \sum_{q,k}y_{0q}\big((X^*\mG_1^{s_1}X )_{kk}\big)^a(\mG_1^{s_2})_{qq}(X^*\mG_1^{s_3}\mb{\vartheta}_{0})_{k} \mP^{l-1}, \qquad a=0,1. \label{17092070}
\end{align}
where $s=1,2$ and $s= s_1+s_2+s_3-2$ for \rf{17092071} and $s=a(s_1-1)+s_2+s_3-1$
 for \rf{17092072} and \rf{17092070}. 
 
It suffices to estimate \eqref{17092071}-\eqref{17092070}. First, notice that $|\mP^{l-1}|=O_\prec(1)$,  $|m_1|=O_\prec(1/d_i)$ and the quadratic forms of $(X^*\mG_1)$ and $(X^*\mG_1^2)$ can be bounded by $O_\prec(N^{-\frac12}(d_i-\sqrt y)^{-\frac 12}d_i^{-\frac 12})$ and $O_\prec(N^{-\frac12}(d_i-\sqrt y)^{-\frac 52}d_i^{\frac 12}) $ respectively.   By the Cauchy Schwarz inequality, $\sum_{q}|y_{tq}|=O_\prec (\sn)$. With the expression of $\wt c_{1,2}$ in \rf{def:wtc12},  one see that the  term in (\ref{17092071}) is bounded by $O_\prec(N^{-1}(d_i-\sqrt y)^{-1})$.  Second, combining the following bounds
\begin{align*}
&|\mP^{l-1}|=O_\prec(1), \quad |(X^*\mG_1X)_{kk}|=O_\prec(d_i^{-1}), \notag\\
& |(X^*\mG_1^2X)_{kk}|= O_\prec(d_i^{-1}(d_i-\sqrt y)^{-1}),\\
&|(X^*\mG_1)_{kq}|= O_\prec(N^{-\frac12}(d_i-\sqrt y)^{-\frac 12}d_i^{-\frac 12}), \notag\\
& |(X^*\mG_1^2)_{kq}|=O_\prec(N^{-\frac12}(d_i-\sqrt y)^{-\frac 52}d_i^{\frac 12}),
\end{align*}
together with the estimate of $\Delta(d_i)$ in \rf{Delta:asymp}, we have for $a=1$,
\begin{align}
(\ref{17092072})&=O_\prec\Big(N^{-\frac12} \wt c_s d_i^{s_2-2} (d_i-\sqrt y)^{-(s_1-1)-2(s_2-1)}\sum_{q}\big|y_{tq}(\mG_1^{s_3}\mb{\vartheta}_{\varphi(t)})_q\big|\Big)\nonumber\\
&=O_\prec(N^{-\frac12} \wt c_s d_i^{s_2-3} (d_i-\sqrt y)^{-(s-1)-(s_2-1)}) = O_\prec(d_i^{-1}N^{-\frac12}).
\end{align}
In the last two steps above, we used \rf{Importestsum1}, $\wt c_1= O(d_i)$ and $\wt c_2= O((d_i-\sqrt y)^{2})$. More specifically, if $s=1$, then $s_2=1$, $\wt c_s d_i^{s_2-3} (d_i-\sqrt y)^{-(s-1)-(s_2-1)}= O(d_i^{-1})$; if $s=2$, we have $\wt c_s d_i^{s_2-3} (d_i-\sqrt y)^{-(s-1)-(s_2-1)}=O(d_i^{-(3-s_2)}  (d_i-\sqrt y)^{2-s_2})< O(d_i^{-1})$.  The case of $a=0$ follows similarly as
\begin{align}
(\ref{17092072})=O_\prec\Big(N^{-\frac12} \wt c_s d_i^{s_2-1} (d_i-\sqrt y)^{-2(s_2-1)}\sum_{q}\big|y_{tq}(\mG_1^{s_3}\mb{\vartheta}_{\varphi(t)})_q\big|\Big)
 = O_\prec(N^{-\frac12}).
\end{align}
Third, by Cauchy-Schwarz inequality and \eqref{weak_est_DG}, we have
\begin{align}\label{eq:middleCS}
&\Big|N^{-1}\frac{\wt c_s m_1}{\Delta(d_i)} \sum_{q,k}y_{0q}\big((X^*\mG_1^{s_1}X )_{kk}\big)^a(\mG_1^{s_2})_{qq}(X^*\mG_1^{s_3}\mb{\vartheta}_{0})_{k}\Big|\nonumber\\
&\leq N^{-1}\frac{\wt c_s|m_1|}{\Delta(d_i)} \|\mathbf{y}_0\|\Big(\sum_{q}(\mG_1^{s_2})_{qq}^2\Big)^{1/2}\Big|\sum_{k}\big((X^*\mG_1^{s_1}X )_{kk}\big)^a(X^*\mG_1^{s_3}\mb{\vartheta}_{0})_{k}\Big|\nonumber\\
&\prec N^{-\frac12}\wt c_s d_i^{-2} (d_i-\sqrt y)^{-(s_2-1)}\Delta(d_i)^{-1}\Big|\sum_{k}\big((X^*\mG_1^{s_1}X )_{kk}\big)^a(X^*\mG_1^{s_3}\mb{\vartheta}_{0})_{k}\Big| \notag\\
&\prec N^{-\frac12}.
\end{align}
In the last step above,  we get the $O_\prec(d_i^{-a-\frac12+s_3-1}\delta_{i0}^{-\frac12-a(s_0-1)-2(s_3-1)})$ bound for the term $\Big|\sum_{k}\big((X^*\mG_1^{s_1}X )_{kk}\big)^a(X^*\mG_1^{s_3}\mb{\vartheta}_{0})_{k}\Big|$, similarly to  \eqref{Importestsum2}. Hence, we conclude $h_{0}(2,0)=O_\prec(N^{-\frac12})$. Similarly, for the cases of $t=1,2$, the estimates of $h_{t}(2,0)$ are reduced to those of the forms \rf{17092071}-\rf{17092070} only with the coefficients $\wt c_s$ removed. The computations turn out to be even easier; the details are omitted. Therefore, we also have $h_{t}(2,0)=O_\prec(N^{-\frac12})$ for $t=1,2$.

Next, in the case of  $(\alpha_1,\alpha_2)=(0,2)$, by the definition in (\ref{rankrdefhts}), we have 
\begin{align}
 h_{t}(0,2)
 &=(l-1)(l-2)\frac{\kappa_3 m_1}{2N\Delta(d_i)}\sum_{q,k}y_{tq}(X^*T_t(\mG_1)\mb{\vartheta}_{t})_{k}\Big(\frac{\partial\mP}{\partial x_{qk}}\Big)^2 \mP^{l-3}\nonumber\\
 &\qquad+(l-1)\frac{\kappa_3 m_1 }{2N\Delta(d_i)}\sum_{q,k}y_{tq}(X^*T_t(\mG_1)\mb{\vartheta}_{t})_{k}\frac{\partial^2\mP}{\partial x_{qk}^2} \mP^{l-2}. \label{rankrhts(0,2)}
\end{align}
We then use the formula  in \rf{eq:Pd1}. After expanding the first term in the RHS of \rf{rankrhts(0,2)}, one notices that it can be written as a linear combination of the terms of the  forms (by ignoring prefactor $m_1/\Delta(d_i)^3$)
\begin{align}
&\sum_{q}y_{tq}(\mG_1\mb{\eta}_1)_{q}(\mG_1\mb{\psi}_1)_{q}\sum_{k}(X^*T_t(\mG_1)\mb{\vartheta}_{t})_{k}(X^*\mG_1\mb{\eta}_2)_{k}(X^*\mG_1\mb{\psi}_2)_{k}\mP^{l-3},  \label{19072080} \\
& \sum_{q,k}y_{tq} (X^*T_t(\mG_1)\mb{\vartheta}_{t})_{k} \sum_{a=1}^2 c_a \sum_{\begin{subarray} {c} a_1,a_2\geq 1\\ a_1+a_2=a+1 
\end{subarray}}(\mG_1^{a_1}\mb{\eta}_{1})_{q} (X^*\mG_1^{a_2}\mb{\eta}_2)_{k}\notag\\
& \qquad\qquad\qquad\qquad\times\sum_{b=1}^2 c_b \sum_{\begin{subarray} {c} b_1,b_2\geq 1\\ b_1+b_2=b+1 
\end{subarray}}(\mG_1^{b_1}\mb{\psi}_{1})_{q} (X^*\mG_1^{b_2}\mb{\psi}_2)_{k},
  \label{19072080*} \\
& \sum_{q,k}y_{tq} (X^*T_t(\mG_1)\mb{\vartheta}_{t})_{k} \Big(\sum_{a=1}^2 c_a \sum_{\begin{subarray} {c} a_1,a_2\geq 1\\ a_1+a_2=a+1 
\end{subarray}}(\mG_1^{a_1}\mb{\eta}_{0})_{q} (X^*\mG_1^{a_2}\mb{\eta}_0)_{k}\Big)
 (\mG_1\mb{\psi}_{s})_{q} (X^*\mG_1\mb{\psi}_s)_{k},  \notag\\
 &\qquad\qquad\qquad s=1,2.
 \label{19072080**} 
\end{align}
where the vectors $\mb{\eta}_\alpha,{\mb{\psi}}_\alpha$ $(\alpha=1,2)$ take $\mb{\vartheta}_{s}$ or $\mb{y}_t$ for $s,t=0,1,2$.
We claim that  the above forms  (\ref{19072080})- \rf{19072080**} are bounded by $O_\prec(N^{-\frac12}d_i^{-\frac 72}(d_i-\sqrt y)^{-\frac 32} )$ for all choices of $\mb{\eta}_\alpha,{\mb{\psi}}_\alpha$ listed above. To see this, we first notice that  the isotropic local law  \eqref{weak_est_XG} implies
\begin{align}
\sum_{k}(X^*\mG_1\mb{\vartheta}_{t})_{k}(X^*\mG_1\mb{\eta}_2)_{k}(X^*\mG_1\mb{\psi}_2)_{k}=O_\prec(N^{-\frac 12} (d_i-\sqrt y)^{-\frac 32}d_i^{-\frac 32 } ). \label{19072081}
\end{align}
Plugging in the expressions of $\wt c_{1,2}$, we see that  if $d_i\leq K$ with sufficiently large constant $K>0$,
\begin{align*}
(X^*T_0(\mG_1)\mb{\vartheta}_{0})_{k}=\sum_{s=1}^2 \wt c_s (X^*\mG_1^s \mb{\vartheta}_{0})_{k} = O_\prec(N^{-\frac 12}  (d_i-\sqrt y)^{-\frac 12} d_i^{-\frac 12}).
\end{align*} 
If  $d_i > K$ with sufficiently large constant $K>0$, expanding $\mG_1, \mG_1^2$ around $-1/\theta(d_i)$ and $1/(\theta(d_i))^2$, respectively, we obtain 
\begin{align} \label{2020052002}
(X^*T_0(\mG_1)\mb{\vartheta}_{0})_{k}&=\sum_{s=1}^2 \wt c_s (X^*\mG_1^s \mb{\vartheta}_{0})_{k} \notag\\
&= \theta(d_i)\Big(X^*\big(\mG_1+ \theta(d_i)\mG_1^2\big)\mb{\vartheta}_{0}\Big)_{k} + O_\prec (N^{-\frac 12}  (d_i-\sqrt y)^{-\frac 12} d_i^{-\frac 12}) \nonumber\\
&= \frac{(X^*H\mb{\vartheta}_{0})_{k}}{\theta(d_i)} + O_\prec(N^{-\frac 12}d_i^{-1}) = O_\prec(N^{-\frac 12}d_i^{-1}),
\end{align}
where in the last step, we used $(X^*H^\alpha\mb{\vartheta}_{0})_{k} = O_\prec(N^{-1/2})$ for $\alpha\in \mathbb{Z}^+$ which can be proved simply by moment method.

Combining the two cases together with the isotropic local law for $(X^*\mG_1\mb{\vartheta}_{t})_{k}$, we get the estimate 
\begin{align}\label{est:XT012}
(X^*T_t(\mG_1)\mb{\vartheta}_{t})_{k}= O_\prec(N^{-\frac 12}  (d_i-\sqrt y)^{-\frac 12} d_i^{-\frac 12}), \quad \text{ for } t=0,1,2.
\end{align}
This implies 
\begin{align}  \label{2020052001}
\sum_{k}(X^*T_0(\mG_1)\mb{\vartheta}_{0})_{k}(X^*\mG_1\mb{\eta}_2)_{k}(X^*\mG_1\mb{\psi}_2)_{k}=O_\prec(N^{-\frac 12} (d_i-\sqrt y)^{-\frac 32}d_i^{-\frac 32 } ). 
\end{align}
Further, from \rf{Importestsum1}, we have 
\begin{align}
\sum_{q}y_{tq}(\mG_1\mb{\eta}_1)_{q}(\mG_1\mb{\psi}_1)_{q}=O_\prec(d_i^{-2} ). \label{19072082}
\end{align} 
Combining \rf{19072081}, \rf{2020052001},  (\ref{19072082}), and the fact $|\mP|\prec 1$, we easily get $$\rf{19072080}=O_\prec(N^{-\frac12}d_i^{-\frac 72}(d_i-\sqrt y)^{-\frac 32}) \quad \text{for $t=0,1,2$}.$$ 

For  \rf{19072080*} and \rf{19072080**}, the coefficients $c_{1,2}$ contain powers of $d_i$ thus we need to handle them more carefully in the case of large $d_i$.  Notice that if $d_i=O(1)$ then $c_{1,2}= O(1)$, hence similarly  to the estimates \rf{19072081}, \rf{2020052001} and (\ref{19072082}), we can easily derive the crude bound $O_\prec(N^{-\frac12}d_i^{-\frac 72}(d_i-\sqrt y)^{-\frac 32})$ for \rf{19072080*} and \rf{19072080*} by using the isotropic local law. In the sequel, we only state the details  for the case $d_i>K$ with sufficiently large constant $K>0$. First by plugging in the expressions of $c_{1,2}$ and recalling \rf{2020052002}, we see that  
\begin{align} \label{2020052003}
&\sum_{a=1}^2 c_a \sum_{\begin{subarray} {c} a_1,a_2\geq 1\\ a_1+a_2=a+1 
\end{subarray}}(\mG_1^{a_1}\mb{\eta}_{1})_{q} (X^*\mG_1^{a_2}\mb{\eta}_2)_{k} \nonumber\\
&= \Big(\frac {c_1}{2}  (X^*\mG_1\mb{\eta}_2)_{k} +  c_2(X^*\mG_1^{2}\mb{\eta}_2)_{k}\Big) (\mG_1\mb{\eta}_{1})_{q}
 +   \Big(\frac {c_1}{2}   (\mG_1\mb{\eta}_{1})_{q}+  c_2 (\mG_1^2\mb{\eta}_{1})_{q}\Big)   (X^*\mG_1\mb{\eta}_2)_{k} \nonumber\\
 &= O_\prec(N^{-\frac 12}d_i^{-1})m_1 \eta_{1q}  +   \Big(\frac {c_1}{2}   m_1+  c_2m_1'\Big) \eta_{1q}  O_\prec(N^{-\frac 12}d_i^{-1}) + O_\prec(N^{-\frac 12}d_i^{-2}) \nonumber\\
 &= O_\prec(N^{-\frac 12}d_i^{-2})  \eta_{1q},
\end{align}
where we use the fact that $\frac {c_1}{2}   m_1+  c_2m_1'= O(d_i^{-1})$. Then by \rf{2020052003} and \rf{est:XT012}, we obtain that 
\begin{align}
\rf{19072080*} = O_\prec (N^{-\frac 12}{ d_i^{-5}}) \Big(\sum_{q} y_{tq}\eta_{1q } \psi_{1q} \Big) = O_\prec (N^{-\frac 12 }d_i^{-5} ),
\end{align}
which follows from Cauchy-Schwarz inequality for $\sum_{q} y_{tq}\eta_{1q } \psi_{1q}$. Analogously, we also have $\rf{19072080**} = O_\prec (N^{-\frac 12 }d_i^{-5} )$. Note that we get the bound $O_\prec (N^{-\frac 12 }d_i^{-5} )$ in the large $d_i$ case, which can be combined together with the case $d_i=O(1)$ to get the crude bound $O_\prec(N^{-\frac12}d_i^{-\frac 72}(d_i-\sqrt y)^{-\frac 32})$ for \rf{19072080*} and \rf{19072080**}. Now, multiplying with the prefactor $m_1/\Delta(d_i)^3$ and using $m_1=O(d_i^{-1})$, we finally get the first term on the RHS of \rf{rankrhts(0,2)} is of order $O_\prec (N^{-1/2})$.

 Analogously,  using the formula in  \rf{eq:Pd2}, it is easy to see that the second term in the RHS of \rf{rankrhts(0,2)} is a linear combination of the terms of the following forms containing coefficients $ c_{1,2}$
  \begin{align}
&N^{-\frac12}\frac{m_1}{\Delta(d_i)^2}\sum_{b=1}^2 c_b \sum_{\begin{subarray}{c}
b_1,b_2\geq 1\\ b_1+b_2=b+1
\end{subarray}
}\Big(\sum_{q}y_{tq}(\mG_1^{b_1}\mb{\eta}_1)_q(\mG_1^{b_2}\mb{\eta}_2)_q\Big)\notag\\
&\qquad\qquad\qquad\qquad\times\Big(\sum_{k}(X^*T_t(\mG_1)\mb{\vartheta}_{t})_{k}\Big)\mP^{l-2},\label{eq:h02-1}\\
&N^{-\frac12}\frac{m_1}{\Delta(d_i)^2}\sum_{b=1}^2 c_b \sum_{\begin{subarray}{c}
b_1,b_2,b_3\geq 1\\ b_1+b_2+b_3=b+1
\end{subarray}
}\Big(\sum_{q}y_{tq}(\mG_1^{b_1}\mb{\eta}_1)_q(\mG_1^{b_2}\mb{\eta}_2)_q\Big)\notag\\
&\qquad\qquad\qquad\qquad\times\Big(\sum_{k}(X^*\mG_1^{b_3}X)_{kk}(X^*T_t(\mG_1)\mb{\vartheta}_{t})_{k}\Big)\mP^{l-2},\label{eq:h02-11}\\
&N^{-\frac12} \frac{m_1}{\Delta(d_i)^2} \sum_{b=1}^2 c_b \sum_{\begin{subarray}{c}
b_1,b_2,b_3\geq 1\\ b_1+b_2+b_3=b+1
\end{subarray}
}
\Big(\sum_{q}y_{tq}(\mG_1^{b_1}\mb{\eta}_1)_q\Big)\notag\\
&\qquad\qquad\qquad\qquad\times\Big(\sum_{k}(X^*\mG_1^{b_2})_{kq}(X^*\mG_1^{b_3}\mb{\eta}_2)_k(X^*T_t(\mG_1)\mb{\vartheta}_{t})_{k}\Big)\mP^{l-2},\label{eq:h02-2}\\
&N^{-\frac12} \frac{m_1}{\Delta(d_i)^2} \sum_{b=1}^2 c_b \sum_{\begin{subarray}{c}
b_1,b_2,b_3\geq 1\\ b_1+b_2+b_3=b+1
\end{subarray}
}
\Big(\sum_{q}y_{tq}(\mG_1^{b_1})_{qq}\Big)\notag\\
&\qquad\qquad\qquad\qquad\times\Big(\sum_{k}(X^*\mG_1^{b_2}\mb{\eta}_1)_{k}(X^*\mG_1^{b_3}\mb{\eta}_2)_k(X^*T_t(\mG_1)\mb{\vartheta}_{t})_{k}\Big)\mP^{l-2}, \label{eq:h02-3}
\end{align}
for $\mb{\eta}_0,\mb{\eta}_1, \mb{\eta}_2=\mb{\vartheta}_{1}, \mb{\vartheta}_{2},\mb{y}_1, \mb{y}_2, \mb{y}_3$, $a=0,1$ and $b_1,b_2,b_3=1$ or 2 as well as the following forms  irrelevant to $c_{1,2}$,
  \begin{align}
&N^{-\frac12}\frac{m_1}{\Delta(d_i)^2}\Big(\sum_{q}y_{tq}(\mG_1\mb{\eta}_1)_q(\mG_1\mb{\eta}_2)_q\Big)\Big(\sum_{k}(X^*T_t(\mG_1)\mb{\vartheta}_{t})_{k}\Big)\mP^{l-2},\label{eq:h02-1*}\\
&N^{-\frac12}\frac{m_1}{\Delta(d_i)^2}\Big(\sum_{q}y_{tq}(\mG_1\mb{\eta}_1)_q(\mG_1\mb{\eta}_2)_q\Big)\Big(\sum_{k}(X^*\mG_1X)_{kk}(X^*T_t(\mG_1)\mb{\vartheta}_{t})_{k}\Big)\mP^{l-2},\label{eq:h02-11*}\\
&N^{-\frac12} \frac{m_1}{\Delta(d_i)^2} \Big(\sum_{q}y_{tq}(\mG_1\mb{\eta}_1)_q\Big)\Big(\sum_{k}(X^*\mG_1)_{kq}(X^*\mG_1\mb{\eta}_2)_k(X^*T_t(\mG_1)\mb{\vartheta}_{t})_{k}\Big)\mP^{l-2},\label{eq:h02-2*}\\
&N^{-\frac12} \frac{m_1}{\Delta(d_i)^2} \Big(\sum_{q}y_{tq}(\mG_1)_{qq}\Big)\Big(\sum_{k}(X^*\mG_1\mb{\eta}_1)_{k}(X^*\mG_1\mb{\eta}_2)_k(X^*T_t(\mG_1)\mb{\vartheta}_{t})_{k}\Big)\mP^{l-2}, \label{eq:h02-3*}
\end{align}
for $\mb{\eta}_0,\mb{\eta}_1, \mb{\eta}_2=\mb{\vartheta}_{1}, \mb{\vartheta}_{2},\mb{y}_1, \mb{y}_2, \mb{y}_3$, $a=0,1$ and $b_1,b_2,b_3=1$ or 2.

 We claim that all of the above terms can be bounded crudely by $O_\prec(N^{-\frac 12} (d_i-\sqrt y)^{-\frac12} d_i^{\frac 12})$. First, recall the bound $|\mathcal{P}|\prec 1$.  The $O_\prec(N^{-\frac12}(d_i-\sqrt y)^{\frac 12}d_i^{-\frac12})$ bounds for \eqref{eq:h02-1*} and \eqref{eq:h02-11*} follow directly from $|m_1|\sim 1/d_i$, \rf{Delta:asymp}, \rf{Importestsum1} and \rf{Importestsum2}. From \eqref{Importestsum1} and the isotropic local law \eqref{weak_est_XG}, together with \rf{est:XT012}, we see that \eqref{eq:h02-2*} is  bounded by $O_\prec(N^{-1}d_i^{-\frac 12} (d_i-\sqrt y)^{-\frac 12})$. The estimate of \eqref{eq:h02-3*} is similar to that of \eqref{eq:h02-2*} by using the Cauchy-Schwarz inequality and the isotropic local law \rf{weak_est_XG}. We thus omit the details.
 
  In the sequel, we bound the terms  \rf{eq:h02-1}-\rf{eq:h02-3}. The estimates for \rf{eq:h02-11}-\rf{eq:h02-3} are the same as those of \rf{eq:h02-11*}-\rf{eq:h02-3*} by using isotropic local laws, \rf{est:XT012} and  the bounds $c_1=O(d_i), c_2= O((d_i-\sqrt y)^2), m_1=O(d_i^{-1})$. We observe that each summand for $b=1,2$ in \rf{eq:h02-11}-\rf{eq:h02-3} is crudely of order $O_\prec(N^{-\frac 12})$, hence we can conclude the $O_\prec(N^{-\frac 12})$ bounds for \rf{eq:h02-11}-\rf{eq:h02-3}.  As for \rf{eq:h02-1}, in the case that $d_i\leq K$,  using the same arguments as for  \rf{eq:h02-1*},  one gets the bound $O_\prec(N^{-\frac 12})$. We only need to show the case that  $d_i>K$, and we will observe the same cancellation as \rf{2020052003}. More specifically, for \rf{eq:h02-1}, notice that 
 \begin{align} \label{2020052403}
\sum_{b=1}^2 c_b \sum_{\begin{subarray}{c}
b_1,b_2\geq 1\\ b_1+b_2=b+1
\end{subarray}}
&\Big(\sum_{q}y_{tq}(\mG_1^{b_1}\mb{\eta}_1)_q(\mG_1^{b_2}\mb{\eta}_2)_q\Big)\notag\\
&= \sum_{q}y_{tq}\eta_{1q}\eta_{2q} \Big(c_1m_1^2+ 2c_2 m_1m_1'\Big) +O_\prec(N^{-\frac 12} d_i^{-2})\nonumber\\
&= O_\prec(d_i^{-2}).
 \end{align}
 This together with \rf{Importestsum2} leads to the bound $O_\prec(N^{-\frac 12})$ for \rf{eq:h02-1}.
  Hence, the second term in the RHS of \rf{rankrhts(0,2)} is also of order $O_\prec(N^{-\frac12})$. 
This together with the same bound for the first term in the RHS of \rf{rankrhts(0,2)} leads to $$h_{t}(0,2)=O_\prec\Big(N^{-\frac12} \Big), \text { for $t=0,1,2$}.$$

Next, we turn to  $h_{t}(1,1)$.  By the definition in (\ref{rankrdefhts}), we have 
\begin{align}
{h}_{t}(1,1)=\frac{\kappa_3m_1}{N\Delta(d_i)}\sum_{q,k}y_{tq}\frac{\partial (X^*T_t(\mG_1)\mb{\vartheta}_{t})_{k}}{\partial x_{qk}}\frac{\partial \mP}{\partial x_{qk}}\mP^{l-2}.\label{rankrhts(1,1)}
\end{align}
Using \rf{deriXG^l} and \rf{eq:Pd1}, for $t=0$, we can write
\begin{align} \label{def:h0(1,1)}
&h_{0}(1,1)\notag\\
&=-\frac{\kappa_3m_1}{\sn\Delta(d_i)^2}\sum_{q,k}y_{0q}\Big( \big(T_0(\mG_1)\mb{\vartheta}_{0} \big)_{q}-\sum_{s=1}^2 \wt c_s \sum_{a=1}^2 \sum_{\begin{subarray}{c}s_1,s_2\geq 1\\s_1+s_2=s+1\end{subarray}}
(X^*\mG_1^{s_1}\mP^{qk}_{a}\mG_1^{s_2} \mb{\vartheta}_{0} )_k \Big)
\nonumber\\
&\qquad \times \bigg( \sum_{a=1}^2 c_a \sum_{\begin{subarray}{c}a_1,a_2\geq 1;\\ a_1+a_2=a+1 \end{subarray}}\Big((\mb{y}_0^*\mG_1^{a_1})_q(X^*\mG_1^{a_2}\mb{\vartheta}_0)_k+(X^*\mG_1^{a_1}\mb{y}_0)_k(\mG_1 ^{a_2}\mb{\vartheta}_0)_q\Big)\nonumber\\
&\qquad + \sum_{t=1}^2 \Big((\mb{y}_t^*\mG_1)_q(X^*\mG_1\mb{\vartheta}_{t})_k+(X^*\mG_1\mb{y}_t)_k(\mG_1\mb{\vartheta}_{t})_q\Big) \bigg)\mP^{l-1}.
\end{align}
Note that by denoting  
\begin{align} \label{def:wtTG}
\wt T_0(\mG_1) :=\frac {c_1}{2} \mG_1 + c_2 \mG_1^2, \quad \wt T_1(\mG_1) = \mG_1,
\end{align}
we can represent the first term of ${\partial \mP}/{\partial x_{qk}}$ as 
\begin{align*}
&\sum_{a=1}^2 c_a \sum_{\begin{subarray}{c}a_1,a_2\geq 1;\\ a_1+a_2=a+1 \end{subarray}}\Big((\mb{y}_0^*\mG_1^{a_1})_q(X^*\mG_1^{a_2}\mb{\vartheta}_0)_k+(X^*\mG_1^{a_1}\mb{y}_0)_k(\mG_1 ^{a_2}\mb{\vartheta}_0)_q\Big)\nonumber\\
&=\sum_{\begin{subarray}{c}
t_1,t_2\geq 0\\ t_1+t_2=1 \end{subarray}} \Big[
(\mb{y}_0^*\wt T_{t_1}(\mG_1))_q(X^*\wt T_{t_2}(\mG_1)\mb{\vartheta}_0)_k + 
(\mb{\vartheta}_0^*\wt T_{t_1}(\mG_1))_q(X^*\wt T_{t_2}(\mG_1)\mb{y}_0)_k  \Big]. 
\end{align*}
Hence, it is easy to see that the RHS of \rf{def:h0(1,1)} is a linear combination of the terms of  the following forms
\begin{align} \label{terms:h0(1,1)}
&N^{-\frac12} \frac{m_1}{\Delta(d_i)^2}\Big(\sum_{q}y_{tq} \big(T_0(\mG_1)\mb{\vartheta}_{0} \big)_{q} \big(\wt T_{t_1}(\mG_1)\mb{\eta}_1 \big)_{q}\Big)\Big(\sum_k \big(X^*\wt T_{t_2}(\mG_1)\mb{\eta}_2 \big)_{k}\Big)\mP^{l-1},\nonumber\\
&N^{-\frac12} \frac{m_1 \wt c_s}{\Delta(d_i)^2}\Big(\sum_{q}y_{tq} \big(\mG_1^{s_1}\mb{\vartheta}_{0} \big)_{q} \big(\wt T_{t_1}(\mG_1)\mb{\eta}_1 \big)_{q}\Big)\Big(\sum_k 
(X^*\mG_1^{s_2}X)_{kk}\big(X^*\wt T_{t_2}(\mG_1)\mb{\eta}_2 \big)_{k}\Big)\mP^{l-1},\nonumber\\
&N^{-\frac12} \frac{m_1 \wt c_s}{\Delta(d_i)^2}\Big(\sum_{q}y_{tq}\big(\wt T_{t_1}(\mG_1)\mb{\eta}_1 \big)_{q}\Big)\Big(\sum_k(X^*\mG_1^{s_1})_{kq}(X^*\mG_1^{s_2}\mb{\vartheta}_{0})_k\big(X^*\wt T_{t_2}(\mG_1)\mb{\eta}_2 \big)_{k}\Big)\mP^{l-1},
\end{align}
where $t_1,t_2=0,1 $, $s_1, s_2=1,2$ satisfying $s= s_1+s_2- 1\in \{1,2\}$ and $\mb{\eta}_1,\mb{\eta}_2= \mb{y}_0, \mb{y}_1, \mb{y}_2, \mb{\vartheta}_0, \mb{\vartheta}_1, \mb{\vartheta}_2$. Recall the definitions of $\wt c_{1,2}$ and $c_{1,2}$ (c.f. (\ref{def:c12}), (\ref{def:wtc12})). We can write 
\begin{align} \label{2020052401}
\big(X^*\wt T_{0}(\mG_1)\mb{\eta}_2 \big)_{k} = \big(X^*T_{0}(\mG_1)\mb{\eta}_2 \big)_{k} + O(1)\times \big(X^*\mG_1\mb{\eta}_2 \big)_{k}. 
\end{align}
Similar estimates like \rf{Importestsum2} and \rf{est:XT012} can be deduced from replacing $T_t(\mG_1) $ by $\wt T_t(\mG_1)$. Besides, we also have 
\begin{align} \label{2020052402}
 &\big(\wt T_{0}(\mG_1)\mb{\eta}_1 \big)_{q} =   \big( T_{0}(\mG_1)\mb{\eta}_1 \big)_{q}  + O(1)\times  \big(\mG_1\mb{\eta}_1 \big)_{q} \notag\\
 &= (\wt c_1m_1 +\wt c_2 m_1')\eta_{1q} + \big( (\wt c_1 \Xi+ \wt c_2 \Xi' ) \mb{\eta}\big)_{q}  +O_\prec\big( d_i^{-1}\big) \eta_{1q}  \nonumber\\
 &=  \big( (\wt c_1 \Xi+ \wt c_2 \Xi' ) \mb{\eta}\big)_{q}  +O_\prec\big( d_i^{-1}\big) \eta_{1q}  \nonumber\\
 &= O_\prec\big( d_i^{-1}\big) \eta_{1q}
\end{align}
which can be verified by directly applying isotropic local law \rf{weak_est_DG} for the case $d_i\leq K$ or involving eigenvector empirical spectral distribution to do further analysis of $\big( (\wt c_1 \Xi+ \wt c_2 \Xi' ) \mb{\eta}\big)_{q} $ for the case $d_i>K$. The details for the case $d_i>K$ are analogous to that of \rf{2020051101} and hence we skip them. 
Then, with the above estimates, similar to the estimates of \eqref{eq:h02-1}-\eqref{eq:h02-2}, all the  terms in \rf{terms:h0(1,1)} can be bounded  by $O_\prec(N^{-\frac12})$. For the case $t=1,2$, similarly, we still see the forms in \rf{terms:h0(1,1)} with $T_0(\mG_1)$ replaced by $\mG_1$ and the coefficients $\wt c_s$ removed. These forms can be bounded by $O_\prec(N^{-\frac12})$ using the isotropic local laws \rf{weak_est_DG} and \rf{weak_est_XG}. We omit the details here. Hence, we proved $h_{t}(1,1)=O_\prec(N^{-\frac12})$. 

Next, we  consider the other cases for $\alpha_1+\alpha_3=3$ except for (\ref{rankresth0(1,2)}) and (\ref{rankrestht(1,2)}), i.e. $(\alpha_1,\alpha_2)=(3,0),(2,1),(0,3)$.  We start with the formulas  in (\ref{rankrorder3derivative}) and  (\ref{eq:Pd3}). Notice that according to the definition in (\ref{def of O}), we have $\mathcal{O}_2=\{(0,1),(0,2),(1,0),(2,0)\}$ in (\ref{rankrorder3derivative}) and  (\ref{eq:Pd3}). 

With (\ref{rankrorder3derivative}), we  can now estimate   $h_{t}(3,0)$. By the definition in \rf{rankrdefhts}, 
\begin{align}
h_{t}(3,0)=\frac{\kappa_4 m_1}{3!N^{\frac{3}{2}}\Delta(d_i)}\sum_{q,k}y_{tq}\frac{\partial^3 (X^*T_t(\mG_1)\mb{\vartheta}_{t})_{k}}{\partial x_{qk}^3}\mP^{l-1}. \label{eq:h(3,0)}
\end{align}
For $t=0$, after plugging in \eqref{rankrorder3derivative} and taking the sums, one can check that except for the following type of terms  
\begin{align}
&N^{-\frac32}\frac{m_1 \wt c_s}{\Delta(d_i)}\Big(\sum_{q}y_{0q}(\mG_1^{s_3})_{qq}(\mG_1^{s_4}\mb{\vartheta}_{0})_q\Big)\Big(\sum_k\big((X^*\mG_1^{s_1}X)_{kk}\big)^a\big((X^*\mG_1^{s_2}X)_{kk}\big)^b\Big)\mathcal P^{l-1} \notag\\
&\qquad\qquad\qquad\qquad  a,b=0,1, \label{19072101}
\end{align} 
all the other terms of (\ref{eq:h(3,0)}) contain at least  one quadratic form of $X^*\mG_1^a$ for some $a=1,2$ and one quadratic form of $\mG_1^b$ for some $b=1,2$. Here, $s_1,s_2, s_3,s_4 =1,2$ satisfy $s=s_3+s_4-1+ a(s_1-1)+ b(s_2-1)\in \{1,2\}$.  Actually, by the isotropic local law \eqref{weak_est_XG}, those terms with at least  one quadratic form of $X^*\mG_1^a$ and one quadratic form of $\mG_1^b$ can all be dominated by a crude bound $O_\prec(N^{-\frac 12})$. For instance, 
\begin{align*}
&N^{-\frac32} \frac{m_1\wt c_s}{\Delta(d_i)}\sum_{q,k} y_{0q} \big(\mG_1^{s_1}\mathscr{P}_1^{qk}\mG_1^{s_2}\mathscr{P}_1^{qk}\mG_1^{s_3} \mb{\vartheta}_{0} \big)_{q}\mP^{l-1} \\
&= N^{-\frac32}  \frac{m_1 \wt c_s}{\Delta(d_i)} \sum_{q,k} y_{0q} (\mG_1^{s_1})_{qq} (X^*\mG_1^{s_2})_{kq} (X^*\mG_1^{s_3} \mb{\vartheta}_{0})_k \mP^{l-1} \notag\\
&=O_\prec(N^{-1}(d_i-\sqrt y)^{-\frac 12}d_i^{-\frac12}),
\end{align*}
where $s= s_1+s_2+s_3-2 \in\{1,2\}$ and $s_1,s_2,s_3=1,2$.
The other terms with at least  one quadratic form of $X^*\mG_1^a$  and one quadratic form of $\mG_1^b$  can be estimated similarly. And we use a crude bound $O_\prec(N^{-1/2})$ to estimate them. We skip the details for those terms. 
Further, using \rf{Importestsum1}, Remark \ref{boundrmk*}, $\wt c_1=O(d_i)$ and $\wt c_2=O((d_i-\sqrt y)^{2})$, one can easily get the terms in (\ref{19072101}) are bounded by $O_\prec(N^{-\frac12})$. Analogously to the above statements, we can derive the crude bound  $O_\prec(N^{-\frac12})$ for the case $t=1,2$. As a consequence, we get  $h_{t}(3,0)=O_\prec (N^{-\frac 12})$.

For $h_{t}(2,1), t=0,1,2$, by the definition in (\ref{rankrdefhts}), we have
\begin{align}
h_{t}(2,1)=\frac{(l-1)\kappa_4m_1}{2N^{\frac{3}{2}}\Delta(d_i)}\sum_{q,k}y_{tq}\frac{\partial^2 (X^*T_t(\mG_1)\mb{\vartheta}_{t})_{k}}{\partial x_{qk}^2}\frac{\partial \mP}{\partial x_{qk}}\mP^{l-2}.\label{rankrhts(2,1)}
\end{align}
Using the formula in (\ref{eq:2ndXG}) and further the $O_\prec(N^{-\frac12}(d_i-\sqrt y)^{-\frac 12-2(a-1)}d_i^{-\frac 12+(a-1)})$ bound for  the quadratic forms of $X^*\mG_1^a$ for  $a= 1, 2$, the $O_\prec((d_i-\sqrt y)^{-(b-1)}d_i^{-1})$ bound for  the quadratic forms of $\mG_1^b$ for $b= 1, 2$, it is not difficult to see 
 $$ \frac{\wt c_s \partial^2 (X^*\mG_1^s\mb{\vartheta}_{t})_{k}}{\partial x_{qk}^2}=O_\prec (N^{-\frac 12}(d_i-\sqrt y)^{-\frac 12}d_i^{-\frac 12}).$$
Similarly, we can also obtain 
 \begin{align*}
 \frac{\partial^2 (X^*T_t(\mG_1)\mb{\vartheta}_{t})_{k}}{\partial x_{qk}^2}= O_\prec (N^{-\frac 12}(d_i-\sqrt y)^{-\frac 12}d_i^{-\frac 12}) \quad\text{ for } t=0,1,2.
 \end{align*}
Then, from the above bound, $|\mathcal{P}|=O_\prec(1)$, $|\partial \mP/\partial x_{qk}|=O_\prec(1)$ (\cf (\ref{19072410})) and $m_1= O(d_i^{-1})$, one can conclude that  for $t=0,1,2$, $h_{t}(2,1)=O_\prec(N^{-\frac 12} )$.

For $h_{t}(0,3)$, we write
\begin{align*}
h_{t}(0,3)&=\frac{\kappa_4 m_1}{3!N^{\frac{3}{2}}\Delta(d_i)}  \sum_{q,k}y_{tq}(X^*T_t(\mG_1)\mb{\vartheta}_{t})_{k}\frac{\partial^3 \mP^{l-1}}{\partial x_{qk}^3}\nonumber\\
&=\mathcal \kappa_4(l-1)(l-2)(l-3)\mathcal{L}_1 + \kappa_4(l-1)(l-2)\mathcal L_2 + \kappa_4(l-1)\mathcal L_3, 
\end{align*}
where
\begin{align*}
&\mathcal L_1:= \frac{m_1 }{3!N^{\frac{3}{2}}\Delta(d_i)}\sum_{q,k}y_{tq}(X^*T_t(\mG_1)\mb{\vartheta}_{t})_{k}\Big(\frac{\partial \mP}{\partial x_{qk}}\Big)^3\mP^{l-4},\nonumber\\
&\mathcal L_2:= \frac{m_1 }{2!N^{\frac{3}{2}}\Delta(d_i)}\sum_{q,k}y_{tq}(X^*T_t(\mG_1)\mb{\vartheta}_{t})_{k}\frac{\partial \mP}{\partial x_{qk}}\frac{\partial^2 \mP}{\partial x_{qk}^2}\mP^{l-3},\nonumber\\
&\mathcal L_3:=  \frac{m_1 }{3!N^{\frac{3}{2}}\Delta(d_i)}\sum_{q,k}y_{tq}(X^*T_t(\mG_1)\mb{\vartheta}_{t})_{k}\frac{\partial^3 \mP}{\partial x_{qk}^3}\mP^{l-2}.
\end{align*}

First, note that  $\mathcal L_1=\onh$, by using the facts $|\partial \mP/\partial x_{ik}|\prec 1$ (\cf (\ref{19072410})), $|m_1|\asymp d_i^{-1}$ and  $|(X^*T_t(\mG_1)\bv_i)_{k}|\prec N^{-\frac12}(d_i-\sqrt y)^{-\frac 12}d_i^{-\frac 12}$ (\cf (\ref{est:XT012})), together with the Cauchy-Schwarz inequality. 

Second, as for $\mathcal L_2$, we use the formula of $\partial^2 \mP/\partial x_{qk}^2$ in \eqref{eq:Pd2}. Observe that, by the isotropic local laws  \eqref{weak_est_DG} and \eqref{weak_est_XG},  the terms in \eqref{eq:Pd2} can be  bounded  by either $O_\prec(N^{-1/2}(d_i-\sqrt y)^{-\frac 12} d_i^{\frac 12})$ or $O_\prec(\sn (d_i-\sqrt y)^{\frac 12}d_i^{\frac 12})$.
 More precisely, the $O_\prec(N^{-1/2}(d_i-\sqrt y)^{-\frac 12} d_i^{\frac 12})$ bound  is derived for the following forms 
\begin{align*}
\frac{\sn c_b}{\Delta(d_i)} (X^*\mG_1^{b_1}\mb{\eta}_1)_k(X^*\mG_1^{b_2})_{kq}(\mG_1^{b_3}\mb{\eta}_2)_q, \qquad \frac{\sn c_b}{\Delta(d_i)} (X^*\mG_1^{b_1}\mb{\eta}_1)_k(\mG_1^{b_2})_{qq}(X^*\mG_1^{b_3}\mb{\eta}_2)_k,
\end{align*} 
and  the $O_\prec(\sn (d_i-\sqrt y)^{\frac 12}d_i^{\frac 12})$ bound is obtained for the form $$\sn  c_b \Delta(d_i)^{-1}(\mG_1^{b_1}\mb{\eta}_1)_q(X^*\mG_1^{b_2}X)_{kk}^a(\mG_1^{b_3}\mb{\eta}_2)_q.$$ Here, $\mb{\eta}_1,\mb{\eta}_2=\mb{y}_1,\mb{y}_2, \mb{y}_3, \bw_{\mathsf{I}}^0$ or $\mb{\varsigma}_{\mathsf{I}}^0$,  $a=0,1$ and $b_1,b_2,b_3=1$ or $2$ with $b_1+b_2+b_3-2=b\in \{1,2\}$.
 The contribution from the $O_\prec(N^{-1/2}(d_i-\sqrt y)^{-\frac 12} d_i^{\frac 12})$ terms can be discussed similarly to $\mathcal{L}_1$. We thus omit the details. For the contribution from the  those  $O_\prec(\sn (d_i-\sqrt y)^{\frac 12}d_i^{\frac 12})$ terms,   recalling the notations in \rf{def:wtTG}, we notice that it suffices to consider the bound of the following forms 
\begin{align}\label{eq:last3forms}
&\frac{m_1 c_b}{\sn\Delta(d_i)^3}\Big(\sum_{q}y_{tq}(\mG_1^{b_1}\mb{\eta}_1)_q(\mG_1^{b_2}\mb{\eta}_2)_q(\wt T_{t_1}(\mG_1)\mb{\eta}_{3})_q\Big)\notag\\
&\qquad\qquad\qquad\times\Big(\sum_k(X^*T_t(\mG_1) \mb{\vartheta}_{t} )_k(X^*\wt T_{t_2}(\mG_1)\mb{\eta}_4)_k (X^*\mG_1^{b_3}X)_{kk}\Big) \mP^{l-3},\nonumber\\
&\frac{m_1 }{\sn\Delta(d_i)^3}\sum_{b=1}^2 c_b
 \sum_{\begin{subarray} {c} b_1,b_2\geq 1\\ b_1+b_2=b+1\end{subarray}}
\Big(\sum_{q}y_{tq}(\mG_1^{b_1}\mb{\eta}_1)_q(\mG_1^{b_2}\mb{\eta}_2)_q(\wt T_{t_1}(\mG_1)\mb{\eta}_{3})_q\Big)\notag\\
&\qquad\qquad\qquad\times\Big(\sum_k(X^*T_t(\mG_1) \mb{\vartheta}_{t} )_k(X^*\wt T_{t_2}(\mG_1)\mb{\eta}_4)_k \Big) \mP^{l-3},\nonumber\\
&\frac{m_1 }{\sn\Delta(d_i)^3}\Big(\sum_{q}y_{tq}(\mG_1\mb{\eta}_1)_q(\mG_1\mb{\eta}_2)_q(\wt T_{t_1}(\mG_1)\mb{\eta}_{3})_q\Big)\notag\\
&\qquad\qquad\qquad\times\Big(\sum_k(X^*T_t(\mG_1) \mb{\vartheta}_{t} )_k(X^*\wt T_{t_2}(\mG_1)\mb{\eta}_4)_k\big((X^*\mG_1X)_{kk}\big)^a\Big) \mP^{l-3},
\end{align}
for $a=0,1$, $t_1,t_2=0,1$ and $b=b_1+b_2+b_3-2\in \{1,2\}$.
By \rf{2020052401}, we get similar estimates to \rf{Importestsum1} for the sum $\sum_{q}y_{tq}(\mG_1^{b_1}\mb{\eta}_1)_q(\mG_1^{b_2}\mb{\eta}_2)_q(\wt T_{t_1}(\mG_1)\mb{\eta}_{3})_q$.
This bound, together with \rf{2020052402}, \rf{est:XT012}, Remark \ref{boundrmk*} and the estimates $c_1=O(d_i), c_2=O((d_i-\sqrt y)^{-2})$, leads to the crude bound $O_\prec(N^{-\frac 12})$ for the first and last forms in \eqref{eq:last3forms}. For the second form, one can refer to the estimate \rf{2020052403}. Analogously, we can also get 
\begin{align*}
\sum_{b=1}^2 c_b
 \sum_{\begin{subarray} {c} b_1,b_2\geq 1\\ b_1+b_2=b+1\end{subarray}}
\Big(\sum_{q}y_{tq}(\mG_1^{b_1}\mb{\eta}_1)_q(\mG_1^{b_2}\mb{\eta}_2)_q(\wt T_{t_1}(\mG_1)\mb{\eta}_{3})_q\Big) = O_\prec(d_i^{-3}),
\end{align*}
by which together with \rf{2020052402}, \rf{est:XT012}, we have that the second form of (\ref{eq:last3forms})  is bounded crudely by $O_\prec(N^{-\frac 12})$.
 Hence, $\mathcal L_2$ is also bounded by $O_\prec (N^{-\frac 12 })$.

Finally, for $\mathcal L_3$, we use the bound in (\ref{eq:bdPd3}). 
Then following the same argument as for $\mathcal L_1$, one can prove that $\mathcal L_3= O_\prec(N^{-\frac12})$. 
Thus, we have $h_{t}(0,3)=O_\prec (N^{-\frac 12 } )$ for $t=0,1,2$ and the proof of (2) in Lemma \ref{rankrlemh_ts} is complete.

Before we proceed to the proof of the remainder term $\mathcal R_{t}$, let us comment that, using the same reasoning as we  previously did for $h_{t}(\alpha_1,\alpha_2)$ with $\alpha_1+\alpha_2\le 3$, $t=0,1,2$, one can also get
\begin{align}\label{eq:bdh4'}
N^{-2}\frac{|m_1|}{\Delta(d_i)}\sum_{q,k} \Big|y_{tq} \frac{\partial^4 \big((X^*T_t(\mG_1)\mb{\vartheta}_{t})_{k}\mP^{l-1}\big)}{\partial x_{qk}^4}\Big| = O_\prec(N^{-\frac12}),
\end{align}
which implies that $h_{t}(\alpha_1,\alpha_2) = O_\prec(N^{-\frac12})$ for $\alpha_1+\alpha_2 =4$. The main tools are still Lemma \ref{Importantests} and the isotropic local laws \eqref{weak_est_DG}  and \eqref{weak_est_XG}. We omit the details. The necessary formulas for the fourth derivatives of $(X^*\mG_1^s)_{kj}$ and $\mP$ are recorded in the Appendix \ref{s.derivative of G} for the readers' convenience.

In the end, we prove  (3) of Lemma \ref{rankrlemh_ts}. By Lemma \ref{cumulantexpansion}, we can bound $\mathcal{R}_{t}$ by
\begin{align}
|\mathcal{R}_{t}|\leq \sn \frac{|m_1|}{\Delta(d_i)} \sum_{q,k}\mathbb{E}\Big( & N^{-\frac{5}{2}}\sup_{|x_{qk}|\leq N^{-\frac{1}{2}+\epsilon}}\Big|  y_{tq}\frac{\partial^4 \big((X^*T_t(\mG_1)\mb{\vartheta}_{t})_{k}\mP^{l-1}\big)}{\partial x_{qk}^4}\Big| \nonumber\\
&+N^{-\wt K}\sup_{x_{qk}\in\mathbb{R}}\Big| y_{tq}\frac{\partial^4 \big((X^*T_t(\mG_1)\mb{\vartheta}_{t})_{k}\mP^{l-1}\big)}{\partial x_{qk}^4}\Big|\Big), \label{boundR_t}
\end{align}
for any sufficiently large constant $\wt K$. We evaluate the RHS of \rf{boundR_t} term by term. 

First, we claim that similarly to  \eqref{eq:bdh4'} we have 
\begin{align}
&\sn \frac{|m_1|}{\Delta(d_i)}\mathbb{E}\Big(N^{-\frac{5}{2}}\sup_{|x_{qk}|\leq N^{-\frac{1}{2}+\epsilon}}\sum_{q,k}\Big| y_{tq}\frac{\partial^4 \big((X^*T_t(\mG_1)\mb{\vartheta}_{\varphi(t)})_{k}\mP^{l-1}\big)}{\partial x_{qk}^4}\Big|\Big)\notag\\
&=O_\prec(N^{-\frac12}). \label{19072201}
\end{align}
The difference between \eqref{eq:bdh4'} and (\ref{19072201}) is that we actually consider a random matrix $\wt{X}$ with the $(q,k)$-th entry deterministic while all the other entries random.  Using a regular perturbation argument through the resolvent expansion,   one can show that replacing one random entry $x_{qk}$ in $X$ by any deterministic  number bounded by $N^{-1/2+\epsilon}$ while keeping all the other $X$ entries random will not change the isotropic local law. Thus the isotropic local law together with the deterministic bounds \rf{trivialbd_G1}-\rf{trivialbd_XG1X} leads to (\ref{19072201}).

For the second term on the RHS of \rf{boundR_t}, we simply apply the crude deterministic bounds \rf{trivialbd_G1}-\rf{trivialbd_XG1X}. By choosing $\wt{K}$ sufficiently large, we can conclude that the second term in   \rf{boundR_t} is negligible. 
  
Thus $|\mathcal{R}_{t}| = O_\prec(N^{-\frac12})$. This completes the proof of  Lemma \ref{rankrlemh_ts}.

\end{proof}
 
 \section{Proofs of theorems in Section \ref{s.application}} \label{appendix.proof}

In this section, we prove the two theorems in Section \ref{s.application}, Theorems \ref{thm_firststatdist} and \ref{thm_secondtestatistics}, and also their corollaries, Corollaries \ref{cor.082001} and \ref{cor_simulateddistribution}. We also provide  a result equivalent to the joint eigenvalue-eigenvector distribution in the multiple case, which will be needed in the proofs of some of the aforementioned theorems and corollaries.
In addition, the proofs of Theorems \ref{thm_firststatdist} and \ref{thm_secondtestatistics}  rely on Propositions \ref{prop.simples}-\ref{prop.multiple}, among which the proofs of Propositions \ref{prop.simples}, \ref{joint ev-evector multiple} and \ref{prop.multiple} are analogous to that of Theorem \ref{mainthm}, and the proof of Proposition \ref{repre.multiple.eigv} is an extension of that of Lemma \ref{lem.representation.eigv}. 
More specifically, Proposition \ref{prop.simples} is an extension of Theorem \ref{thm.simple case}  by considering  more simple spikes and it will support the proof of the all-simple spikes case for both Theorem \ref{thm_firststatdist} and Theorem  \ref{thm_secondtestatistics}. Second, Proposition \ref{prop.multiple} generalizes Theorem \ref{thm.simple case} to allow multiple spikes. Consequently,  
the case of all-equal spikes for both Theorem \ref{thm_firststatdist} and Theorem \ref{thm_secondtestatistics} will rely on Proposition \ref{prop.multiple}.  It will be seen that the proof of Theorems \ref{thm_firststatdist} and \ref{thm_secondtestatistics} will also rely on Proposition \ref{repre.multiple.eigv} which establish the expansion of the spiked eigenvalue estimator for a multiple spike.

Recall the notation $\mathds{1}_{E}$ as the indicator function of some event $E$, and also the notation $\mathcal{I}^c= \lb1,r\rb\setminus \mathcal{I}$ for any $\mathcal{I}\subset \lb1,r\rb $.
Further, we recall the notation $\mathsf{I}\equiv \mathsf{I}(i)$ as the set of indices  of the multiple $d_t$'s including $d_i$.  We remark here that throughout this section, $\mathcal{I}$ is used to denote a generic subset of $\lb1,r\rb$, which may include the indices of distinct $d_t$'s. In contrast, the notation $\mathsf{I}\equiv \mathsf{I}(i)$ is always used to denote a set of indices of a multiple $d_i$.

 Further, we raise the following notation as a rewriting of  \rf{def:v_I_varsigma_I} in certain cases: For a fixed $i\in \lb1, r\rb$, and  any unit vector $\mb{u}\in \mathrm{Span}\{\mathbf v_j \}_{j\in \lb 1,M\rb \setminus \mathsf{I}(i)}$, we set 
 \begin{align}\label{2020081001}
\mb{\varsigma}_{i}^{\mb{u}}:=\sum_{j\in \lb 1,M\rb \setminus \{i\}} \frac{d_i\sqrt{d_j +1}}{d_i -d_j} \langle \mb{u}, \bv_j \rangle \bv_j.
 \end{align}
  Here we made the convention that $\langle \mb{u}, \bv_j \rangle/(d_i-d_j)=0$, in case $\langle \mb{u}, \bv_j \rangle=d_i-d_j=0$. Note that $\mb{\varsigma}_{i}^{\mb{u}}= \mb{\varsigma}_{\mathsf{I}(i)}$ (with $\mb{w}=\mb{u}$) (c.f. \rf{def:v_I_varsigma_I}). Here we emphasize the dependence of the notation on $\mb{w}=\mb{u}$, and also 
 the dependence of $\mathsf{I}(i)$ on $i$. Such a rewriting will be mostly suitable for the discussion in this section.

Recall the projection $\rm{P}_{\mathcal{I}}$ defined in (\ref{def of PI general}).  In the sequel, we state and briefly prove the aforementioned propositions. We will consider the projections of vectors onto $ {\rm{P}}_{\mathcal{I}}$, and we are primarily interested in the case that either $\{d_t\}_{t\in \mathcal{I}}$ are all distinct and well-separated or are all equal. We call the former as {\it all-simple} case, and the latter as {\it all-equal} case. Nevertheless, many of our results hold for more general $\mathcal{I}$ in the sense that it can contain indices of both simple and multiple $d_t$'s.

First, we consider an extension of Theorem \ref{thm.simple case}. Suppose we are interested in the all-simple case, i.e., all the $d_i$'s for $i\in \mathcal{I}$ are simple, we have the following proposition.

\begin{prop}\label{prop.simples}
Suppose that Assumptions \ref{assumption} ,  \ref{supercritical},  and the setting (\ref{19071802}) hold.  In case $d_i\equiv d_i(N)\to \infty$ as $N\to\infty$ for some $i\in \mathcal{I}$, we additionally assume that $|y-1|\geq \tau_0$ for some small but fixed $\tau_0>0$. 
We further assume that all $d_i, i\in\mathcal{I}$ are simple. 
Then for each $i\in \mathcal{I}$, we have the expansion of $i$-th eigenvalue $\mu_i$  in \rf{expansion.eigv}.
Moreover, we have the following statements:

\noindent (1). For any  $\bv_i$ with $i\in \mathcal{I}$,  we have
\begin{align*}
\langle  \bv_i, {\rm{P}}_\mathcal{I} \bv_i\rangle=
&\frac{d_i^2-y}{d_i(d_i+y)}+ \frac{1}{\sqrt{N(d_i^2-y)}}  \,\Theta_{\bv_{i}}^{\bv_i} \nonumber\\
&-\frac{1}{N}\sum_{t\in  \{i\}^c}\frac{d_id_t}{(d_i-d_t)^2} (\Pi_{\bv_t}^{\bv_i})^2 
+\frac {\Vert \mb{\varsigma}_{j}^{ \mb{v}_i}\Vert^2}{N} \sum_{j\in \mathcal{I}\setminus\{i\}}\frac{1}{d_j} (\Delta_{\bv_j}^{ \mb{v}_i})^2  \nonumber\\
& 
+O_\prec \bigg({N^{-(1+\varepsilon)}}\sum_{j\in \mathcal{I}\setminus\{i\}}\frac{1}{d_j} \Vert \mb{\varsigma}_{j}^{ \mb{v}_i}\Vert^2\bigg) \nonumber\\
&+ O_\prec\bigg(N^{-\varepsilon}\Big(N^{-\frac 12} \frac{1}{\sqrt{d_i^2-y}} + \frac{1}{N}\sum_{t\in\{ i\}^c} \frac{d_id_t}{(d_i-d_t)^2}\Big)\bigg).
\end{align*}
 Further, $\{\Phi_i\}_{i\in\mathcal{I} },\{ \Theta_{\bv_{i}}^{\bv_i}\}_{i\in\mathcal{I} }, \{\Pi_{\bv_t}^{\bv_i}\}_{i\in\mathcal{I} , t\in\{ i\}^c}$, $\{\Delta_{\bv_j}^{ \mb{v}_i}\}_{i,j\in \mathcal{I}, j\neq i}$ are asymptotically jointly Gaussian distributed. Especially, 
\begin{align}
\Big( \{\Phi_i\}_{i\in \mathcal{I}}, \{\Theta_{\bv_{i}}^{\bv_i}\}_{i\in \mathcal{I}}\Big) \simeq \mathcal{N}(0, C_{\mathcal{I}}) \label{082101}
\end{align}
where $C_{\mathcal{I}}$ is defined entrywise by  the RHS of the following equations
\begin{align*}
&{\rm{cov}} (\Phi_i, \Phi_j) \doteq  \mathds{1}_{\{i=j\}} \cdot 2(1+d_i^{-1})^2 \nonumber\\
&\hspace{15ex}+ \kappa_4 (1-yd_i^{-2})^{\frac12} (1+d_i^{-1})  (1-yd_j^{-2})^{\frac12}  (1+d_j^{-1}) s_{2,2} (\bv_i,\bv_j),\nonumber\\
&{\rm{cov}}(\Theta_{\bv_{i}}^{\bv_i}, \Theta_{\bv_{j}}^{\bv_j}) \doteq \mathds{1}_{\{i=j\}} \cdot  2y \mathtt{h}(d_i)^2 (1+ y \mathtt{h}(d_i)^2)\nonumber\\
 &\hspace{15ex}+  \kappa_4 (1-yd_i^{-2})^{\frac12} \mathtt{f}(d_i) (1-yd_j^{-2})^{\frac12}  \mathtt{f}(d_j)  s_{2,2} (\bv_i,\bv_j), \nonumber\\
& {\rm{cov}}(\Phi_i, \Theta_{\bv_{j}}^{\bv_j}) \doteq  \mathds{1}_{\{i=j\}} \cdot  2y \mathtt{h}(d_i)^2 (1+d_i^{-1})  \nonumber\\
 &\hspace{15ex}+ \kappa_4 (1-yd_i^{-2})^{\frac12} (1+d_i^{-1})  (1-yd_j^{-2})^{\frac12}  \mathtt{f}(d_j)s_{2,2} (\bv_i,\bv_j),
\end{align*}
for $i,j\in \mathcal{I}$.  Here $\Phi_i$ is given in \rf{expansion.eigv}. 

\noindent(2). Let $\mathcal{J}$ be any fixed index set. Further, let $ \{\mb{u}_j\}_{j\in \mathcal{J}} \subset \mathrm{Span}\{\mathbf v_i \}_{i\in \lb 1,M\rb \setminus \mathcal{I}} $ be any  set of orthonormal vectors.  Then 
 \begin{align*}
\langle \mb{u}_j, {\rm{P}}_\mathcal{I} \mb{u}_j\rangle =\sum_{i\in \mathcal{I}}\frac{\Vert \mb{\varsigma}_{i}^{ \mb{u}_j}\Vert^2}{Nd_i} (\Delta_{\bv_i}^{ \mb{u}_j})^2  
+O_\prec \bigg({N^{-(1+\varepsilon)}} \sum_{i\in \mathcal{I}}d_i^{-1} \Vert \mb{\varsigma}_{i}^{ \mb{u}_j}\Vert^2\bigg),\, \text{ for $j\in \mathcal{J}$} 
\end{align*}
and $\Big(\{\Delta_{\bv_i}^{ \mb{u}_j}\}_{(j, i)\in \mathcal{J}\times \mathcal{I}} \Big)\simeq \mathcal{N}(0,\mb{U}_0)$. The  covariance  matrix $\mb{U}_0$ is defined by  the RHS of the following equation
\begin{align}\label{200826010}
&{\rm{cov}}(\Delta_{\bv_{i_1}}^{ \mb{u}_{j_1}}, \Delta_{\bv_{i_2}}^{ \mb{u}_{j_2}})
\doteq \mathds{1}_{\{i_1=i_2\}}\mathtt{h}(d_{i_1}) \langle \big(\mb{\varsigma}_{i_1}^{ \mb{u}_{j_1}}\big)^0,  \big(\mb{\varsigma}_{i_2}^{ \mb{u}_{j_2}}\big)^0 \rangle \nonumber\\
&\qquad+ \kappa_4\frac{\sqrt {(d_{i_1}^2-y)\mathtt{h}(d_{i_1})}}{d_{i_1}} \frac{\sqrt {(d_{i_2}^2-y)\mathtt{h}(d_{i_2})}}{d_{i_2}} s_{1,1,1,1}\Big(\bv_{i_1}, \bv_{i_2}, \big(\mb{\varsigma}_{i_1}^{ \mb{u}_{j_1}}\big)^0, \big(\mb{\varsigma}_{i_2}^{ \mb{u}_{j_2}}\big)^0 \Big), 
\end{align}
for $j_{1,2}\in \mathcal{J}, i_{1,2}\in \mathcal{I}$. Here we use the notation $\big( \mb{\varsigma}_{i}^{ \mb{u}_j}\big)^0$ as the normalized $\mb{\varsigma}_{i}^{ \mb{u}_j}$ similarly to \rf{def.nor.Varsigma}. And  we  define  $\mb{U}$ to be the matrix obtained by multiplying the entry of $\mb{U}_0$, the RHS of \rf{200826010},  by $\Vert \mb{\varsigma}_{i_1}^{ \mb{u}_{j_1}} \Vert \Vert\mb{\varsigma}_{i_2}^{ \mb{u}_{j_2}}\Vert/\sqrt{d_{i_1}d_{i_2}}$.
\end{prop}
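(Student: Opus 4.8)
\textbf{Proof strategy for Proposition \ref{prop.simples}.}

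The plan is to reduce the proposition, piece by piece, to the machinery already established for Theorem \ref{mainthm} and Theorem \ref{thm.simple case}. The key observation is that Proposition \ref{prop.simples} is \emph{not} a statement about a single multiple $d_i$; rather, $\mathcal{I}$ ranges over indices of distinct (simple) spikes. So first I would show that $\langle \bv_i, {\rm P}_{\mathcal I}\bv_i\rangle$ and $\langle \mb u_j, {\rm P}_{\mathcal I}\mb u_j\rangle$ decompose into contributions that only involve the \emph{individual} projections ${\rm P}_i = \bs\xi_i\bs\xi_i^*$ up to negligible error. Concretely, since $\langle \bv_i, {\rm P}_{\mathcal I}\bv_i\rangle = \sum_{k\in\mathcal I}\langle \bv_i, \bs\xi_k\rangle^2$, the term $k=i$ is exactly $\langle \bv_i,{\rm P}_i\bv_i\rangle$, which is handled by Theorem \ref{thm.simple case} with $\bw=\bv_i$ (noting $\bw_{\{i\}}=\bw$ since $\bv_i\in\mathrm{Span}\{\bv_i\}$, so Remark \ref{rmk_inside} applies), while each cross term $k\neq i$ with $k\in\mathcal I$ is $\langle\bv_i,\bs\xi_k\rangle^2$, which is the projection of $\bs\xi_k$ onto a direction $\bv_i$ lying in $\mathrm{Span}\{\bv_j\}_{j\notin\{k\}}$, i.e. exactly the ``orthogonal'' case $\bw\in\mathrm{Span}\{\bv_j\}_{j\in\lb1,M\rb\setminus\{k\}}$ of Theorem \ref{mainthm}/Remark \ref{rmk_inside}. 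This case contributes the $\chi^2$-type term $\frac{\Vert\mb\varsigma_k^{\bv_i}\Vert^2}{Nd_k}(\Delta_{\bv_k}^{\bv_i})^2$ plus lower order, which is precisely the $\frac1N\sum_{j\in\mathcal I\setminus\{i\}}\frac1{d_j}\Vert\mb\varsigma_j^{\bv_i}\Vert^2(\Delta_{\bv_j}^{\bv_i})^2$ summand in the proposition. Part (2) is entirely of this orthogonal type: every $\langle\mb u_j,\bs\xi_i\rangle^2$ with $\mb u_j\in\mathrm{Span}\{\bv_i\}_{i\notin\mathcal I}$ and $i\in\mathcal I$ falls under the $\bw\in\mathrm{Span}\{\bv_j\}_{j\notin\mathsf I}$ case, giving the stated $\chi^2$-form.

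The substance of the proposition beyond Theorems \ref{mainthm} and \ref{thm.simple case} is the \emph{joint} asymptotic Gaussianity across all the indices in $\mathcal I$ (and $\mathcal J$), together with the explicit covariance structure in \eqref{082101} and \eqref{200826010}. For this I would go back to the Green function level. Each random variable in play --- the eigenvalue fluctuation $\Phi_i = -\sn\sqrt{d_i^2-y}\,\theta(d_i)\chi_{ii}(\theta(d_i))$ (cf. \eqref{090301}), the eigenvector quadratic forms $\Theta^{\bv_i}_{\bv_i}$, $\Delta^{\mb u_j}_{\bv_i}$, $\Pi^{\bv_i}_{\bv_t}$ --- is, by Lemma \ref{weak_lem_decomp} and Lemma \ref{lem.representation.eigv}, a linear combination of quadratic forms of the centered Green function $\Xi(z)$ evaluated at \emph{different} arguments $z=\theta(d_i)$ for $i\in\mathcal I$. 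I would form an arbitrary real linear combination $\mathcal P$ of all these quadratic forms across all the relevant $z$-values and run the same cumulant-expansion recursive moment estimate as in Proposition \ref{recursivemP}, establishing $\mathbb E\mathcal P^l = (l-1)\mathcal V\,\mathbb E\mathcal P^{l-2}+O_\prec(N^{-1/2+\cdots})$ with $\mathcal V$ the appropriate quadratic form of the (joint) covariance. The only genuinely new input is evaluating cross-covariances between quadratic forms of $\Xi$ at two \emph{different} spectral parameters $\theta(d_i)\neq\theta(d_j)$; the isotropic local law (Theorem \ref{isotropic}, estimate \eqref{est.DG}) and the identities in Lemma \ref{lem.19072501*} control these uniformly, and the leading deterministic cross-terms come out proportional to $m_1(\theta(d_i))m_1(\theta(d_j))(\cdots)$, which after substituting the explicit values collapses to the Kronecker-$\delta$ structure $\mathds 1_{\{i=j\}}$ seen in the stated covariances (for the Gaussian part) plus a non-degenerate $\kappa_4$ cross-term of the form $s_{2,2}$ resp. $s_{1,1,1,1}$.

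Then I would extract the covariance entries by specializing $\mathbf c$ in the linear combination: taking $\mathbf c$ to pick out $\Phi_i$ and $\Phi_j$ reproduces ${\rm cov}(\Phi_i,\Phi_j)$, and so on, using the algebraic identities for $m_1,m_2$ at $z=\theta(d_i)$ already listed in the proof of Theorem \ref{mainthm} (e.g. $m_1^2(zm_1)'=\frac1{(d_i+y)^2(d_i^2-y)}$, $zm_2m_1^2=-\frac1{d_i(d_i+y)}$, etc.). The identifications $\mathtt f,\mathtt g,\mathtt h,\mathtt l$ are exactly as in \eqref{eq_keyeqone}--\eqref{eq_keyeqtwo}, so matching the output of the moment estimate against the claimed formulas is a routine (if tedious) computation, entirely parallel to the end of the proof of Theorem \ref{mainthm}. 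Finally, the error-term bookkeeping --- that the $\chi^2$ remainders and all $O_\prec$ terms in the expansions of $\langle\bv_i,{\rm P}_{\mathcal I}\bv_i\rangle$ and $\langle\mb u_j,{\rm P}_{\mathcal I}\mb u_j\rangle$ are dominated by the displayed fluctuating terms --- follows from the size estimates on $A_{\mathsf I}^{\bw}, B_{\mathsf I}^{\bw}$ and the $\Lambda_2,\Lambda_3$ bounds from Lemma \ref{weak_lem_decomp}, exactly as in the remark following Theorem \ref{mainthm}; here Assumption \ref{supercritical} guarantees $d_id_j/(d_i-d_j)^2$ stays controlled for distinct simple spikes.

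\textbf{Main obstacle.} The delicate part is the joint-distribution step: handling quadratic forms of $\Xi$ at \emph{several distinct} arguments $\theta(d_i)$ simultaneously in the recursive moment estimate, and verifying that the off-diagonal (in $i,j$) Gaussian cross-covariances vanish while the $\kappa_4$ cross-terms survive with exactly the stated $s_{2,2}$ / $s_{1,1,1,1}$ form. This requires re-running the cumulant expansion of Proposition \ref{recursivemP} keeping track of products $m_1(\theta(d_i))^{a_1}m_1(\theta(d_j))^{a_2}$ and showing the ``diagonal'' terms $\mathrm{Tr}(X^*\mG_1(\theta(d_i))^s X)$-type contributions only couple to the same spectral parameter. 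I expect no conceptual novelty here over the single-$d_i$ case, but the combinatorial bookkeeping is the real cost; everything else reduces cleanly to Theorems \ref{mainthm}, \ref{thm.simple case} and Lemma \ref{lem.representation.eigv}.
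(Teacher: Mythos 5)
Your proposal is correct and matches the paper's own argument: the paper proves Proposition \ref{prop.simples} exactly by collecting all the Green function quadratic forms associated with the distinct simple spikes (i.e.\ evaluated at the several parameters $\theta(d_i)$, $i\in\mathcal{I}$) into the linear combination $\mathcal{P}$ of (\ref{19071950}) and re-running the recursive moment estimate of Proposition \ref{recursivemP}, with the expansions themselves coming from Lemma \ref{weak_lem_decomp}, Lemma \ref{lem.representation.eigv} and Remark \ref{rmk_inside} applied with $\bw=\bv_i$ or $\bw=\mb{u}_j$. The extra bookkeeping you flag (cross-covariances at distinct spectral parameters and the surviving $\kappa_4$ terms) is precisely the content the paper omits as routine, so no further justification is needed.
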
 

\begin{rmk} \label{rmk.082102} We remark here that  (\ref{082101}) and the whole statement (2) in Proposition \ref{prop.simples}
hold for general $\mathcal{I}\subset \llbracket 1, r_0\rrbracket $, with the same expression of the covariance structure $C_{\mathcal{I}}$ and $\mb{U}_0$. Here general $\mathcal{I}$ means that it can contain the indices of either simple $d_t$'s  or multiple ones,  or even both. The proof of such an extended version is nearly the same as the simple one. 
\end{rmk}

\begin{proof}[Proof of Proposition \ref{prop.simples}]  The proof is basically the same as the one 
of Theorem \ref{thm.simple case}. The only change is that now one needs to put all Green function quadratic forms corresponding to distinct simple spikes  together and consider their joint Gaussianity. It will only require one to change the quadratic forms in the  definition of  $\mathcal{P}$ in (\ref{19071950}) and go through the recursive moment estimate again. Hence, we omit the details and conclude the proof. 
\end{proof}

Next, we consider the all-equal case, i.e.,   all $d_i, i\in \mathcal{I}$ are identical. Analogously to the all-simple case, we can also estimate the multiple spike from the data. In the sequel, we present the detailed estimation of the unknown $d_i$ in the multiple case. Recall the index set $\mathsf{I}(i)$ as the collection of indices for a multiple $d_i$. In the sequel, we are primarily interested in the case $|\mathsf{I}(i)|>1$, but the results apply to the degenerated (simple) case as well. For any $t\in \mathsf{I}(i)$, according to Lemma \ref{locationeig}, we have   $|\mu_t - \theta(d_i)|\prec d_i^{\frac 12}\delta_{i0}^{\frac 12} N^{-\frac 12}$. 
We shall derive the expansion  for $\{\mu_t\}_{t\in \mathsf{I}(i)}$ and further  get the expression of $\sum_{t\in \mathsf{I}(i)} \mu_t /|\mathsf{I}(i)|$ which can be applied to estimate the unknown $\theta(d_i)$. Define 
\begin{align*}
D_{\mathsf{I}(i)}: = \frac {d_i}{1+d_i} { I}_{|\mathsf{I}(i)|} \text{ and }  V_{\mathsf{I}(i)}:= (\{\bv_t\}_{t\in \mathsf{I}(i)}),
\end{align*}
where $ I_{|\mathsf{I}(i)|}$ means an identity matrix of dimension $|\mathsf{I}(i)|$ and $V_{\mathsf{I}(i)}$  is the submatrix of $V$ obtained by selecting the columns with  indices in $\mathsf{I}(i)$.  For convenience,  for the columns in $V_{\mathsf{I}(i)}$, we keep their original  indices in $V$. With some abuse of notation, in this section, we will use the notation $(\lambda_t (V_{\mathsf{I}(i)}^* A V_{\mathsf{I}(i)})) _{t\in {\mathsf{I}(i)}}$ to represent the family of eigenvalues of  $V_{\mathsf{I}(i)}^* A V_{\mathsf{I}(i)}$, with  the  increasing order $\lambda_{t_1} (V_{\mathsf{I}(i)}^* A V_{\mathsf{I}(i)})\leq \lambda_{t_2} (V_{\mathsf{I}(i)}^* A V_{\mathsf{I}(i)})$ if  $t_1, t_2\in \mathsf{I}(i), t_1<t_2$. We recall that $\delta_{i0}:= d_i-\sqrt y$, the distance of the multiple $d_i$ to critical value $\sqrt y$. With the above notations, we have the following proposition, whose proof will be stated in the end.
\begin{prop}\label{repre.multiple.eigv}
Suppose that the assumptions in Theorem \ref{mainthm} hold. Let $i\in \llbracket 1, r_0\rrbracket$. For any $t\in \mathsf{I}(i)$,   we have
\begin{align*}
\mu_t = \theta(d_i)-  (d_i^2-y)\theta(d_i)\, \lambda_t \big( V_{\mathsf{I}(i)}^* \Xi(\theta(d_i)) V_{\mathsf{I}(i)} \big)   + O_\prec \Big(d_i^{\frac12} \delta_{i0}^{\frac12} N^{-\frac12 - \varepsilon}\Big)
\end{align*}
for some small fixed $\varepsilon>0$.
\end{prop}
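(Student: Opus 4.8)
\textbf{Proof plan for Proposition \ref{repre.multiple.eigv}.}

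The plan is to mirror the argument of Lemma \ref{lem.representation.eigv}, upgrading it from the simple case $\mathsf{I}=\{i\}$ to a multiple spike $\mathsf{I}(i)$ of arbitrary (fixed) multiplicity, using a perturbation argument at the level of a $|\mathsf{I}(i)|\times|\mathsf{I}(i)|$ matrix rather than a scalar. First I would recall from the proof of Lemma \ref{weak_lem_decomp} that $Q-z = \Sigma^{1/2}\mG_1^{-1}(z)\big(I_M + z\mG_1(z)VDV^*\big)\Sigma^{1/2}$, so that, away from the spectrum of $H$ (which holds with high probability for $z$ near $\theta(d_i)$ by Lemma \ref{locationeig} and Assumption \ref{supercritical}), the eigenvalues $\mu_t$, $t\in\mathsf{I}(i)$, are exactly the real solutions to $\det\big(D^{-1}+zV^*\mG_1(z)V\big)=0$ in a window of size $d_i^{1/2}\delta_{i0}^{1/2}N^{-1/2+\delta}$ around $\theta(d_i)$. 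Restricting attention to that window, I would define the $r\times r$ matrices $\mathcal{A}(x)$ with $\mathcal{A}_{jk}(x) = (1+d_j^{-1})\delta_{jk} + x\,\bv_j^*\mG_1(x)\bv_k - x m_1(x)\delta_{jk}$ and its diagonal approximation $\wt{\mathcal{A}}(x)$, exactly as in the proof of Lemma \ref{lem.representation.eigv}. By the isotropic local law \eqref{est.DG} (equivalently \eqref{weak_est_DG}) and the non-overlapping condition \eqref{asd2}, for $x$ in the window the eigenvalues of $\mathcal{A}(x)$ split into the $|\mathsf{I}(i)|$-sized cluster near $1+d_i^{-1}$ (coming from the rows/columns indexed by $\mathsf{I}(i)$) and the remaining eigenvalues near $1+d_j^{-1}$, $j\in\mathsf{I}(i)^c$, which stay bounded away. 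Thus $xm_1(x) = -\lambda\big(\text{cluster eigenvalue of }\mathcal{A}(x)\big)$ is solved by exactly $|\mathsf{I}(i)|$ values $\mu_t$, and it remains to locate the cluster eigenvalues of $\mathcal{A}(\theta(d_i))$.

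The key step is then a block-perturbation analysis. Write $\mathcal{A}(x)$ in block form with the $\mathsf{I}(i)$–$\mathsf{I}(i)$ block, the $\mathsf{I}(i)$–$\mathsf{I}(i)^c$ off-diagonal blocks, and the $\mathsf{I}(i)^c$–$\mathsf{I}(i)^c$ block. Using a Schur complement (or the second-order Rayleigh–Schr\"odinger expansion as in Lemma \ref{lem.representation.eigv}), the $|\mathsf{I}(i)|$ cluster eigenvalues of $\mathcal{A}(\theta(d_i))$ equal the eigenvalues of
$$
(1+d_i^{-1})\,I_{|\mathsf{I}(i)|} + \theta(d_i)\,V_{\mathsf{I}(i)}^*\,\mG_1(\theta(d_i))\,V_{\mathsf{I}(i)} - \theta(d_i)m_1(\theta(d_i))\,I_{|\mathsf{I}(i)|}
$$
up to an error controlled by $\|\mathcal{A}-\wt{\mathcal{A}}\|_{\mathrm{op}}^2 / \delta_i$, which by \eqref{weak_est_DG} and \eqref{asd2} is $O_\prec(d_i^{-1/2}\delta_{i0}^{-1/2}N^{-1/2-\varepsilon})$, exactly as the second-order term was bounded in the proof of Lemma \ref{lem.representation.eigv}. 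One also needs the analogue of the estimate $\|\partial_x\mathcal{A}(x)\|_{\mathrm{op}}\prec d_i^{1/2}\delta_{i0}^{-5/2}N^{-1/2}$ (from \eqref{est.DG} for $l=1$) together with Lemma \ref{locationeig} to transfer the evaluation point from $\mu_t$ to $\theta(d_i)$, picking up only $O_\prec(d_i\delta_{i0}^{-2}N^{-1})\prec d_i^{-1/2}\delta_{i0}^{-1/2}N^{-1/2-\varepsilon}$. Recalling that, by \eqref{19071910}, $\theta(d_i)m_1(\theta(d_i)) = -(1+d_i^{-1})$, and that $\mG_1 = \Xi + m_1 I$ so $\theta(d_i)V_{\mathsf{I}(i)}^*\mG_1 V_{\mathsf{I}(i)} = \theta(d_i)V_{\mathsf{I}(i)}^*\Xi V_{\mathsf{I}(i)} + \theta(d_i)m_1 I_{|\mathsf{I}(i)|}$, the constant and $m_1$ terms cancel, leaving the cluster eigenvalues of $\mathcal{A}(\theta(d_i))$ equal to $\theta(d_i)\,\lambda_t\big(V_{\mathsf{I}(i)}^*\Xi(\theta(d_i))V_{\mathsf{I}(i)}\big)$ up to the stated error. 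Finally, inverting $xm_1(x)=-\lambda_t(\cdots)$ by expanding $xm_1(x)$ around $\theta(d_i)m_1(\theta(d_i))$ via Lemma \ref{locationeig} (using $(xm_1)'(\theta(d_i))\asymp (d_i^2-y)^{-1}$ as in Remark \ref{Rmk:UpsilonDelta} and the computation after \eqref{19072010}) produces $\mu_t = \theta(d_i) - (d_i^2-y)\theta(d_i)\,\lambda_t\big(V_{\mathsf{I}(i)}^*\Xi(\theta(d_i))V_{\mathsf{I}(i)}\big) + O_\prec(d_i^{1/2}\delta_{i0}^{1/2}N^{-1/2-\varepsilon})$, since the factor $-(d_i^2-y)\theta(d_i)$ multiplying the error $O_\prec(d_i^{-1/2}\delta_{i0}^{-1/2}N^{-1/2-\varepsilon})$ is $O(d_i^2\delta_{i0})$, giving the claimed size.

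I expect the main obstacle to be the bookkeeping in the block-perturbation step: one must verify that the clustering of the eigenvalues of $\mathcal{A}$ is uniform over the whole window for $x$ (so that the count ``exactly $|\mathsf{I}(i)|$ solutions'' is valid and the labelling by increasing order is consistent with the labelling of $\lambda_t(V_{\mathsf{I}(i)}^*\Xi V_{\mathsf{I}(i)})$), and that all perturbation errors are dominated by the target $O_\prec(d_i^{1/2}\delta_{i0}^{1/2}N^{-1/2-\varepsilon})$ in the regime where $d_i$ may diverge and $\delta_{i0}$ may be as small as $N^{-1/3+\epsilon}$ — this is where the prefactor $d_i^{3/2}$ in the non-overlapping condition \eqref{asd2} and the $\Delta(d_i)$-type bounds are needed, just as in the proof of Lemma \ref{weak_lem_decomp}. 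Apart from that, the argument is a routine adaptation of the scalar case, so I would keep the write-up short and refer to the proof of Lemma \ref{lem.representation.eigv} for the details that carry over verbatim.
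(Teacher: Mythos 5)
Your overall route is the same as the paper's: reduce to $\det\big(D^{-1}+zV^*\mG_1(z)V\big)=0$ in a window of size $d_i^{1/2}\delta_{i0}^{1/2}N^{-1/2+\delta}$ around $\theta(d_i)$, compare $\mathcal{A}(x)$ with a block approximation via degenerate perturbation theory with the inter-cluster gap supplied by \eqref{asd2}, move the evaluation point from $\mu_t$ to $\theta(d_i)$ using $\|\partial_x\mathcal{A}\|_{\mathrm{op}}$, and invert through $(zm_1)'(\theta(d_i))=1/(d_i^2-y)$. The genuine gap is precisely the point you set aside as ``bookkeeping'': in the multiple case you must produce a one-to-one, order-consistent correspondence between the $|\mathsf{I}(i)|$ eigenvalues $\mu_t$ and the ordered cluster eigenvalues $a_t(x)$ of $\mathcal{A}(x)$, and the naive argument (``each equation $a_t(x)=-xm_1(x)$ has a solution in the window, and there are $|\mathsf{I}(i)|$ eigenvalues there'') does not by itself rule out that two indices are matched to the same eigenvalue, or that the labelling jumps, because the cluster functions $a_t(x)$ are nearly degenerate and may coincide. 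The paper resolves this, following Proposition 4.5 of \cite{KY14}, by adding a tiny perturbation $\iota\Delta$ with an absolutely continuous law: almost surely at most one index $t\in\mathsf{I}(i)$ satisfies $a^{\iota}_t(x)=-xm_1(x)$ at any given $x$, a sign-change argument at the window endpoints $\theta^{\pm}$ shows each such equation does have a solution, and counting (Lemma \ref{locationeig} gives exactly $|\mathsf{I}(i)|$ eigenvalues of the perturbed model in the window) then yields the bijection; one takes $\iota\to0$ at the end. Without this (or some substitute continuity/degree argument), your identification of $\mu_t$ with the $t$-th ordered eigenvalue of $V_{\mathsf{I}(i)}^*\Xi(\theta(d_i))V_{\mathsf{I}(i)}$ is not justified, and this identification is exactly what the proposition asserts.

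Two smaller points. First, your opening sentence takes $\wt{\mathcal{A}}$ to be the diagonal approximation ``exactly as in Lemma \ref{lem.representation.eigv}''; in the multiple case the fully diagonal approximation is useless (the intra-cluster off-diagonal entries of $\mathcal{A}_{\mathsf{I}(i)}$ enter at first order), and what is needed — and what your key display in fact uses — is the block-diagonal approximation $\mathcal{A}_{\mathsf{I}(i)}\oplus\mathcal{A}_{\mathsf{I}(i)^c}$, with the perturbation error $\|\mathcal{A}-\wt{\mathcal{A}}\|_{\mathrm{op}}^2$ divided by the inter-block spectral gap (which in the $D^{-1}$ scale is of order $\delta_i/(d_id_j)$, not $\delta_i$; the non-overlapping condition \eqref{asd2} is exactly calibrated so that this still yields $O_\prec(d_i^{-1/2}\delta_{i0}^{-1/2}N^{-1/2-\varepsilon})$). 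Second, your final error bookkeeping is off by a factor $d_i$: the error produced at the level of $\mu_t m_1(\mu_t)$ is converted to the $\mu$-scale by the factor $1/(zm_1)'(\theta(d_i))=d_i^2-y\asymp d_i\delta_{i0}$, not by $(d_i^2-y)\theta(d_i)\asymp d_i^2\delta_{i0}$ (the extra $\theta(d_i)$ belongs to the main term $\theta(d_i)\lambda_t(V_{\mathsf{I}(i)}^*\Xi V_{\mathsf{I}(i)})$ only); taken literally your accounting would give $O_\prec(d_i^{3/2}\delta_{i0}^{1/2}N^{-1/2-\varepsilon})$, whereas the correct conversion gives the stated $O_\prec(d_i^{1/2}\delta_{i0}^{1/2}N^{-1/2-\varepsilon})$.
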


Recall the representation in Lemma  \ref{weak_lem_decomp}. Similarly to Theorem \ref{mainthm}, we can also derive the joint distribution of generalized components of eigenvectors and all  entries of the matrix  $V_{\mathsf{I}(i)}^* \Xi(\theta(d_i)) V_{\mathsf{I}(i)} $ under appropriate scaling. The result is collected in the following proposition.  Set the $|\mathsf{I}(i)|\times |\mathsf{I}(i)|$ matrix
\begin{align*}
\Phi\equiv (\Phi_{st})_{s,t}:= -\sqrt{N (d_i^2-y)}\,\theta(d_i) V_{\mathsf{I}(i)}^* \Xi(\theta(d_i)) V_{\mathsf{I}(i)}. 
\end{align*}
We remark here that with certain abuse of notation $\Phi_{ii}\equiv \Phi_i$ in (\ref{090301}) in the simple case $|\mathsf{I}(i)|=1$. 
\begin{prop} \label{joint ev-evector multiple}
Under the assumptions in Theorem  \ref{mainthm},  besides the expansions for  eigenvalues in Proposition \ref{repre.multiple.eigv} and the expansion for generalized components \rf{weak_gf_decomp}, for all of the upper triangular entries of $\Phi$, i.e., $\{\Phi_{st}\}_{s,t\in {\mathsf{I}(i)}, s\leq t}$,  together with  the random terms in  \rf{weak_gf_decomp}, we have 
\begin{align*}
\Big( \big\{\{\Phi_{st}\}_{t\in {\mathsf{I}(i)} , t\geq s}\big\}_{s\in {\mathsf{I}(i)} }, \Theta_{\bw_{{\mathsf{I}(i)} }}^{\bw}, \Lambda_{\mb{\varsigma}_{{\mathsf{I}(i)} }}^{\bw}, \{\Delta_{\bv_t}^{\bw}\}_{t\in {\mathsf{I}(i)} }, \{\Pi_{\bv_j}^{\bw}\}_{j\in (\mathsf{I}(i)) ^c} \Big) \simeq \mathcal{N}(0, C_{\mathsf{I}(i)} ^\bw)
\end{align*}
where $ C_{\mathsf{I}(i)} ^\bw$ is of size $r+2+|{\mathsf{I}(i)} |^2$ and the lower right $(r+2)\times (r+2)$ corner of $ C_{\mathsf{I}(i)} ^\bw$ is given by $A_{{\mathsf{I}(i)} }^{\bw} + \kappa_4  \frac{d_i^2-y}{d_i^2} B_{{\mathsf{I}(i)} }^{\bw}$ with $A_{{\mathsf{I}(i)} }^{\bw}$ and $B_{{\mathsf{I}(i)} }^{\bw}$ defined in \eqref{def:covA} and \eqref{def:covB} respectively. And the other entires of $ C_{\mathsf{I}(i)} ^\bw$ are defined by the RHS of the following equations
\begin{align*}
&{\rm{cov}} (\Phi_{\ell_1k_1}, \Phi_{\ell_2k_2} )\doteq \mathds{1}_{\{(\ell_1, k_1)=(\ell_2,k_2)\}} (1+\mathds{1}_{\ell_1=k_1} )(1+d_i^{-1})^2 \nonumber\\
&\qquad\qquad\qquad\quad + \kappa_4 (1-yd_i^{-2})(1+d_i^{-1})^2s_{1,1,1,1} (\bv_{\ell_1}, \bv_{k_1}, \bv_{\ell_2}, \bv_{k_2}),
\nonumber\\
& {\rm{cov}} (\Phi_{\ell k}, \Theta_{\bw_{\mathsf{I}}}^{\bw} ) \doteq 2y\mathtt{h}(d_i)^2(1+d_i^{-1})\langle \bw_{{\mathsf{I}(i)} }, \bv_\ell\rangle \langle \bw_{{\mathsf{I}(i)} }, \bv_k\rangle \nonumber\\
&\qquad\qquad\qquad  +\kappa_4 (1-yd_i^{-2})(1+d_i^{-1})\mathtt{f}(d_i) s_{1,1,2}(\bv_\ell,\bv_k, \bw_{\mathsf{I}}),
\nonumber\\
& {\rm{cov}} (\Phi_{\ell k}, \Lambda_{\mb{\varsigma}_{\mathsf{I}}}^{\bw} ) \doteq \kappa_4 (1-yd_i^{-2})(1+d_i^{-1})\mathtt{g}(d_i) s_{1,1,1,1} (\bv_\ell,\bv_k, \mb{\varsigma}_{\mathsf{I}(i)}^0, \bw_{\mathsf{I}}),
\nonumber\\
& {\rm{cov}} (\Phi_{\ell k}, \Delta_{\bv_t}^{\bw} ) \doteq \kappa_4 (1-yd_i^{-2})(1+d_i^{-1})\sqrt{\mathtt{h}(d_i) }s_{1,1,1,1} (\bv_\ell,\bv_k, \bv_t, \mb{\varsigma}_{{\mathsf{I}(i)} }^0), \text{ for $t\in {\mathsf{I}(i)} $},
 \nonumber\\
& {\rm{cov}} (\Phi_{\ell k }, \Pi_{\bv_j}^{\bw} ) \doteq \kappa_4 (1-yd_i^{-2})(1+d_i^{-1})\mathtt{l}(d_i) s_{1,1,1,1} (\bv_\ell,\bv_k, \bv_j, \bw_{{\mathsf{I}(i)} }), \text{ for $j\in{\mathsf{I}(i)} ^c$},
\end{align*}
for $\ell, \ell_1, \ell_2, k, k_1, k_2\in {\mathsf{I}(i)} $ satisfying  $\ell\leq k, \ell_1\leq k_1, \ell_2\leq k_2$.
\end{prop}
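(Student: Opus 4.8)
\textbf{Proof proposal for Proposition \ref{joint ev-evector multiple}.}

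The plan is to reduce the statement to a single multivariate CLT for a vector of Green function quadratic forms, following the same machinery developed for Theorem \ref{mainthm}. First I would recall, from Proposition \ref{repre.multiple.eigv} and Lemma \ref{weak_lem_decomp} (applied with $\mathsf{I}=\mathsf{I}(i)$), that all the objects of interest are, up to negligible errors, linear functions of the entries of the matrix $V_{\mathsf{I}(i)}^*\Xi(\theta(d_i))V_{\mathsf{I}(i)}$ and of the quadratic forms $\bw_{\mathsf{I}}^*\Xi\bw_{\mathsf{I}}$, $\bw_{\mathsf{I}}^*\Xi'\bw_{\mathsf{I}}$, $\mb{\varsigma}_{\mathsf{I}}^*\Xi\bw_{\mathsf{I}}$, $\{\bv_t^*\Xi\mb{\varsigma}_{\mathsf{I}}\}_{t\in\mathsf{I}}$, $\{\bv_j^*\Xi\bw_{\mathsf{I}}\}_{j\in\mathsf{I}^c}$, all evaluated at $z=\theta(d_i)$. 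Here the key point (already used for the eigenvalue expansion in the simple case, Lemma \ref{lem.representation.eigv}, and established in Proposition \ref{repre.multiple.eigv}) is that $\Phi_{st}$ is, to leading order, $-\sqrt{N(d_i^2-y)}\,\theta(d_i)\,\bv_s^*\Xi(\theta(d_i))\bv_t$, so each $\Phi_{st}$ with $s\le t$ is just one more quadratic form of $\Xi$ of exactly the same type $\bu^*\Xi\bv$ as those already appearing in $\mathbf{\chi}_{\mathsf{I}}$ in \eqref{def:chi_I}.

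The second step is to enlarge the random vector $\mathbf{\chi}_\mathsf{I}$ of \eqref{def:chi_I} by appending the rescaled quadratic forms $\{\bv_s^*\Xi(z)\bv_t\}_{s\le t,\ s,t\in\mathsf{I}(i)}$ (each multiplied by the appropriate power of $\sn/\Delta(d_i)$), and then rerun the argument of Section \ref{s.proof of thm} verbatim: for an arbitrary deterministic $\mathbf{c}$ one forms the linear combination $\mathcal{P}=\mathbf{c}^*(\text{enlarged }\mathbf{\chi})$, proves a recursive moment estimate $\mbE\mathcal{P}^l=(l-1)\mathcal{V}\,\mbE\mathcal{P}^{l-2}+O_\prec(N^{-1/2}\delta_{i0}^{-3/2}d_i^{3/2})$ exactly as in Proposition \ref{recursivemP}, and concludes joint asymptotic Gaussianity via the Cauchy--Schwarz/cumulant-expansion scheme of Section \ref{sec:prop}. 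Nothing structurally new is needed: the cumulant expansion in Lemma \ref{cumulantexpansion}, the isotropic local laws in Remark \ref{Rmk:UpsilonDelta}, the VESD input of Theorem \ref{thm.VESD} for the large-$d_i$ cancellation, and the auxiliary sum estimates of Lemma \ref{Importantests} all apply with $\bv_s,\bv_t$ in place of the vectors already handled, since $\bv_s,\bv_t$ are deterministic unit vectors. Finally one reads off the covariance matrix $C_{\mathsf{I}(i)}^{\bw}$ by evaluating the entries of the analogue of $\mathcal{V}_\mathsf{I}(\theta(d_i))$ at $z=\theta(d_i)$, using the explicit list of values of $m_1$, $m_1'$, $(zm_1)'$, $(zm_2)'$, $zm_2m_1^2$, etc.\ collected in the proof of Theorem \ref{mainthm}; the Gaussian-part ($\kappa_4$-free) contributions give the $\mathds{1}$-terms together with the factor $(1+\mathds{1}_{\ell_1=k_1})$ reflecting that a diagonal entry $\Phi_{\ell\ell}$ comes from a symmetric quadratic form, and the $\kappa_4$-terms produce the $s_{1,1,1,1}$ and $s_{1,1,2}$ structures.

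The main obstacle, as in the proof of Lemma \ref{weak_lem_decomp} and of Proposition \ref{repre.multiple.eigv}, is bookkeeping the dependence of all error terms on the three scales $\delta_{i0}=d_i-\sqrt y$, $\delta_{ij}=|d_i-d_j|$, and $d_i$ itself, so that the errors in \eqref{weak_gf_decomp}, in Proposition \ref{repre.multiple.eigv}, and in the recursive moment estimate remain dominated by the fluctuating terms uniformly across the full supercritical range (including $d_i\to\infty$). In particular, for diverging $d_i$ one again has to exploit the hidden cancellation between $c_1\bw_{\mathsf{I}}^*\Xi\bw_{\mathsf{I}}+c_2\bw_{\mathsf{I}}^*\Xi'\bw_{\mathsf{I}}$ via Theorem \ref{thm.VESD}; but the newly added entries $\bv_s^*\Xi(z)\bv_t$ are each individually of the typical order $O_\prec(N^{-1/2}\delta_{i0}^{-1/2}d_i^{-1/2})$ by \eqref{weak_est_DG}, so multiplying by $\sqrt{N(d_i^2-y)}\,\theta(d_i)$ puts them at order $O_\prec(1)$ without any cancellation needed, which is why no extra VESD argument is required for the $\Phi_{st}$ components. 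With these size estimates in hand, the moment recursion closes exactly as before, and the proof reduces to mechanical computation of the covariance entries; hence we omit the routine details and simply invoke the argument of Theorem \ref{mainthm} \emph{mutatis mutandis}, as was done for Proposition \ref{prop.simples}.
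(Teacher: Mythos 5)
Your proposal is correct and follows essentially the same route as the paper: the paper's proof also simply appends the quadratic forms $\chi_{st}(\theta(d_i))=\bv_s^*\Xi(\theta(d_i))\bv_t$, $s,t\in\mathsf{I}(i)$, to those of Lemma \ref{weak_lem_decomp}, modifies $\mathcal{P}$ in \eqref{19071950} accordingly, and invokes the argument of Theorem \ref{mainthm} mutatis mutandis. Your additional remarks on error bookkeeping and on why no extra VESD cancellation is needed for the $\Phi_{st}$ components are consistent with, and slightly more detailed than, the paper's (omitted) details.
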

\begin{proof}
Note that the entries of $ V_{\mathsf{I}(i)}^* \Xi(\theta(d_i)) V_{\mathsf{I}(i)} $ admit the form $\chi_{st}(\theta(d_i))= \bv_s^*  \Xi(\theta(d_i))  \bv_t$ for $s,t \in {\mathsf{I}(i)}$. Putting these quadratic forms together with those in Lemma  \ref{weak_lem_decomp}, and modifying the definition of $\mathcal{P}$ in (\ref{19071950}) by including those quadratic forms as entries of $ V_{\mathsf{I}(i)}^* \Xi(\theta(d_i)) V_{\mathsf{I}(i)} $, one can apply the proof strategy of Theorem \ref{mainthm} mutatis mutandis to the proof of Proposition \ref{joint ev-evector multiple}. Hence we omit the details and conclude the proof. 
\end{proof}

We can then propose an estimator of $\theta(d_i)$, that is $\sum_{t\in {\mathsf{I}(i)} } \mu_t /|{\mathsf{I}(i)}|$.
The estimator of $\theta(d_i)$ then leads to an estimator of $d_i$ via taking the inverse $\theta^{-1}$.  Applying Proposition \ref{repre.multiple.eigv}, we see that this estimator admits the following expansion 
\begin{align}
\frac{1}{|\mathsf{I}(i)|}\sum_{t\in \mathsf{I}(i)} \mu_t = \theta(d_i) - (d_i^2-y)\theta(d_i)\,
\frac {1}{|\mathsf{I}(i)|} {\rm{Tr}} \, V_{\mathsf{I}(i)}^* \Xi(\theta(d_i)) V_{\mathsf{I}(i)} +  O_\prec \Big(d_i^{\frac12} \delta_{i0}^{\frac12} N^{-\frac12 - \varepsilon}\Big). \label{082106}
\end{align} 
Therefore, to perform hypothesis testing with this estimator, we shall study the fluctuation of $\frac {1}{|\mathsf{I}(i)|} {\rm{Tr}} \, V_{\mathsf{I}(i)} ^* \Xi(\theta(d_i)) V_{\mathsf{I}(i)} $ under appropriate scaling. Note that $\frac {1}{|{\mathsf{I}(i)} |} {\rm{Tr}} \, V_{\mathsf{I}(i)} ^* \Xi(\theta(d_i)) V_{\mathsf{I}(i)} $ is simply a linear combination of the quadratic forms, i.e, 
\begin{align*}
\frac {1}{|{\mathsf{I}(i)} |} {\rm{Tr}} \, V_{\mathsf{I}(i)} ^* \Xi(\theta(d_i)) V_{\mathsf{I}(i)}= \frac {1}{|{\mathsf{I}(i)} |} \sum_{t\in \mathsf{I}(i)} \chi_{tt}(\theta(d_i)),
\end{align*}
where we recall $ \chi_{tt}(z)= \bv_t^*\Xi(z)\bv_t$. Hence,  the joint distribution of the above term and  the other quadratic forms in the expansion of eigenvector in Lemma  \ref{weak_lem_decomp} can be derived in the same manner as the proof of Theorem  \ref{mainthm}.   
 
Let  \begin{align} \label{def:Phi_I}
\Phi_{\mathsf{I}(i)} :=-  \sqrt{N (d_i^2-y)} \, \theta(d_i) \frac {1}{|{\mathsf{I}(i)}|} \sum_{t\in {\mathsf{I}(i)}} \chi_{tt}(\theta(d_i)).
\end{align}
We have the following proposition.

\begin{prop}  \label{prop.multiple}
Under the assumptions in Theorem \ref{mainthm}, we have 
\begin{align}
\frac{1}{|{\mathsf{I}(i)}|}\sum_{t\in {\mathsf{I}(i)}}  \mu_t = \theta(d_i) + \frac{\sqrt{d_i^2-y}}{\sn} \Phi_{\mathsf{I}(i)}+  O_\prec \Big(d_i^{\frac12} \delta_{i0}^{\frac12} N^{-\frac12 - \varepsilon}\Big), \label{082105}
\end{align}
 for some small fixed $\varepsilon>0$.  
 
 \noindent (1). For  any $\bv_k, k\in \mathsf{I}(i)$, we recall from (\ref{082021}) with $\mb{w}=\bv_k$, 
 \begin{align}
 \la \bv_k, {\rm{P}}_{\mathsf{I}(i)}  \bv_k\ra &=
 \frac{d_i^2-y}{d_i(d_i+y)}+ 
\frac{1}{\sqrt{N(d_i^2-y)}}  \,\Theta_{\bv_{k}}^{\bv_k}
-\frac{1 }{N}\sum_{t\in{\mathsf{I}}^c}\frac{d_id_t}{(d_i-d_t)^2} (\Pi_{\bv_t}^{\bv_k})^2 
\nonumber  \\
&+ O_\prec\bigg(N^{-\varepsilon}\Big(N^{-\frac 12} \frac{1}{\sqrt{d_i^2-y}} + \frac{1}{N}\sum_{t\in\{ i\}^c} \frac{d_id_t}{(d_i-d_t)^2}\Big)\bigg). \label{082109}
 \end{align} 
Further, $\Big(\Phi_{\mathsf{I}(i)} , \{\Theta_{\bv_{k}}^{\bv_k}\}_{k\in {\mathsf{I}(i)} }, \{\Pi_{\bv_t}^{\bv_k}\}_{k\in \mathsf{I}, t\in (\mathsf{I}(i))^c}\Big)$ is asymptotically Gaussian distributed. In particular, 
 \begin{align}
 \Big(\Phi_{\mathsf{I}(i)} , \, \{\Theta_{\bv_{k}}^{\bv_k}\}_{k\in {\mathsf{I}(i)} }\Big)\simeq \mathcal{N}(0, C_{{\mathsf{I}(i)} ,e}) \label{082103}
 \end{align} 
 where $C_{{\mathsf{I}(i)} ,e}$ is defined by the RHS of the following equations
 \begin{align*}
 &{\rm{var}} (\Phi_{\mathsf{I}(i)})\doteq \frac{2(1+d_i^{-1})^2}{|{\mathsf{I}(i)}|^{2}}  + \kappa_4\, \frac{(1-yd_i^{-2})(1+d_i^{-1})^2}{|{\mathsf{I}(i)} |^{2} } \sum_{k_1,k_2\in {\mathsf{I}(i)} } s_{2,2} (\bv_{k_1}, \bv_{k_2}),\nonumber\\
&{\rm{cov}} (\Theta_{\bv_{k_1}}^{\bv_{k_1}}, \Theta_{\bv_{k_2}}^{\bv_{k_2}}) \doteq  \mathds{1}_{\{k_1=k_2\}} \cdot  2y \mathtt{h}(d_i)^2 (1+ y \mathtt{h}(d_i)^2) +  \kappa_4 (1-yd_i^{-2}) \mathtt{f}(d_i)^2  s_{2,2} (\bv_{k_1},\bv_{k_2}), \nonumber\\
& \mathrm{cov}(\Phi_{\mathsf{I}(i)} , \Theta_{\bv_k}^{\bv_k})\doteq 
\frac{2y \mathtt h(d_i)^2 (1+d_i^{-1}) }{ |{\mathsf{I}(i)}|}
+\kappa_4\frac{(1-yd_i^{-2})(1+d_i^{-1}) \mathtt f(d_i)} {|{\mathsf{I}(i)}|} \sum_{t\in {\mathsf{I}(i)}} s_{2,2}(\bv_t, \bv_k) 
 \end{align*}
 for $k,k_1,k_2\in {\mathsf{I}(i)}$.
 
 \noindent(2). Let $\mathcal{J}$ be  any fixed index set. Let  $\{ \mb{u}_k\}_{k\in \mathcal{J}}\subset \mathrm{Span}\{\mathbf v_j \}_{j\in \lb 1,M\rb \setminus {\mathsf{I}(i)}} $ be any given family of orthonormal vectors. Then according to the second case in Remark \ref{rmk_inside}, i.e., (\ref{0820100}), we have 
 \begin{align*}
 \la \mb{u}_k, {\rm{P}}_{\mathsf{I}(i)} \mb{u}_k\ra =
 \frac{ \Vert \mb{\varsigma}_{{\mathsf{I}(i)}}^{ \mb{u}_k}\Vert^2}{Nd_i} \sum_{t\in {\mathsf{I}(i)}}(\Delta_{\bv_t}^{ \mb{u}_k})^2  
+O_\prec \bigg({N^{-(1+\varepsilon)}}d_i^{-1} \Vert \mb{\varsigma}_{{\mathsf{I}(i)}}^{ \mb{u}_k}\Vert^2\bigg),
 \end{align*} 
 with $\{\Delta_{\bv_t}^{ \mb{u}_k}\}_{(k,t)\in \mathcal{J}\times {\mathsf{I}(i)}}\simeq \mathcal{N}(0, \mb{U}_m)$. 
{
 Here $\mb{U}_m $ is a symmetric matrix of dimension $|\mathcal{J}| |\mathsf{I}(i)|$ that can be defined by
 the RHS of the following equation
\begin{align*}
{\rm{cov}} (\Delta_{\bv_{t_1}}^{ \mb{u}_{k_1}}, \Delta_{\bv_{t_2}}^{ \mb{u}_{k_2}}) \doteq & \mathds{1}_{\{t_1=t_2\}}\mathtt{h}(d_i) 
\langle \big(\mb{\varsigma}_{t_1}^{\mb{u}_{k_1}}\big)^0, \big(\mb{\varsigma}_{t_2}^{\mb{u}_{k_2}}\big)^0  \rangle \notag\\
&+ \kappa_4\frac{ {(d_i^2-y)}}{d_i^2 }\mathtt{h}(d_i) s_{1,1,1,1} \Big(\bv_{t_1}, \bv_{t_2}, \big(\mb{\varsigma}_{t_1}^{\mb{u}_{k_1}} \big)^0, \big(\mb{\varsigma}_{t_2}^{\mb{u}_{k_2}}\big)^0 \Big),
\end{align*}
for $k_{1,2}\in\mathcal{J}, t_{1,2}\in {\mathsf{I}(i)}$. Here $\big( \mb{\varsigma}_{{\mathsf{I}(i)}}^{ \mb{u}_k}\big)^0$  represents the normalized $\mb{\varsigma}_{{\mathsf{I}(i)}}^{ \mb{u}_k}$ similarly to \rf{def.nor.Varsigma}.
 }
\end{prop}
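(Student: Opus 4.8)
\textbf{Proof plan for Proposition \ref{prop.multiple}.}

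The plan is to mimic the proof of Theorem \ref{mainthm} and Theorem \ref{thm.simple case}, but now carrying along one more scalar linear combination of Green function quadratic forms, namely the one producing $\Phi_{\mathsf{I}(i)}$. First I would establish the scalar expansion \rf{082105}: this is an immediate consequence of Proposition \ref{repre.multiple.eigv}, which gives $\mu_t = \theta(d_i) - (d_i^2-y)\theta(d_i)\lambda_t(V_{\mathsf{I}(i)}^*\Xi(\theta(d_i))V_{\mathsf{I}(i)}) + O_\prec(d_i^{1/2}\delta_{i0}^{1/2}N^{-1/2-\varepsilon})$ for each $t\in\mathsf{I}(i)$; averaging over $t\in\mathsf{I}(i)$ converts the sum of eigenvalues of the small matrix $V_{\mathsf{I}(i)}^*\Xi V_{\mathsf{I}(i)}$ into its trace $\sum_{t\in\mathsf{I}(i)}\chi_{tt}(\theta(d_i))$, and recalling the definition \rf{def:Phi_I} of $\Phi_{\mathsf{I}(i)}$ gives \rf{082105} directly. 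The expansions \rf{082109} and the second expansion in part (2) are just restatements of Remark \ref{rmk_inside}, specifically \rf{082021} with $\mb{w}=\bv_k$ and \rf{0820100} with $\mb{w}=\mb{u}_k$, so nothing new is needed there beyond Lemma \ref{weak_lem_decomp}.

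The main work is the joint asymptotic normality in \rf{082103} and the $\mathbf{U}_m$ statement in part (2). The key observation is that every quantity involved—$\Phi_{\mathsf{I}(i)} = -\sqrt{N(d_i^2-y)}\,\theta(d_i)\,\frac{1}{|\mathsf{I}(i)|}\sum_{t\in\mathsf{I}(i)}\chi_{tt}$, the $\Theta_{\bv_k}^{\bv_k}$, the $\Pi_{\bv_t}^{\bv_k}$, and the $\Delta_{\bv_t}^{\mb{u}_k}$—is, up to the deterministic rescaling factors appearing in \rf{weak_def:variables} and \rf{090301}, a fixed linear combination of the rescaled quadratic forms $\frac{\sqrt N}{\Delta(d_i)}\bv_s^*\Xi(\theta(d_i))\bv_t$, $\frac{\sqrt N}{\Delta(d_i)}(\bw_\mathsf{I}^0)^*\Xi(\theta(d_i))\bw_\mathsf{I}^0$, $\frac{\sqrt N}{\Delta(d_i)}(\bw_\mathsf{I}^0)^*\Xi'(\theta(d_i))\bw_\mathsf{I}^0$, etc., evaluated at the appropriate choices of $\bw$. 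So I would redefine the test vector $\mathcal P = \mathbf{c}^*\boldsymbol{\chi}_{\mathsf{I}}(z)$ of \rf{19071950} by enlarging the random vector $\boldsymbol{\chi}_{\mathsf{I}}$ to also include the components $\{\chi_{tt}(\theta(d_i))\}_{t\in\mathsf{I}(i)}$ (suitably rescaled), and then re-run the recursive moment estimate of Proposition \ref{recursivemP} verbatim—the cumulant expansion argument in Section \ref{sec:prop} does not care which particular deterministic unit vectors index the quadratic forms, only that finitely many of them are bundled together. This yields asymptotic Gaussianity of all linear combinations, hence joint Gaussianity, and the covariance structure is read off from the analogue of $\mathcal V_{\mathsf{I}}(\theta(d_i))$ in \rf{weak_def:V} using the explicit derivative identities listed just before the proof of Theorem \ref{mainthm}. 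The covariance entries in the statement—for instance ${\rm var}(\Phi_{\mathsf{I}(i)}) \doteq 2(1+d_i^{-1})^2/|\mathsf{I}(i)|^2 + \kappa_4 \frac{(1-yd_i^{-2})(1+d_i^{-1})^2}{|\mathsf{I}(i)|^2}\sum_{k_1,k_2\in\mathsf{I}(i)}s_{2,2}(\bv_{k_1},\bv_{k_2})$—then follow by plugging $\mb{w}=\bv_k$ or $\mb{w}=\mb{u}_k$ into the $A_{\mathsf{I}}^{\bw}$, $B_{\mathsf{I}}^{\bw}$ formulas of \eqref{def:covA}–\eqref{def:covB} together with the corresponding $\mathcal{K}_\mathsf{I}$-type entries coming from the new diagonal components, and collecting the $1/|\mathsf{I}(i)|$ factors from the averaging in \rf{def:Phi_I}.

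I expect the main obstacle to be bookkeeping rather than any new idea: one must verify that when $\mb{w}=\bv_k\in\mathrm{Span}\{\bv_t\}_{t\in\mathsf{I}(i)}$ the vector $\mb{\varsigma}_{\mathsf{I}}$ vanishes (so $\Lambda$ and $\Delta$ drop out and \rf{082109} has the advertised reduced form), and when $\mb{w}=\mb{u}_k\in\mathrm{Span}\{\bv_j\}_{j\notin\mathsf{I}(i)}$ the component $\bw_\mathsf{I}$ vanishes (so only the $\Delta$ terms survive); these are exactly the two degenerate cases already handled in Remark \ref{rmk_inside}, so no extra analysis is needed, only careful substitution. A secondary point requiring attention is the cross-covariance ${\rm cov}(\Phi_{\mathsf{I}(i)},\Theta_{\bv_k}^{\bv_k})$, which mixes the "eigenvalue" quadratic form $\frac{1}{|\mathsf{I}(i)|}\sum_t\chi_{tt}$ with the "eigenvector" quadratic form $(\bv_k^0)^*\Xi\bv_k^0$; its Gaussian part $\frac{2y\mathtt h(d_i)^2(1+d_i^{-1})}{|\mathsf{I}(i)|}$ comes from the off-diagonal block of the $\mathcal M_{\mathsf I}$-analogue and must be tracked through the rescalings in \rf{weak_def:variables}, while the $\kappa_4$ part comes from the $\mathcal K_{\mathsf I}$-analogue and produces the $s_{2,2}$ sum over $t\in\mathsf{I}(i)$. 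All of this is routine given the machinery already developed; the only genuinely delicate inputs—the hidden cancellations exploited for large $d_i$—are already incorporated in Proposition \ref{recursivemP} and require no modification here.
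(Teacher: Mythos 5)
Your proposal is correct and follows essentially the same route as the paper: the expansion \eqref{082105} is obtained by averaging Proposition \ref{repre.multiple.eigv} and recalling \eqref{def:Phi_I}, the expansions in (1) and (2) are the degenerate cases \eqref{082021} and \eqref{0820100} of Lemma \ref{weak_lem_decomp}, and the joint Gaussianity with the stated covariances is proved exactly as in the paper by enlarging the vector of quadratic forms (adding the $\chi_{tt}(\theta(d_i))$ components) in the definition of $\mathcal P$ in \eqref{19071950} and re-running the recursive moment estimate of Proposition \ref{recursivemP}, then writing $\Phi_{\mathsf{I}(i)}=|\mathsf{I}(i)|^{-1}\sum_{t\in\mathsf{I}(i)}\Phi_{tt}$ and collecting the resulting $1/|\mathsf{I}(i)|$ factors. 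This matches the paper's proof, which invokes the same extension (Remark \ref{rmk.082102} and Proposition \ref{joint ev-evector multiple}) and omits the same routine bookkeeping.
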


\begin{proof} First,  (\ref{082105}) is an obvious conclusion of (\ref{082106})-(\ref{def:Phi_I}). Then, (\ref{082109}) is simply a rewriting of (\ref{082021}) with $\mb{w}=\bv_k$. Observe that $\Phi_{\mathsf{I}(i)}= |{\mathsf{I}(i)}|^{-1} \sum_{t\in {\mathsf{I}(i)}}\Phi_{tt}$. In addition, according Remark \ref{rmk.082102}, we can also write up the joint distribution of $\{\Phi_{tt}\}_{t\in \mathsf{I}(i)}$ and $\{\Theta_{\bv_{k}}^{\bv_k}\}_{k\in {\mathsf{I}(i)} }$, which admits the same form as (\ref{082101}) by choosing $\mathcal{I}=\mathsf{I}(i)$.  Then the joint distribution in (\ref{082103}) can be regarded as a corollary of such a more general joint distribution. 
The whole statement (2) above is also analogous to its counterpart, Proposition \ref{prop.simples} (2); see Remark \ref{rmk.082102}. The details can be checked similarly to Theorem \ref{mainthm}, and are thus omitted. Hence, we conclude the proof. 
\end{proof}

With the  above propositions and remarks, we are now ready to prove Theorems \ref{thm_firststatdist} and \ref{thm_secondtestatistics}.

\begin{proof} [Proof of Theorem \ref{thm_firststatdist} ] 
Under the null  hypothesis of \rf{eq_spectralprojection}, without loss of generality, we can choose  $\mb{u}_i= \bv_i$ for $i\in \mathcal{I}$, since our statistic $\sn \mathcal{T}$ is independent of the choice of the basis of $Z_0$. To ease the understanding of the reader,  
we will first show the proof in all-simple case, and then we discuss the all-equal case, and finally we conclude the extension to more general case. 

First, assume that all $d_i, i\in\mathcal{I}$ are simple. We assume that $\mb{\xi}_i$ is the eigenvector corresponding to  $d_i$ and the direction $\bv_i$.    By definition of $\mathcal{T}$ in \rf{eq_teststat}, we have the following derivation 
\begin{align}\label{2020081601}
\sn \mathcal{T}&= \sn \sum_{i \in \mathcal{I}}\langle \mb{v}_i, {\rm{P}}_{\mathcal{I}} \bm{v}_i  \rangle-\vartheta(\widehat d_i))\nonumber\\
&= \sn \sum_{i \in \mathcal{I}}\Big(|\langle \mb{\xi}_i, \bm{v}_i  \rangle|^2-\vartheta( d_i) + \vartheta( d_i)-\vartheta(\widehat d_i)\Big)+ \sn \sum_{i \in \mathcal{I}}\sum_{j \in \mathcal{I}\setminus\{i\}}|\langle \mb{\xi}_j, \bm{v}_i  \rangle|^2\nonumber\\
&= \sum_{i \in \mathcal{I}} \frac{1}{\sqrt{d_i^2-y}} \Theta_{\bv_i}^{\bv_i} -  \sn \sum_{i\in \mathcal{I}} \vartheta'(d_i)(\widehat d_i- d_i) + O_\prec\Big(N^{-\varepsilon}\sum_{i \in \mathcal{I}} \frac{1}{\sqrt{d_i^2-y}}\Big) \nonumber\\
&= \sum_{i \in \mathcal{I}} \frac{1}{\sqrt{d_i^2-y}} \Theta_{\bv_i}^{\bv_i} -  \sum_{i\in \mathcal{I}} \vartheta'(d_i)(\theta^{-1})'(\theta(d_i)) \sqrt{d_i^2-y} \, \Phi_i\notag\\
&\qquad+ O_\prec\Big(N^{-\varepsilon}\sum_{i \in \mathcal{I}} \frac{1}{\sqrt{d_i^2-y}}\Big),
\end{align}
for some small constant $\varepsilon>0$. Here in the third step, we used  Remark \ref{rmk_inside} and Assumption \ref{asm.082021} to absorb the $\chi^2$-terms into the error, and also the term $\sn \sum_{i\in \mathcal{I},  j \in \mathcal{I}\setminus\{i\}}|\langle \mb{\xi}_j, \bm{v}_i  \rangle|^2$ which possesses the same bound that can be seen from Proposition \ref{prop.simples} and the definition of $\mb{\varsigma}_{i}^{\mb{u}}$ in (\ref{2020081001}).  More precisely, from statement (2) of Proposition \ref{prop.simples}, we see that for each $i\in \mathcal{I}$, 
\begin{align*}
\sn \sum_{  j \in \mathcal{I}\setminus\{i\}}|\langle \mb{\xi}_j, \bm{v}_i  \rangle|^2 &= O_\prec \Big( \sum_{j \in \mathcal{I}\setminus\{i\}} \frac{\Vert \mb{\varsigma}_j^{\bv_i}\Vert^2 }{\sn d_j} \big( \Delta_{\bu_j}^{\bv_i}\big)^2\Big)\nonumber\\
&= O_\prec \Big(  N^{-\frac12}\sum_{j \in \mathcal{I}, t\in \{j\}^c } \frac{d_jd_t}{(d_j-d_t)^2}\Big)
\end{align*}
where in the last step, we plugged in the definition of $\mb{\varsigma}_{i}^{\mb{u}}$. Then by Assumption \ref{asm.082021}, we can also absorb the above term into the error.

 By elementary computation, we further obtain 
\begin{align*}
\vartheta'(d_i)= \frac{y(d_i^2+2d_i+y)}{d_i^2(d_i+y)^2}, \quad (\theta^{-1})'(\theta(d_i))= (1- yd_i^{-2})^{-1},
\end{align*}
so that the coefficient of $\Phi_i$  in the last line of (\ref{2020081601}) is $$-\frac{y(d_i^2+2d_i+y)}{(d_i+y)^2(d_i^2-y)^{\frac12}}.$$
Therefore, we can apply the result in Proposition \ref{prop.simples}  to finish the proof by the fact that  it is a linear combination of asymptotically Gaussian random variables.

Next, we prove (\ref{eq_statone}) for the all-equal case, i.e. $d_i=d_e$ for all $i\in \mathcal{I}$, and $\mathcal{I}=\mathsf{I}(i)$. Recall the definition of $\mathcal{T}$ in \rf{eq_teststat}. We have 
\begin{align} \label{2020081601*}
 \sn {\mathcal{T}} &= \sn \sum_{t\in \mathcal{I}}\big(\la\bv_t, {\rm{P}}_{\mathcal{I}} \bv_t\ra - \vartheta(d_t) +\vartheta(d_t) - \vartheta(\widehat d_t)\big) 
 \nonumber\\
 &= (d_e^2-y)^{-\frac12} \sum_{t\in \mathcal{I}} \Theta_{\bv_t}^{\bv_t}  
 - \sn \, |\mathcal{I}|\vartheta'(d_e) (\theta^{-1})'(\theta(d_e)) \Big(\frac{1}{|\mathcal{I}|} \sum_{i\in \mathcal{I}} \mu_i- \theta(d_e)\Big) \notag\\
 &\qquad + O_\prec\Big(N^{-\varepsilon}\frac{1}{\sqrt{d_e^2-y}}\Big) \nonumber\\
 &=  (d_e^2-y)^{-\frac12} \sum_{t\in \mathcal{I}} \Theta_{\bv_t}^{\bv_t}  
 - |\mathcal{I}|\vartheta'(d_e) (\theta^{-1})'(\theta(d_e)) (d_e^2-y)^{\frac12} \Phi_{\mathcal{I}}\notag\\
 &\qquad + O_\prec\Big(N^{-\varepsilon}\frac{1}{\sqrt{d_e^2-y}}\Big)
 \end{align}
 for some small constant $\varepsilon>0$.
 Note that by \rf{def:Phi_I} and $d_i=d_e$ for all $i\in \mathcal{I}$, we can rewrite $\Phi_{\mathsf{I}(i)}= \frac{1}{|{\mathsf{I}(i)}|} \sum_{t\in \mathcal{I}} \Phi_{tt} $ by setting $\Phi_{tt}:= -\sqrt{N(d_t^2-y)} \theta(d_t) \chi_{tt}(\theta(d_t))$ which is consistent to the defined $\Phi_i\equiv \Phi_{ii}$ in the simple  case (see Theorem \ref{thm.simple case} and its proof). Hence, we can represent $\sn \mathcal{T}$ by 
 \begin{align} \label{2020081701}
  \sn {\mathcal{T}}=  &\sum_{t\in \mathcal{I}} (d_t^2-y)^{-\frac12} \Theta_{\bv_t}^{\bv_t}  
 -  \sum_{t\in \mathcal{I}}\vartheta'(d_t) (\theta^{-1})'(\theta(d_t)) (d_t^2-y)^{\frac12} \Phi_{tt} \notag\\
 &+ O_\prec\Big(N^{-\varepsilon}\sum_{t \in \mathcal{I}} \frac{1}{\sqrt{d_t^2-y}}\Big)
 \end{align}
 which coincides with the  representation in \rf{2020081601}. And the asymptotic distribution is reduced to the joint distribution of $\{\Theta_{\bv_t}^{\bv_t} \}_{t\in \mathcal{I}}, \{\Phi_{tt}\}_{t\in \mathcal{I}}$ which is asymptotically Gaussian,  sharing the same covariance structure as all-simple case, as we mentioned in Remark \ref{rmk.082102}. Since here $d_i=d_e$ for all $i\in \mathcal{I}$, we can get the explicit expression of the  variance of $\sn \mathcal{T}$ (c.f.\rf{def.V1(d_I)}) by applying (1) of  Proposition \ref{prop.multiple}.
 
 In the end, we claim the result for the general case where $\mathcal{I}= \bigcup_{k=1}^\ell \mathcal{I}_k$ such that all the spikes with indices from the same subset $\mathcal{I}_k$ are equal and distinct otherwise with  distance satisfying Assumptions \ref{supercritical} and \ref{asm.082021} . Especially, we do not further require that each subset is a singleton or not.  We then do the decomposition
 \begin{align*}
  \sn {\mathcal{T}}& = \sn \sum_{k=1}^ \ell \sum_{t\in \mathcal{I}_k}\big(\la\bv_t, {\rm{P}}_{\mathcal{I}} \bv_t\ra - \vartheta(d_t) +\vartheta(d_t) - \vartheta(\widehat d_t)\big)  \nonumber\\
  &=  \sum_{k=1}^ \ell\Big(\sn  \sum_{t\in \mathcal{I}_k}\big(\la\bv_t, {\rm{P}}_{\mathcal{I}_k} \bv_t\ra - \vartheta(d_t) +\vartheta(d_t) - \vartheta(\widehat d_t)\big) \Big) \notag\\
  &\qquad+  O_\prec\Big(N^{-\varepsilon}\sum_{t \in \mathcal{I}} \frac{1}{\sqrt{d_t^2-y}}\Big).
 \end{align*}
 Applying the discussion for the all-equal case to each summand above, we can finish the proof analogously. 
 The details are omitted. Hence, we conclude the proof of Theorem \ref{thm_firststatdist}. 
\end{proof}

\begin{proof} [Proof of Theorem  \ref{thm_secondtestatistics}]
Recall the definition of the statistic \rf{eq_finalstat}. We can write
\begin{align*}
N\mathbb{T}_{2}=N\sum_{j \in \mathcal{J}} \langle\mb{u}_j, {\rm{P}}_\mathcal{I} \bm{u}_j \rangle&=    \sum_{k=1}^ \ell \sum_{j \in \mathcal{J}} N \langle\mb{u}_j, {\rm{P}}_{\mathcal{I}_k} \bm{u}_j\rangle \notag\\
&=\sum_{i\in\mathcal{I}, j\in \mathcal{J}}\frac{\Vert \mb{\varsigma}_{i}^{\bu_j}\Vert^2}{d_i} (\Delta_{\bv_i}^{\bu_j})^2 + O_\prec\Big(N^{-\varepsilon}\sum_{i \in \mathcal{I}} \frac{1}{\sqrt{d_i^2-y}}\Big),
\end{align*}
for some small constant $\varepsilon>0$ by using  statement  (2) of Proposition \ref{prop.multiple} for each summand in the third step. And the asymptotic distribution immediately follows from jointly CLT of $\{\Delta_{\bv_i}^{\bu_j}\}_{i\in \mathcal{I}, j\in \mathcal{J}}$. Then combining the results in
Proposition \ref{prop.simples} and Proposition \ref{prop.multiple} further with Remark \ref{rmk.082102}, we can then finish the proof.

\end{proof}

 In the end, we prove Proposition \ref{repre.multiple.eigv}, by adapting the proof of Proposition 4.5 of \cite{KY14}. We emphasize that the discussion in \cite{KY14} was done for deformed Wigner matrices, where the strength of the deformation is required to be bounded above by a constant. Here we need to adapt the discussion to our model with possibly diverging $d_i$'s. Hence, we need to keep tracking the dependence of the error terms on $d_i$ more carefully. Nevertheless, apart from the size of $d_i$, the discussion is essentially the same as that of Proposition 4.5 of \cite{KY14}.
 \begin{proof} [Proof of  Proposition \ref{repre.multiple.eigv}  ]
Similarly to the proof of Lemma \ref{lem.representation.eigv},  $\mu_t$ is an eigenvalue outside the spectrum of $H$ if and only if $z=\mu_t$ solves the equation ${\rm{det}} (D^{-1}+ zV^*\mG_1(z)V)=0$ or equivalently  $D^{-1}+ \mu_tV^*\mG_1(\mu_t)V$ has a zero eigenvalue. 
By Lemma \ref{locationeig}, we see that for $t\in {\mathsf{I}(i)}$, $|\mu_t-\theta(d_i)|= O_\prec(d_i^{1/2} \delta_{i0}^{1/2}N^{-1/2})$.  Without loss of generality, up to a permutation, we assume the following decomposition
\begin{align} \label{note:2020080301}
D^{-1}= D_{\mathsf{I}(i)}^{-1} \oplus D_{{\mathsf{I}(i)}^c}^{-1}, \quad V= (V_{\mathsf{I}(i)}, V_{{\mathsf{I}(i)}^c}). 
\end{align}
Here we recall ${\mathsf{I}(i)}^c=\llbracket 1, r\rrbracket\setminus {\mathsf{I}(i)}$. 
 For simplicity, we set the following neighborhood of  $\theta(d_i)$
\begin{align}
\mathcal{D}(\theta(d_i)) &= \big(\theta(d_i)- d_i^{1/2}\delta_{i0}^{1/2}N^{-1/2 + \delta}, \theta(d_i)+ d_i^{1/2}\delta_{i0}^{1/2}N^{-1/2 + \delta} \big) \notag\\
&=: (\theta^-, \theta^+ ) 
\end{align}
 for some sufficiently small $\delta$. We will identify those $\mu_t$'s by analysing the behaviour of eigenvalues of $D^{-1}+ xV^*\mG_1(x)V$ as $x$ varies. Our discussion in the sequel will rely on the fact  that the eigenvalues of $D^{-1}+ xV^*\mG_1(x)V$ should not attain zero simultaneously as $x$ varies in $\mathcal{D}(\theta(d_i)) $ with high probability, so that we can exactly find all the $\mu_t$ for $t\in {\mathsf{I}(i)}$. In order to see this, similarly to the counterpart in \cite{KY14},  
we introduce an additional randomness, by adding some small perturbation $\iota \Delta$ where $\Delta$ is a Hermitian matrix with entries i.i.d. and has an absolutely continuous law supported in the unit disc. And the scalar $\iota\equiv \iota(N)>0$ can be chosen arbitrarily small, say $e^{-N}$. We now turn to  study the behaviours of eigenvalues of $D^{-1}+ xV^*\mG_1(x)V+ \iota \Delta$ instead. 

 First, for $x\in \mathcal{D}(\theta(d_i)) $ for some sufficiently small $\delta$, we define two matrices $\mathcal{A}^{\iota}(x)$ and $\wt {\mathcal{A}}^{\iota}(x)$  by 
\begin{align*}
&\mathcal{A}^{\iota}(x) := D^{-1}+ xV^*\mG_1(x)V+ \iota \Delta- xm_1(x){\rm{I}}_r, \notag\\
& \wt {\mathcal{A}}^{\iota}(x):= \mathcal{A}_{\mathsf{I}(i)}^{\iota}(x) \oplus \mathcal{A}_{{{\mathsf{I}(i)}^c}}
^{\iota}(x).
\end{align*}
Here we use the notation $\mathcal{A}^{\iota}_{{I}}(x)$ to represent a submatrix of $\mathcal{A}^{\iota}(x)$ by taking columns  and rows  both from $I$. In particular, $\mathcal{A}_{\mathsf{I}(i)}^{\iota}(x)= D_{\mathsf{I}(i)}^{-1}+ xV^*_{\mathsf{I}(i)} \mG_1(x)V_{\mathsf{I}(i)} + \iota \Delta_{\mathsf{I}(i)} - xm_1(x) {\rm{I}}_{|{\mathsf{I}(i)}|}$, where $\Delta_{\mathsf{I}(i)}$ is the submatrix of $\Delta$ with row and column indices both from $I$.

By definition, we see that $\wt {\mathcal{A}}^{\iota}(x)$ is obtained from ${\mathcal{A}}^{\iota}(x)$ by taking the two  block matrices on diagonal.  Using isotropic local law \rf{est.DG} and taking $\iota$ small enough, say $e^{-N}$, we can easily get 
\begin{align} \label{2020073101}
\Vert {\mathcal{A}}^{\iota}(x)- \wt {\mathcal{A}}^{\iota}(x)\Vert_{\text{op}} \prec  d_i^{-\frac12} \delta_{i0}^{-\frac12} N^{-\frac12}. 
\end{align}
Next, we claim that the spectrum of the two block matrices in $\wt{\mathcal{A}}^{\iota}(x)$ are well separated, i.e. we will establish  a lower bound on the spectral gap 
\begin{align*}
{\rm{dist}} \Big( \text{Spec}\big(\mathcal{A}_{\mathsf{I}(i)}^{\iota}(x)\big), \text{Spec}\big(\mathcal{A}_{{{\mathsf{I}(i)}^c}}^{\iota}(x)\big)\Big).
\end{align*}
This can be easily achieved by using triangle inequality and isotropic local law \rf{est.DG}, by choosing $\iota$ sufficiently small, 
\begin{align*}
{\rm{dist}} \Big( \text{Spec}\big(\mathcal{A}_{{\mathsf{I}(i)}^c_l}^{\iota}(x)\big), \text{Spec}\big(\mathcal{A}_{\mathsf{I}(i)}^{\iota}(x)\big)\Big) \geq {\rm{dist } } \big( D_{\mathsf{I}(i)}^{-1}, D_{{{\mathsf{I}(i)}^c}}^{-1} \big)  - d_i^{-\frac12} \delta_{i0}^{-\frac12} N^{-\frac12+\frac{\varepsilon}{2}}
\end{align*}
with high probability. 
Then by non-overlapping condition \rf{asd2}, we see that $ {\rm{dist } } \big( D_{\mathsf{I}(i)}^{-1}, D_{{{\mathsf{I}(i)}^c}}^{-1} \big) > d_i^{-\frac12} \delta_{i0}^{-\frac12} N^{-\frac12 + \varepsilon}$ for some small fixed $\varepsilon>0$. This implies with high probability, 
\begin{align}\label{2020073102}
{\rm{dist}} \Big(\text{Spec}\big(\mathcal{A}_{\mathsf{I}(i)}^{\iota}(x)\big),\text{Spec}\big(\mathcal{A}_{{{\mathsf{I}(i)}^c}}^{\iota}(x)\big)\Big)> d_i^{-\frac12} \delta_{i0}^{-\frac12} N^{-\frac12 + \varepsilon} 
\end{align}
by slightly modifying the value of $\varepsilon$. 

Further, we define $ \{\wt a_t^\iota(x)\}_{t\in {\mathsf{I}(i)}}$ (resp. $ \{\wt a_j^\iota(x)\}_{j\in {\mathsf{I}(i)}^c}$) the eigenvalues of $\wt{\mathcal{A}}_{{I}}^{\iota}(x)$  (resp. $\wt{\mathcal{A}}_{{{\mathsf{I}(i)}^c}}^{\iota}(x)$) in an increasing order. And they are actually the eigenvalues of $\wt{\mathcal{A}}^{\iota}(x)$ due to its block structure. Correspondingly, we denote by $a_k^\iota(x)$ the eigenvalues of $\mathcal{A}^\iota(x)$.  Note that for $x\in \mathcal{D}(\theta(d_i))$, by isotropic local law \rf{est.DG}
\begin{align*}
&\wt a_t^\iota(x) =  1+ d_i^{-1} +O_\prec(d_i^{-1/2} \delta_{i0}^{-1/2}N^{-1/2}), \text{ for } t\in {\mathsf{I}(i)}\nonumber\\
&\wt a_j^\iota(x) =  1+ d_j^{-1} +O_\prec(d_i^{-1/2} \delta_{i0}^{-1/2}N^{-1/2}), \text{ for } j\in {\mathsf{I}(i)}^c.
\end{align*}
This with \rf{2020073101} and non-overlapping condition \rf{asd2}, implies that for $j\in {\mathsf{I}(i)}^c$,
$$ a_j^\iota(x) + xm_1(x) \asymp d_j^{-1}-d_i^{-1}.$$
Hence, $a_j^\iota(x) =- xm_1(x)$ get no solution on $\mathcal{D}(\theta(d_i))$ for $j\in {\mathsf{I}(i)}^c$.

We then further set   $H^{\iota}:= \Sigma_{\iota}^{1/2} XX^* \Sigma_{\iota}^{1/2}$ where we define that $\Sigma_{\iota}=  I+ V ({\rm{\diag}}(\{1/d_t\}_{t\in {\mathsf{I}(i)}}, \{1/d_j\}_{j\in {\mathsf{I}(i)}^c} )+ \iota\Delta)^{-1}V^*$ with $V= (V_{\mathsf{I}(i)}, V_{{\mathsf{I}(i)}^c})$.
Note that since $\iota $ can be choosing arbitrarily small, then by Lemma \ref{locationeig}, we get that $H^{\iota}$ has with high probability exactly $|{\mathsf{I}(i)}|$ eigenvalues $\{\mu^\iota_t\}_{t\in {\mathsf{I}(i)}}$ in $ \mathcal{D}(\theta(d_i))$ and $\mu_t^{\iota} -\theta(d_i) = O_\prec(d_i^{1/2} \delta_{i0}^{1/2}N^{-1/2})$ further with  $\mu_t^\iota m_1(\mu_t^\iota)= -(1+1/d_i) +O_\prec(d_i^{-1/2} \delta_{i0}^{-1/2}N^{-1/2}) $. Here tentatively we  do not specify the ordering of the eigenvalues $\{\mu^\iota_t\}_{t\in {\mathsf{I}(i)}}$. Later we will see how to  pin down each $\mu_t^\iota$ for $t\in {\mathsf{I}(i)}$.
Recall the  notations in  \rf{note:2020080301}. By elementary computation,
 ${\rm{det}}(D^{-1}+ \mu_t^\iota V^*\mG_1(\mu_t^\iota)V+ \iota \Delta)=0$ which implies that $\mathcal{A}^\iota(\mu_t^\iota)$ has an eigenvalue $-\mu_t^\iota m_1(\mu_t^\iota)$. 
 
  From all the above arguments, we see that all those $\mu_t^\iota$ can  be a solution to the equation $a_t^\iota(x)=-xm_1(x)$ only when $t\in {\mathsf{I}(i)}$ and $x\in \mathcal{D}(\theta(d_i))$.  
We can further claim that for all $x\in \mathcal{D}(\theta(d_i)) $,  $a_t^{\iota}(x)= -xm_1(x)$ for at most one $t\in {\mathsf{I}(i)}$ (a.s.)
by referring to  the proof of Proposition 4.5 in \cite{KY14}, thanks to the continuously distributed perturbation term $\iota\Delta$.
Next, we can claim that $a_t^{\iota}(x)= -xm_1(x)$ indeed has  solution in $\mathcal{D}(\theta(d_i))$ for each $t\in {\mathsf{I}(i)}$ by the continuity of $a_t^{\iota}(x), -xm_1(x)$ and  showing that 
\begin{align}
a_t^{\iota}(\theta^-)< -\theta^-m_1(\theta^-), \quad a_t^{\iota}(\theta^+) > -\theta^+m_1(\theta^+).
\end{align}
We will show the detail for the first inequality above and the second one can be done analogously.  By isotropic local law \rf{est.DG}, we have 
\begin{align}
a_t^{\iota}(\theta^-) + \theta^-m_1(\theta^-)& = \wt a_t^{\iota}(\theta^-)+ \theta^-m_1(\theta^-) + O_\prec(d_i^{-1/2} \delta_{i0}^{-1/2}N^{-1/2})\nonumber\\
& = -\theta(d_i)m_1(\theta(d_i)) +\theta^-m_1(\theta^-)+ O_\prec(d_i^{-1/2} \delta_{i0}^{-1/2}N^{-1/2}) \nonumber\\
&= \frac{\theta^- -\theta(d_i)}{d_i^2-y} + O_\prec(d_i^{-1/2} \delta_{i0}^{-1/2}N^{-1/2}) \nonumber\\
&= - O_\prec(d_i^{-1/2} \delta_{i0}^{-1/2}N^{-1/2+\delta}) <0.
\end{align}

The above arguments state that  with high probability each equation  $a_t^{\iota}(x)= -xm_1(x)$ identifies at least one distinct $\mu_t^\iota$ for $t\in {\mathsf{I}(i)}$, and totally, we have $|{\mathsf{I}(i)}|$ eigenvalues of $H^\iota$  sitting in $\mathcal{D}(\theta(d_i))$. Thus with high probability, we shall have a one-to-one correspondence between  $\mu_t^\iota$ and solution of  $a_t^{\iota}(x)= -xm_1(x)$ in $\mathcal{D}(\theta(d_i))$ for $t\in {\mathsf{I}(i)}$ so that  
 $-\mu_t^\iota m_1(\mu_t^\iota) = a_{ t }^\iota( \mu_t^\iota)$.
 Next, by isotropic local law \rf{est.DG}, one can also check $\Vert \partial \mathcal{A}^\iota(x)\Vert_{op}\prec d_i^{1/2} \delta_{i0}^{-5/2}N^{-1/2}$  which together  with $\mu_t^{\iota} -\theta(d_i) = O_\prec(d_i^{1/2} \delta_{i0}^{1/2}N^{-1/2})$ and $\rf{asd}$ further leads to 
 \begin{align} \label{2020080302}
  a_{ t }^\iota( \mu_t^\iota)-  a_{ t }^\iota( \theta(d_i)) = O_\prec(d_i^{-\frac12}\delta_{i0}^{-\frac 12}N^{-\frac12-\varepsilon})
 \end{align}
 for some small fixed $\varepsilon>0$.
 
 Furthermore, by using perturbation theory we can claim that 
 \begin{align} \label{2020080303}
  a_{ t }^\iota( \theta(d_i)) = \wt a_{ t }^\iota( \theta(d_i)) + O_\prec(d_i^{-\frac12}\delta_{i0}^{-\frac 12}N^{-\frac12-\varepsilon}),
 \end{align}
which can be checked by applying Proposition A.1 of \cite{KY14}  that 
\begin{align*}
\big| a_{ t }^\iota( \theta(d_i)) - \wt a_{ t }^\iota( \theta(d_i)) \big| 
&\prec \frac{\Vert \mathcal{A}^\iota - \wt{\mathcal{A}}^\iota\Vert_{\text{op}}^2}{{\rm{dist}} \Big( \sigma\big(\mathcal{A}_{\mathsf{I}(i)}^{\iota}(x)\big), \sigma\big(\mathcal{A}_{{{\mathsf{I}(i)}^c}}^{\iota}(x)\big)\Big) - \Vert \mathcal{A}^\iota - \wt{\mathcal{A}}^\iota\Vert_{\text{op}}  } \nonumber\\
&= O_\prec(d_i^{-\frac12}\delta_{i0}^{-\frac 12}N^{-\frac12-\varepsilon}).
\end{align*}
Combining \rf{2020080302}, \rf{2020080303} and  $-\mu_t^\iota m_1(\mu_t^\iota) = a_{ t }^\iota( \mu_t^\iota)$, we eventually arrive at 
\begin{align*}
-\mu_t^\iota m_1(\mu_t^\iota)  &= \wt a_{ t }^\iota( \theta(d_i)) + O_\prec(d_i^{-\frac12}\delta_{i0}^{-\frac 12}N^{-\frac12-\varepsilon}) \nonumber\\
&= \lambda_ t \Big(D_{\mathsf{I}(i)}^{-1}+ \theta(d_i)V^*_{\mathsf{I}(i)} \mG_1(\theta(d_i))V_{\mathsf{I}(i)} + \iota \Delta_{\mathsf{I}(i)} - \theta(d_i)m_1(\theta(d_i)) {\rm{I}}_{|{\mathsf{I}(i)}|}\Big)\notag\\
&\quad +  O_\prec(d_i^{-\frac12}\delta_{i0}^{-\frac 12}N^{-\frac12-\varepsilon}).
\end{align*}
Here we emphasize that we use the notation $\lambda_t(A_{\mathsf{I}(i)})$ for a $|{\mathsf{I}(i)}|\times |{\mathsf{I}(i)}|$ Hermitian matrix $A_{\mathsf{I}(i)}$ indexed by $|{\mathsf{I}(i)}|$ to denote its eigenvalues that admit the ordering  $\lambda_{t_1}(A_{\mathsf{I}(i)})\leq \lambda_{t_2}(A_{\mathsf{I}(i)})$ for any $t_1, t_2\in {\mathsf{I}(i)}, t_1\leq t_2$. Using the identity $1+ d_i^{-1}= - \theta(d_i)m_1(\theta(d_i))$ and expanding $\mu_t^\iota m_1(\mu_t^\iota)  $ around $ \theta(d_i)m_1(\theta(d_i))$,  we get 
\begin{align*}
\mu_t^\iota - \theta(d_i)&= -\frac{1}{d_i^2-y} \lambda_t\Big( \theta(d_i) V_{\mathsf{I}(i)}^* \Xi(\theta(d_i))V_{\mathsf{I}(i)} + \iota \Delta_{\mathsf{I}(i)}\Big)
\notag\\
&\quad +  O_\prec(d_i^{-\frac12}\delta_{i0}^{-\frac 12}N^{-\frac12-\varepsilon}).
\end{align*}
In the end, let $\iota\to 0$, we ultimately proved  Proposition   \ref{repre.multiple.eigv}.
\end{proof}
 
\begin{proof}[Proofs of Corollaries \ref{cor.082001} and  \ref{cor_simulateddistribution}] The conclusions in Corollaries \ref{cor.082001} and  \ref{cor_simulateddistribution} follow from Theorems \ref{thm_firststatdist}  and \ref{thm_secondtestatistics} and the fact that $\widehat{\bm{d}}_{\mathcal{I}}$ (resp. $\widehat{\bm{d}}$)  is a consistent estimator of 
$\bm{d}_{\mathcal{I}}$ (resp. $\bm{d}$) after appropriate scaling. 
\end{proof}
 
 \section{Additional simulations} \label{sec_additionalsimulation} 
In this section, we provide additional simulation results regarding the two-point random variable $\frac{1}{3} \delta_{\sqrt{2}}+\frac{2}{3} \delta_{-\frac{1}{\sqrt{2}}}$.

\begin{table}[!ht]
\setlength\arrayrulewidth{1.5pt}
\renewcommand{\arraystretch}{1.4}
 \captionsetup{width=1\linewidth}
 \addtolength{\tabcolsep}{-0.5pt} 
\begin{center}
\scriptsize
\hspace*{-0.8cm}
{
\begin{tabular}{clclclclclclc|c|c|c|c}
\toprule
 & \multicolumn{5}{c}{$N=200$}                                                                                                                       & \multicolumn{5}{c}{$N=500$}                                                                                                                       \\ \hline
Method   & $d=2$& $d=5$  & $d=10$  & $d=50$  & $ d=100$  & $d=2$ & $d=5$ & $d=10$ & $d=50$ & $d=100$ \\ \hline
$\texttt{Fr-bootstrap}$ & 0.045 &  0.049  &  0.053  & 0.054  & 0.063 & 0.045 &  0.047 &  0.053  & 0.061 & 0.063  \\
$\texttt{Fr-Bayes}$ & {0.053} & {\bf 0.059} & 0.063  & 0.072  & 0.087 & 0.052 & {\bf 0.062}  &  {\bf 0.073}& {0.084}  & {0.094} \\
$\texttt{En-bootstrap}$& 0.038  & 0.042  &  0.047 & 0.052 & 0.061 & 0.049 &  0.051 & 0.054 &  0.062 & 0.068  \\
$\texttt{En-Bayes}$ &  0.053 &  0.058 & {\bf 0.073}  &  0.082 & {\bf 0.093}  & 0.058  & 0.059 & 0.068 & 0.085 & {0.096}  \\
$\texttt{Fr-Datadriven}$& 0.042  & 0.043  & 0.046  & 0.051 & 0.057 &  0.045 & 0.046 & 0.051  & 0.049  & 0.059   \\
$\texttt{HPV-LeCam}$& 0.394 & 0.419 & 0.424  & 0.39 & 0.4   & 0.395 & 0.387 & 0.423 &  0.392 & 0.412 \\
$\texttt{Sp-bootstrap}$	& 0.048   & 0.051 & 0.047 & 0.056 & 0.063  & 0.047   & 0.039 & 0.053  &  0.058 & 0.071\\
$\texttt{Sp-Bayes}$&  {\bf 0.061}   & {\bf 0.059}    & {\bf 0.073}  & {\bf 0.094}  & {\bf 0.097}   &  {\bf 0.059} & {0.054} & 0.069 & {\bf 0.089} &  {\bf 0.099} \\
$\texttt{Fr-Adaptive}$ & {\bf 0.11}  & {\bf 0.102} & {\bf 0.105} &  {\bf 0.099} & {\bf 0.093}  & {\bf 0.102}  & {\bf 0.095} & {\bf 0.102} & {\bf 0.094}  & {\bf 0.098} \\ 
 \bottomrule
\end{tabular}
}
\end{center}
\caption{ Scenario I: simulated type I error rates under the nominal level $0.1$ for $y=0.1$.  We report our results based on 2,000 Monte-Carlo simulations with two-point random variables. We highlighted the two most accurate methods for each value of $d.$   
} \label{table_singutp}
\end{table}

\begin{table}[!ht]
\setlength\arrayrulewidth{1.5pt}
\renewcommand{\arraystretch}{1.4}
 \captionsetup{width=1\linewidth}
 \addtolength{\tabcolsep}{-0.5pt} 
\begin{center}
\scriptsize
\hspace*{-0.8cm}
{
\begin{tabular}{clclclclclclc|c|c|c|c}
\toprule
 & \multicolumn{5}{c}{$N=200$}                                                                                                                       & \multicolumn{5}{c}{$N=500$}                                                                                                                       \\ \hline
Method   & $d=2$& $d=5$  & $d=10$  & $d=50$  & $ d=100$  & $d=2$ & $d=5$ & $d=10$ & $d=50$ & $d=100$ \\ \hline
$\texttt{Fr-bootstrap}$ & 0.052   & 0.047   & 0.047   & 0.046 & 0.054 &  0.045 &  0.039 &  0.042  & 0.049  &  0.052 \\
$\texttt{Fr-Bayes}$ &  0.048 & 0.054  & 0.061  & 0.069  & {\bf 0.088} & {\bf 0.055} & 0.048  & 0.046 & 0.062  & 0.084 \\
$\texttt{En-bootstrap}$& 0.046  & 0.049  &  0.051 & 0.046 & 0.048 & {0.052} &  0.039 & 0.049 &  0.055 & 0.062  \\
$\texttt{En-Bayes}$ & {\bf 0.059 } &  {\bf 0.062} & {\bf 0.067}  &  0.072 & {0.086}  & 0.054  & {\bf 0.067} & {\bf 0.072} & {\bf 0.086} & {\bf 0.088}  \\
$\texttt{Fr-Datadriven}$& 0.039  & 0.042  & 0.046  & 0.043 & 0.052 &  0.045 & 0.052 & 0.043  & 0.052  & 0.058   \\
$\texttt{HPV-LeCam}$& 0.783 & 0.779 & 0.813  & 0.793 & 0.754   & 0.789 & 0.832 & 0.789 &  0.787 & 0.813 \\
$\texttt{Sp-bootstrap}$	&  {\bf 0.059} & 0.057  & 0.049 & 0.054 &  0.053 & 0.049  & {0.051} & 0.058  & 0.061 & 0.059 \\
$\texttt{Sp-Bayes}$& 0.057   &  0.059    & {0.066}  & {\bf 0.081}  & 0.085  & 0.049 & 0.051 &  {0.063} &  {0.075} & {0.084} \\
$\texttt{Fr-Adaptive}$ & {\bf 0.108}   & {\bf 0.092}  & {\bf 0.103}  & {\bf 0.095}  &  {\bf 0.096}  & {\bf 0.11}  & {\bf 0.097}  & {\bf 0.097}  & {\bf 0.102}  & {\bf 0.104}  \\ 
 \bottomrule
\end{tabular}
}
\end{center}
\caption{ Scenario I: simulated type I error rates under the nominal level $0.1$ for $y=1$.  We report our results based on 2,000 Monte-Carlo simulations with two-point random variables. We highlighted the two most accurate methods for each value of $d.$  
} \label{table_singu1tp}
\end{table}

\begin{table}[!ht]
\setlength\arrayrulewidth{1.5pt}
\renewcommand{\arraystretch}{1.4}
 \captionsetup{width=1\linewidth}
 \addtolength{\tabcolsep}{-0.5pt} 
\begin{center}
\scriptsize
\hspace*{-0.8cm}
{
\begin{tabular}{clclclclclclc|c|c|c|c}
\toprule
 & \multicolumn{5}{c}{$N=200$}                                                                                                                       & \multicolumn{5}{c}{$N=500$}                                                                                                                       \\ \hline
Method   & $d=2$& $d=5$  & $d=10$  & $d=50$  & $ d=100$  & $d=2$ & $d=5$ & $d=10$ & $d=50$ & $d=100$ \\ \hline
$\texttt{Fr-bootstrap}$ & 0.031  & 0.042  & 0.039   & 0.043 & 0.052 &  0.035&  0.043 &  0.045 & 0.051 & 0.049  \\
$\texttt{Fr-Bayes}$ &  {\bf 0.043} & 0.047  & 0.049  & 0.052  & 0.059 & 0.041 &  0.045  & 0.039 & 0.052  & {\bf 0.068} \\
$\texttt{En-bootstrap}$& 0.039 & 0.046  & {\bf 0.052}  & 0.057 & 0.059 & 0.041 & 0.045  & 0.039  &0.057   & 0.052   \\
$\texttt{En-Bayes}$ & 0.038  & {0.043}  & {\bf 0.052}   &  {\bf 0.061}  &  {\bf 0.068} & 0.041 &  0.049 &  0.043 & {0.057}  & 0.064   \\
$\texttt{Fr-Datadriven}$& 0.034  & 0.038   & 0.029  & 0.042 & 0.048 &  0.029  & 0.035  &  0.044  & 0.051   & 0.059  \\
$\texttt{HPV-LeCam}$& 0.912  & 0.895 & 0.923   & 0.944 &  0.957  & 0.933 & 0.897 & 0.962 &  0.932 & 0.955  \\
$\texttt{Sp-bootstrap}$	& 0.041  & 0.043  & 0.049 & 0.052  & 0.058   & 0.039  & 0.048  & 0.042 & 0.052 & 0.061 \\
$\texttt{Sp-Bayes}$& {0.039}   & {\bf 0.048}   & {\bf 0.052}   &{\bf 0.061}   & 0.059   &  {\bf 0.048} & {\bf 0.053} & {\bf 0.049} & {\bf 0.061} & {\bf 0.068} \\
$\texttt{Fr-Adaptive}$ & {\bf 0.102}  & {\bf 0.107} & {\bf 0.099}  &{\bf 0.103}  & {\bf 0.095}  &  {\bf 0.102} & {\bf 0.096} & {\bf 0.103} & {\bf 0.094}  & {\bf 0.093}  \\ 
 \bottomrule
\end{tabular}
}
\end{center}
\caption{ Scenario I: simulated type I error rates under the nominal level $0.1$ for $y=10$.  We report our results based on 2,000 Monte-Carlo simulations with two-point random variables. We highlighted the two most accurate methods for each value of $d.$ 
} \label{table_singu2tp}
\end{table}

\begin{table}[!ht]
\setlength\arrayrulewidth{1.5pt}
\renewcommand{\arraystretch}{1.4}
 \captionsetup{width=1\linewidth}
 \addtolength{\tabcolsep}{-0.5pt} 
\begin{center}
\scriptsize
\hspace*{-0.8cm}
{
\begin{tabular}{clclclclclclc|c|c|c|c}
\toprule
 & \multicolumn{5}{c}{$N=200$}                                                                                                                       & \multicolumn{5}{c}{$N=500$}                                                                                                                       \\ \hline
Method   & $d=2$& $d=5$  & $d=10$  & $d=50$  & $ d=100$  & $d=2$ & $d=5$ & $d=10$ & $d=50$ & $d=100$ \\ \hline
$\texttt{Fr-bootstrap}$ &  0.045 &  0.053   &  0.058   &  0.065 & 0.063  & 0.039 & 0.046 & 0.048  & 0.055 & 0.062  \\
$\texttt{Fr-Bayes}$ &  0.055 & {\bf 0.064} & 0.068   & {\bf 0.089}  & {\bf 0.095} & 0.062 & {\bf 0.075}  & {\bf 0.085} &  {\bf 0.092} & {\bf 0.095} \\
$\texttt{En-bootstrap}$&  0.042  & {0.056}  & 0.059  &  0.063 & 0.059  & 0.041 &  0.049  &  0.058 &  0.057  & 0.059   \\
$\texttt{En-Bayes}$ &{\bf 0.059}  & 0.06   & 0.062  & 0.085 &  {0.094}  & {\bf 0.064} & {0.067} & 0.073 & 0.085 & {0.089}  \\
$\texttt{Fr-Datadriven}$& 0.034 & 0.033  & 0.042 & 0.049 & 0.052& 0.039  & 0.045  & 0.048 & 0.052 &  0.055   \\
$\texttt{HPV-LeCam}$& 0.384 & 0.379 & 0.363  & 0.397 & 0.333   & 0.354 & 0.321 & 0.343 &  0.352 & 0.393 \\
$\texttt{Sp-bootstrap}$	& {0.047}  & 0.048  &  0.053 &  0.065 & 0.068  &  0.039  & 0.043  &  0.053  & 0.058 & 0.065 \\
$\texttt{Sp-Bayes}$&  {0.057}  & 0.063     & {\bf 0.069}  & 0.088  & 0.092  &  0.063 & {0.074} & {0.082} & {0.089} & {\bf 0.095} \\
$\texttt{Fr-Adaptive}$ & {\bf 0.091} & {\bf 0.103} &  {\bf 0.104} &  {\bf 0.095} & {\bf 0.103}  & {\bf 0.103}  & {\bf 0.103} & {\bf 0.093} & {\bf 0.096}  & {\bf 0.098} \\ 
 \bottomrule
\end{tabular}
}
\end{center}
\caption{ Scenario II: simulated type I error rates under the nominal level $0.1$ for $y=0.1$.  We report our results based on 2,000 Monte-Carlo simulations with two-point random variables. We highlighted the two most accurate methods for each value of $d.$   
} \label{table_singutpdenegate}
\end{table}

\begin{table}[!ht]
\setlength\arrayrulewidth{1.5pt}
\renewcommand{\arraystretch}{1.4}
 \captionsetup{width=1\linewidth}
 \addtolength{\tabcolsep}{-0.5pt} 
\begin{center}
\scriptsize
\hspace*{-0.8cm}
{
\begin{tabular}{clclclclclclc|c|c|c|c}
\toprule
 & \multicolumn{5}{c}{$N=200$}                                                                                                                       & \multicolumn{5}{c}{$N=500$}                                                                                                                       \\ \hline
Method   & $d=2$& $d=5$  & $d=10$  & $d=50$  & $ d=100$  & $d=2$ & $d=5$ & $d=10$ & $d=50$ & $d=100$ \\ \hline
$\texttt{Fr-bootstrap}$ & 0.037   & 0.042  &  0.048   & 0.053  & 0.061  &  0.042 & 0.048 & 0.051  & 0.057  & 0.059  \\
$\texttt{Fr-Bayes}$ & {0.047}  &  0.049  & {0.056}  & {\bf 0.064} &  {\bf 0.073} & {\bf 0.051}  & {0.049}  & {\bf 0.062} &  {\bf 0.071}  & {\bf 0.078} \\
$\texttt{En-bootstrap}$&  0.039 & 0.041  & 0.049 &  0.058 & {0.062} & {0.048} & 0.041  & 0.053 &  0.059 & 0.058   \\
$\texttt{En-Bayes}$  & 0.052  &{0.049}  &   {\bf 0.058} & 0.062 & 0.064  & 0.049  &  0.053 & {0.059} & 0.061 & 0.063  \\
$\texttt{Fr-Datadriven}$& 0.031  & 0.029  & 0.042  & 0.046 & 0.051 & 0.047  & 0.052  &  0.042& 0.051 & 0.049   \\
$\texttt{HPV-LeCam}$& 0.812 & 0.795 & 0.773  & 0.862 & 0.767   & 0.815 & 0.833 & 0.799 &  0.787 & 0.813 \\
$\texttt{Sp-bootstrap}$	&  0.042 & {\bf 0.053} & {\bf 0.058} & {\bf 0.064} & 0.059  &  {\bf 0.051}  & {\bf 0.058} & 0.047  & 0.058 & 0.059 \\
$\texttt{Sp-Bayes}$&   {\bf 0.056} & 0.049    & 0.052  & {0.058}   & 0.063  & 0.042 & 0.051 & 0.057 & 0.063 & 0.059 \\
$\texttt{Fr-Adaptive}$ & {\bf 0.11} & {\bf 0.11} & {\bf 0.104}  & {\bf 0.097}  &  {\bf 0.095}  & {\bf 0.102}  & {\bf 0.092} & {\bf 0.094} & {\bf 0.107}  & {\bf 0.103} \\ 
 \bottomrule
\end{tabular}
}
\end{center}
\caption{ Scenario II: simulated type I error rates under the nominal level $0.1$ for $y=1$.  We report our results based on 2,000 Monte-Carlo simulations with two-point random variables.  We highlighted the two most accurate methods for each value of $d.$  
} \label{table_singu1tpdenegrate}
\end{table}

\begin{table}[!ht]
\setlength\arrayrulewidth{1.5pt}
\renewcommand{\arraystretch}{1.4}
 \captionsetup{width=1\linewidth}
 \addtolength{\tabcolsep}{-0.5pt} 
\begin{center}
\scriptsize
\hspace*{-0.8cm}
{
\begin{tabular}{clclclclclclc|c|c|c|c}
\toprule
 & \multicolumn{5}{c}{$N=200$}                                                                                                                       & \multicolumn{5}{c}{$N=500$}                                                                                                                       \\ \hline
Method   & $d=2$& $d=5$  & $d=10$  & $d=50$  & $ d=100$  & $d=2$ & $d=5$ & $d=10$ & $d=50$ & $d=100$ \\ \hline
$\texttt{Fr-bootstrap}$ & 0.035   & 0.037  & 0.051    & 0.049 & 0.059 & 0.036 & {0.045} & 0.054  & 0.057  & 0.055 \\
$\texttt{Fr-Bayes}$ & {\bf 0.049} & 0.047  & {\bf 0.055}  & {0.059}  & {0.062} & 0.043 & 0.051  & 0.057 & {0.053}  &{\bf 0.062} \\
$\texttt{En-bootstrap}$& 0.041  & 0.042  &  0.053 & 0.052 & 0.049 & {\bf 0.048} &  0.053 & 0.049 &  0.055 & {\bf 0.062}  \\
$\texttt{En-Bayes}$ &  0.048 &  0.052 & 0.049  &  0.053 & {0.061}  & 0.043  & {\bf 0.056} & {\bf 0.061} & 0.058 & 0.059  \\
$\texttt{Fr-Datadriven}$& 0.034  & 0.041  & 0.039  & 0.052 & 0.049 &  0.046 & 0.037 & 0.049  & 0.039  & 0.046   \\
$\texttt{HPV-LeCam}$& 0.894 & 0.913 & 0.952  & 0.934 & 0.933   & 0.898 & 0.913 & 0.922 &  0.935 & 0.911 \\
$\texttt{Sp-bootstrap}$	& 0.039   & 0.041  &  0.047 & 0.051 & 0.053  & 0.042   & 0.049 & 0.052  & 0.049 & 0.057 \\
$\texttt{Sp-Bayes}$&  {\bf 0.049}  & {\bf 0.059}    &  {0.054} & {\bf 0.062}  & {\bf 0.064} & 0.045 & 0.053 & 0.056 & {\bf 0.061} & 0.059\\
$\texttt{Fr-Adaptive}$ & {\bf 0.097}  & {\bf 0.102} & {\bf 0.094}  & {\bf 0.101} & {\bf 0.103}  & {\bf 0.094} & {\bf 0.103} & {\bf 0.103} &{\bf 0.105}  & {\bf 0.104}  \\ 
 \bottomrule
\end{tabular}
}
\end{center}
\caption{ Scenario II: simulated type I error rates under the nominal level $0.1$ for $y=10$.  We report our results based on 2,000 Monte-Carlo simulations with two-point random variables.  We highlighted the two most accurate methods for each value of $d.$ 
} \label{table_singu2tpdegenerate}
\end{table}

\begin{figure}[!ht]
\hspace*{-1.4cm}
\begin{subfigure}{0.45\textwidth}
\includegraphics[width=6.5cm,height=4.8cm]{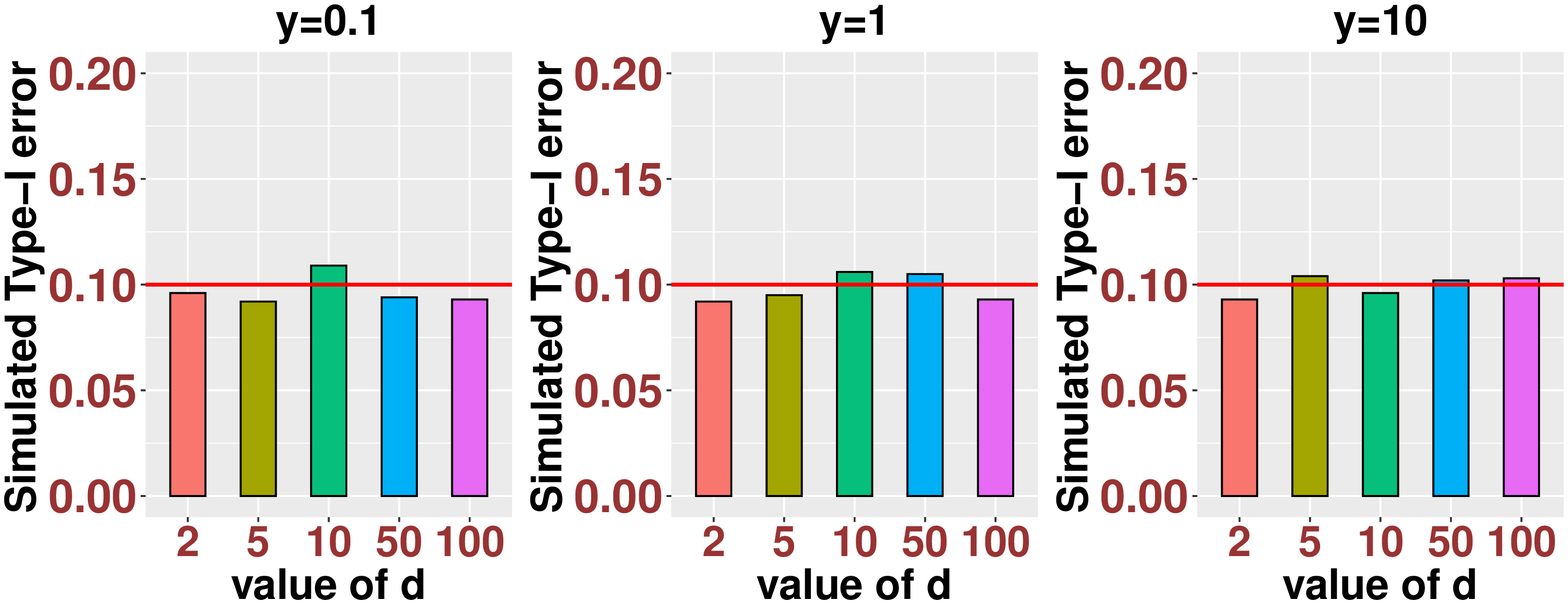}
\caption{$N=200.$}\label{subfig_nullorthogonaltypei200tp}
\end{subfigure}
\hspace{1cm}
\begin{subfigure}{0.45\textwidth}
\includegraphics[width=6.5cm,height=4.8cm]{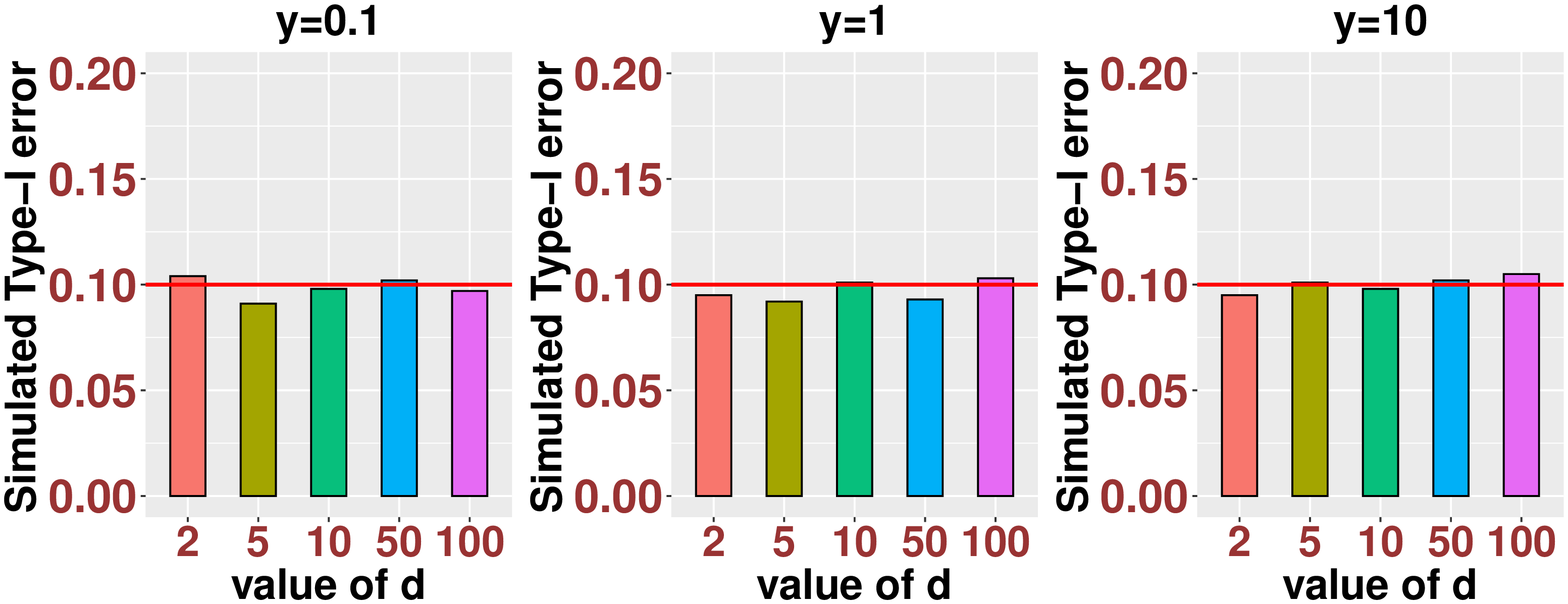}
\caption{$N=500.$}\label{subfig_nullorthogonaltypei500tp}
\end{subfigure}
\caption{{ \small Scenario A: simulated type I error rates for (\ref{eq_orthogobalteset}) using (\ref{eq_finalstat}). We report our results based on 2,000 Monte-Carlo simulations with two-point variables. The critical values are generated using Corollary \ref{cor_simulateddistribution}. } }
\label{fig_figorttypeitp}
\end{figure}

\begin{figure}[!ht]
\hspace*{-1.4cm}
\begin{subfigure}{0.45\textwidth}
\includegraphics[width=6.5cm,height=4.8cm]{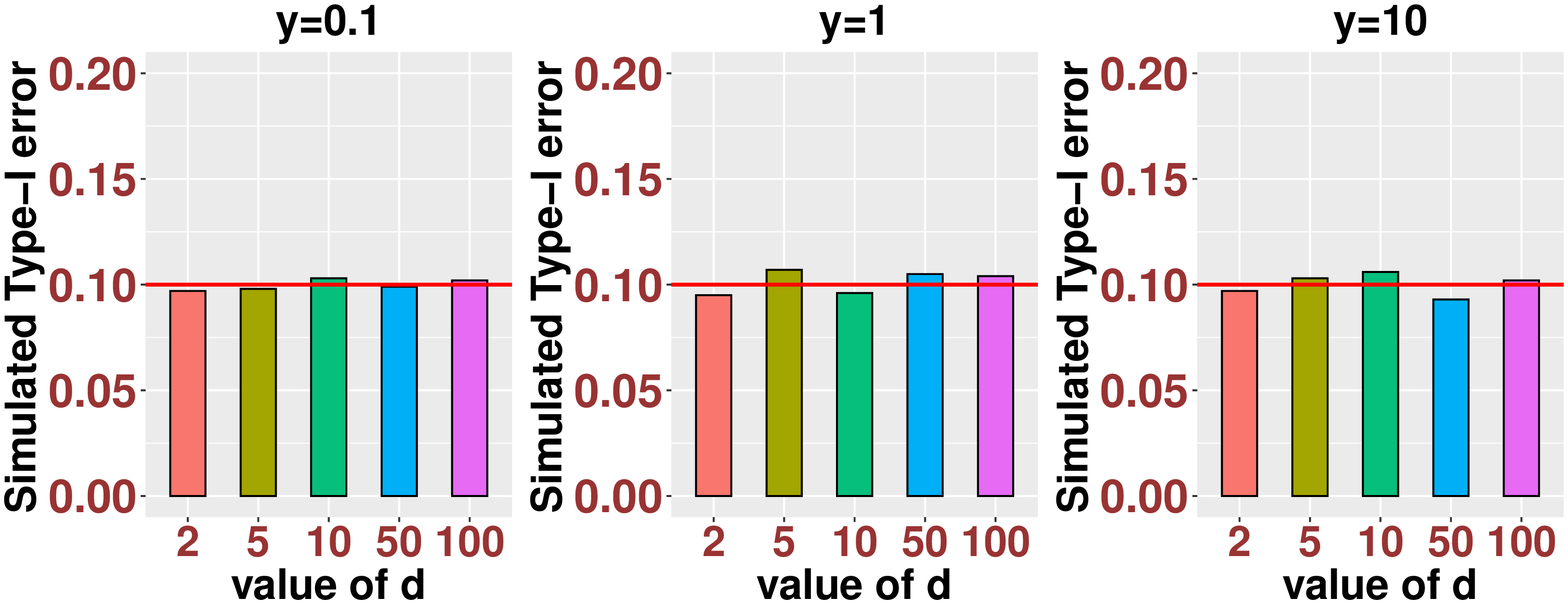}
\caption{$N=200.$}\label{subfig_nullorthogonaltypeii500tp}
\end{subfigure}
\hspace{1cm}
\begin{subfigure}{0.45\textwidth}
\includegraphics[width=6.5cm,height=4.8cm]{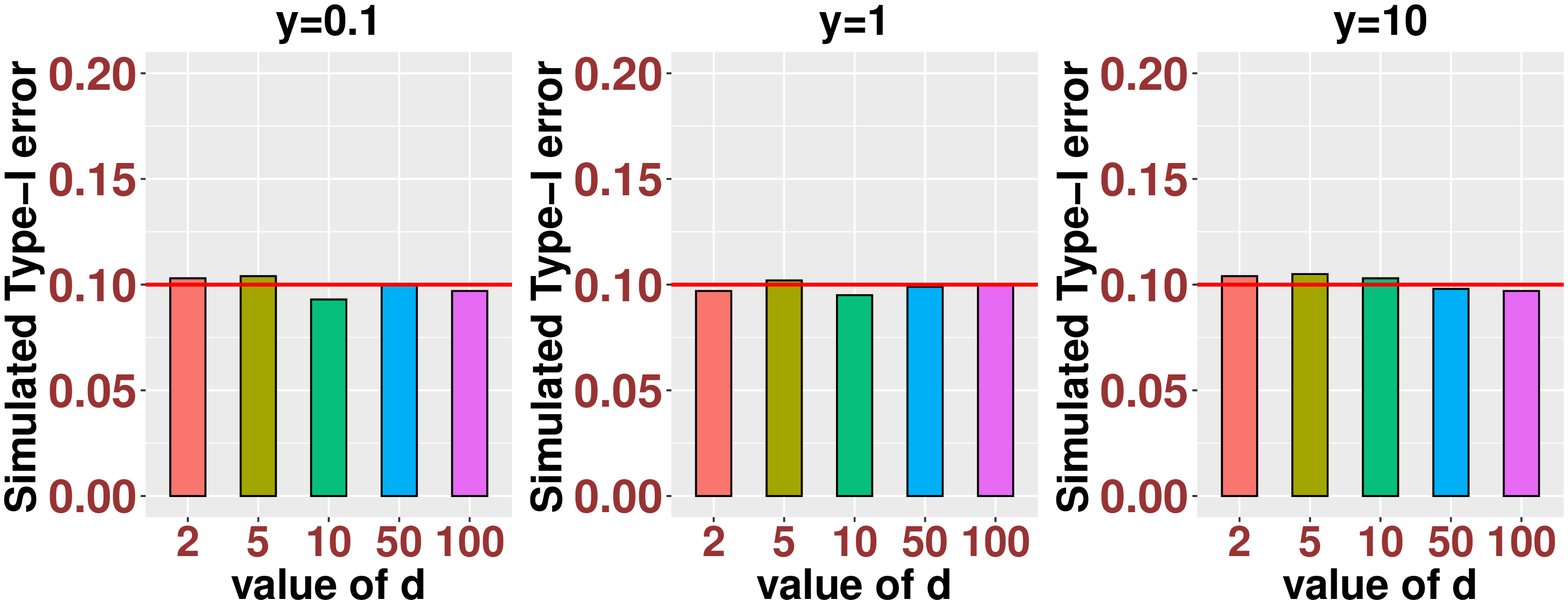}
\caption{$N=500.$}\label{subfig_nullorthogonaltypeii500tp}
\end{subfigure}
\caption{{ \small Scenario B: simulated type I error rates for (\ref{eq_orthogobalteset}). We report our results based on 2,000 Monte-Carlo simulations with two-point variables. The critical values are generated using Corollary \ref{cor_simulateddistribution}. }}
\label{fig_figorttypeiitp}
\end{figure}

\begin{figure}[!ht]
\hspace*{-2.0cm}
\begin{subfigure}{0.3\textwidth}
\includegraphics[width=5.8cm,height=5cm]{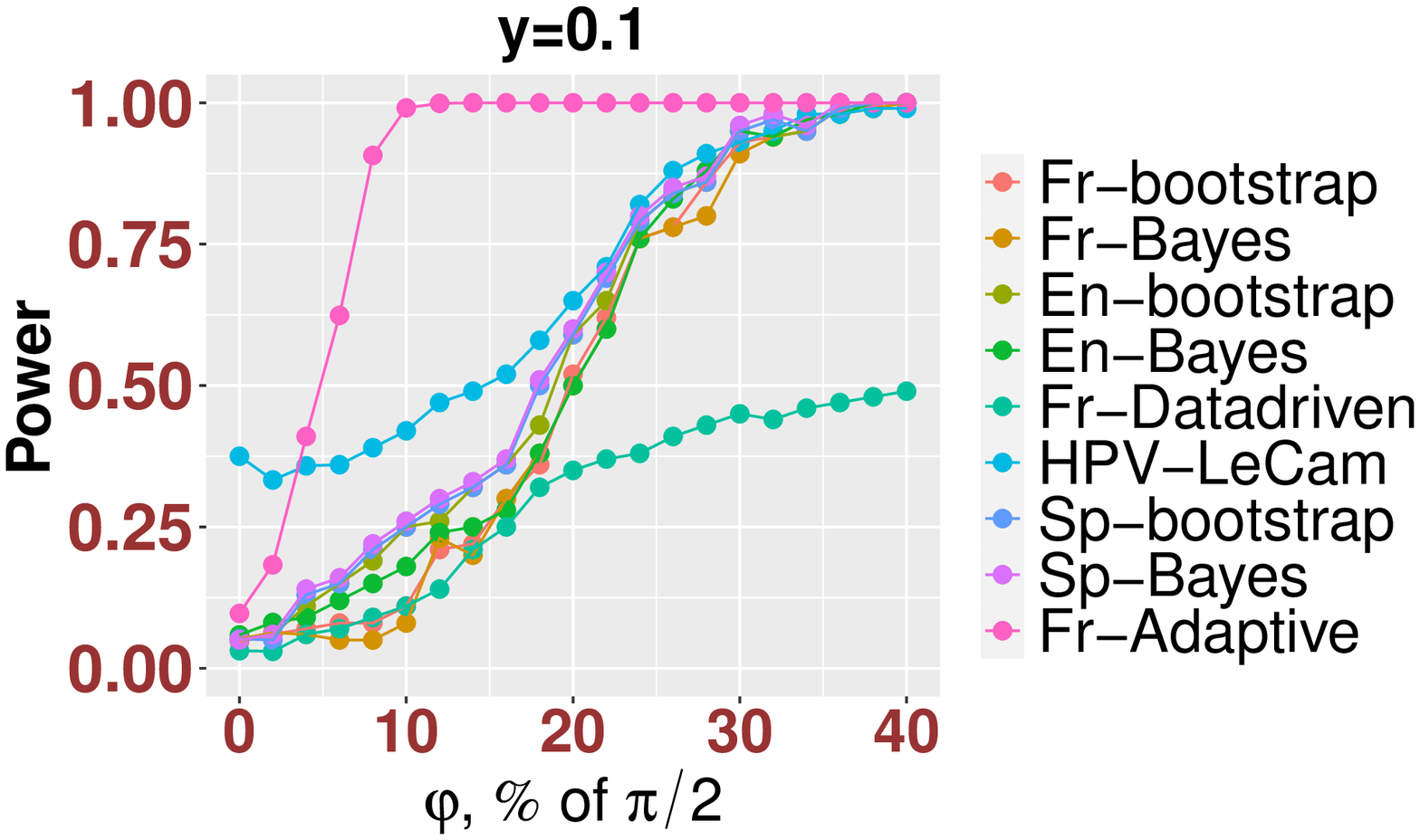}
\end{subfigure}
\begin{subfigure}{0.3\textwidth}
\includegraphics[width=5.8cm,height=5cm]{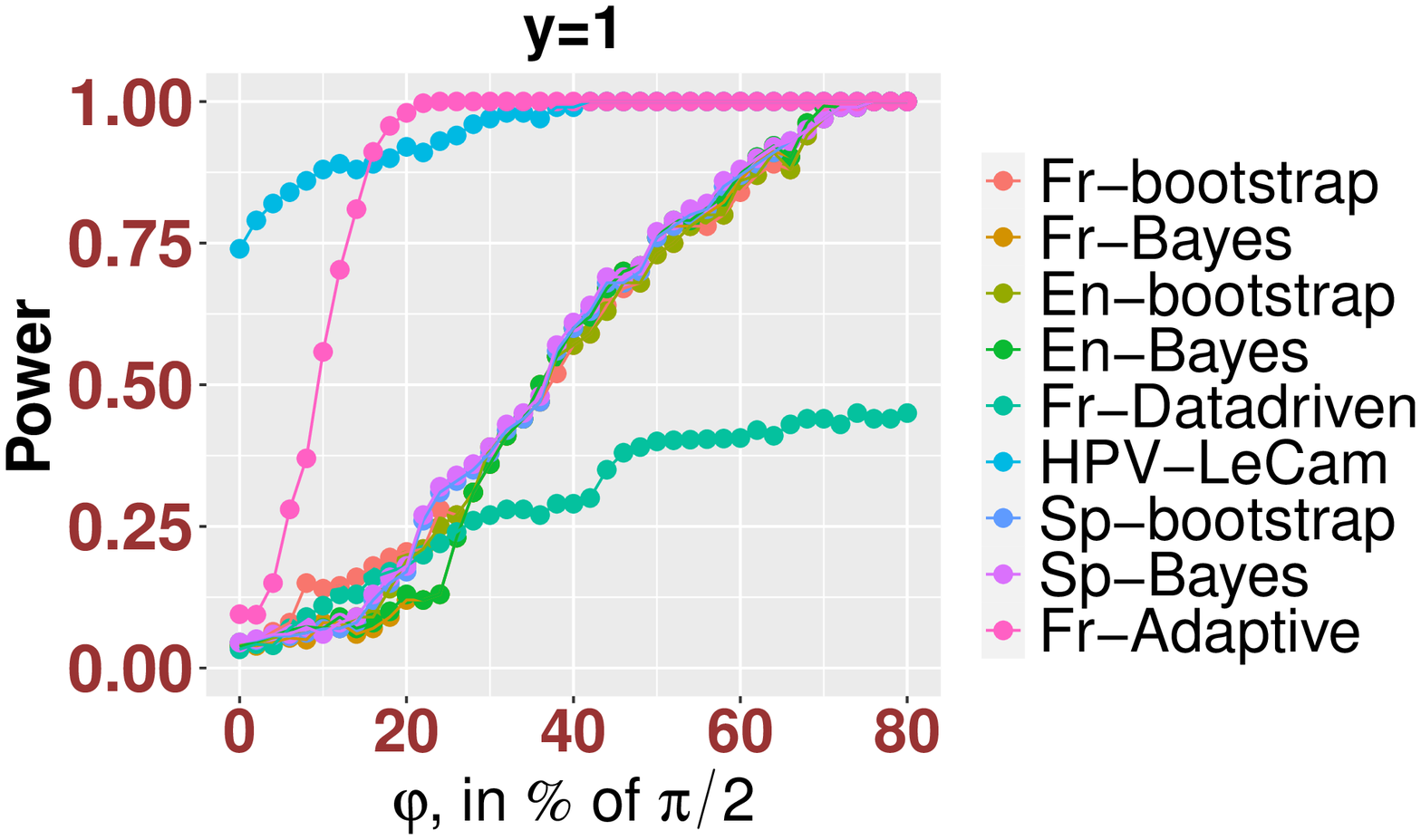}
\end{subfigure}
\begin{subfigure}{0.3\textwidth}
\includegraphics[width=5.8cm,height=5cm]{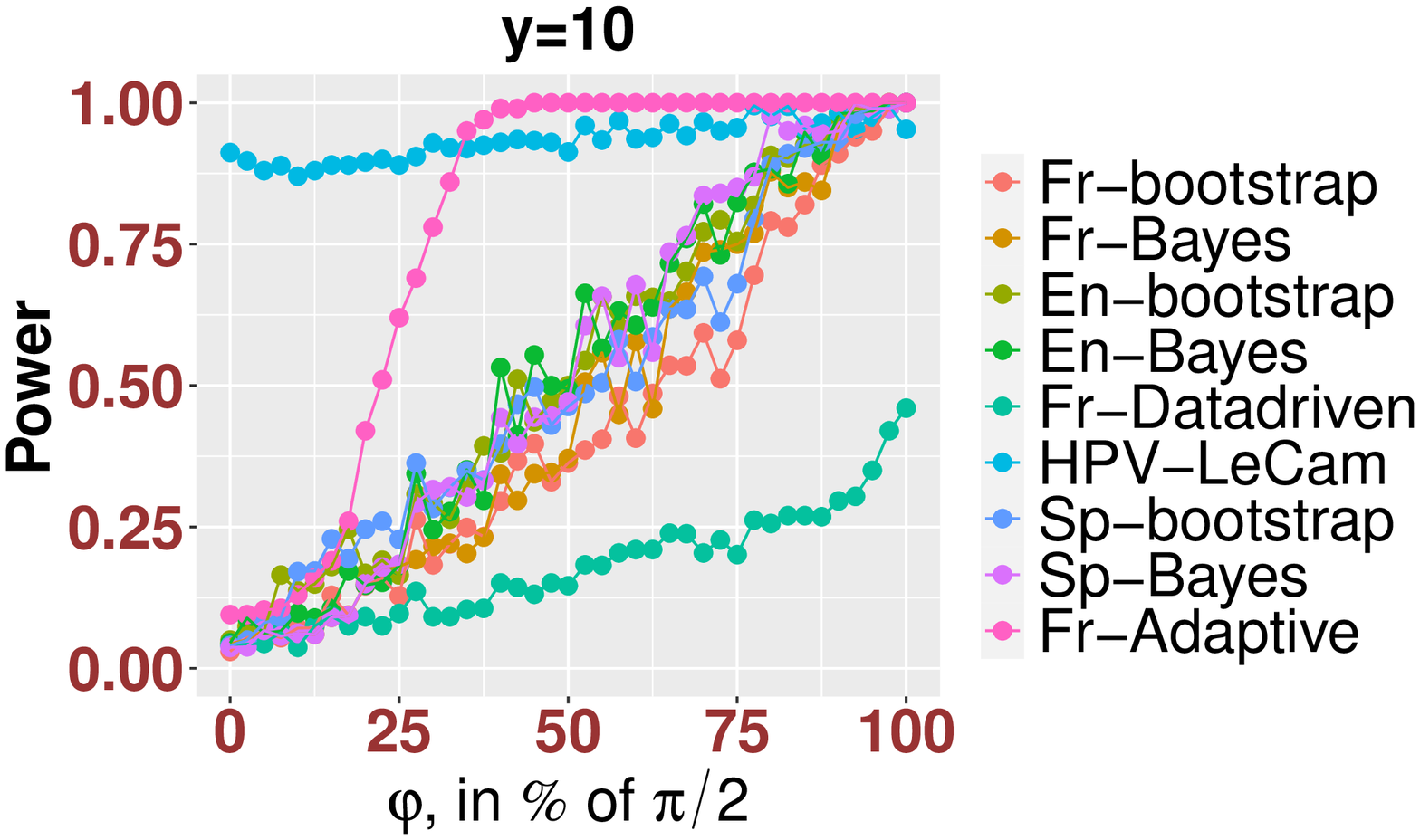}
\end{subfigure}
\caption{{ \small Comparison of power for Scenario I. We choose $d=5$ and use two-point random variables. We report our results under the nominal level 0.1 based on $2,000$ simulations.  Here $N=500.$ }}
\label{fig_powertp1}
\end{figure}

\begin{figure}[!ht]
\hspace*{-2.0cm}
\begin{subfigure}{0.3\textwidth}
\includegraphics[width=5.8cm,height=5cm]{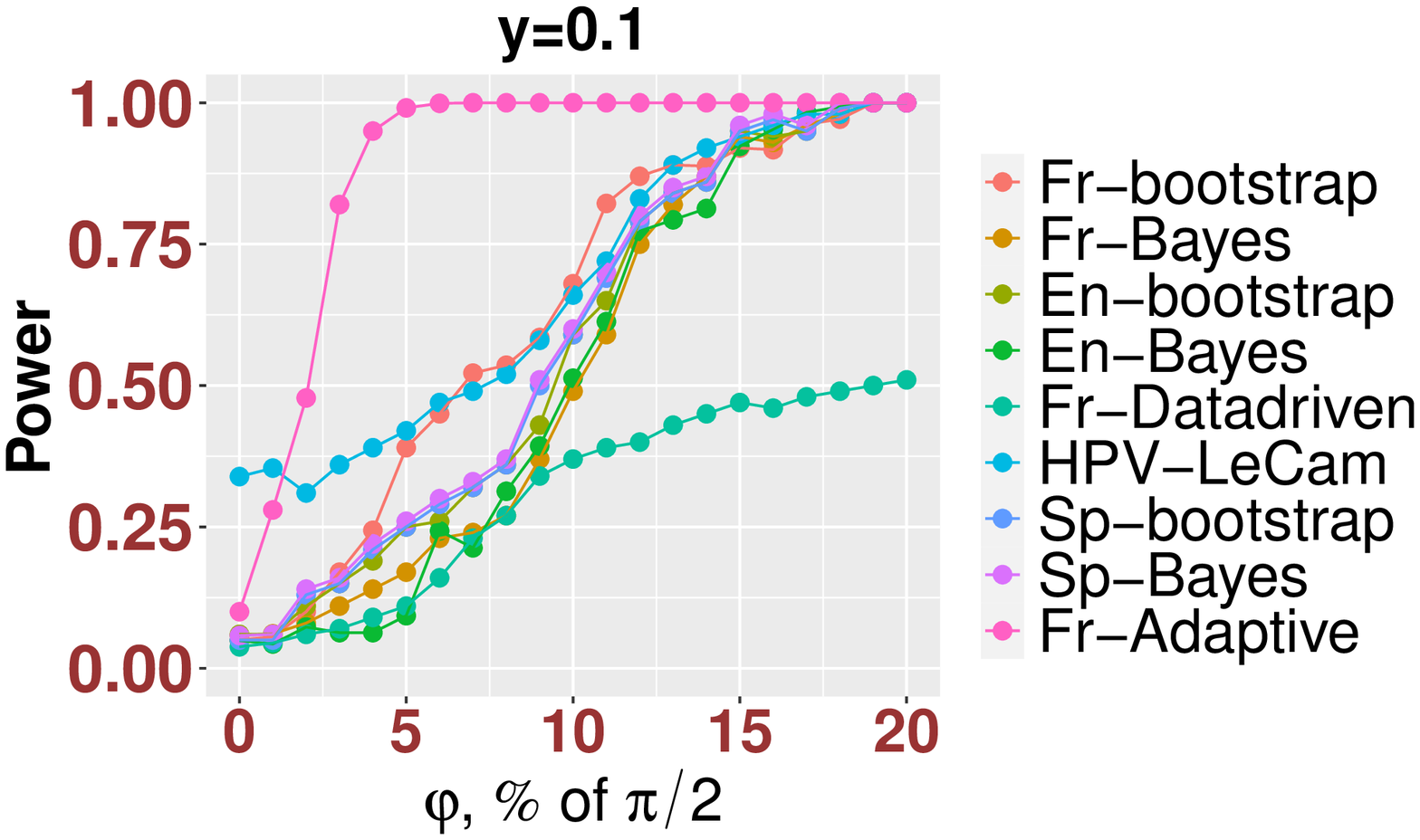}
\end{subfigure}
\begin{subfigure}{0.3\textwidth}
\includegraphics[width=5.8cm,height=5cm]{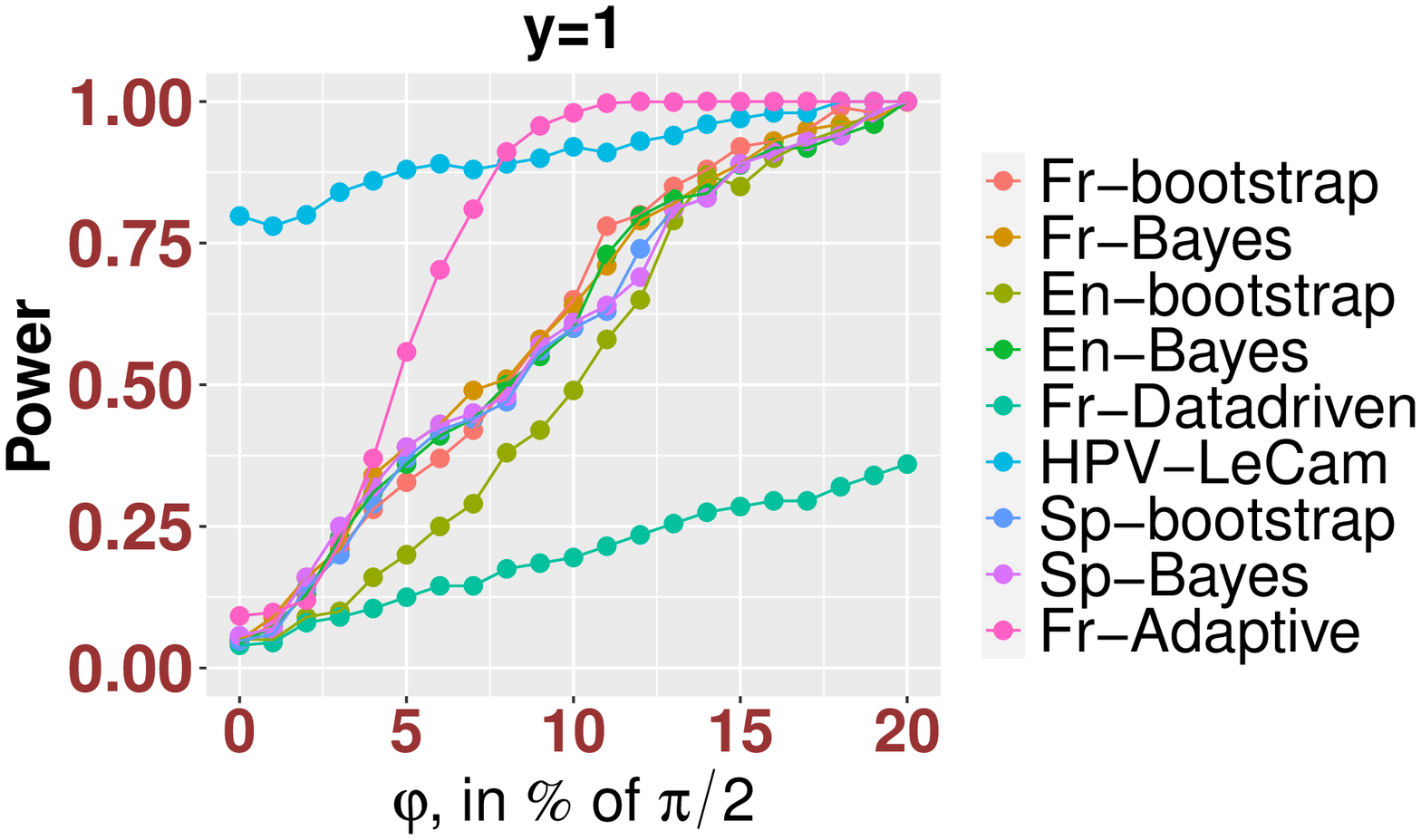}
\end{subfigure}
\begin{subfigure}{0.3\textwidth}
\includegraphics[width=5.8cm,height=5cm]{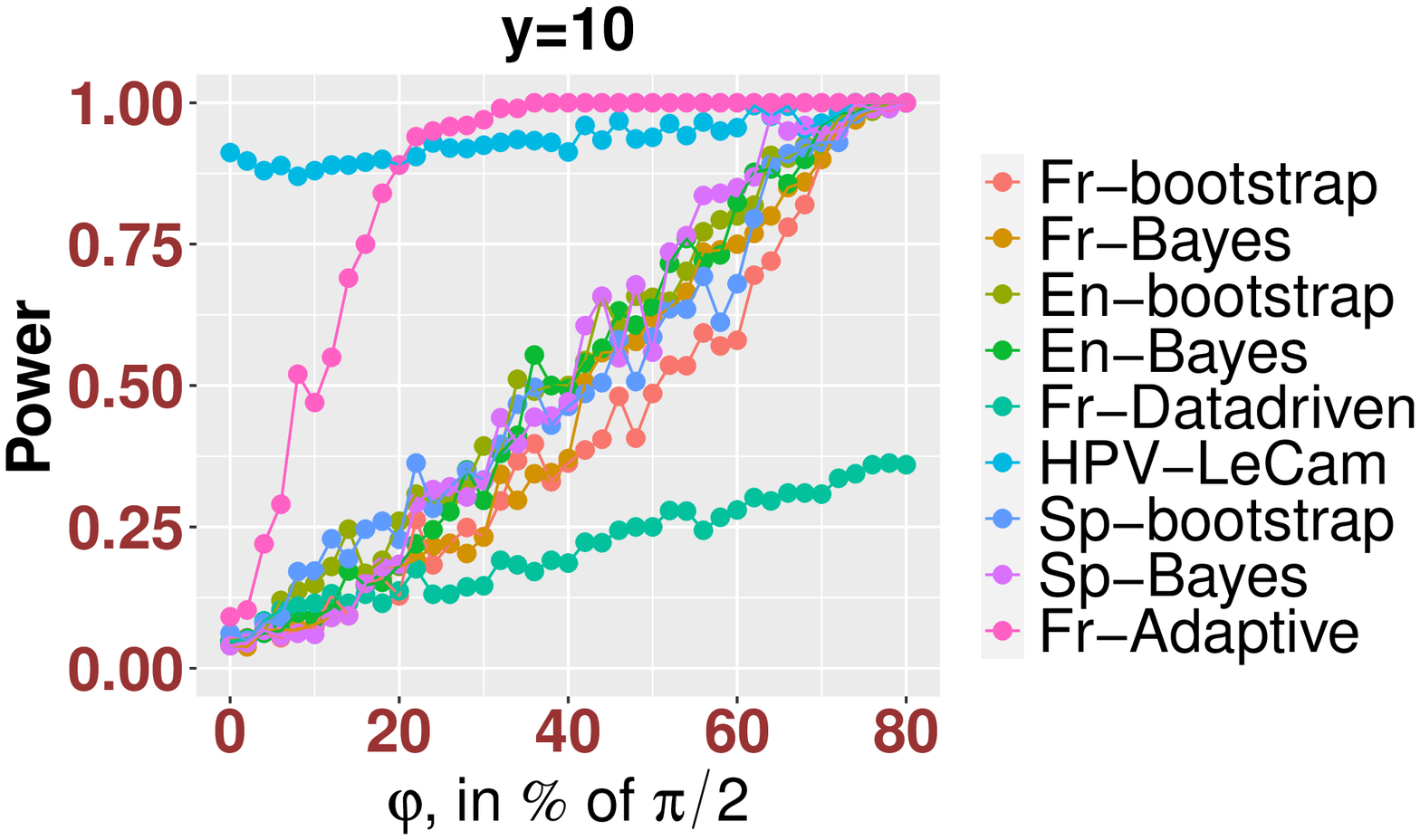}
\end{subfigure}
\caption{{ \small Comparison of power for Scenario I. We choose $d=50$ and use two-point random variables. We report our results under the nominal level 0.1 based on $2,000$ simulations. Here $N=500.$ }}
\label{fig_powertp150}
\end{figure}

\begin{figure}[!ht]
\hspace*{-2.2cm}
\begin{subfigure}{0.3\textwidth}
\includegraphics[width=5.8cm,height=5cm]{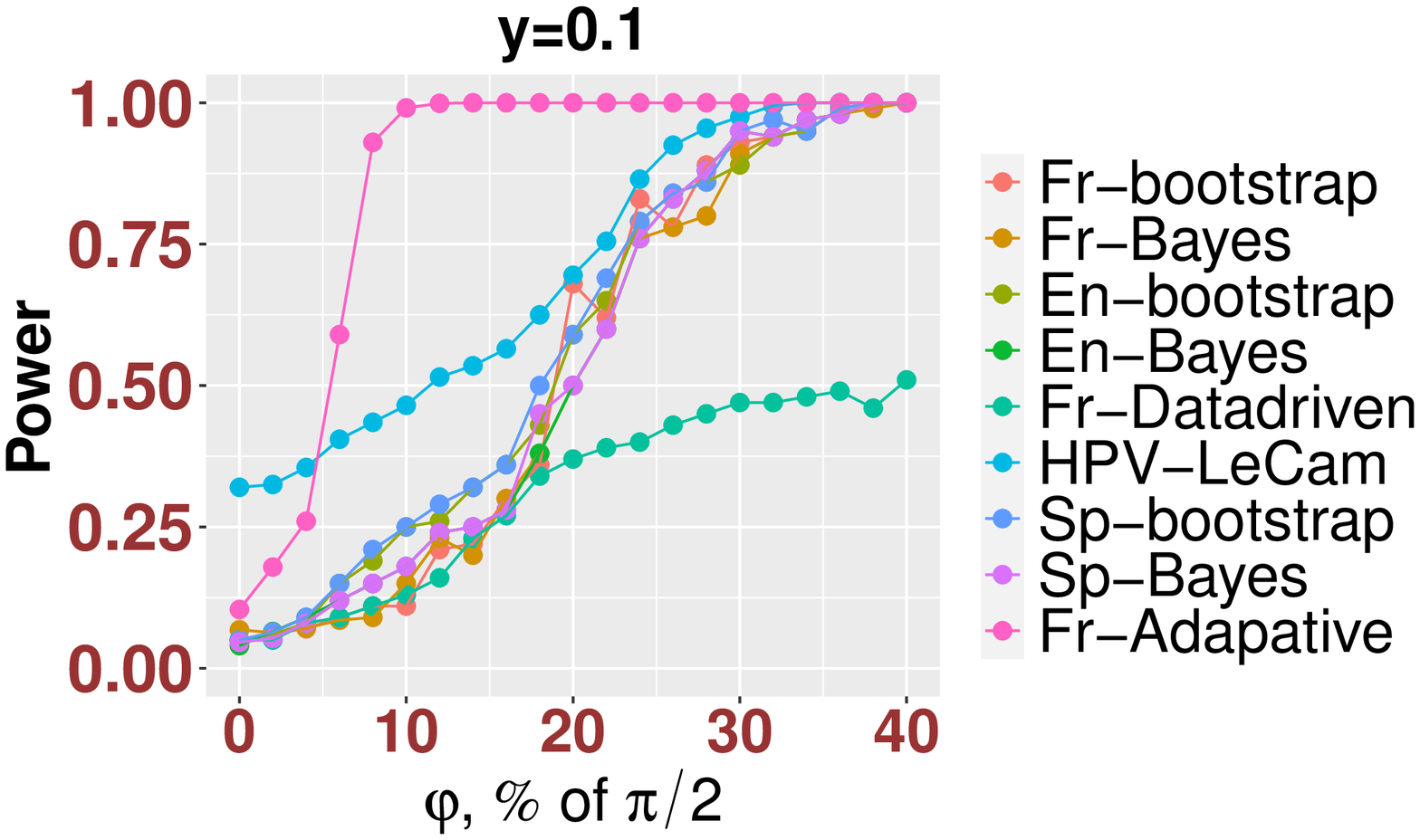}
\end{subfigure}
\begin{subfigure}{0.3\textwidth}
\includegraphics[width=5.8cm,height=5cm]{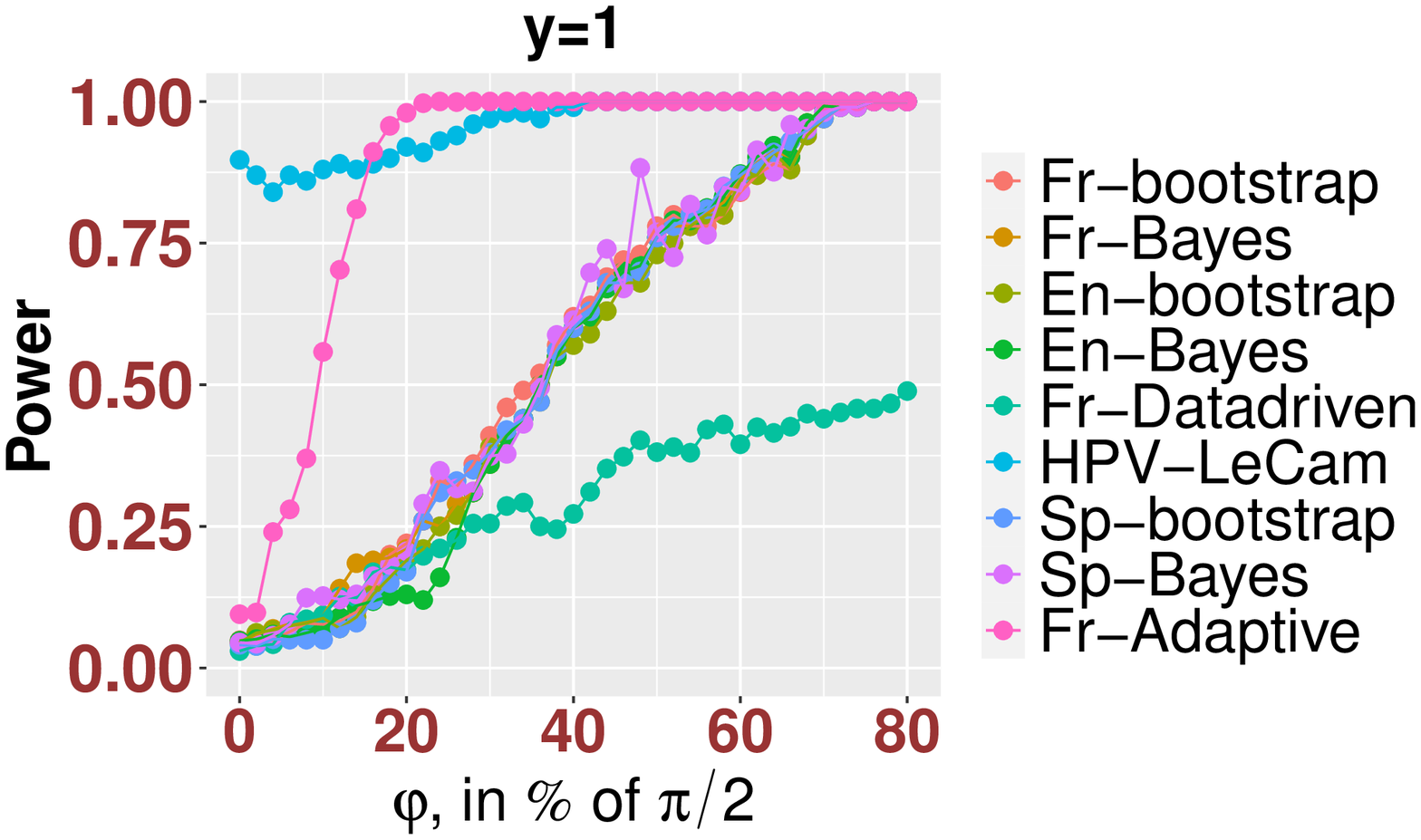}
\end{subfigure}
\begin{subfigure}{0.3\textwidth}
\includegraphics[width=5.8cm,height=5cm]{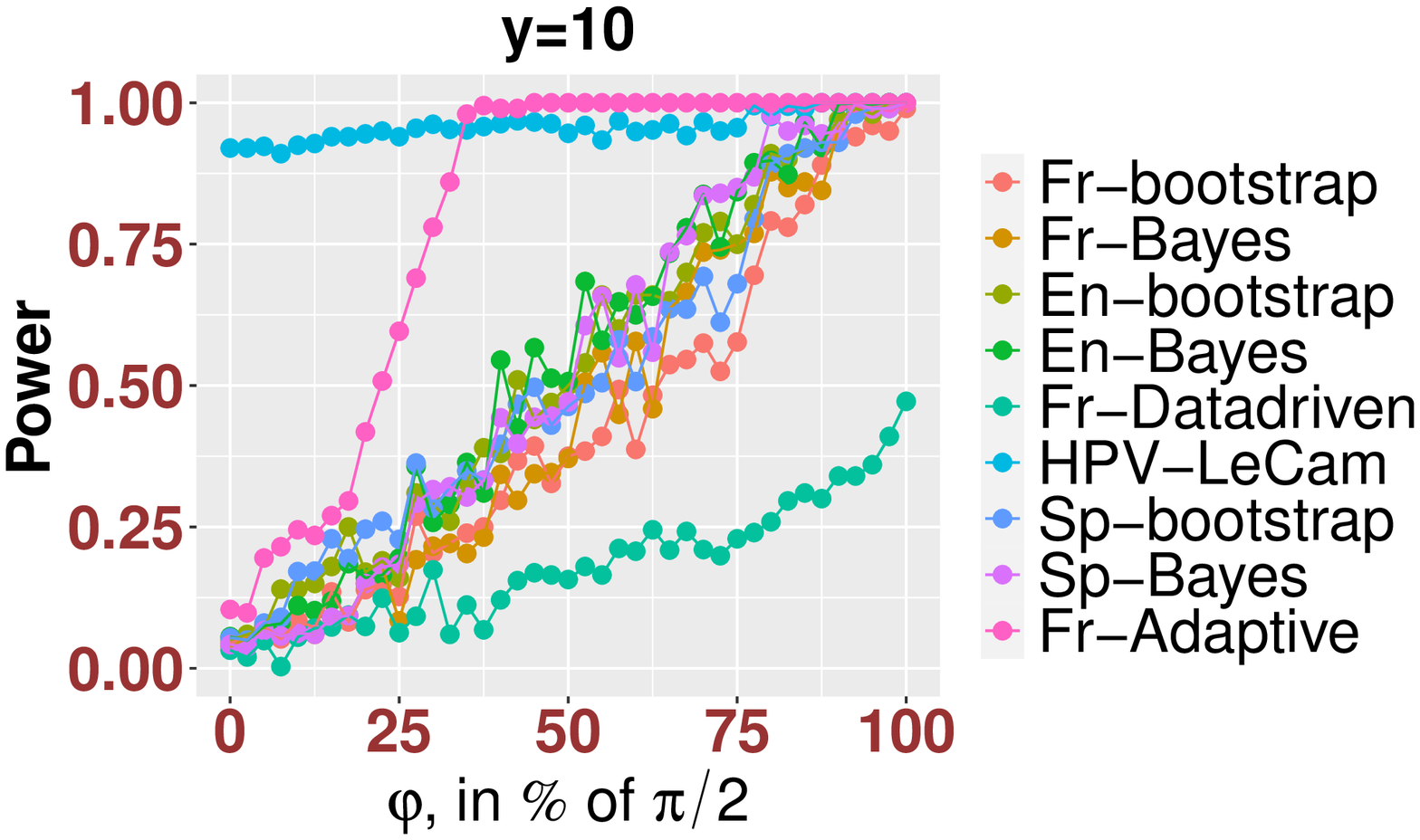}
\end{subfigure}
\caption{{ \small  Comparison of power for Scenario II. We choose $d=5$ and use two-point random variables. We report our results under the nominal level 0.1 based on $2,000$ simulations.  Here $N=500.$ }  }
\label{fig_powertp2}
\end{figure}

\begin{figure}[!ht]
\hspace*{-2.2cm}
\begin{subfigure}{0.3\textwidth}
\includegraphics[width=5.8cm,height=5cm]{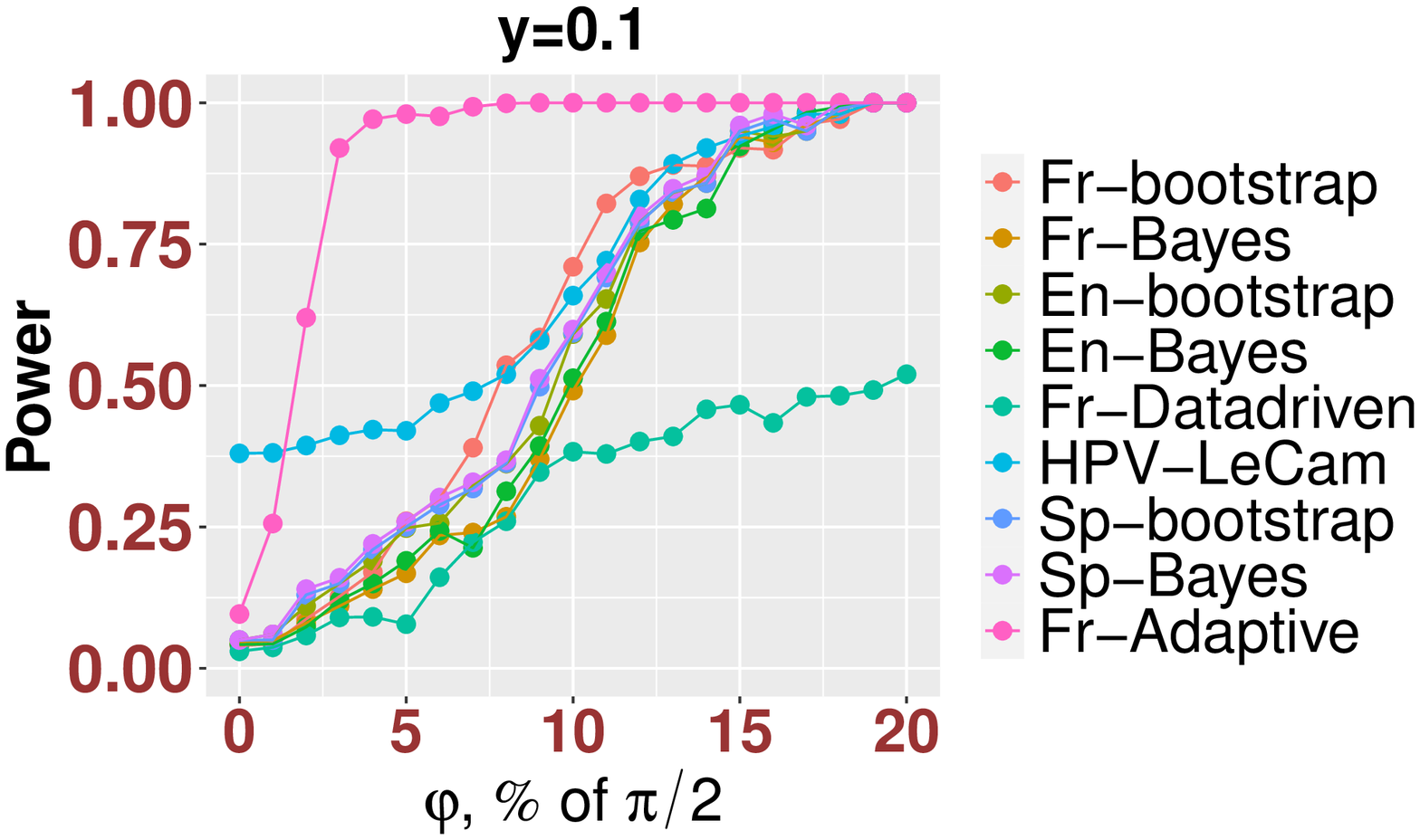}
\end{subfigure}
\begin{subfigure}{0.3\textwidth}
\includegraphics[width=5.8cm,height=5cm]{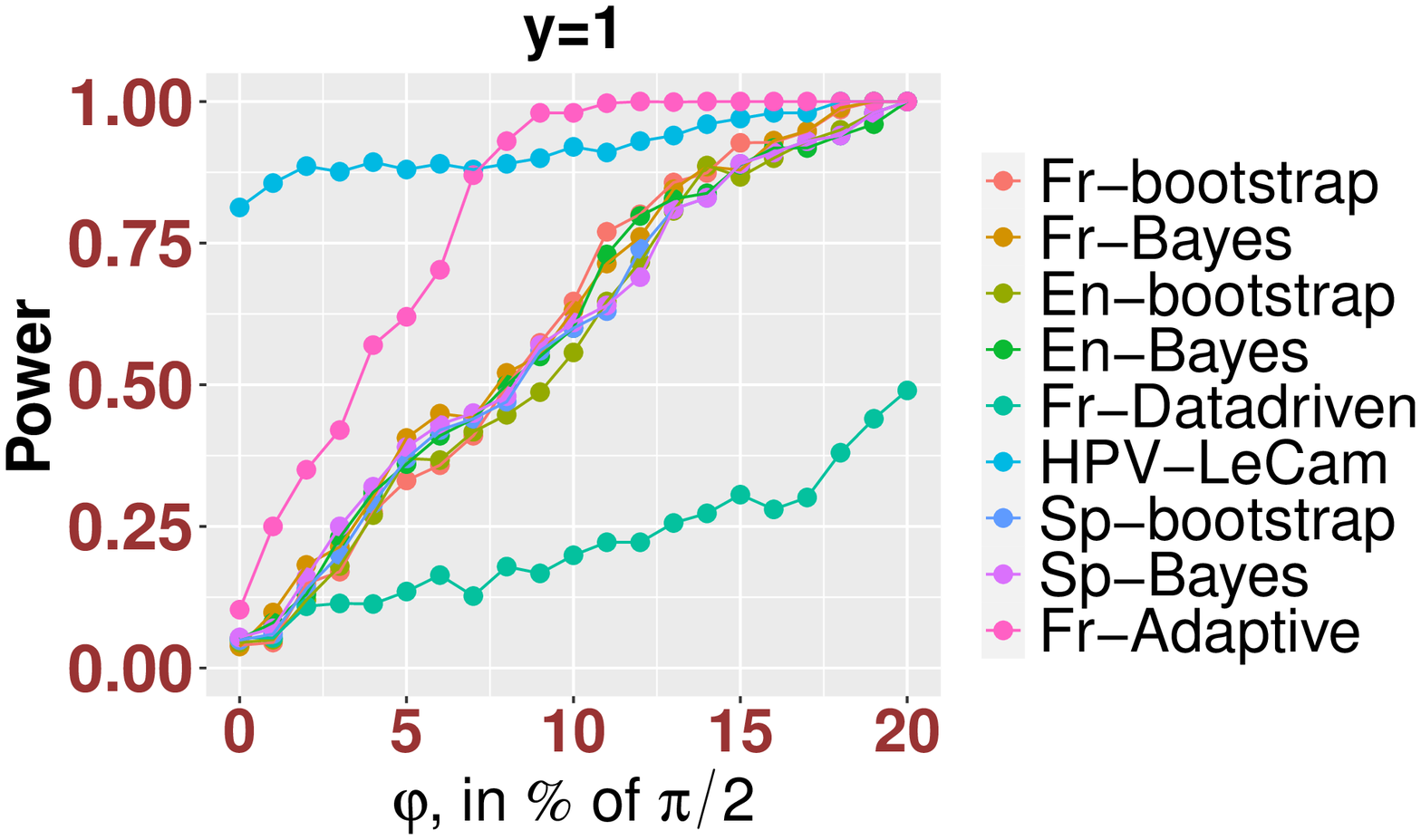}
\end{subfigure}
\begin{subfigure}{0.3\textwidth}
\includegraphics[width=5.8cm,height=5cm]{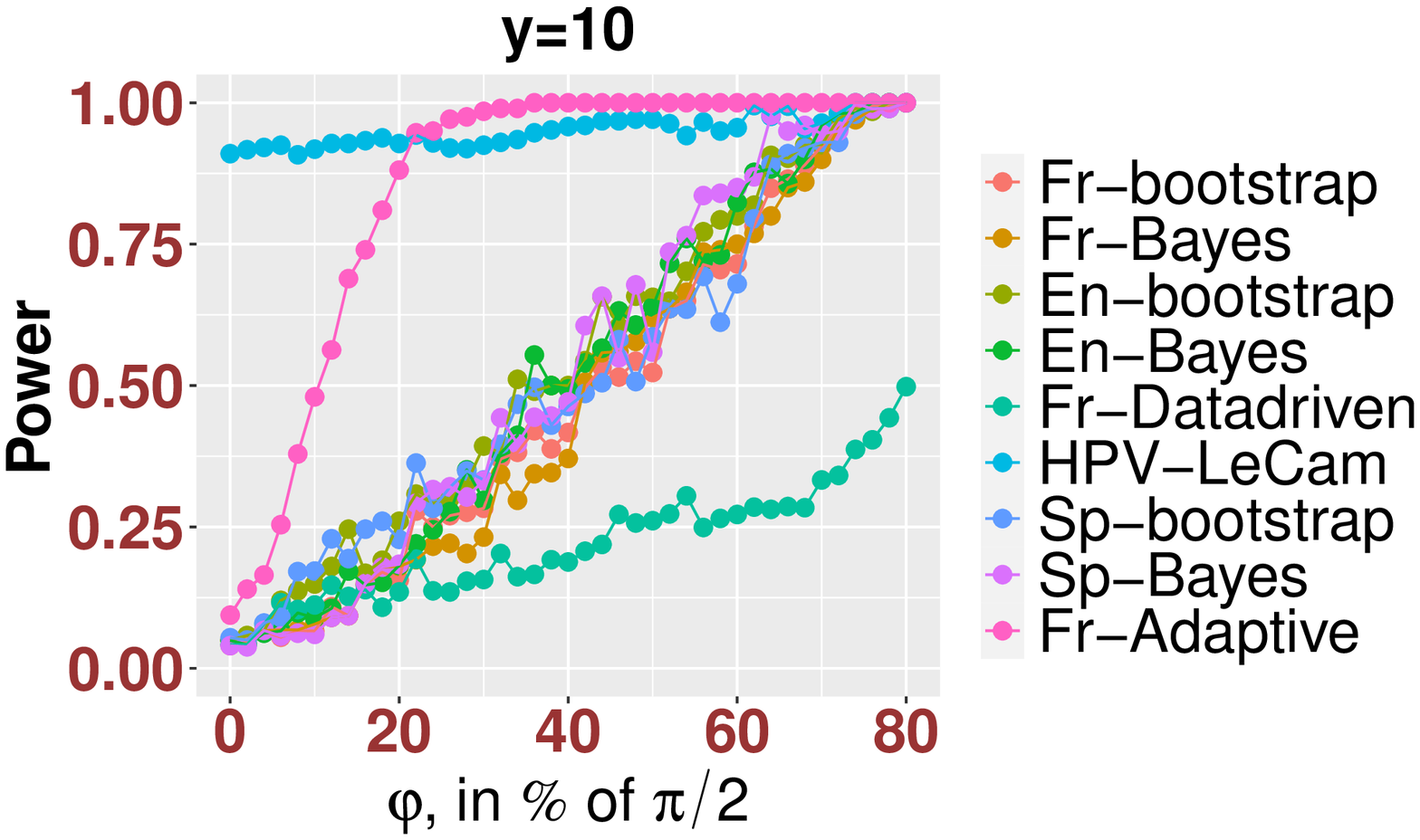}
\end{subfigure}
\caption{{ \small Comparison of power for Scenario II. We choose $d=50$ and use two-point random variables. We report our results under the nominal level 0.1 based on $2,000$ simulations.  Here $N=500.$ } }
\label{fig_powertp250}
\end{figure}

\begin{figure}[!ht]
\hspace*{-1.0cm}
\begin{subfigure}{0.45\textwidth}
\includegraphics[width=6.8cm,height=5cm]{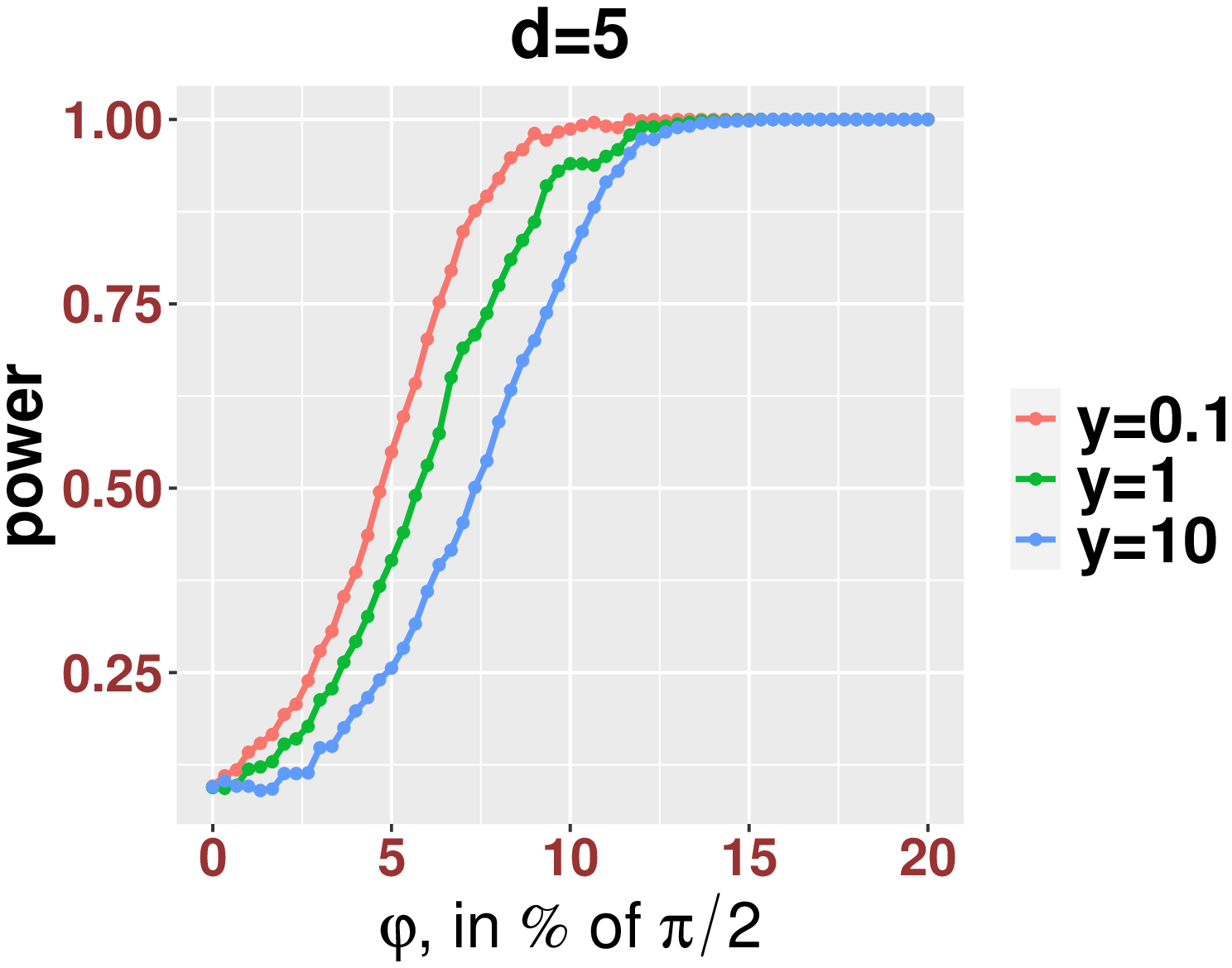}
\caption{$d=5.$}
\end{subfigure}
\hspace{1cm}
\begin{subfigure}{0.45\textwidth}
\includegraphics[width=6.8cm,height=5cm]{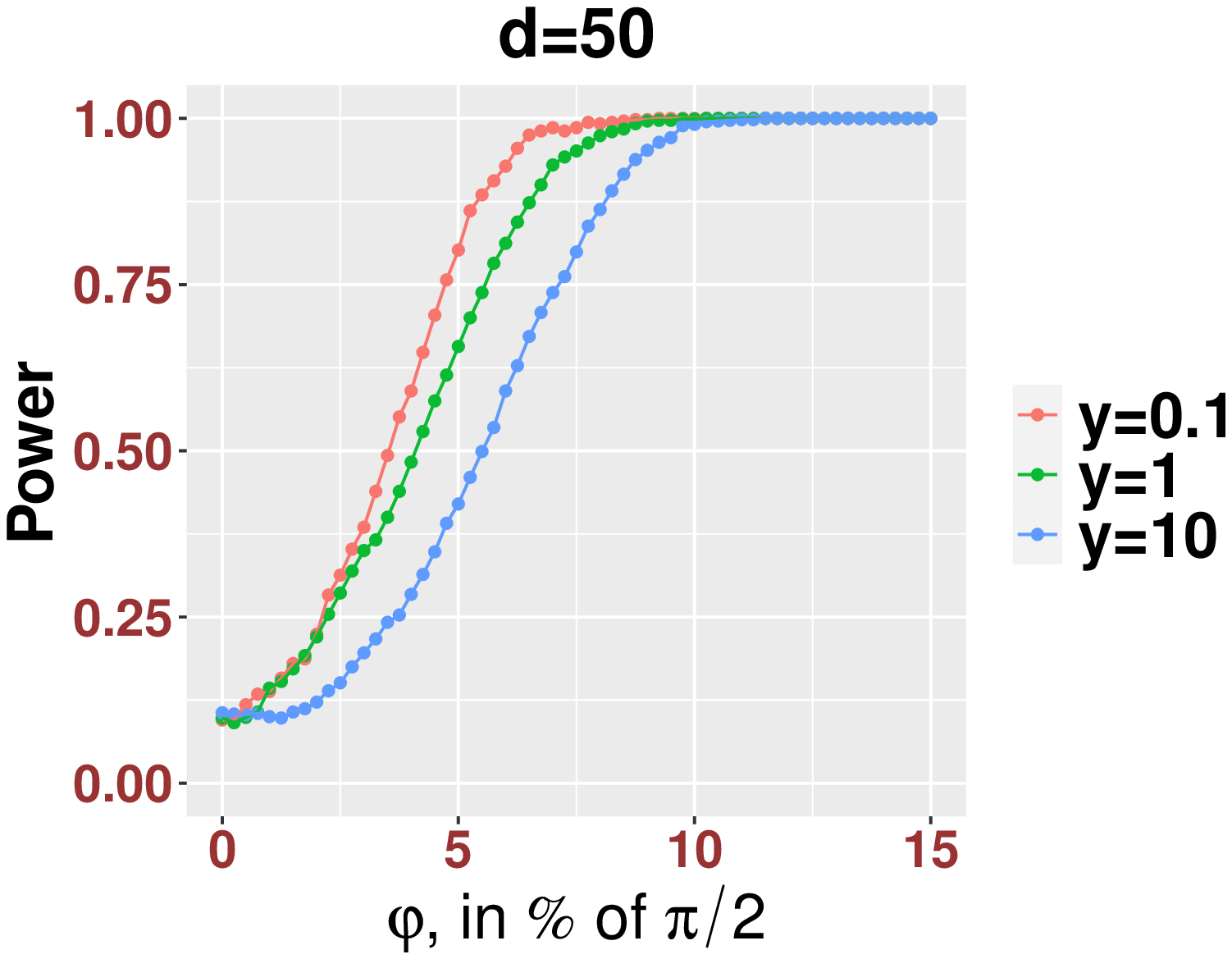}
\caption{$d=50.$}
\end{subfigure}
\caption{{\small Power of Scenario A for two-point random variables using (\ref{eq_finalstat}).  We report our results under the nominal level 0.1 based on $2,000$ simulations.  Here $N=500$ and the critical values are generated using Corollary \ref{cor_simulateddistribution}. }}
\label{fig_figorttypeipowertp}
\end{figure}

\begin{figure}[!ht]
\hspace*{-1.0cm}
\begin{subfigure}{0.45\textwidth}
\includegraphics[width=6.8cm,height=4.8cm]{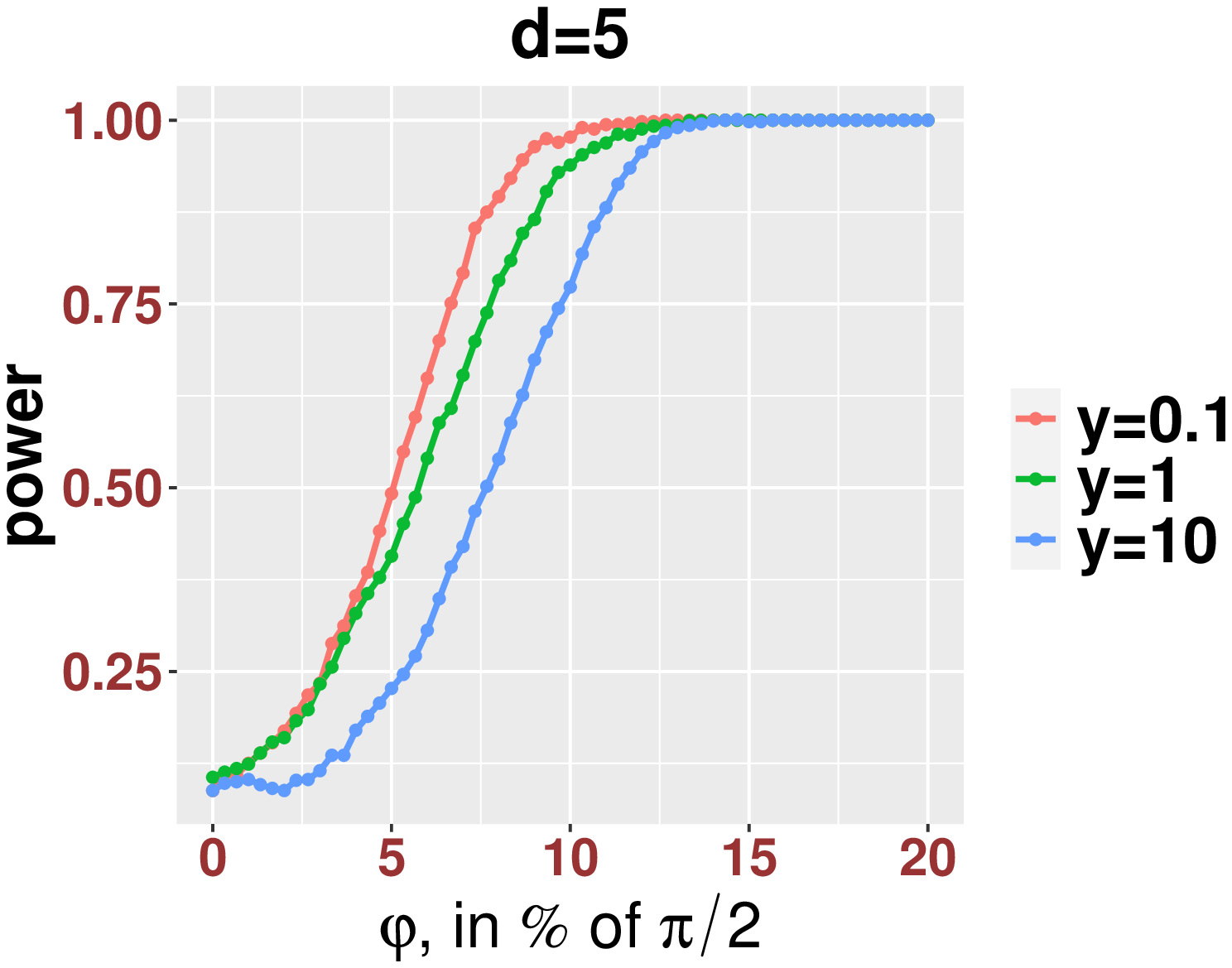}
\caption{$d=5.$}
\end{subfigure}
\hspace{1cm}
\begin{subfigure}{0.45\textwidth}
\includegraphics[width=6.8cm,height=4.8cm]{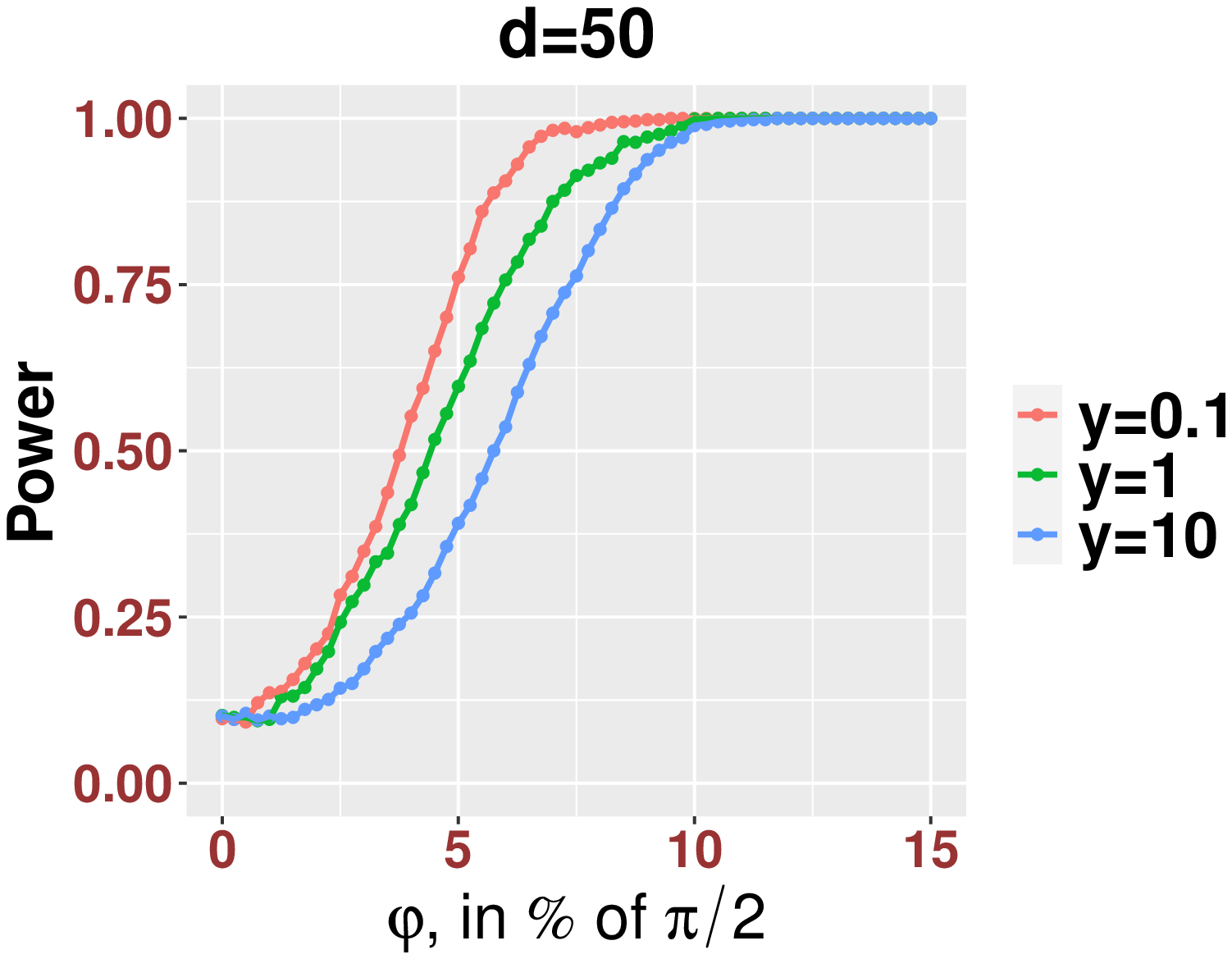}
\caption{$d=50.$}
\end{subfigure}
\caption{{ \small Power of Scenario B for two-point random variables using (\ref{eq_finalstat}).  We report our results under the nominal level 0.1 based on $2,000$ simulations. Here $N=500$ and the critical values are generated using Corollary \ref{cor_simulateddistribution}. }  }
\label{fig_figorttypeiipowertp}
\end{figure}

\end{appendix}
 
%\clearpage

\bibliographystyle{unsrt}

\begin{thebibliography}{1}

\bibitem{TA}
T. Anderson. \newblock An Introduction to Multivariate Statistical Analysis, 3rd edition. \newblock Wiley Series in Probability and Statistics, Wiley. 2003. 

\bibitem{BN}
J. Bai, and S. Ng. \newblock Principal components estimation and identification of static factors. 
\newblock{\em Journal of Econometrics}, 176(1):18--29, 2013. 

\bibitem{BN2}
J. Bai, and S. Ng. \newblock Rank regularized estimation of approximate factor models. 
\newblock{\em Journal of Econometrics}, 212(1):78--96, 2019.

\bibitem{BN3}
J. Bai, and S. Ng. \newblock Determining the Number of Factors in Approximate Factor Models. 
\newblock{\em Econometrica}, 70(1):191--221, 2002.

\bibitem{BY08}
Z.D. Bai, J.F. Yao.
\newblock Central limit theorems for eigenvalues in a spiked population model. 
\newblock {\em  Annales de l'IHP Probabilit\'{e}s et statistiques} 44(3), 447--474, 2008.

\bibitem{BY12}
Z.D. Bai, J.F. Yao. 
\newblock On sample eigenvalues in a generalized spiked population model. 
\newblock {\em Journal of Multivariate Analysis}, 106, 167--177, 2012.

\bibitem{BBP}
J. Baik,  G. Ben Arous, and  S. P\'{e}ch\'{e}.   
\newblock Phase transition of the largest eigenvalue for nonnull complex sample covariance matrices.
\newblock {\em The Annals of Probability}, 33(5), 1643--1697, 2005.

\bibitem{BaikS}
J. Baik,  J.W. Silverstein.   
\newblock Eigenvalues of large sample covariance matrices of spiked population models. 
\newblock {\em Journal of multivariate analysis}, 97(6), 1382--1408, 2006.

\bibitem{BDW}
Z.G. Bao, X.C. Ding, and K.  Wang.  
\newblock Singular vector and singular subspace distribution for the matrix denoising model.
\newblock  {\em The Annals of Statistics} (in press), 2020. 

\bibitem{BBCF}
S. Belinschi, H.  Bercovici, M.  Capitaine.  
\newblock On the outlying eigenvalues of a polynomial in large independent random matrices. 
\newblock {\em arXiv:1703.08102}, 2017.

\bibitem{BBCFAOP}
S. Belinschi, H.  Bercovici, M.  Capitaine,  and M.  F\'{e}vrier.  
\newblock Outliers in the spectrum of large deformed unitarily invariant models. 
\newblock {\em The Annals of Probability}, 45(6A), 3571-3625, 2017.

\bibitem{BGM11}
F. Benaych-Georges, A. Guionnet, and M.  Maida.
\newblock  Fluctuations of the extreme eigenvalues of finite rank deformations of random matrices. 
\newblock {\em Electronic Journal of Probability}, 16, 1621--1662, 2011.

\bibitem{BGN12}
F. Benaych-Georges, R. R.  Nadakuditi.  
\newblock The singular values and vectors of low rank perturbations of large rectangular random matrices. 
\newblock {\em Journal of Multivariate Analysis}, 111, 120--135, 2012.

\bibitem{BGN11}
F. Benaych-Georges, R. R.  Nadakuditi.   
\newblock The eigenvalues and eigenvectors of finite, low rank perturbations of large random matrices. 
\newblock {\em Advances in Mathematics}, 227(1), 494--521, 2011.

\bibitem{BJNP}
A. Birnbaum, I. Johnstone, B. Nadler, and D. Paul.
\newblock Minimax bounds for sparse PCA with noisy high-dimensional data.
\newblock{\em Ann. Stat.}, 41(3):1055--1084,2013. 



\bibitem{bloemendal2016principal}
A. Bloemendal, A. Knowles, H.-T. Yau, and J. Yin.
\newblock On the principal components of sample covariance matrices.
\newblock {\em Probability theory and related fields}, 164(1-2):459--552, 2016.

\bibitem{BVirag13}
A. Bloemendal, B. Vir\'{a}g.   
\newblock Limits of spiked random matrices I. 
\newblock {\em Probability Theory and Related Fields}, 156(3-4), 795--825, 2013.

\bibitem{BVirag16}
A. Bloemendal, B. Vir\'{a}g. 
\newblock Limits of spiked random matrices II. 
\newblock {\em The Annals of Probability}, 44(4), 2726--2769, 2016.

\bibitem{bloemendal2014isotropic}
A. Bloemendal, L. Erdos, A. Knowles, H.-T. Yau, and J. Yin.
\newblock Isotropic local laws for sample covariance and generalized Wigner
  matrices.
\newblock {\em Electron. J. Probab}, 19(33):1--53, 2014.

\bibitem{BBPA}
J. Bun, J. Bouchaud, and M. Potters. 
\newblock Cleaning large correlation matrices: Tools from Random Matrix Theory. \newblock{\em Physics Reports}, 666:1-109, 2017.

\bibitem{Capitaine17}
M. Capitaine.  
\newblock Limiting eigenvectors of outliers for Spiked Information-Plus-Noise type matrices. 
\newblock {\em S\'{e}minaire de Probabilit\'{e}s XLIX} 119--164, Springer, Cham, 2018.

\bibitem{CDM16}
M. Capitaine, C. Donati-Martin. 
\newblock Spectrum of deformed random matrices and free probability. 
\newblock {\em arXiv:1607.05560}, 2016.

\bibitem{CDM18}
M. Capitaine, C. Donati-Martin. 
 \newblock Non universality of fluctuations of outlier eigenvectors for block diagonal deformations of Wigner matrices. 
 \newblock {\em arXiv:1807.07773}, 2018.

\bibitem{CDF09}
M. Capitaine, C. Donati-Martin, and D. F\'{e}ral.  
\newblock The largest eigenvalues of finite rank deformation of large Wigner matrices: convergence and nonuniversality of the fluctuations. 
\newblock {\em The Annals of Probability}, 37(1), 1--47, 2009.

\bibitem{CDF12}
M. Capitaine, C. Donati-Martin, and D. F\'{e}ral.  
 \newblock Central limit theorems for eigenvalues of deformations of Wigner matrices. 
 \newblock {\em Annales de l'IHP Probabilit\'{e}s et statistiques}  48(1),  107--133, 2012.
 

 
 \bibitem{Ding}
 X.C. Ding.  
 \newblock High dimensional deformed rectangular matrices with applications in matrix denoising.
 \newblock{\em Bernoulli}, 26(1):387--417,2020. 
 
 \bibitem{DY19}
  X.C. Ding, F. Yang.
 \newblock Spiked separable covariance matrices and principal components.
 \newblock {\em The Annals of Statistics} (to appear), 2020.
 
 \bibitem{DO}
 E. Dobriban and A. Owen.
 \newblock Deterministic parallel analysis: an improved method for selecting factors and principal components.
 \newblock {\em }, 81(1):163-183, 2019.
 
  
 \bibitem{ED}
 E. Dobriban. 
 \newblock Permutation methods for factor analysis and PCA. 
 \newblock {\em The Annals of Statistics}(to appear), 2020. 
 
 \bibitem{shrinkage}
D. Donoho, M. Gavish, and I. Johnstone. \newblock Optimal shrinkage of eigenvalues in the spiked covariance model.\newblock{\em The Annals of Statistics}, 46(4): 1742--1778, 2018. 
 


\bibitem{EKY2013}
L. Erd{\H o}s, A. Knowles, and H.-T. Yau.
\newblock Averaging fluctuations in resolvents of random band matrices.
\newblock {\em Ann. Henri Poincar{\'e}}, 14(8):1837--1926, 2013.

\bibitem{FFHL}
J. Fan, Y. Fan, X. Han, and  J.  Lv.  
\newblock Asymptotic theory of eigenvectors for large random matrices. 
\newblock{\em arXiv:1902.06846}, 2019.

\bibitem{FLM}
J. Fan, Y. Liao, and M. Mincheva. 
\newblock High-dimensional covariance matrix estimation in approximate factor models. 
\newblock{\em Ann. Stat.}, 39(6):3320--3356, 2011. 

\bibitem{FP07}
D. F\'{e}ral, S. P\'{e}ch\'{e}. 
\newblock The largest eigenvalue of rank one deformation of large Wigner matrices. 
\newblock{\em Communications in mathematical physics}, 272(1), 185--228, 2007.

\bibitem{LeCamtest}
M. Hallin, D. Paindaveine and T. Verdebout. Optimal rank-based testing for principal components. \newblock{\em The Annals of Statistics}, 38(6), 3245--3299, 2010.  

\bibitem{HK}
Y. He,  A. Knowles.
\newblock   Mesoscopic eigenvalue statistics of Wigner matrices. 
\newblock{\em The Annals of Applied Probability}, 27(3), 1510-1550, 2017.


\bibitem{HK2}
Y. He,  A. Knowles.
\newblock Mesoscopic eigenvalue density correlations of Wigner matrices.
\newblock{\em Probability Theory and Related Fields}, 1-70, 2019.

\bibitem{HLS}
J. Hwang, J. Lee, and K. Schnelli. 
\newblock Local law and Tracy–Widom limit for sparse sample covariance matrices.
\newblock{\em Ann. Appl. Probab.}, 29(5):3006--3036, 2019.


\bibitem{JWHT}
G. James, D. Witten, T. Hastie, and R. Tibshirani. 
\newblock An Introduction to Statistical Learning: with Applications in R. \newblock  Springer Texts in Statistics , Springer. 2013. 

\bibitem{Johnstone}
I.  Johnstone. 
\newblock On the distribution of the largest eigenvalue in principal components analysis. 
\newblock {\em The Annals of statistics}, 29(2): 295--327,  2001.

\bibitem{sparse1}
I. Johnstone, and A. Lu.
\newblock On Consistency and Sparsity for Principal Components Analysis in High Dimensions.
\newblock {\em Journal of American Statistical Association}, 104(486):682--693, 2009.

\bibitem{JY}
I. Johnstone, and  J. Yang.  
\newblock Notes on asymptotics of sample eigenstructure for spiked covariance models with non-Gaussian data. 
\newblock {\em arXiv:1810.10427}, 2018.

\bibitem{JI}
I. T. Jolliffe. \newblock Principal Component Analysis. \newblock Springer Series in Statistics, Springer-Verlag New York, 2002. 

\bibitem{KML}
Z. Ke, Y. Ma, and X. Lin. \newblock Estimation of the number of spiked eigenvalues in a
covariance matrix by bulk eigenvalue matching analysis. \newblock{\em arXiv preprint arXiv: 2006.00436}, 2020. 

\bibitem{KKP96}
A.M.  Khorunzhy, B.A. Khoruzhenko, and L.A.  Pastur.
\newblock Asymptotic properties of large random matrices with independent
  entries.
\newblock {\em Journal of Mathematical Physics}, 37(10):5033--5060, 1996.


\bibitem{KY13}
A. Knowles,  J. Yin. 
\newblock The isotropic semicircle law and deformation of Wigner matrices. 
\newblock {\em Communications on Pure and Applied Mathematics}, 66(11), 1663--1749, 2013. 

\bibitem{KY14}
A. Knowles,  J. Yin. 
\newblock The outliers of a deformed Wigner matrix. 
\newblock {\em The Annals of Probability}, 42(5), 1980--2031, 2014.

\bibitem{knowles2017anisotropic}
A. Knowles,  J. Yin.
\newblock Anisotropic local laws for random matrices.
\newblock {\em Probability Theory and Related Fields}, 169(1-2):257--352, 2017.

\bibitem{KLsplit}
V. Koltchinskii and K. Lounici. New asymptotic results in Principal Component Analysis.  \newblock{\em Sankhya A.}, 79(2), 254-297, 2017.

\bibitem{loadinginterval1}
Z.V. Lambert, A.R. Wildt, and R. M. Durand. \newblock Approximating Confidence Intervals for Factor Loadings.  \newblock{\em Multivariate Behavioral Research},  26(3):421-434, 1991. 

\bibitem{LS18}
J. O. Lee,  K.  Schnelli.   
\newblock Local law and Tracy-Widom limit for sparse random matrices. 
\newblock {\em Probability Theory and Related Fields}, 171(1-2), 543-616, 2018.

\bibitem{LHY}
Z. Li, F. Han, and J. Yao. 
\newblock Asymptotic joint distribution of extreme eigenvalues and trace of large sample covariance matrix in a generalized spiked population model.
\newblock{\em The Annals of Statistics} (in press), 2020.  

\bibitem{LV}
P. Loubaton,  P.  Vallet.  
\newblock Almost sure localization of the eigenvalues in a Gaussian information plus noise model. Application to the spiked models. 
\newblock {\em Electronic Journal of Probability}, 16, 1934--1959, 2011.

\bibitem{LP09}
A. Lytova and L. Pastur.
\newblock Central limit theorem for linear eigenvalue statistics of random
  matrices with independent entries.
\newblock {\em Ann. Probab.}, 37(5):1778--1840, 2009.

\bibitem{MP67}
V.A. Mar{\v{c}}enko, L.A. Pastur.
\newblock Distribution of eigenvalues for some sets of random matrices.
\newblock {\em Mathematics of the USSR-Sbornik}, 1(4):457, 1967.

\bibitem{sparse2}
Z. Ma.
\newblock Sparse principal component analysis and iterative thresholding. 
\newblock{\em Ann. Stat.}, 41(2):772--801, 2013. 

\bibitem{Monasson_2015}
R. Monasson, and D. Villamaina. \newblock Estimating the principal components of correlation matrices from all their empirical eigenvectors. 
\newblock{\em Europhysics Letters}, 112(5): 50001, 2015. 

\bibitem{MJMY}
D. Morales-Jimenez, I. M. Johnstone, M. R. McKay, and J.  Yang.   
\newblock Asymptotics of eigenstructure of sample correlation matrices for high-dimensional spiked models. 
\newblock {\em arXiv:1810.10214}, 2018.

\bibitem{NE2008}
R. R. Nadakuditi, and A., Edelman.  
\newblock Sample eigenvalue based detection of high-dimensional signals in white noise using relatively few samples. 
\newblock {\em IEEE Transactions on Signal Processing}, 56(7), 2625-2638, 2008.

\bibitem{NS2010}
R. R. Nadakuditi, and J. W.  Silverstein. 
\newblock Fundamental limit of sample generalized eigenvalue based detection of signals in noise using relatively few signal-bearing and noise-only samples. 
\newblock {\em IEEE Journal of Selected Topics in Signal Processing}, 4(3), 468-480, 2010.

\bibitem{Nadler2008}
B. Nadler.  
\newblock Finite sample approximation results for principal component analysis: A matrix perturbation approach. 
\newblock{\em The Annals of Statistics}, 36(6), 2791-2817, 2008.


\bibitem{NJ}
B. Nadler and I. M. Johnstone.
\newblock Detection performance of Roy's largest root test when the noise covariance matrix is arbitrary.
\newblock {\em 2011 IEEE Statistical Signal Processing Workshop (SSP)}, 681--684, 2011.

\bibitem{bootsfr}
A. Naumov, V. Spokoiny, and V. Ulyanov.  
\newblock  Bootstrap confidence sets for spectral projectors of sample covariance. 
\newblock {\em Probability Theory and Related Fields}, 1091--1132, 2019.

\bibitem{ONA}
A. Onatski. \newblock Testing Hypotheses About the Number of Factors in Large Factor Models. 
\newblock {\em Econometrica}, 77(5):1447--1479,2009.




\bibitem{ONA1}
A. Onatski, M. Moreira, and M. Hallin. \newblock Signal detection in high dimension: The multispiked case. \newblock {\em Ann. Statist.}, 42(1): 225-254, 2014. 


\bibitem{ONA2}
A. Onatski. \newblock Asymptotics of the principal components estimator of large factor models with weakly influential factors. \newblock{\em Journal of Econometrics}, 168(2):244-258, 2012. 
 

\bibitem{loadinginterval2}
F.J. Oort. \newblock Likelihood-Based Confidence Intervals in Exploratory Factor Analysis. \newblock{\em Structural Equation Modeling: A Multidisciplinary Journal}, 18(3):383-396, 2011. 

\bibitem{PYspike}
D. Passemier, J.F. Yao. \newblock Estimation of the number of spikes, possibly equal, in the high-dimensional case. \newblock{\em Journal of Multivariate Analysis}, 127: 173-183, 2014. 


\bibitem{Paul}
D. Paul.
\newblock  Asymptotics of sample eigenstructure for a large dimensional spiked covariance model. 
\newblock {\em Statistica Sinica}, 1617--1642, 2007.

\bibitem{Peche06}
S. P\'{e}ch\'{e}.
\newblock The largest eigenvalue of small rank perturbations of Hermitian random matrices. 
\newblock {\em Probability Theory and Related Fields}, 134(1), 127--173, 2006.

\bibitem{PWBM}
A. Perry, A.S. Wein, A.S. Bandeira, A. Moitra. 
\newblock Optimality and sub-optimality of PCA I: Spiked random matrix models. 
\newblock {\em The Annals of Statistics}, 46(5), 2416-51, 2018.

\bibitem{PY14}
 N. S. Pillai, J. Yin.
\newblock Universality of covariance matrices. 
\newblock{\em The Annals of Applied Probability}, 24(3), 935-1001,  2014.

\bibitem{PRS13}
A. Pizzo, D. Renfrew, and A. Soshnikov. 
\newblock  On finite rank deformations of Wigner matrices. 
\newblock{\em Annales de l'IHP Probabilit\'{e}s et statistiques} 49(1),  64--94, 2013.

\bibitem{MP}
M. Pourahmadi.
\newblock High-Dimensional Covariance Estimation: With High-Dimensional Data. \newblock Wiley, 2013. 

\bibitem{SZ}
H. Shen, and J. Huang. 
\newblock Sparse principal component analysis via regularized low rank matrix approximation. 
\newblock{Journal of Multivariate Analysis}, 99(6):1015--1034, 2008. 


\bibitem{SF}
I. Silin, and J. Fan. 
\newblock  Hypothesis testing for eigenspaces of covariance matrix. 
\newblock{\em arXiv: 2020.09810}, 2020.

\bibitem{SS}
I. Silin, and V. Spokoiny. Bayesian inference for spectral projectors of the covariance matrix. \newblock{Electronic Journal of Statistics}, 12, 1948-1987, 2018. 

\bibitem{tyler1981}
D.E.Tyler. Asymptotic inference for eigenvectors. \newblock{\em The Annals of Statistics}, 9, 725--736,1981. 

\bibitem{tyler1983}
D.E.Tyler. A class of asymptotic tests for principal component vectors. \newblock{\em The Annals of Statistics}, 11, 1243--1250,1983. 

\bibitem{XYY}
H.K. Xi, F. Yang, J. Yin.  \newblock Convergence of eigenvector empirical spectral distribution of sample covariance matrices. \newblock{\em Annals of Statistics}, 48(2):953-82, 2020.

\bibitem{loadingtest}
H. Yamamoto, T. Fujimori, H. Sato, G. Ishikawa, K. Kami, and Y. Ohashi. \newblock Statistical hypothesis testing of factor loading in principal component analysis and its application to metabolite set enrichment analysis. \newblock{\em BMC Bioinformatics}, 15:51, 2014. 

\bibitem{YZB}
J. Yao, S. Zheng, and Z. Bai. \newblock Large Sample Covariance Matrices and High-Dimensional Data Analysis. \newblock{Cambridge University Press}, 2015. 

\bibitem{ZHT}
H. Zou, T. Hastie, and R. Tibshirani. \newblock Sparse Principal Component Analysis. \newblock{\em Journal of Computational and Graphical Statistics}, 15(2): 265--286, 2006. 




\end{thebibliography}

\end{document}